\numberwithin{equation}{section}
\theoremstyle{plain}
\newtheorem{theorem}{Theorem}[section]
\newtheorem{lemma}[theorem]{Lemma}
\newtheorem{corollary}[theorem]{Corollary}
\newtheorem{proposition}[theorem]{Proposition}
\theoremstyle{definition}
\newtheorem{definition}[theorem]{Definition}
\newtheorem{remark}[theorem]{Remark}
\newtheorem{example}[theorem]{Example}
\renewcommand{\theequation}{\arabic{section}.\arabic{equation}}
\newcommand{\R}{\mathbb R}
\newcommand{\N}{\mathbb N}
\renewcommand{\H}{\mathbb H}
\newcommand{\G}{\mathbb G}
\newcommand{\galg}{\mathfrak g}
\newcommand{\al}{\alpha}
\newcommand{\be}{\beta}
\newcommand{\ga}{\gamma}
\newcommand{\de}{\delta}
\newcommand{\ep}{\varepsilon}
\newcommand{\la}{\lambda}
\newcommand{\si}{\sigma}
\newcommand{\res}
{\mathop{\hbox{\vrule height 7pt width .5pt depth 0pt \vrule
height .5pt width 6pt depth 0pt}}\nolimits}
\newcommand{\Shaus}{\mathscr S}
\newcommand{\Haus}{\mathscr H}
\newcommand{\W}{\mathbb W}
\newcommand{\f}{\phi}
\newcommand{\leb}[1]{\mathscr L^{#1}}
\newcommand{\ci}{{C}}
\newcommand{\pa}{\partial}
\newcommand{\V}{\mathbb V}
\newcommand{\F}{\Phi}
\newcommand{\h}{\mathfrak h}
\newcommand{\qtaq}{\quad\text{and}\quad}
\newcommand{\qqtaqq}{\qquad\text{and}\qquad}
\newcommand{\bwl}{\text{\Large$\wedge$}}
\renewcommand{\DH}{\mathcal D_\H}
\newcommand{\nf}{\nabla^\f}
\newcommand{\curr}[1]{\llbracket{#1}\rrbracket}
\newcommand{\clcurr}[1]{\llparenthesis{#1}\rrparenthesis}
\newcommand{\unoenne}{\{1,\dots,n\}}
\newcommand{\unodueenne}{\{1,\dots,2n\}}
\newcommand{\Pab}{{\mathscr P_{a,b}}}
\newcommand{\gr}{\mathrm{gr}}
\newcommand{\eexp}{\overrightarrow{\exp}}
\newcommand{\Tpr}{\Tcurr_{p,r}}
\newcommand{\Lpr}{\mathcal L_{p,r}}
\newcommand{\heis}{\mathbb H}
\newcommand{\C}{\mathscr C}
\newcommand{\gradh}{\nabla_\H}
\DeclareMathOperator*{\Span}{span}
\newcommand{\cono}{\mathscr C}
\newcommand{\spt}{\mathrm{spt}\:}
\newcommand{\Tcurr}{\mathsf T}
\newcommand{\tauT}{\!\vec{\,\mathsf T}}
\newcommand{\Tan}{{\mathrm{Tan}}}
\author{Davide Vittone}
\address{Dipartimento di Matematica ``T. Levi-Civita'', via Trieste 63, 35121 Padova, Italy.}
\email{vittone@math.unipd.it}
\thanks{The author is supported by the   STARS Project SUGGESTION of the University of Padova and by GNAMPA of INdAM (Italy).
}
\subjclass[2010]{49Q15, 53C17, 26A16, 53C65, 22E30, 58C35.}
\keywords{Heisenberg group, Lipschitz graphs, Rademacher's Theorem, currents, Constancy Theorem, Rumin's complex, area formula, rectifiable sets.}
\begin{document}


\title{Lipschitz graphs and currents in Heisenberg groups}

\begin{abstract}
The main result of the present paper is  a Rademacher-type theorem for intrinsic Lipschitz graphs of  codimension $k\leq n$ in sub-Riemannian Heisenberg groups $\H^n$.
For the purpose of proving such a result we settle several related questions pertaining  both to the theory of intrinsic Lipschitz graphs and to the one of currents.
First, we prove  an extension result for intrinsic Lipschitz graphs as well as a uniform approximation theorem by means of smooth graphs:  these results stem both from a new definition (equivalent to the one introduced by F.~Franchi, R.~Serapioni and F.~Serra Cassano) of intrinsic Lipschitz graphs and are valid for a more general class of intrinsic Lipschitz graphs in Carnot groups. 
Second, our proof of Rademacher's Theorem heavily uses the language of currents in Heisenberg groups: one key result  is, for us, a version of the celebrated Constancy Theorem. Inasmuch as Heisenberg currents are defined in terms of Rumin's complex of differential forms, we also provide a convenient basis of Rumin's spaces.
Eventually, we provide some applications of  Rademacher's Theorem including a Lusin-type result for intrinsic Lipschitz graphs, the equivalence between $\H$-rectifiability and ``Lipschitz'' $\H$-rectifiability, and an area formula for intrinsic Lipschitz graphs in Heisenberg groups.
\end{abstract}

\maketitle

\section{Introduction}
The celebrated Rademacher's Theorem~\cite{Rademacher} states that a Lipschitz continuous function $f:\R^h\to\R^k$ is differentiable almost everywhere in $\R^h$; in particular, the graph of $f$ in $\R^{h+k}$ has a  $h$-dimensional tangent plane at almost all  of its points. 
One of the consequences of Rademacher's Theorem is the following Lusin-type result, which stems from Whitney's Extension Theorem~\cite{WhitneyExtension34}: for every $\ep>0$, there exists $g\in C^1(\R^h,\R^k)$ that coincides with $f$ out of a set of measure at most $\ep$. From the viewpoint of Geometric Measure Theory, this means that Lipschitz-regular objects (functions, submanifolds...) are  essentially as nice as $C^1$-smooth ones and has profound implications, for instance, in the theory of rectifiable sets and currents~\cite{federer,mattila,Simon}.

The present paper aims at developing a similar theory for submanifolds with (intrinsic) Lipschitz regularity in  sub-Riemannian Heisenberg groups: before adequately introducing our results,  we feel the need to list them at least quickly. We believe that our main result is  a Rademacher-type theorem for intrinsic Lipschitz graphs, that was the main open problem since the beginning of this theory. 
Some applications -- namely, a Lusin-type result and an area formula for intrinsic Lipschitz graphs -- are  provided here as well; however, we believe that further consequences are yet to come concerning, for instance,  rectifiability and minimal submanifolds in  Heisenberg groups. 
Some of the tools we develop for proving our main result are worth mentioning:  in fact, we prove an extension result for intrinsic Lipschitz graphs as well as the fact that they can be uniformly approximated by smooth graphs. Both results stem from what can be considered as another contribution of the present paper, i.e., a new definition of intrinsic Lipschitz graphs that is equivalent to the original one, introduced by B.~Franchi, R.~Serapioni and F.~Serra Cassano and now widely accepted: recall in fact that intrinsic Lipschitz graphs in Heisenberg groups played a fundamental role in the recent proof by A.~Naor and R.~Young~\cite{NaorYoungAnnals2018} of the ``vertical versus horizontal''  isoperimetric inequality in $\H^n$  that allowed to settle the longstanding  question of determining  the approximation ratio of the Goemans-Linial algorithm for the Sparsest Cut Problem.
Let us also say that our proof of Rademacher's Theorem heavily uses the language of currents in Heisenberg groups: a key result  is for us (a version of) the celebrated Constancy Theorem~\cite{federer,Simon,KrantzParks}. From the technical point of view, the use of currents constitutes the  hardest part of the paper: in fact,  currents in Heisenberg groups are defined in terms of the complex of differential forms introduced by M.~Rumin in~\cite{RuminComptesRendus1990,Rumin}, that is not easy to handle. Among other things, we had to provide a convenient basis of Rumin's covectors that could be fruitfully employed in the computation of Rumin's exterior derivatives; we were surprised  by the fact that the use of {\em standard Young tableaux} from combinatorics revealed crucial in performing this task. 

It is  time to introduce  and discuss our results more appropriately. 

\subsection{Heisenberg groups and intrinsic graphs} 
For $n\geq 1$, the Heisenberg group $\H^n$ is the connected, simply connected and nilpotent Lie group associated with the Lie algebra $\h$ with $2n+1$ generators $X_1,\dots,X_n,Y_1,\dots,Y_n,T$; all Lie brackets between these generators are null  except for
\[
[X_j,Y_j]=T\quad\text{for every }j=1,\dots,n.
\] 
The algebra $\h$ is stratified, as it can be decomposed as $\h=\h_1\oplus\h_2$ with $\h_1:=\Span\{X_j,Y_j:j=1,\dots,n\}$ and $\h_2:=\Span\{T\}$. The first layer $\h_1$ in the stratification is called {\em horizontal}. 

It will be often convenient to identify $\H^n$ with $\R^{2n+1}$ by exponential coordinates
\[
\R^n\times\R^n\times\R\ni (x,y,t)\longleftrightarrow \exp(x_1X_1+\dots +x_nX_n+y_1Y_1+\dots+y_nY_n+tT)\in\H^n,
\]
where $\exp:\h\to\H^n$ is the exponential map; 0 is the group identity. 
The Heisenberg group is a homogeneous group according to~\cite{FS}: indeed, for $\la>0$ the maps 
$\de_\la(x,y,t):=(\la x,\la y,\la^2 t)$ determine  a one-parameter family of group automorphisms of $\H^n$ called {\em dilations}. We endow $\H^n$ with a left-invariant and homogeneous distance $d$, so that
\[
d(qp,qp')=d(p,p')\quad\text{and}\quad d(\de_\la p,\de_\la q)=\la d(p,q)\qquad\forall\: p,p',q\in\H^n,\la>0.
\]
It will be convenient to assume that $d$ is rotationally invariant, i.e., that
\[
\|(x,y,t)\|_\H=\|(x',y',t)\|_\H\qquad\text{whenever }|(x,y)|=|(x',y')|
\]
where we set $\|p\|_\H:=d(0,p)$ for every $p\in\H^n$. Relevant examples of rotationally invariant distances are the well-known Carnot-Carath\'eodory and Kor\'anyi (or Cygan-Kor\'anyi) distances.

An intensive search for a robust  intrinsic notion of  $C^1$ or Lipschitz regularity for submanifolds was conducted in the last two decades: in fact (see~\cite{AmbKirMathAnn2000}) the Heisenberg group $\H^1$ is purely $k$-unrectifiable, in the sense of~\cite{federer}, for $k=2,3,4$. 
It can however be stated that the theory of {\em $\H$-regular} submanifolds (i.e., submanifolds with intrinsic $C^1$ regularity) is well-established,  see for instance the beautiful paper~\cite{FSSCAIM}. 
It turns out that $\H$-regular submanifold in $\H^n$ of {\em low dimension} $k\in\{1,\dots,n\}$ are  $k$-dimensional submanifolds of class $C^1$ (in the  Euclidean sense) that are tangent to the horizontal bundle $\h_1$. 
On the contrary, $\H$-regular submanifolds of {\em low codimension} $k\in\{1,\dots,n\}$ are more complicated:  they are (locally) noncritical level sets of $\R^k$-valued maps on $\H^n$ with continuous horizontal derivatives (see \S\ref{subsec:Hreg,Hrettif} for precise definitions) and, as a matter of fact, they can have fractal Euclidean dimension~\cite{KirchhSC}.

A key tool for the study of $\H$-regular submanifold is provided by {\em intrinsic graphs}. Assume that $\V,\W$ are homogeneous  complementary subgroups of $\H^n$, i.e., that they are invariant under dilations, $\V\cap\W=\{0\}$ and $\H^n=\W\V=\V\W$; given $A\subset \W$ and a map $\f:A\to\V$, the {\em intrinsic graph} of $\f$ is $\gr_\f:=\{w\f(w):w\in A\}\subset \H^n$. It is worth recalling that, if $\V,\W$ are homogeneous and  complementary subgroups, then one of the two is necessarily horizontal (i.e., contained in $\exp(\h_1)$), Abelian, and of dimension  $k\leq n$, while the other has dimension $2n+1-k\geq n+1$, is normal, and  contains the group center $\exp(\h_2)$; see \cite[Remark 3.12]{FSSCAIM}. The first appearance of intrinsic graphs is most likely to be attributed to the Implicit Function Theorem of the fundamental paper~\cite{FSSCMathAnn2001}, 
where the authors prove a $\H$-rectifiability result for (boundaries of) sets with finite perimeter in $\H^n$. As a matter of fact, $\H$-regular submanifolds are locally intrinsic graphs whose properties have been studied in many papers, see e.g.~\cite{ASCV,ArenaSer,BigolinSC2,BigolinSC1,CittiManfredini,Corni,CorniMagnani,
DiDonato2018,DiDonato2019,DDFaessOrpo2019,FSSCAIM,JNGV,MagnaniTowardsDiffCalc,MVMathZ12}.

Intrinsic graphs also provide the language for introducing a theory of Lipschitz submanifolds in $\H^n$. Observe that, while for the case of low dimensional submanifolds one could simply consider Euclidean Lipschitz submanifolds that are a.e. tangent to the horizontal distribution, for submanifolds of low codimension there is no immediate way of modifying the ``level set definition'' of $\H$-regularity into a Lipschitz one. Intrinsic Lipschitz graphs in $\H^n$ first appeared in~\cite{FSSCJNCA}; their definition is stated in terms of a suitable  cone property. Given $\al>0$, consider the homogeneous cone of axis $\V$ and aperture $\al$
\[
\cono_\al:=\{wv\in\H^n:w\in\W,v\in\V,\|w\|_\H\leq\al\|v\|_\H\}.
\]
We say that a map $\f:A\subset \W\to\V$ is {\em intrinsic Lipschitz} if there exists $\al>0$ such that
\[
\gr_\f \cap (p \cono_\al)=\{p\}\qquad\text{for every }p\in\gr_\f.
\]
Intrinsic Lipschitz graphs can be introduced in the more general framework of {\em Carnot groups}: apart from the elementary basics contained in Section~\ref{sec:gruppi}, we refer to~\cite{FSJGA} for a beautiful introduction to the topic and to~\cite{BigolinCaravennaSC,ChouFaessOrpoAJM2019,CittiMPSC,DDFaessOrpo2019,DLDMV,FaessOrpoRigot,FMarchiS,FPensoS,NGSC,SerapioniDiffQuot,SCSomeTopics} for several facets of the theory. 

\subsection{Rademacher's Theorem for intrinsic Lipschitz graphs and consequences}
One of the main questions about intrinsic Lipschitz graphs concerns their almost everywhere ``intrinsic'' differentiability. Consider an intrinsic Lipschitz map $\f:A\to\V$ defined on some relatively open subset $A\subset\W$. If $\W$ has low dimension $k\leq n$, then (see \cite{AntonelliMerlo} or also \cite[Remark~3.11]{FSSCJNCA},~\cite[Proposition~3.7]{FSJGA}) $\gr_\f$ is a $k$-dimensional submanifold with Euclidean Lipschitz regularity that is a.e. tangent to the horizontal bundle $\h_1$; therefore, the  problem reduces to the case of $\H$-regular graphs with low codimension $k=\dim\V\leq n$. 
A positive answer (\cite{FSSCJGA-Diff}) is known only for the case of codimension $k=1$: in fact, in this case $\gr_\f$ is (part of) the boundary of a set with finite $\H$-perimeter (\cite{CapognaDanielliGarofaloCAG1994,FSSCHouston}) in $\H^n$ and one can use the rectifiability result~\cite{FSSCMathAnn2001} available for such sets. A Rademacher-type theorem for intrinsic Lipschitz functions of codimension 1 was proved in Carnot groups of type $\star$, see~\cite{FMarchiS}. In this paper we provide a full solution to the problem in $\H^n$, as stated in our main result.

\begin{theorem}\label{thm:rademacher}
If $A\subset\W$ is open and $\f:A\to\V$ is  intrinsic Lipschitz, 
then $\f$ is intrinsically differentiable at almost every point of $A$.
\end{theorem}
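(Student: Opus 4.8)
The plan is to reduce the Rademacher-type statement for the intrinsic Lipschitz graph $\gr_\f$ to a Constancy-type result for the Heisenberg current naturally associated with $\gr_\f$. First I would use the new definition of intrinsic Lipschitz graphs together with the extension and smooth-approximation results announced in the introduction: every intrinsic Lipschitz $\f:A\to\V$ of codimension $k=\dim\V\leq n$ can be uniformly approximated by intrinsic graphs of smooth maps $\f_j$, and the graphs $\gr_{\f_j}$ carry well-defined integer-multiplicity currents $\mathsf T_j$ in the sense of Rumin's complex. The uniform cone estimate guarantees that the masses of $\mathsf T_j$ (and of their boundaries, which vanish since the $\f_j$ are smooth and the graphs are, locally, cycles after a suitable construction) are uniformly bounded, so a compactness argument produces a limit current $\mathsf T$ supported on $\gr_\f$, with $\partial\mathsf T=0$ inside the relevant region. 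The key structural point, to be proved using the area formula for intrinsic Lipschitz graphs, is that $\mathsf T$ is precisely the current of integration over $\gr_\f$ with multiplicity one and a measurable ``tangent $(2n+1-k)$-vector'' $\vec{\mathsf T}$; almost-everywhere intrinsic differentiability of $\f$ is then equivalent to the statement that $\vec{\mathsf T}(p)$ is, at $\mathcal H$-almost every $p$, a \emph{simple} vector of the admissible (Rumin) type, i.e., that it corresponds to an actual homogeneous subgroup tangent plane.

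Next I would localize and blow up. At a point $p\in\gr_\f$ where $\gr_\f$ has an approximate tangent in a weak (current) sense, the rescaled currents $(\delta_{1/r})_\#(p^{-1}\mathsf T)$ converge to a limit cycle $\mathsf T_\infty$ which is dilation-invariant and supported on the intrinsic graph of the (candidate) intrinsic differential. The heart of the argument is to show that such a dilation-invariant cycle must be the current associated with a homogeneous subgroup, i.e. an \emph{intrinsic linear} graph: this is where the Constancy Theorem enters. One writes $\mathsf T_\infty$ in the basis of Rumin covectors built from standard Young tableaux, observes that $\partial\mathsf T_\infty=0$ forces the ``coefficient functions'' of $\vec{\mathsf T}_\infty$ against each basis element to be (horizontally) constant, and then checks that the only such constant, dilation-invariant, boundaryless configuration consistent with the cone condition and with the area formula is the one coming from a single $(2n+1-k)$-dimensional homogeneous subgroup complementary to $\V$. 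Pulling this back through the blow-up, one concludes that at $\mathcal H$-a.e. $p$ the graph $\gr_\f$ admits a homogeneous tangent subgroup, which is exactly intrinsic differentiability of $\f$ at $p$ in the sense of Franchi--Serapioni--Serra Cassano.

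Finally I would assemble the measure-theoretic bookkeeping: the set of $p$ where the blow-up fails to produce a subgroup has $\mathcal H$-measure zero, because along a subsequence the rescalings always converge (by the uniform mass bounds and the cone property) to \emph{some} cycle of the above type, and the standard differentiation-of-measures / density argument upgrades subsequential convergence of currents to genuine intrinsic differentiability at a.e. point. Transferring the statement from $\gr_\f$ back to $A\subset\W$ via the graph map (which is a bi-Lipschitz-in-the-intrinsic-sense parametrization, again by the cone condition) gives Theorem~\ref{thm:rademacher}.

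I expect the main obstacle to be the Constancy step: making precise which components of a Rumin-type $(2n+1-k)$-current must be constant, controlling the interaction between the horizontal derivative constraints coming from $\partial\mathsf T=0$ and the algebraic constraints defining Rumin's spaces (this is where the Young-tableaux basis is needed so that $\partial$ acts transparently), and then ruling out non-simple limit vectors. A secondary difficulty is ensuring that the currents $\mathsf T_j$ have controlled boundary — one must work on a family of exhausting open sets and use the extension theorem to close up the graphs — so that the compactness theorem for Heisenberg currents applies and the limit is genuinely a cycle where it needs to be.
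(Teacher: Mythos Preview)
Your overall architecture---smooth approximation, passage to a limiting Heisenberg current $\Tcurr$ with $\partial\Tcurr=0$, blow-up, and a Constancy-type argument---matches the paper's. But two genuine ideas are missing, and without them the argument does not close.

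First, you assert that the blow-up current $\Tcurr_\infty$ is supported on an intrinsic graph $\gr_\psi$ and that Constancy then forces $\gr_\psi$ to be a plane. The problem is that the Constancy Theorem (in the form the paper proves, Theorem~\ref{teo:correntinormalirettificabili}) applies only to currents concentrated on $\H$-\emph{rectifiable} sets, and an arbitrary intrinsic Lipschitz graph is not known to be $\H$-rectifiable at this stage---that is essentially what you are trying to prove. The paper bridges this gap with Lemma~\ref{lem:blowuptinvariante}: at a.e.\ $\bar w$, \emph{every} blow-up $\psi$ of $\f$ is $t$-invariant, and a $t$-invariant intrinsic Lipschitz map is Euclidean Lipschitz (Lemma~\ref{lem:tinvariantLipschitz}), hence $\gr_\psi$ is Euclidean rectifiable and therefore $\H$-rectifiable. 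The proof of $t$-invariance uses the already-known codimension-one Rademacher theorem of Franchi--Serapioni--Serra~Cassano along vertical slices; this is an essential external input that your proposal does not mention.

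Second, and more seriously, your claim that ``the only such constant, dilation-invariant, boundaryless configuration \dots\ is the one coming from a single homogeneous subgroup'' is false in codimension $k=n$. The map $\tau\mapsto[\tau]_\mathcal J$ on simple vertical $(2n+1-k)$-vectors is \emph{not} injective: e.g.\ in $\H^2$ one has $[X_1\wedge Y_1\wedge T]_{\mathcal J}=[-X_2\wedge Y_2\wedge T]_{\mathcal J}$. So after Constancy you only know that $[t^\H_{\gr_\psi}(p)]_\mathcal J$ equals a fixed element of $\mathcal J_{2n+1-k}$ at a.e.\ $p$, which still allows $\gr_\psi$ to have two distinct tangent planes $\mathscr P_1,\mathscr P_2$. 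The paper resolves this with Proposition~\ref{prop:almassimo2}: there are at most two such planes, and when there are two they are \emph{not rank-one connected}. One then invokes the Ball--James rigidity result (if a Lipschitz map has only two possible gradients $M_1,M_2$ with $\mathrm{rank}(M_1-M_2)\geq 2$, the map is affine) to conclude $\gr_\psi$ is a single plane, and a final connectedness argument on the family of blow-ups rules out oscillation between $\mathscr P_1$ and $\mathscr P_2$. None of this is visible in your sketch.
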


In Theorem~\ref{thm:rademacher}, ``almost every'' must be understood with respect to a Haar measure on the subgroup $\W$ -- for instance, the  Hausdorff measure of dimension $2n+2-k$. Concerning the notion of intrinsic differentiability (see \S\ref{subsec:differenziabilita_blowups}), recall that left-translations and dilations of intrinsic Lipschitz graphs are intrinsic Lipschitz graphs; in particular, for every $w\in A$ and every $\la>0$ there exists an intrinsic Lipschitz $\f_w^\la:B\to\V$, defined on some open subset $B\subset\W$, such that
\[
\de_\la((w\f(w))^{-1}\gr_\f)=\gr_{\f_w^\la}.
\]
One then says (\cite[\S3.3]{ArenaSer}) that $\f$ is {\em intrinsically differentiable} at $w$ if, as $\la\to+\infty$, the blow-ups $\f_w^\la$ converge locally uniformly on $\W$ to an intrinsic linear map, i.e., to a map  $\psi:\W\to\V$ such that $\gr_\psi$ is a homogeneous subgroup of $\H^n$ with codimension $k$. This subgroup, that  is necessarily vertical (i.e., it contains the center of $\H^n$) and  normal, is called {\em  tangent subgroup} to $\gr_\f$ at $w\f(w)$ and is denoted by $\Tan^\H_{\gr_\f}(w\f(w))$. 

For the reader's convenience, the proof of Theorem~\ref{thm:rademacher} is sketched at the end of the Introduction. We are now going to introduce a few consequences of our main result: the first one is a Lusin-type theorem for intrinsic Lipschitz graphs.

\begin{theorem}\label{thm:Lusinbreve}
Let $A\subset\W$ be an open set and $\f:A\to\V$ an intrinsic Lipschitz function. Then for every $\ep>0$ there exists an intrinsic Lipschitz function $\psi:A\to\V$ such that $\gr_\psi$ is a $\H$-regular submanifold and 
\[
\Shaus^{Q-k}((\gr_\f\,\Delta\, \gr_\psi)\cup\{p\in\gr_\f\cap \gr_\psi:\Tan^\H_{\gr_\f}(p)\neq \Tan^\H_{\gr_\psi}(p)\})<\ep.
\]
\end{theorem}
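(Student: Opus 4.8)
The plan is to transpose to the intrinsic setting the classical derivation of Lusin-type statements from Rademacher's Theorem through Whitney's Extension Theorem. Throughout, $\mu:=\Shaus^{Q-k}\res\W$ is the Haar measure on $\W$ (recall $\W$ is homogeneous of dimension $Q-k$, with $Q=2n+2$) and ``a.e.'' refers to $\mu$. The first step is to apply Theorem~\ref{thm:rademacher} to the given intrinsic Lipschitz $\f:A\to\V$: one obtains a Borel set $D\subset A$ with $\mu(A\setminus D)=0$ such that $\f$ is intrinsically differentiable at every $w\in D$, with intrinsic gradient $\nf\f(w)$, so that $\Tan^\H_{\gr_\f}(w\f(w))$ is the graph of the intrinsic linear map associated with $\nf\f(w)$. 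Two elementary remarks are recorded at this point: $\nf\f$ is $\mu$-measurable on $D$, being a pointwise limit of intrinsic difference quotients; and $\nf\f$ is bounded on $D$ by a constant depending only on the intrinsic Lipschitz constant $\al$ of $\f$, since the cone condition bounds the slope of any tangent subgroup.

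Next I would carry out an Egorov--Lusin reduction. Given $\ep>0$, the aim is a compact set $K\subset D$ with $\mu(A\setminus K)$ as small as we like and such that: (i) $\nf\f|_K$ is continuous; and (ii) the intrinsic differentiability of $\f$ is \emph{uniform} on $K$, i.e.\ the convergence of the blow-ups $\f_w^\la$ to the associated intrinsic linear map --- which is how intrinsic differentiability is defined --- occurs with one and the same rate for all $w\in K$; equivalently, the remainder in a first-order intrinsic Taylor expansion of $\f$ along $K$ is a uniform $o(\mathrm{dist})$. This is achieved exactly as in the Euclidean case: the defect of the scale-$\la$ blow-up approximation is a $\mu$-measurable function of $w$ that goes to $0$ $\mu$-a.e.\ as $\la\to+\infty$, so Egorov's theorem (applied, say, to $\la\in\N$) combined with Lusin's theorem for $\nf\f$ produces the desired $K$.

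The core of the proof --- and, I expect, its main obstacle --- is the extension step. I would invoke a \emph{Whitney-type extension theorem for intrinsic $C^1$ graphs}, whose hypotheses are precisely that, on the compact set $K$, $\f|_K$ is continuous, the candidate intrinsic gradient $\nf\f|_K$ is continuous and bounded, and the two are related by the uniform first-order intrinsic Taylor expansion produced in the previous step. Its conclusion is an intrinsic Lipschitz map $\psi:A\to\V$ --- intrinsic Lipschitzianity following from the boundedness of $\nf\psi$, together with the extension result for intrinsic Lipschitz graphs if one wishes to fix the domain to be exactly $A$ --- such that $\gr_\psi$ is an $\H$-regular submanifold, $\psi=\f$ on $K$, and $\nf\psi=\nf\f$ on $K$; in particular $\Tan^\H_{\gr_\f}(w\f(w))=\Tan^\H_{\gr_\psi}(w\psi(w))$ for every $w\in K$, both subgroups being the graph of the intrinsic linear map of the common value $\nf\f(w)=\nf\psi(w)$. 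What makes this step delicate is the nonlinearity of the group law, which forces the compatibility condition to be written through the graph map $w\mapsto w\f(w)$ and intrinsic difference quotients rather than affine increments, and the patching of the local ``Whitney pieces'' into a single graph that is at once intrinsic $C^1$ and intrinsic Lipschitz.

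Finally I would conclude by a measure estimate. Let $\pi:\H^n\to\W$ be the projection along $\V$, so that $\pi(w\f(w))=\pi(w\psi(w))=w$ for all $w\in A$. If $p$ lies in the set $E$ appearing in the statement, set $w:=\pi(p)\in A$; were $w\in K$, the extension step would force $w\f(w)=w\psi(w)$ and equality of the tangent subgroups there, contradicting $p\in E$. Hence $\pi(E)\subset A\setminus K$ and $E\subset(\gr_\f\cup\gr_\psi)\cap\pi^{-1}(A\setminus K)$. Since $\gr_\f\cap\pi^{-1}(A\setminus K)=\{w\f(w):w\in A\setminus K\}$ and likewise for $\psi$, the area formula for intrinsic Lipschitz graphs --- whose Jacobian is bounded, for $\f$ and for $\psi$ respectively, by finite constants $C$ and $C'$ depending only on the data --- yields $\Shaus^{Q-k}(E)\le(C+C')\,\mu(A\setminus K)$. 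Choosing $K$ in the second step so that $\mu(A\setminus K)<\ep/(C+C')$ completes the argument.
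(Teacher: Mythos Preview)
Your overall strategy---Rademacher, then Egorov--Lusin, then Whitney extension, then a measure estimate---is exactly the paper's strategy, and the paper carries it out in Lemma~\ref{lem:quasiLusin} and Theorem~\ref{thm:Lusin}. Two points deserve comment, one structural and one a genuine gap.

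First, the Whitney step. You treat ``Whitney-type extension for intrinsic $C^1$ graphs'' as a black box to be invoked; this is precisely the content the paper has to supply, and the way it does so is worth noting because it differs from what you sketch. Rather than extend the graph map $\f:\W\to\V$ directly (which, as you say, is awkward because increments are nonlinear), the paper runs Whitney on the \emph{defining function}: from the data $(\f|_K,\nf\f|_K)$ it builds the homogeneous homomorphisms $L_w:\H^n\to\R^k$ of~\eqref{eq:defL_w} and patches them, via a Whitney partition of unity on $\H^n\setminus\F(K)$, into a map $f\in C^1_\H(\H^n,\R^k)$ with $\F(K)\subset\{f=0\}$ and $\nabla_\H f$ of full rank everywhere. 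The $\H$-regular submanifold is then the level set $\{f=0\}$, which is an entire intrinsic Lipschitz graph by Theorem~\ref{teo:equivdefLip}. This level-set detour replaces the nonlinear intrinsic-graph extension problem by a standard Whitney extension $\H^n\to\R^k$, at the price of the careful estimates in Steps~5--8 of Lemma~\ref{lem:quasiLusin}.

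Second, and this is an actual gap: your final measure estimate invokes the area formula for intrinsic Lipschitz graphs (Theorem~\ref{thm:formulaarea}). In the paper's logical order that formula is \emph{derived from} the Lusin-type theorem you are proving---see the proof of Theorem~\ref{thm:formulaarea}, which begins by applying Theorem~\ref{thm:Lusin}. Using it here is circular. The fix is immediate: replace the area formula by the Ahlfors-regularity bound of Remark~\ref{rem:Ahlfors}, which gives $\Shaus^{Q-k}\res\gr_\f\le C\,\F_\#(\leb{2n+1-k}\res\W)$ (and likewise for $\psi$) with $C$ depending only on the intrinsic Lipschitz constant, and which is available prior to both Rademacher and Lusin. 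The paper does exactly this in Step~9 of Lemma~\ref{lem:quasiLusin}.
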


As customary, the integer $Q:=2n+2$ denotes the homogeneous dimension of $\H^n$ and $\Shaus^{Q-k}$ is the spherical Hausdorff measure of dimension $Q-k$; by $A\Delta B:=(A\setminus B)\cup(B\setminus A)$ we denote the symmetric difference of sets $A,B$. Theorem~\ref{thm:Lusinbreve} is part of Theorem~\ref{thm:Lusin}; the latter stems from the equivalent definition of intrinsic Lipschitz graphs provided by Theorem~\ref{teo:equivdefLip} below and is proved by  an adaptation of the classical argument of Whitney's Extension Theorem, see also~\cite{FSSCMathAnn2001,FSSCstep2,VittonePhDThesis,VodoPupy}. Theorem~\ref{thm:Lusinbreve}  implies that, as in the Euclidean case, the notion of $\H$-rectifiability (Definition~\ref{def:Hrettificabili})  can be equivalently defined in terms of  either $\H$-regular submanifolds or  intrinsic Lipschitz graphs; see Corollary~\ref{cor:HrectifiableC1vsLip}. 
We refer to~\cite{ChouFaessOrpoAJM2019,ChouMagTyson,DDFaessOrpo2019,FaessOrpoRigot,MatSerSC,Merlo_Geom1codim,Merlo_MM} for more  about rectifiability in Heisenberg groups.

We  stress the fact that Rademacher's Theorem~\ref{thm:rademacher} also allows to define a canonical {\em current} $\curr{\gr_\f}$  carried by the  graph of an intrinsic Lipschitz map $\f:\W\to\V$. This current turns out to have zero boundary, see Proposition~\ref{prop:grf0bdry}.

A further consequence of Theorem~\ref{thm:Lusinbreve} is an area formula for intrinsic Lipschitz graphs of low codimension. For $\H$-regular submanifolds, area formulae are proved in~\cite{FSSCMathAnn2001,FSSCAIM,ASCV} for submanifolds of codimension 1 and in~\cite{CorniMagnani} for higher codimension (see also~\cite{MagnaniTowardsTheoryArea}). For intrinsic Lipschitz graphs of low dimension, an area formula is proved in~\cite[Theorem~1.1]{AntonelliMerlo}. Our area formula is stated in Theorem~\ref{thm:formulaarea} and, once  Lusin's Theorem~\ref{thm:Lusinbreve} is available,  it is a quite simple consequence of~\cite[Theorem~1.2]{CorniMagnani}, where a similar area formula is proved for intrinsic graphs that are also $\H$-regular submanifolds. 
As in~\cite{CorniMagnani}, the symbol $J^\f\f(w)$ denotes the intrinsic Jacobian of $\f$ at $w$, see Definition~\ref{def:differenziabilita}, while $C_{n,k}$ denotes a positive constant, depending only on $n,k$ and the distance $d$, that will be introduced later in Proposition~\ref {prop:correnti_cl_vs_H_INTRO}.

\begin{theorem}\label{thm:formulaarea}
Assume that the subgroups $\W,\V$ are orthogonal\footnote{By {\em orthogonal} we mean that $\W,\V$ are orthogonal as linear subspaces of $\H^n\equiv\R^{2n+1}$.} and let  $\f:A\to\V$ be an intrinsic Lipschitz map defined on some Borel subset $A\subset\W$; then for every Borel function $h:\gr_\f\to[0,+\infty)$ there holds
\[
\int_{\gr_\f} h \:d\Shaus^{Q-k} =C_{n,k}\int_A (h\circ\F)J^\f\f\:d\leb{2n+1-k},
\]
where $\Phi$ denotes the graph map $\F(w):=w\f(w).$
\end{theorem}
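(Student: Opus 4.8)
The plan is to deduce the formula from the area formula for \emph{$\H$-regular} intrinsic graphs, \cite[Theorem~1.2]{CorniMagnani}, using the Lusin-type approximation of Theorem~\ref{thm:Lusinbreve} to realize $\gr_\f$, up to a $\Shaus^{Q-k}$-negligible set, as a countable union of pieces that are simultaneously pieces of $\H$-regular intrinsic graphs \emph{with the same tangent subgroups}. Two preliminary reductions are convenient. Since intrinsic Lipschitz maps extend (cf.\ the extension result for intrinsic Lipschitz graphs discussed in the Introduction), it is not restrictive to assume $A=\W$; in particular $\f$ is then continuous and the graph map $\F$ is a continuous injection of $\W$ onto $\gr_\f$. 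Recall also that, by Definition~\ref{def:differenziabilita}, the Jacobian $J^\f\f(w)$ is a function of the intrinsic differential $d\f_w$, equivalently of the tangent subgroup $\Tan^\H_{\gr_\f}(w\f(w))$, hence it depends only on that subgroup; and it is defined $\leb{2n+1-k}$-a.e.\ on $\W$ by Theorem~\ref{thm:rademacher}. Finally, it suffices to establish the identity of Borel measures
\[
\Shaus^{Q-k}\res\gr_\f \;=\; C_{n,k}\,\F_{\#}\big(J^\f\f\,\leb{2n+1-k}\res\W\big),
\]
because the formula for an arbitrary Borel $h\colon\gr_\f\to[0,+\infty)$ then follows at once from the change-of-variables formula for push-forwards.

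To prove the identity, I would apply Theorem~\ref{thm:Lusinbreve} with $\ep=2^{-j}$, $j\in\N$, obtaining intrinsic Lipschitz maps $\psi_j\colon\W\to\V$ with $\gr_{\psi_j}$ an $\H$-regular submanifold and Borel sets $N_j\subset\H^n$ (the exceptional sets provided by that theorem) with $\Shaus^{Q-k}(N_j)<2^{-j}$, such that $\gr_\f\setminus N_j\subset\gr_{\psi_j}$ and $\Tan^\H_{\gr_\f}(p)=\Tan^\H_{\gr_{\psi_j}}(p)$ for every $p\in\gr_\f\setminus N_j$. Fix $j$ and a Borel set $E\subset\gr_\f\setminus N_j$. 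Every $p\in E$ has the form $p=w\f(w)=w\psi_j(w)$ with the same $w\in\W$ (the $\W$-component of $p$); hence $\f=\psi_j$ on $\F^{-1}(E)$, the graph maps $\F$ and $\Psi_j$ of $\f$ and $\psi_j$ agree there, $\F^{-1}(E)=\Psi_j^{-1}(E)$, and, since $\Tan^\H_{\gr_\f}=\Tan^\H_{\gr_{\psi_j}}$ on $E$, the remark above gives $J^\f\f=J^{\psi_j}\psi_j$ on $\F^{-1}(E)$. Applying \cite[Theorem~1.2]{CorniMagnani} to the $\H$-regular intrinsic graph $\gr_{\psi_j}$ (this is where the orthogonality of $\W$ and $\V$ enters) with the Borel function $\mathbf 1_E$ we obtain
\[
\Shaus^{Q-k}(E)=C_{n,k}\int_{\Psi_j^{-1}(E)}J^{\psi_j}\psi_j\,d\leb{2n+1-k}=C_{n,k}\int_{\F^{-1}(E)}J^\f\f\,d\leb{2n+1-k}.
\]

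Now set $N:=\bigcap_{m\in\N}\bigcup_{j\geq m}N_j$, so that $\Shaus^{Q-k}(N)=0$ and $\gr_\f\setminus N\subset\bigcup_j(\gr_\f\setminus N_j)$. Writing $\gr_\f\setminus N$ as a countable disjoint union of Borel sets, each contained in some $\gr_\f\setminus N_j$, and adding the identities above, we get for every Borel $E\subset\gr_\f$
\[
\Shaus^{Q-k}(E)=\Shaus^{Q-k}(E\setminus N)=C_{n,k}\int_{\F^{-1}(E)\setminus\F^{-1}(N)}J^\f\f\,d\leb{2n+1-k}.
\]
The only thing left is to check that $\leb{2n+1-k}(\F^{-1}(N))=0$, after which the displayed measure identity — and hence the theorem — follows. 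This is the one point not already contained in Theorem~\ref{thm:Lusinbreve} and \cite{CorniMagnani}: it reduces to the estimate $\leb{2n+1-k}(\F^{-1}(S))\le C\,\Shaus^{Q-k}(S)$ for Borel $S\subset\H^n$, i.e.\ to the lower Ahlfors $(Q-k)$-regularity of intrinsic Lipschitz graphs (see \cite{FSSCJNCA}); alternatively, for the particular set $N=\bigcap_m\bigcup_{j\ge m}N_j$ the bound is immediate once one uses, as in the full statement of Theorem~\ref{thm:Lusin}, that the $\psi_j$ may be chosen so that $\{w\in\W:\f(w)\neq\psi_j(w)\}$ has $\leb{2n+1-k}$-measure $<2^{-j}$. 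I expect this last step — reconciling $\leb{2n+1-k}$-negligibility on $\W$ with $\Shaus^{Q-k}$-negligibility on $\gr_\f$ — to be the only genuinely delicate point; the rest is a routine transfer of \cite[Theorem~1.2]{CorniMagnani} along the Lusin approximation.
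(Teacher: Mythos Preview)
Your proposal is correct and follows essentially the same strategy as the paper: reduce to $A=\W$ via extension, invoke the Lusin-type theorem to approximate $\gr_\f$ by $\H$-regular graphs with matching tangents, and import the area formula from \cite{CorniMagnani} (stated in the paper as Theorem~\ref{teo:Hreg_grafici_area}) on the coincidence set. The only organizational difference is that the paper fixes a single $\ep>0$, takes one approximating $\psi$ from the full Lusin theorem (which controls \emph{both} the $\Shaus^{Q-k}$-error on the graph and the $\leb{2n+1-k}$-error on $\W$), and bounds the discrepancy directly using the uniform bound $J^\f\f\le C$ from Lemma~\ref{lem:nablaffijminoredialfa}; you instead take a sequence $\psi_j$ and pass to a countable decomposition, which forces you to argue separately that $\leb{2n+1-k}(\F^{-1}(N))=0$. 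As you correctly note, that last point follows from the Ahlfors regularity recorded in Remark~\ref{rem:Ahlfors} (the inequality $\F_\#(\leb{2n+1-k}\res\W)\le C_4\,\Shaus^{Q-k}\res\gr_\f$), so there is no gap.
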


By abuse of notation, $\leb{2n+1-k}$ denotes the Haar measure on $\W$ associated with the canonical identification of $\W$ with $\R^{2n+1-k}$ induced by exponential coordinates.  It is  worth observing that Theorem~\ref{thm:formulaarea} and Proposition~\ref{prop:correnti_cl_vs_H_INTRO} are the only points where we use the rotational invariance of the distance $d$: in case of  general distances, area formulae for intrinsic Lipschitz graphs can be easily deduced using Theorem~\ref{thm:Lusinbreve} and~\cite[Theorem~1.2]{CorniMagnani}, but they are slightly more complicated than ours, as they involve a certain {\em area factor} that depends on the  tangent plane to the graph.

\subsection{Equivalent definition, extension and approximation of intrinsic Lipschitz graphs}
We now introduce two of the ingredients needed in the proof of  Theorem~\ref{thm:rademacher} that are of independent interest: namely, an extension theorem for intrinsic Lipschitz graphs  in the spirit of the classical McShane-Whitney Theorem, and an approximation result by means of smooth graphs. They are stated in Theorems~\ref{thm:estensione} and~\ref{teo:approssimiamoooooooo} and are both based on a new, equivalent definition of intrinsic Lipschitz graphs, Theorem~\ref{teo:equivdefLip}, that can be regarded as another contribution of this paper. 

Our alternative definition of intrinsic Lipschitz graphs appeared in~\cite{VAnnSNS} for graphs of codimension 1; it can be seen as a generalization of the original level-set definition of $\H$-regular submanifolds. Observe however that it is not immediate  to give a level-set definition even for Lipschitz submanifolds of codimension 1 in $\R^n$: in fact, {\em every} closed set $S\subset\R^n$ is the level set of some Lipschitz function -- for instance,  the distance from $S$. Anyway, we leave as an exercise to the reader the following observation, that was actually the starting point of~\cite{VAnnSNS}: a set $S\subset\R^n=\R^{n-1}\times\R$ is (contained in) the graph of a Lipschitz function $\f:\R^{n-1}\to\R$ if and only if there exist $\de>0$ and a Lipschitz function $f:\R^n\to\R$ such that  $S\subset\{x\in\R^n:f(x)=0\}$ and $\frac{\partial f}{\partial{x_n}}\geq\de$ a.e. on $\R^n$.

Since their proofs present no extra difficulty  with respect to the Heisenberg case, Theorems~\ref{teo:equivdefLip},~\ref{thm:estensione} and~\ref{teo:approssimiamoooooooo} are stated in the more general setting of a  {\em Carnot group} $\G$ where two homogeneous complementary subgroups $\W,\V$ are fixed with $\V$ horizontal: this means that $\V\subset\exp(\galg_1)$, where $\galg_1$ is the first layer in the stratification of the Lie algebra of $\G$. When $\V$ is horizontal, we say that an intrinsic Lipschitz graph $\f:\W\to\V$ is {\em co-horizontal}, see~\cite{ADDDLD}. Observe that $\V$ is necessarily Abelian and there exists a homogeneous isomorphism according to which we can identify $\V$ with  $\R^k$, see~\eqref{eq:VVVVVVVV}: this identification is understood in the scalar product appearing in~\eqref{eq:(b)2} below.

\begin{theorem}\label{teo:equivdefLip}
Assume that a splitting $\G=\W\V$ is fixed in such a way that the subgroup $\V$ is horizontal; set $k:=\dim\V$. 
If $S\subset\G$ is not empty, then the following statements are equivalent:
\begin{itemize}
\item[(a)] there exist $A\subset\W$ and an intrinsic Lipschitz map $\f:A\to\V$  such that $S=\gr_\f$;
\item[(b)] there exist $\de>0$ and a Lipschitz map $f:\G\to\R^k$ such that
\begin{align}
& S\subset \{x\in\G:f(x)=0\}
\label{eq:(b)1}
\\
\text{and}\quad& \langle f(xv)-f(x),v\rangle \geq \de|v|^2\qquad\text{for every }v\in\V\text{ and }x\in\G.
\label{eq:(b)2}
\end{align}
\end{itemize}
\end{theorem}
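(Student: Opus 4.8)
The plan is to prove the two implications separately, with the direction (b)$\Rightarrow$(a) being the more delicate one. For (a)$\Rightarrow$(b), suppose $S=\gr_\f$ for an intrinsic Lipschitz $\f:A\to\W$. Using the identification $\V\equiv\R^k$ and the product structure $\G=\W\V$, each $x\in\G$ can be written uniquely as $x=\pi_\W(x)\,\pi_\V(x)$ with $\pi_\W(x)\in\W$, $\pi_\V(x)\in\V\equiv\R^k$. The first attempt at the defining function would be something like $f(x):=\pi_\V(x)-\f(\pi_\W(x))$, but this is not defined on all of $\G$ (only where $\pi_\W(x)\in A$) and, more importantly, $\f$ is only intrinsic Lipschitz, not Euclidean Lipschitz, so a naive formula need not give a Lipschitz $f$. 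The correct route is: first invoke the intrinsic Lipschitz extension theorem (Theorem~\ref{thm:estensione}) to assume without loss of generality $A=\W$; then exploit the cone condition. The key geometric fact is that $\gr_\f\cap(p\,\cono_\al)=\{p\}$ for all $p\in\gr_\f$ means precisely that points of $\G$ are separated from the graph in the $\V$-direction at a linear rate: I would define $f(x)$ essentially as (a regularized version of) the signed $\V$-displacement needed to bring $x$ onto $\gr_\f$ along vertical fibers, and show that the cone condition forces the estimate \eqref{eq:(b)2} with $\de$ comparable to $1/\al^2$, while the intrinsic Lipschitz bound on $\f$ (equivalently, the cone opening) forces $f$ to be Euclidean Lipschitz. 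Here one uses that $\V$ is horizontal and Abelian, so that the map $v\mapsto xv$ is, in the relevant coordinates, an honest translation in the $\R^k$ factor, which makes the pairing $\langle f(xv)-f(x),v\rangle$ behave monotonically.

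For the converse (b)$\Rightarrow$(a), assume $f:\G\to\R^k$ is Lipschitz and satisfies \eqref{eq:(b)1}, \eqref{eq:(b)2}. Condition \eqref{eq:(b)2} says that for each fixed $x$, the function $v\mapsto\langle f(xv),v\rangle$ is $\de$-strongly monotone on $\V\equiv\R^k$; more usefully, fixing $w\in\W$ and varying $v$ over the fiber $w\V$, the map $v\mapsto f(wv)$ from $\R^k$ to $\R^k$ is a Lipschitz strongly monotone map, hence a bijection of $\R^k$ onto $\R^k$ (by, e.g., Browder--Minty, or elementarily since strong monotonicity plus Lipschitz gives a bi-Lipschitz homeomorphism onto a set that is both open and closed). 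Therefore for each $w\in\W$ there is a unique $v=\f(w)\in\V$ with $f(w\f(w))=0$, i.e.\ $\{f=0\}$ is exactly $\gr_\f$ for this well-defined $\f:\W\to\V$; and since $S\subset\{f=0\}$, $S=\gr_\f$ restricted to some $A\subset\W$ (one checks $A$ is as required, or again extends). It remains to verify that $\f$ is intrinsic Lipschitz, i.e.\ satisfies the cone condition. This is where I expect the main obstacle: one must translate the algebraic inequalities \eqref{eq:(b)2} plus the Euclidean Lipschitz bound on $f$ into the intrinsic cone property $\gr_\f\cap(p\,\cono_\al)=\{p\}$. The strategy is to argue by contradiction: if the cone condition fails for every $\al$, one finds pairs of graph points $p=w\f(w)$, $p'=w'\f(w')$ with $\|\pi_\W(p^{-1}p')\|_\H$ small compared to $\|\pi_\V(p^{-1}p')\|_\H$; writing out $f(p')-f(p)=0$ and inserting an intermediate point obtained by moving from $p$ purely in the $\V$-fiber to the point $p(p^{-1}p')_\V$ (using that $\V$ is a subgroup, so this is $p''=w\,\tilde v$ with $\tilde v\in\V$), one estimates $f(p')-f(p'')$ by the Euclidean Lipschitz constant times $d$-distance (controlled by $\|\pi_\W\|_\H$), and $f(p'')-f(p)$ from below by \eqref{eq:(b)2}; pairing with the appropriate vector and comparing the two bounds yields $\|\pi_\V\|_\H\lesssim\|\pi_\W\|_\H$, the desired cone inequality with $\al$ depending only on $\de$ and $\mathrm{Lip}(f)$.

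The technical heart on both sides is thus the same dictionary: in coordinates adapted to the splitting $\G=\W\V$ with $\V\subset\exp(\galg_1)$ horizontal and Abelian, left translation by an element of $\V$ acts as a Euclidean translation on the $\R^k\equiv\V$ slot (up to harmless lower-order group terms that do not affect the $\V$-component), so that the group-theoretic objects ($\cono_\al$, $\gr_\f$, the splitting projections) admit clean Euclidean descriptions along $\V$-fibers, and the quadratic-in-$v$ form of \eqref{eq:(b)2} is exactly what matches the homogeneity $\|w\|_\H\leq\al\|v\|_\H$ in the cone. I would isolate this coordinate computation as a preliminary lemma. The extension theorem (Theorem~\ref{thm:estensione}) is used freely to pass between $A\subsetneq\W$ and $A=\W$ in both directions, so that I only ever need to deal with globally defined $\f$ and $f$; and the approximation Theorem~\ref{teo:approssimiamoooooooo} is not needed here — it is, rather, a consequence of this equivalence, obtained by mollifying the function $f$ from (b) and checking that \eqref{eq:(b)2} is stable under convolution (which it is, being a linear-in-$f$ inequality), so that the mollified defining functions still satisfy (b) and hence define smooth intrinsic Lipschitz graphs converging to $\gr_\f$.
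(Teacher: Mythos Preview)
Your argument for (b)$\Rightarrow$(a) is essentially the paper's: the strong monotonicity along $\V$-fibers gives a unique zero on each fiber (this is the paper's Lemma~\ref{lem:zeriLip}), and the cone condition follows by writing $y=xvz$ with $z$ small and pairing $f(y)=f(y)-f(x)$ with $v$, splitting into a coercive piece $\langle f(xv)-f(x),v\rangle\geq\de|v|^2$ and a Lipschitz-controlled piece $|\langle f(xvz)-f(xv),v\rangle|\leq L\,d(0,z)\,|v|$. The paper makes this explicit via the auxiliary cone $D_\la=\bigcup_{v\in\V}\overline{B(v,\la\|v\|_\G)}$, but your contradiction argument is the same mechanism.

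The direction (a)$\Rightarrow$(b), however, has a genuine gap. You invoke the extension Theorem~\ref{thm:estensione} to reduce to $A=\W$, but in the paper that theorem is a \emph{corollary} of Theorem~\ref{teo:equivdefLip} (see Remark~\ref{rem:graficoglobale} and the proof of Theorem~\ref{thm:estensione}): one first produces $f$ as in (b), and then the level set $\{f=0\}$ is the graph of the desired global extension. Using the extension theorem inside the proof of Theorem~\ref{teo:equivdefLip} is therefore circular. The construction of $f$ must work directly for an arbitrary $A\subset\W$, and your description (``a regularized version of the signed $\V$-displacement needed to bring $x$ onto $\gr_\f$'') does not supply this: when $A\neq\W$ there is no displacement to speak of on fibers that miss the graph, and on fibers that do hit it, $\f$ is only intrinsic Lipschitz, so the naive $\pi_\V(x)-\f(\pi_\W(x))$ need not be Lipschitz on $\G$.

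The paper's route is quite different. For each coordinate $i\in\{1,\dots,k\}$ one builds an explicit $1$-homogeneous Lipschitz ``cone function'' $f_{i,\be,\ep}:\G\to\R$ (Lemma~\ref{lem:lemmacono}) with $\{f_{i,\be,\ep}\geq 0\}$ contained in a cone adapted to the $i$-th axis and with $X_if_{i,\be,\ep}\in[1,3]$, $X_\ell f_{i,\be,\ep}\in[\ep,3\ep]$ for $\ell\neq i$. Then one defines $f_{i,\ep}(x):=\sup_{y\in\gr_\f}f_{i,\be,\ep}(y^{-1}x)$ (Lemma~\ref{lem:lemmafunzione}): this sup of left-translates is Lipschitz, vanishes on $\gr_\f$ by the cone condition, and inherits the same derivative bounds. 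Assembling $f:=(f_{1,\ep},\dots,f_{k,\ep})$ and choosing $\ep$ small (of order $1/k^2$) forces the matrix $\mathrm{col}[X_1f|\dots|X_kf]$ to be uniformly elliptic, which is~\eqref{eq:(b)2} with $\de=1/2$. This sup-of-cones construction is the missing idea in your proposal; it is what simultaneously handles arbitrary $A$ and produces a globally Lipschitz $f$ without any prior extension.
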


It is worth remarking that, if $X_1,\dots, X_k\in\galg_1$ are such that $\V=\exp(\Span\{X_1,\dots,X_k\})$, then statement~\eqref{eq:(b)2} is equivalent to the a.e.~uniform ellipticity (a.k.a.~coercivity) of the matrix col$[\,X_1f(x)\,|\,\dots\,|\,X_k f(x)\,]$; see Remark~\ref{rem:coercivoderivate}. 
In case $k=1$, Theorem~\ref{teo:equivdefLip} was proved in \cite[Theorem~3.2]{VAnnSNS}. 

Let us underline two of the most interesting features of this alternative definition. First, it allows for a definition of co-horizontal intrinsic Lipschitz submanifolds in the more general setting of Carnot-Carath\'eodory spaces, as in~\cite{VAnnSNS}. Second, it gives  {\em gratis} an extension result for intrinsic Lipschitz maps: in fact (Remark~\ref{rem:graficoglobale}), the level set $\{x\in\G:f(x)=0\}$ appearing in~\eqref{eq:(b)1} is the graph of an intrinsic Lipschitz map that is defined on the whole $\W$ and extends $\f$. We can then state the following result.

\begin{theorem}\label{thm:estensione}
Let $A\subset\W$ and $\f:A\to\V$ be a co-horizontal intrinsic Lipschitz graph; then there exists an intrinsic Lipschitz extension $\tilde\f:\W\to\V$ of $\f$. Moreover, $\tilde \f$ can be chosen in such a way that its intrinsic Lipschitz constant is controlled in terms of the intrinsic Lipschitz constant of $\f$.
\end{theorem}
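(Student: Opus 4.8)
The plan is to deduce the statement as a direct corollary of the equivalence (a)$\Leftrightarrow$(b) in Theorem~\ref{teo:equivdefLip}, together with the elementary observation that the zero level set produced in~\eqref{eq:(b)1}--\eqref{eq:(b)2} is \emph{itself} an intrinsic Lipschitz graph over all of $\W$. So the first step is: starting from the co-horizontal intrinsic Lipschitz graph $\f:A\to\V$, invoke the implication (a)$\Rightarrow$(b) of Theorem~\ref{teo:equivdefLip} to obtain $\de>0$ and a Lipschitz map $f:\G\to\R^k$ with $\gr_\f\subset\{f=0\}$ and $\langle f(xv)-f(x),v\rangle\ge\de|v|^2$ for all $x\in\G$, $v\in\V$.

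The second step is to show that $S':=\{x\in\G:f(x)=0\}$ is the graph of a single-valued intrinsic Lipschitz map $\tilde\f:\W\to\V$. For existence and uniqueness of $\tilde\f(w)$: fix $w\in\W$; since $\G=\W\V$ uniquely, points of $S'$ of the form $wv$ with $v\in\V$ correspond to zeros of the map $v\mapsto f(wv)$ on $\V\simeq\R^k$. Condition~\eqref{eq:(b)2} forces this map to be strictly monotone (coercive) in the sense $\langle f(wv_2)-f(wv_1),v_2-v_1\rangle\ge\de|v_2-v_1|^2$ — here I use that $v_2=(v_1)(v_1^{-1}v_2)$ and $\V$ is Abelian, so the increment identity applies with base point $x=wv_1$ — whence $v\mapsto f(wv)$ is injective, giving at most one zero; and the same coercivity plus continuity (a Lipschitz map with a coercive-type lower bound is a homeomorphism of $\R^k$, e.g.\ by a standard degree/Minty argument) gives exactly one zero. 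Thus $\tilde\f:\W\to\V$ is well defined with $\gr_{\tilde\f}=S'\supset\gr_\f$, so $\tilde\f$ extends $\f$. Then apply the implication (b)$\Rightarrow$(a) of Theorem~\ref{teo:equivdefLip} with $S=S'$ (which holds trivially with the same $\de$ and the same $f$) to conclude that $\tilde\f$ is intrinsic Lipschitz.

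The third and final step is the quantitative control of the intrinsic Lipschitz constant of $\tilde\f$. This is the only point requiring care, and I expect it to be the main obstacle: one must check that the constants in the proof of Theorem~\ref{teo:equivdefLip} are effective. Concretely, in the passage (a)$\Rightarrow$(b) the pair $(\de,\mathrm{Lip}(f))$ should be produced with $\de/\mathrm{Lip}(f)$ bounded below by a function of the intrinsic Lipschitz constant $L$ of $\f$ alone (and of $\G$, $\W$, $\V$); and in the reverse passage (b)$\Rightarrow$(a) the intrinsic Lipschitz constant of the resulting graph should be bounded above by a function of $\mathrm{Lip}(f)/\de$ alone. If Theorem~\ref{teo:equivdefLip} is proved with these effective dependencies — which, given the constructive nature of such cone-to-level-set arguments, it should be — then chaining the two estimates yields $\mathrm{Lip}^{\mathrm{intr}}(\tilde\f)\le C(L,\G,\W,\V)$, completing the proof. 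I would therefore either quote the relevant quantitative versions of Theorem~\ref{teo:equivdefLip} directly or, if only the qualitative statement is recorded there, add a remark tracking the constants through its proof.
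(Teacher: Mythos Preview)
Your proposal is correct and follows essentially the same route as the paper: apply (a)$\Rightarrow$(b) of Theorem~\ref{teo:equivdefLip} to produce $f$, observe that $\{f=0\}$ is an entire intrinsic Lipschitz graph extending $\gr_\f$, and invoke the quantitative dependencies in both directions. The paper just streamlines your Steps~2 and~3 by citing Remark~\ref{rem:graficoglobale} (the fact that $\{f=0\}$ is a global graph was already established in Step~2 of the proof of Theorem~\ref{teo:equivdefLip}, via Lemma~\ref{lem:zeriLip}) and Remark~\ref{rem:dipendenzaalfa} (the effective constants you ask for are recorded there).
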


Theorem~\ref{thm:estensione} was proved in \cite[Proposition~3.4]{VAnnSNS} for the case of codimension $k=1$; see also \cite{FSSCJGA-Diff,FSJGA,NaorYoungAnnals2018,Rigot2019quantitative}. 

In Proposition~\ref{prop:equivdefLipCinfty} we use a standard approximation argument based on group convolutions (see e.g.~\cite[\S1.B]{FS}) to show that the function $f$ appearing in Theorem~\ref{teo:equivdefLip} can be chosen with the additional property that $f\in C^\infty(\{x\in\G:f(x)\neq 0\})$. This fact has the following consequence.

\begin{theorem}\label{teo:approssimiamoooooooo}
Let $A\subset\W$ and $\f:A\to\V$ be a co-horizontal intrinsic Lipschitz graph. Then there exists a sequence $(\f_i)_{i\in\N}$  of  $C^\infty$-regular and  intrinsic Lipschitz maps $\f_i:\W\to\V$ such that
\[
\f_i\to\f\text{ uniformly in }A\text{ as }i\to\infty\,.
\]
Moreover, the intrinsic Lipschitz constant of $\f_i$ is bounded, uniformly in $i$, in terms of the intrinsic Lipschitz constant of $\f$.
\end{theorem}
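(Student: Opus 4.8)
The plan is to construct the approximating graphs as nonzero level sets $\{f=c\}$ of a defining function $f$ that has been smoothed away from the graph $\{f=0\}$ itself, and to let $c\to0$. First I would apply Theorem~\ref{teo:equivdefLip} together with Remark~\ref{rem:graficoglobale} and Proposition~\ref{prop:equivdefLipCinfty}: these produce a constant $\de>0$ and a Lipschitz map $f:\G\to\R^k$ with $f\in C^\infty(\{x\in\G:f(x)\neq0\})$, satisfying~\eqref{eq:(b)2}, and such that $\{f=0\}=\gr_{\tilde\f}$, where $\tilde\f:\W\to\V$ is an intrinsic Lipschitz extension of $\f$ (as provided by Theorem~\ref{thm:estensione}, or directly by Remark~\ref{rem:graficoglobale}). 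For $c\in\R^k\setminus\{0\}$ I set $g_c:=f-c$. Since $g_c(xv)-g_c(x)=f(xv)-f(x)$, the map $g_c$ is Lipschitz with the same Lipschitz constant as $f$ and satisfies~\eqref{eq:(b)2} with the very same constant $\de$, while $\{g_c=0\}=\{f=c\}$. The implication (b)$\Rightarrow$(a) of Theorem~\ref{teo:equivdefLip}, applied to $g_c$, then yields an intrinsic Lipschitz map $\f_c:\W\to\V$, defined on the whole $\W$, with $\gr_{\f_c}=\{f=c\}$; tracing the quantitative dependences in the proofs of Theorems~\ref{teo:equivdefLip} and~\ref{thm:estensione}, the intrinsic Lipschitz constant of $\f_c$ is bounded in terms of $\de$ and $\mathrm{Lip}(f)$ only, hence uniformly in $c$ and ultimately in terms of the intrinsic Lipschitz constant of $\f$.

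Next I would check that $\f_c$ is of class $C^\infty$ whenever $c\neq0$. Indeed $\gr_{\f_c}=\{f=c\}\subset\{f\neq0\}$, so $f$ is of class $C^\infty$ in a neighbourhood of every point $p=w\f_c(w)$ of the graph. Fix $w\in\W$; using the identification $\V\cong\R^k$ and the fact that $\V$ is abelian, apply~\eqref{eq:(b)2} at the point $wv$ with increment $v'\in\V$ to get $\langle f(w(v+v'))-f(wv),v'\rangle\ge\de|v'|^2$, and let $v'\to0$: the differential of the smooth map $v\mapsto f(wv)$, $\V\to\R^k$, is everywhere $\ge\de\,\mathrm{Id}$ in the sense of quadratic forms, hence invertible. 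The classical implicit function theorem applied to $v\mapsto f(wv)-c$ near $v=\f_c(w)$ then shows that $\f_c$ is $C^\infty$ in a neighbourhood of $w$; since $w$ is arbitrary, $\f_c\in C^\infty(\W,\V)$.

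Finally I would estimate the distance between $\f_c$ and $\tilde\f$. Applying~\eqref{eq:(b)2} at $x=w\f_c(w)$ with the increment $v\in\V$ such that $\f_c(w)v=\tilde\f(w)$ (so that $xv=w\tilde\f(w)$, again using that $\V$ is abelian), and using $f(w\tilde\f(w))=0$ and $f(w\f_c(w))=c$, one gets $\langle-c,\,v\rangle\ge\de|v|^2$, whence $|v|=|\tilde\f(w)-\f_c(w)|\le|c|/\de$ for every $w\in\W$. Choosing $c_i\to0$ with $c_i\neq0$ and setting $\f_i:=\f_{c_i}$, the maps $\f_i:\W\to\V$ are $C^\infty$-regular, intrinsic Lipschitz with uniformly bounded intrinsic Lipschitz constant by the above, and $\sup_\W|\f_i-\tilde\f|\le|c_i|/\de\to0$; since $\tilde\f=\f$ on $A$, this proves $\f_i\to\f$ uniformly in $A$, as desired.

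The step I expect to be the main obstacle is the regularity claim together with its compatibility with~\eqref{eq:(b)2}: one must make sure that the smoothing of the defining function carried out in Proposition~\ref{prop:equivdefLipCinfty} leaves both the ellipticity~\eqref{eq:(b)2} and the Lipschitz constant under control, and that~\eqref{eq:(b)2} is precisely what makes the partial differential along $\V$ invertible so that the implicit function theorem applies; the rest is bookkeeping. A variant avoiding the level $c$ altogether is to smooth $f$ directly by a \emph{left} group convolution $f_\ep:=\rho_\ep*f$: left convolution commutes with the right increments $v\in\V$ occurring in~\eqref{eq:(b)2} and does not increase the Lipschitz constant, so $f_\ep$ is $C^\infty$ on all of $\G$, satisfies~\eqref{eq:(b)2} with the same $\de$, and $\|f_\ep-f\|_\infty\le\mathrm{Lip}(f)\,\ep$; the graphs $\{f_\ep=0\}$ then yield the maps $\f_i$ in exactly the same fashion.
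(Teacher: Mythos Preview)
Your proof is correct and follows essentially the same route as the paper: take the defining function $f$ from Proposition~\ref{prop:equivdefLipCinfty}, define $\f_i$ via the nonzero level sets $\{f=c_i\}$ with $c_i\to0$, and deduce uniform convergence directly from the coercivity~\eqref{eq:(b)2}. The only cosmetic difference is that the paper invokes the implicit function theorem after passing to exponential coordinates of the second type (in which $X_j=\partial_{x_j}$ for $j\le k$), whereas you obtain invertibility of the $\V$-differential directly from~\eqref{eq:(b)2}; both arguments are fine.
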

 
A similar result has been proved in~\cite{CittiMPSC} for intrinsic Lipschitz graphs of codimension 1 in Heisenberg groups; see also~\cite{ASCV,ADDDLD,MVMathZ12,VAnnSNS}.

\subsection{Currents and the Constancy Theorem}\label{subsec:currentsconstancy_intro}
As in the classical setting, currents in Heisenberg groups are defined in duality with spaces of smooth forms with compact support; here, however, the De Rham complex must be replaced by the complex introduced by M.~Rumin~\cite{RuminComptesRendus1990,Rumin} in the setting of contact manifolds. The construction of the spaces $\Omega_\H^m$ of {\em Heisenberg differential $m$-forms} is detailed in \S\ref{subsec:Rumin}; here we only recall that, for $1\leq k\leq n$,  Heisenberg forms of codimension $k$ are smooth functions on $\H^n$ with values in a certain subspace $\mathcal J^{2n+1-k}$
of $(2n+1-k)$-covectors.
We denote by $\mathcal J_{2n+1-k}$ the (formal) dual of $\mathcal J^{2n+1-k}$; clearly, every $(2n+1-k)$-vector $t$ canonically induces an element $[t]_\mathcal J\in\mathcal J_{2n+1-k}$ defined by $[t]_\mathcal J(\la):=\langle\, t\mid\la\,\rangle$, where $\langle \,\cdot\mid\cdot\,\rangle$ is the standard pairing vectors-covectors. 
See \S\ref{subsec:algebraHeis} and \S\ref{subsec:Rumin} for more details.

The starting point of the theory of Heisenberg currents is the existence of a linear second-order   operator $D:\Omega_\H^n\to\Omega_\H^{n+1}$ such that the sequence
\[
0\to\R\to\Omega_\H^0\stackrel{d}{\to}\Omega_\H^1\stackrel{d}{\to}\dots\stackrel{d}{\to}\Omega_\H^n\stackrel{D}{\to}\Omega_\H^{n+1}\stackrel{d}{\to}  \dots   \stackrel{d}{\to}\Omega_\H^{2n+1}\to 0
\]
is exact, where $d$ is (the operator induced by) the standard exterior derivative. A {\em Heisenberg $m$-current} $\Tcurr$ is by definition a continuous linear functional on the space $\DH^m\subset\Omega_\H^m$ of Heisenberg $m$-forms with compact support. The {\em boundary} $\partial\Tcurr$ of  $\Tcurr$ is the Heisenberg $(m-1)$-current defined, for every $\omega\in \DH^{m-1}$, by
\[
\begin{array}{ll}
\partial \Tcurr(\omega):=\Tcurr(d\omega)\quad&\text{if }m\neq n+1\\
\partial \Tcurr(\omega):=\Tcurr(D\omega)\quad&\text{if }m= n+1.
\end{array}
\]
We say that $\Tcurr$ is  {\em locally normal} if both $\Tcurr$ and $\partial\Tcurr$ have locally finite mass, i.e., if they have order 0 in the sense of distributions. Recall that, if $\Tcurr$ has locally finite mass, then there exist a Radon  measure $\mu$ on $\H^n$ and a locally $\mu$-integrable function $\tau$ with values in a suitable space of multi-vectors (that, for $m\geq n+1$, is precisely $\mathcal J_m$) such that $\Tcurr=\tau\mu$, where
\[
\tau\mu(\omega):=\int_{\H^n}\langle\,\tau(p)\mid\omega(p)\,\rangle\,d\mu(p)\qquad\text{for every }\omega\in\DH^m.
\]
One can also assume that $|\tau|=1$ $\mu$-a.e., where $|\cdot|$ denotes some fixed norm on multi-vectors\footnote{More precisely, when $m\geq n+1$ one needs  a norm on  $\mathcal J_m$.}: in this case we write $\tauT$  
and $\|\Tcurr\|$ in place of $\tau$ and $\mu$, respectively.

Relevant examples of currents will be for us those concentrated on $\H$-rectifiable sets of low codimension. Recall that a set $R\subset\H^n$  is {\em locally $\H$-rectifiable} of codimension $k\in\unoenne$ if $\Shaus^{Q-k}\res R$ is locally finite and $R$  can be covered by countably many $\H$-regular submanifolds of codimension $k$ plus a $\Shaus^{Q-k}$-negligible set. In this case, a (unit) {\em approximate  tangent $(2n+1-k)$-vector} $t^\H_R(p)$ to $R$ can be defined at $\Shaus^{Q-k}$-a.e. $p\in\R$, see \S\ref{subsec:Hreg,Hrettif}; we denote by $\curr R$ the  Heisenberg current $[t^\H_R]_\mathcal J\Shaus^{Q-k}\res R$ naturally associated with $R$. 

A fundamental result in the classical theory of currents is the {\em Constancy Theorem} (see e.g.~\cite[4.1.7]{federer} and~\cite[Theorem~26.27]{Simon}) which states that, if $\Tcurr$ is an  $n$-dimensional current  in $\R^n$ such that $\partial\Tcurr=0$, then $\Tcurr$ is constant, i.e., there exists $c\in\R$ such that $\Tcurr(\omega)=c\int_{\R^n}\omega$ for every smooth $n$-form $\omega$ with compact support. 
A more general version of the Constancy Theorem can be proved for currents supported on an $m$-dimensional plane $\mathscr P\subset\R^n$: if $\Tcurr$ is an $m$-current with support in $\mathscr P$ and such that $\partial \Tcurr=0$, then there exists $c\in\R$ such that $\Tcurr(\omega)=c\int_\mathscr P\omega$ for every smooth $m$-form $\omega$ with compact support. 
For this statement, see e.g.~\cite[Proposition 7.3.5]{KrantzParks}. The following result can be considered as the Heisenberg analogue of this more general Constancy Theorem.

\begin{theorem}\label{thm:Constancy}
Let $k\in\unoenne$ be fixed and let  $\Tcurr$ be a Heisenberg $(2n+1-k)$-current supported on a  vertical plane $\mathscr P\subset\H^n$ of dimension $2n+1-k$. Assume that $\partial\Tcurr=0$; then there exists a constant $c\in\R$ such that $\Tcurr=c\curr{\mathscr P}$.
\end{theorem}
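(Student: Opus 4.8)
The plan is to mimic the classical proof of the Constancy Theorem, in which one tests the current against forms of the type $d\eta$ and deduces that the coefficient function is (locally) constant in the directions tangent to $\mathscr P$; here the extra subtlety is that the relevant exterior derivative is Rumin's operator and that $\Tcurr$ lives in the space $\mathcal J_{2n+1-k}$ of ``Heisenberg multi-vectors''. First I would set up coordinates adapted to $\mathscr P$: since $\mathscr P$ is a vertical plane of dimension $2n+1-k$, after a left-translation (which does not affect the hypothesis $\partial\Tcurr=0$, nor the conclusion, since left-translations map $\curr{\mathscr P}$ to $\curr{\mathscr P}$ up to the obvious relabelling) we may assume $\mathscr P$ is a homogeneous vertical subgroup; it then contains the center $\exp(\h_2)$ and is spanned by $T$ together with $2n-k$ of the horizontal generators $X_j,Y_j$. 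Because $\Tcurr$ has support in $\mathscr P$ and order $0$ (this is automatic: a current with zero boundary supported on a plane has locally finite mass by the same distributional argument as in the classical case, or one can simply reduce to that case), we may write $\Tcurr = \tauT\,\|\Tcurr\|$ with $\|\Tcurr\|$ a Radon measure on $\mathscr P$ and $\tauT$ a $\mathcal J_{2n+1-k}$-valued density; the goal is to show $\tauT\,\|\Tcurr\| = c\,[t^\H_{\mathscr P}]_\mathcal J\,\Shaus^{Q-k}\res\mathscr P$.

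The heart of the argument is to exploit $\partial\Tcurr(\omega)=\Tcurr(d\omega)=0$ for all $\omega\in\DH^{2n-k}$ (note $2n+1-k\neq n+1$ since $k\le n$, so the boundary is taken with the ordinary $d$). Using the convenient basis of Rumin's spaces $\mathcal J^\bullet$ provided earlier in the paper, I would expand a test form $\omega\in\DH^{2n-k}$ in that basis and compute $d\omega$; testing against $\Tcurr$ and letting $\omega$ range over a well-chosen family — in particular forms $\psi(p)\,\xi$ where $\psi\in C^\infty_c$ and $\xi$ is a fixed basis covector of $\mathcal J^{2n-k}$ — yields, for each component, a system of first-order distributional equations on $\mathscr P$ of the form: the directional derivatives of the components of $\tauT\,\|\Tcurr\|$ along the vector fields tangent to $\mathscr P$ vanish, except that the algebraic structure of $d$ on Rumin forms (which mixes the ``horizontal'' exterior derivative with the second-order correction encoded in $D$ only at degree $n\to n+1$, hence not directly here) also forces certain components of $\tauT$ to vanish identically. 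Concretely I expect to show first that $\tauT$ is, $\|\Tcurr\|$-a.e., a scalar multiple of the tangent $(2n+1-k)$-vector $t^\H_{\mathscr P}$ — this is the purely algebraic step, coming from the fact that testing against $d$ of the basis forms not ``aligned'' with $\mathscr P$ produces no cancellation unless those components are zero — and then that the remaining scalar density, together with the measure, is translation-invariant along $\mathscr P$, whence equals a constant times the Haar measure $\Shaus^{Q-k}\res\mathscr P$ by the classical Constancy/Lebesgue-point argument.

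The main obstacle, I expect, is precisely the bookkeeping of Rumin's exterior derivative on the chosen basis of $\mathcal J^{2n-k}$ and $\mathcal J^{2n+1-k}$: one must check that $d$ maps the basis in such a way that the only currents in $\mathcal J_{2n+1-k}$ annihilated by all $d\omega$ are multiples of $[t^\H_{\mathscr P}]_\mathcal J$ — in other words that the ``constancy'' phenomenon survives the quotient/annihilator operations defining Rumin's complex and is not destroyed by the low codimension $k\le n$ (where $\Omega_\H^{2n+1-k}$ consists of honest smooth sections of $\mathcal J^{2n+1-k}\subset\bigwedge^{2n+1-k}$, so $d$ is an ordinary first-order operator, which is the favorable case). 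A secondary technical point is justifying that $\Tcurr$ has order $0$ from $\partial\Tcurr=0$ and support in a plane; this can be done exactly as in \cite[Theorem~26.27]{Simon} and \cite[Proposition 7.3.5]{KrantzParks}, by a cone-construction/homotopy-formula argument, here using that $\mathscr P$ is itself a homogeneous subgroup so that the dilations $\de_\la$ restrict to $\mathscr P$ and provide the needed homotopy. Once these two points are in place, the conclusion $\Tcurr=c\curr{\mathscr P}$ follows by identifying the constant $c$ via a single test form.
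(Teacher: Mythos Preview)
Your overall strategy is on the right track, but there is one concrete error and a structural difference from the paper's argument.

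The error: your parenthetical ``note $2n+1-k\neq n+1$ since $k\le n$'' is false precisely at $k=n$, which is allowed. In that case $\partial\Tcurr(\omega)=\Tcurr(D\omega)$ with $D$ the second-order Rumin operator, so the boundary condition is \emph{not} a first-order constraint on the density. Your later remark that the second-order correction appears ``only at degree $n\to n+1$, hence not directly here'' repeats the same slip. The paper accordingly splits the proof into $1\le k<n$ (test against $d\omega$) and $k=n$ (test against $D\omega$); the latter is where the heaviest bookkeeping lives. A smaller inaccuracy: a vertical plane through $0$ is not in general spanned by $T$ together with a subset of the $X_j,Y_j$; one needs a symplectic normal-form argument (Proposition~\ref{prop:ruotiamosupianicanonici}) to reduce to the model planes $\Pab$.

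The structural difference: the paper does not establish tangency and constancy in one direct pass. It first mollifies $\Tcurr$ by group convolution on $\Pab$ (this, not a cone/homotopy construction, is how regularity is obtained) to reduce to $\Tcurr=\tau\,\Shaus^{Q-k}\res\Pab$ with $\tau$ smooth; then invokes Theorem~\ref{teo:correntinormalirettificabili}---itself proved earlier from the purely algebraic Proposition~\ref{prop:correntipiane} via a blow-up---to get $\tau=\varphi_\tau\,[t^\H_\Pab]_\mathcal J$ for a smooth scalar $\varphi_\tau$; and only then tests against $d_C\omega$ for suitable $\omega$ built from the Young-tableaux basis to show that each horizontal derivative of $\varphi_\tau$ along $\Pab$ vanishes, whence also $T\varphi_\tau=0$. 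Tangency is thus outsourced to an earlier result; your direct route would have to reproduce the content of Proposition~\ref{prop:correntipiane} together with the constancy computation, and for $k=n$ both pieces require computing with $D$.
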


Using a procedure involving projection on planes (see~\cite[Theorem~4.2]{Silhavy}; let us   mention also~\cite[\S5]{AlbertiMarchese} and~\cite{AlbMassStep} for  some related results), 
the (version on planes of the) Constancy Theorem in $\R^n$ has the following consequence: if $R\subset\R^n$ is an $m$-rectifiable set and  $\Tcurr=\tau\mu$ is a  normal $m$-current, where $\mu$ is a Radon measure and $\tau$ is a locally $\mu$-integrable $m$-vectorfield with $\tau\neq0$ $\mu$-a.e., then 
\begin{enumerate}
\item[(i)] $\mu\res R$ is absolutely continuous with respect to the Hausdorff measure $\Haus^m\res R$, and 
\item[(ii)] $\tau$ is tangent to $R$ at $\mu$-almost every point of $R$.
\end{enumerate}
A consequence of this fact, which might help explaining its geometric meaning, is the following one: if $\Tcurr=\tau\mu$ is a normal  current concentrated\footnote{By {\em concentrated} we mean that $\mu(\H^n\setminus R)=0$.} on a rectifiable set $R$, then  $\mu\ll\Haus^m\res R$ and $\tau$ is necessarily tangent to $R$ $\mu$-almost everywhere\footnote{Equivalently: there exists a $\Haus^m\res R$-measurable function $f:R\to\R$ such that $\Tcurr(\omega)=\int_Rf\omega$.}.

In our proof of  Rademacher's Theorem~\ref{thm:rademacher} we will utilize the following result, which is the Heisenberg counterpart of the ``tangency'' property (ii) above; we were not able to deduce any ``absolute continuity'' statement analogous to (i) because no good notion of projection on  planes is available in $\H^n$. Notice however that, in the special case when $\Tcurr$ is concentrated on a vertical plane, Theorem~\ref{thm:Constancy} allows to deduce a complete  result including absolute continuity.

\begin{theorem}\label{teo:correntinormalirettificabili}
Let $k\in\unoenne$ and let a locally normal Heisenberg $(2n+1-k)$-current $\Tcurr$ and a locally $\H$-rectifiable set $R\subset\H^n$ of codimension $k$ be fixed. 
Then
\[
\tauT(p)\text{ is a multiple of }[t^\H_R(p)]_\mathcal J\qquad \text{for $\|\Tcurr\|_a$-a.e. }p.
\] 
\end{theorem}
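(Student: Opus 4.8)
The plan is to blow $\Tcurr$ up at generic points of $R$ and to apply the Heisenberg Constancy Theorem~\ref{thm:Constancy} to the limit. Decompose $\|\Tcurr\|=\|\Tcurr\|_a+\|\Tcurr\|_s$ with $\|\Tcurr\|_a\ll\Shaus^{Q-k}\res R$ and $\|\Tcurr\|_s\perp\Shaus^{Q-k}\res R$, and let $\theta:=d\|\Tcurr\|_a/d(\Shaus^{Q-k}\res R)$, so that $0<\theta<+\infty$ holds $\|\Tcurr\|_a$-a.e. It is enough to prove the assertion at every $p\in R$ belonging to a set of full $\|\Tcurr\|_a$-measure, and I would take the set of those $p$ at which: (i) $0<\theta(p)<+\infty$, $p$ is a Lebesgue point of $\theta$ with respect to $\Shaus^{Q-k}\res R$, and $r^{-(Q-k)}\|\Tcurr\|_s(B(p,r))\to0$ as $r\to0$; (ii) $R$ has at $p$ a vertical approximate tangent plane $\mathscr P$ of dimension $2n+1-k$, with unit approximate tangent vector $t^\H_R(p)$ equal to the constant tangent multivector $t^\H_{\mathscr P}$ of $\mathscr P$, in the sense that $r^{-(Q-k)}(\de_{1/r}\circ\ell_{p^{-1}})_\#(\Shaus^{Q-k}\res R)$ converges weakly-$*$ to $c_0\,\Shaus^{Q-k}\res\mathscr P$ as $r\to0$, for some $c_0>0$; (iii) $p$ is a Lebesgue point of $\tauT$ with respect to $\|\Tcurr\|$; (iv) $\|\partial\Tcurr\|(B(p,r))=o(r^{\sigma})$ as $r\to0$, where $\sigma:=Q-k-1$ if $k<n$ and $\sigma:=n$ if $k=n$. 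Properties (i)--(iii) hold $\Shaus^{Q-k}\res R$-a.e.\ — hence $\|\Tcurr\|_a$-a.e.\ — by the differentiation theory of Radon measures in $\H^n$ and by the blow-up structure theory of $\H$-rectifiable sets. Property (iv) holds $\|\Tcurr\|_a$-a.e.\ because the set $E$ where the upper $\sigma$-dimensional density of the Radon measure $\|\partial\Tcurr\|$ is positive satisfies $\Shaus^{\sigma}(E\cap\{\Theta^{*\sigma}(\|\partial\Tcurr\|,\cdot)\ge t\})\lesssim t^{-1}\|\partial\Tcurr\|$ on bounded sets, hence $E$ is $\Shaus^{\sigma}$-$\sigma$-finite, hence $\Shaus^{Q-k}$-negligible (since $Q-k>\sigma$), hence $\|\Tcurr\|_a$-negligible as $\|\Tcurr\|_a\ll\Shaus^{Q-k}\res R$.

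Now fix $p$ as above and set $\Tcurr_{p,r}:=(\de_{1/r})_\#(\ell_{p^{-1}})_\#\Tcurr$. Left translations and dilations are contact maps, hence preserve Rumin's complex and commute with $\partial$, so $\Tcurr_{p,r}$ is a Heisenberg $(2n+1-k)$-current and $\partial\Tcurr_{p,r}=(\de_{1/r})_\#(\ell_{p^{-1}})_\#(\partial\Tcurr)$. Reading off the anisotropic homogeneity — a Heisenberg $(2n+1-k)$-current carries weight $Q-k$, while its boundary carries weight $\sigma$ (the value $\sigma=n\ne Q-k-1$ in the case $k=n$ being forced by the fact that there $\partial$ is Rumin's second-order operator $D$) — gives, for every $\rho>0$,
\[
\|\Tcurr_{p,r}\|(B(0,\rho))=r^{-(Q-k)}\|\Tcurr\|(B(p,\rho r)),\qquad \|\partial\Tcurr_{p,r}\|(B(0,\rho))=r^{-\sigma}\|\partial\Tcurr\|(B(p,\rho r)).
\]
By (iv) the boundary masses $\|\partial\Tcurr_{p,r}\|(B(0,\rho))$ vanish as $r\to0$. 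By (i)--(ii) the rescaled measures $\|\Tcurr_{p,r}\|$ converge weakly-$*$ to $c_0\theta(p)\,\Shaus^{Q-k}\res\mathscr P$ (the rescaled $\|\Tcurr\|_s$-part vanishing by (i), and the rescaled $\|\Tcurr\|_a=\theta\,\Shaus^{Q-k}\res R$ part tending to $c_0\theta(p)\,\Shaus^{Q-k}\res\mathscr P$ by (i)--(ii)); moreover, by the Lebesgue point property (iii) and because the rescaling transforms the direction multivectors — all of which lie in $\mathcal J_{2n+1-k}$ and so may be written $[T\wedge\xi]_\mathcal J$ with $\xi$ horizontal — only through the scale-independent isomorphism $d\ell_{p^{-1}}$ (the dilation acting on such multivectors by the scalar $r^{-(Q-k)}$ which is absorbed in the normalization), the direction $\tauT_{\Tcurr_{p,r}}$ converges to the unit multivector $\overline v$ obtained by transporting $\tauT(p)$ via $d\ell_{p^{-1}}$. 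Consequently $\Tcurr_{p,r}$ converges weakly-$*$ to $\Tcurr_\infty:=c_0\theta(p)\,\overline v\cdot(\Shaus^{Q-k}\res\mathscr P)$, which satisfies $\partial\Tcurr_\infty=0$ and $\spt\Tcurr_\infty\subseteq\mathscr P$.

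Since $\mathscr P$ is a vertical plane of dimension $2n+1-k$, Theorem~\ref{thm:Constancy} applies and yields $c\in\R$ with $\Tcurr_\infty=c\,\curr{\mathscr P}=c\,[t^\H_{\mathscr P}]_\mathcal J\,\Shaus^{Q-k}\res\mathscr P$. Comparing with $\Tcurr_\infty=c_0\theta(p)\,\overline v\,\Shaus^{Q-k}\res\mathscr P$ and using $c_0\theta(p)>0$, we get that $\overline v$ is a multiple of $[t^\H_{\mathscr P}]_\mathcal J$, necessarily nonzero since $|\overline v|=1$. Finally, since $\mathscr P$ is the blow-up of $\ell_{p^{-1}}(R)$ at the origin, the isomorphism $d\ell_{p^{-1}}$ carries $[t^\H_R(p)]_\mathcal J$ to a nonzero multiple of $[t^\H_{\mathscr P}]_\mathcal J$; undoing the transport, $\tauT(p)$ is a nonzero multiple of $[t^\H_R(p)]_\mathcal J$. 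As $p$ ranges over a set of full $\|\Tcurr\|_a$-measure, the theorem follows.

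The step I expect to be the real obstacle is making the blow-up work inside Rumin's framework: checking that the contact rescalings keep the objects within the class of Heisenberg currents, pinning down correctly the anisotropic scaling of $\Tcurr$, of $\partial\Tcurr$ (including the weight shift caused by the second-order operator $D$ when $k=n$) and of the direction multivectors of $\mathcal J_{2n+1-k}$ — this is exactly where the explicit basis of Rumin's spaces built earlier in the paper is needed — together with, on the geometric side, the approximate-tangent-plane convergence in (ii) for a merely $\H$-rectifiable set. Once the rescaled currents are known to converge to a (closed) Heisenberg current supported on a vertical plane, Theorem~\ref{thm:Constancy} delivers the conclusion.
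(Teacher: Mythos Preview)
Your blow-up strategy is exactly the paper's, but you have a circularity: you invoke Theorem~\ref{thm:Constancy} to identify the limit, while in the paper Theorem~\ref{thm:Constancy} is proved \emph{after} (and using) Theorem~\ref{teo:correntinormalirettificabili}. The fix is easy and in fact already implicit in your argument: your blow-up limit is $c_0\theta(p)\,\overline v\,\Shaus^{Q-k}\res\mathscr P$ with a \emph{constant} $\overline v\in\mathcal J_{2n+1-k}$, so you do not need the full Constancy Theorem --- Proposition~\ref{prop:correntipiane} (the special case of a constant-$\tau$ current on a vertical plane, proved directly from Rumin's calculus and the Young-tableau basis) is precisely what applies, and it is logically prior to Theorem~\ref{teo:correntinormalirettificabili}. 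This is what the paper does.

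A few smaller points. First, the paper reduces at the outset to $R=S$ an $\H$-regular submanifold, which makes your item (ii) concrete via~\eqref{eq:convdebolealtangente} rather than appealing to a general approximate-tangent statement. Second, your condition (iv) is stronger than needed: the paper only uses $\|\partial\Tcurr\|(B(p,r))=O(r^{Q-k})$, which holds $\Shaus^{Q-k}\res S$-a.e.\ by differentiation of the Radon measure $\|\partial\Tcurr\|$ against the Ahlfors-regular $\Shaus^{Q-k}\res S$; combined with the strict inequality $\Delta<Q-k$ for the homogeneity degree $\Delta$ of $(2n-k)$-forms (here is where the second-order nature of $D$ enters when $k=n$), this already forces $\partial\Tcurr_\infty=0$. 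Third, the ``transport via $d\ell_{p^{-1}}$'' is superfluous: the direction $\tauT$ takes values in the fixed space $\mathcal J_{2n+1-k}$ and is unchanged by left translation, so $\overline v=\tauT(p)$ and the conclusion $\tauT(p)\in\R\,[t^\H_R(p)]_\mathcal J$ is immediate once Proposition~\ref{prop:correntipiane} has been applied.
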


In  Theorem~\ref{teo:correntinormalirettificabili} we  decomposed $\|\Tcurr\|=\|\Tcurr\|_a+\|\Tcurr\|_s$ as the sum of the absolutely continuous and singular part of $\|\Tcurr\|$ with respect to $\Shaus^{Q-k}\res R$. Observe that $t^\H_R$ is defined only $\Shaus^{Q-k}$-almost everywhere on $R$, hence it could be undefined on a set with positive $\|\Tcurr\|_s$-measure. 
The geometric content of Theorem~\ref{teo:correntinormalirettificabili} is again clear: for a current $\Tcurr$ concentrated on $R$ to be normal, it is necessary that  $\tauT$ is almost everywhere  tangent to $R$. 

The proof of Theorem~\ref{teo:correntinormalirettificabili} follows a blow-up strategy according to which one can prove that, at $\Shaus^{Q-k}$-a.e. $p\in R$, the current $\tauT(p)\Shaus^{Q-k}\res \Tan^\H_R(p)$ has zero boundary, where $\Tan^\H_R(p)=\exp(\Span t^\H_R(p))$ is the approximate tangent plane to $R$ at $p$; Proposition~\ref{prop:correntipiane} shows that this is possible only if $\tauT(p)$ is a multiple of $[t^\H_R(p)]_\mathcal J$. 
Proposition~\ref{prop:correntipiane} is essentially a simpler version of Theorem~\ref{thm:Constancy}; its classical counterpart can be found for instance in~\cite[Lemma~1 in \S3.3.2]{GiaqModSouc_I}. 
The proof of Proposition~\ref{prop:correntipiane}\footnote{It is worth pointing out that we cannot deduce Proposition~\ref{prop:correntipiane} from Theorem~\ref{thm:Constancy}: in fact, Proposition~\ref{prop:correntipiane} is needed for proving Theorem~\ref{teo:correntinormalirettificabili}, which in turn is needed for the proof of Theorem~\ref{thm:Constancy}.} consists in feeding the given boundaryless current with (the differential of) enough test forms in order to eventually deduce the desired ``tangency'' property. Apart from the computational difficulties pertaining to the second-order  operator $D$ (at least in case $k=n$), one demanding task we had to face was the search for a convenient basis of $\mathcal J^{2n+1-k}$, see the following \S\ref{subsec:baseRuminINTRO}. 

We conclude this section with an important observation. Assume that $S$ is an oriented submanifold of codimension $k$ that is (Euclidean) $C^1$-regular; in particular, the  tangent vector $t^\H_S$ is defined except at characteristic points of $S$, which however are $\Shaus^{Q-k}$-negligible~\cite{balogh,MagJEMS}. Then, on the one side, $S$ induces the natural Heisenberg current $\curr S=[t^\H_S]_\mathcal J \Shaus^{Q-k}\res S$; on the other side, associated to $S$ is also the classical current $\clcurr S$ defined by $\clcurr S(\omega) :=\int_S\omega$ for every $(2n+1-k)$-form $\omega$ with compact support. The following fact holds true provided the homogeneous distance $d$ is rotationally invariant.

\begin{proposition}\label{prop:correnti_cl_vs_H_INTRO}
Let $k\in\unoenne$; then there exists a positive constant $C_{n,k}$, depending on $n,k$ and the rotationally invariant distance $d$, such that for every $C^1$-regular submanifold $S\subset\H^n$ of codimension $k$ 
\begin{equation}\label{eq:curr=clcurr}
\curr S(\omega)=C_{n,k}\clcurr S(\omega)\qquad\text{for every }\omega\in \DH^{2n+1-k}.
\end{equation}
In particular, if $S$ is a submanifold without boundary, then $\partial \curr S=0$ as a Heisenberg $(2n-k)$-current.
\end{proposition}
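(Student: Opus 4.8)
The plan is to reduce the identity~\eqref{eq:curr=clcurr} to a pointwise statement along $S$ and to verify it by combining an algebraic property of Rumin's covectors of codimension $k$ with a density computation in which the rotational invariance of $d$ plays the crucial role. The algebraic input is that, since $2n+1-k\geq n+1$, every $\la\in\mathcal J^{2n+1-k}$ is of the form $\la=\theta\wedge\beta$ for a suitable horizontal $(2n-k)$-covector $\beta$; this is transparent in the description of Rumin's spaces obtained earlier through standard Young tableaux, and in any case follows from the defining relations $\theta\wedge\la=0=d\theta\wedge\la$. Hence, writing a $(2n+1-k)$-vector as $t=T\wedge\xi+\eta$ with $\xi\in{\textstyle\bigwedge^{2n-k}}\h_1$ and $\eta\in{\textstyle\bigwedge^{2n+1-k}}\h_1$, one has $\langle\,t\mid\la\,\rangle=\langle\,\xi\mid\beta\,\rangle$: the pairing with Heisenberg covectors only sees the ``$T$-part'' $\xi$ of $t$. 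In particular, for a vertical plane $\mathscr P$ of dimension $2n+1-k$ the Euclidean and the Heisenberg unit tangent $(2n+1-k)$-vectors have the same $T$-part, so $[t^\H_{\mathscr P}]_\mathcal J=[\vec t_{\mathscr P}]_\mathcal J$ (here $\vec t_{\mathscr P}$ denotes the Euclidean unit tangent); and since $\Shaus^{Q-k}\res\mathscr P$ is a left-invariant measure on $\mathscr P$, hence a constant multiple of the Euclidean measure on $\mathscr P$, a multiple which by the rotational invariance of $d$ is independent of $\mathscr P$, the identity~\eqref{eq:curr=clcurr} holds on every vertical plane with a constant that I would take as the very definition of $C_{n,k}$.

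For a general $C^1$-regular submanifold $S$ I would discard its characteristic set, which is both $\Shaus^{Q-k}$- and $\Haus^{2n+1-k}$-negligible by~\cite{balogh,MagJEMS} (here $\Haus^{2n+1-k}$ is the Euclidean Hausdorff measure), and argue at a non-characteristic $p$, where $T_pS\cap\h_1(p)$ has dimension $2n-k$ and $S$ is locally an $\H$-regular submanifold. Choosing an orthonormal basis $h_1,\dots,h_{2n-k}$ of $T_pS\cap\h_1(p)$ and a unit $w=w_h+cT\in T_pS$ orthogonal to it (so $c\neq 0$ by non-characteristicity and $w_h\in\h_1(p)$), the Heisenberg tangent $\Tan^\H_S(p)=\exp\big(\Span\{T,h_1,\dots,h_{2n-k}\}\big)$ is the blow-up of $S$ at $p$ and has oriented unit tangent $\pm\,T\wedge h_1\wedge\dots\wedge h_{2n-k}$; since $\vec t_S(p)=\pm\,h_1\wedge\dots\wedge h_{2n-k}\wedge w$ has $T$-part $\pm c\,T\wedge h_1\wedge\dots\wedge h_{2n-k}$, this gives $[t^\H_S(p)]_\mathcal J=|c(p)|^{-1}[\vec t_S(p)]_\mathcal J$. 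The same blow-up on the measure side gives $\frac{d(\Shaus^{Q-k}\res S)}{d(\Haus^{2n+1-k}\res S)}(p)=C_{n,k}\,|c(p)|$: the anisotropic dilation $\de_{1/r}$ carries $T_pS$ to planes converging to the vertical plane $\Tan^\H_S(p)$, with the Jacobian of $\de_{1/r}|_{T_pS}$ producing exactly the factor $|c(p)|^{-1}$ in the limit, while $\Shaus^{Q-k}$ is homogeneous of degree $Q-k$ and on the limit plane equals $C_{n,k}$ times the Euclidean measure by the definition of $C_{n,k}$. Plugging the two displayed relations into $\curr S=[t^\H_S]_\mathcal J\,\Shaus^{Q-k}\res S$ and $\clcurr S(\omega)=\int_S\omega=\int_S\langle\,\vec t_S\mid\omega\,\rangle\,d\Haus^{2n+1-k}$, the two occurrences of $|c(p)|$ cancel and~\eqref{eq:curr=clcurr} follows.

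For the final assertion, if $S$ has no boundary and $\omega\in\DH^{2n-k}$, then $\partial\curr S(\omega)=\curr S(d\omega)=C_{n,k}\clcurr S(d\omega)$, reading $D$ in place of $d$ when $k=n$; since the Rumin differential $d\omega$ (resp.\ $D\omega$) is the de Rham differential of a smooth compactly supported form, Stokes' theorem on $S$ yields $\clcurr S(d\omega)=\int_S d\omega=0$, whence $\partial\curr S=0$.

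The main obstacle is the density identity $\frac{d(\Shaus^{Q-k}\res S)}{d(\Haus^{2n+1-k}\res S)}=C_{n,k}\,|c|$, where one has to establish simultaneously that the metric factor relating the sub-Riemannian and the Euclidean surface measures on $S$ is a \emph{universal} constant --- this is precisely where the rotational invariance of $d$ enters, and for a general homogeneous distance this factor would depend on the tangent plane --- and that it comes multiplied by exactly the ``verticality'' $|c|$ of the Euclidean tangent plane, so as to cancel the factor coming from the comparison of the two tangent multivectors. A secondary, more computational difficulty is to fix the orientations and the normalization of the norm on $\mathcal J_{2n+1-k}$ consistently, so that the Heisenberg unit tangent of a vertical plane is genuinely represented by $T\wedge h_1\wedge\dots\wedge h_{2n-k}$ with the $h_i$ orthonormal.
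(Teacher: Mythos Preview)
Your proposal is correct and follows essentially the same route as the paper: both combine the algebraic fact that Rumin covectors annihilate purely horizontal multivectors (Remark~\ref{rem:J2n+1-k}) with the density identity $\Shaus^{Q-k}\res S=C_{n,k}\,|\eta_S|\,\text{vol}_S$, where $|\eta_S|$ is your $|c|$. The paper obtains the latter --- precisely the ``main obstacle'' you flag --- by directly quoting Magnani's area formula~\cite[Theorem~1.2]{MagCEJM}, which is valid under rotational invariance of $d$, rather than arguing by blow-up.
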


In other words, $\curr S$ and $\clcurr S$ coincide, as Heisenberg currents, up to a multiplicative constant. This is remarkable. The first part of the statement of Proposition~\ref{prop:correnti_cl_vs_H_INTRO} is proved in Lemma~\ref{lem:correntiR=H}, while the second one is a consequence of the fact that the operator $D$ is the composition of the differential $d$ with another operator; see Corollary~\ref{cor:senzabordo}.  For the exact value of $C_{n,k}$, see Remark~\ref{rem:valoreCnk}. Proposition~\ref{prop:correnti_cl_vs_H_INTRO} is crucial in the proof of our main result Theorem~\ref{thm:rademacher}.

\subsection{A basis for Rumin's spaces \texorpdfstring{$\mathcal J^{2n+1-k}$}{}}\label{subsec:baseRuminINTRO}
We believe it is worth introducing, at least quickly, the basis of $\mathcal J^{2n+1-k}$ that we use; we need some preliminary notation. Assume that the elements of a finite subset $M\subset\N$ with cardinality $|M|=m$ are arranged (each element of $M$ appearing exactly once) in a tableau  with 2 rows, the first row displaying $\ell\geq \frac m2$ elements $R^1_1,\dots,R^1_\ell$ and the second one displaying $m-\ell\geq 0$ elements $R^2_1,\dots,R^2_{m-\ell}$, as follows
\begin{equation*}
R=\begin{tabular}{|c|c|c|c|ccc}
\cline{1-7}
$R^{1^{\vphantom{2^2}}}_1$ & $R^1_2$ &$\ \cdots\ $& $R^1_{m-\ell}$ &\multicolumn{1}{c|}{$ R_{m-\ell+1}^1$} & \multicolumn{1}{c|}{$\ \cdots\ $} & \multicolumn{1}{c|}{$R_\ell^1$}\\
\cline{1-7}
$R^2_1$ & $R^2_2$ &$\ \cdots\ $& $R^{2^{\vphantom{2^2}}}_{m-\ell}$
\\
\cline{1-4}
\end{tabular}.
\end{equation*}
Such an $R$ is called {\em Young tableau}, see e.g.~\cite{Fulton}. Clearly, $R$ has to be read as a $(2\times \ell)$ rectangular tableau when $\ell=m/2$ while, in case $\ell=m$, we agree that the second row is empty.  Given such an $R$, define the $2\ell$-covector
\[
\al_R:=(dxy_{R^1_1}- dxy_{R^2_1})\wedge(dxy_{R^1_2}- dxy_{R^2_2})\wedge\dots\wedge(dxy_{R^1_{m-\ell}}- dxy_{R^2_{m-\ell}})\wedge dxy_{R^1_{m-\ell+1}}\wedge\dots\wedge dxy_{R^1_\ell},
\]
where for shortness we set $dxy_i:=dx_i\wedge dy_i$; when $\ell=m$ (i.e., when the second row of $R$ is empty) we agree that $\al_R=dxy_{R^1_{1}}\wedge\dots\wedge dxy_{R^1_\ell}$ . One key observation is the fact that,
\begin{equation}\label{eq:wedgiarea0}
\al_R\wedge\sum_{i\in M}dxy_i=0,
\end{equation}
which is essentially a consequence of the equality  $(dxy_i-dxy_j)\wedge(dxy_i+dxy_j)=0$.

Before stating Proposition~\ref{prop:baseJintro} we need some further notation. First, we say that $R$ is a {\em standard Young tableau}  when the elements in each row and each column of $R$ are in increasing order, i.e., when $R^i_j<R^i_{j+1}$ and $R^1_j<R^2_j$. Second, given $I=\{i_1,\dots,i_{|I|}\}\subset\unoenne$ with $i_1<i_2<\dots<i_{|I|}$ we write
\[
dx_I:=dx_{i_1}\wedge\dots\wedge dx_{i_{|I|}},\qquad dy_I:=dy_{i_1}\wedge\dots\wedge dy_{i_{|I|}}
\]
Eventually, we denote by $\theta:=dt+\frac12\sum_{i=1}^n(y_idx_i-x_idy_i)$ the contact form on $\H^n$, which is left-invariant and then can be thought of as a covector in $\bwl^1\h$. Observe that  $\theta$ vanishes on horizontal vectors.

\begin{proposition}\label{prop:baseJintro}
For every $k\in\unoenne$, a basis of $\mathcal J^{2n+1-k}$ is provided by the elements of the form $dx_I\wedge dy_J\wedge \al_R\wedge\theta$ where $(I,J,R)$ ranges among those triples such that
\begin{itemize}
\item $I\subset\unoenne$, $J\subset\unoenne$, $|I|+|J|\leq k$ and  $I\cap J=\emptyset$;
\item $R$ is a standard Young tableau containing the elements of $\unoenne\setminus(I\cup J)$ arranged in two rows of length, respectively, $(2n-k-|I|-|J|)/2$  and   $(k-|I|-|J|)/2$. 
\end{itemize}
\end{proposition}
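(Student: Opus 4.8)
The plan is to establish Proposition~\ref{prop:baseJintro} in two stages: first a counting argument showing the proposed family has the right cardinality $\dim\mathcal J^{2n+1-k}$, and then a linear-independence argument. For the dimension count I would recall from \S\ref{subsec:Rumin} that $\mathcal J^{2n+1-k}$ is (isomorphic to) the quotient $\bigwedge\nolimits^{2n+1-k}\h^*\big/ \mathcal I^{2n+1-k}$, where $\mathcal I^{*}$ is the graded ideal generated by the contact form $\theta$ and its differential $d\theta=-\sum_i dx_i\wedge dy_i=:-\Theta$; equivalently, that $\mathcal J^{2n+1-k}=\{\alpha\in\bigwedge\nolimits^{2n+1-k}\h^*:\ \alpha\wedge\theta=0 \text{ and } \alpha\wedge\Theta=0\}$ (the precise identification is the one fixed earlier in the paper, and whichever one is used, its dimension is known). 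So the first real task is combinatorial: count the triples $(I,J,R)$ satisfying the two bulleted constraints and check this matches $\dim\mathcal J^{2n+1-k}$. Here the constraint is that $|I|+|J|=:j$ has the same parity as $k$ (so that $k-j$ and $2n-k-j$ are both even, allowing the two-row filling of the remaining $n-j$ indices into rows of lengths $n-(k+j)/2$ and $(k-j)/2$), and for fixed disjoint $I,J$ one counts standard Young tableaux of the given rectangular-ish shape $\lambda=(n-(k+j)/2,\,(k-j)/2)$ on $n-j$ entries, whose number is given by the hook length formula. Summing $\binom{n}{|I|}\binom{n-|I|}{|J|}$ times that SYT count over all admissible $|I|,|J|$ should reproduce the known dimension; I would verify this identity directly (it is the kind of Catalan/ballot-number identity that telescopes, and in the border cases $k=1$ and $k=n$ it reduces to something transparent).

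Second, linear independence. The elements $dx_I\wedge dy_J\wedge\alpha_R\wedge\theta$ live in $\bigwedge\nolimits^{2n+1-k}\h^*$, and the presence of the wedge with $\theta$ means each is essentially determined by $dx_I\wedge dy_J\wedge\alpha_R\in\bigwedge\nolimits^{2n-k}(\text{span}\{dx_i,dy_i\})$. I would first observe that $dx_I\wedge dy_J\wedge\alpha_R$ is a sum of standard monomials $dx_{I'}\wedge dy_{J'}$ all of which contain the block $dx_I\wedge dy_J$ and whose remaining factors come in pairs $dx_idy_i$ drawn from the columns of $R$ together with the singleton entries of the long row. Thus monomials coming from triples with different $(I,J)$ are supported on disjoint sets of standard basis monomials (the ``$dx$ only, $dy$ only'' block is recorded by $(I,J)$ and cannot be produced by any $\alpha_{R'}$, which only ever contributes full $dx_idy_i$ pairs), so it suffices to fix $(I,J)$ and prove that $\{\alpha_R: R \text{ standard, entries } = M:=\unoenne\setminus(I\cup J)\}$ is linearly independent in $\bigwedge\nolimits^{|M|+(\ell-\text{short})}$... more precisely in the relevant space built from the index set $M$. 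For that reduced claim — linear independence of the $\alpha_R$ over standard tableaux $R$ of a fixed two-row shape on a fixed entry set — I would use the classical ``straightening'' / Young symmetrizer picture: the $\alpha_R$ are, up to the identification $dx_idy_i\leftrightarrow e_i$, exactly the images of two-row column-tabloids under the standard polytabloid construction, and standard polytabloids form a basis of the Specht module; alternatively, and more self-containedly, I would pick for each standard $R$ the ``leading'' monomial (expand each factor $dxy_{R^1_p}-dxy_{R^2_p}$ and take the term $dxy_{R^1_p}$, giving the monomial $\prod_{i\in R^1}dxy_i$, i.e. the long row of $R$) and argue that distinct standard $R$ have distinct long rows — because a two-row standard filling is determined by which $\lfloor\cdot\rfloor$-many entries sit in the bottom row, and standardness forces a unique such choice compatible with a given top row — so a term-order / triangularity argument gives independence.

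The third ingredient is \emph{spanning}: showing every element of $\mathcal J^{2n+1-k}$ is a combination of the listed ones. Given the dimension count and linear independence this is automatic, so strictly I only need two of the three; but if I wanted a direct proof of membership I would verify that each $dx_I\wedge dy_J\wedge\alpha_R\wedge\theta$ actually lies in $\mathcal J^{2n+1-k}$: the wedge with $\theta$ kills it against $\theta$ (as $\theta\wedge\theta=0$), and the wedge with $\Theta=\sum_i dx_idy_i$ vanishes by identity~\eqref{eq:wedgiarea0} applied with the index set $M=\unoenne\setminus(I\cup J)$ — indeed $\Theta=\sum_{i\in M}dxy_i+\sum_{i\in I\cup J}dxy_i$, the second sum dies against $dx_I\wedge dy_J$ (repeated $dx$ or $dy$), and the first dies against $\alpha_R$ by~\eqref{eq:wedgiarea0}. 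That confirms the family sits inside $\mathcal J^{2n+1-k}$, and combined with the count it is a basis.

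I expect the main obstacle to be the dimension bookkeeping: one needs the exact value of $\dim\mathcal J^{2n+1-k}$ from Rumin's theory (or from the explicit description of $\mathcal J^{2n+1-k}$ recalled in \S\ref{subsec:Rumin}) and then a clean evaluation of $\sum_{|I|+|J|\equiv k}\binom{n}{|I|}\binom{n-|I|}{|J|}\,f^{\lambda(|I|+|J|)}$ via hook lengths, matching the two. The linear-independence step, while it looks like it needs representation theory of $S_m$, can be made elementary through the leading-monomial trick once one notices the bijection between standard two-row tableaux and their top rows; the genuinely delicate point there is checking that the cross terms (non-leading monomials) of one $\alpha_R$ never collide with the leading monomial of another standard $R'$ in a way that cannot be triangularized — which follows because any monomial appearing in $\alpha_R$ has its set of ``$dxy$-pair indices'' obtained from the long row of $R$ by swapping some entries down into column-partners, an operation that can only move entries \emph{within} columns of $R$, so the resulting pair-set, if it is itself a long row of some standard $R'$, forces $R'=R$.
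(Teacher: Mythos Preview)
Your overall architecture matches the paper's: verify membership via~\eqref{eq:wedgiarea0}, split by the $(I,J)$-block (this is exactly the decomposition~\eqref{eq:kersplit}), reduce to linear independence of the $\alpha_R$ on a fixed index set, and close with the hook length count. Two comments.

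First, a genuine gap. Your justification for the triangularity of the $\alpha_R$ is wrong as stated. You claim that if a monomial $dxy_K$ appearing in $\alpha_R$ happens to coincide with the top row of some standard $R'$, then $R'=R$. Take $m=6$, $\ell=3$, $R$ with rows $(1,2,3)/(4,5,6)$ and $R'$ with rows $(1,3,5)/(2,4,6)$: expanding $(dxy_1-dxy_4)(dxy_2-dxy_5)(dxy_3-dxy_6)$ produces $dxy_{\{1,3,5\}}$ with coefficient $-1$, and $\{1,3,5\}$ is the top row of the standard $R'\neq R$. What \emph{is} true, and what the paper proves in Lemma~\ref{lem:YoungTabLinInd}, is that whenever the top row $\overline K$ of a standard $\overline R$ appears in $\alpha_R$, one has $\sigma(R)\le\sigma(\overline R)$ (where $\sigma$ is the sum of the top-row entries), with equality iff $R=\overline R$. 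This gives a genuine triangular structure with respect to the total pre-order by $\sigma$, diagonal within each $\sigma$-level; the paper runs it as an induction on $\sigma$. Your ``column swaps only'' observation is the right starting point but does not by itself pin down $R'$; you need the order statistic $\sigma$.

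Second, a difference in the dimension step. You propose to pull $\dim\mathcal J^{2n+1-k}$ from Rumin's theory and then prove a summed Catalan-type identity. The paper avoids this: it works blockwise, observing (Lemma~\ref{lem:dimKerLD}) that the restricted Lefschetz map $L_D:\bwl_D^{2\ell}\R^{2m}\to\bwl_D^{2\ell+2}\R^{2m}$ is surjective for $m\le 2\ell\le 2m$, so $\dim\ker L_D=\binom m\ell-\binom m{\ell+1}$, which is exactly the two-row hook length count. This sidesteps any global identity and is cleaner; but your route would also work once the independence step is repaired.
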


Proposition~\ref{prop:baseJintro} follows from Corollary~\ref{cor:baseJ}. 
Observe that  the tableaux $R$ appearing in the statement are rectangular exactly in  case $k=n$. In this case it might happen that $I\cup J=\unoenne$, i.e., that $R$ is the empty table: if so, we agree that $\al_R=1$. It  is also worth observing that the covectors $\la_{I,J,R}=dx_I\wedge dy_J\wedge \al_R\wedge\theta$ appearing in Proposition~\ref{prop:baseJintro} indeed belong to $\mathcal J^{2n+1-k}$ because $\la_{I,J,R}\wedge\theta=0$ (by definition) and $\la_{I,J,R}\wedge d\theta=0$, which comes as a consequence of~\eqref{eq:wedgiarea0} and the fact that $d\theta=-\sum_{i=1}^n dxy_i$ is, up to a sign, the standard symplectic form.

During the preparation of this paper  we became aware  that a basis of $\mathcal J^{2n+1-k}$ is provided also in the paper~\cite{BaldiBarnabeiFranchi}:
however, the basis in~\cite{BaldiBarnabeiFranchi} is presented by induction on $n$, while ours is given directly and is somewhat  manageable in the computations we need.

\subsection{Sketch of the proof of Rademacher's Theorem~\ref{thm:rademacher}}
For the reader's convenience we provide a sketch of the proof of our main result. 
Let  $\f:A\subset\W\to\V$ be intrinsic Lipschitz; by Theorem~\ref{thm:estensione} we can assume that $A=\W$. We now use Theorem~\ref{teo:approssimiamoooooooo} to produce a sequence of smooth maps $\f_i:\W\to\V$ converging uniformly to $\f$: it can be easily proved that the associated Heisenberg currents $\curr{\gr_{\f_i}}$ converge (possibly up to a subsequence) to a current $\Tcurr$ supported on $\gr_\f$ and, actually, that $\Tcurr=\tau\Shaus^{Q-k}\res\gr_\f$ for some bounded  function $\tau:\gr_\f\to\mathcal J_{2n+1-k}\setminus\{0\}$. 
Moreover we have $\partial\curr{\gr_{\f_i}}=0$ for every $i$ because of  Proposition~\ref{prop:correnti_cl_vs_H_INTRO}, therefore also $\partial\Tcurr=0$: as we will see, this equality carries the relevant geometric information.

Our aim is to prove that, at a.e. $w\in\W$, the blow-up of $\f$ at $w$ (i.e., the limit as $r\to+\infty$ of $\de_{r}((w\f(w))^{-1}\gr_\f)$) is the graph of an intrinsic linear map; a priori, however, there could exist many possible blow-up limits $\psi$ associated with different diverging scaling sequences $(r_j)_j$. In Lemma~\ref{lem:blowuptinvariante} we prove the following: for a.e. $\bar w\in\W$, {\em all} the possible blow-ups $\psi$ of $\f$ at $\bar w$ are $t$-invariant, i.e., 
\[
\psi(w\exp(tT))=\psi(w(0,0,t))=\psi(w)\qquad\text{for every }t\in\R,w\in\W.
\]
The proof of Lemma~\ref{lem:blowuptinvariante} makes use of the  Rademacher's Theorem   proved for the case of codimension 1 in~\cite{FSSCJGA-Diff}. 

Let then $\bar w$ be such a point and fix a $t$-invariant blow-up $\psi$ of $\f$ at $\bar w$ associated with a scaling sequence $(r_j)_j$. It is a good point to notice that, being both intrinsic Lipschitz (because it is the limit of uniformly intrinsic Lipschitz maps) and $t$-invariant, $\psi$ is necessarily Euclidean Lipschitz, see Lemma~\ref{lem:tinvariantLipschitz}. Consider now  the current $\Tcurr_\infty$ defined (up to passing to a subsequence) as the blow-up limit along $(r_j)_j$  of $\Tcurr$ at $\bar p:=\bar w\f(\bar w)\in\gr_\f$, namely,
\[
\Tcurr_\infty:=\lim_{j\to\infty}(\de_{r_j}\circ L_{\bar p^{-1}})_\#\Tcurr
\]
where $L_{\bar p^{-1}}$ denotes left-translation by $\bar p^{-1}$ and the subscript $\#$ denotes push-forward. If one assumes that $\bar p$ is also a Lebesgue point (in a suitable sense) of the function $\tau$, then the following properties hold for $\Tcurr_\infty$:
\begin{itemize}
\item $\Tcurr_\infty=f\,\tau(\bar p) \Shaus^{Q-k}\res \gr_\psi$ for some positive and bounded function $f$ on $\gr_\psi$;
\item $\gr_\psi$ is locally Euclidean rectifiable, in particular it is locally $\H$-rectifiable;
\item $\partial\Tcurr_\infty=0$, because $\Tcurr_\infty$ is limit of boundaryless currents.
\end{itemize}
We can then apply  Theorem~\ref{teo:correntinormalirettificabili} to deduce  that $[t^\H_{\gr_\psi}(p)]_\mathcal J$ is a multiple of $\tau(\bar p)$ for a.e. $p\in\gr_\psi$. By $t$-invariance, the unit  tangent vector $t_{\gr_\psi}(p)$ coincides with  $t^\H_{\gr_\psi}(p)$. 
Summarizing, we have a $t$-invariant Euclidean Lipschitz submanifold $\gr_\psi$ whose unit tangent vector $t_{\gr_\psi}$ is always vertical (i.e., of the form $t_{\gr_\psi}=t'\wedge T$ for a suitable multi-vector $t'$) and has the property that, for a.e. point $p$,  $[t_{\gr_\psi}(p)]_\mathcal J$ is a multiple of  $\tau(\bar p)\in\mathcal J_{2n+1-k}\setminus\{0\}$. If we could guarantee that there is a unique (up to a sign) unit simple  vector $\bar t$ that is vertical  and such that $[\,\bar t\,]_\mathcal J$ is a multiple of $\tau(\bar p)$, then  we would conclude that $\gr_\psi$ is always tangent to that particular $\bar t$, i.e., that $\gr_\psi$ is a vertical plane $\mathscr P$. Since $\bar t$ (and then $\mathscr P$) depends only on $\bar p$ and not on the particular sequence $(r_j)_j$, then the blow-up $\mathscr P$ is unique and is the graph of an intrinsic linear map $\psi$: this would conclude the proof.

Unluckily, this is not always the case: in fact, in the second Heisenberg group $\H^2$ the unit simple vertical 3-vectors $X_1\wedge Y_1\wedge T$ and $-X_2\wedge Y_2\wedge T$ have the property that
\[
[ X_1\wedge Y_1\wedge T]_\mathcal J = [- X_2\wedge Y_2\wedge T]_\mathcal J.
\]
This, however, is basically the worst-case scenario. A key, technically demanding result is  Proposition~\ref{prop:almassimo2}, where we prove that there exist at most two linearly independent unit simple vertical  vectors $\bar t_1,\bar t_2$ such that $[ \bar t_1]_\mathcal J = [\bar t_2]_\mathcal J$ are multiples of $\tau(\bar p)$; moreover,  the planes $\mathscr P_1,\mathscr P_2$ associated (respectively) with $\pm\bar t_1,\pm\bar t_2$  are not {\em rank-one connected}, i.e., $\dim\mathscr P_1\cap\mathscr P_2$ has codimension at least 2 in $\mathscr P_1$ (equivalently, in $\mathscr P_2$).  This means that the vertical Euclidean Lipschitz submanifold $\gr_\psi$ has at most two possible   tangent planes $\mathscr P_1,\mathscr P_2$; however (see e.g. \cite[Proposition 1]{BallJamesARMA87} or \cite[Proposition 2.1]{MullerCetraro}) the fact that these two planes are not rank-one connected forces $\gr_\psi$ to be a plane (either $\mathscr P_1$ or $\mathscr P_2$) itself.

This is not the  conclusion yet: we have for the moment proved that, for a.e. $\bar w\in\W$, all the possible blow-ups of $\f$ at $\bar w$ are either the map $\psi_1$ parameterizing  $\mathscr P_1$, or the map $\psi_2$ parameterizing $\mathscr P_2$; both  are determined by $\tau(\bar p)$ (i.e. by $\bar w$) only. However, it is not difficult to observe that   the family of all possible blow-ups of $\f$ at a fixed point must enjoy a suitable connectedness property, hence it cannot consist of the two  points $\psi_1,\psi_2$ only. This proves the uniqueness of blow-ups and concludes the proof of our main result.

\subsection{Structure of the paper}
In Section~\ref{sec:gruppi} we introduce intrinsic Lipschitz graphs in Carnot groups and prove Theorems~\ref{teo:equivdefLip},~\ref{thm:estensione} and~\ref{teo:approssimiamoooooooo}.  
Heisenberg groups are introduced in Section~\ref{sec:Heisenberg}, where we focus on the algebraic preliminary material, in particular about multi-linear algebra and the Rumin's complex. We also provide the basis of Rumin's spaces of Proposition~\ref{prop:baseJintro}, introduce Heisenberg currents and prove Proposition~\ref{prop:correnti_cl_vs_H_INTRO}. Eventually, we state  Proposition~\ref{prop:almassimo2}, that we use in the proof of Rademacher's Theorem~\ref{thm:rademacher}, and whose long and tedious proof is postponed to Appendix~\ref{app:max2}.
In Section~\ref{sec:intrLipgrHeis} we deal with intrinsic Lipschitz graphs of low codimension: in particular, we define intrinsic differentiability and we prove the crucial Lemma~\ref{lem:blowuptinvariante}. We also introduce $\H$-regular submanifold and $\H$-rectifiable sets and  we study (Euclidean)  $C^1$-regular intrinsic  graphs. 
Section~\ref{sec:constancytheorem} is devoted to the proof of the Constancy-type Theorems~\ref{thm:Constancy} and~\ref{teo:correntinormalirettificabili}. 
The proof of Rademacher's Theorem~\ref{thm:rademacher} is provided in Section~\ref{sec:dimrademacher}. 
Eventually, Section~\ref{sec:applic} contains the applications of our main result concerning Lusin's Theorem~\ref{thm:Lusinbreve}, the equivalence between $\H$-rectifiability and ``Lipschitz'' $\H$-rectifiability (Corollary~\ref{cor:HrectifiableC1vsLip}) and the area formula of Theorem~\ref{thm:formulaarea}.\medskip

{\em Acknowledgments.} 
We are grateful to G.~Alberti and A.~Marchese for pointing out  reference~\cite{Silhavy} to us. 
The author  wishes to thank F.~Boarotto, A.~Julia and S.~Nicolussi Golo for providing a pleasant  environment and for their patience during the long preparation of this work.




\section{Intrinsic Lipschitz graphs in Carnot groups: extension and approximation results}\label{sec:gruppi}
In this section we introduce Carnot groups and intrinsic Lipschitz graphs; our  goal is to prove the extension and approximation results stated in Theorems~ \ref{thm:estensione} and \ref{teo:approssimiamoooooooo}. These two results are used  later in the paper for intrinsic Lipschitz graphs in Heisenberg groups; however,  they can be proved with no extra effort in the wider setting  of {\em Carnot groups} and  we will therefore operate in this framework, that also allows for some  simplifications in the notation. The presentation of Carnot groups will be only minimal and we refer to \cite{FS,BLU,GromovWithin,LeDonneAPrimer,SCSomeTopics} for a more comprehensive treatment. The reader looking for a thorough  account on intrinsic Lipschitz graphs might instead  consult \cite{FSJGA}.

\subsection{Carnot groups: algebraic and metric preliminaries}
A Carnot (or {\em stratified})  group is a connected, simply connected and nilpotent Lie group whose Lie algebra $\galg$ is {\em stratified}, i.e., it possesses a decomposition  $\galg=\galg_1\oplus\dots\oplus\galg_s$ such that
\[
\forall\ j=1,\dots,s-1\quad \galg_{j+1}=[\galg_j,\galg_1],\qquad\galg_s\neq\{0\}\qquad\text{and}\qquad [\galg_s,\galg]=\{0\}.
\]
We refer to the integer $s$ as the {\em step} of $\G$ and to $m:=\,$dim $\galg_1$ as its {\em rank}; we also denote by $d$ the topological dimension of $\G$. The group identity is denoted by $0$ and, as customary, we identify $\galg$, $T_0\G$ and the algebra of left-invariant vector fields on $\G$. The elements of $\galg_1$ are referred to as {\em horizontal}.

The exponential map $\exp:\galg\to\G$ is a diffeomorphism and, given a basis $X_1,\dots,X_d$ of $\galg$, we will often identify $\G$ with $\R^d$ by means of exponential coordinates:
\[
\R^d\ni x=(x_1,\dots,x_d)\longleftrightarrow \exp\left( x_1X_1+\dots+x_dX_d\right)\in\G.
\]
We will also assume that the basis is adapted to the stratification, i.e., that
\begin{align*}
&  X_1,\dots,X_m \text{ is a basis of $\galg_1$, and}\\
& \forall\ j=2,\dots,s,\ X_{\dim (\galg_1\oplus\cdots\oplus\galg_{j-1})+1},\dots, X_{\dim (\galg_1\oplus\cdots\oplus\galg_{j})}\text{ is a basis of $\galg_j$.} 
\end{align*}
In these coordinates, one has 
\begin{equation}\label{eq:formacampiorizzontali}
X_i(x)=\partial_{x_i}+\sum_{j=m+1}^d P_{i,j}(x)\partial_{x_j}\quad\text{for every }i=1,\dots,m
\end{equation}
for suitable polynomial functions $P_{i,j}$. A one-parameter family $\{\de_\la\}_{\la>0}$ of {\em dilations} $\de_\la:\galg\to\galg$  is defined by (linearly extending)  
\[
\text{$\de_\la(X):=\la^j X$ for any $X\in\galg_j$;}
\]
notice that dilations are Lie algebra homomorphisms and $\de_{\la\mu}=\de_\la\circ\de_\mu$. By composition with $\exp$ one can then define a one-parameter family, for which we use the same symbol, of group isomorphisms $\de_\la:\G\to\G$.

We fix a left-invariant homogeneous distance $d$ on $\G$, so that
\[
d(xy,xz)=d(y,z)\qtaq d(\de_\la x,\de_\la y)=\la d(x,y)\qquad\text{for all }x,y,z\in\G,\la>0.
\]
We use $d$ to denote both the distance on $\G$ and its topological dimension, but no confusion  will ever arise. We denote by $B(x,r)$ the open ball of center $x\in\G$ and radius $r>0$; it will also be convenient to denote by $\|\cdot\|_\G$ the homogeneous norm defined for $x\in\G$ by $\|x\|_\G:=d(0,x)$.  Recall that $\leb d$ is a Haar measure on $\G\equiv\R^d$ and that the homogeneous dimension of $\G$ is the integer $Q:=\sum_{j=1}^s j\dim\galg_j$. One has
\[
\leb d (B(x,r)) = r^Q \leb d (B(0,1))\qquad\text{for all }x\in\G,r>0.
\]
The number $Q$ is always greater than $d$ (apart from the Euclidean case $s=1$) and it coincides with the Hausdorff dimension of $\G$. Since also the Hausdorff $Q$-dimensional measure is a Haar measure, it coincides with $\leb d$ up to a constant.  

Given a measurable function $f:\G\to\R$ we denote by $\nabla_\G f=(X_1f,\dots,X_mf)$ its  horizontal derivatives in the sense of distributions. It is well-known  that, if $f$ is Lipschitz continuous, then it is {\em Pansu differentiable} almost everywhere~\cite{PansuAnnals89} and, in particular, the pointwise horizontal gradient $\nabla_\G f$ exists almost everywhere on $\G$. Moreover (see e.g. \cite{FSSCBUMI,GNLip}) we have 
\begin{equation}\label{eq:W1inf=Lip}
\text{if $f:\G\to\R$ is continuous, then $f$ is Lipschitz  if and only if $\nabla_\G f\in L^\infty(\G)$}
\end{equation}
where Lipschitz continuity, of course, is meant with respect to the homogeneous distance $d$ on $\G$. It is worth mentioning that  the Lipschitz constant of $f$ is bounded by $\|\nabla_\G f\|_{L^\infty(\G)}$, apart from multiplicative constants that depend only on the distance $d$,  both from below and from above.

We will need later the following result, proved in \cite[Lemma 2.2]{VAnnSNS}, where we denote by $\eexp(X)(x)$ the point reached in unit time by the integral curve of a vector field $X$ starting at a point $x$.

\begin{lemma}\label{lem:derivatecoercive}
Let $f:\R^d\to\R$ be a continuous function and let $Y$ be a smooth vector field in $\R^d$. Assume that $Yf\geq \de$ holds, in the sense of distributions, on an open set $U\subset\R^d$ and for a suitable   $\de\in\R$. If $x\in U$ and $T>0$ are such that $\eexp(hY)(x)\in U$ for every $h\in[0,T)$, then
\[
f(\eexp(tY)(x))\geq f(x)+ \de t\quad  \text{ for every }t\in[0,T).
\]
\end{lemma}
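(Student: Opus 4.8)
The strategy is to reduce the statement to a one-dimensional ODE comparison along the integral curve of $Y$ through $x$, and there the inequality $Yf\geq\de$ becomes the statement that a certain scalar function of one real variable has distributional derivative at least $\de$, hence is Lipschitz with that one-sided derivative bound, hence grows at least linearly. First I would set $\gamma(t):=\eexp(tY)(x)$, the integral curve of $Y$ with $\gamma(0)=x$; by assumption $\gamma(t)\in U$ for all $t\in[0,T)$, so the composition $g(t):=f(\gamma(t))$ is a well-defined continuous function on $[0,T)$. The claim is precisely $g(t)\geq g(0)+\de t$ for $t\in[0,T)$.

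Next I would show that $g'\geq\de$ in the sense of distributions on $(0,T)$. This is the only place where the hypothesis $Yf\geq\de$ \emph{on $U$} is used: if $f$ were smooth we would simply have $g'(t)=(Yf)(\gamma(t))\geq\de$, but $f$ is only continuous and $Yf$ is a distribution, so this needs a small mollification argument. I would fix a small $\epsilon>0$ and a neighbourhood of the compact arc $\gamma([0,T-\epsilon])$ contained in $U$, let $f_\eta$ be a family of smooth functions converging to $f$ uniformly on that neighbourhood with $Yf_\eta\to Yf$ in the sense of distributions (standard Euclidean convolution suffices, since the statement is in $\R^d$); then $g_\eta(t):=f_\eta(\gamma(t))$ is $C^1$ on $[0,T-\epsilon]$ with $g_\eta'(t)=(Yf_\eta)(\gamma(t))$. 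Testing against a nonnegative $\varphi\in C^\infty_c((0,T-\epsilon))$ and passing to the limit in $\int g_\eta\varphi' = -\int (Yf_\eta)(\gamma(t))\,\varphi(t)\,dt$ yields $\int g\,\varphi' \leq -\de\int\varphi$, i.e.\ $g'\geq\de$ distributionally on $(0,T-\epsilon)$, and since $\epsilon$ is arbitrary this holds on all of $(0,T)$. (One must be a little careful that the distributional convergence $Yf_\eta\to Yf$ on $U$ implies convergence of the pushed-forward measures $(Yf_\eta)\circ\gamma$ along the curve; this is where the argument is slightly delicate, because pulling a distribution back along a curve is not automatic — but $Yf\geq\de$ means $Yf$ is (locally) a signed measure, in fact $Yf-\de\leb d\geq 0$ is a nonnegative measure, so one can work with measures rather than general distributions and the pullback/restriction to the arc makes sense once one knows the $f_\eta$ converge uniformly.)

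Having established that $g-\de t$ has nonnegative distributional derivative on $(0,T)$ and is continuous on $[0,T)$, it is nondecreasing there, so $g(t)-\de t\geq g(0^+)-0 = g(0)$ for every $t\in[0,T)$, which is exactly the assertion $f(\eexp(tY)(x))\geq f(x)+\de t$. The main obstacle, as indicated, is the second paragraph: making rigorous the passage from the distributional inequality $Yf\geq\de$ on the open set $U$ to the one-variable distributional inequality $g'\geq\de$ along the integral curve, since $f$ is merely continuous. The cleanest route is to exploit that $Yf\geq\de$ forces $Yf$ to be representable by a Radon measure, reducing everything to monotone convergence of measures against the uniformly convergent test profiles $f_\eta\circ\gamma$; alternatively one can invoke the cited Lemma~2.2 of \cite{VAnnSNS} directly, which is precisely this statement and whose proof carries out exactly this mollification-and-ODE-comparison scheme.
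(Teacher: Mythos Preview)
The paper itself does not prove this lemma; it simply cites \cite[Lemma~2.2]{VAnnSNS}. Your overall plan---set $g(t)=f(\gamma(t))$, establish $g'\geq\delta$ distributionally on $(0,T)$, and conclude that $g-\delta t$ is nondecreasing---is the standard one, and the final monotonicity step is correct.

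The gap is in your justification of the key limit $\lim_\eta\int(Yf_\eta)(\gamma(t))\,\varphi(t)\,dt\geq\delta\int\varphi$. Distributional convergence $Yf_\eta\to Yf$ on the open set $U$ gives no control whatsoever on integrals along the curve $\gamma$, which has $\leb d$-measure zero; and your measure-theoretic workaround (``the restriction to the arc makes sense once the $f_\eta$ converge uniformly'') does not help either, since a sequence of smooth functions can converge weak-$*$ to a nonnegative measure while taking arbitrary values on a fixed null set. What you actually need is the \emph{pointwise} lower bound $Yf_\eta\geq\delta-o(1)$ on a neighbourhood of the arc, and this does \emph{not} follow from straight Euclidean mollification when $Y$ has variable coefficients, because in general $Y(f*\rho_\eta)\neq(Yf)*\rho_\eta$.

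The clean fix---and almost certainly the argument in \cite{VAnnSNS}---is to straighten $Y$ first. By the flow-box theorem, in local coordinates near (pieces of) the compact arc one has $Y=\partial_1$; then mollification commutes with $Y$, so $Yf_\eta=(Yf)*\rho_\eta\geq\delta$ pointwise (convolution of the nonnegative distribution $Yf-\delta$ against the nonnegative kernel $\rho_\eta$ of unit mass is nonnegative). Hence $g_\eta'\geq\delta$ classically, integrate, and pass to the limit using the uniform convergence $f_\eta\to f$. An equivalent route, if you prefer to avoid changing coordinates, is to verify directly that the Friedrichs commutator $Y(f*\rho_\eta)-(Yf)*\rho_\eta$ tends to $0$ uniformly on compacts when $f$ is continuous and the coefficients of $Y$ are $C^1$; this again yields $Yf_\eta\geq\delta-o(1)$ pointwise, but it is a genuine computation that your proposal does not carry out.
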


\subsection{Intrinsic Lipschitz graphs}\label{subsec:intrLipgr}

Following \cite{FSJGA}, we fix a  splitting $\G=\W\V$ in terms of a couple $\W,\V$ of homogeneous (i.e., invariant under dilations) and complementary (i.e., $\W\cap\V=\{0\}$ and $\G=\W\V$) Lie subgroups  of $\G$. In exponential coordinates, $\W,\V$ are linear subspaces of $\G\equiv\R^d$. Clearly, the splitting induces for every $x\in\G$ a unique decomposition $x=x_\W x_\V$ such that $x_\W\in\W$ and $x_\V\in\V$; we will sometimes refer to the maps $x\mapsto x_\W$ and $x\mapsto x_\V$ as the {\em projections} of $\G$ on $\W$ and on $\V$, respectively.

Given $A\subset\W$ and a map $\f:A\to\V$, the {\em intrinsic graph} $\gr_\f$ of $\f$ is the set
\[
\gr_\f:=\{w\f(w):w\in A\}\subset\G.
\]
The notion of intrinsic Lipschitz continuity  for maps $\f$ from $\W$ to $\V$ was introduced  by B.~Franchi, R.~Serapioni and F.~Serra Cassano \cite{FSSCJNCA} in terms of a cone property for $\gr_\f$. The {\em intrinsic cone} $\cono_\al$ of aperture $\al> 0$ and axis $\V$ is 
\[
\cono_\al:=\{x\in\G:\|x_\W\|_\G\leq\al\|x_\V\|_\G \}\,.
\]
Observe that $\cono_\al$  is homogeneous (invariant under dilations) and that $\V\subset \cono_\al$. For $x\in\G$ we also introduce the cone  $\cono_\al(x):=x\cono_\al$ with vertex $x$.

\begin{definition}\label{def:grafLipCarnot}
Let $A\subset\W$; we say that  $\f:A\to\V$ is {\em intrinsic Lipschitz} if there exists $\al>0$ such that
\begin{equation}\label{eq:defLipCarnot}
\forall\:x\in\gr_\f\qquad \gr_\f\cap \cono_\al(x)=\{x\}.
\end{equation}
The {\em intrinsic Lipschitz constant} of $\f$ is  $\inf\{\frac1\al:\al>0\text{ and~\eqref{eq:defLipCarnot} holds}\}$.
\end{definition}

Since all homogeneous distances on $\G$ are equivalent, Definition~\ref{def:grafLipCarnot} is clearly independent from the fixed distance $d$ on the group. It was proved in \cite[Theorem 3.9]{FSJGA} that, if $\f$ is intrinsic Lipschitz, then the Hausdorff dimension of $\gr_\f$ is the same as the Hausdorff dimension of the domain $\W$; actually, the corresponding Hausdorff measure on $\gr_\f$ is  Ahlfors regular, and then also locally finite on $\gr_\f$. In particular  one always has
\begin{equation}\label{eq:grLiptrascurabili}
\leb d(\gr_\f)=0
\end{equation}
provided, of course,  we are not in the trivial case $\W=\G$, $\V=\{0\}$.

\begin{remark}\label{rem:nointersezalloragrafico}
For the purpose of future references, we observe the following easy fact. Let $S\subset \G$ and $\al>0$ be fixed; if
\[
\forall\:x\in\ S\qquad S\cap \cono_\al(x)=\{x\},
\] 
then $S=\gr_\f$ for suitable $\f:A\to\V$ (which is clearly intrinsic Lipschitz) and $A\subset \W$. See e.g.~\cite[\S2.2.3]{FSJGA}.
\end{remark}

\subsection{A level set definition of co-horizontal intrinsic Lipschitz graphs}
From now on we assume that the splitting $\W\V$ of $\G$ is fixed in such a way that $\V$ is not trivial ($\V\neq\{0\}$) and it is horizontal, i.e.,  $\V\subset\exp(\galg_1)$. Of course, this poses some algebraic restrictions: for instance, $\V$ is forced to be Abelian.  Moreover, it can be easily checked that  {\em free} Carnot groups (of step at least $2$) have no  splitting such that $\V$ is horizontal and $\dim\V\geq 2$. Nonetheless, the theory we are going to develop here is rich enough to include  intrinsic Lipschitz graphs of codimension 1 in any Carnot group (in fact, every 1-dimensional horizontal subgroup $\V$ of a Carnot group provides a splitting  $\W\V$ for some $\W$) and intrinsic Lipschitz graphs of  codimension at most $n$ in the Heisenberg group $\H^n$, which are the main object of study of the present paper. 

With such assumptions on the splitting $\W\V$, intrinsic Lipschitz graphs $\F:A\subset\W\to\V$ will be called {\em co-horizontal} (see~\cite{ADDDLD}). We denote by $k$ the topological dimension of $\V$ 
and we assume without loss of generality that the adapted basis $X_1,\dots,X_d$ of $\galg$ has been fixed in such a way that
\[
\V=\exp(\Span\{X_1,\dots,X_k\}).
\]
We consequently identify  $\V$ with $\R^k$ through the map
\begin{equation}\label{eq:VVVVVVVV} 
\R^k\ni (v_1,\dots,v_k)\longleftrightarrow \exp(v_1X_1+\dots+v_kX_k)\in\V
\end{equation}
and  we accordingly write $ v=(v_1,\dots,v_k)\in\V$.  The map in \eqref{eq:VVVVVVVV} turns out to be a group isomorphism as well as a biLipschitz map between $(\R^k,|\cdot|)$ and $(\V,d)$: this proves that the Hausdorff dimension of $\V$ equals the topological dimension $k$. We observe that, since the flow of a left-invariant vector field corresponds to right multiplication, we have
\[
xv=\eexp (v_1X_1+\dots+v_kX_k)(x).
\]
In particular, the projections on the factors $\W,\V$ can be written as
\[
x_\V=\exp(x_1X_1+\dots+x_kX_k),\qquad x_\W =x\,x_\V^{-1}=\eexp(-(x_1X_1+\dots+x_kX_k))(x)
\]
and are therefore smooth maps.

Our first goal is to provide the equivalent characterization of co-horizontal intrinsic Lipschitz graphs  stated in Theorem \ref{teo:equivdefLip}. We however need some preparatory lemmata as well as some extra  convention about notation. First, we introduce the homogeneous (pseudo)-norm
\[
\|x\|_*:=\left(\sum_{j=1}^s \sum_{i:X_i\in\galg_j}|x_i|^{\frac{2\;\!s!}{j}}\right)^{\frac1{2\;\!s!}},\qquad x\in\G,
\]
that is equivalent to $\|\cdot\|_\G$ in the sense that there exists $C_*\geq 1$ such that
\begin{equation}\label{eq:introducoC*}
\|x\|_\G/C_*\leq \|x\|_*\leq C_*  \|x\|_\G\qquad\forall\ x\in\G.
\end{equation}
Observe that $x\mapsto\|x\|_*$ is of class $C^\infty$ in $\G\setminus\{0\}$. 
Second, given $i\in\{1,\dots,k\}$, $\be>0$ and $\ep>0$ we introduce the homogeneous cone
\[
\begin{split}
\cono_{i,\be,\ep} :=& \left\{wv:w\in\W,\ v\in\V,\ |v_i|+\ep\sum_{j\in\{1,\dots,k\}\setminus\{i\}}|v_j|\geq \beta\|w\|_*\right\}
\\
= & \left\{x\in\G:|x_i|+\ep\sum_{j\in\{1,\dots,k\}\setminus\{i\}}|x_j|\geq \beta\|x_\W\|_*\right\},
\end{split}
\]
where we used the fact that $x_\V=(x_1,\dots,x_k)$. 
Third, if $t\in\R$ and $f:D\to\R$ is a real-valued function defined on some set $D$,  we denote by $\{f\geq t\}$ the set $\{x\in D:f(x)\geq t\}$. Similar conventions are understood when writing $\{f>t\},\ \{f<t\},\ \{f=t\},\ \{t_1<f<t_2\}$, etc.

\begin{lemma}\label{lem:lemmacono}
For every $i\in\{1,\dots,k\}$, $\be>0$ and $\ep\in(0,1)$ there exists a 1-homogeneous Lipschitz function $f_{i,\be,\ep}:\G\to\R$ such that
\begin{align}
& f_{i,\be,\ep}(0)=0\label{eq:0}\\
& 1\leq X_if_{i,\be,\ep}\leq 3\quad\leb d\text{-a.e. on }\G\label{eq:1}\\
& \ep\leq X_\ell f_{i,\be,\ep}\leq 3\ep\quad\leb d\text{-a.e. on }\G\quad \forall\ \ell\in\{1,\dots,k\}\setminus\{i\}\label{eq:2}\\
& \{f_{i,\be,\ep}\geq 0\} \subset \cono_{i,\be,\ep}.  \label{eq:3}
\end{align}
Moreover, if  $0<\be\leq\bar\be$, then the Lipschitz constant of $f_{i,\be,\ep}$ can be controlled in terms of $\ep$ and $\bar\be$ only.
\end{lemma}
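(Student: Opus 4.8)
The goal is to produce a single $1$-homogeneous Lipschitz function $f_{i,\be,\ep}$ whose horizontal derivatives $X_i f$ and $X_\ell f$ (for $\ell\neq i$) are uniformly bounded between the prescribed constants, and whose nonnegativity set is trapped inside the cone $\cono_{i,\be,\ep}$. The natural candidate is a function of the form
\[
f_{i,\be,\ep}(x) := 2x_i + 2\ep\sum_{\ell\in\{1,\dots,k\}\setminus\{i\}} x_\ell - \be\,g(x_\W),
\]
where $g$ is a suitable $1$-homogeneous, nonnegative, smooth-away-from-$0$ surrogate for $\|x_\W\|_*$ on the subgroup $\W$. I would first check that the linear-in-$(x_1,\dots,x_k)$ part has exactly the right horizontal derivatives: since $\V=\exp(\Span\{X_1,\dots,X_k\})$ and the projection $x\mapsto x_\W$ is the flow of $-(x_1X_1+\dots+x_kX_k)$, the functions $x\mapsto x_j$ ($j\le k$) satisfy $X_i x_j=\delta_{ij}$, while $x\mapsto x_\W$ (hence $g(x_\W)$) is annihilated by each $X_i$, $i\le k$. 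This immediately gives $X_i f = 2$ and $X_\ell f = 2\ep$ pointwise, comfortably inside $[1,3]$ and $[\ep,3\ep]$ respectively; and $f(0)=0$ is clear. Homogeneity of degree $1$ follows because each coordinate $x_j$ with $X_j\in\galg_1$ is $1$-homogeneous and $g$ is chosen $1$-homogeneous.

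The two remaining points are Lipschitz continuity and the inclusion \eqref{eq:3}. For Lipschitz continuity I would invoke \eqref{eq:W1inf=Lip}: $f$ is continuous, so it suffices that $\nabla_\G f\in L^\infty(\G)$. The horizontal derivatives $X_i f$ are constants; the derivatives $X_r f$ for $r>k$ come only from the term $\be\,g(x_\W)$, and here one uses that $g\circ(\cdot)_\W$ is smooth away from the ($d-k$)-dimensional subspace $\{x_\W=0\}$, $1$-homogeneous, and that on Carnot groups a continuous, $1$-homogeneous function that is smooth off the origin of the relevant stratified subgroup is Lipschitz — indeed its horizontal gradient is $0$-homogeneous, hence bounded, by a standard compactness argument on the unit sphere of $\W$. (A clean way is to take $g(w):=\|w\|_{*,\W}$, the analogue of the $\|\cdot\|_*$ norm restricted to $\W$, which is $C^\infty$ on $\W\setminus\{0\}$ by the same reasoning that gave $\|\cdot\|_*\in C^\infty(\G\setminus\{0\})$.) The bound on $\|\nabla_\G f\|_{L^\infty}$ then depends on $\ep$ and $\be$, and since it is increasing in $\be$ it is controlled by $\ep$ and $\bar\be$ whenever $0<\be\le\bar\be$; by the final sentence of \eqref{eq:W1inf=Lip} this controls the Lipschitz constant of $f$ as well, uniformly.

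For \eqref{eq:3}, suppose $f_{i,\be,\ep}(x)\ge 0$. Using $|x_i|\ge x_i$ and $|x_\ell|\ge x_\ell$ we get
\[
|x_i| + \ep\sum_{\ell\neq i}|x_\ell| \;\ge\; \tfrac12\Big(2x_i + 2\ep\sum_{\ell\neq i}x_\ell\Big) \;=\; \tfrac12\big(f_{i,\be,\ep}(x) + \be\, g(x_\W)\big) \;\ge\; \tfrac{\be}{2}\,g(x_\W),
\]
so by adjusting the constant (e.g. replacing $2$ by a larger multiplicative factor, or absorbing the $\tfrac12$ into a redefinition of $g$ so that $g=2\|x_\W\|_*$) one obtains $|x_i| + \ep\sum_{\ell\neq i}|x_\ell| \ge \be\|x_\W\|_*$, i.e. $x\in\cono_{i,\be,\ep}$. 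I expect the only mildly delicate point to be the verification of Lipschitz continuity of the term built from $\|\cdot\|_{*,\W}$, which hinges on its smoothness and $1$-homogeneity on $\W\setminus\{0\}$ together with \eqref{eq:W1inf=Lip}; everything else is a direct computation once the candidate $f_{i,\be,\ep}$ is written down, and the uniformity in $\be\le\bar\be$ is built in because all constants appear as explicit, monotone functions of $\ep$ and $\be$.
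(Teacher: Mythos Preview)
Your candidate $f_{i,\be,\ep}(x)=2x_i+2\ep\sum_{\ell\neq i}x_\ell-\be\,g(x_\W)$ is \emph{not} Lipschitz on $\G$, so the proposal has a genuine gap. The flaw lies in the compactness argument: the horizontal gradient of $x\mapsto g(x_\W)$ is indeed $0$-homogeneous under dilations of $\G$, but it is only continuous on $\G\setminus\V$ (not $\G\setminus\{0\}$), and the unit sphere $\partial B(0,1)\subset\G$ meets $\V=\{x_\W=0\}$, so no compactness bound is available there. Concretely, take $\G=\H^1$, $\V=\exp(\R X_1)$, $\W=\exp(\Span\{Y_1,T\})$: for $p=(x,y,t)$ one has $p_\W=(0,y,t+\tfrac{xy}{2})$ and $\|p_\W\|_*=(y^4+(t+\tfrac{xy}{2})^2)^{1/4}$. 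Along the horizontal curve $\gamma(s)=(1,0,0)\exp(sY_1)=(1,s,\tfrac{s}{2})$ one finds $\gamma(s)_\W=(0,s,s)$, hence $\|\gamma(s)_\W\|_*=(s^4+s^2)^{1/4}\sim|s|^{1/2}$, while $d(\gamma(0),\gamma(s))=|s|$; the Lipschitz quotient of $\|\,\cdot_\W\|_*$ (and therefore of your $f$) blows up like $|s|^{-1/2}$. No $1$-homogeneous $g$ comparable to $\|\cdot\|_*$ on $\W$ avoids this: the obstruction is the projection $x\mapsto x_\W$ itself, which is not Lipschitz from $(\G,d)$ to $(\W,d)$.

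This is precisely why the paper's construction is piecewise rather than global: one must \emph{switch off} the $\|x_\W\|_*$ term near $\V$. The paper uses the formula $2[x_i+\ep(\dots)-\be\|x_\W\|_*]$ only on the middle region $\{|x_i+\ep(\dots)|\le 2\be\|x_\W\|_*\}$ and replaces it by the purely linear expressions $x_i+\ep(\dots)$ and $3(x_i+\ep(\dots))$ on the two outer regions; the Lipschitz estimate then reduces to bounding $\nabla_\G\|x_\W\|_*$ only on the middle slab, which is what the compactness argument at the end of the paper's proof is set up to do. The three different slopes $1,2,3$ (resp.\ $\ep,2\ep,3\ep$) on the three pieces are also why \eqref{eq:1}--\eqref{eq:2} are stated as ranges rather than the single values $2$ and $2\ep$ that your construction would produce.
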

\begin{proof}
We can without loss of generality assume that $i=1$. For $x\in\G$ define
\[
f(x):=\left\{
\begin{array}{ll}
2[x_1+\ep(x_2+\dots+x_k)-\be\| x_\W\|_*]\quad & \text{if }|x_1+\ep(x_2+\dots+x_k)|\leq 2\be\| x_\W\|_*\vspace{.1cm}\\
(x_1+\ep(x_2+\dots+x_k)) & \text{if }x_1+\ep(x_2+\dots+x_k)> 2\be\| x_\W\|_*\vspace{.1cm}\\
3[x_1+\ep(x_2+\dots+x_k)] & \text{if }x_1+\ep(x_2+\dots+x_k)<- 2\be\| x_\W\|_*.
\end{array}
\right.
\]
We prove that $f_{1,\be,\ep}:=f$ satisfies all the claimed statements. Property \eqref{eq:0} and the homogeneity of $f$ are immediate. Property \eqref{eq:3} is equivalent 
to the implication
\[
|x_1|+\ep(|x_2|+\dots+|x_k|)< \beta\|x_\W\|_*\ \Longrightarrow\ f(x)<0
\]
that one can easily check.  The function $f$ is continuous on $\G$ and smooth on $\G\setminus D$, where 
\[
D:=\{x\in\G:|x_1+\ep(x_2+\dots+x_k)| = 2\be\| x_\W\|_*  \}.
\]
Since $D$ is $\leb d$-negligible, statements \eqref{eq:1} and \eqref{eq:2} follow if we prove that for every $\ell=2,\dots,k$
\begin{equation}\label{eq:claim4}
1\leq X_1f\leq 3\quad\text{and}\quad \ep\leq X_\ell f\leq 3\ep\qquad\text{on }\G\setminus D.
\end{equation}
Using~\eqref{eq:formacampiorizzontali} one gets
\begin{equation}\label{eq:gradfbe}
\begin{array}{ll}
\nabla_\G f(x)=(1,\ep,\dots,\ep,0,\dots,0) & \text{if }x_1+\ep(x_2+\dots+x_k)> 2\be\| x_\W\|_*\vspace{.1cm}\\
\nabla_\G f(x)=(3,3\ep,\dots,3\ep,0,\dots,0)\quad & \text{if }x_1+\ep(x_2+\dots+x_k)<- 2\be\| x_\W\|_*\,.
\end{array}
\end{equation}
We now notice that, for any $x\in\G$, the map $y\mapsto y_\W$ is constant on the coset $x\V$, which is a smooth submanifold tangent to $X_1,\dots,X_k$: this implies that
\[
(X_1f,\dots,X_kf)(x)=(2,2\ep,\dots,2\ep)\qquad\text{if }|x_1+\ep(x_2+\dots+x_k)|<2\be\| x_\W\|_*
\]
which, together with \eqref{eq:gradfbe}, implies \eqref{eq:claim4}. 

We have only  to check that $f$ is Lipschitz continuous on $\G$ and that a bound on the Lipschitz constant can be given in terms of $\ep$ and $\bar\be$. Taking into account \eqref{eq:gradfbe} and the continuity of $f$ on $\G$, by \eqref{eq:W1inf=Lip} it is enough to prove the function $g:\G\to\R$ defined by
\[
g(x):=2[x_1+\ep(x_2+\dots+x_k)-\be\| x_\W\|_*]
\]
satisfies
\begin{equation}\label{bastano1-1}
|\nabla_\G g|\leq C\quad\text{on }\{x\in\G:|x_1+\ep(x_2+\dots+x_k)|<2\be\| x_\W\|_*\}
\end{equation}
for some positive $C$. Since $x\mapsto x_\W$ is  smooth on $\G$ and $\|\cdot\|_*$ is smooth on $\G\setminus\{0\}$, we get that  $g$ is smooth on $\G\setminus\V$. Moreover $g$ is 1-homogeneous, thus $\nabla_\G g$ is 0-homogeneous  (i.e., invariant under dilations) and continuous on $\G\setminus\V$. Inequality \eqref{bastano1-1} will then follow if we prove that
\[
|\nabla_\G g|\leq C\quad\text{on } \pa B(0,1)\cap\{x\in\G:|x_1+\ep(x_2+\dots+x_k)|\leq\be\| x_\W\|_*\};
\]
in turn, this inequality and the bound (in terms of $\ep,\bar\be$) on the Lipschitz constant of $f$ follow by proving that
\begin{equation}\label{eq:uffa}
|\nabla_\G g|\leq C\quad\text{on } \pa B(0,1)\cap\{x\in\G:|x_1+\ep(x_2+\dots+x_k)|\leq\bar\be\| x_\W\|_*\}.
\end{equation}
The set $\V$ is closed, while $\pa B(0,1)\cap\{x\in\G:|x_1+\ep(x_2+\dots+x_k)|\leq\bar\be\| x_\W\|_*\}$ is compact; since they are disjoint,  they have positive distance and the continuity of $\nabla_\G g$ on $\G\setminus\V$ ensures that
\[
\sup \big\{|\nabla_\G g(x)|:x\in\pa B(0,1)\text{ and }|x_1+\ep(x_2+\dots+x_k)|\leq \bar\be\| x_\W\|_*\big\} < +\infty,
\]
which is~\eqref{eq:uffa} and allows to conclude.
\end{proof}

\begin{lemma}\label{lem:lemmafunzione}
Let $A\subset\W$ be nonempty and let $\f:A\to\V$ be intrinsic Lipschitz. Then for every $\ep\in(0,1)$ and $i\in\{1,\dots,k\}$ there exists a Lipschitz function $f_{i,\ep}:\G\to\R$ such that
\begin{align}
& \gr_\f\subset\{f_{i,\ep}=0\}\label{eq:funz1}\\
& 1\leq X_if_{i,\ep}\leq 3 \qquad\leb d\text{-a.e. on }\G\label{eq:funz2}\\
& \ep \leq X_\ell f_{i,\ep}\leq 3\ep\qquad\leb d\text{-a.e. on }\G\quad \forall\ell\in\{1,\dots,k\}\setminus\{i\}. \label{eq:funz3}
\end{align}
Moreover, if the intrinsic Lipschitz constant of $\f$ is not greater than $\bar\al>0$, then  the Lipschitz constant of $f_{i,\ep}$ can be bounded in terms of $\ep$ and $\bar\al$ only. 
\end{lemma}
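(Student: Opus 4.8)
The plan is to build $f_{i,\ep}$ by gluing together, over all cones with vertices on $\gr_\f$, the model functions $f_{i,\be,\ep}$ produced in Lemma~\ref{lem:lemmacono}; the construction is an infimal convolution that mimics the classical McShane--Whitney extension but is adapted to the cone property. First I would observe that, since $\f$ is intrinsic Lipschitz with constant $\leq\bar\al$, there is $\al>0$ (with $1/\al\leq\bar\al$, so $\al\geq 1/\bar\al$) such that $\gr_\f\cap\cono_\al(x)=\{x\}$ for every $x\in\gr_\f$; after replacing the $\|\cdot\|_\G$-cone by the equivalent $\|\cdot\|_*$-cone using~\eqref{eq:introducoC*}, this furnishes a $\be=\be(\al,\ep)>0$, bounded below in terms of $\bar\al$ and $\ep$, such that for each $x\in\gr_\f$ one has $\gr_\f\cap(x\cdot\cono_{i,\be,\ep})=\{x\}$ and, more usefully, $\gr_\f\cap(x\cdot(-\cono_{i,\be,\ep}))=\{x\}$ as well (the cone is symmetric here because $\cono_{i,\be,\ep}$ contains both $\pm v$ for $v\in\V$).

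Next I would set, for $x\in\gr_\f$, the single-vertex function $g_x(\cdot):=f_{i,\be,\ep}(x^{-1}\cdot)$, which by left-invariance of the vector fields $X_1,\dots,X_k$ still satisfies the derivative bounds~\eqref{eq:funz2}--\eqref{eq:funz3} $\leb d$-a.e., vanishes at $x$, is $1$-homogeneous around $x$, has $\{g_x\geq 0\}\subset x\cdot\cono_{i,\be,\ep}$, and has Lipschitz constant bounded in terms of $\ep,\bar\al$ (using the last sentence of Lemma~\ref{lem:lemmacono} and left-invariance of $d$). Then I would define
\[
f_{i,\ep}(z):=\inf_{x\in\gr_\f} g_x(z)\qquad(z\in\G).
\]
As an infimum of a family of functions with a common Lipschitz constant $L=L(\ep,\bar\al)$, the function $f_{i,\ep}$ is $L$-Lipschitz (one must first check the infimum is finite, e.g.\ by bounding $g_x(z)$ from below using $g_x(z)\geq g_x(z')-L\,d(z,z')$ and a fixed reference point, together with the linear growth of $g_x$ along $\V$-translates). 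The a.e.\ derivative bounds~\eqref{eq:funz2}--\eqref{eq:funz3} pass to the infimum because along any coset $z\V$ each $g_x$ is affine with the \emph{same} slopes $(1\text{ or }2\text{ or }3,\ \ep\text{ or }2\ep\text{ or }3\ep,\dots)$ --- more precisely $X_\ell g_x$ is $\leb d$-a.e.\ pinned into $[1,3]$ for $\ell=i$ and $[\ep,3\ep]$ otherwise, and an infimum of functions that are monotone increasing with controlled increasing slope in the $X_i$-direction retains that property; here I would invoke Lemma~\ref{lem:derivatecoercive} to turn the distributional lower bound $X_\ell g_x\geq(\text{slope lower bound})$ into genuine monotonicity of each $g_x$ along the $X_\ell$-flow, and then note that an infimum of strictly increasing functions along a flow is strictly increasing, giving back the distributional inequality $X_\ell f_{i,\ep}\geq(\text{slope lower bound})$ a.e.; the upper bounds follow since locally near any point $f_{i,\ep}$ agrees with one of the $g_x$ or is squeezed between two of them. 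Finally, $\gr_\f\subset\{f_{i,\ep}=0\}$: for $x\in\gr_\f$ we have $f_{i,\ep}(x)\leq g_x(x)=0$; conversely, if $f_{i,\ep}(x)<0$ then $g_{x'}(x)<0$ for some $x'\in\gr_\f$ with $x'\neq x$, but $g_{x'}(x)<0$ together with the sign/cone structure forces $x\notin x'\cdot(-\cono_{i,\be,\ep})$, and combined with $\{g_{x'}\geq 0\}\subset x'\cdot\cono_{i,\be,\ep}$ one sees $x$ lies strictly on the negative side; a symmetric argument using the function centered at $x$ (whose negativity region is $x\cdot(-\cono_{i,\be,\ep}\text{-complement})$) shows $x'\in x\cdot\cono_{i,\be,\ep}^{c}$ would contradict $\gr_\f\cap(x\cdot\cono_{i,\be,\ep})=\{x\}$ — I would phrase this cleanly by noting $g_{x'}(x)<0 \iff g_{x}(x')>0$ up to the sign asymmetry of the three branches, hence $x'\in x\cdot\cono_{i,\be,\ep}$, hence $x'=x$, a contradiction.

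The main obstacle I anticipate is precisely the last coherence step: the model function $f_{i,\be,\ep}$ is \emph{not} odd (its three branches have slopes $2$, $1$, $3$), so the naive equivalence ``$g_{x'}(x)<0\iff g_x(x')>0$'' fails and one must argue more carefully with the cone inclusions $\{f_{i,\be,\ep}\geq 0\}\subset\cono_{i,\be,\ep}$ and a matching inclusion of the form $\{f_{i,\be,\ep}<0\}\subset(-\cono_{i,\be,\ep'})^{c}$ for a suitable (possibly different) aperture — so I would either add to Lemma~\ref{lem:lemmacono} the complementary statement $\{f_{i,\be,\ep}\leq 0\}\supset\{x:-|x_i|-\ep\sum_{j\neq i}|x_j|>\be\|x_\W\|_*\}$ (immediate from the explicit formula), or equivalently work with $-\f$-type cones. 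A secondary, purely bookkeeping, difficulty is making the quantitative dependence ``Lipschitz constant of $f_{i,\ep}$ bounded in terms of $\ep$ and $\bar\al$ only'' honest: this requires that the $\be$ used is bounded below (not just positive) in terms of $\bar\al,\ep$ — which is where one uses that the intrinsic Lipschitz constant $\leq\bar\al$ gives $\al\geq 1/\bar\al$ and hence $\be\geq\be(1/\bar\al,\ep)>0$ uniformly — and then the last sentence of Lemma~\ref{lem:lemmacono} (Lipschitz bound in terms of $\ep$ and any upper bound $\bar\be\geq\be$) applies with $\bar\be$ a fixed constant. Everything else is routine left-translation and infimal-convolution arithmetic.
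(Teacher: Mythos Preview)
There is a genuine gap: your infimal construction does not vanish on $\gr_\f$. The only link between the sign of the model function and the cone geometry is the inclusion $\{f_{i,\be,\ep}\geq 0\}\subset\cono_{i,\be,\ep}$ from Lemma~\ref{lem:lemmacono}. Once you arrange $\cono_{i,\be,\ep}\subset\cono_\al$, the cone property $\gr_\f\cap x'\cono_\al=\{x'\}$ forces, for any two distinct points $x,x'\in\gr_\f$, that $x\notin x'\cono_{i,\be,\ep}$ and therefore $g_{x'}(x)=f_{i,\be,\ep}({x'}^{-1}x)<0$. Thus for $x\in\gr_\f$ one has $g_{x'}(x)<0$ for \emph{every} $x'\neq x$, so $\inf_{x'}g_{x'}(x)<0$ (in fact $=-\infty$ when $\gr_\f$ is unbounded, since $|g_{x'}(x)|$ grows linearly with $d(x,x')$). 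Your ``converse'' step tries to derive a contradiction from $g_{x'}(x)<0$, but this inequality is precisely what the cone property \emph{guarantees}, not forbids; the implication $g_{x'}(x)<0\Rightarrow g_x(x')>0$ is false for this non-odd model function, and the ``complementary statement'' you propose, $\{f_{i,\be,\ep}\leq 0\}\supset\{-|x_i|-\ep\sum_{j\neq i}|x_j|>\be\|x_\W\|_*\}$, is vacuous because the left side of the inner inequality is nonpositive.

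The fix is simple and is what the paper does: replace the infimum by the \emph{supremum},
\[
f_{i,\ep}(z):=\sup_{x\in\gr_\f}g_x(z).
\]
Then for $x\in\gr_\f$ one has $f_{i,\ep}(x)\geq g_x(x)=0$, while $g_{x'}(x)<0$ for every $x'\neq x$ yields $f_{i,\ep}(x)=0$, which is~\eqref{eq:funz1}. The derivative bounds~\eqref{eq:funz2}--\eqref{eq:funz3} then follow by the mechanism you sketch (choose a near-optimal $y$, apply Lemma~\ref{lem:derivatecoercive} to $g_y$ for the lower bound, and to $-g_y$ for the upper bound), and uniform Lipschitz continuity is immediate for a supremum of uniformly Lipschitz functions. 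One minor bookkeeping correction: the inclusion $\cono_{i,\be,\ep}\subset\cono_\al$ requires $\be$ to be \emph{large} (the paper takes $\be=kC_*^2/\al$); since $\al$ is bounded below in terms of $\bar\al$, what one obtains and feeds into the last sentence of Lemma~\ref{lem:lemmacono} is an \emph{upper} bound $\bar\be$ on $\be$, not a lower bound as you write.
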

\begin{proof}
Assume that the intrinsic Lipschitz constant of $\f$ is not greater than some $\bar\Lambda>0$ and define $\al:=(2\Lambda)^{-1}$; then \eqref{eq:defLipCarnot} holds for such $\al$.  Recalling that the constant $C_*>0$ was introduced in~\eqref{eq:introducoC*}, we set $\be:=k\:\!C_*^2/\al$. Taking into account the inequalities
\begin{align*}
& |x_i|+\ep\!\!\!\sum_{j\in\{1,\dots,k\}\setminus\{i\}}\!\!\!\!|x_j|\ \leq |x_1|+\dots+|x_k|\leq k\|x_\V\|_*\leq kC_*\|x_\V\|_\G,\\
& \beta\|x_\W\|_* \geq \be\|x_\W\|_\G/{C_*}
\end{align*}
we obtain the inclusion $\cono_{i,\be,\ep}\subset \cono_\al$. For $y\in\G$ set $f_y(x):=f_{i,\be,\ep}(y^{-1}x)$, where $f_{i,\be,\ep}$ is the function  provided by Lemma \ref{lem:lemmacono}, and define
\[
f(x):=\sup_{y\in \gr_\f} f_y(x).
\]
We prove that $f_{i,\ep}:=f$ satisfies the claimed statement.

Let $x\in \gr_\f$; then $f(x)\geq f_x(x)=0$, while for every $y\in \gr_\f\setminus\{x\}$ one has $f_y(x)<0$ because of \eqref{eq:3} and
\[
\gr_\f\cap \{f_y\geq0\}\    =\   \gr_\f\cap y \{f_{i,\be,\ep}\geq0\}  
\subset\   \gr_\f\cap y \cono_{i,\be,\ep}\   \subset\  \gr_\f\cap y\cono_\al\  =\ \{y\}.
\]
This proves that $f(x)=0$, which is \eqref{eq:funz1}.

The functions $f_y$ are uniformly Lipschitz continuous, hence $f$ shares the same Lipschitz continuity. Let $x\in\G$ be fixed; then for every $\eta>0$ there exists $y\in \gr_\f$ such that
\[
f_y(x)\geq f(x)-\eta.
\]
Since $X_i f_y\geq 1$, by Lemma \ref{lem:derivatecoercive} we have for every $t\geq 0$
\[
f(\eexp(tX_i)(x))\geq f_y(\eexp(tX_i)(x))\geq f_y(x) +t\geq f(x)+t -\eta.
\]
By the arbitrariness of $\eta$ one obtains
\[
f(\eexp(tX_i)(x))\geq f(x)+t\quad\text{for every }t\geq 0,
\]
i.e., $X_if\geq 1$ a.e. on $\G$. A similar argument, using $g:=-f$ and the inequality $X_ig\geq -3$, shows that $f(\eexp(tX_i)(x))\leq f(x)+3t$ for every $t\geq 0$, i.e., that $X_if\leq 3$ a.e. on $\G$. This proves \eqref{eq:funz2}. 

This proof of \eqref{eq:funz3} is completely analogous and we omit it.
\end{proof}

The following lemma is most likely well-known; we however provide a proof for the sake of completeness.

\begin{lemma}\label{lem:zeriLip}
Let $f:\R^k\to\R^k$ be a Lipschitz map such that there exists $\de>0$ for which
\begin{equation}\label{eq:conddelta}
\langle f(x+v)-f(x),v\rangle \geq \de|v|^2\qquad\text{for every }x,v\in\R^k.
\end{equation}
Then there exists a unique $\bar x\in\R^k$ such that $f(\bar x)=0$.
\end{lemma}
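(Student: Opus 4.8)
\textbf{Proof plan for Lemma~\ref{lem:zeriLip}.} The statement is a standard surjectivity/injectivity result for strongly monotone Lipschitz maps on $\R^k$, and the plan is to prove uniqueness first, then existence by a fixed-point argument. For uniqueness, suppose $f(\bar x) = f(\bar x') = 0$ with $\bar x \neq \bar x'$, and apply~\eqref{eq:conddelta} with $x = \bar x$ and $v = \bar x' - \bar x$: this gives $0 = \langle f(\bar x') - f(\bar x), v\rangle \geq \de|v|^2 > 0$, a contradiction. Hence at most one zero can exist.

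For existence, the plan is to set up a contraction. Let $L$ denote the Lipschitz constant of $f$, and for a parameter $\lambda > 0$ to be chosen, define the map $g:\R^k\to\R^k$ by $g(x) := x - \lambda f(x)$; clearly $f(\bar x) = 0$ if and only if $\bar x$ is a fixed point of $g$. To estimate $|g(x) - g(y)|$ for $x, y \in \R^k$, write $v := x - y$ and compute
\[
|g(x) - g(y)|^2 = |v - \lambda(f(x) - f(y))|^2 = |v|^2 - 2\lambda\langle f(x) - f(y), v\rangle + \lambda^2|f(x) - f(y)|^2.
\]
Using~\eqref{eq:conddelta} (with the roles of $x$ and $x+v$ played by $y$ and $y + v = x$) to bound $\langle f(x) - f(y), v\rangle \geq \de|v|^2$ from below, and the Lipschitz bound $|f(x) - f(y)| \leq L|v|$ from above, one gets
\[
|g(x) - g(y)|^2 \leq (1 - 2\lambda\de + \lambda^2 L^2)|v|^2.
\]
Choosing $\lambda \in (0, 2\de/L^2)$ makes the factor $1 - 2\lambda\de + \lambda^2 L^2$ strictly less than $1$ (for instance $\lambda = \de/L^2$ gives $1 - \de^2/L^2 < 1$, noting $\de \leq L$ from~\eqref{eq:conddelta}), so $g$ is a contraction on the complete metric space $\R^k$. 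By the Banach fixed-point theorem, $g$ has a (unique) fixed point $\bar x$, which is the desired zero of $f$.

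I do not expect any genuine obstacle here: the only mild point of care is verifying that the contraction factor can indeed be made less than $1$, which requires knowing $\de \le L$ — but this follows immediately from~\eqref{eq:conddelta} combined with the Lipschitz estimate, since $\de|v|^2 \le \langle f(x+v)-f(x), v\rangle \le |f(x+v)-f(x)|\,|v| \le L|v|^2$ for all $v$. Everything else is routine.
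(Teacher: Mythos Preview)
Your proof is correct and takes a genuinely different route from the paper. The paper proceeds by induction on $k$: for the inductive step from $k$ to $k+1$, it uses the one-dimensional case to show that for each $x\in\R^k$ there is a unique $g(x)\in\R$ with $f_{k+1}(x,g(x))=0$, proves that $g$ is Lipschitz, and then applies the inductive hypothesis to the map $h(z):=(f_1,\dots,f_k)(z,g(z))$ after checking it inherits the strong monotonicity condition. Your argument instead treats all dimensions at once via the Banach fixed-point theorem applied to $g(x)=x-\lambda f(x)$, which is the standard approach for strongly monotone Lipschitz maps. Your route is shorter and more direct; the paper's inductive approach is slightly more hands-on and, as a byproduct, exhibits the level set $\{f_{k+1}=0\}$ as a Lipschitz graph over $\R^k$, though that extra information is not used elsewhere.
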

\begin{proof}
We reason by induction on $k$ and leave the case $k=1$ as an  exercise to the reader. We assume that the lemma holds for some $k\geq 1$ and we prove it for $k+1$. 

By the one-dimensional case of the lemma, for every $x\in\R^k$ there exists a unique $g(x)\in\R$ such that $f_{k+1}(x,g(x))=0$; we claim that $g$ is Lipschitz continuous. Letting $L$ denote the Lipschitz constant of $f$ we indeed have for every $x,y\in\R^k$
\begin{align*}
f_{k+1}(y,g(x)+\tfrac L\de|y-x|) \stackrel{\eqref{eq:conddelta}}{\geq} &  f_{k+1}(y,g(x)) + L|y-x|\\
\stackrel{\phantom{\eqref{eq:conddelta}}}{\geq} & f_{k+1}(x,g(x)) - L|y-x| + L|y-x|\ =\ 0
\end{align*}
and 
\begin{align*}
f_{k+1}(y,g(x)-\tfrac L\de|y-x|) \stackrel{\eqref{eq:conddelta}}{\leq} &  f_{k+1}(y,g(x)) - L|y-x|\\
\stackrel{\phantom{\eqref{eq:conddelta}}}{\leq} & f_{k+1}(x,g(x)) + L|y-x| - L|y-x|\ =\ 0.
\end{align*}
The last two displayed formulae imply that
\[
g(x)-\tfrac L\de|y-x| \leq g(y) \leq g(x)+\tfrac L\de|y-x|
\]
and the Lipschitz continuity of $g$ follows. In particular, the function $h:\R^k\to\R^k$ defined by $h(z):=(f_1,\dots,f_k)(z,g(z))$ is Lipschitz continuous; since 
\[
\langle h(z+v)-h(z),v\rangle \geq \de|v|^2\qquad\text{for every }z,v\in\R^k,
\]
by inductive assumption there is a unique $\bar z\in\R^k$ such that $h(\bar z)=0$. It follows that $\bar x:=(\bar z,g(\bar z))$ is the unique zero of $f$, which concludes the proof.
\end{proof}

Before passing to the  the main proof of this section, we recall once more that $\V$ is identified with $\R^k$ by
\[
\R^k\ni(v_1,\dots,v_k)\longleftrightarrow \exp(v_1X_1+\dots+v_kX_k)\in \V.
\]
This identification is understood, in particular, when considering scalar products between elements of $\R^k$ and $\V$ as in \eqref{eq:(b)2}.


\begin{remark}\label{rem:coercivoderivate}
It is easily seen that, for a given Lipschitz map $f:\G\to\R^k$,  statement~\eqref{eq:(b)2} is equivalent to the uniform ellipticity (a.k.a.~coercivity) of the matrix col$[X_1f|\dots|X_k f]$, i.e., to the fact that
\begin{equation}\label{eq:b(2)equiv}
\text{col$[X_1f|\dots|X_k f]$}(x)\geq \de\: I\qquad\text{for $\leb d$-a.e. }x\in\G
\end{equation}
in the sense of bilinear forms, where $I$ denotes the $k\times k$ identity matrix. Observe that such a matrix is defined a.e. on $\G$ by Pansu's Theorem~\cite[Th\'eor\`eme 2]{PansuAnnals89}.
\end{remark}

\begin{proof}[Proof of Theorem \ref{teo:equivdefLip}]
{\em Step 1.} We prove the implication (a)$\Rightarrow$(b). Consider the map 
\[
f:=(f_{1,\ep},\dots,f_{k,\ep}):\G\to\R^k,
\]
where $\ep\in(0,1)$ will be determined later and the functions $f_{i,\ep}$ are provided by Lemma \ref{lem:lemmafunzione}. The inclusion \eqref{eq:(b)1} follows from \eqref{eq:funz1}. In order to prove \eqref{eq:(b)2} we first observe that for $\leb d$-a.e. $x\in\G$ and $\leb k$-a.e. $v\in\R^k\equiv\V$ one has
\begin{align*}
\langle f(xv)-f(x),v\rangle =& \int_0^1 \left\langle \sum_{j=1}^kv_jX_jf(x\delta_tv),v\right\rangle\:dt,
\end{align*}
where we used the fact that $x\delta_tv=\eexp(t(v_1X_1+\dots+v_kX_k))(x)$. Therefore
\begin{align*}
\langle f(xv)-f(x),v\rangle = & \int_0^1  \sum_{i,j=1}^kv_iv_jX_jf_i(x\delta_tv) \:dt\\
= & \int_0^1  \sum_{i=1}^kv_i^2X_if_i(x\delta_tv) \:dt + \int_0^1  \sum_{\substack{i,j=1,\dots, k\\i\neq j}}v_iv_jX_jf_i(x\delta_tv) \:dt\\
\geq & (1-3(k^2-k)\ep) |v|^2.
\end{align*}
where, in the last inequality, we used \eqref{eq:funz2} and \eqref{eq:funz3}. If $k=1$, this inequality is  \eqref{eq:(b)2} with $\de=1$; if $k\geq 2$, \eqref{eq:(b)2} follows with $\de=1/2$ provided we choose $ \ep=(6(k^2-k))^{-1}$. This proves the implication (a)$\Rightarrow$(b).
%
%

{\em Step 2.} We now prove the converse implication (b)$\Rightarrow$(a); it is enough to prove that $Z_f:=\{f=0\}$ is the intrinsic graph of some intrinsic Lipschitz function $\f:\W\to\V$. For every $w\in\W$ define $f_w:\V\equiv\R^k\to\R^k$ as $f_w(v):=f(wv)$. By Lemma \ref{lem:zeriLip} there is a unique $\bar v=\bar v(w)$ such that $f_w(\bar v)=0$; we define $\f:\W\to\V$ by $\f(w):=\bar v$. For $\la\in(0,1)$, that will be fixed later, we introduce the homogeneous cone
\[
D_\la:=\bigcup_{v\in\V} \overline{B(v,\la \|v\|_\G)}=\bigcup_{v\in\V} v\overline{B(0,\la \|v\|_\G)}.
\]
By a simple topological argument (see e.g. \cite[Remark A.2]{DMV}) there exists $\al=\al(\la)>0$ such that $\cono_\al\subset D_\la$; in order to prove that $\f$ is intrinsic Lipschitz it is sufficient to show that
\begin{equation}\label{eq:ccccc}
Z_f\cap xD_\la =\{x\}\qquad\forall\; x\in Z_f.
\end{equation}
To this aim, for every $x\in Z_f$ and every $y\in xD_\la\setminus\{x\}$ one has by definition
\[
y=xvz\qquad \text{for some }v\in\V\setminus\{0\}\text{ and $z\in\G$ such that }d(0,z)\leq\la d(0,v).
\]
Denoting by $L$ the Lipschitz constant of $f$ we obtain
\begin{align*}
\langle f(y),v\rangle = & \langle f(xvz)-f(xv),v\rangle +  \langle f(xv)-f(x),v\rangle\\
\geq & - L\,d(0,z)|v| + \de|v|^2 \\
\geq & -L\la\,d(0,v)|v|+ \de|v|^2\\
\geq & -L\la\, C_*\|v\|_*|v|+ \de|v|^2\\
\geq & (\de-L\la C_*c_k)|v|^2
\end{align*}
for some positive constant $c_k$ depending on $k$ only. It follows that, provided $\la$ is chosen small enough, one has $\langle f(y),v\rangle>0$, hence $f(y)\neq 0$ and  $y\notin Z_f$. This proves \eqref{eq:ccccc} and concludes the proof of the theorem.
\end{proof}

\begin{remark}\label{rem:graficoglobale}
In Step 2 of the previous proof we showed that, if $f$ is as in Theorem \ref{teo:equivdefLip} (b), then the level set $\{f=0\}$ is an {\em entire} intrinsic Lipschitz graph, i.e., it is the intrinsic graph of a $\V$-valued map $\f$ defined on the whole $\W$.
\end{remark}

\begin{remark}\label{rem:dipendenzaalfa}
It is worth pointing out that, in the implication (b)$\Rightarrow$(a), the aperture $\al$ depends, apart from geometric quantities, only on the Lipschitz  and coercivity constants $L,\de$ of $f$. More precisely: if $f$ is as in Theorem \ref{teo:equivdefLip} (b), the Lipschitz constant $L$ of $f$ is not greater than some $\bar L>0$ and the coercivity constant $\de$ is not smaller than some $\bar\de>0$, then the aperture $\al$ (and hence the intrinsic Lipschitz constant of $\f$) can be controlled in terms of $\bar L$ and $\bar\de$ only.

A similar remark applies at the level of the implication (a)$\Rightarrow$(b): in fact, if  $\f$ is as in Theorem \ref{teo:equivdefLip} (a) and the intrinsic Lipschitz constant  of $\f$ is not greater than some positive $\Lambda$, then statement (b) in Theorem \ref{teo:equivdefLip}  holds with $\de=1/2$ and (by the second part of Lemma~\ref{lem:lemmafunzione}) a function $f$ with Lipschitz constant bounded in terms of $\Lambda$ only. 
\end{remark}

\subsection{Extension  and smooth approximation of co-horizontal intrinsic Lipschitz maps}
Given Remark~\ref{rem:graficoglobale}, Theorem~\ref{thm:estensione} is an  immediate consequence of  Theorem \ref{teo:equivdefLip}.

\begin{proof}[Proof of Theorem~\ref{thm:estensione}]
Let $S:=\gr_\f$ and consider $f:\G\to\R$ as given by Theorem \ref{teo:equivdefLip} (b); by Remark~\ref{rem:graficoglobale}, the level set $Z_f:=\{f=0\}$ is the graph of an intrinsic Lipschitz function  $\tilde\f:\W\to\V$ defined on the whole $\W$. Since $\gr_\f=S\subset Z_f=\gr_{\tilde\f}$, $\tilde\f$ is an extension of $\f$. The bound on the intrinsic Lipschitz constant of $\tilde \f$ follows from Remark~\ref{rem:dipendenzaalfa}.
\end{proof}

We now state a technical improvement of Theorem \ref{teo:equivdefLip} that will provide the key tool in the proof of the approximation result stated in Theorem \ref{teo:approssimiamoooooooo}. In case $k=1$, Proposition~\ref{prop:equivdefLipCinfty} should be compared with~\cite[Lemma~4.3]{VAnnSNS}.

\begin{proposition}\label{prop:equivdefLipCinfty}
Let $A\subset\W$ be nonempty and $\f:A\to\V$ be intrinsic Lipschitz. Then there exist $\de>0$ and a Lipschitz map $f:\G\to\R^k$ such that \eqref{eq:(b)1} (with $S:=\gr_\f$) and \eqref{eq:(b)2} hold  together with
\begin{align}
& f\in C^\infty(\G\setminus\{f=0\}).\label{eq:(b)3}
\end{align}
Moreover, if the intrinsic Lipschitz constant  of $\f$ is not greater than some positive $\Lambda$, then the statement holds with $\de=1/4$ and  a function $f:\G\to\R$ with Lipschitz constant bounded in terms of $\Lambda$ only.
\end{proposition}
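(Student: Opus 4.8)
The plan is to take the Lipschitz map $f=(f_{1,\ep},\dots,f_{k,\ep}):\G\to\R^k$ produced in the proof of Theorem~\ref{teo:equivdefLip} (Step 1), which already satisfies \eqref{eq:(b)1} and \eqref{eq:(b)2} with $\de=1/2$ (and with Lipschitz constant controlled by $\Lambda$ only, by Remark~\ref{rem:dipendenzaalfa}), and to regularize it away from its zero set by a group convolution, losing only a little of the coercivity and a little of the Lipschitz bound. The natural tool is the standard mollification on Carnot groups: fix $\chi\in C^\infty_c(\G)$ nonnegative with $\int_\G\chi\,d\leb d=1$ and $\chi(x^{-1})=\chi(x)$, set $\chi_\si(x):=\si^{-Q}\chi(\de_{1/\si}x)$, and define $f_\si(x):=(f*\chi_\si)(x)=\int_\G f(x z^{-1})\chi_\si(z)\,d\leb d(z)$. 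It is classical (see~\cite[\S1.B]{FS}) that $f_\si$ is smooth, that $f_\si\to f$ locally uniformly, that $\|\,\nabla_\G f_\si\,\|_{L^\infty}\leq\|\,\nabla_\G f\,\|_{L^\infty}$, and — the key algebraic point — that the horizontal derivatives commute with this convolution in the sense $X_j f_\si=(X_j f)*\chi_\si$, because left-invariant vector fields are differential operators acting on the left-translation variable $x$ while the convolution integrates over a right-translation. Consequently the bound \eqref{eq:b(2)equiv} passes to $f_\si$: averaging the inequality $\mathrm{col}[X_1 f|\dots|X_k f](xz^{-1})\geq\tfrac12 I$ against the probability density $\chi_\si$ gives $\mathrm{col}[X_1 f_\si|\dots|X_k f_\si](x)\geq\tfrac12 I$ everywhere, hence (Remark~\ref{rem:coercivoderivate}) $f_\si$ satisfies \eqref{eq:(b)2} with $\de=1/2$ as well.

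The remaining issue is that $f_\si$ need not vanish exactly on $\gr_\f=\{f=0\}$, so \eqref{eq:(b)1} is not automatic for $f_\si$. To fix this I would \emph{patch $f$ and $f_\si$ together}: choose a cutoff depending only on $\|f(x)\|$. Concretely, since $f$ is Lipschitz with, say, $\|f(xv)-f(x)\|\le L|v|$ and $\langle f(xv)-f(x),v\rangle\ge\tfrac12|v|^2$, the distance of $x$ from $\{f=0\}$ is comparable to $\|f(x)\|$ from above, so $\|f(x)\|$ is a good surrogate for that distance. Pick $\eta\in C^\infty(\R)$ with $\eta\equiv0$ near $0$, $\eta\equiv1$ on $[1,\infty)$, $0\le\eta\le1$, and set
\[
F(x):=\big(1-\eta(\|f(x)\|/\tau)\big)\,f(x)+\eta(\|f(x)\|/\tau)\,f_{\si(x)}(x)
\]
with $\si(x)$ itself chosen smoothly, of the order of $\mathrm{dist}(x,\{f=0\})$, and $\tau$ small. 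On $\{f=0\}$ and in a neighbourhood we have $F=f$, so \eqref{eq:(b)1} holds; on $\{f\ne0\}$, where the cutoff is $\equiv1$ for $x$ far enough and $\si$ stays bounded below on compacta, $F$ is a smooth function of $x$ (here one needs that $\|f\|$ is smooth where $f\ne 0$, which fails at $f=0$ but there the cutoff vanishes to infinite order). Alternatively — and this is cleaner — one runs a Whitney-type partition-of-unity argument on the open set $\{f\ne0\}$: cover it by balls on which a suitably scaled $f_\si$ is $C^\infty$ and $\tfrac13$-coercive and within $\tau$ of $f$ in $C^0$ and $C^1_\H$, and glue by a smooth partition of unity subordinate to that cover whose functions have gradients controlled by the reciprocal of the local scale. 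Either way one produces $F\in C^\infty(\G\setminus\{F=0\})$, $\{F=0\}=\{f=0\}=\gr_\f$, $F$ Lipschitz with constant controlled by $\Lambda$, and $\mathrm{col}[X_1F|\dots|X_kF]\ge\tfrac14 I$ (the loss from $1/2$ to $1/4$ absorbs the error terms coming from the cutoff derivatives times $\|f_\si-f\|$, which are $O(\tau)$ and $O(\si/\mathrm{dist})=O(1)$ controllably small after one more shrinking of constants).

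I expect the \textbf{main obstacle} to be precisely this gluing step: one must carry the \emph{matrix} coercivity \eqref{eq:b(2)equiv}, not merely $|\nabla_\G f|\ge\de$, through the patching, so the error terms $\big(\nabla_\G(\text{cutoff})\big)\otimes(f_\si-f)$ and $(\text{cutoff})\cdot(X_jf_\si-X_jf)$ must be shown to be small \emph{uniformly} in the bilinear-form sense, which forces the convolution scale $\si$ to be tied to $\mathrm{dist}(\cdot,\{f=0\})$ and forces careful bookkeeping near the zero set. The estimate $X_j f_\si=(X_jf)*\chi_\si$ and the fact that a convex-combination average of matrices $\ge\tfrac12 I$ is $\ge\tfrac12 I$ are what make the argument go through; once the scale is chosen right, the rest is routine. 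Finally, for the quantitative last sentence, since Remark~\ref{rem:dipendenzaalfa} gives $f$ with Lipschitz constant bounded by $\Lambda$ only and coercivity constant $\ge1/2$, and since convolution and the gluing do not increase the Lipschitz constant and cost at most a factor $2$ in coercivity, the assertion follows with $\de=1/4$.
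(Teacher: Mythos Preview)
Your ``cleaner alternative'' --- the Whitney-type partition of unity on $\{f\ne0\}$ with group mollification at the local scale --- is exactly the paper's proof, and you have correctly identified the main technical point (carrying the matrix coercivity through the gluing by making the error $\nabla_\G u_j\otimes(f\star K_{\ep_j}-f)$ small via the choice of $\ep_j$). Two small corrections: the convolution for which left-invariant derivatives pass through is $(f\star\chi_\si)(x)=\int f(z^{-1}x)\chi_\si(z)\,dz$, not $\int f(xz^{-1})\chi_\si(z)\,dz$ as you wrote (with your formula one differentiates the kernel, not $f$); and your first approach via $\eta(\|f\|/\tau)$ cannot work as stated, since $f$ is only Lipschitz and hence $\eta(\|f\|/\tau)$ is nowhere smooth, not merely non-smooth at $\{f=0\}$.
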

\begin{proof}
By Theorem \ref{teo:equivdefLip} and Remark \ref{rem:coercivoderivate} there exist $\de>0$ and a Lipschitz function $g:\G\to\R^k$ such that $S$ is contained in the level set $Z_g:=\{g=0\}$ and
\begin{align}
& \widehat\nabla_\G g(x):=\text{col}[X_1g|\dots|X_kg](x) \geq 2\de I\qquad\text{for $\leb d$-a.e. }x\in\G.\label{eq:nablahatg}
\end{align}
By Remark~\ref{rem:dipendenzaalfa}, one can also assume  $\de=1/4$ and that the Lipschitz constant of $g$ is bounded in terms of $\Lambda$ only. For $j\in\N$ choose 
\begin{itemize}
\item[(i)] bounded open sets $(U_j)_{j\in\N}$ such that $\overline{U_j}\subset\G\setminus Z_g$ and $\G\setminus Z_g=\cup_j U_j$
\item[(ii)] positive numbers $\ep_j$ such that $\ep_j<d(U_j,Z_g)$
\item[(iii)] non-negative functions $u_j\in C^\infty_c(U_j)$ forming a partition of the unity on $\G\setminus Z_g$, i.e., $\sum_j u_j=1\text{ on }\G\setminus Z_g$.
\end{itemize}  
We can also assume that $\sum_j \chi_{U_j} \leq M$ for some $M>0$, where $\chi_{U_j} $ denotes the characteristic function of  $U_j$. Notice, in particular, that the sum in (iii) is locally finite. 

We are going to use the group convolution $\star$, see e.g. \cite[Chapter~1]{FS}: here we only recall  that, given $G:\G\to\R^k$ and  $H\in C^\infty_c(\G)$, the group convolution
\[
(G\star H)(x):=\int_{\G} G(y^{-1}x)H(y)d\leb d(y)= \int_\G G(y)H(xy^{-1})d\leb d(y)
\]
is a smooth function satisfying
\[
X(G\star H)=(XG)\star H\qquad\text{for every }X\in\galg.
\]
We fix a positive kernel $K\in C^\infty_c(B(0,1))$ such that $\int_\G K\,d\leb d=1$ and, for $\ep>0$, we set $K_\ep:=\ep^{-Q}K\circ\de_{1/\ep}$. Possibly reducing $\ep_j>0$, as  specified later in~\eqref{eq:wurstel1},~\eqref{eq:wurstel2} and~\eqref{eq:wurstel3}, define
\[
f:=\begin{cases}
\sum_{j} u_j (g\star K_{\ep_j})\qquad &\text{on }\G\setminus Z_g\\
0 &\text{on } Z_g.
\end{cases}
\]
Notice that the sum above is locally finite. We also observe that \eqref{eq:(b)1} holds because $S\subset Z_g\subset\{f=0\}$; actually, the equality $Z_g=\{f=0\}$ will come as a byproduct of what follows.

We first check that $f$ is continuous on $\G$; it is clearly smooth on $\G\setminus Z_g$, hence continuity has to be checked only at points of $Z_g$. Up to reducing $\ep_j$, we can assume that
\begin{equation}\label{eq:wurstel1}
|(g\star K_{\ep_j})-g| \leq d(U_j,Z_g)\qquad\text{ on }U_j
\end{equation}
so that
\[
|f(x)-g(x)|\leq\sum_{j} u_j(x)\:| (g\star K_{\ep_j})(x)-g(x)|\leq  d(x,Z_g)\qquad\forall\ x\in\G\setminus Z_g.
\]
This implies that $f$ is continuous also on $Z_g=\{g=0\}$ because, for  every $\bar x\in Z_g$, one has
\[
\lim_{x\to\bar x} |f(x)-f(\bar x)|
=\lim_{x\to\bar x} |f(x)|
\leq\lim_{x\to\bar x}  |g(x)|+d(x,\bar x)=0.
\] 

We prove that $f$ is Lipschitz continuous on $\G$. Since $Z_g$ is an intrinsic Lipschitz graph, by \eqref{eq:grLiptrascurabili} we have $\leb d(Z_g)=0$; by \eqref{eq:W1inf=Lip},  it is then enough to prove that $|\nabla_\G f|$ is bounded on $\G\setminus Z_g$. Using the properties of convolutions  and the fact that $\sum_j \nabla_\G u_j=0$ one gets
\begin{align}
\nabla_\G f = & \sum_j(g\star K_{\ep_j})\otimes(\nabla_\G u_j) + \sum_j u_j((\nabla_\G g)\star K_{\ep_j})\nonumber\\\
=& \sum_j(g\star K_{\ep_j}-g)\otimes(\nabla_\G u_j) + \sum_j u_j((\nabla_\G g)\star K_{\ep_j}).\label{eq:derivconvol}
\end{align}
The second sum in \eqref{eq:derivconvol} is  bounded by $\|\nabla_\G g\|_{L^\infty(\G)}$ and then by a multiple of the Lipschitz constant $L$ of $g$; possibly reducing $\ep_j$ we can assume that
\begin{equation}\label{eq:wurstel2}
|g\star K_{\ep_j}-g|\leq (\sup |\nabla_\G u_j|)^{-1}\qquad \text{on }U_j
\end{equation}
so that the first sum in \eqref{eq:derivconvol} is bounded by $M$. This proves that $f$ is Lipschitz continuous on $\G$ with Lipschitz constant bounded in terms of the Lipschitz constant of $g$ (and then in terms of $\Lambda$ only).

Eventually, we have to check that $f$ satisfies \eqref{eq:(b)2}; we prove this by checking the equivalent inequality \eqref{eq:b(2)equiv}. Writing $\widehat\nabla_\G f:=\text{col}[X_1f|\dots|X_k f]$ and reasoning as in \eqref{eq:derivconvol} we obtain
\[
\widehat\nabla_\G f =  \sum_j(g\star K_{\ep_j}-g)\otimes(\widehat\nabla_\G u_j) + \sum_j u_j((\widehat\nabla_\G g)\star K_{\ep_j})=: A+B.
\]
By \eqref{eq:nablahatg} we have $B\geq2\de I$ a.e. on $\G$. Given $\eta>0$, possibly reducing $\ep_j$ we can assume that
\begin{equation}\label{eq:wurstel3}
|g\star K_{\ep_j}-g|\leq \eta(\sup |\widehat\nabla_\G u_j|)^{-1}\qquad \text{on }U_j
\end{equation}
so that $\|A\|_{L^\infty(\G)}\leq M\eta$.  Inequality \eqref{eq:b(2)equiv} immediately follows  provided one chooses $\eta=\eta(\de)$ small enough. The proof is complete.
\end{proof}

We have all the tools needed for the proof of the approximation result stated in Theorem~\ref{teo:approssimiamoooooooo}.  

\begin{proof}[Proof of Theorem~\ref{teo:approssimiamoooooooo}]
By Theorem~\ref{thm:estensione} we can assume without loss of generality that $A=\W$. Let $f:\G\to\R^k$ be as in Proposition \ref{prop:equivdefLipCinfty}; for $i\in\N$ we consider
\[
Z_i:=\{f=(1/i,0,\dots,0)\}.
\]
By Theorem \ref{teo:equivdefLip} (applied to the function $f-(1/i,0,\dots,0)$) and Remark \ref{rem:graficoglobale}, for every $i\in\N$ the level set $Z_i$ is the graph of an intrinsic Lipschitz function $\f_i:\W\to\V$  defined on the whole $\W$. By Remark \ref{rem:dipendenzaalfa}, the intrinsic Lipschitz constant of $\f_i$ is bounded (uniformly in~$i$)  in terms of the intrinsic Lipschitz constant of $\f$. Recalling once more the identification $\V\equiv\R^k$, which gives the equality $\f_i(w)=\f(w)(\f_i(w)-\f(w))$, we have for every $w\in\W$
\begin{align*}
\de |\f_i(w)-\f(w)|^2 & \leq  \left\langle f(w\f_i(w))-f(w\f(w)), \f_i(w)-\f(w)\right\rangle\\
&\leq  |f(w\f_i(w))-f(w\f(w))|\:|\f_i(w)-\f(w)|\\
&=  \frac{|\f_i(w)-\f(w)|}{i}\,,
\end{align*}
which proves that $\f_i\to\f$ uniformly as $i\to\infty$.

We have only to show that each $\f_i$ is $C^\infty$ smooth. The quickest way is probably to reason in exponential coordinates {\em of the second type}, i.e., to identify $\G\equiv\R^d$ by
\begin{align*}
\R^d\ni x=(x_1,\dots,x_d)\longleftrightarrow & \exp(x_{k+1}X_{k+1}+\dots+x_dX_d)\exp(x_1X_1+\dots+x_kX_k)\in\G
\end{align*}
Since $\V$ is Abelian we have for every $j=1,\dots,k$
\begin{align*}
&\phantom{=}\exp(x_{k+1}X_{k+1}+\dots+x_dX_d)\exp(x_1X_1+\dots+x_kX_k)\\
&= \exp(x_{k+1}X_{k+1}+\dots+x_dX_d)\exp(x_1X_1+\dots+\widehat{x_jX_j}+\dots+x_kX_k)\exp(x_jX_j)\\
&=\eexp(x_jX_j)\Big(\eexp(x_1X_1+\dots+\widehat{x_jX_j}+\dots+x_kX_k)\big(\eexp(x_{k+1}X_{k+1}+\dots+x_dX_d)(0)\big)\Big),
\end{align*}
which proves that in these  coordinates $X_j=\partial_{x_j}$ for all $j=1,\dots,k$. Since $f\in C^\infty(\R^d\setminus\{f=0\})$, the classical Implicit Function Theorem ensures that the level set $Z_i=\{f=(1/i,0,\dots,0)\}$ is the graph of a $C^\infty$ smooth function $\psi_i=\psi_i(x_{k+1},\dots,x_d):\R^{d-k}\to\R^k$. Writing $x=(x',x'')\in\R^k\times\R^{d-k}$ we observe that
\begin{align*}
\phantom{\Longleftrightarrow }& x=(x_1,\dots,x_d)\in Z_i\\
\Longleftrightarrow & x'=(x_1,\dots,x_k)=\psi_i(x'')\\
\Longleftrightarrow & x=\eexp\Big((\psi_i(x''))_1X_1+\dots+(\psi_i(x''))_kX_k)(\eexp(x_{k+1}X_{k+1}+\dots+x_dX_d)(0)\Big)\\
\Longleftrightarrow & x= \exp(x_{k+1}X_{k+1}+\dots+x_dX_d)\exp((\psi_i(x''))_1X_1+\dots+(\psi_i(x''))_kX_k),
\end{align*}
which in turn proves that $\f_i$ and $\psi_i$ coincide as maps from $\W$ to $\V$. This proves that $\f_i$ is smooth, as wished.
\end{proof}




\section{The Heisenberg group}\label{sec:Heisenberg}
From now on we will always work in  {\em Heisenberg groups}, that provide the most notable examples of Carnot groups; we refer to~\cite{CDPTbook} for a  thorough introduction. 
For every $n\geq 1$, the  $n$-th Heisenberg group $\H^n$ is the connected, simply connected and nilpotent Lie group $\H^n$ associated with the $(2n+1)$-dimensional stratified Lie algebra $\h$ generated by   elements $X_1,\dots,X_n,Y_1,\dots,Y_n,T$  whose Lie brackets  vanish except for 
\[
[X_j,Y_j]=T\qquad\text{for every }j=1,\dots,n.
\]
The algebra stratification is given by $\h=\h_1\oplus\h_2$, where
\[
\text{$\h_1:=\Span\{X_1,\dots,X_n,Y_1,\dots,Y_n\},\qquad \h_2:=\Span\{T\}$;}
\]
the second layer $\h_2$ is the center of the algebra. 

We   identify $\H^n$ with $\R^{2n+1}=\R^n_x\times\R^n_y\times\R_t$ by means of exponential coordinates
\[
\H^n\ni\exp(x_1X_1+\dots+x_nX_n+y_1Y_1+\dots+y_nY_n+tT)\longleftrightarrow(x,y,t)\in\R^{2n+1}.
\]
In these coordinates, the group law reads as
\[
(x,y,t)(x',y',t')=(x+x',y+y',t+t'+ \tfrac12 \langle x,y'\rangle-\tfrac12\langle x',y\rangle)
\]
and left-invariant vector fields have the form
\[
X_j=\partial_{x_j}-\frac{y_j}2 \partial_t,\qquad Y_j=\partial_{y_j}+\frac{x_j}2\partial_t,\qquad T=\partial_t.
\]
We also observe that, in exponential coordinates, we have  $p^{-1}=-p$ for every $p\in\H^n$. As in Section~\ref{sec:gruppi}, a one-parameter family $(\de_\la)_{\la>0}$ of group automorphisms is provided by the dilations $\de_\la(x,y,t):=(\la x,\la y,\la^2 t)$. 

We fix a homogeneous distance $d$ on $\H^n$ and we denote by $B(p,r)$, $p\in\H^n$, $r>0$, the associated open balls; for $p\in\H^n$ we also write $\|p\|_\H:=d(0,p)$. It will be convenient to assume that $d$ is rotationally invariant, i.e., that
\begin{equation}\label{eq:drotinv}
\|(x,y,t)\|_\H=\|(x',y',t)\|_\H\qquad\text{whenever }|(x,y)|=|(x',y')|.
\end{equation}
Relevant examples of rotationally invariant homogeneous distances  are provided by the Carnot-Carath\'eodory distance $d_{cc}$ 
\[
d_{cc}(p,q):=\inf\left\{\|h\|_{L^1([0,1],\R^{2n})} : 
\begin{array}{l}
\text{the curve $\ga_h:[0,1]\to\H^n$ defined by}\\
\ga_h(0)=p,\ \dot\ga_h=\sum_{j=1}^n(h_jX_j+h_{j+n}Y_j)(\ga_h)\\
\text{has final point }\ga_h(1)=q
\end{array}
\right\},
\]
by the left-invariant distance $d_\infty$ (see~\cite[Proposition~2.1]{FSSCMathAnn2001}) such that
\begin{equation}\label{eq:dinfty}
d_\infty(0,(x,y,t)):=\max\{|(x,y)|,2|t|^{1/2}\},
\end{equation}
and by the Kor\'anyi (or Cygan-Kor\'anyi, see~\cite{Cygan}) distance  $d_{K}$ defined by
\begin{equation}\label{eq:dKoranyi}
d_K(0,(x,y,t)):=\bigl((|x|^2+|y|^2)^2+16t^2\bigr)^{1/4}.
\end{equation}

The Lebesgue measure $\leb{2n+1}$ is a Haar measure on $\H^n\equiv\R^{2n+1}$ and  $Q:=2n+2$ is the homogeneous dimension of $\H^n$, in particular
\[
\leb{2n+1}(B(p,r))=r^Q\leb{2n+1}(B(0,1))\qquad\text{for every $p\in\H^n$ and } r>0.
\]
Actually, also the Hausdorff and spherical Hausdorff measures $\Haus^Q,\Shaus^{Q}$ are Haar measure in $\H^n$, hence $\leb{2n+1},\Haus^Q$ and $\Shaus^Q$ coincide up to multiplicative constants. Recall in particular that the spherical Hausdorff measure $\Shaus^k$ of dimension $k\geq 0$ is defined by
\[
\Shaus^k(E):=\lim_{\de\to 0^+} \inf\left\{ \sum_{i=0}^\infty (2r_i)^k:E\subset\bigcup_{i=0}^\infty B(p_i,r_i)\text{ for some }p_i\in\H^n,r_i\in(0,\de)\right\}.
\]

One of the aims of this section is to introduce  Rumin's complex of differential forms in $\H^n$, for which we follow the presentations in \cite{FSSCAIM,Rumin}.

\subsection{Multi-linear algebra}\label{subsec:algebraHeis}
The Heisenberg group $\H^n$ is a  contact manifold: the {\em contact form} $\theta$ defined by $\theta_{|\h_1}=0$  and $\theta(T)=1$ satisfies $\theta\wedge(d\theta)^n\neq 0$ and, actually, the $2n+1$-form $\theta\wedge(d\theta)^n$ is a left-invariant volume form. In coordinates 
\[
\theta=dt+\frac12\sum_{j=1}^n(y_jdx_j-x_jdy_j),
\]
while 
\[
d\theta=-\sum_{j=1}^n dx_j\wedge dy_j
\]
is the standard symplectic 2-form in $\R^{2n}$, up to a sign. Notice that the basis $dx_1,\dots,dy_n,\theta$ is the dual basis to $X_1,\dots,Y_n,T$; observe also that here we are  identifying the dual of $\h$ with left-invariant 1-forms on $\H^n$, in the same way as the algebra $\h$ can be identified with left-invariant vector fields.

It will be sometimes convenient to denote the family $X_1,\dots,Y_n,T$ by $W_1,\dots,W_{2n+1}$, where 
\begin{equation}\label{eq:defW}
\begin{aligned}
& W_j:=X_j &&\text{if }1\leq j\leq n\\
& W_j:=Y_{j-n}\qquad &&\text{if }n+1\leq j\leq 2n\\
& W_{2n+1}:=T.
\end{aligned}
\end{equation}
Analogously, we use $\theta_1,\dots,\theta_{2n+1}$ to denote, respectively, $dx_1,\dots,dy_n,\theta$. Given a subset $I\subset\{1,\dots,2n+1\}$ we write $I=\{i_1,\dots,i_k\}$ for $i_1<i_2<\dots<i_k$ and  we denote by $W_I,\theta_I$ the (formal) exterior products
\begin{equation}\label{eq:defW_I}
W_I:=W_{i_1}\wedge\dots\wedge W_{i_k},\qquad \theta_I:=\theta_{i_1}\wedge\dots\wedge \theta_{i_k}.
\end{equation}
We also denote by $|I|$ the cardinality of $I$. 
We can now introduce  the exterior algebras $\bwl_\ast\h$ and $\bwl^\ast\h$ of (multi-)vectors and (multi-)covectors as
\[
\bwl_\ast\h:=\bigoplus_{k=0}^{2n+1}\bwl_k\h,\qquad
\bwl^\ast\h:=\bigoplus_{k=0}^{2n+1}\bwl^k\h
\]
where $\bwl_0\h=\bwl^0\h=\R$ and
\begin{align*}
& \bwl_k\h:=\Span\{W_I:I\subset\{1,\dots,2n+1\}\text{ with }|I|=k\}\\
& \bwl^k\h:=\Span\{\theta_I:I\subset\{1,\dots,2n+1\}\text{ with }|I|=k\}.
\end{align*}
The elements of $\bwl_k\h$ and $\bwl^k\h$ are called, respectively, $k$-vectors and $k$-covectors. The inner product on $\h$ making $W_1,\dots,W_{2n+1}$ an orthonormal frame can be naturally extended to $\bwl_k\h$ in such a way that the elements $W_I$ form an orthonormal frame. In this way one can define an explicit isomorphism
\[
\bwl_k\h\ni v\longmapsto v^\ast\in \bwl^k\h
\]
by requiring $\langle w\mid  v^\ast\rangle:= \langle w,v\rangle$ for every $w\in \bwl_k\h$, where $\langle \,\cdot\mid \cdot\,\rangle$ denotes duality pairing between vectors and covectors.

We analogously introduce the exterior algebras of {\em horizontal} vectors and covectors
\[
\bwl_\ast\h_1:=\bigoplus_{k=0}^{2n}\bwl_k\h_1,\qquad
\bwl^\ast\h_1:=\bigoplus_{k=0}^{2n}\bwl^k\h_1
\]
where
\begin{align*}
& \bwl_k\h_1:=\Span\{W_I:I\subset\{1,\dots,2n\}\text{ with }|I|=k\}\\
& \bwl^k\h_1:=\Span\{\theta_I:I\subset\{1,\dots,2n\}\text{ with }|I|=k\}.
\end{align*}

\begin{remark}\label{rem:parteorizz}
Given $\tau\in\bwl_k\h$ and $\la\in\bwl^k\h$, $1\leq k\leq 2n$, we denote by $\tau_{\h_1}$ and $\la_{\h_1}$ their horizontal component, i.e., the unique $\tau_{\h_1}\in\bwl_k\h_1$ and $\la_{\h_1}\in\bwl^k\h_1$ such that $\tau=\tau_{\h_1}+\si\wedge T$ and $\la=\la_{\h_1}+\mu\wedge\theta$ for some (unique) $\si\in\bwl_{k-1}\h_1,\mu\in\bwl^{k-1}\h_1$. 
\end{remark}

Of special importance for us are vertical planes, that we now introduce. As customary, given a non-zero simple $k$-vector $\tau=\tau_1\wedge\dots\wedge \tau_k$ we denote by  $\Span\tau$ the linear space generated by $\tau_1,\dots, \tau_k$; equivalently,  $\Span\tau=\{v:v\wedge\tau=0\}$.

\begin{definition}\label{def:pianiverticali}
A set $\mathscr P\subset\H^n$ is a {\em vertical plane} of dimension $k$ if there exists a non-zero $\tau\in\bwl_{k-1}\h_1$ such that 
\[
\mathscr P=\exp(\Span(\tau\wedge T)).
\]
\end{definition}

In exponential coordinates, a vertical plane $\mathscr P$ is a $k$-dimensional linear subspace of $\H^n\equiv\R^{2n+1}=\R^{2n}\times\R$  of the form $V\times\R$ for some $(k-1)$-dimensional subspace $V\subset\R^{2n}$. A vertical plane is always a normal subgroup of $\H^n$.

\subsection{Differential forms and Rumin's complex}\label{subsec:Rumin}
The spaces $\h$, $\h_1$, $\bwl_k\h$, $\bwl^k\h$, $\bwl_k\h_1$, $\bwl^k\h_1$, as well as the spaces  $\mathcal I^k$ and $\mathcal J^{2n+1-k}$
introduced below in~\eqref{eq:IkJk}
,  canonically induce several bundles on $\H^n$, that we will denote by using the same symbol. The same convention applies to dual and quotient spaces of such spaces. As customary, we denote by $\Omega^k$ the space of smooth differential $k$-forms on $\H^n$, i.e., the  space of smooth sections of $\bwl^k\h$ (seen as a bundle on $\H^n$).

We now recall some of the spaces of differential forms introduced by M. Rumin in \cite{RuminComptesRendus1990,Rumin}. As before, we identify the left-invariant 2-form $d\theta$ in $\H^n$ with a 2-covector in $\bwl^2\h$ and we define
\begin{equation}\label{eq:IkJk}
\begin{aligned}
 \mathcal I^k:=&\{\lambda\wedge\theta+\mu\wedge d\theta:\textstyle\lambda\in\bwl^{k-1}\h,\mu\in\bwl^{k-2}\h\},
 \\
 \mathcal J^{k}:=&\{\textstyle\lambda\in\bwl^{k}\h:\lambda\wedge\theta=\lambda\wedge d\theta=0\},
\end{aligned}
\end{equation}
where we adopted the convention that $\bwl^i\h=\{0\}$ if $i<0$. Observe that $\mathcal I^0=\{0\}$. The space $\mathcal I^*:=\bigoplus_{k=0}^{2n+1}\mathcal I^k$ is the graded ideal generated by $\theta$, while $\mathcal J^*:=\bigoplus_{k=0}^{2n+1}\mathcal J^k$ is the annihilator of $\mathcal I^*$. As observed in \cite{Rumin} (see also~\cite[page~166]{FSSCAIM}), for $1\leq k\leq n$ we have $\mathcal J^k=\{0\}$ and $\mathcal I^{2n+1-k}=\bwl^{2n+1-k}\h$; these equalities are, in essence, consequences of the fact that the {\em Lefschetz operator}\footnote{Our operator $L$ actually differs by a sign from the Lefschetz one (wedge product by the standard symplectic form).}  $L:\bwl^{h}\h_1\to\bwl^{h+2}\h_1$ defined by $L(\la):=\la\wedge d\theta$ is injective for $h\leq n-1$ and surjective for $h\geq n-1$, see for instance the beautiful proof in~\cite[Proposition~1.1]{BryantGG}.

\begin{remark}\label{rem:J2n+1-k}
Recalling the notation introduced in Remark~\ref{rem:parteorizz}, it can be easily proved that $\la_{\h_1}=0$ for every $\lambda\in\mathcal J^{k}$. In particular, there exists a unique $\lambda_H\in\bwl^{k-1}\h_1$ such that $\lambda=\lambda_H\wedge\theta$, hence
\[
\text{$\langle\tau\mid\lambda\rangle=0$\qquad for every $\lambda\in\mathcal J^{k}$, $\tau\in \bwl_{k}\h_1$.}
\]
We also notice  that $\lambda_H\wedge d\theta=0$. 
\end{remark}

\begin{remark}\label{rem:introducoJ_k}
For $k\geq n+1$ it is convenient to introduce the (formal) dual space $\mathcal J_k:=(\mathcal J^k)^*$. 
Observe that every multi-vector $\tau\in\bwl_{k}\h$ canonically induces an element $[\tau]_\mathcal J\in\mathcal J_k$ defined by
\[
\langle [\tau]_\mathcal J\mid \la\rangle:=\langle \tau\mid \la\rangle\qquad \text{for every }\la\in\mathcal J^k.
\]
Equivalently, $[\tau]_\mathcal J$ is the equivalence class of $\tau$ in the quotient of $\bwl_k\h$ by its subspace $(\mathcal J^k)^\perp$.
\end{remark}

Let us introduce the spaces $\Omega_\H^k$ of {\em Heisenberg differential $k$-forms} 
\begin{align*}
& \Omega_\H^k:= C^\infty\big(\H^n,\tfrac{\text{\large$\wedge$}^k\h}{\mathcal I^k}\big)=\left\{\text{smooth sections of }\tfrac{\text{\large$\wedge$}^k\h}{\mathcal I^k}\right\},&&\text{if }0\leq k\leq n\\
& \Omega_\H^k:= C^\infty(\H^n,\mathcal J^k)=\left\{\text{smooth sections of }\mathcal J^k\right\},&&\text{if }n+1\leq k\leq 2n+1.
\end{align*}
Clearly, $\Omega_\H^0=C^\infty(\H^n)$. Denoting exterior differentiation by\footnote{We use $d$ also to denote  the distance on $\H^n$; of course, no confusion will ever arise.} $d$, we observe that, if $k\geq n+1$, then $d(\Omega_\H^k)\subset\Omega_\H^{k+1}$. If $ k\leq  n-1$ we have $d(\mathcal I^k)\subset \mathcal I^{k+1}$, hence $d$ passes to the quotient. All in all, for $k\in\{0,\dots,2n\}\setminus\{n\}$  a well-defined operator $d:\Omega_\H^k\to\Omega_\H^{k+1}$ is induced by  exterior  differentiation. The following fundamental result was proved by M.~Rumin \cite{Rumin}, see also~\cite[Theorem 5.9]{FSSCAIM}.

\begin{theorem}
There exists a second-order  differential operator $D:\Omega_\H^n\to\Omega_\H^{n+1}$ such that the sequence
\[
0\to\R\to\Omega_\H^0\stackrel{d}{\to}\Omega_\H^1\stackrel{d}{\to}\dots\stackrel{d}{\to}\Omega_\H^n\stackrel{D}{\to}\Omega_\H^{n+1}\stackrel{d}{\to}  \dots   \stackrel{d}{\to}\Omega_\H^{2n+1}\to 0
\]
is exact
\end{theorem}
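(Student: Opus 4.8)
The statement is local in $\H^n$ and $\R$-linear, so the whole content is twofold: construct the second-order operator $D$ in the middle degree, and prove exactness of the resulting sequence (that $d$ already descends to, resp.\ preserves, the quotients $\bwl^k\h/\mathcal I^k$ and the annihilators $\mathcal J^k$ away from degree $n$ has been recorded above). My plan is to run Rumin's original argument \cite{Rumin}, organised around the Lefschetz structure of the symplectic space $(\h_1,d\theta)$.

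\textbf{Algebra.} Let $L\colon\bwl^h\h_1\to\bwl^{h+2}\h_1$, $L\la:=\la\wedge d\theta$; together with its formal adjoint $\Lambda$ (contraction against the bivector dual to $d\theta$) and $H:=[\Lambda,L]$ it generates an $\mathfrak{sl}_2$-action on $\bwl^\ast\h_1$, and the primitive decomposition $\bwl^h\h_1=\bigoplus_{j\ge0}L^j(P^{h-2j})$, $P^m:=\ker\Lambda\cap\bwl^m\h_1$, yields the facts used throughout: $L$ is injective for $h\le n-1$ and surjective for $h\ge n-1$ (see \cite{BryantGG}); in particular $L$ is a bijection $\bwl^{n-1}\h_1\to\bwl^{n+1}\h_1$ and $P^n=\ker L\cap\bwl^n\h_1$. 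Read off degree by degree, this identifies, for $k\le n$, the fibre of $\bwl^k\h/\mathcal I^k$ with the primitive covectors $P^k$ (take the primitive part of the horizontal component of a representative), and, for $k\ge n+1$, the fibre of $\mathcal J^k$ with $\theta\wedge(\ker L\cap\bwl^{k-1}\h_1)$; a dimension count makes these two spaces match in complementary degrees $k$ and $2n+1-k$, which is precisely why $D$ must span the gap between $\Omega_\H^n$ and $\Omega_\H^{n+1}$.

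\textbf{The operator $D$.} The key is Rumin's correction lemma: every $\bar\al\in\Omega_\H^n$ has a representative $n$-form $\al$ with $d\al\in\mathcal J^{n+1}$, and $d\al$ depends only on $\bar\al$. To prove it I would take the primitive horizontal representative $\al_0$ and look for $\eta\in\mathcal I^n$ — written fibrewise as $\la\wedge\theta+\mu\wedge d\theta$ with $\la\in\bwl^{n-1}\h_1$, $\mu\in\bwl^{n-2}\h_1$ — so that the $(n+1)$-form $d(\al_0+\eta)$ is annihilated by $\theta$ and by $d\theta$; splitting this form into its horizontal part and its $\theta$-coefficient turns the requirement into a short chain of linear equations of the shape $L(\,\cdot\,)=(\text{already computed})$, solvable because $L$ is onto (indeed bijective on $\bwl^{n-1}\h_1$) in the degrees that occur, the last equation being automatically satisfied thanks to the primitivity of $\al_0$; the residual freedom in $\al$ then consists only of closed forms, so $d\al$ is unambiguous. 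One sets $D\bar\al:=d\al$; since determining $\al$ from $\bar\al$ costs one horizontal derivative and $d$ costs another, $D$ is a well-defined $\R$-linear differential operator $\Omega_\H^n\to\Omega_\H^{n+1}$ of order two. The complex identities are then formal: $d\circ D=0$ since $D\bar\al=d\al$ is exact, and $D\circ d=0$ since for $\bar\ga\in\Omega_\H^{n-1}$ a representative of $d\bar\ga$ is $d\ga$, which is already closed, so $D(d\bar\ga)=d(d\ga)=0$ by the uniqueness half of the lemma.

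\textbf{Exactness, and the main obstacle.} Since $\H^n\cong\R^{2n+1}$ is contractible, the ordinary de Rham complex is exact, and the plan is to show that passing to $\Omega_\H^\ast$ — quotienting by the contact ideal in low degrees, restricting to the annihilator spaces $\mathcal J^\ast$ in high degrees, replacing $d$ by $D$ in the middle — is a quasi-isomorphism. Concretely, a $d$-closed class in $\Omega_\H^k$ lifts to a form $\al$ with $d\al\in\mathcal I^{k+1}$ (resp.\ $\al\in\mathcal J^k$), and, using the de Rham Poincar\'e lemma together with the algebraic exactness of the Koszul-type complexes generated by $\cdot\wedge\theta$ and by $L$, one corrects $\al$ back into the ideal (resp.\ into a primitive form) and extracts a primitive of the original class; the ranges $k\le n-1$ and $k\ge n+2$ are symmetric, and the two junctions adjacent to $D$ use the correction lemma once more. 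A cleaner packaging is to realise $\Omega_\H^\ast$ as the $E_1$-page of the spectral sequence of the de Rham complex filtered by the powers of the contact ideal: the Lefschetz facts force $E_1$ to vanish off the Rumin degrees and its differential to coincide with $d$ off the middle and with $D$ in the middle, so $E_2=E_\infty$ is the associated graded of $H^\ast_{\mathrm{dR}}(\R^{2n+1})$, trivial in positive degree — which is exactly the asserted exactness. The one genuinely delicate step is the correction lemma: existence of the corrected representative, canonicity of its differential, and the (second-order, and for $n\ge2$ cumbersome) bookkeeping behind $D$ all hinge on invoking the injectivity and surjectivity of $L$ in exactly the right degrees, and a single shift in the grading derails the argument; everything downstream is comparatively mechanical.
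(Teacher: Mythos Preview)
Your construction of $D$ via the correction lemma is essentially the paper's approach: the paper writes down the explicit formula $D\al = d\bigl(\al - \theta\wedge L^{-1}((d\al)_{\h_1})\bigr)$ for a horizontal representative $\al$, checks that it annihilates the quotienting subspace $\{\mu\wedge d\theta:\mu\in\bwl^{n-2}\h_1\}$, and then verifies directly that $D\al\wedge\theta=0$ and $D\al\wedge d\theta=0$. This is precisely your correction $\al\mapsto\al+\eta$ with $\eta=-\theta\wedge L^{-1}((d\al)_{\h_1})\in\mathcal I^n$ made explicit; the paper's presentation is more computational, yours more structural, but the content is the same.

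Where you diverge is on exactness. The paper does not prove it: the theorem is attributed to Rumin, and the paper only recalls the construction of $D$ because the formula itself is needed later (e.g.\ in Corollary~\ref{cor:d_Ccommuta} and in the Constancy Theorem computations). Your outline --- either by lifting closed classes through the ideal/annihilator using the Lefschetz algebra, or more cleanly via the spectral sequence of the de Rham complex filtered by powers of the contact ideal --- is a correct and standard route to exactness on $\H^n\cong\R^{2n+1}$, and goes beyond what the paper supplies. One small quibble: your phrase ``the residual freedom in $\al$ then consists only of closed forms'' is a little loose --- the ambiguity in the corrected representative is an element of $\mathcal I^n$ whose exterior derivative lies in $\mathcal J^{n+1}$, and one must check (as the paper does directly on $\beta\wedge d\theta$) that such elements have vanishing $D$, not merely that they are closed.
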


The construction of the operator $D$ is crucial for our purposes and we recall it here. First, as already mentioned, the  Lefschetz operator $L:\bwl^{n-1}\R^{2n}\to\bwl^{n+1}\R^{2n}$ defined by $L(\la)=\la\wedge d\theta$ is bijective. Second, observe that
\begin{equation}\label{eq:equivbis}
\frac{\bwl^n\h}{\mathcal I^n}=\frac{\bwl^n\h_1}{\{\mu\wedge d\theta:\mu\in\bwl^{n-2}\h_1\}}.
\end{equation}
We are going to define an operator $D$ on smooth sections  of $\bwl^n\h_1$ and prove that it passes to the quotient modulo smooth sections of $\{\mu\wedge d\theta:\mu\in\bwl^{n-2}\h_1\}$. Given a smooth section $\al$ of $\bwl^n\h_1$ we set
\begin{align*}
D\al:=&d\big(\al-\theta\wedge L^{-1}((d\al)_{\h_1})\big),\\
=& d\big(\al+(-1)^n L^{-1}((d\al)_{\h_1})\wedge\theta\big),
\end{align*}
where we used the notation in Remark \ref{rem:parteorizz}. We have to prove that $D(\be\wedge d\theta)=0$ for every smooth section $\be $ of $\bwl^{n-2}\h_1$. Inasmuch as
\begin{align*}
& L^{-1}((d(\be\wedge d\theta))_{\h_1} )=L^{-1}((d\be\wedge d\theta)_{\h_1})=L^{-1}((d\be)_{\h_1}\wedge d\theta)=(d\be)_{\h_1}\\
& d(\be\wedge d\theta)=(-1)^{n-1}d(d\be\wedge\theta)
\end{align*}
we deduce that
\begin{align*}
D(\be\wedge d\theta)
& = d(\be\wedge d\theta + (-1)^n(d\be)_{\h_1}\wedge\theta)\\
&= (-1)^{n-1} d(d\be\wedge\theta - (d\be)_{\h_1}\wedge\theta)\\
&=  (-1)^{n-1} d((d\be)_{\h_1}\wedge\theta - (d\be)_{\h_1}\wedge\theta)=0,
\end{align*}
as wished. 

This proves that $D$ is well-defined as a linear operator $\Omega_\H^n\to \Omega^{n+1}$. We have to  check that $D(\Omega_\H^n)\subset \Omega_\H^{n+1}$, i.e., that  $D\al\wedge\theta=D\al\wedge d\theta=0$ for every $\alpha\in \Omega_\H^n$. To this aim, let us write $(d\al)_\mathfrak v:=d\al-(d\al)_{\h_1}$ to denote the ``vertical'' part of $d\al$, which can be written as $(d\al)_\mathfrak v=\theta\wedge\be_\al$ for  a suitable smooth section $\beta_\al$ of $\bwl^{n}\h_1$. Then
\begin{equation}\label{eq:Dalcondal_v}
\begin{aligned}
D\al&
= d\big(\al+(-1)^n L^{-1}((d\al)_{\h_1})\wedge\theta\big)\\
&=\cancel{(d\al)_{\h_1}} + (d\al)_\mathfrak v +  (-1)^n d(L^{-1}((d\al)_{\h_1}))\wedge\theta- \cancel{L^{-1}((d\al)_{\h_1})\wedge d\theta}\\
& = \theta\wedge \big(\be_\al +d(L^{-1}((d\al)_{\h_1}))\big)
\end{aligned}
\end{equation}
This implies that $D\al\wedge\theta=0$ and, as a consequence, $0=d(D\al\wedge\theta)=(-1)^{n+1}D\al\wedge d\theta$, as wished.

\begin{example}
Let us compute the operator $D:\Omega_\H^1\to\Omega_\H^2$ in the  first Heisenberg group $\H^1$. Given $\omega\in\Omega_\H^1$ we choose a smooth section $\al$ of $\bwl^1\h_1$ that is a representative of $\omega$ in the quotient~\eqref{eq:equivbis}. Writing $\al=f dx+g dy$ we have
\[
(d\al)_{\h_1}=(Xg-Yf)dx\wedge dy=-(Xg-Yf)d\theta
\]
and, clearly, $L^{-1}((d\al)_{\h_1})=Yf-Xg$. Therefore
\begin{align*}
D\omega =\,& d(f dx+g dy - (Yf-Xg)\,\theta)\\
=\,& \cancel{(-Yf+Xg)dx\wedge dy} - (Tf) dx\wedge\theta - (Tg) dy\wedge\theta\\
& -(XYf-XXg)dx\wedge\theta - (YYf-YXg)dy\wedge\theta-\cancel{(Yf-Xg)d\theta}\\
=\,& (-2XYf+YXf+XXg)dx\wedge\theta + (2YXg-XYg-YYf)dy\wedge\theta.
\end{align*}
See also \cite[Example~B.2]{BaldiFranchiTchouTesi} and~\cite[Example~3.11]{BaldiFranchiDivCurl}. 

We refer to~\cite[Example~3.12]{BaldiFranchiDivCurl} for the computation of the operator $D$ in $\H^2$. 
\end{example}

\begin{remark}\label{rem:dzero}
It is sometimes convenient to have a unique symbol to denote the differential operators in Rumin's complex. Following \cite{FranchiTripaldi} we then define $d_C:\Omega^k_\H\to\Omega^{k+1}_\H$ by $d_C=d$, if $k\neq n$, and $d_C=D$, if $k=n$. 
\end{remark}

\begin{remark}
For $k\leq n$, an interesting interpretation of  $\tfrac{\text{\large$\wedge$}^k\h}{\mathcal I^k}$ and $\mathcal J^{2n+1-k}$ as spaces of {\em integrable covectors} is provided in~\cite[Theorem~2.9]{FSSCAIM}.
\end{remark}

\subsection{\texorpdfstring{$\H$}{H}-linear maps}
We now introduce the notion of $\H$-linear maps in $\H^n$, that are among the most simple examples of {\em contact maps} (see e.g.~\cite{KoranyiReimann}), and study their behaviour on Heisenberg forms. For a finer study of the relations between Heisenberg forms and contact maps see e.g.~\cite[Chapter~3]{CanarecciPhdThesis}. The results in this section will be applied especially, but not exclusively, to left-translations and dilations in $\H^n$ and  their compositions.

\begin{definition}
A map $\mathcal L:\H^n\to\H^n$ is {\em $\H$-linear} if it is a group homomorphism that is also homogeneous, i.e., $\de_r(\mathcal Lp)=\mathcal L(\de_r p)$ for any $p\in\H^n$ and any $r>0$.
\end{definition}

It is not difficult to prove  that $\mathcal L$ is $\H$-linear if and only if 
$\mathfrak l:=\exp^{-1}\circ \mathcal L\circ\exp:\h\to\h$ is a Lie algebra homomorphism, see e.g. \cite[\S3.1]{Magthesis} and the references therein. In particular
\begin{equation}\label{eq:contenHlin}
\mathfrak l(\h_1)\subset\h_1\qqtaqq\mathfrak l(\h_2)\subset\h_2.
\end{equation}
Moreover, if for every $p\in\H^n$ one canonically identifies $T_p\H^n\equiv T_0\H^n=\h$ by means of the differential of the left-translation by $p^{-1}$, then the differential $d\mathcal L$ of $\mathcal L$ is constant, i.e.
\[
d\mathcal L_p=d\mathcal L_0=\mathfrak l\qquad\forall\ p\in\H^n.
\]
This easily follows by computing the differential at $p$ of $\mathcal L(q)=\mathcal L(p)\mathcal L(p^{-1}q)$.

By abuse of notation, we use a unique symbol $\mathcal L_*$ to denote any of $d\mathcal L_p$, $d\mathcal L_0$ and $\mathfrak l$. Similarly, the symbol  $\mathcal L^*$  denotes any of the associated pull-back actions $(d\mathcal L_p)^*:T_{\mathcal L(p)}^*\H^n\to T_{p}^*\H^n$, $(d\mathcal L_0)^*:T_{0}^*\H^n\to T_{0}^*\H^n$ and $\mathfrak l^*:\bwl^1\h\to\bwl^1\h$. 
We use the symbols $\mathcal L_*,\mathcal L^*$ also to denote the induced maps $\mathcal L_*:\bwl_*\h\to\bwl_*\h$ and $\mathcal L^*:\bwl^*\h\to\bwl^*\h$ defined by
\[
\mathcal L_*(v_1\wedge\dots\wedge v_k):=\mathcal L_*(v_1)\wedge\dots\wedge\mathcal L_*(v_k),\qquad\forall\ v_1,\dots,v_k\in\h,
\]
and
\[
\mathcal L^*(\la_1\wedge\dots\wedge \la_k):=\mathcal L^*(\la_1)\wedge\dots\wedge\mathcal L^*(\la_k),\qquad\forall\ \la_1,\dots,\la_k\in\bwl^1\h.
\]

\begin{proposition}\label{prop:dthetainvariante}
Let   $\mathcal L:\H^n\to\H^n$ be a $\H$-linear isomorphism; then the pull-backs  of $\theta$ and $d\theta$ satisfy
\begin{equation}\label{eq:pullbacktheta}
\mathcal L^*(\theta)=c_\mathcal L\theta \qtaq \mathcal L^*(d\theta)=c_\mathcal L\,d\theta
\end{equation}
where $c_\mathcal L\neq 0$ is defined\footnote{The number $c_\mathcal L$ is well-defined because of \eqref{eq:contenHlin}; it is not zero because $\mathcal L_*$ is an isomorphism.} by $\mathcal L_*(T)=c_\mathcal L T$.
\end{proposition}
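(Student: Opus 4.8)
The plan is to exploit the fact that $\theta$ is, up to a nonzero scalar, the only element of $\bwl^1\h$ annihilating the horizontal layer $\h_1$, together with the purely algebraic description of $\mathcal L^*$. First I would record the structure of $\mathfrak l:=\mathcal L_*$: being a Lie algebra isomorphism it preserves the stratification, so by~\eqref{eq:contenHlin} together with a dimension count one has $\mathfrak l(\h_1)=\h_1$ and $\mathfrak l(\h_2)=\h_2$; in particular $\mathfrak l(T)=c_\mathcal L T$ for the nonzero scalar $c_\mathcal L$ in the statement, which is exactly its definition.

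Next I would prove the first identity in~\eqref{eq:pullbacktheta}. For $v\in\h_1$ one has $(\mathcal L^*\theta)(v)=\theta(\mathfrak l(v))=0$, since $\mathfrak l(v)\in\h_1$ and $\theta$ vanishes on $\h_1$. Hence $\mathcal L^*\theta$ is a covector annihilating $\h_1$, and since $\theta$ spans the (one-dimensional) annihilator of $\h_1$ in $\bwl^1\h$, we get $\mathcal L^*\theta=a\,\theta$ for some $a\in\R$. Evaluating on $T$ gives $a=(\mathcal L^*\theta)(T)=\theta(\mathfrak l(T))=\theta(c_\mathcal L T)=c_\mathcal L$, so $\mathcal L^*\theta=c_\mathcal L\theta$.

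For the second identity I would use that exterior differentiation commutes with $\mathcal L^*$ — legitimate here because $d\mathcal L_p\equiv\mathfrak l$ is constant, so $\mathcal L$ carries left-invariant forms to left-invariant forms and the form-level pull-back coincides with the algebraically defined $\mathfrak l^*$ on $\bwl^*\h$ — whence $\mathcal L^*(d\theta)=d(\mathcal L^*\theta)=d(c_\mathcal L\theta)=c_\mathcal L\,d\theta$. Alternatively, one can argue directly on covectors via the Cartan formula $d\theta(U,V)=-\theta([U,V])$ valid for left-invariant $1$-forms: using that $\mathfrak l$ is a Lie algebra homomorphism,
\[
\begin{aligned}
(\mathcal L^*(d\theta))(U,V) &= d\theta(\mathfrak l U,\mathfrak l V) = -\theta([\mathfrak l U,\mathfrak l V]) = -\theta(\mathfrak l[U,V])\\
&= -(\mathcal L^*\theta)([U,V]) = -c_\mathcal L\,\theta([U,V]) = c_\mathcal L\,d\theta(U,V),
\end{aligned}
\]
which is $\mathcal L^*(d\theta)=c_\mathcal L\,d\theta$.

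There is no serious obstacle in this proof; the only point deserving a little care is keeping the identifications straight — namely that the pull-back $\mathcal L^*$ acting on (left-invariant) differential forms agrees, under the identification of $\bwl^*\h$ with left-invariant forms, with the algebraic map $\mathfrak l^*$ on covectors, and that $d$ commutes with it — all of which is a consequence of the constancy of $d\mathcal L_p$ noted above.
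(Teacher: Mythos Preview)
Your proof is correct and follows essentially the same approach as the paper: show that $\mathcal L^*\theta$ annihilates $\h_1$ (hence is a scalar multiple of $\theta$), identify the scalar by evaluating on $T$, and then use commutation of $d$ with pull-back for the second identity. The paper cites \cite[\S3.15]{Warner} for the left-invariance of $\mathcal L^*\theta$, whereas you argue directly at the level of covectors via the constancy of $d\mathcal L_p$; your alternative via the Cartan formula is a nice touch but not needed.
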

\begin{proof}
Since pull-back and exterior derivative commute, it is sufficient to prove the first assertion in \eqref{eq:pullbacktheta}. It is proved in \cite[\S3.15]{Warner} that, since $\theta$ is left-invariant, then  $d\mathcal L^*(\theta)$  is left-invariant as well. By \eqref{eq:contenHlin}, we have $\mathcal L^*(\theta)=0$ on $\h_1$, hence $\mathcal L^*(\theta)$ is a scalar multiple of $\theta$. The statement follows because $\mathcal L^*(\theta)(T)=\theta (\mathcal L_*(T))=\theta (c_\mathcal L T)=c_\mathcal L$.
\end{proof}

\begin{proposition}\label{prop:JtoJ}
Let $\mathcal L:\H^n\to\H^n$ be a $\H$-linear isomorphism; then 
\begin{itemize}
\item[(i)] if $1\leq k\leq n$, $\mathcal L^*:\bwl^k\h\to\bwl^k\h$ passes to the quotient and defines an isomorphism $\mathcal L^*:\bwl^k\h/\mathcal I^k\to\bwl^k\h/\mathcal I^k$;
\item[(ii)] if $n+1\leq k\leq 2n$, $\mathcal L^*(\mathcal J^k)=\mathcal J^k$.
\end{itemize}
\end{proposition}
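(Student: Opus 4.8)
The plan is to deduce both statements formally from Proposition~\ref{prop:dthetainvariante} together with two elementary facts about $\mathcal L^*$. First, since $\mathcal L$ is an $\H$-linear \emph{isomorphism}, its differential $\mathcal L_*:\h\to\h$ is a linear isomorphism, hence the induced maps $\mathcal L^*:\bwl^k\h\to\bwl^k\h$ are linear isomorphisms for every $k$. Second, by its very definition on decomposable covectors, $\mathcal L^*$ is multiplicative with respect to $\wedge$, i.e.\ $\mathcal L^*(\alpha\wedge\beta)=\mathcal L^*(\alpha)\wedge\mathcal L^*(\beta)$. By Proposition~\ref{prop:dthetainvariante} we also have $\mathcal L^*(\theta)=c_\mathcal L\theta$ and $\mathcal L^*(d\theta)=c_\mathcal L\,d\theta$ with $c_\mathcal L\neq 0$.

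For part (i), I would take a generic element $\lambda\wedge\theta+\mu\wedge d\theta\in\mathcal I^k$, with $\lambda\in\bwl^{k-1}\h$ and $\mu\in\bwl^{k-2}\h$, and compute, using multiplicativity and Proposition~\ref{prop:dthetainvariante},
\[
\mathcal L^*(\lambda\wedge\theta+\mu\wedge d\theta)=c_\mathcal L\,(\mathcal L^*\lambda)\wedge\theta+c_\mathcal L\,(\mathcal L^*\mu)\wedge d\theta\ \in\ \mathcal I^k .
\]
Hence $\mathcal L^*(\mathcal I^k)\subset\mathcal I^k$. Applying the same computation to $\mathcal L^{-1}$, whose pull-back is $(\mathcal L^*)^{-1}$, gives $(\mathcal L^*)^{-1}(\mathcal I^k)\subset\mathcal I^k$, so in fact $\mathcal L^*(\mathcal I^k)=\mathcal I^k$ and $\mathcal L^*$ restricts to an automorphism of $\mathcal I^k$. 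Therefore $\mathcal L^*$ descends to a well-defined linear endomorphism of $\bwl^k\h/\mathcal I^k$, and the endomorphism induced by $(\mathcal L^{-1})^*$ is its two-sided inverse; this yields the asserted isomorphism of the quotient.

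For part (ii), I would take $\lambda\in\mathcal J^k$ and, writing $\theta=c_\mathcal L^{-1}\mathcal L^*(\theta)$ and $d\theta=c_\mathcal L^{-1}\mathcal L^*(d\theta)$, use multiplicativity to get
\[
\mathcal L^*(\lambda)\wedge\theta=c_\mathcal L^{-1}\,\mathcal L^*(\lambda)\wedge\mathcal L^*(\theta)=c_\mathcal L^{-1}\,\mathcal L^*(\lambda\wedge\theta)=0,
\]
and likewise $\mathcal L^*(\lambda)\wedge d\theta=c_\mathcal L^{-1}\,\mathcal L^*(\lambda\wedge d\theta)=0$. Thus $\mathcal L^*(\lambda)\in\mathcal J^k$, i.e.\ $\mathcal L^*(\mathcal J^k)\subset\mathcal J^k$; applying this to $\mathcal L^{-1}$ gives the reverse inclusion and hence the equality $\mathcal L^*(\mathcal J^k)=\mathcal J^k$.

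I do not expect a genuine obstacle here: the proposition is a purely formal consequence of Proposition~\ref{prop:dthetainvariante} and of the fact that pull-back is an algebra homomorphism. The only points deserving a line of care are that one should pass through $\mathcal L^{-1}$ (equivalently, invoke finite-dimensionality plus injectivity of $\mathcal L^*$) in order to upgrade the inclusions $\mathcal L^*(\mathcal I^k)\subset\mathcal I^k$ and $\mathcal L^*(\mathcal J^k)\subset\mathcal J^k$ to equalities, and that one should briefly record why $\mathcal L^*$ is a linear isomorphism of $\bwl^k\h$ — namely because $\mathcal L_*$ is a Lie algebra isomorphism of $\h$.
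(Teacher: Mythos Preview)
Your proof is correct and follows essentially the same approach as the paper: both parts are deduced from Proposition~\ref{prop:dthetainvariante} via the multiplicativity of $\mathcal L^*$, first establishing the inclusions $\mathcal L^*(\mathcal I^k)\subset\mathcal I^k$ and $\mathcal L^*(\mathcal J^k)\subset\mathcal J^k$ by direct computation and then upgrading to equalities using that $\mathcal L^*$ is an isomorphism. The only cosmetic difference is that you spell out the reverse inclusion via $\mathcal L^{-1}$, whereas the paper simply invokes that $\mathcal L^*$ is an isomorphism.
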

\begin{proof}
(i) If $1\leq k\leq n$ it is enough to show that $\mathcal L^*(\mathcal I^k)=\mathcal I^k$. For every $\la\in \bwl^{k-1}\h$ and $\mu\in\bwl^{k-2}\h$ we have by Proposition~\ref{prop:dthetainvariante}
\begin{align*}
& \mathcal L^*(\la\wedge\theta+\mu\wedge d\theta)=c_\mathcal L (\mathcal L^*(\la)\wedge\theta + \mathcal L^*(\mu)\wedge d\theta    ),
\end{align*}
hence $\mathcal L^*(\mathcal I^k)\subset \mathcal I^k$. The equality $\mathcal L^*(\mathcal I^k)= \mathcal I^k$ follows because $\mathcal L^*$ is a isomorphism.

(ii) 
 Given $\la\in\mathcal J^k$ we have
\[
\mathcal L^*(\la)\wedge\theta= c_\mathcal L^{-1} \mathcal L^*(\la\wedge \theta)=0\qtaq 
\mathcal L^*(\la)\wedge d\theta= c_\mathcal L^{-1} \mathcal L^*(\la\wedge d\theta)=0,
\]
hence $\mathcal L^*(\la)\in\mathcal J^k$. This proves the inclusion $\mathcal L^*(\mathcal J^k)\subset\mathcal J^k$, and the equality follows again because $\mathcal L^*$ is an isomorphism.
\end{proof}

Proposition~\ref{prop:JtoJ} has the following  consequence that we will later use when $\mathcal L$ is (a composition of) dilations and left-translations.  Compare with~\cite[Theorem~3.2.1]{CanarecciPhdThesis}.

\begin{corollary}\label{cor:d_Ccommuta}
Let $\mathcal L:\H^n\to\H^n$ be a $\H$-linear isomorphism; then the pull-back $\mathcal L^*$  commutes with the differential operators in Rumin's complex, i.e., for every $k\in\unodueenne$
\[
d_C(\mathcal L^*\omega)=\mathcal L^*(d_C\omega)\qquad\text{for every } \omega\in\Omega^k_\H.
\]
\end{corollary}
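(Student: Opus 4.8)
\emph{Proof strategy.} The plan is to deduce the statement from two facts already at our disposal: that the ordinary exterior derivative commutes with pull-backs, and that $\mathcal L^*$ preserves the building blocks of Rumin's complex, i.e.\ Propositions~\ref{prop:dthetainvariante} and~\ref{prop:JtoJ}. I would distinguish the degrees $k\neq n$ from the degree $k=n$, the former being essentially formal and the latter requiring the explicit construction of $D$.

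For $1\le k\le n-1$ the operator $d_C$ is the operator induced on $\bwl^k\h/\mathcal I^k$ by $d$; since by Proposition~\ref{prop:JtoJ}(i) the map $\mathcal L^*$ sends $\mathcal I^k$ into $\mathcal I^k$ and $\mathcal I^{k+1}$ into $\mathcal I^{k+1}$, it is well defined as a map $\Omega_\H^k\to\Omega_\H^k$ and $\Omega_\H^{k+1}\to\Omega_\H^{k+1}$, and the desired identity is obtained by passing to the quotient in the classical identity $d(\mathcal L^*\omega)=\mathcal L^*(d\omega)$. For $n+1\le k\le 2n$ the operator $d_C$ is again $d$; Proposition~\ref{prop:JtoJ}(ii) gives $\mathcal L^*(\mathcal J^k)=\mathcal J^k$ (and $\mathcal L^*(\mathcal J^{k+1})=\mathcal J^{k+1}$, which is immediate when $k+1=2n+1$, since $\mathcal J^{2n+1}=\bwl^{2n+1}\h$ and $\mathcal L^*$ acts on it by a scalar), so $\mathcal L^*$ restricts to a map $\Omega_\H^k\to\Omega_\H^k$ and the identity is just the classical one restricted to sections of $\mathcal J^k$. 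Hence everything reduces to the case $k=n$.

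For $k=n$ I would argue using the explicit formula for $D$ recalled in the excerpt. Fix $\omega\in\Omega_\H^n$ and choose a representative $\al$, i.e.\ a smooth section of $\bwl^n\h_1$ with $[\al]=\omega$ in~\eqref{eq:equivbis}. Since $\mathcal L_*(T)=c_\mathcal L T$, the pull-back $\mathcal L^*$ preserves $\bwl^1\h_1$ (the annihilator of $T$), hence also each $\bwl^j\h_1$; in particular $\mathcal L^*\al$ is again a section of $\bwl^n\h_1$, and by Proposition~\ref{prop:JtoJ}(i) it represents $\mathcal L^*\omega$. The core of the argument is then to push $\mathcal L^*$ through the correction term $L^{-1}((d\al)_{\h_1})\wedge\theta$, the scalar $c_\mathcal L$ produced along the way cancelling against $\mathcal L^*\theta=c_\mathcal L\theta$. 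I would record two intertwining relations: first, from $\mathcal L^*(d\theta)=c_\mathcal L\,d\theta$ (Proposition~\ref{prop:dthetainvariante}) one gets $\mathcal L^*(L\eta)=\mathcal L^*(\eta\wedge d\theta)=c_\mathcal L\,L(\mathcal L^*\eta)$ for $\eta\in\bwl^{n-1}\h_1$, i.e.\ $\mathcal L^*\circ L=c_\mathcal L\,L\circ\mathcal L^*$, and, since $L$ is an isomorphism $\bwl^{n-1}\h_1\to\bwl^{n+1}\h_1$ preserved in both degrees by $\mathcal L^*$, also $L^{-1}\circ\mathcal L^*=c_\mathcal L\,\mathcal L^*\circ L^{-1}$ on $\bwl^{n+1}\h_1$; second, from $d(\mathcal L^*\al)=\mathcal L^*(d\al)$ together with $\mathcal L^*\theta=c_\mathcal L\theta$ and the uniqueness of the horizontal decomposition of Remark~\ref{rem:parteorizz}, one gets $(d(\mathcal L^*\al))_{\h_1}=\mathcal L^*((d\al)_{\h_1})$. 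Combining the two, $L^{-1}((d(\mathcal L^*\al))_{\h_1})\wedge\theta=c_\mathcal L\,\mathcal L^*(L^{-1}((d\al)_{\h_1}))\wedge\theta=\mathcal L^*(L^{-1}((d\al)_{\h_1}))\wedge\mathcal L^*\theta=\mathcal L^*\big(L^{-1}((d\al)_{\h_1})\wedge\theta\big)$; feeding this into $D(\mathcal L^*\omega)=d\big(\mathcal L^*\al+(-1)^n L^{-1}((d(\mathcal L^*\al))_{\h_1})\wedge\theta\big)$ and using $d\mathcal L^*=\mathcal L^* d$ once more yields $D(\mathcal L^*\omega)=\mathcal L^*(D\omega)$.

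I do not expect a real obstacle: the whole content is the bookkeeping of the scalar $c_\mathcal L$ and of the horizontal/vertical splitting of forms, and everything rests on Propositions~\ref{prop:dthetainvariante} and~\ref{prop:JtoJ}. The one point deserving a word of care is that $D(\mathcal L^*\omega)$ must be independent of the chosen representative $\al$; this is guaranteed because $\mathcal L^*$ descends to the quotient $\bwl^n\h/\mathcal I^n$ (equivalently, to $\bwl^n\h_1$ modulo $\{\mu\wedge d\theta:\mu\in\bwl^{n-2}\h_1\}$) by Proposition~\ref{prop:JtoJ}(i) and $\mathcal L^*(d\theta)=c_\mathcal L\,d\theta$, so that $\mathcal L^*\al$ and $\mathcal L^*(\al+\mu\wedge d\theta)$ produce the same value of $D$.
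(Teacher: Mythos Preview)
Your proof is correct and follows essentially the same approach as the paper: both reduce to the case $k=n$ via Proposition~\ref{prop:JtoJ} and the commutation of $d$ with pull-backs, and then handle $D$ by tracking the scalar $c_\mathcal L$ through the formula $D\al=d\big(\al+(-1)^n L^{-1}((d\al)_{\h_1})\wedge\theta\big)$ using the identities $\mathcal L^*\theta=c_\mathcal L\theta$, $(\mathcal L^*(d\al))_{\h_1}=\mathcal L^*((d\al)_{\h_1})$, and $L^{-1}\circ\mathcal L^*=c_\mathcal L\,\mathcal L^*\circ L^{-1}$. Your presentation makes the two intertwining relations somewhat more explicit than the paper's computation, but the substance is identical.
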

\begin{proof}
When $k\neq n$ this is a simple consequence of Proposition~\ref{prop:JtoJ} and the fact that $\mathcal L^*$ commutes with exterior differentiation $d=d_C$. 

When $k=n$ we have $d_C=D$  and some computations are needed. For every $\omega\in \Omega^k_\H$ we fix a representative $\al\in C^\infty(\H^n,\bwl^n\h_1)$ of the equivalence class $\omega$ in the quotient in the right-hand side of~\eqref{eq:equivbis}. By Proposition~\ref{prop:JtoJ}, $\mathcal L^*\al$ is a representative of $\mathcal L^*\omega$ and we can compute
\begin{align*}
D(\mathcal L^*\omega)
& = d\big(\mathcal L^*\al-\theta\wedge L^{-1}((d\mathcal L^*\al)_{\h_1})\big)\\
& = d\big(\mathcal L^*\al-\theta\wedge L^{-1}((\mathcal L^*(d\al))_{\h_1})\big)\\
& = d\big(\mathcal L^*\al-\mathcal L^*(\theta/c_\mathcal L)\wedge L^{-1}(\mathcal L^*((d\al)_{\h_1}))\big),
\end{align*}
where we used Proposition~\ref{prop:dthetainvariante}. The latter also gives
\begin{align*}
\mathcal L^*((d\al)_{\h_1})
&=\mathcal L^*(L^{-1}(d\al)_{\h_1}\wedge d\theta)\\
&=\mathcal L^*(L^{-1}(d\al)_{\h_1})\wedge\mathcal L^*( d\theta)\\
&=c_\mathcal L \mathcal L^*(L^{-1}(d\al)_{\h_1})\wedge d\theta
\end{align*}
so that
\[
L^{-1}(  \mathcal L^*((d\al)_{\h_1})  ) = c_\mathcal L \mathcal L^*(L^{-1}(d\al)_{\h_1}).
\]
Therefore
\begin{align*}
D(\mathcal L^*\omega)
& = d\big(\mathcal L^*\al-\mathcal L^*(\theta/c_\mathcal L)\wedge c_\mathcal L \mathcal L^*(L^{-1}(d\al)_{\h_1})\big)\\
&= d\big(\mathcal L^*(\al-\theta\wedge(L^{-1}(d\al)_{\h_1}))\big)\\
&=\mathcal L^*\big(d(\al-\theta\wedge(L^{-1}(d\al)_{\h_1}))\big)\\
&= \mathcal L^*(D\omega)
\end{align*}
and the proof is concluded.
\end{proof}

We recall that the notion of vertical plane was introduced in Definition~\ref{def:pianiverticali}. Given natural numbers $a,b$ such that $1\leq a+b\leq n$, we denote by $\Pab\subset \H^n$ the $(2a+b+1)$-dimensional vertical plane 
\begin{equation}\label{eq:defPab}
\begin{aligned}
\!\!\:
\Pab:=\:& \{(x,y,t)\in\H^n:x_i=y_j=0\text{ for all }a+b+1\leq i\leq n \text{ and }a+1\leq j\leq n\}\\
=\:&
\begin{cases}
\{(x_1,\dots,x_{a+b},0,\dots,0,y_1,\dots,y_a,0,\dots,0,t)\}\quad & \text{if }a\geq 1\text{ and } a+b<n\\
\{(x_1,\dots,x_n,y_1,\dots,y_a,0,\dots,0,t)\}\quad & \text{if }a\geq 1\text{ and } a+b=n\\
\{(x_1,\dots,x_{b},0,\dots,0,t)\}\quad & \text{if }a=0.
\end{cases}
\end{aligned}
\end{equation}
As shown by the following proposition, the planes $\Pab$ can be considered as canonical models for vertical planes in $\H^n$.

\begin{proposition}\label{prop:ruotiamosupianicanonici}
Let $\mathscr P\subset\R^{2n+1}\equiv\H^n$ be a vertical plane; then there  exist non-negative integers $a,b$ and a $\H$-linear isomorphism $\mathcal L:\H^n\to\H^n$ such that 
\[
\begin{aligned}
& a+b\leq n\qtaq \dim \mathscr P=2a+b+1\\
& \mathcal L^*(\theta)=\theta,\quad \mathcal L^*(d\theta)=d\theta\qtaq
\mathcal L(\mathscr P)=\Pab.
\end{aligned}
\]
\end{proposition}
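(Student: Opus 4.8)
The plan is to reduce the statement to a classical normal--form result for subspaces of a symplectic vector space. Recall first that an $\mathbb H$-linear map $\mathcal L$ is the same thing as a graded Lie algebra automorphism $\mathfrak l=\mathcal L_*$ of $\h$, and that $[u,v]=\omega'(u,v)\,T$ for $u,v\in\h_1$, where $\omega'\in\bwl^2\h_1$ is the skew form with $\omega'(X_j,Y_j)=1$, i.e.\ $\omega'=-d\theta$. Hence the automorphism identity $[\mathfrak lu,\mathfrak lv]=\mathfrak l[u,v]$ reads $\omega'(\mathcal L_*u,\mathcal L_*v)=c_{\mathcal L}\,\omega'(u,v)$, where $\mathcal L_*(T)=c_{\mathcal L}T$; thus $\mathcal L_*|_{\h_1}$ is a conformally symplectic automorphism of $(\h_1,\omega')$, and it is genuinely symplectic precisely when $c_{\mathcal L}=1$, which by Proposition~\ref{prop:dthetainvariante} is exactly the requirement $\mathcal L^*\theta=\theta$ (equivalently $\mathcal L^*d\theta=d\theta$). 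Conversely, any $A\in\mathrm{Sp}(\h_1,\omega')$ extends, by $A$ on $\h_1$ and the identity on $\h_2$, to a graded Lie algebra automorphism of $\h$ and hence to an $\mathbb H$-linear isomorphism $\mathcal L$ with $\mathcal L_*T=T$. So it suffices to produce a symplectic $A$ moving the horizontal part of $\mathscr P$ onto the horizontal part of a suitable $\Pab$.

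Concretely, I would write $\mathscr P=\exp(V\oplus\h_2)$ for the unique subspace $V=\Span\tau\subset\h_1$, so that $\dim V=\dim\mathscr P-1$, and set $V_{a,b}:=\Span\{X_1,\dots,X_{a+b},Y_1,\dots,Y_a\}$, so that $\Pab=\exp(V_{a,b}\oplus\h_2)$ and $\dim V_{a,b}=2a+b$. Given $A\in\mathrm{Sp}(\h_1,\omega')$ with $A(V_{a,b})=V$, the $\mathbb H$-linear isomorphism $\mathcal L$ associated with $A^{-1}$ satisfies $\mathcal L_*T=T$, hence $\mathcal L^*\theta=\theta$ and $\mathcal L^*d\theta=d\theta$ by Proposition~\ref{prop:dthetainvariante}, and $\mathcal L(\mathscr P)=\exp(A^{-1}(V)\oplus\h_2)=\Pab$; moreover $\dim\mathscr P=\dim V+1=2a+b+1$, so all the assertions in the statement would follow at once.

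It remains to put $V$ in normal form under $\mathrm{Sp}(\h_1,\omega')$. I would let $b:=\dim(V\cap V^{\omega'})$ be the defect of $\omega'|_V$ and $2a:=\dim V-b=\mathrm{rank}(\omega'|_V)$, which is even since $\omega'|_V$ is skew. Choosing a complement $V_0$ in $V$ to the radical $R:=V\cap V^{\omega'}$, the form $\omega'|_{V_0}$ is nondegenerate, so $V_0$ carries a symplectic basis $X_1',Y_1',\dots,X_a',Y_a'$; since $R\subset V_0^{\omega'}$ and $R$ is isotropic in the symplectic space $V_0^{\omega'}$ (of dimension $2n-2a$), one obtains $b\le n-a$, i.e.\ $a+b\le n$. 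Completing a basis $X_{a+1}',\dots,X_{a+b}'$ of $R$ to a symplectic basis $(X_i',Y_i')_{i=a+1}^n$ of $V_0^{\omega'}$ and concatenating yields a symplectic basis $(X_i',Y_i')_{i=1}^n$ of $\h_1$ with $V=\Span\{X_1',\dots,X_{a+b}',Y_1',\dots,Y_a'\}$, and the symplectic map $A$ sending the standard symplectic basis $X_1,Y_1,\dots,X_n,Y_n$ to $X_1',Y_1',\dots,X_n',Y_n'$ satisfies $A(V_{a,b})=V$. I expect this symplectic Gram--Schmidt / Witt-type normal form to be the main (though classical) ingredient; everything else is bookkeeping with Proposition~\ref{prop:dthetainvariante} and the characterization of $\mathbb H$-linear maps. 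One should also dispose of the degenerate case $V=\{0\}$ (that is, $\mathscr P=\exp(\h_2)$, the center line): then $a=b=0$ and $\Pab$ falls outside the range $1\le a+b\le n$, so this case is either excluded or absorbed by a trivial convention.
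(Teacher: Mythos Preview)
Your proof is correct and follows essentially the same approach as the paper: both reduce to finding a symplectic normal form for the horizontal subspace $V\subset\h_1$ via the radical/nondegenerate decomposition and a Witt-type extension (the paper packages this as a separate lemma citing Artin's \emph{Geometric Algebra}), and then lift the resulting symplectic automorphism of $\h_1$ to an $\H$-linear map with $c_{\mathcal L}=1$ via Proposition~\ref{prop:dthetainvariante}. Your concern about the degenerate case $V=\{0\}$ is moot here, since by Definition~\ref{def:pianiverticali} a vertical plane has dimension at least $1$, but more to the point the proposition is only ever applied to planes of dimension $2n+1-k\geq n+1\geq 2$.
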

\begin{proof}
The set $V:=\exp^{-1}(\{(x,y,t)\in \mathscr P:t=0\})$ is a vector subspace of $\h_1$. If $\mathfrak l:\h\to\h$ is the Lie algebra isomorphism provided by the following Lemma \ref{lem:Artin}, then $\mathcal L:=\exp\circ\mathfrak l\circ\exp^{-1}$ is a $\H$-linear isomorphism which, by Proposition~\ref{prop:dthetainvariante}, satisfies the statement of the present proposition. 
\end{proof}

\begin{lemma}\label{lem:Artin}
Let $V\subset\h_1$ be a linear subspace  with $\dim V\geq 1$. Then there exist non-negative integers $a,b$ and a Lie algebra isomorphism $\mathfrak l:\h\to\h$ such that $\dim V=2a+b$, $\mathfrak l(T)=T$ and
\[
\mathfrak l(V)=
\begin{cases}
\Span\{X_1,\dots,X_{a+b},Y_1,\dots,Y_a\}\qquad& \text{if }a\geq 1\\
\Span\{X_1,\dots,X_{b}\}\qquad& \text{if }a=0.
\end{cases}
\]
Moreover, $a=0$ if and only if $[V,V]=\{0\}$, i.e., if and only if $V$ is an Abelian sub-algebra of $\h$.
\end{lemma}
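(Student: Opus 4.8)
The plan is to reduce this to standard symplectic linear algebra. Since $[X_i,Y_i]=T$ and all other brackets among the $X_j,Y_j$ vanish, for $v=\sum_i(v_iX_i+v_{i+n}Y_i)$ and $w=\sum_i(w_iX_i+w_{i+n}Y_i)$ in $\h_1$ one has $[v,w]=\omega(v,w)\,T$, where $\omega(v,w):=\sum_i(v_iw_{i+n}-v_{i+n}w_i)$ is the standard symplectic form on $\h_1\cong\R^{2n}$. Hence, given any linear symplectomorphism $\ell_1$ of $(\h_1,\omega)$, the map $\mathfrak l\colon\h\to\h$ with $\mathfrak l|_{\h_1}:=\ell_1$ and $\mathfrak l(T):=T$ is a Lie algebra isomorphism: it is bijective, it satisfies $\mathfrak l[v,w]=\omega(v,w)T=\omega(\ell_1 v,\ell_1 w)T=[\mathfrak l v,\mathfrak l w]$ for $v,w\in\h_1$, and all brackets involving $T$ vanish on both sides. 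So it suffices to produce a symplectomorphism $\ell_1$ of $(\h_1,\omega)$ taking $V$ to the prescribed model.

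Next I would run the classification of subspaces of a symplectic vector space (see e.g.\ E.~Artin, \emph{Geometric Algebra}). Put $V^\omega:=\{u\in\h_1:\omega(u,v)=0\ \forall\,v\in V\}$ and let $K:=V\cap V^\omega$ be the radical of $\omega|_V$; set $b:=\dim K$. The form $\omega|_V$ descends to a nondegenerate alternating form on $V/K$, so $\dim V-b$ is even; write $\dim V-b=2a$, whence $\dim V=2a+b$. Choose a complement $U$ of $K$ in $V$: then $\omega|_U$ is nondegenerate, so $U$ admits a symplectic basis $e_1,f_1,\dots,e_a,f_a$ (with $\omega(e_i,f_j)=\delta_{ij}$, $\omega(e_i,e_j)=\omega(f_i,f_j)=0$). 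Since $K\subseteq V^\omega$, the subspace $K$ is isotropic and $\omega$-orthogonal to $U$; fix a basis $e_{a+1},\dots,e_{a+b}$ of $K$. By the symplectic basis completion theorem the system $e_1,f_1,\dots,e_a,f_a,e_{a+1},\dots,e_{a+b}$ extends to a symplectic basis $e_1,f_1,\dots,e_n,f_n$ of $\h_1$, and by construction $V=\Span\{e_1,f_1,\dots,e_a,f_a,e_{a+1},\dots,e_{a+b}\}$.

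Finally, let $\ell_1\colon\h_1\to\h_1$ be the linear map determined by $\ell_1(e_i)=X_i$ and $\ell_1(f_i)=Y_i$ for $i=1,\dots,n$. It carries a symplectic basis to a symplectic basis, hence is a symplectomorphism, and $\ell_1(V)=\Span\{X_1,\dots,X_a,Y_1,\dots,Y_a,X_{a+1},\dots,X_{a+b}\}$, which equals $\Span\{X_1,\dots,X_{a+b},Y_1,\dots,Y_a\}$ when $a\ge1$ and $\Span\{X_1,\dots,X_b\}$ when $a=0$. Defining $\mathfrak l$ from this $\ell_1$ as in the first paragraph completes the existence part. For the ``moreover'', note that $[V,V]=\{0\}$ iff $\omega(v,w)=0$ for all $v,w\in V$, i.e.\ $V\subseteq V^\omega$, i.e.\ $K=V$, i.e.\ $2a=\dim V-b=0$, i.e.\ $a=0$; and $[V,V]=\{0\}$ is precisely the statement that $V$ is an Abelian subalgebra of $\h$.

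The only non-elementary ingredient is the symplectic basis completion theorem (extending a family of symplectic pairs together with an orthogonal isotropic system to a full symplectic basis); I would either quote it or prove it by a short induction — complete $K$ to a Lagrangian inside the symplectic complement $\Span\{e_1,f_1,\dots,e_a,f_a\}^\omega$, then choose a Lagrangian complement of it to supply the missing partners. The only real care needed is bookkeeping: simultaneously maintaining the symplectic pairings $\omega(e_i,f_j)=\delta_{ij}$, the $\omega$-orthogonality of $K$ to $U$, and the isotropy of $K$ — all of which hold automatically from the definitions $K=V\cap V^\omega$ and $U\subseteq V$ — so this step, while the most computational one, poses no genuine obstacle.
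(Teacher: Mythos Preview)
Your proof is correct and follows essentially the same approach as the paper's: both reduce the problem to symplectic linear algebra on $(\h_1,\omega)$, decompose $V$ as its radical (dimension $b$) plus a nondegenerate complement (dimension $2a$), and then extend a partial symplectic frame on $V$ to a global symplectomorphism of $\h_1$. The only cosmetic difference is that the paper phrases the extension step as an application of Witt's theorem (citing Artin, \emph{Geometric Algebra}, Theorem~3.9), while you phrase it as symplectic basis completion; these are equivalent here.
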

\begin{proof}
We first recall the  canonical symplectic structure on $\h_1$, that is provided by the bilinear skew-symmetric form
\[
B(X,Y):=\langle[X,Y],T\rangle,\qquad X,Y\in\h_1.
\]
Notice that $d\theta$ (seen as a 2-covector at $0$, i.e., as an element of $\bwl^2\h$) satisfies
\[
\langle X\wedge Y\mid d\theta\rangle=-B(X,Y),\qquad \forall\ X,Y\in\h_1\subset\h\equiv T_0\H^n
\]
which can be easily checked by testing $d\theta$ on a basis $X_i\wedge X_j,X_i\wedge Y_j,Y_i\wedge Y_j$ of $\bwl^2\h_1$.

We borrow some  language and notation  from \cite[Chapter 3]{Artin}. Let
\[
\text{rad}(V):=\{X\in V:B(X,X')=0\text{ for every }X'\in V\}=\{X\in V:[X,V]=0\}
\]
be the {\em radical} of $V$ and let $b:=\dim\text{rad}(V)$. Choose a subspace $U$ of $V$ such that $V=$rad$(V)\oplus U$;  $U$ is clearly  {\em non-singular}, i.e., for every $X\in U$ there exists $X'\in U$ such that $[X,X']\neq0$.
By \cite[Theorem 3.7]{Artin}, the dimension of $ U$ is even and we set $\dim U=2a$.

If $a\geq 1$, by \cite[Theorem 3.7]{Artin} there exists a basis $\widetilde X_1,\dots,\widetilde X_a,\widetilde Y_1,\dots, \widetilde Y_a$ of $U$ such that
\[
B(\widetilde X_i,\widetilde X_j)=0,\quad B(\widetilde Y_i,\widetilde Y_j)=0\qtaq B(\widetilde X_i,\widetilde Y_j)=\de_{ij},\qquad i,j\in\{1,\dots,a\},
\]
i.e.,
\[
[\widetilde X_i,\widetilde X_j]=0,\quad [\widetilde Y_i,\widetilde Y_j]=0\qtaq [\widetilde X_i,\widetilde Y_j]=\de_{ij}T,\qquad i,j\in\{1,\dots,a\}.
\]

If $b\geq 1$, fix a basis $\widetilde X_{a+1},\dots,\widetilde X_{a+b}$ of rad$(V)$. 

We can now define $\mathfrak l':V\to\h_1$ by 
\[
\mathfrak l'(\widetilde X_i)=X_i\qtaq \mathfrak l'(\widetilde Y_j)=Y_j
\]
for all integers $1\leq i\leq a+b$ and $1\leq j\leq b$. Notice that $\mathfrak l'$ is an {\em  isometry} of $V$ into $\h_1$, i.e., 
\[
B(\mathfrak l'(X),\mathfrak l'(Y))=B(X,Y)\qquad \text{for every }X,Y\in V.
\]
By Witt's Theorem (see \cite[Theorem 3.9]{Artin}), $\mathfrak l'$ can be extended to an  isometry $\mathfrak l'':\h_1\to\h_1$, i.e., a map satisfying $[\mathfrak l''(X),\mathfrak l''(Y)]=[X,Y]$ for every $X,Y\in \h_1$.
Finally, we extend $\mathfrak l''$ to $\mathfrak l:\h\to\h$ by setting $\mathfrak l_{|\h_1}:=\mathfrak l''$ and $\mathfrak l(T):=T$. The map $\mathfrak l$ provides the desired Lie algebra isomorphism.
\end{proof}

\begin{remark}\label{rem:isometrie_dell'algebra}
Under the same assumptions of Lemma~\ref{lem:Artin}: if $[V,V]=0$, then the Lie algebra isomorphism $\mathfrak l:\h\to\h$ provided by Lemma~\ref{lem:Artin} can be chosen in such a way that $\mathfrak l$ is an isometry of $\h$ when endowed with the scalar product making $X_1,\dots,X_n,Y_1,\dots,Y_n,T$ orthonormal. Observe in particular that here the term {\em isometry} has a different meaning  than in the proof of Lemma~\ref{lem:Artin}.  

Let us prove our statement. As in the proof of Lemma~\ref{lem:Artin}, fix a basis $\widetilde X_1,\dots,\widetilde X_b$ of $V=\text{rad}(V)$.  
Consider the linear isomorphism $J:\h_1\to\h_1$ defined by 
\[
J(X_i)=-Y_i\qtaq J(Y_i)=X_i\qquad\text{for every }i=1,\dots,n.
\]
We observe that $B(X,Y)=\langle X,JY\rangle$ for $X,Y\in\h_1$ and that $J$ is an isometry of $\h_1$. Set $\widetilde Y_i:=J(\widetilde X_i)$ and observe that, since $V=\text{rad}(V)\subset (J(V))^\perp$, the elements $\{\widetilde X_i,\widetilde Y_i:i=1,\dots,b\}$ form an orthonormal basis of $V\oplus J(V)$.

Consider $W:=(V\oplus J(V))^\perp$; then $W=W'\oplus\Span\{T\}$ where  $W':=(V\oplus J(V))^\perp\cap\h_1$. We claim that $J(W')=W'$; by dimensional reasons, it suffices to prove that $J(W')\subset W'$ and this follows because, for every $X\in W'$ and $Y\in V$, there holds
\begin{align*}
& \langle JX,Y\rangle = -\langle X,JY\rangle =0\\
& \langle JX,JY\rangle = \langle X,Y\rangle =0.
\end{align*}

We can now exhibit a Lie algebra isomorphism $\mathfrak l:\h\to\h$ that is also an isometry. First, we define $\mathfrak l (T)=T$ and
\[
\mathfrak l (\widetilde X_i)=X_i\qtaq \mathfrak l (\widetilde Y_i)=Y_i\qquad\text{for every }i=1,\dots,b.
\]
If $W'=\{0\}$, the proof is concluded. Otherwise, we fix a unit element $\widetilde X_{b+1}\in W'$ and, defining $\widetilde Y_{b+1}:=J(\widetilde X_{b+1})\in W'$, we set
\[
\mathfrak l (\widetilde X_{b+1})=X_{b+1}\qtaq \mathfrak l (\widetilde Y_{b+1})=Y_{b+1}.
\]
If $W'=\Span\{\widetilde X_{b+1},\widetilde Y_{b+1}\}$, the proof is concluded; otherwise, we fix a unit element $\widetilde X_{b+2}\in W'\cap \Span\{\widetilde X_{b+1},\widetilde Y_{b+1}\}^\perp$ and, defining $\widetilde Y_{b+2}:=J(\widetilde X_{b+2})\in W'\cap \Span\{\widetilde X_{b+1},\widetilde Y_{b+1}\}^\perp$, we set
\[
\mathfrak l (\widetilde X_{b+2})=X_{b+2}\qtaq \mathfrak l (\widetilde Y_{b+2})=Y_{b+2}.
\]
It is clear that this  construction can be iterated and that it eventually stops providing as a final outcome  the desired isometric Lie algebra isomorphism $\mathfrak l:\h\to\h$.
%
\end{remark}

\begin{remark}\label{rem:isometrie_delgruppo}
Under the same assumptions of Proposition~\ref{prop:ruotiamosupianicanonici}: if the vertical plane $\mathscr P$ is an Abelian subgroup of $\H^n$ (i.e., if $a=0$), then the $\H$-linear isomorphism $\mathcal L$ provided by Proposition~\ref{prop:ruotiamosupianicanonici} can be chosen to be an isometry of $\H^n$. This is an easy consequence of Remark~\ref{rem:isometrie_dell'algebra} and the rotational invariance of the distance $d$.

In particular, if $\V\subset\exp(\h_1)$ is a horizontal (and necessarily Abelian) subgroup of $\H^n$, we can consider the vertical subgroup $\mathscr P$ generated by $\V$ and $\exp(\Span\{T\})$ and deduce that there exists an isometric $\H$-linear isomorphism $\mathcal L$ of $\H^n$ such that
\[
\mathcal L(\V)=\exp(\Span\{X_1,\dots,X_k\}),
\]
where $k:=\dim \V$.
\end{remark}

\subsection{A basis for Rumin's spaces.}\label{subsec:baseRumin}
In this section we provide a basis for  Rumin's spaces $\mathcal J^{n+1},\dots,\mathcal J^{2n}$; since later in the paper we will denote by $k$ the codimension of the involved objects, we fix $k$ such that $1\leq k\leq n$ and study $\mathcal J^{2n+1-k}$. By Remark~\ref{rem:J2n+1-k}, this space coincides with $\{\la\wedge\theta:\la\in\bwl^{2n-k}\h_1,\la\wedge d\theta=0\}$, hence one is lead to the study of the kernel of the Lefschetz operator $\la\mapsto\la\wedge d\theta$. 

We  write $h:=2n-k$ and, identifying $\h_1\equiv\R^{2n}=\R^n_x\times\R^n_y$, we denote by $L$ the operator
\[
L(\la):=\la\wedge d\theta=-\la\wedge (dx_1\wedge dy_1+\dots+dx_n\wedge dy_n),\qquad\la\in\bwl^\ast\R^{2n}.
\]
Since $h\geq n$, the operator $L:\bwl^h\R^{2n}\to\bwl^{h+2}\R^{2n}$ is surjective (\cite[Proposition~1.1]{BryantGG}).

We need some preliminary notation. For every $i\in \{1,\dots,n\}$ we use the compact notation $dxy_i:=dx_i\wedge dy_i$, so that $d\theta=-(dxy_1+\dots+dxy_n)$. Moreover, for every $I\subset\{1,\dots,n\}$ we denote by $|I|$ the cardinality of $I$ and, if $I=\{i_1,i_2,\dots,i_{|I|}\}$ with $i_1<\dots<i_{|I|}$, we define
\begin{align*}
& dx_I:=dx_{i_1}\wedge dx_{i_2}\wedge\dots\wedge dx_{i_{|I|}}\in\bwl^{|I|}\R^{2n}\\
& dy_I:=dy_{i_1}\wedge dy_{i_2}\wedge\dots\wedge dy_{i_{|I|}}\in\bwl^{|I|}\R^{2n}\\
& dxy_I:=dxy_{i_1}\wedge dxy_{i_2}\wedge\dots\wedge dxy_{i_{|I|}}\in\bwl^{2|I|}\R^{2n}.
\end{align*}
For $I=\emptyset$ we agree that $dx_\emptyset=dy_\emptyset=dxy_\emptyset=1$. It will be also convenient to set for $i\in\{1,\dots,2n\}$
\[
dz_i:=dx_i\text{ if }i\leq n,\quad dz_i:=dy_{i-n}\text{ if }i\geq n+1 
\]
and accordingly
\begin{equation}\label{eq:defdzI}
dz_I:=dz_{i_1}\wedge dz_{i_2}\wedge\dots\wedge dz_{i_{|I|}}
\end{equation}
whenever $I=\{i_1,i_2,\dots,i_{|I|}\}$ with $1\leq i_1<\dots<i_{|I|}\leq 2n$.

A basis of $\bwl^h\R^{2n}$ is given by the family $\{dx_I\wedge dy_J\wedge dxy_K\}_{(I,J,K)}$, where   $(I,J,K)$ range among all (ordered) triples of pairwise disjoint subsets $I,J,K$ of $\{1,\dots,n\}$ such that $|I|+|J|+2|K|=h$. In particular one can write
\[
\bwl^h\R^{2n}=\bigoplus_{(I,J)} dx_I\wedge dy_J\wedge\bwl^{h, I,J}\R^{2n},
\]
where the sum ranges among all ordered pairs $(I,J)$ of disjoint $I,J\subset\{1,\dots,n\}$ such that $0\leq |I|+|J|\leq h$ and $\bwl^{h, I,J}\R^{2n}$ is defined by
\[
\bwl^{h, I,J}\R^{2n}:=\Span\{dxy_K:K\subset\{1,\dots,n\}\setminus(I\cup J)\text{ and }|I|+|J|+2|K|=h\}.
\] 
The parity of $|I|+|J|$ is necessarily the same of $h$.
Observe also that
\[
L(dx_I\wedge dy_J\wedge\bwl^{h, I,J}\R^{2n})\subset dx_I\wedge dy_J\wedge\bwl^{h+2, I,J}\R^{2n};
\]
more precisely,  for every $\al\in \bwl^{h, I,J}\R^{2n}$ one has
\[
L(dx_I\wedge dy_J\wedge\al)=-dx_I\wedge dy_J\wedge \al\wedge\sum_{i\in\unoenne\setminus( I\cup J)}dx_i\wedge dy_i.
\]
In particular 
\begin{equation}\label{eq:kersplit}
\ker L=\bigoplus_{(I,J)}dx_I\wedge dy_J\wedge \ker L_{h,I,J},
\end{equation}
where $L_{h,I,J}:\bwl^{h, I,J}\R^{2n}\to\bwl^{h+2, I,J}\R^{2n}$ is defined by 
\[
L_{h,I,J}(\al):=-\al\wedge\sum_{i\in\unoenne\setminus( I\cup J)}dx_i\wedge dy_i. 
\]
There is a canonical isomorphism $\iota:\bwl^{h,I,J}\R^{2n}\to\bwl_D^{2\ell}\R^{2m}$, where
\begin{equation}\label{eq:ell,m}
\begin{split}
&\ell:=({h-|I|-|J|})/2,\qquad m:=n-|I|-|J|,\\
&\bwl^{2\ell}_D\R^{2m}:=\Span\{dxy_K:K\subset\{1,\dots,m\},|K|=\ell\},
\end{split}
\end{equation}
according to which $\iota\circ L_{h,I,J}= L\circ \iota$. The study of $\ker L_{h,I,J}$ (which, by~\eqref{eq:kersplit}, determines $\ker L$) is thus equivalent to the study of the kernel of  the restriction 
\[
L_D:=L_{{\big|}\text{\large$\wedge$}^{2\ell}_D\text{\small$\R^{2m}$}}.
\]

\begin{remark}
The objects introduced so far are well-defined unless  $|I|+|J|=n$, i.e., if $m=0$: this can happen only if $h=n$ and gives that also $\ell=0$. In this case, we agree that $\bwl^{h,I,J}\R^{2n}=\bwl^{2\ell }\R^{2m}=\R$, $\bwl^{h+2,I,J}\R^{2n}=\bwl^{2\ell +2}\R^{2m}=\{0\}$ and $L_D=0$. It is immediate to check that, for every such $I,J$, one has $L(dx_I\wedge dy_J)=0$.
\end{remark}

Observe that $L(\bwl^{2\ell}_D\R^{2m})\subset \bwl^{2\ell+2}_D\R^{2m}$, hence $L_D$ maps $\bwl^{2\ell}_D\R^{2m}$ on $\bwl^{2\ell+2}_D\R^{2m}$. It is understood that, when $\ell=m$, then $\bwl^{2\ell+2}_D\R^{2m}=\{0\}$ and $L_D=0$. Notice also that the inequality $2\ell\geq m$ holds because $h\geq n$.

\begin{lemma}\label{lem:dimKerLD}
The operator $L_D:\bwl^{2\ell}_D\R^{2m}\to\bwl^{2\ell+2}_D\R^{2m}$ is surjective for every integer $\ell$ such that $m\leq2\ell\leq 2m$. In particular
\[
\dim\ker L_D=\dim \bwl^{2\ell}_D\R^{2m}-\dim\bwl^{2\ell+2}_D\R^{2m}=\binom m\ell-\binom m{\ell+1},
\]
where we agree that $\binom m{m+1} =0$.
\end{lemma}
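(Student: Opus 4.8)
The plan is to recognise that the family of spaces $\bwl^{2\ell}_D\R^{2m}$, as $\ell$ ranges over $\{0,\dots,m\}$, assembles into a single $\mathfrak{sl}_2$-submodule of $\bwl^\ast\R^{2m}$ for the $\mathfrak{sl}_2$-action coming from symplectic linear algebra, and then to invoke the elementary representation theory of $\mathfrak{sl}_2$; the computation of $\dim\ker L_D$ will then be a one-line rank--nullity count. This is consistent with the fact that the surjectivity of the full Lefschetz operator has already been quoted from \cite{BryantGG}.

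In detail, I would set $M:=\bigoplus_{\ell=0}^{m}\bwl^{2\ell}_D\R^{2m}$ and let $\sigma:=dx_1\wedge dy_1+\dots+dx_m\wedge dy_m=dxy_1+\dots+dxy_m$ be the standard symplectic form on $\R^{2m}$. Since $L_D(\lambda)=-\lambda\wedge\sigma$ on $M$ and the sign affects neither surjectivity nor the dimension of the kernel, I may replace $L_D$ by the Lefschetz operator $\mathsf L(\lambda):=\lambda\wedge\sigma$. Recall (see e.g.\ \cite{BryantGG} or any text on symplectic linear algebra) that $\mathsf L$, its metric adjoint $\Lambda$ with respect to the standard inner product on $\bwl^\ast\R^{2m}$ — namely contraction by the bivector $\partial_{x_1}\wedge\partial_{y_1}+\dots+\partial_{x_m}\wedge\partial_{y_m}$, which lowers degree by $2$ — and the operator $\mathsf H$ acting as $(j-m)\,\mathrm{Id}$ on $\bwl^j\R^{2m}$, form an $\mathfrak{sl}_2$-triple acting on $\bwl^\ast\R^{2m}$.

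The key point is that $M$ is stable under this action. Stability under $\mathsf H$ is immediate. Since $\mathsf L(dxy_K)=\sum_{i\in\{1,\dots,m\}\setminus K}dxy_{K\cup\{i\}}$ for $K\subseteq\{1,\dots,m\}$, one has $\mathsf L(M)\subseteq M$. For $\Lambda$ a direct computation shows that contracting $dxy_K$ with $\partial_{x_i}\wedge\partial_{y_i}$ produces $\pm\,dxy_{K\setminus\{i\}}$ if $i\in K$ and $0$ otherwise, so that $\Lambda(dxy_K)$ is again a linear combination of the $dxy_{K'}$, i.e.\ $\Lambda(M)\subseteq M$. Hence $M$ is a finite-dimensional $\mathfrak{sl}_2$-module in which $\bwl^{2\ell}_D\R^{2m}$ is exactly the weight-$(2\ell-m)$ subspace, with $\dim\bwl^{2\ell}_D\R^{2m}=\binom m\ell$.

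To finish, I would use the standard fact that in any finite-dimensional $\mathfrak{sl}_2$-module the raising operator maps the weight-$w$ subspace onto the weight-$(w+2)$ subspace whenever $w\ge 0$ (checked at once on irreducibles). Applying this with $w=2\ell-m$, which is $\ge 0$ precisely when $2\ell\ge m$, gives that $\mathsf L$, hence $L_D$, maps $\bwl^{2\ell}_D\R^{2m}$ onto $\bwl^{2\ell+2}_D\R^{2m}$ for all $\ell$ with $m\le 2\ell\le 2m$ (the case $\ell=m$ being trivial, the target being $\{0\}$). Then rank--nullity yields
\[
\dim\ker L_D=\dim\bwl^{2\ell}_D\R^{2m}-\dim\bwl^{2\ell+2}_D\R^{2m}=\binom m\ell-\binom m{\ell+1},
\]
with the convention $\binom m{m+1}=0$. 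The only mildly delicate point is the verification that $M$ is closed under the lowering operator $\Lambda$ (equivalently, under the ``down'' operator on subsets of $\{1,\dots,m\}$), together with the bookkeeping of the ultimately irrelevant signs and normalisations in the $\mathfrak{sl}_2$-relations; everything else is a direct appeal to well-known facts.
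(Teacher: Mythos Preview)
Your proof is correct. The paper takes a slightly different and somewhat shorter route: rather than verifying that $M=\bigoplus_\ell\bwl^{2\ell}_D\R^{2m}$ is an $\mathfrak{sl}_2$-submodule (in particular, checking closure under the lowering operator $\Lambda$), it simply notes that the iterated power $L_D^i:\bwl_D^{m-i}\R^{2m}\to\bwl_D^{m+i}\R^{2m}$ is the restriction of the full Lefschetz power $L^i:\bwl^{m-i}\R^{2m}\to\bwl^{m+i}\R^{2m}$, which is bijective by the Hard Lefschetz theorem cited from \cite{BryantGG}; since $\dim\bwl_D^{m-i}\R^{2m}=\dim\bwl_D^{m+i}\R^{2m}$ and injectivity restricts to subspaces, each $L_D^i$ is itself bijective, and factoring $L_D^i$ through the single step $L_D:\bwl_D^{2\ell}\R^{2m}\to\bwl_D^{2\ell+2}\R^{2m}$ (with $2\ell+2=m+i$) gives surjectivity of the latter. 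Your argument trades this ``injectivity restricts plus dimension count'' trick for a direct $\mathfrak{sl}_2$ analysis on $M$; this is conceptually cleaner and makes the module structure explicit, at the cost of having to check closure under $\Lambda$. The paper's route is marginally more economical because it only needs closure under $\mathsf L$, which is immediate.
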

\begin{proof}
When $2\ell=2m$ there is nothing to prove. For the remaining cases, it suffices to prove that, for every $i=1,\dots, m$, the operator
\[
L_D^i:=\underbrace{L_D\circ\dots\circ L_D}_{\text{$i$ times}}:\bwl_D^{m-i}\R^{2m}\to\bwl_D^{m+i}\R^{2m}
\]
is bijective. Since $\dim\bwl_D^{m-i}\R^{2m} = \dim\bwl_D^{m+i}\R^{2m}$, this is an immediate consequence of the bijectivity of $L^i:\bwl^{m-i}\R^{2m}\to\bwl^{m+i}\R^{2m}$, see~\cite[Proposition~1.1]{BryantGG}.
\end{proof}

We now provide a basis of $\ker L_D:\bwl_D^{2\ell}\R^{2m}\to \bwl_D^{2\ell+2}\R^{2m}$ for $m\leq2\ell\leq 2m$. Assume that the numbers $1,\dots,m$ have been arranged (each number appearing exactly once) in a tableau $ R$ with 2 rows, the first row having $\ell$ elements $R^1_1,\dots,R^1_\ell$ and the second having $m-\ell$ elements $R^2_1,\dots,R^2_{m-\ell}$, as follows
\begin{equation}\label{eq:Tabella}
R=\begin{tabular}{|c|c|c|c|ccc}
\cline{1-7}
$R^{1^{\vphantom{2^2}}}_1$ & $R^1_2$ &$\ \cdots\ $& $R^1_{m-\ell}$ &\multicolumn{1}{c|}{$ R_{m-\ell+1}^1$} & \multicolumn{1}{c|}{$\ \cdots\ $} & \multicolumn{1}{c|}{$R_\ell^1$}\\
\cline{1-7}
$R^2_1$ & $R^2_2$ &$\ \cdots\ $& $R^{2^{\vphantom{2^2}}}_{m-\ell}$
\\
\cline{1-4}
\end{tabular}
\end{equation}
where, clearly, $R$ has to be read as a $(2\times \ell)$ rectangular matrix when $2\ell=m$. As customary, we call {\em Young tableau} such a tableau, see~\cite{Fulton}. When $m=\ell=0$, we agree that the tableau is empty. When $m=\ell\geq 1$ we agree that $R$ has one row only (the second row is empty).

Given such a Young tableau $R$ consider the covector $\al_R\in\bwl^{2\ell}_D\R^{2m}$ defined by
\begin{equation}\label{eq:DefAlfaT}
\al_R:=(dxy_{R^1_1}- dxy_{R^2_1})\wedge(dxy_{R^1_2}- dxy_{R^2_2})\wedge\dots\wedge(dxy_{R^1_{m-\ell}}- dxy_{R^2_{m-\ell}})\wedge dxy_{R^1_{m-\ell+1}}\wedge\dots\wedge dxy_{R^1_\ell}.
\end{equation}
When $m=\ell=0$ we agree that $\al_R:=1$, while if $m=\ell\geq 1$ we set $
\al_R:=dxy_{R^1_1}\wedge\dots\wedge dxy_{R^1_m}$.
Using the  equality $(dxy_i-dxy_j)\wedge(dxy_i+dxy_j)=0$, valid for every $i,j=1,\dots,m$, one can easily check that
\begin{equation}\label{eq:tuttetabelleok}
L(\al_R)=\al_R\wedge d\theta=0.
\end{equation}
It follows that, for every Young tableau $R$, $\al_R$ belongs to $\ker L_D$.

Consider now the set $\mathscr R$ of  {\em standard Young tableaux}, i.e., of those Young tableaux $R$   such that the entries in each row and in each  column are in increasing order:
\[
\begin{array}{ccccccccccccc}
R^1_1 & < & R^1_2 & < & \ \cdots\  & < & R^1_{m-\ell} & <  & R^1_{m-\ell+1} & < & \ \cdots\  & < & R^1_\ell\\
\wedge && \wedge && &&\wedge\\
R^2_1 & < & R^2_2 & < & \ \cdots\  & < & R^2_{m-\ell}. 
\end{array}
\]
Notice that a standard Young tableau $R$ always satisfies $R^1_1=1$. The following lemma shows that the family $\{\al_R\}_{R\in\mathscr R}\subset\ker L_D$ is made by linearly independent covectors.

\begin{lemma}\label{lem:YoungTabLinInd}
Let $m,\ell$ be integers such that $0\leq m\leq 2\ell\leq 2m$ and let $\mathscr R$ be the set of standard Young tableaux with two rows of width $\ell$ and $m-\ell$, as in \eqref{eq:Tabella}. Then the elements $\{\al_R\}_{R\in\mathscr R}$ defined in \eqref{eq:DefAlfaT} are linearly independent.
\end{lemma}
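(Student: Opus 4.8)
The plan is to identify, for each standard tableau $R\in\mathscr R$, a distinguished basis covector $dxy_{S(R)}$ occurring in the expansion of $\al_R$ which determines $R$ uniquely, and then to conclude by a minimal‑counterexample argument; this is the usual mechanism behind the standard basis theorem for Specht modules, which $\{\al_R\}_{R\in\mathscr R}$ closely resembles. To set it up, note first that every $dxy_i=dx_i\wedge dy_i$ is a $2$‑covector, so the factors in~\eqref{eq:DefAlfaT} commute and expanding the product produces no reordering sign:
\[
\al_R=\sum_{B\subseteq\{1,\dots,m-\ell\}}(-1)^{|B|}\,dxy_{K_B(R)},\qquad
K_B(R):=\{R^2_j:j\in B\}\cup\{R^1_j:j\notin B,\ 1\le j\le m-\ell\}\cup\{R^1_j:m-\ell<j\le\ell\}.
\]
Since the displayed entries are all distinct (they are $1,\dots,m$, each occurring exactly once in $R$), each $K_B(R)$ is an $\ell$‑subset of $\{1,\dots,m\}$ and contains $R^2_j$ if and only if $j\in B$; hence $B\mapsto K_B(R)$ is injective and, writing $S(R):=K_\emptyset(R)$ for the set of first‑row entries of $R$, the coefficient of $dxy_{S(R)}$ in the basis expansion of $\al_R$ is exactly $+1$.

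The second point I would record is that $S(R)$ is a genuine ``leading term''. Because $R$ is column‑strict, $R^1_j<R^2_j$ for every $j$, so for every $B$
\[
\sum_{i\in K_B(R)}i=\sum_{i\in S(R)}i+\sum_{j\in B}(R^2_j-R^1_j)\ \ge\ \sum_{i\in S(R)}i,
\]
with equality only when $B=\emptyset$. Thus among all basis covectors $dxy_K$ appearing in $\al_R$, the one indexed by the first row $S(R)$ is the unique one of minimal index‑sum. I would also note that $R\mapsto S(R)$ is injective on $\mathscr R$: a standard Young tableau of shape $(\ell,m-\ell)$ is completely determined by the content of its first row, the second row being forced to be the complementary entries arranged increasingly.

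With these facts the linear independence is a one‑line extremality argument, which I would carry out as follows. Suppose $\sum_{R\in\mathscr R}c_R\al_R=0$ with not all $c_R$ zero, and choose $R_*$ with $c_{R_*}\neq 0$ for which $\sum_{i\in S(R_*)}i$ is minimal. If, for some $R$ with $c_R\neq 0$, the covector $dxy_{S(R_*)}$ occurs in $\al_R$, then $S(R_*)=K_B(R)$ for some $B$, whence $\sum_{i\in S(R_*)}i\ge\sum_{i\in S(R)}i$; minimality forces equality, hence $B=\emptyset$ and $S(R)=S(R_*)$, hence $R=R_*$ by injectivity. Consequently the coefficient of $dxy_{S(R_*)}$ in $\sum_{R}c_R\al_R$ equals $c_{R_*}\cdot 1=c_{R_*}\neq 0$, contradicting the relation; the degenerate cases $m=0$ and $\ell=m$ (a single tableau) are trivial and are covered by the same computation. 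I expect the only delicate point to be the bookkeeping in the expansion of $\al_R$ — verifying that commutativity of the $dxy_i$ removes every sign except $(-1)^{|B|}$ and that $B\mapsto K_B(R)$ is injective — everything else being elementary once the minimal‑index‑sum leading term has been singled out.
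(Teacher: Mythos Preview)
Your proof is correct and follows essentially the same approach as the paper's: both identify the first-row set $S(R)$ as the unique term of minimal index-sum in the expansion of $\al_R$, observe that $R\mapsto S(R)$ is injective, and deduce linear independence by looking at the coefficient of $dxy_{S(R_*)}$ for an extremal $R_*$. The paper organizes the argument as an induction on the row-sum $\sigma(R)=\sum_{i\in S(R)}i$ while you phrase it as a minimal-counterexample, but the key inequality and its equality case are identical.
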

\begin{proof}
When $m=0$ also $\ell=0$ and the family $\{\al_R\}_{R\in\mathscr R}$ is made by a unique element $\al_R=1$; there is nothing to prove. When $\ell=m$, the family $\mathscr R$ is made by the single standard Young tableau 
$
R=\begin{tabular}{|c|c|c|c|}
\cline{1-4}
$1$ & $2$ &$\ \cdots\ $& $m$
\\
\cline{1-4}
\end{tabular}.
$
Therefore $\{\al_R\}_{R\in\mathscr R}=\{dxy_1\wedge\dots\wedge dxy_m\}$ is made by linearly independent vectors, as wished. 

We  therefore assume that $1\leq m\leq 2\ell\leq 2m-2$. Let us begin with some preliminary considerations. Given a Young tableau $R$, we denote by $\si(R)$ the sum $R^1_1+R^1_2+\dots+ R^1_\ell$ of the elements in the first row of $R$. Clearly, there exists an integer $M=M(m,\ell)$ such that 
\[
1+\dots+\ell\leq \si(R)\leq M\qquad\text{for every $R\in\mathscr R$;}
\]
moreover, there is a unique $R_{min}\in\mathscr R$ such that $\si(R)=1+\dots+\ell$, namely
\[
R_{min}:=
\begin{tabular}{|c|c|c|c|ccc}
\cline{1-7}
$1$ & $2$ &$\ \cdots\ $& $m-\ell$ &\multicolumn{1}{c|}{$ m-\ell+1$} & \multicolumn{1}{c|}{$\ \cdots\ $} & \multicolumn{1}{c|}{$\ell^{\phantom{2}}$}\\
\cline{1-7}
$\ell+1$ & $\ell+2^{\phantom{2}}$ &$\ \cdots\ $& $m$
\\
\cline{1-4}
\end{tabular}.
\]

Let $\overline R\in\mathscr R$ be fixed and let $\overline K:=\{\overline R^1_1,\overline R^1_2,\dots,\overline R^1_\ell\}$ be the subset of $\{1,\dots,m\}$ containing the $\ell$ elements in the first row of $\overline R$. We claim that, if
\begin{equation}\label{eq:Giorgia}
\text{$R\in\mathscr R$ is such that   each column of $R$ contains one element of $\overline K$}
\end{equation}
(in particular, by the pigeonhole principle, each column of $R$ contains exactly one element of $\overline K$), then
\begin{equation}\label{eq:Riccardo}
\text{$\si(R)\leq\si(\overline R)$, and the equality $\si(R)\leq\si(\overline R)$ holds if and only if $R=\overline R$.}
\end{equation}
Indeed, for every  tableau $R$ as in~\eqref{eq:Giorgia} there exist a permutation $\pi$ of $\{1,\dots,\ell\}$ and a function $u:\{1,\dots,\ell\}\to\{1,2\}$ such that
\[
R^{u(j)}_j=\overline R^1_{\pi(j)}\qquad\text{for all $j=1,\dots,\ell$.}
\]
Therefore
\begin{equation}\label{eq:sigmaminore}
\si(R)=\sum_{j=1}^\ell R^1_j\leq \sum_{j=1}^\ell R^{u(j)}_j=\sum_{j=1}^\ell\overline R^1_{\pi(j)}=\si(\overline R);
\end{equation}
notice that equality holds if and only if $R^1_j=R^{u(j)}_j$ for every $j=1,\dots,\ell$, i.e., if and only if
\[
\{ R^1_1, R^1_2,\dots, R^1_\ell\} =
\{\overline R^1_1,\overline R^1_2,\dots,\overline R^1_\ell\}.
\]
Since a standard Young tableau is uniquely determined once one fixes the set of elements of the first row,  equality in \eqref{eq:sigmaminore} holds if and only if $R=\overline R$.

We now prove the lemma. Assume that there are real coefficients $(b_R)_{R\in\mathscr R}$, such that $\sum_{R\in\mathscr R}b_R\al_R=0$; we prove that all the coefficients $b_R$ are null. We perform this task reasoning by induction on $\bar s=1+\dots+\ell,\dots,M$ and showing that
\[
b_R=0\qquad\text{for every $R$ such that $\si(R)=\bar s$.}
\]

Consider first the case $\bar s=1+\dots+\ell$; we need to prove that $b_{R_{min}}=0$. For every  $R\in\mathscr R$ let us write $\al_R=\sum_K c_{R,K} dxy_K$, where the sum ranges among all $K\subset\{1,\dots,m\}$ with $|K|=\ell$ and $c_{R,K}$ are suitable real numbers. We claim that, defining $\overline K:=\{1,2,\dots,\ell\}$, then
\begin{equation}\label{eq:Tbarra}
c_{R,\overline K}=0\qquad\text{for every }R\in\mathscr R\setminus\{R_{min}\}.
\end{equation}
This would be enough to conclude: indeed, one would have
\[
0=  \sum_{R\in\mathscr R}b_R\al_R = \sum_{K} \sum_{R\in\mathscr R}b_Rc_{R,K} dxy_K= b_{R_{min}} dxy_{\overline K} + \sum_{K\neq \overline K} \sum_{R\in\mathscr R}b_Rc_{R,K} dxy_K
\]
which gives $b_{R_{min}}=0$, as desired. 

\noindent We prove \eqref{eq:Tbarra}: by the very definition~\eqref{eq:DefAlfaT} of $\al_R$, we see that for every $K$ one has $c_{R,K}=0$ unless each column of $R$ contains (exactly) one element of $K$ (in which case $c_{R,K}\in\{1,-1\}$). Using this observation with $K=\overline K$, \eqref{eq:Tbarra} follows because, by the implication~\eqref{eq:Giorgia}~$\Rightarrow$~\eqref{eq:Riccardo}, the only standard Young tableau such that each of its columns contains one element of $\overline K$ is $R_{min}$ itself, because any other standard Young tableau $R$ with such a  property would satisfy $\si(R)<1+\dots+\ell$.  

Assume now that $b_R=0$ for every $R\in\mathscr R$ such that $\si(R)\leq \overline s-1$; we prove that $b_R=0$ for every (fixed) $\overline R$ such that $\si(\overline R)=\overline s$. Let $\overline K:=\{\overline R^1_1,\dots,\overline R^1_\ell\}$ be the set formed by the elements in the first row of $\overline R$; we claim that
\begin{equation}\label{eq:Tbarra222}
c_{R,\overline K}=0\qquad\text{for every }R\in\mathscr R\setminus \{\overline R\}\text{ such that }\si(R)\geq\overline s.
\end{equation}
This would be enough to conclude: indeed,  one would have
\[
0=  \sum_{R\in\mathscr R}b_R\al_R=  \sum_{\substack{R\in\mathscr R\\\si(R)\geq \overline s}}b_R\al_R
 = \sum_{K} \sum_{\substack{R\in\mathscr R\\\si(R)\geq \overline s}}b_Rc_{R,K} dxy_K= b_{\overline R} dxy_{\overline K} + \sum_{K\neq \overline K} \sum_{\substack{R\in\mathscr R\\\si(R)\geq \overline s}}b_Rc_{R,K} dxy_K
\]
which would give $b_{\overline R}=0$, as desired. 

\noindent Claim \eqref{eq:Tbarra222} can be proved similarly as before: by the  definition of $\al_R$, for a standard Young tableau $R$ one has $c_{R,\overline K}=0$ unless each column of $R$ contains (exactly) one element of $\overline K$; in particular, \eqref{eq:Tbarra222} follows from the implication \eqref{eq:Giorgia}$\Rightarrow$\eqref{eq:Riccardo}. 
%
\end{proof}

The cardinality of $\mathscr R$ can be computed using the {\em Hook length formula}, also known as {\em Frame-Robinson-Thrall formula}. We refer to~\cite[page~53]{Fulton}
: such a formula states that the cardinality of $\mathscr R$ equals
\begin{align*}
&\frac{m!}{(\ell+1)\ell(\ell-1)\cdots(2\ell-m+2)\;1\cdot\;(2\ell-m)\cdots 2\cdot 1\;\cdot\; (m-\ell)(m-\ell-1)\cdots1}\\
=\:& \frac{m!}{(\ell+1)!(m-\ell)!}(2\ell-m+1)
\ =\ \frac{m!}{(\ell+1)!(m-\ell)!}(\ell+1-(m-\ell))\\
=\:&\binom m\ell-\binom m{\ell+1}
\end{align*}
provided $\ell<m$.
If $\ell=m$, then the cardinality of $\mathscr R$ is 1. In both cases, Lemma~\ref{lem:dimKerLD} implies that the cardinality of $\mathscr R$ is equal to $\dim\ker L_D$: together with Lemma \ref{lem:YoungTabLinInd}, this proves that $\{\al_R\}_{R\in\mathscr R}$ is a basis of $\ker L_D$.  

We can  summarize the discussion above as follows.

\begin{proposition}\label{prop:BaseKerLD}
Consider integer numbers $n,h$ such that $n\geq 1$  and $n\leq h\leq 2n$. Then a basis of the kernel of $L:\bwl^h\R^{2n}\to\bwl^{h+2}\R^{2n}$ is given by the elements of the form $dx_I\wedge dy_J\wedge\al_R$ where
\begin{itemize}
\item $I,J$ are disjoint subsets of $\{1,\dots,n\}$;
\item $\al_R$ is defined as in \eqref{eq:DefAlfaT} and $R$ is a standard Young tableau where the elements of $\{1,\dots,n\}\setminus(I\cup J)$ are arranged in two rows, the first one having $(h-|I|-|J|)/2$ elements and the second one having $(2n-h-|I|-|J|)/2$ elements. 
\end{itemize}
\end{proposition}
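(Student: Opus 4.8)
The plan is to assemble the statement from the three ingredients already established, namely the splitting \eqref{eq:kersplit} of $\ker L$ over pairs $(I,J)$, the isomorphism $\iota$ reducing each summand $\ker L_{h,I,J}$ to $\ker L_D$ on $\bwl^{2\ell}_D\R^{2m}$ with $\ell,m$ as in \eqref{eq:ell,m}, and the fact that $\{\al_R\}_{R\in\mathscr R}$ is a basis of $\ker L_D$. So the proof is essentially a bookkeeping argument, with the ``work'' having been done in Lemmas~\ref{lem:dimKerLD} and~\ref{lem:YoungTabLinInd} and the Hook length computation. Concretely, I would first recall that by \eqref{eq:kersplit} we have $\ker L=\bigoplus_{(I,J)}dx_I\wedge dy_J\wedge\ker L_{h,I,J}$, the sum running over ordered pairs of disjoint subsets $I,J\subset\unoenne$ with $|I|+|J|\le h$ and $|I|+|J|\equiv h\pmod 2$; and that $\iota$ carries $\ker L_{h,I,J}$ isomorphically onto $\ker L_D\subset\bwl^{2\ell}_D\R^{2m}$ while intertwining $L_{h,I,J}$ with $L_D$. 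Hence, under $\iota^{-1}$, the basis $\{\al_R\}_{R\in\mathscr R}$ of $\ker L_D$ (with $\mathscr R$ the set of standard Young tableaux on $\{1,\dots,m\}$ with rows of width $\ell$ and $m-\ell$) pulls back to a basis of $\ker L_{h,I,J}$.

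The one genuinely substantive point to verify is that $\iota^{-1}(\al_R)$, when $\al_R$ is the covector attached (via \eqref{eq:DefAlfaT}, with entries in $\{1,\dots,m\}$) to a standard tableau $R$ on $\{1,\dots,m\}$, is exactly the covector $\al_{\widetilde R}$ attached to the standard tableau $\widetilde R$ obtained by relabelling: the isomorphism $\iota$ was built precisely so that $dxy_K\in\bwl^{2\ell}_D\R^{2m}$ corresponds to $dxy_{K'}$, where $K'\subset\unoenne\setminus(I\cup J)$ is the image of $K\subset\{1,\dots,m\}$ under the unique order-preserving bijection $\{1,\dots,m\}\to\unoenne\setminus(I\cup J)$. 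Since this bijection is order-preserving, a tableau on $\{1,\dots,m\}$ is standard if and only if its relabelling with elements of $\unoenne\setminus(I\cup J)$ is standard, and since $\al_R$ depends on $R$ only through the multi-indices $R^i_j$ appearing in \eqref{eq:DefAlfaT}, we get $\iota^{-1}(\al_R)=\al_{\widetilde R}$. Therefore $\{\al_{\widetilde R}:\widetilde R\text{ a standard Young tableau on }\unoenne\setminus(I\cup J)\text{ with rows of width }\ell,m-\ell\}$ is a basis of $\ker L_{h,I,J}$, where $\ell=(h-|I|-|J|)/2$ and $m-\ell=n-|I|-|J|-\ell=(2n-h-|I|-|J|)/2$ — precisely the row widths in the statement.

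Combining this with \eqref{eq:kersplit}, the collection $\{dx_I\wedge dy_J\wedge\al_R\}$, as $(I,J)$ ranges over disjoint subsets of $\unoenne$ and $R$ over standard Young tableaux on $\unoenne\setminus(I\cup J)$ with the indicated row widths, is a union of bases of the direct summands of $\ker L$, hence a basis of $\ker L$. Two degenerate cases should be checked explicitly to be safe: first, the constraint $|I|+|J|\le h$ and the parity condition are automatically encoded in the requirement that $\unoenne\setminus(I\cup J)$ admit a tableau with rows of width $(h-|I|-|J|)/2$ and $(2n-h-|I|-|J|)/2$ (both must be non-negative integers summing to $n-|I|-|J|$), so no hypothesis is lost; second, when $h=n$ it can happen that $m=0$, i.e.\ $I\cup J=\unoenne$, in which case the tableau $R$ is empty, $\al_R=1$ by convention, and $\ker L_{h,I,J}=\bwl^{n,I,J}\R^{2n}=\R$ is one-dimensional, consistent with the Remark preceding Lemma~\ref{lem:dimKerLD}. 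The only place where something could go wrong is the identification $\iota^{-1}(\al_R)=\al_{\widetilde R}$; but since $\iota$ is defined by the order-preserving relabelling of the index set, this is immediate, and I do not expect any real obstacle — this proposition is a corollary of the preceding lemmas rather than a theorem requiring new ideas.
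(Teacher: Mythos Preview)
Your proposal is correct and follows essentially the same approach as the paper: the proposition is stated there as a summary of the preceding discussion, assembling the splitting \eqref{eq:kersplit}, the isomorphism $\iota$ to $\bwl^{2\ell}_D\R^{2m}$, and the fact (from Lemmata~\ref{lem:dimKerLD} and~\ref{lem:YoungTabLinInd} together with the Hook length formula) that $\{\al_R\}_{R\in\mathscr R}$ is a basis of $\ker L_D$. Your explicit verification that $\iota^{-1}$ carries $\al_R$ to $\al_{\widetilde R}$ via the order-preserving relabelling, and your handling of the degenerate case $m=0$, make precise what the paper leaves implicit.
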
 

\begin{remark}\label{rem:tabellenonYoung}
It follows from~\eqref{eq:tuttetabelleok} that, given $I,J\subset\unoenne$ disjoint and a (non-necessarily standard) Young tableau $Q$ containing the elements of $\unoenne\setminus(I\cup J)$, then the covector $dx_I\wedge dy_J\wedge\al_Q$ is in the kernel of $L$. Moreover, by~\eqref{eq:kersplit} $dx_I\wedge dy_J\wedge\al_Q$ can be written  as a linear combination of covectors of the form $dx_I\wedge dy_J\wedge\al_R$ where $R$ ranges among all standard Young tableaux with the same shape  and containing the same elements as $Q$.
\end{remark}

Proposition \ref{prop:BaseKerLD} has the following immediate consequence. 

\begin{corollary}\label{cor:baseJ}
Let $n\geq 1$ and $1\leq k\leq n$ be integers. Then a basis of $\mathcal J^{2n+1-k}$ is given by the elements of the form $dx_I\wedge dy_J\wedge\al_R\wedge \theta$ where $I,J,R$ are given by Proposition \ref{prop:BaseKerLD} with  $h:=2n-k$.
\end{corollary}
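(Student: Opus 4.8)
The plan is to recognize $\mathcal J^{2n+1-k}$ as a linear-isomorphic copy of $\ker L$ and then transport, unchanged, the basis of $\ker L$ produced in Proposition~\ref{prop:BaseKerLD}. So essentially no new work is needed; the only thing to make precise is the identification $\mathcal J^{2n+1-k}\cong\ker L$, which is exactly what Remark~\ref{rem:J2n+1-k} gives.

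First I would fix $h:=2n-k$. Since $1\le k\le n$ we have $n\le h\le 2n-1$, so Proposition~\ref{prop:BaseKerLD} applies and yields a basis of $\ker L$, where $L\colon\bwl^{h}\R^{2n}\to\bwl^{h+2}\R^{2n}$ is the Lefschetz operator $L(\mu)=\mu\wedge d\theta$, and where we silently use the identification $\h_1\equiv\R^{2n}$, so that $\bwl^h\R^{2n}\equiv\bwl^h\h_1$. That basis consists of the covectors $dx_I\wedge dy_J\wedge\al_R$ with $(I,J,R)$ ranging as in the statement.

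Next I would check that the linear map $\Psi\colon\bwl^h\h_1\to\bwl^{h+1}\h$, $\Psi(\mu):=\mu\wedge\theta$, restricts to an isomorphism of $\ker L$ onto $\mathcal J^{2n+1-k}$. It is injective, because wedging with $\theta$ is injective on $\bwl^\ast\h_1$ (in the basis $dx_1,\dots,dy_n,\theta$ of $\bwl^1\h$, distinct basis multicovectors of $\bwl^h\h_1$ are sent to distinct basis multicovectors of $\bwl^{h+1}\h$). If $\mu\in\ker L$, then $\Psi(\mu)\wedge\theta=0$ trivially, while $\Psi(\mu)\wedge d\theta=\mu\wedge\theta\wedge d\theta=\mu\wedge d\theta\wedge\theta=0$, using that the $1$-form $\theta$ commutes with the $2$-form $d\theta$; hence $\Psi(\mu)\in\mathcal J^{2n+1-k}$. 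Conversely, by Remark~\ref{rem:J2n+1-k} every $\lambda\in\mathcal J^{2n+1-k}$ has the form $\lambda=\lambda_H\wedge\theta$ for a unique $\lambda_H\in\bwl^{h}\h_1$, and the same remark tells us $\lambda_H\wedge d\theta=0$, i.e. $\lambda_H\in\ker L$; thus $\Psi$ maps $\ker L$ onto $\mathcal J^{2n+1-k}$.

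Finally I would observe that $\Psi$ sends the basis element $dx_I\wedge dy_J\wedge\al_R$ of $\ker L$ to $dx_I\wedge dy_J\wedge\al_R\wedge\theta$, and since a linear isomorphism carries a basis to a basis, these elements form a basis of $\mathcal J^{2n+1-k}$ as $(I,J,R)$ ranges as in Proposition~\ref{prop:BaseKerLD} with $h=2n-k$ (including the degenerate case $k=n$, $I\cup J=\unoenne$, where $\al_R=1$ by convention). I do not expect any genuine obstacle: the linear-independence and dimension count were already carried out in Lemmas~\ref{lem:YoungTabLinInd} and~\ref{lem:dimKerLD}, and the only point needing a line of care is the surjectivity of $\Psi$ onto $\mathcal J^{2n+1-k}$, which is precisely the content of Remark~\ref{rem:J2n+1-k}.
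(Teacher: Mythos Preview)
Your proposal is correct and follows the same approach as the paper: both identify $\mathcal J^{2n+1-k}$ with $\{\lambda\wedge\theta:\lambda\in\bwl^{2n-k}\h_1,\ \lambda\wedge d\theta=0\}$ via Remark~\ref{rem:J2n+1-k} and then invoke Proposition~\ref{prop:BaseKerLD}. Your write-up is simply more explicit about the isomorphism $\Psi$ than the paper's two-line proof.
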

\begin{proof}
We observed  at the beginning of the section that
\[
\mathcal J^{2n+1-k}=\{\la\wedge\theta:\la\in\bwl^{2n-k}\h_1\text{ and }\la\wedge d\theta=0\}.
\]
The statement now easily follows from Proposition~\ref{prop:BaseKerLD}.
\end{proof}

Proposition~\ref{prop:baseJintro} is now an easy consequence of Corollary~\ref{cor:baseJ}.

We feel it might be useful to provide some simple examples before continuing our analysis.

\begin{example}\label{example:baseJ3inH2}
When $n=2$, the space $\mathcal J^{n+1}=\mathcal J^3$ in $\H^2$ is 5-dimensional. The corresponding basis $dx_I\wedge dy_J\wedge\al_R\wedge\theta$ provided by Proposition~\ref{prop:baseJintro} is displayed on the left column of the following table, while on the right column the corresponding triple $(I,J,R)$ appears:
\begin{align*}
& dx_1\wedge dx_2\wedge\theta &&\hspace{-2cm}\longleftrightarrow\hspace{1cm} (\{1,2\},\emptyset,\emptyset)\\ 
& dx_1\wedge dy_2\wedge\theta&&\hspace{-2cm}\longleftrightarrow\hspace{1cm} (\{1\},\{2\},\emptyset)\\ 
& dx_2\wedge dy_1\wedge\theta&&\hspace{-2cm}\longleftrightarrow\hspace{1cm} (\{2\},\{1\},\emptyset)\\ 
& dy_1\wedge dy_2\wedge\theta&&\hspace{-2cm}\longleftrightarrow\hspace{1cm}(\emptyset,\{1,2\},\emptyset)\\ 
& (dx_1\wedge dy_1 - dx_2\wedge dy_2)\wedge\theta
&&\hspace{-2cm}\longleftrightarrow\hspace{1cm} 
\left(\emptyset,\emptyset,
\begin{tabular}{|c|}
\cline{1-1}
$1$\\
\cline{1-1}
$2$
\\
\cline{1-1}
\end{tabular}\ 
\right),
\end{align*}
where $\emptyset$ denotes either the empty set or the empty tableau. See also~\cite[Example~3.12]{BaldiFranchiDivCurl}.
\end{example}

\begin{example}\label{example:baseJ5inH3}
When $n=3$, the space $\mathcal J^5$ in $\H^3$ is 14-dimensional; let us write the basis provided by Proposition~\ref{prop:baseJintro}. Observe that here $n=3$, $k=2$ and $h=4$.  We first determine the triples $(I,J,R)$: since  $|I|+|J|$ is not greater than $ k=2$ and has the same parity  as $h=4$. then either $|I|+|J|=0$ or $|I|+|J|=2$. 

If $|I|+|J|=0$, then $m=3$, $\ell=2$ and all the indices 1, 2, 3 appear in the tableau $R$, whose rows have length $2$ and $1$, respectively. It is immediate to check that $R$ can be only one the following two tableaux
\[
\begin{tabular}{|c|c|}
\cline{1-2}
$1$ & $2$\\
\cline{1-2}
$3$
\\
\cline{1-1}
\end{tabular}\,
\qquad\qquad
\begin{tabular}{|c|c|}
\cline{1-2}
$1$ & $3$\\
\cline{1-2}
$2$
\\
\cline{1-1}
\end{tabular}
\]
which, respectively, provide  the elements
\begin{equation}\label{eq:sheriff}
(dx_1\wedge dy_1- dx_3\wedge dy_3)\wedge dx_2\wedge dy_2\wedge\theta,\qquad (dx_1\wedge dy_1- dx_2\wedge dy_2)\wedge dx_3\wedge dy_3\wedge\theta
\end{equation}
of $\mathcal J^5$.

If $|I|+|J|=2$, then  $I\cup J=\{a,b\}$ for some $a,b\in\{1,2,3\}$ and the tableau $R$ contains the unique remaining index $c\in\{1,2,3\}\setminus\{a,b\}$; in particular, the second row of $R$ is empty (in fact, here $\ell=m=1$) and 
$R=\begin{tabular}{|c|}
\cline{1-1} $c$ \\ \cline{1-1} \end{tabular}$. This produces the following elements of $\mathcal J^5$
\begin{align}
& dx_a\wedge dx_b\wedge dx_c\wedge dy_c\wedge\theta\qquad\text{provided $a<b$}\nonumber\\
& dx_a\wedge dy_b\wedge dx_c\wedge dy_c\wedge\theta\label{eq:deputy}\\
& dy_a\wedge dy_b\wedge dx_c\wedge dy_c\wedge\theta\qquad\text{provided $a<b$}\nonumber
\end{align}
and, as $a,b,c$ vary in $\{1,2,3\}$,  each of the covectors in~\eqref{eq:deputy} provides, respectively, 3, 6 and 3 elements of $\mathcal J^5$. All in all, a basis of $\mathcal J^5$ is  provided by the 2+12 elements displayed in~\eqref{eq:sheriff} and~\eqref{eq:deputy}.
\end{example}

\begin{remark}\label{rem:Rrettangolare}
The difference between the lengths of the first and second rows of every standard Young tableau $R$ appearing in Proposition~\ref{prop:baseJintro} (respectively, in Proposition~\ref{prop:BaseKerLD}) is determined by $k$ (resp., by $h$) and it is equal to $n-k$ (resp., to $h-n$). In particular, the standard Young tableaux $R$ appearing in Proposition~\ref{prop:baseJintro} (resp., in Proposition~\ref{prop:BaseKerLD}) are rectangular (the two rows have the same length) if and only if $k=n$ (resp., if $h=n$). 
\end{remark}

\begin{remark}\label{rem:tuttetabelleok}
It follows from Remark~\ref{rem:tabellenonYoung} that, given $I,J\subset\unoenne$ disjoint and a (non-necessarily standard) Young tableau $Q$ containing the elements of $\unoenne\setminus(I\cup J)$,  the covector $dx_I\wedge dy_J\wedge\al_Q\wedge\theta$ belongs to $\mathcal J^{|I|+|J|+2\ell+1}$, where $\ell$ denotes the length of the first row of $Q$, and it can be written as a linear combination of the elements $dx_I\wedge dy_J\wedge\al_R\wedge\theta\in \mathcal J^{|I|+|J|+2\ell+1}$ where $R$ ranges among the standard Young tableaux with the same shape and containing the same elements as $Q$.
\end{remark}


\begin{remark}\label{rem:cosafailvettorestandard}
It is a good point to state, for future references, the following property of the basis  provided by Proposition~\ref{prop:baseJintro}.

Let $a,b$ be fixed non-negative integers such that $a+b\leq n$ and $n\leq 2a+b\leq 2n$. 
Let $(I,J,R)$ range among the triples such that $\{dx_I\wedge dy_J\wedge\al_R\wedge \theta\}_{(I,J,R)}$ is the basis of $\mathcal J^{2a+b+1}$ provided by Proposition~\ref{prop:baseJintro}, i.e.,
\begin{itemize}
\item $I,J$ are disjoint subsets of $\{1,\dots,n\}$ such that $|I|+|J|\leq 2n-2a-b$;
\item $R$ is a standard Young tableau where the elements of $\{1,\dots,n\}\setminus(I\cup J)$ are arranged in two rows, the first one having $(2a+b-|I|-|J|)/2$ elements and the second one having $(2n-2a-b-|I|-|J|)/2$ elements.
\end{itemize}
Then either
\begin{equation}\label{eq:IJR}
\left\{
\begin{array}{l}
I=\{a+1,\dots,a+b\}\\
J=\emptyset\\
R=
\begin{tabular}{|c|c|c|c|ccc}
\cline{1-7}
$1$ & $2$ &$\ \cdots\ $& $n-a-b$ 
&\multicolumn{1}{c|}{$ n-a+1$} 
& \multicolumn{1}{c|}{$\ \cdots\ $} 
& \multicolumn{1}{c|}{$a^{\phantom{2}}$}\\
\cline{1-7}
$a+b+1$ & $a+b+2^{\phantom{2}}$ &$\ \cdots\ $& $n$
\\
\cline{1-4}
\end{tabular}
\end{array}
\right.
\end{equation}
%
or
\begin{equation}\label{eq:IJR222}
\langle X_1\wedge\dots\wedge X_{a+b}\wedge Y_1\wedge\dots\wedge Y_a\wedge T\mid  dx_I\wedge dy_J\wedge \al_R\wedge\theta\rangle=0
\end{equation}
where, in~\eqref{eq:IJR}
\begin{itemize}
\item $I=\emptyset$ if $b=0$,
\item $R$ is the empty tableau if $a=0$,
\item $R$  has to be interpreted as a  rectangular $2\times a$ matrix if $n=2a+b$ and $a\geq 1$.
\end{itemize}

Let us prove what claimed. If either $ I\neq\{a+1,\dots,a+b\}$ or $ J\neq\emptyset$, then~\eqref{eq:IJR222} holds. If instead $I=\{a+1,\dots,a+b\}$ and $J=\emptyset$, then the shape of $R$ (i.e., the lengths of its rows) is necessarily the same of the tableau displayed in~\eqref{eq:IJR} and $R$ contains precisely the elements $\{1,\dots,a,a+b+1,\dots,n\}$. We have
\begin{equation}\label{eq:pesaro}
\begin{aligned}
&\langle X_1\wedge\dots\wedge X_{a+b}\wedge Y_1\wedge\dots\wedge Y_a\wedge T\mid  dx_I\wedge dy_J\wedge \al_R\wedge\theta\rangle\\
=\: &
\pm\langle X_1\wedge\dots\wedge X_{a}\wedge Y_1\wedge\dots\wedge Y_a\mid  \al_R\rangle.
\end{aligned}
\end{equation}
where the sign is determined by $a$ and $b$. 
By its definition~\eqref{eq:DefAlfaT}, $\al_R$ can be written as a sum $\sum_S \si(S) dxy_S$, where $\si(S)\in\{1,-1\}$ is a proper sign and the sum ranges among all subsets $S\subset\{1,\dots,a,a+b+1,\dots,n\}$ (i.e., $S$ is a subset of the set of the entries of $R$) of cardinality $a$ that contain exactly one element from each column of $R$. It is clear that 
\[
\text{if $S\neq\{1,\dots,a\}$, then }\langle X_1\wedge\dots\wedge X_{a}\wedge Y_1\wedge\dots\wedge Y_a\mid  dxy_S\rangle=0.
\]
If $R$ is such that at least one of its columns contains no elements of $\{1,\dots,a\}$, then none of the $S$'s appearing in the sum $\al_R=\sum_S \si(S) dxy_S$ is $\{1,\dots,a\}$, and by~\eqref{eq:pesaro}\[
\langle X_1\wedge\dots\wedge X_{a+b}\wedge Y_1\wedge\dots\wedge Y_a\wedge T\mid  dx_I\wedge dy_J\wedge \al_R\wedge\theta\rangle=0.
\]
If all the $a$ columns of $R$ contain at least one element of $\{1,\dots,a\}$, then each of these columns contains exactly one such element. Since $R$ is a standard Young tableau, the sum of the elements of the first row of $R$ is at most $1+\dots+a$, but this sum is clearly also at least $1+\dots+a$. It follows that the first line of $R$ is made by the elements $1,\dots,a$ (in increasing order) and that the remaining elements $a+b+1,\dots,n$ (not belonging to $I$, $J$ nor the first line of $R$) have to be placed, in increasing order, in the second line of $R$. This proves that $R$ must be the one in~\eqref{eq:IJR} and concludes the proof.
\end{remark}

\subsection{Heisenberg currents}\label{subsec:correnti}
For every $k=0,\dots,2n+1$ we introduce the spaces $\DH^k\subset\Omega_\H^k$ of compactly supported smooth Heisenberg $k$-forms, i.e.,
\begin{align*}
& \DH^k:= C_c^\infty\big(\H^n,\tfrac{\text{\large$\wedge$}^k\h}{\mathcal I^k}\big)&&\hspace{-2cm}\text{if }0\leq k\leq n\\
& \DH^k:= C_c^\infty(\H^n,\mathcal J^k)&&\hspace{-2cm}\text{if }n+1\leq k\leq 2n+1
\end{align*}
and we observe that $d_C$  maps $\DH^k$ to $\DH^{k+1}$.  We endow the space $\DH^k$ with the natural topology induced by the topology of the space $\mathcal D^k$ of compactly supported $k$-forms on $\H^n$. 

\begin{definition}
Given $k\in\{0,\dots,2n+1\}$, we denote by $\mathcal D_{\H,k}$ the space of continuous linear functionals on $\DH^k$. An element of $\mathcal D_{\H,k}$ is called {\em Heisenberg $k$-dimensional current} or, for shortness, {\em Heisenberg $k$-current}. 
\end{definition}

For every $k\in\{1,\dots,2n+1\}$ and every Heisenberg $k$-current $\Tcurr\in\mathcal D_{\H,k}$ we denote by $\partial\Tcurr$ the Heisenberg $(k-1)$-current defined, for every $\omega\in\DH^{k-1}$, by $\partial\Tcurr(\omega):=\Tcurr(d_C\omega)$ (recall Remark~\ref{rem:dzero}). Namely,
\[
\begin{array}{ll}
\partial\Tcurr(\omega):=\Tcurr(d\omega)\quad& \text{if }k\neq n+1\\
\partial\Tcurr(\omega):=\Tcurr(D\omega)&\text{if }k= n+1.
\end{array}
\]

It is not our aim to introduce here the {\em mass} of a current (see~\cite[Definition~5.12]{FSSCAIM} and~\cite[Definitions~2.5 and~2.6]{Canarecci_Currents}), which would require to introduce a notion of {\em comass} (\cite[4.1.7]{federer}) on Rumin's spaces of covectors. For our purposes it will be enough to introduce the notion of  {\em  current with finite mass}, see Definition~\ref{def:correntinormali}, and to this end any choice of (co)mass on covectors is equivalent. We denote by $|\cdot|$ the standard norm on $\bwl^*\h$ (in particular, $|\cdot|$ is defined on $\mathcal J^k$ for $k\geq n+1$) and we agree that, for every $1\leq k\leq n$  and $\la\in\bwl^k\h/\mathcal I^k$,
\begin{equation}\label{eq:normasuJn}
|\la|:=\min\left\{|\nu|:\nu\in\bwl^k\h_1,\ [\nu]=\la\right\}
\end{equation}
where $[\nu]$ is the equivalence class of $\nu$ in the quotient ${\bwl^k\h_1}/{\{\mu\wedge d\theta:\mu\in\bwl^{k-2}\h_1\}}$ (recall~\eqref{eq:equivbis}). The quantity $|\cdot|$ is a norm on ${\bwl^k}\h/{\mathcal I^k}$.

\begin{definition}\label{def:correntinormali}
Let $k\in\unodueenne$ be fixed. We say that a current $\Tcurr\in\mathcal D_{\H,k}$ has {\em finite mass} if there exists $M_{\Tcurr}\in\R$ such that
\[
|\Tcurr(\omega)|\leq M_\Tcurr \sup_{p\in\H^n}|\omega(p)|\qquad\text{for every }\omega\in\DH^k.
\]
The current $\Tcurr$ has {\em locally finite mass} if, for each compact set $K\subset\H^n$, there exists $M_{\Tcurr,K}\in\R$ such that
\[
|\Tcurr(\omega)|\leq M_{\Tcurr,K} \sup_{p\in\H^n}|\omega(p)|\qquad\text{for every }\omega\in\DH^k\text{ with support in }K.
\]
Finally, $\Tcurr$ is {\em normal} (respectively, {\em locally normal}) if both $\Tcurr$ and $\partial\Tcurr$ have finite mass (resp. locally finite mass).
\end{definition}

\begin{remark}\label{rem:distriborder0}
The reader familiar with the theory of distributions will realize that a Heisenberg $k$-current has locally finite mass if and only if it has order 0 in the sense of distributions; equivalently, if it is a measure taking values in a proper space of $k$-vectors. More precisely, if $\Tcurr$ is a $k$-current with locally finite mass, then there exist a locally finite (non-negative) measure $\mu$ and a $\mu$-measurable function $\tau$, taking values in $\mathcal J_k$ (if $k\geq n+1$; recall Remark~\ref{rem:introducoJ_k}) or in the dual space $({\bwl^k\h}/{\mathcal I^k})^*$ (if $k\leq n$), such that $\Tcurr=\tau\mu$, i.e.,
\[
\Tcurr(\omega)=\int\langle\tau\mid \omega\rangle\,d\mu\qquad\text{for every }\omega\in\DH^k.
\]
As done in the Introduction,  one can also assume that $|\tau|=1$ $\mu$-a.e.: in this case we denote $\tau$ and $\mu$ by, respectively, $\tauT$  
and $\|\Tcurr\|$ and write $\Tcurr=\tauT \|\Tcurr\|$.
\end{remark}

As in \S\ref{subsec:baseRumin}, for the rest of the present section we denote by $k$ the codimension, rather than the dimension, of a current or submanifold. We  focus on the low-codimensional case and we fix $k\in\{1,\dots,n\}$.

Recall that every $C^1$-regular oriented submanifold $S\subset\H^n$ of codimension $k$ naturally induces a $(2n+1-k)$-dimensional classical current $\clcurr S$ defined, for every smooth and compactly supported $(2n+1-k)$-form $\omega$ in $\H^n$, by
\[
\clcurr S(\omega):=\int_S\omega=\int_S\langle t_S\mid \omega\rangle\,d\text{vol}_S,
\]
where $t_S$ is a unit $(2n+1-k)$-vector tangent to $S$ and with positive orientation and $d\text{vol}_S$ is the  surface measure on $S$ induced by the left-invariant Riemannian metric on $\H^n$ making $X_1,\dots,Y_n,T$ orthonormal. 

Using the notation in Remark~\ref{rem:parteorizz}, for every $p\in S$ we can write
\begin{equation}\label{eq:tauSV}
t_S(p)=(t_S(p))_{\h_1} + \eta_S(p)\wedge T
\end{equation}
for a unique $\eta_S(p)\in\bwl_{2n-k}\h_1$. Notice that $\eta_S(p)= 0$ if and only if $p$ is a {\em characteristic point of $S$}, i.e., $T_pS\subset\h_{1}$. 

Assume now that $p\in S$ is not a characteristic point; then, the intersection $T_pS\cap\h_{1}$ is a  $(2n-k)$-dimensional plane and it is immediate to check that $T_pS\cap\h_{1}=\Span\eta_S(p)$. Therefore, the unit vector
\begin{equation}\label{eq:deftauHS}
\tau^\H_S(p):=\frac{\eta_S(p)}{|\eta_S(p)|}
\end{equation}
is canonically associated with the linear subspace $T_pS\cap\h_{1}$. We denote by $t^\H_S:=\tau^\H_S\wedge T$ the {\em (horizontal)  tangent vector} to $S$. Geometrically, $t^\H_S$ characterizes the blow-up limit of $S$ at $p$: in fact,
\[
\lim_{r\to +\infty} \de_{r} (p^{-1}S) = \exp(\Span t^\H_S(p)),
\]
where the limit is taken with respect to local Hausdorff convergence of sets. See e.g.~\cite{MagCEJM}. 

With this notation, we define the Heisenberg current $\curr S\in\mathcal D_{\H,2n+1-k}$  by
\[
\curr S(\omega):=\int_S \langle t^\H_S\mid \omega\rangle\,d\Shaus^{Q-k},\qquad\omega\in\DH^{2n+1-k}.
\]
The definition is well-posed because the Hausdorff measure $\Shaus^{Q-k}$ is locally finite on $S$ (see e.g. \cite{MagCEJM}) and the set of characteristic points of $S$, where in principle $t^\H_S$ is not defined, is $\Shaus^{Q-k}$-negligible (\cite{balogh,MagJEMS}).

The next  result, though very simple, has to our knowledge never been noticed in the literature.  Lemma~\ref{lem:correntiR=H} and the subsequent Corollary~\ref{cor:senzabordo} prove Proposition~\ref{prop:correnti_cl_vs_H_INTRO}, which will play a crucial role in the sequel.

\begin{lemma}\label{lem:correntiR=H}
Let $n\geq 1$ and $k\in\{1,\dots,n\}$ be fixed. Then there exists $C_{n,k}>0$ such that, for every oriented and $C^1$-smooth submanifold $S$ of $\H^n$ of dimension $2n+1-k$, one has
\[
\curr S(\omega)=C_{n,k}\clcurr S(\omega)\qquad\text{for every }\omega\in \DH^{2n+1-k}.
\]
\end{lemma}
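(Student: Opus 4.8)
The plan is to split the proof into an algebraic identity, valid for the integrands, and a measure identity $|\eta_S|\,d\text{vol}_S=C_{n,k}\,d\Shaus^{Q-k}\res S$.

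\emph{Step 1 (algebraic reduction).} Fix $\omega\in\DH^{2n+1-k}$, so that $\omega(p)\in\mathcal J^{2n+1-k}$ for every $p$, and fix $p\in S$. Writing $t_S(p)=(t_S(p))_{\h_1}+\eta_S(p)\wedge T$ as in~\eqref{eq:tauSV}, Remark~\ref{rem:J2n+1-k} applied with $2n+1-k$ in place of $k$ gives $\langle (t_S(p))_{\h_1}\mid\omega(p)\rangle=0$, because $(t_S(p))_{\h_1}\in\bwl_{2n+1-k}\h_1$. Hence $\langle t_S(p)\mid\omega(p)\rangle=\langle \eta_S(p)\wedge T\mid\omega(p)\rangle$, and off the characteristic set $\{\eta_S=0\}$ the definition~\eqref{eq:deftauHS} yields $\eta_S\wedge T=|\eta_S|\,t^\H_S$. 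Since the characteristic set is $\Shaus^{Q-k}$-negligible (\cite{balogh,MagJEMS}) and is annihilated by the integrand $\langle t_S\mid\omega\rangle$ defining $\clcurr S$ (where $\eta_S=0$), we get
\[
\clcurr S(\omega)=\int_S\langle t_S\mid\omega\rangle\,d\text{vol}_S=\int_S|\eta_S|\,\langle t^\H_S\mid\omega\rangle\,d\text{vol}_S,\qquad \curr S(\omega)=\int_S\langle t^\H_S\mid\omega\rangle\,d\Shaus^{Q-k},
\]
the first integrand being understood to vanish where $\eta_S=0$. Thus the lemma is equivalent to the measure identity $|\eta_S|\,d\text{vol}_S=C_{n,k}\,d\Shaus^{Q-k}\res S$ (both sides assign zero mass to $\{\eta_S=0\}$, which is $\Shaus^{Q-k}$-null).

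\emph{Step 2 (reduction to vertical planes).} The measures $\mu_S:=|\eta_S|\,\text{vol}_S$ and $\nu_S:=\Shaus^{Q-k}\res S$ are Radon measures on $S$, mutually absolutely continuous (both vanish exactly on $\{\eta_S=0\}$), so $\mu_S=g_S\,\nu_S$ for a positive $\nu_S$-measurable $g_S$; I would compute $g_S(p)$ by blow-up. A direct check shows that both $\mu_{(\cdot)}$ and $\nu_{(\cdot)}$ are covariant under left-translations (which are isometries of $d$ and of the left-invariant Riemannian metric) and homogeneous of degree $Q-k$ under dilations. Since, at a non-characteristic point $p$, the rescaled surfaces $\de_{1/r}(p^{-1}S)$ converge, as $r\to 0^+$, to the vertical plane $\mathscr P_p:=\exp(\Span t^\H_S(p))$ (cf.\ \cite{MagCEJM}), passing to the limit in $r^{-(Q-k)}(\de_{1/r}\circ L_{p^{-1}})_\#\mu_S$ and $r^{-(Q-k)}(\de_{1/r}\circ L_{p^{-1}})_\#\nu_S$ identifies $g_S(p)$ with the constant Radon–Nikodym derivative $|\eta_{\mathscr P_p}|\,\text{vol}_{\mathscr P_p}\,/\,\Shaus^{Q-k}\res\mathscr P_p$ on the tangent plane. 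On a vertical plane $\mathscr P=V\times\R$ (with $V\subset\R^{2n}$ a $(2n-k)$-dimensional subspace) one has $|\eta_{\mathscr P}|\equiv 1$, and a Schur-complement computation of the Gram matrix of $g_\H=\sum dx_i^2+\sum dy_i^2+\theta^2$ along $\mathscr P$ shows that $\text{vol}_{\mathscr P}$ is the Euclidean $(2n+1-k)$-area $\mathcal A_{\mathscr P}$ on $\mathscr P$. As $\mathcal A_{\mathscr P}$ is Ahlfors $(Q-k)$-regular with respect to $d$ (dilations scale it by $\lambda^{2n-k}\lambda^{2}=\lambda^{Q-k}$ and the balls of $d$ are scaled copies of $B(0,1)$), a Vitali-covering argument gives $\Shaus^{Q-k}\res\mathscr P=\dfrac{2^{Q-k}}{\mathcal A_{\mathscr P}(B(0,1)\cap\mathscr P)}\,\mathcal A_{\mathscr P}$. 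Here rotational invariance~\eqref{eq:drotinv} enters: $B(0,1)\cap\mathscr P=\{(v,t): v\in V,\ \|(v,t)\|_\H\le 1\}$, and integrating in polar coordinates on $V$ shows that $\mathcal A_{\mathscr P}(B(0,1)\cap\mathscr P)$ depends only on $\dim V=2n-k$ (hence on $n,k$) and on $d$, not on the subspace $V$. Therefore $g_S(p)\equiv C_{n,k}$ for a constant independent of $S$ and $p$, which is the required measure identity.

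I expect the main obstacle to be the blow-up step: making rigorous that, at $\Shaus^{Q-k}$-a.e.\ point of the $C^1$ surface $S$, the rescaled measures $\mu_S$ and $\nu_S$ converge weakly$^*$ to $\mu_{\mathscr P_p}$ and $\nu_{\mathscr P_p}$ (not merely the sets converging locally in Hausdorff distance), together with the independence of the plane-constant from the tangent plane — this is the only place the rotational invariance is used. Everything else — the algebraic identity of Step 1 and the Gram-determinant and polar-integral computations on planes — is routine. Alternatively, the measure identity $|\eta_S|\,d\text{vol}_S=C_{n,k}\,d\Shaus^{Q-k}\res S$ can be quoted directly from the area formula for $C^1$ submanifolds of low codimension with a rotationally invariant distance (see \cite{CorniMagnani,MagnaniTowardsTheoryArea}), in which case only Step 1 remains to be carried out.
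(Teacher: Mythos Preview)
Your Step~1 is exactly the paper's argument: the paper uses Remark~\ref{rem:J2n+1-k} to kill the horizontal part $(t_S)_{\h_1}$ against $\omega\in\mathcal J^{2n+1-k}$ and reduces everything to the measure identity $\Shaus^{Q-k}\res S=C_{n,k}\,|\eta_S|\,\text{vol}_S$. (Note a small slip in your reduction: the identity you need is $d\Shaus^{Q-k}\res S=C_{n,k}\,|\eta_S|\,d\text{vol}_S$, not its reciprocal; with your orientation of the constant you would get $\curr S=C_{n,k}^{-1}\clcurr S$.)

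For the measure identity, the paper takes precisely the ``alternative'' you mention at the end: it quotes it directly from the literature, namely \cite[Theorem~1.2]{MagCEJM} (together with \cite[Proposition~4.5]{MagCEJM} to justify, via rotational invariance~\eqref{eq:drotinv}, that the area factor is a constant depending only on $n,k,d$). Your primary Step~2 is therefore an outline of how one would \emph{reprove} that cited result by blow-up to vertical planes plus a polar-coordinate computation of $\Shaus^{Q-k}\res\mathscr P$; this is more self-contained but also more work, and the delicate part you flag (weak$^*$ convergence of the two measures under blow-up, not just Hausdorff convergence of the supports) is genuine and is essentially what \cite{MagCEJM} handles. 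In short: your alternative route coincides with the paper's proof; your main route is a correct but longer path to the same measure identity.
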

\begin{proof}

By \cite[Theorem 1.2]{MagCEJM} (see also~\cite[Theorem~8.1 and Proposition~8.7]{MagnaniTowardsTheoryArea}),
there exists a positive constant $C_{n,k}$ such that
\[
\Shaus^{Q-k}\res S= C_{n,k}|\eta_S\wedge T|\,\text{vol}_S= C_{n,k} |\eta_S|\,\text{vol}_S,
\]
where $\eta_S(p)$ is as in~\eqref{eq:tauSV}. We remark that one is allowed to apply  \cite[Theorem 1.2]{MagCEJM} because of the rotational invariance~\eqref{eq:drotinv} of the distance $d$, see  \cite[Proposition 4.5]{MagCEJM}. Therefore
\[
\curr S(\omega) 
 = \int_S \langle t^\H_S\mid \omega\rangle\,d\Shaus^{Q-k}
 =\int_S\left\langle \left. \frac{\eta_S}{|\eta_S|}\wedge T \right|\omega \right\rangle d\Shaus^{Q-k}
 =  C_{n,k}\int_S \langle\eta_S\wedge T\mid \omega\rangle\, d\text{vol}_S
\]
and by Remark~\ref{rem:J2n+1-k}
\[
\curr S(\omega) 
= C_{n,k}\int_S \langle(t_S)_{\h_1}+\eta_S\wedge T\mid \omega\rangle\, d\text{vol}_S = C_{n,k}\int_S \langle t_S\mid \omega\rangle\, d\text{vol}_S.
\] 
This concludes the proof.
\end{proof}

\begin{remark}
The constant $C_{n,k}$ provided by Lemma~\ref{lem:correntiR=H} actually depends also on the distance $d$. We  however omit this dependence.
\end{remark}

\begin{remark}
Let $S$ be as in Lemma~\ref{lem:correntiR=H}. The definition of the Heisenberg current $\curr S$ depends on $\Shaus^{Q-k}$, i.e., on the choice of the rotationally invariant distance $d$; on the contrary, the classical current $\clcurr S$ is a purely {\em differential} object -- there is no metric involved. Therefore, Lemma~\ref{lem:correntiR=H}  suggests that $\Shaus^{Q-k}_*:=\Shaus^{Q-k}/C_{n,k}$, which does not depend on the choice of $d$, might be the correctly normalized spherical Hausdorff measure on $\H^n$.
\end{remark}

The following result is very simple.

\begin{corollary}\label{cor:senzabordo}
Let $n\geq 1$ and $k\in\{1,\dots,n\}$ be fixed. If $S$ is an oriented  $C^1$-smooth submanifold of $\H^n$ of codimension $k$ and without boundary, then $\partial \curr S=0$.
\end{corollary}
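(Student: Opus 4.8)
The plan is to combine Lemma~\ref{lem:correntiR=H} with the structure of the Rumin differential in codimension $k$. The point is that, although $\partial\curr S$ is defined using the operator $D$ when $2n+1-k=n+1$ (i.e., $k=n$) and using the ordinary exterior derivative $d$ otherwise, in \emph{both} cases the operator $d_C$ on forms of the relevant degree factors through ordinary exterior differentiation in a way that makes $\partial\clcurr S$ appear. Since $S$ has no boundary, $\clcurr S$ is a closed current in the classical sense, i.e. $\int_S d\omega=0$ for every compactly supported ordinary $(2n-k)$-form $\omega$ on $\H^n$, by Stokes' theorem.

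First I would treat the easy case $k<n$, so that $2n-k>n$. Here $\DH^{2n-k}=C_c^\infty(\H^n,\mathcal J^{2n-k})$ and $d_C=d$ on such forms. Given $\omega\in\DH^{2n-k}$, Lemma~\ref{lem:correntiR=H} (applied in degree $2n+1-k$) gives
\[
\partial\curr S(\omega)=\curr S(d\omega)=C_{n,k}\clcurr S(d\omega)=C_{n,k}\int_S d\omega=0,
\]
the last equality being Stokes' theorem for the boundaryless manifold $S$, valid since $\omega$ (viewed as an ordinary form) has compact support.

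Next I would handle the case $k=n$, where $2n+1-k=n+1$ and $\partial\curr S(\omega)=\curr S(D\omega)$ for $\omega\in\DH^n$. The key is the construction of $D$ recalled in \S\ref{subsec:Rumin}: one has $D\omega=d\bigl(\al-\theta\wedge L^{-1}((d\al)_{\h_1})\bigr)$ where $\al\in C^\infty(\H^n,\bwl^n\h_1)$ is any representative of $\omega$; in particular $D\omega=d\beta$ for the ordinary $n$-form $\beta:=\al-\theta\wedge L^{-1}((d\al)_{\h_1})$, which is compactly supported because $\al$ can be chosen so (e.g. the representative with no $d\theta$-component). Hence, again by Lemma~\ref{lem:correntiR=H} and Stokes,
\[
\partial\curr S(\omega)=\curr S(D\omega)=C_{n,n}\clcurr S(d\beta)=C_{n,n}\int_S d\beta=0.
\]
I expect the only mildly delicate point to be the bookkeeping of compact supports in the $k=n$ case — one must check that a compactly supported choice of the representative $\al$ exists and that $L^{-1}((d\al)_{\h_1})$ inherits compact support, which is clear since $L^{-1}$ is a fixed linear algebraic isomorphism acting fibrewise. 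This is genuinely routine, which matches the paper's assertion that the corollary is ``very simple''. Assembling the two cases completes the proof.
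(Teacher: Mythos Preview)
Your proof is correct and follows essentially the same route as the paper: split into the cases $k<n$ and $k=n$, invoke Lemma~\ref{lem:correntiR=H} to pass from $\curr S$ to $\clcurr S$, and then use Stokes' theorem together with the fact that $D\omega$ is by construction an exact form. Your extra remarks on compact supports in the $k=n$ case are fine but not strictly needed, as the paper simply notes that $D\omega$ is exact.
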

\begin{proof}
If $k<n$, by Lemma \ref{lem:correntiR=H} and Stokes' theorem we have for every $\omega\in \DH^{2n-k}$
\[
\partial \curr S(\omega)= \curr S(d\omega)= C_{n,k}\clcurr S(d\omega)=0.
\]
Similarly, when $k=n$ we have for every $\omega\in \DH^{n}$
\[
\partial \curr S(\omega)= \curr S(D\omega)=C_{n,k}\clcurr S(D\omega)=0
\]
because, by definition of $D$, the form $D\omega$ is exact.
\end{proof}

\subsection{Rank-one connection and a property of tangent vectors to vertical planes}\label{subsec:rankoneconnect}
We conclude the present Section~\ref{sec:Heisenberg} by stating a technical result, Proposition~\ref{prop:almassimo2}, which will be of the utmost relevance in the proof of  Rademacher's Theorem~\ref{thm:rademacher}. Since the proof of Proposition~\ref{prop:almassimo2} is quite long and involved, for the moment we only state it and postpone its proof to Appendix~\ref{app:max2}. 

Let us start with a preliminary definition.

\begin{definition}\label{def:rank1conn}
Let $V$ be a real vector space of dimension $\ell$ and let $m$ be an integer such that $1\leq m<\ell$. We say that two $m$-dimensional vector subspaces $\mathscr P_1,\mathscr P_2$ of $V$ are {\em rank-one connected} if $\dim \mathscr P_1\cap\mathscr P_2\geq m-1$.
\end{definition}

The terminology chosen in Definition~\ref{def:rank1conn} is borrowed from some classical problems in the Calculus of Variations, see e.g. \cite{BallJamesARMA87,MullerCetraro}. Some motivations are provided by the following  Remark~\ref{rem:applicazionilinearirankone}, that we state for future references and where we identify linear applications and matrices.

\begin{remark}\label{rem:applicazionilinearirankone}
Let $W,V$ be real vector spaces of dimensions $m\geq 1$ and $\ell\geq 1$, respectively. Let also $L_1,L_2:W\to V$ be linear maps and, for $i=1,2$, let $\mathscr P_i:=\{(w,L_i(w))\in W\times V:w\in W\}$ be the graph of $L_i$. Then  the following statements are equivalent:
\begin{enumerate}
\item[(a)] the  vector subspaces $\mathscr P_1,\mathscr P_2 $ of $W\times V$ are rank-one connected;
\item[(b)] either $L_1=L_2$ or rank $(L_1-L_2)=1$.
\end{enumerate} 
\end{remark}

Also the subsequent Lemma~\ref{lem:whyrank1} motivates the terminology in Definition~\ref{def:rank1conn}: recall in fact that simple multi-vectors are sometimes  called {\em rank-one} multi-vectors.  Before stating  Lemma~\ref{lem:whyrank1} let us fix some standard language. 
If  $\mathscr P\subset V$ is an $m$-vector subspace and $t\in\bwl_m V$ is not null, we say that $t$ is {\em tangent} to $\mathscr P$ if $t$ is simple and it can be written as $t=v_1\wedge\dots\wedge v_m$ for some basis $v_1,\dots, v_m$ of $\mathscr P$. Equivalently, $t$ is tangent to $\mathscr P$ if and only if $\Span t:=\{v\in V:v\wedge t=0\}$ coincides with $\mathscr P$. Needless to say, if $t$ and $s$ are both tangent to $\mathscr P$, then $t$ and $s$ are linearly dependent, i.e., $t$ is a multiple of $s$ (and vice versa).

\begin{lemma}\label{lem:whyrank1}
Let $V$ be a real vector space of dimension $\ell$ 
and let $m$ be an integer such that $1\leq m \leq\ell$; let also $\mathscr P_1,\mathscr P_2$ be fixed $m$-dimensional vector subspaces  of $V$. Then the following statements are equivalent:
\begin{enumerate}
\item[(a)] $\mathscr P_1,\mathscr P_2$ are rank-one connected;
\item[(b)] for every couple of simple vectors $t_1,t_2\in\bwl_m V$ tangent to  $\mathscr P_1,\mathscr P_2$ (respectively), the difference $t_1-t_2$ is a simple $m$-vector;
\item[(c)] there exists a couple of simple vectors $t_1,t_2\in\bwl_m V$ tangent to  $\mathscr P_1,\mathscr P_2$ (respectively) such that the difference $t_1-t_2$ is a simple $m$-vector.
\end{enumerate}
\end{lemma}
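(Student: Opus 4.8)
\textbf{Proof plan for Lemma~\ref{lem:whyrank1}.}
The plan is to prove the cyclic chain of implications (a)$\Rightarrow$(b)$\Rightarrow$(c)$\Rightarrow$(a); since (b)$\Rightarrow$(c) is trivial (tangent simple vectors to a given subspace always exist, being nonzero scalar multiples of a wedge of a basis), the work is concentrated in (a)$\Rightarrow$(b) and (c)$\Rightarrow$(a). Throughout I would use the standard fact that a nonzero $m$-vector $t$ is simple if and only if the space $\Span t=\{v\in V:v\wedge t=0\}$ has dimension exactly $m$, in which case $t$ is tangent to $\Span t$; and conversely $t$ is a nonzero multiple of $s$ whenever both are simple and tangent to the same $m$-plane.

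For (a)$\Rightarrow$(b): assume $\dim(\mathscr P_1\cap\mathscr P_2)\geq m-1$. If $\mathscr P_1=\mathscr P_2$ then $t_1,t_2$ are linearly dependent, say $t_2=c\,t_1$, so $t_1-t_2=(1-c)t_1$ is simple (or zero, which we may also regard as simple). Otherwise $\dim(\mathscr P_1\cap\mathscr P_2)=m-1$ exactly; pick a basis $e_1,\dots,e_{m-1}$ of the intersection and extend by $v_1$ to a basis of $\mathscr P_1$ and by $v_2$ to a basis of $\mathscr P_2$. Then for a suitable choice of representatives one may write $t_1=\lambda\, e_1\wedge\dots\wedge e_{m-1}\wedge v_1$ and $t_2=\mu\, e_1\wedge\dots\wedge e_{m-1}\wedge v_2$ for nonzero scalars $\lambda,\mu$ (here I am choosing the particular tangent vectors that share the common factor; since any tangent vector to $\mathscr P_i$ is a scalar multiple of this one, the general case follows after adjusting $\lambda,\mu$). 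Hence
\[
t_1-t_2 = e_1\wedge\dots\wedge e_{m-1}\wedge(\lambda v_1-\mu v_2),
\]
which is simple. Actually, to handle \emph{arbitrary} tangent $t_1,t_2$ one just notes $t_i$ equals the displayed expression up to a nonzero scalar, and the difference of two scalar multiples of the above is again of that form; I would state this carefully but it is routine.

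For (c)$\Rightarrow$(a): suppose $t_1,t_2$ are tangent to $\mathscr P_1,\mathscr P_2$ and $s:=t_1-t_2$ is simple. If $s=0$ then $t_1=t_2$, forcing $\mathscr P_1=\Span t_1=\Span t_2=\mathscr P_2$, so they are trivially rank-one connected. If $s\neq 0$, set $\mathscr Q:=\Span s$, an $m$-dimensional subspace. Then $t_1 = t_2 + s$. I would argue that $\mathscr P_1\cap\mathscr P_2$ contains a subspace of dimension $\geq m-1$ as follows: consider the subspace $U:=\{v\in V: v\wedge t_1=0 \text{ and } v\wedge t_2=0\}=\mathscr P_1\cap\mathscr P_2$; I want $\dim U\geq m-1$. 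The cleanest route is to use the identity $t_1\wedge t_2 = t_1\wedge(t_1-s)=-t_1\wedge s = -(t_2+s)\wedge s = -t_2\wedge s$, together with the decomposability criteria: three simple $m$-vectors $t_1,t_2,s=t_1-t_2$ with $s$ simple means that in fact the plane spanned by $t_1$ and $t_2$ inside $\bwl_m V$ projects to a pencil of decomposable vectors, and by the classical description of such pencils (every pencil of decomposable $m$-vectors is either contained in a single $\bwl_m$ of an $(m+1)$-space intersected appropriately, or all members share a common $(m-1)$-dimensional factor) one concludes $\mathscr P_1$ and $\mathscr P_2$ share an $(m-1)$-dimensional subspace or coincide.

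The main obstacle I expect is precisely the last step: turning ``$t_1-t_2$ simple'' into the geometric conclusion $\dim(\mathscr P_1\cap\mathscr P_2)\geq m-1$. The slick way is to invoke the structure theorem on \emph{pencils of decomposable multivectors} (closely related to Remark~\ref{rem:applicazionilinearirankone}): if $t_1,t_2$ are decomposable and $t_1-t_2$ is decomposable, then either $\Span t_1=\Span t_2$ or $\dim(\Span t_1\cap\Span t_2)=m-1$. If one prefers to be self-contained, I would instead argue directly by writing $t_1=v_1\wedge\dots\wedge v_m$ in a basis adapted so that $v_1,\dots,v_r$ span $\mathscr P_1\cap\mathscr P_2$ where $r=\dim(\mathscr P_1\cap\mathscr P_2)$, express $t_2$ in terms of a basis of $\mathscr P_2$ extending $v_1,\dots,v_r$, expand $t_1-t_2$, and show that decomposability of the difference forces $r\geq m-1$ by examining the rank of the associated bilinear/Plücker relations. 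This is the computational heart of the proof but is elementary once set up; the abstract pencil argument is shorter and is the approach I would ultimately present, citing the analogue of \cite[Proposition~1]{BallJamesARMA87} or Remark~\ref{rem:applicazionilinearirankone} as appropriate.
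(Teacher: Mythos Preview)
Your (a)$\Rightarrow$(b) argument and the trivial (b)$\Rightarrow$(c) match the paper's proof. The gap is in (c)$\Rightarrow$(a): you correctly flag this as the crux but do not actually resolve it. Invoking a ``structure theorem on pencils of decomposable multivectors'' is circular in spirit---that statement is essentially what you are trying to prove---and the ``direct'' Pl\"ucker-relations route is only gestured at. Neither Remark~\ref{rem:applicazionilinearirankone} nor \cite{BallJamesARMA87} supplies this implication; those concern graphs of linear maps, a strictly more restrictive setting.

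The paper's argument is shorter and entirely self-contained; the idea you are missing is the following. Set $h:=\dim(\mathscr P_1\cap\mathscr P_2)$ and $t:=t_1-t_2$. First observe that $\mathscr P_1\cap\mathscr P_2\subset\Span t$, since $v\wedge t=v\wedge t_1-v\wedge t_2=0$ for every $v\in\mathscr P_1\cap\mathscr P_2$. Choose a basis $v_1,\dots,v_h$ of the intersection and complete it to bases of $\Span t$, $\mathscr P_1$, $\mathscr P_2$ respectively; this lets you write
\[
t_1=v_1\wedge\dots\wedge v_h\wedge e_1\wedge\dots\wedge e_{m-h},\qquad
t_2=v_1\wedge\dots\wedge v_h\wedge e_{m-h+1}\wedge\dots\wedge e_{2(m-h)}
\]
with $e_1,\dots,e_{2(m-h)}$ linearly independent, and $t=v_1\wedge\dots\wedge v_m$ for suitable $v_{h+1},\dots,v_m$. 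Hence $t':=e_1\wedge\dots\wedge e_{m-h}-e_{m-h+1}\wedge\dots\wedge e_{2(m-h)}$ equals the simple vector $v_{h+1}\wedge\dots\wedge v_m$, so $\dim\Span t'=m-h$. But a direct check (using independence of the $e_i$) shows that when $m-h\geq 2$ one has $\{v:v\wedge t'=0\}=\{0\}$, a contradiction. Therefore $m-h\leq 1$, which is (a). This two-line reduction replaces your appeal to external structure theorems.
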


The proof of Lemma~\ref{lem:whyrank1} is quite simple, nonetheless it is  provided in  Appendix~\ref{app:max2} for the sake of completeness.

After recalling Definition~\ref{def:pianiverticali} (vertical planes) and the notation $[\,\cdot\,]_\mathcal J$ introduced in Remark~\ref{rem:introducoJ_k}, we can eventually state the following result.

\begin{proposition}\label{prop:almassimo2}
Let $k\in\unoenne$ and  $\zeta\in \mathcal J_{2n+1-k}$ be fixed with $\zeta\neq 0$. Then
\begin{enumerate}
\item[(i)] if $k<n$, there exists at most one $(2n+1-k)$-dimensional vertical plane $\mathscr P$ whose unit tangent vector $t_{\mathscr P}=t_{\mathscr P}^\H$ is such that $[t_{\mathscr P}^\H]_\mathcal J$ is a multiple of  $\zeta$;
\item[(ii)] if $k=n$, there exist at most two vertical $(n+1)$-planes $\mathscr P$ whose unit tangent vectors $t_{\mathscr P}=t_{\mathscr P}^\H$ are such that $[t_{\mathscr P}^\H]_\mathcal J$ is a multiple of  $\zeta$. Moreover, if $\mathscr P_1,\mathscr P_2$ are two different such planes, then ${\mathscr P_1}$ and ${\mathscr P_2}$ are not rank-one connected.
\end{enumerate}
\end{proposition}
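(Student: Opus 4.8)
The plan is to reduce everything to an explicit computation using the basis of $\mathcal J^{2n+1-k}$ provided by Proposition~\ref{prop:baseJintro}, exploiting the normalization afforded by Proposition~\ref{prop:ruotiamosupianicanonici}. First I would recall that a $(2n+1-k)$-dimensional vertical plane $\mathscr P$ is of the form $\exp(\Span(\tau\wedge T))$ for a simple $(2n-k)$-vector $\tau\in\bwl_{2n-k}\h_1$, and its unit tangent vector is $t_{\mathscr P}=\pm\tau\wedge T/|\tau|$. The condition to analyze is that $[\,\tau\wedge T\,]_\mathcal J$ be a multiple of the fixed $\zeta\in\mathcal J_{2n+1-k}\setminus\{0\}$. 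Suppose $\mathscr P_1$ is such a plane with tangent $(2n-k)$-vector $\tau_1$. Up to composing with a $\H$-linear isomorphism $\mathcal L$ satisfying $\mathcal L^*\theta=\theta$, $\mathcal L^*d\theta=d\theta$ (Proposition~\ref{prop:ruotiamosupianicanonici} together with Lemma~\ref{lem:Artin}), I can assume $\mathscr P_1=\mathscr P_{a,b}$ for suitable $a,b$ with $2a+b+1=2n+1-k$, i.e. $2a+b=2n-k$; then $\tau_1=X_1\wedge\dots\wedge X_{a+b}\wedge Y_1\wedge\dots\wedge Y_a$ up to sign, and $t_{\mathscr P_1}$ is (a multiple of) $X_1\wedge\dots\wedge X_{a+b}\wedge Y_1\wedge\dots\wedge Y_a\wedge T$. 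Since $\mathcal L^*$ preserves $\mathcal J^{2n+1-k}$ (Proposition~\ref{prop:JtoJ}) and sends the basis to another basis, it suffices to prove the statement for $\mathscr P_1=\mathscr P_{a,b}$ and a general second plane $\mathscr P_2$.

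Next I would use Remark~\ref{rem:cosafailvettorestandard}, which is tailored to exactly this situation: among all basis elements $\lambda_{I,J,R}=dx_I\wedge dy_J\wedge\al_R\wedge\theta$ of $\mathcal J^{2n+1-k}$, there is precisely one, call it $\lambda_0$ (the one in~\eqref{eq:IJR}), with $\langle t_{\mathscr P_{a,b}}\mid\lambda_0\rangle\neq0$, while all the others annihilate $t_{\mathscr P_{a,b}}$. Consequently, if $[t_{\mathscr P_1}]_\mathcal J$ is a multiple of $\zeta$, then $\zeta$ — expressed in the dual basis — has a nonzero coefficient on $\lambda_0^*$ and, more importantly, the pairing of $\zeta$ with every basis covector is determined: up to the overall scalar, $\zeta=[t_{\mathscr P_{a,b}}]_\mathcal J$. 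So the whole question becomes: for which simple $(2n-k)$-vectors $\tau\in\bwl_{2n-k}\h_1$ is $[\tau\wedge T]_\mathcal J$ a multiple of $[t_{\mathscr P_{a,b}}]_\mathcal J$? Equivalently, writing $\tau\wedge T=\sum c_{I,J,R}\,t_{I,J,R}$ in the dual basis of multivectors, the condition $\langle\tau\wedge T\mid\lambda_{I,J,R}\rangle=0$ for all $(I,J,R)\neq(I_0,J_0,R_0)$ pins down which coordinates of $\tau$ must vanish. Carrying out this linear-algebra bookkeeping — which columns/rows of the admissible Young tableaux force which $dx$–$dy$ components of $\tau$ to be zero — is the technical heart. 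When $k<n$ the tableaux are not rectangular, the asymmetry between the two rows rigidifies the system, and one finds $\tau$ is forced to equal $\tau_1$ up to sign: this gives (i). When $k=n$ the tableaux are rectangular ($2\times a$), and there is exactly one extra degree of freedom coming from the symmetry $dxy_i\leftrightarrow dxy_j$ encoded in~\eqref{eq:wedgiarea0}; this produces at most one second plane $\mathscr P_2$, giving the ``at most two'' bound in (ii).

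For the rank-one statement in (ii), once I have the explicit description of $\mathscr P_1=\mathscr P_{a,a}$ (note $b=0$ when $k=n$, since $2a+b=2n-k=n$ forces $b=n-2a$; I would track the precise parametrization) and of the unique alternative $\mathscr P_2$, I would compute $\dim(\mathscr P_1\cap\mathscr P_2)$ directly and check it is at most $(2n+1-k)-2=n-1$. The model case from the introduction — $X_1\wedge Y_1\wedge T$ versus $-X_2\wedge Y_2\wedge T$ in $\H^2$, where $\mathscr P_1\cap\mathscr P_2=\Span T$ has codimension $2$ — is exactly what the general computation should reproduce: the two planes share only the vertical direction $\exp(\Span T)$ together with whatever horizontal part is common to both tableaux, and the combinatorics of the standard-Young-tableau basis guarantees the overlap drops by at least $2$. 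I expect the main obstacle to be the careful, case-heavy verification that the linear system $\langle\tau\wedge T\mid\lambda_{I,J,R}\rangle=0$ has the claimed solution space (dimension $1$ when $k<n$, dimension $2$ — i.e. two rays — when $k=n$); this is precisely why the authors defer it to Appendix~\ref{app:max2}, and in the plan I would isolate it as the combinatorial core, relying on Remark~\ref{rem:cosafailvettorestandard} and the injectivity/surjectivity properties of the Lefschetz operator from Lemma~\ref{lem:dimKerLD} to organize the argument.
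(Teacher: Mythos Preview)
Your overall strategy matches the paper's: normalize via Proposition~\ref{prop:ruotiamosupianicanonici} so that $\mathscr P_1=\mathscr P_{a,b}$, then analyze which simple horizontal $(2n-k)$-vectors $\tau$ satisfy $[\tau\wedge T]_\mathcal J=[t^\H_{\mathscr P_{a,b}}]_\mathcal J$ by testing against the basis of $\mathcal J^{2n+1-k}$, and finally check the rank-one condition by a direct dimension count of $\mathscr P_{a,b}\cap\mathscr P_3$. That is exactly the architecture of the proof in Appendix~\ref{app:max2}.

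There is, however, a real gap in how you describe the combinatorial core. You assert that the linear system $\langle\tau\wedge T\mid\lambda_{I,J,R}\rangle=0$ for $(I,J,R)\neq(I_0,J_0,R_0)$ has solution space of dimension $1$ (resp.\ $2$). This is false: the map $\tau\mapsto[\tau\wedge T]_\mathcal J$ from $\bwl_{2n-k}\h_1$ to $\mathcal J_{2n+1-k}$ has kernel of dimension $\binom{2n}{2n-k}-\dim\mathcal J^{2n+1-k}$, so the linear preimage of the line $\R\zeta$ is large. What is true, and much harder, is that among \emph{simple} vectors in this preimage there are at most one (resp.\ two) directions. The paper never tries to describe the linear preimage; instead it encodes simplicity from the start by writing $\tau=\tau^1\wedge\dots\wedge\tau^{2a+b}$ and forming the $2n\times(2a+b)$ matrix $M$ of coefficients, so that the coordinates of $\tau$ are the maximal minors of $M$. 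The vanishing conditions then become vanishing of specific minors, and a long sequence of claims (ten in Lemma~\ref{lem:vettoresemplicediRumin,b=0}, seven in Lemma~\ref{lem:vettoresemplicediRumin}) exploits the rank structure of $M$ to force entire rows, and then specific blocks, to be zero. For $b\geq1$ the paper also performs a nontrivial reduction to a lower Heisenberg group $\overline\H{}^{\,n-b}$ in order to invoke the $b=0$ case. None of this is visible in your plan, and the ``dimension $1$ or $2$'' heuristic would collapse on the first explicit example. A smaller slip: you write $b=0$ when $k=n$, but $2a+b=n$ allows any $b\equiv n\pmod 2$, and the paper treats all such $(a,b)$.
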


As we said, the long proof of Proposition~\ref{prop:almassimo2} is postponed to Appendix~\ref{app:max2}. It is however worth observing that the non-uniqueness phenomenon allowed for by statement (ii) above can indeed occur. In fact, consider the vertical 3-planes in $\H^2$ 
\[
\mathscr P_1:=\{(x,y,t)\in\H^2:x_2=y_2=0\}\qtaq
\mathscr P_2:=\{(x,y,t)\in\H^2:x_1=y_1=0\}.
\]
Unit tangent vectors are provided by
\[
t^\H_{\mathscr P_1}=X_1\wedge Y_1\wedge T\qtaq
t^\H_{\mathscr P_2}=-X_2\wedge Y_2\wedge T
\]
and, using for instance Example~\ref{example:baseJ3inH2}, one can easily check that  $[t^\H_{\mathscr P_1}]_{\mathcal J}=[t^\H_{\mathscr P_2}]_{\mathcal J}$. Observe that $\mathscr P_1$ and $\mathscr P_2$ are not rank-one connected, as stated by Proposition~\ref{prop:almassimo2}. The latter also guarantees that no vertical plane $\mathscr P\neq\mathscr P_1$ such that $[t^\H_{\mathscr P}]_{\mathcal J}=[t^\H_{\mathscr P_1}]_{\mathcal J}$ exists other than $\mathscr P_2$. See also Remarks~\ref{rem:nonuniqueness1} and~\ref{rem:nonuniqueness2}.


\section{Intrinsic Lipschitz graphs in Heisenberg groups}\label{sec:intrLipgrHeis}
Though already introduced in Section~\ref{sec:gruppi}, for the reader's convenience we now recall  the notion of intrinsic Lipschitz graph. Assume that a splitting of $\H^n$ is fixed: namely, let $\W,\V$ be homogeneous (i.e., invariant under dilations) and complementary (i.e., $\W\cap\V=\{0\}$ and $\H^n=\W\V$) subgroups of $\H^n$. Each $p\in\H^n$ possesses a unique decomposition $p=p_\W p_\V$ as  product of  elements $p_\W\in\W$, $p_\V\in\V$.  For $\al\geq 0$, the homogeneous cone $C_\al$  along $\V$ is 
\[
C_\al:=\{p\in\H:\|p_\V\|_\H\geq\al\|p_\W\|_\H \}=\{wv:w\in\W,v\in\V,\|v\|_\H\geq\al\|w\|_\H\}\,.
\]
The set $C_\al$  is homogeneous (i.e., invariant under dilations) and $0\in C_\al$; actually, $\V\subset C_\al$. For $p\in\H^n$ we  set $C_\al(p):=pC_\al$.
Observe  that $C_\al$ coincides with  the cone $\cono_{1/\al}$ defined in \S\ref{subsec:intrLipgr}: the reason for this change in notation is that, from now on, we will more frequently deal with the intrinsic Lipschitz constants of maps rather than with the apertures of the associated cones. 

Given $A\subset\W$ and a map $\f:\W\to\V$, the intrinsic graph of $\f$ is the set
\[
\gr_\f:=\{w\f(w):w\in A\}\subset\H^n.
\]
We hereafter adopt the convention that, whenever a map $\f:A\to\V$ is introduced, we denote by $\F:A\to\H^n$ the associated graph map $\F(w):=w\f(w)$; in particular, $\gr_\f=\F(A)$. 

 \begin{definition}\label{def:grafLip}
Let $A\subset\W$; we say that a map $\f:A\to\V$ is {\em intrinsic Lipschitz continuous} if there exists $\al>0$ such that
\begin{equation}\label{eq:defLip}
\forall\:p\in\gr_\f\qquad \gr_\f\cap C_\al(p)=\{p\}.
\end{equation}
We call {\em intrinsic Lipschitz constant} of $\f$ the   infimum of those positive $\al$ for which \eqref{eq:defLip} holds.
\end{definition}

Intrinsic Lipschitz maps of low dimension ($\dim \W\leq n$) are Euclidean Lipschitz continuous and the Hausdorff dimension of their graphs equals the topological one, i.e., $\dim\W$. See~\cite[Remark~3.11]{FSSCJNCA},~\cite[Proposition~3.7]{FSJGA} or~\cite{AntonelliMerlo}.  
On the contrary, intrinsic Lipschitz maps of low codimension $k=\dim\V\leq n$ are not better than Euclidean $1/2$-H\"older continuous, see Remark~\ref{rem:1/2Holder}. Despite this fractal behaviour (see also~\cite{KirchhSC}) they enjoy good metric properties: for instance, the Hausdorff dimension of their graphs is the same, $Q-k$, as $\W$, and the $(Q-k)$-dimensional Hausdorff measure on their graph is even $(Q-k)$-Ahlfors regular: see Remark~\ref{rem:Ahlfors} and the references therein.

\subsection{Intrinsic Lipschitz graphs of low codimension}
We are interested in intrinsic Lipschitz graphs of codimension at most $n$; we then assume that $k:=\dim\V$ is a positive integer not greater than $n$. It can be easily checked (see e.g. \cite[Remark 3.12]{FSSCAIM}) that this  forces $\V$ to be Abelian; by   Remark~\ref{rem:isometrie_delgruppo}, up to an isometric group isomorphism of $\H^n$ one can always assume that 
\begin{equation}\label{eq:Vfissato}
\V=\exp(\Span\{X_1,\dots,X_k\}).
\end{equation}
Moreover, it follows from \cite[Proposition 3.1]{FSJGA} (alternatively, from Theorem~\ref{teo:equivdefLip})  that, if $\W,\W'$ are complementary to $\V$ and $S\subset\H^n$ is such that $S=\gr_\f$ for some intrinsic Lipschitz  $\f:A\to\V$ with $A\subset\W$, then there exists $ A'\subset\W'$ and an intrinsic Lipschitz map $\f': A'\to\V$ such that $S=\gr_{\f'}$. In particular, it will not be restrictive (see also Remark~\ref{rem:Rademachersuffpianicanon}) to assume that 
\begin{equation}\label{eq:Wfissato}
\begin{aligned}
&\W=\exp(\Span\{X_{k+1},\dots,X_n,Y_1,\dots,Y_n,T\}) &&\text{if }1\leq k\leq n-1\\
&\W=\exp(\Span\{Y_1,\dots,Y_n,T\}) &&\text{if }k=n.
\end{aligned}
\end{equation}

Hence, from now on we   work with  the subgroups $\W,\V$ defined in~\eqref{eq:Vfissato} and~\eqref{eq:Wfissato}.
In coordinates
\begin{align}
&\V=\{(x,y,t)\in\R^n\times\R^n\times\R:y=0,\ x_{k+1}=\ldots=x_n=t=0\} &&\text{if }1\leq k\leq n-1\nonumber\\
&\V=\{(x,y,t)\in\R^n\times\R^n\times\R:y=0,\ t=0\} &&\text{if }k=n\label{eq:pianiVWincoordinate}\\
&\W=\{(x,y,t)\in\R^n\times\R^n\times\R:x_1=\ldots=x_k=0\}.\nonumber
\end{align}
For simplicity we will write $v\in\V$ and $w\in\W$ as
\begin{align}
& v=(x_1,\dots,x_k)\in\R^k\nonumber\\
& w=(x_{k+1},\dots,x_n,y_1,\dots,y_n,t)\in\R^{2n+1-k} &&\text{if }1\leq k\leq n-1\label{eq:vwincoordinate}\\
& w=(y_1,\dots,y_n,t)\in\R^{n+1} &&\text{if }k=n.\nonumber
\end{align}
Notice that, if $p=(x,y,t)$, then $p_\V=(x_1,\dots,x_k)$.  Observe also that the measure $\leb{2n+1-k}$ induced on $\W$ by the identification $\W\equiv\R^{2n+1-k}$ in \eqref{eq:vwincoordinate} is a  Haar measure on $\W$ that is also $(Q-k)$-homogeneous, i.e.,
\begin{equation}\label{eq:lebW}
\leb{2n+1-k}(B(w,r)\cap\W) = r^{Q-k} \leb{2n+1-k}(B(0,1)\cap\W)\qquad\forall\ w\in\W,r>0.
\end{equation}
In particular, $\leb{2n+1-k}$ coincides, up to multiplicative factors, with the Hausdorff $\Haus^{Q-k}$ and spherical Hausdorff $\Shaus^{Q-k}$ measures on $\W$.

We now write the intrinsic graph map $\F$ in coordinates: writing $w\in\W\equiv\R^{2n+1-k}$ and  $\f(w)\in\V\equiv\R^k$ as in~\eqref{eq:vwincoordinate} one gets
\begin{equation}\label{eq:graficoincoordinate}
\begin{array}{ll}
\F(w)=(\f(w),x_{k+1},\dots,x_n,y_1,\dots,y_n,t-\tfrac 12\langle \f(w),(y_1,\dots,y_k)\rangle)&\text{if }1\leq k\leq n-1\\
 \F(w)=(\f(w),y_1,\dots,y_n,t-\tfrac 12\langle \f(w),y\rangle) &\text{if }k=n,
\end{array}
\end{equation}
where the scalar products appearing in \eqref{eq:graficoincoordinate} are those of $\R^k$.
Let us write an equivalent analytic formulation for~\eqref{eq:defLip}. Clearly, the latter is equivalent to require that
\begin{equation}\label{eq:qwerty}
\F(w')^{-1}\F(w)=(-\f(w'))(-w')w\f(w)\notin C_\al\qquad\text{for all }w,w'\in A,\ w\neq w'.
\end{equation}
Since $\W$ is a normal subgroup and $\V$ is commutative we have
\[
(-\f(w'))(-w')w\f(w) = \underbrace{(-\f(w'))(-w')w\f(w')}_{\in\W}\underbrace{(\f(w)-\f(w'))}_{\in\V\equiv\R^k}
\]
and \eqref{eq:qwerty} is equivalent to
\begin{equation*}
\|\f(w)-\f(w')\|_\H < \al\| (-\f(w'))(-w')w\f(w')\|_\H
\qquad\text{for all }w,w'\in A,\ w\neq w'.
\end{equation*}
After a boring  computation (that we omit) and writing  $w=(x_{k+1},\dots,y_n,t)$ and  $w'=(x_{k+1}',\dots,y_n',t')$, when $k<n$ we obtain that~\eqref{eq:defLip} is equivalent to
\begin{multline}\label{eq:Lipanalitica<n}
|\f(w)-\f(w')|\\
< \al \left\|\left(x_{k+1}-x_{k+1}',\dots,y_n-y_n',t-t'-\sum_{j=1}^k\f_j(w')(y_j-y_j')+\frac 12\sum_{j=k+1}^n(x_jy_j'-x_j'y_j)\right) \right\|_\H
\end{multline}
for all points $w=(x_{k+1},\dots,y_n,t)$, $w'=(x_{k+1}',\dots,y_n',t')\in A$ with $w\neq w'$. If $k=n$, the formula above reads as
\begin{equation}\label{eq:Lipanalitica=n}
|\f(w)-\f(w')|
< \al \left\|\left(y_1-y_1',\dots,y_n-y_n',t-t'-\sum_{j=1}^k\f_j(w')(y_j-y_j')\right) \right\|_\H
\end{equation}
for all  $w=(y_1,\dots,y_n,t)$, $w'=(y_1',\dots,y_n',t')\in A$, $w\neq w'$. 

It is convenient to point out some basic facts about intrinsic Lipschitz functions.

\begin{remark}\label{rem:sephie'0}
Let $\f:A\subset\W\to\V$ be intrinsic Lipschitz with intrinsic Lipschitz constant not greater than $\al$ and assume that $\f(0)=0$; then, $|\f(w)|\leq\al\|w\|_\H$  for every $w\in\W$. In order to prove this statement, it is enough to plug $w'=0$ in~\eqref{eq:Lipanalitica<n} and~\eqref{eq:Lipanalitica=n}.
\end{remark}

\begin{remark}\label{rem:sealfaminore1/2}
Assume that $\H^n$ is endowed with the distance $d_\infty$ defined in~\eqref{eq:dinfty} and let $\al\leq 1/2$ be fixed. Then, for every intrinsic Lipschitz $\f:\W\to\V$ with intrinsic Lipschitz constant not greater than $\al$ there holds
\begin{equation}\label{eq:peperoni}
|p_\V-q_\V|\leq 2\al d_\infty(p,q)\qquad\text{for every }p,q\in\gr_\f.
\end{equation}
We can assume without loss of generality that $q=0$; using Remark~\ref{rem:sephie'0}
\begin{align*}
|p_\V| &=|\f(p_\W)| \leq \al d_\infty(0,p_\W)\\
&\leq\al(d_\infty(0,p)+ d_\infty(p,p_\W))\leq \al d_\infty(0,p) + \tfrac 12 d_\infty(p_\V,0) =\al d_\infty(0,p) + \tfrac 12 |p_\V|,
\end{align*}
which is~\eqref{eq:peperoni}.
\end{remark}

\begin{remark}\label{rem:epsilonalfa}
Assume that $\H^n$ is endowed with the distance $d_\infty$ defined in~\eqref{eq:dinfty}. Then, for every $\ep>0$ there exists $\bar\al=\bar\al(\ep,n,k)>0$ such that the following holds: for every intrinsic Lipschitz map $\f:\W\to\V$ with intrinsic Lipschitz constant not greater than $\bar\al$
\begin{equation}\label{eq:lapizzaall'ananasfaschifo}
(1-\ep)|p_\V-\f(p_\W)|\leq d_\infty(p,\gr_\f)\leq|p_\V-\f(p_\W)|\qquad\forall\: p\in\H^n.
\end{equation}
The second equality in~\eqref{eq:lapizzaall'ananasfaschifo} is trivial (and, actually, it holds for {\em every} $\f:\W\to\V$) because
\[
d_\infty(p,\gr_\f)\leq d_\infty(p,\F(p_\W))=d_\infty(p_\W p_\V,p_\W \f(p_\W))=d_\infty(0,(p_\V)^{-1}\f(p_\W))=|p_\V-\f(p_\W)|.
\]
In order to prove the first inequality in~\eqref{eq:lapizzaall'ananasfaschifo} we argue by contradiction. Assume that there exists $\bar\ep>0$ such that, for every $i\in\N$, there exist $p_i\in\H^n$ and  $\f_i:\W\to\V$ such that $\f_i$ is intrinsic Lipschitz  with intrinsic Lipschitz constant not greater than $1/i$ and
\begin{equation}\label{eq:lapizzaall'ananasfaschifo2}
\text{for every }i\qquad d_\infty(p_i,\gr_{\f_i}) < (1-\bar\ep) |(p_i)_\V-\f_i((p_i)_\W)|.
\end{equation}
Up to a left translation we can assume that $(p_i)_\W=0$ and $\f_i(0)=0$ for all $i$; in particular $p_i\in\V$ for all $i$ and, up to a dilation, we can also assume that $d_\infty(p_i,0)=1$, so that~\eqref{eq:lapizzaall'ananasfaschifo2} becomes
\begin{equation}\label{eq:lapizzaall'ananasfaschifo3}
\text{for every } i\qquad d_\infty(p_i,\bar p_i)=d_\infty(p_i,\gr_{\f_i}) < 1-\bar\ep,
\end{equation}
where the points $\bar p_i\in\gr_{\f_i}$ are chosen so that $d_\infty(p_i,\bar p_i)=d_\infty(p_i,\gr_{\f_i})$. Observe that 
$d_\infty(0,\bar p_i)\leq d_\infty(0, p_i) +d_\infty(p_i,\bar p_i)<2$; in particular, up to extracting a subsequence there exist $p\in\V$ and $\bar p\in B(0,2) $ such that $p_i\to p\in\V$ and $\bar p_i\to\bar p$ as $i\to+\infty$; clearly, $d_\infty(p,0)=1$. By Remark~\ref{rem:sephie'0} we have that $\f_i\to 0$ uniformly on compact sets of $\W$ (equivalently: $\gr_{\f_i}\to \W$ with respect to the local Hausdorff convergence of closed sets in $\H^n$), in particular $\bar p\in\W$. Letting $i\to+\infty$ in~\eqref{eq:lapizzaall'ananasfaschifo3} provides
\[
 d_\infty(p,\bar p)\leq 1-\bar\ep.
\]
However, writing $p=(p_1,\dots,p_k,0,\dots,0)$ and $\bar p=(0,\dots,0,\bar p_{k+1},\dots,\bar p_{2n+1})$ we notice that
\[
d_\infty(p,\bar p)\geq |(p_1,\dots,p_k)|=d_\infty(0,p)=1,
\]
a contradiction.
\end{remark}

\begin{remark}\label{rem:1/2Holder} 
It follows from \eqref{eq:Lipanalitica<n}--\eqref{eq:Lipanalitica=n} that, if $K\subset\W$ is compact and $\f:\W\to\V$ is intrinsic Lipschitz with intrinsic Lipschitz constant $\al$, then there exists $M=M(\al,\f(0),K)$ such that $|\f|\leq M$ on $K$.

In particular, one can apply \cite[Proposition 4.8]{FSSCJGA-Diff} to get the following: for every $\al>0$ and every compact set $K\subset\W$  there exists $C=C(\al,K)>0$ such that, for every $w,w'\in K$ and every intrinsic Lipschitz $\f:\W\to\V$ with $\f(0)=0$ and intrinsic Lipschitz constant not greater than $\al$, the $1/2$-H\"older estimate
\[
|\f(w)-\f(w')|\leq C|w-w'|^{1/2}
\]
holds, where $|\cdot|$ denote the Euclidean norm in $\W\equiv\R^{2n+1-k}$.
\end{remark}

\begin{remark}\label{rem:Ahlfors}
It was proved in~\cite[Theorem 3.9]{FSJGA} that the $(Q-k)$-dimensional Hausdorff measure on intrinsic Lipschitz graphs graphs is $(Q-k)$-Ahlfors regular; in particular, intrinsic Lipschitz graphs have the same  Hausdorff dimension $Q-k$ of the domain $\W$.
Actually, the statement of~\cite[Theorem 3.9]{FSJGA} is more quantitative: in fact, it states that for every $\al>0$ there exists $C_1=C_1(\al)>0$ such that, for every intrinsic Lipschitz function $\f:\W\to\V$ whose intrinsic Lipschitz constant is not greater than $\al$, one has 
\begin{equation}\label{eq:Ahlfors}
{C_1}^{-1} r^{Q-k} \leq \Shaus^{Q-k}(\gr_\f\cap B(p,r)) \leq C_1 r^{Q-k}\qquad\forall\, p\in\gr_\f,\ r>0.
\end{equation}

Let us point out for future references one further consequence that is implicitly proved in~\cite[Theorem 3.9]{FSJGA}. Denote by $\pi_\W:\H^n\to\W$ the projection $\pi_\W(p):=p_\W$; by~\cite[formula (44)]{FSJGA} there exists a constant $C_2=C_2(\al)\in(0,1)$ such that, for every $\f$ as above,
\[
\pi_\W(B(p,C_2r))\subset\pi_\W(\gr_\f\cap B(p,r))\subset \pi_\W(B(p,r))\qquad\forall\, p\in\gr_\f,\ r>0.
\]
By~\cite[Lemma~2.20]{FSJGA}, which states that there exists $C_3>0$ such that
\[
\leb{2n+1-k}(\pi_\W(B(p,r))=C_3r^{Q-k}\qquad\forall\:p\in\H^n,\ r>0,
\]
we deduce that
\[
{C_4}^{-1}\Shaus^{Q-k}\res\gr_\f
\leq
\F_\#(\leb{2n+1-k}\res\W)
\leq 
{C_4}\Shaus^{Q-k}\res\gr_\f
\]
for a suitable $C_4=C_4(\al)>0$, where  $\F_\#$ denotes push-forward of measures. Since $\Shaus^{Q-k}\res\gr_\f$ is a doubling measure, one can differentiate the measure $\F_\#(\leb{2n+1-k}\res\W)$ with respect to $\Shaus^{Q-k}\res\gr_\f$ (see e.g.~\cite{Rigot}) to get the existence of  $g:\gr_\f\to[{C_4}^{-1},C_4]$ such that $\F_\#(\leb{2n+1-k}\res\W)=g\Shaus^{Q-k}\res\gr_\f$. Equivalently, there exists a measurable function $J_\f:\W\to[{C_4}^{-1},C_4]$ such that
\[
\Shaus^{Q-k}\res\gr_\f = \F_\#(J_\f\,\leb{2n+1-k}\res\W).
\]
Theorem~\ref{thm:formulaarea} (proved later in Section~\ref{sec:applic}) will show that $J_\f$ coincides with the intrinsic Jacobian determinant $J^\f\f$ of $\f$ (see Definition~\ref{def:differenziabilita} below) up to a multiplicative constant. We will of course need to consider  $J_\f$ and $J^\f\f$ as separate notions until Theorem~\ref{thm:formulaarea} is proved and we therefore ask the reader to remember that  the two objects are distinguished even though quite similar  in notation.
\end{remark}

\subsection{Intrinsic differentiability and blow-ups of intrinsic Lipschitz maps}\label{subsec:differenziabilita_blowups}
Left-translations of  intrinsic Lipschitz graphs are also intrinsic Lipschitz graphs. When  $A\subset\W$, $\bar w\in A$ and an intrinsic Lipschitz map $\f:A\to\V$ are fixed and one sets $\bar p:=\bar w\f(\bar w)$, then (see \cite[Proposition 3.6]{ArenaSer} or \cite[Proposition 2.21]{FSJGA}) $\bar p^{-1}\gr_\f$ is the intrinsic Lipschitz graph of the map $\f_{\bar w}:\bar p^{-1}A\f(\bar w)\to\V$ defined by
\[
\f_{\bar w}(w):=\f(\bar w)^{-1}\f(\bar p w \f(\bar w)^{-1}).
\]
We observe that  the domain $\bar p^{-1}A\f(\bar w)=\f(\bar w)^{-1}\bar w^{-1} A\f(\bar w)$ of $\f_{\bar w}$ is a subset of $\W$ because $\W$ is a normal subgroup; moreover, $\f_{\bar w}(0)=0$ by construction. Clearly, $\f_{\bar w}$ has the same intrinsic Lipschitz constant as $\f$.

Dilations of intrinsic Lipschitz graphs   are intrinsic Lipschitz graphs too: if $A$ and $\f$  are as above, $r>0$ and $\f(0)=0$, then $\de_r(\gr_\f)$ is the intrinsic Lipschitz graph of the function $\f^r:\de_r A\to\V$ defined by
\[
\f^r(w):=\de_r \f(\de_{1/r}w)=r\f(\de_{1/r}w).
\]
The intrinsic Lipschitz constant of $\f^r$ equals the one of $\f$.

\begin{definition}\label{def:blowup}
Let $\f:A\to\V$ be a map defined on a (relatively) open subset $A\subset\W$. We say that $\bar\f:\W\to\V$ is a {\em blow-up} of $\f$ at $\bar w\in A$ if there exists a sequence $(r_j)_j$ such that $r_j\to +\infty$ as $j\to+\infty$ and
\[
\lim_{j\to\infty} (\f_{\bar w})^{r_j}=\bar\f\qquad\text{locally uniformly on $\W$.}
\]
\end{definition}

\begin{remark}\label{rem:blowupssonoLip}
Clearly, every blow-up $\bar\f$ of $\f$ is such that $\bar\f(0)=0$. Blow-ups of $\f$ at $\bar w$ are in general not unique. The functions $(\f_{\bar w})^r$, $r>0$, have the same intrinsic Lipschitz constant as $\f$; in particular, every blow-up of $\f$ in intrinsic Lipschitz continuous with intrinsic Lipschitz constant not greater than the one of $\f$.
\end{remark}

We say that $\psi:\W\to\V$ is  {\em intrinsic linear}  if its  graph $\gr_\psi$ is a homogeneous subgroup of $\H^n$; in coordinates, this is equivalent to requiring that $\gr_\psi$ is a vertical plane (recall Definition~\ref{def:pianiverticali}) of dimension $2n+1-k$. 
Another characterization can be given as follows.  For every $w\in\W$  define $w_H\in\R^{2n+1-k}$ as
\begin{equation}\label{eq:defw_H}
\begin{array}{ll}
w_H:=(x_{k+1},\dots,y_n)\qquad & \text{if $k<n$ and $w=(x_{k+1},\dots,y_n,t)$}\\
 w_H:=(y_1,\dots,y_n) & \text{if $k=n$ and $w=(y_1,\dots,y_n,t)$.}
\end{array}
\end{equation}
Then, $\psi$ is intrinsic linear if and only if there exists a $k\times(2n-k)$ matrix $M$ (here identified with a linear map $M:\R^{2n-k}\to\R^k\equiv\V$) such that, for every $w\in\W$, $\psi(w)=M\,w_H$. 

We can now state the following definition.

\begin{definition}\label{def:differenziabilita}
Let $A\subset\W$ be open  and $\f:A\to\V$ be given; we say that $\f$ is {\em intrinsically differentiable} at $\bar w\in A$ if there exists an intrinsic linear map $d\f_{\bar w}:\W\to\V$ such that
\[
\lim_{s\to0}\left( \sup\left\{
\frac{d(\f_{\bar w}(w),d\f_{\bar w}(w))}{d(0,w)}:w\in\W\cap B(0,s)\right\}\right)=0.
\]
The map $d\f_{\bar w}$ is called {\em intrinsic differential} of $\f$ at $\bar w$; the intrinsic graph of $d\f_{\bar w}$ is  called {\em tangent plane} to $\gr_\f$ at $\F(\bar w)$ and is denoted by $\Tan^\H_{\gr_\f}(\F(\bar w))$.

The {\em intrinsic gradient} $\nabla^\f\f(\bar w) $ is the unique $k\times(2n-k)$ matrix such that $d\f_{\bar w}(w)=\nabla^\f\f(\bar w)\, w_H$ for every $w\in \W$. We also define the {\em intrinsic Jacobian determinant} $J^\f\f(\bar w)$ of $\f$ at $\bar w$ as
\[
J^\f\f(\bar w):=\left( 1+\sum_M (\det M)^2 \right)^{1/2},
\]
where the sum ranges on all minors (of any size) of the matrix $\nabla^\f\f$.\end{definition}

\begin{remark}\label{rem:WVortoghonal}
The notions introduced in  Definition~\ref{def:differenziabilita} (and, in particular, that of intrinsic Jacobian needed in Theorem~\ref{thm:formulaarea}) make sense also when the subgroups $\W,\V$ are orthogonal, i.e., when they  are orthogonal as linear subspaces of $\H^n\equiv\R^{2n+1}$. In fact, as Remark~\ref{rem:isometrie_delgruppo} (see also~\cite[\S2.4]{CorniMagnani}) in this case there exists an isometric $\H$-linear isomorphism sending $\W,\V$ to the subgroups defined in~\eqref{eq:Wfissato} and~\eqref{eq:Vfissato}.
\end{remark}

\begin{remark}
It will be convenient to denote the components $(\nabla^\f\f)_{ij}$ of the intrinsic gradient  using indices that vary in the  ranges $i=1,\dots, k$ and $j= k+1,\dots,2n$. This choice might seem a bit unusual for what concerns the index $j$, but it is somehow suggested by the definition of $w_H$. Further justification is provided in \S\ref{subsec:correntiindottedagraficilipschitz}.
\end{remark}

In the following Proposition~\ref{prop:differ->convtoTanH} we collect several statements that are equivalent to intrinsic differentiability: the equivalences among (a), (b) and (c) are straightforward while for the equivalence with (d) we refer to~\cite[Theorem~4.15]{FSSCJGA-Diff}.

\begin{proposition}\label{prop:differ->convtoTanH}
Consider an open set $A\subset\W$, a map $\f:\W\to\V$ and a point $\bar w\in A$. Then the following statements are equivalent:
\begin{enumerate}
\item[(a)] $\f$ is  intrinsically differentiable at $\bar w$;
\item[(b)] there exists an intrinsic linear map $\psi:\W\to\V$ such that $(\f_{\bar w})^r\to \psi$ locally uniformly on $\W$ as $r\to+\infty$;
\item[(c)] the blow-up of $\f$ at $\bar w$ is unique and it is an intrinsic linear map;
\item[(d)] there exists a $(2n+1-k)$-dimensional vertical plane $\mathscr P$ that is complementary to $\V$ and such that, as $r\to+\infty$, the sets $\de_{r}(\F(\bar w)^{-1}\gr_\f)$ converge to $\mathscr P$ with respect to the local Hausdorff convergence of  sets.
, i.e.,
\begin{equation}\label{eq:convtoTanH}
\lim_{s\to 0^+} \left(\sup\left\{\frac{d(\F(\bar w)^{-1}p,\mathscr P)}{d(\F(\bar w),p)}: p\in\gr_\f\cap B(\F(\bar w),s)\right\}\right)=0.
\end{equation}
\end{enumerate}
Moreover, the plane $\mathscr P$ in {\rm (d)} coincides with $\Tan^\H_{\gr_\f}(\F(\bar w))$.
\end{proposition}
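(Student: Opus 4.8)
The statement to prove is Proposition~\ref{prop:differ->convtoTanH}, asserting the equivalence of four characterizations of intrinsic differentiability together with the identification of the limit plane as $\Tan^\H_{\gr_\f}(\F(\bar w))$.

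\begin{proof}[Proof plan]
The plan is to prove the cyclic chain of implications (a)$\Rightarrow$(b)$\Rightarrow$(c)$\Rightarrow$(a), then handle (d) separately, and finally identify $\mathscr P$ with $\Tan^\H_{\gr_\f}(\F(\bar w))$. First I would observe that, by definition, $\f$ is intrinsically differentiable at $\bar w$ precisely when there is an intrinsic linear map $d\f_{\bar w}$ with $\sup\{d(\f_{\bar w}(w),d\f_{\bar w}(w))/d(0,w):w\in\W\cap B(0,s)\}\to0$ as $s\to0$. Rewriting this supremum at scale $s=1/r$ using the homogeneity $d\f_{\bar w}(\de_{1/r}w)=\de_{1/r}(d\f_{\bar w}(w))$ (valid because $d\f_{\bar w}$ is intrinsic linear, hence its graph is a homogeneous subgroup) and the left-invariance and homogeneity of $d$, one sees that this condition is equivalent to $(\f_{\bar w})^r\to d\f_{\bar w}$ uniformly on $\W\cap B(0,1)$; since the functions $(\f_{\bar w})^r$ all have the same intrinsic Lipschitz constant as $\f$ (Remark~\ref{rem:blowupssonoLip}), local uniform convergence on all of $\W$ follows by a standard rescaling argument (if $w\in B(0,\rho)$, pass to scale $r\rho$). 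This gives (a)$\Leftrightarrow$(b) directly. For (b)$\Rightarrow$(c): uniqueness of the blow-up is immediate since the limit in (b) is independent of any subsequence, and the limit is intrinsic linear by hypothesis; conversely (c)$\Rightarrow$(b) is trivial. Combining, (a), (b), (c) are all equivalent, and in each case the limiting intrinsic linear map is $d\f_{\bar w}$, whose graph is by definition $\Tan^\H_{\gr_\f}(\F(\bar w))$.

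Next I would treat (d). The key point is to translate between "the rescaled graphs $\de_r(\F(\bar w)^{-1}\gr_\f)$ converge locally in Hausdorff distance to a vertical plane $\mathscr P$ complementary to $\V$" and "the rescaled maps $(\f_{\bar w})^r$ converge locally uniformly to an intrinsic linear map". Since $\F(\bar w)^{-1}\gr_\f = \gr_{\f_{\bar w}}$ and dilations of intrinsic graphs are intrinsic graphs of the rescaled functions, $\de_r(\F(\bar w)^{-1}\gr_\f)=\gr_{(\f_{\bar w})^r}$. All these graphs are intrinsic Lipschitz with a uniform constant, and a complementary vertical plane $\mathscr P$ of dimension $2n+1-k$ is exactly the intrinsic graph $\gr_\psi$ of an intrinsic linear $\psi$. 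The equivalence between local uniform convergence of the $(\f_{\bar w})^r$ and local Hausdorff convergence of their graphs, under a uniform intrinsic Lipschitz bound, is precisely the content cited from~\cite[Theorem~4.15]{FSSCJGA-Diff}: one direction is elementary (uniform convergence of maps forces Hausdorff convergence of graphs on bounded sets), and the converse uses the cone property to control vertical oscillation — if the graphs are close in Hausdorff distance and each satisfies the cone condition, then the functions are close, using that the "bad" directions are confined to $\V$. The formula~\eqref{eq:convtoTanH} is just the quantitative restatement of local Hausdorff convergence to $\mathscr P$ through $\F(\bar w)$, again using left-invariance of $d$. This establishes (b)$\Leftrightarrow$(d) and pins down $\mathscr P=\gr_\psi=\gr_{d\f_{\bar w}}=\Tan^\H_{\gr_\f}(\F(\bar w))$.

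The main obstacle I anticipate is the careful verification that the scaling identities are being applied correctly — in particular that $(\f_{\bar w})^r(w)=r\,\f_{\bar w}(\de_{1/r}w)$ and $d\f_{\bar w}(\de_{1/r}w)=\de_{1/r}(d\f_{\bar w}(w))$ interact to turn the static differentiability quotient into the dynamic blow-up convergence, and the uniform-bound argument upgrading convergence on $B(0,1)$ to convergence on all compact subsets of $\W$. The genuinely nontrivial equivalence, (b)$\Leftrightarrow$(d), is not reproved here but quoted from~\cite[Theorem~4.15]{FSSCJGA-Diff}; if a self-contained argument were required, the hard part would be showing that Hausdorff closeness of two intrinsic Lipschitz graphs (with controlled cone aperture) implies sup-closeness of the defining maps, which one obtains by noting that at each base point $w$ the graph point $w\f_{\bar w}(w)$ and the nearby point of $\gr_{(\f_{\bar w})^r}$ differ by an element that, by the cone property applied to \emph{both} graphs, must have small $\V$-component — hence the maps differ by little. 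Everything else is bookkeeping with left-invariance, homogeneity, and the normal/commutative structure of $\W,\V$ recorded in~\eqref{eq:graficoincoordinate}.
\end{proof}
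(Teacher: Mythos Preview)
Your approach matches the paper's exactly: the paper gives no proof at all, merely stating that the equivalences among (a), (b), (c) are ``straightforward'' and deferring the equivalence with (d) to \cite[Theorem~4.15]{FSSCJGA-Diff}. Your sketch fills in precisely this outline, with the same division of labor and the same citation for the nontrivial step.

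One small remark: you invoke Remark~\ref{rem:blowupssonoLip} to say the $(\f_{\bar w})^r$ share a common intrinsic Lipschitz constant, but the proposition as stated does not assume $\f$ is intrinsic Lipschitz. Fortunately your actual rescaling argument (``if $w\in B(0,\rho)$, pass to scale $r\rho$'') works without any Lipschitz hypothesis, using only homogeneity of $d\f_{\bar w}$ and of the distance; so the reference is superfluous rather than wrong. The same caveat applies, in principle, to (c)$\Rightarrow$(b): without precompactness of the family $\{(\f_{\bar w})^r\}_r$ one cannot pass from ``unique subsequential limit'' to ``full convergence'' by the standard sub-subsequence trick, so a literal reading of (c) in the non-Lipschitz setting is slightly delicate. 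The paper glosses over this too (calling it ``straightforward''), and in every application in the paper $\f$ is in fact intrinsic Lipschitz, so this is not a substantive issue.
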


It is worth observing that intrinsic graphs parameterizing $\H$-regular submanifolds are intrinsically differentiable: see Remark~\ref{rem:Hreg-->intrdiff} for a  precise statement.

As one can easily guess, intrinsic Lipschitz functions with small Lipschitz constant have small intrinsic gradient at differentiability points; the following lemma provides a quantitative version of this statement.

\begin{lemma}\label{lem:nablaffijminoredialfa}
Assume that $\H^n$ is endowed with the distance $d=d_\infty$ introduced in~\eqref{eq:dinfty}. Let $\f:A\to\V$ be an intrinsic Lipschitz function defined on an open set $A$ of $\W$ and let $\al$ be the intrinsic Lipschitz constant of $\f$. Then for every point $w\in A$ where $\f$ is intrinsically differentiable we have
\begin{equation}\label{eq:nablaffij}
|(\nabla^\f\f(w))_{ij}|\leq\al\qquad\forall\: i=1,\dots,k,\ \forall\:j=k+1,\dots,2n.
\end{equation}
\end{lemma}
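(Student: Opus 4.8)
The plan is to reduce the estimate to the behaviour of the blow-up $\f_w$ at the differentiability point $w$, and then exploit the cone condition directly. First I would recall that, by Remark~\ref{rem:blowupssonoLip}, the left-translated map $\f_w$ has the same intrinsic Lipschitz constant $\al$ as $\f$, and that $\f_w(0)=0$; moreover $\f$ is intrinsically differentiable at $w$ if and only if $\f_w$ is intrinsically differentiable at $0$, with $d(\f_w)_0=d\f_w=\psi$ the intrinsic linear map $\psi(w')=\nabla^\f\f(w)\,w'_H$. Since the intrinsic differential at $0$ is, by Proposition~\ref{prop:differ->convtoTanH}, the locally uniform limit of the dilations $(\f_w)^r$ as $r\to+\infty$, and each $(\f_w)^r$ has intrinsic Lipschitz constant $\al$, it suffices to prove the bound~\eqref{eq:nablaffij} for an intrinsic \emph{linear} map $\psi:\W\to\V$ whose intrinsic Lipschitz constant is at most $\al$: indeed the estimate~\eqref{eq:nablaffij} only involves the matrix $\nabla^\f\f(w)$, which equals the matrix $M$ representing $\psi$.

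So the heart of the matter is: if $\psi(w')=M\,w'_H$ is intrinsic linear with intrinsic Lipschitz constant $\le\al$ (with respect to $d_\infty$), then $|M_{ij}|\le\al$ for all $i\in\{1,\dots,k\}$, $j\in\{k+1,\dots,2n\}$. To see this I would test the cone condition~\eqref{eq:defLip} — equivalently its analytic form~\eqref{eq:Lipanalitica<n} (or~\eqref{eq:Lipanalitica=n} when $k=n$) — at the pair $w'=0$ and $w=w_j$, where $w_j\in\W$ is the point whose $w_H$-component is the $j$-th coordinate unit vector $e_j\in\R^{2n-k}$ and all other coordinates (including the central coordinate $t$) vanish; concretely, in the coordinates~\eqref{eq:vwincoordinate}, $w_j$ has a single non-zero entry equal to $1$ in the slot corresponding to the $j$-th horizontal direction. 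Since $\psi(0)=0$ and $\psi(w_j)=M e_j=(M_{1j},\dots,M_{kj})\in\R^k$, the left-hand side of~\eqref{eq:Lipanalitica<n}/\eqref{eq:Lipanalitica=n} is $|M e_j|$. For the right-hand side I compute the $\W$-component appearing inside $\|\cdot\|_\H=\|\cdot\|_{d_\infty}$: with $w'=0$ its horizontal part is exactly $e_j$ (so its Euclidean norm in $\R^{2n}$ is $1$) and its central ($t$-) component is a sum $-\tfrac12$ of products of coordinates of $w_j$ that all vanish except possibly the one pairing the chosen horizontal direction with its symplectic conjugate — but $w_j$ has only one non-zero horizontal coordinate, so that central term is $0$ as well. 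Hence, using the explicit form~\eqref{eq:dinfty} of $d_\infty$, the right-hand side is $\al\cdot\max\{1,0\}=\al$, and~\eqref{eq:Lipanalitica<n} gives $|M e_j|<\al$; passing to the blow-up limit of intrinsic-Lipschitz-constant-$\le\al$ maps yields the non-strict inequality $|M e_j|\le\al$. Since $|M_{ij}|\le|M e_j|$ for each $i$, we conclude $|M_{ij}|\le\al$ as desired.

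I would then remark that the choice $d=d_\infty$ in the statement of Lemma~\ref{lem:nablaffijminoredialfa} is exactly what makes the right-hand side computation clean: the $d_\infty$-norm of a point $(z,t)\in\R^{2n}\times\R$ is $\max\{|z|,2|t|^{1/2}\}$, so once the central coordinate of the relevant displacement has been arranged to vanish, the norm is simply the Euclidean norm of the horizontal part, which we engineered to be $1$. One small technical point to double-check is that $w_j$ as defined above indeed lies in $\W$: this follows from~\eqref{eq:pianiVWincoordinate}, since $\W$ consists of points with $x_1=\dots=x_k=0$ and $w_j$ has vanishing $x_1,\dots,x_k$ by construction (its unique non-zero entry is in an $x_\ell$ with $\ell>k$, in a $y_\ell$, or — only in edge cases we avoid by not choosing the $t$-slot — never in the center).

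The main obstacle, such as it is, is bookkeeping rather than conceptual: one must be careful with the coordinate conventions~\eqref{eq:vwincoordinate}, \eqref{eq:defw_H} and~\eqref{eq:graficoincoordinate}, and in particular verify that the quadratic (central) correction terms in~\eqref{eq:Lipanalitica<n} genuinely vanish for the single-coordinate test point $w_j$ — they do, because every such term is a product of two distinct coordinates of $w_j$ and at most one coordinate of $w_j$ is non-zero. Once this is checked, the argument is essentially a one-line application of the analytic form of the intrinsic Lipschitz condition followed by a blow-up passage to the limit; no substantial difficulty remains.
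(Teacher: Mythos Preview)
Your proof is correct and follows essentially the same approach as the paper: reduce to the intrinsic differential $\psi=d\f_w$, which by Remark~\ref{rem:blowupssonoLip} is intrinsic Lipschitz with constant at most $\al$ and satisfies $\psi(0)=0$, then evaluate at the unit horizontal point $\exp(W_j)$ using $d_\infty$. The only difference is packaging: the paper cites Remark~\ref{rem:sephie'0} (which already records that $\f(0)=0$ and Lipschitz constant $\le\al$ imply $|\f(w)|\le\al\|w\|_\H$) to obtain $|(\nabla^\f\f(w))_{ij}|=|(d\f_w(\exp(W_j)))_i|\le\al\|\exp(W_j)\|_\H=\al$ in one line, whereas you rederive that remark by plugging $w'=0$ into~\eqref{eq:Lipanalitica<n}--\eqref{eq:Lipanalitica=n} and taking the blow-up limit to turn the strict inequality into a non-strict one.
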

\begin{proof}
By Remark~\ref{rem:blowupssonoLip}, the intrinsic differential $d\f_w$ is intrinsic Lipschitz with Lipschitz constant not greater than $\al$. Let $i,j$ be as in~\eqref{eq:nablaffij}; then, by Remark~\ref{rem:sephie'0} we have
\[
|(\nabla^\f\f(w))_{ij}| = |(d\f_w(\exp(W_j)))_i|\leq\al\|\exp(W_j)\|_\H=\al\, d_\infty(0,\exp(W_j))=\al,
\]
where $(d\f(\exp(W_j)))_i$ is the $i$-component of $d\f_w(\exp(W_j))\in\V\equiv\R^k$.
\end{proof}

\subsection{Blow-ups of intrinsic Lipschitz maps are almost always \texorpdfstring{$t$}{t}-invariant}\label{subsec:quasisempreverticali}
The aim of this section is the proof of the following Lemma~\ref{lem:blowuptinvariante}, a very first step towards Theorem~\ref{thm:rademacher}. Given  $h\in\R$, we write $\vec h:=\exp(hT)$ and we observe that, in coordinates, $(x,y,t)\vec h=(x,y,t+h)$.

We say that $\f:\W\to\V$ is {\em $t$-invariant} if
\[
\f(w\vec h)=\f(w)\qquad\text{for every }w\in\W,\ h\in\R.
\]
With the notation introduced in~\eqref{eq:defw_H}, $\f:\W\to\V$ is $t$-invariant if and only if there exists $f_\f:\R^{2n-k}\to\R^k\equiv\V$ such that \begin{equation}\label{eq:phif_phi}
\f(w)=f_\f(w_H)\qquad\text{for every }w\in\W.
\end{equation}
Clearly, every intrinsic linear map $\psi:\W\to\V$ is $t$-invariant: a simple consequence of this fact is contained in the following observation.

\begin{remark}\label{rem:piccoloHolder}
If $\f$ is intrinsically differentiable at $\bar w$, then 
\[
|\f(\bar w\vec h)-\f(\bar w)|=o(|h|^{1/2})\qquad\text{as }h\to0.
\]
This  is a simple consequence of the fact that, as $r\to+\infty$, $(\f_{\bar w})^r$ converges to an intrinsic linear (and then $t$-invariant) map.
\end{remark}

Let us collect  some basic facts about intrinsic Lipschitz maps that are also $t$-invariant: thought very simple, they will be useful in the sequel.

\begin{lemma}\label{lem:tinvariantLipschitz}
Let $\f:\W\to\V$ be intrinsic Lipschitz continuous and $t$-invariant and let $f_\f$ be as in~\eqref{eq:phif_phi}. Then
\begin{enumerate}
\item[(i)] $f_\f:\R^{2n-k}\to\R^k$ and $\f:\W\equiv\R^{2n-k}\to\V\equiv\R^k$  are Euclidean Lipschitz continuous;
\item[(ii)] the  intrinsic graph $\gr_\f$ coincides with
\[
\{(f_\f(w),w,t)\in\R^k\times\R^{2n-k}\times\R\equiv\H^n:w\in\R^{2n-k},t\in\R\}
\]
\item[(iii)] $\f$ is Euclidean differentiable at $\bar w$ if and only if $f_\f$ is Euclidean differentiable at $\bar w_H$;
\item[(iv)] $\f$ is intrinsically differentiable at $\bar w$ if and only if it is Euclidean differentiable at $\bar w$. In this case, $\nabla^\f\f(\bar w)=\nabla f_\f(\bar w_H)$.
\end{enumerate}
\end{lemma}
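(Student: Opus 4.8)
The plan is to unwind the four claims in order, using the coordinate description of $t$-invariant graphs. First I would set up notation: write $w=(w_H,t)\in\R^{2n-k}\times\R\equiv\W$ as in~\eqref{eq:defw_H}, so that by~\eqref{eq:phif_phi} the intrinsic graph map becomes, via~\eqref{eq:graficoincoordinate}, essentially $\F(w_H,t)=(f_\f(w_H),w_H,t-\tfrac12\langle f_\f(w_H),\cdot\rangle)$; the key point is that the $\R^k$-part and the $\R^{2n-k}$-part of $\F$ depend only on $w_H$, while the last coordinate is an affine function of $t$ with slope $1$. This already gives that $\gr_\f$ is exactly the set in (ii): for every $w_H\in\R^{2n-k}$ and every real $s$, solving $t-\tfrac12\langle\text{stuff}\rangle=s$ for $t$ shows the point $(f_\f(w_H),w_H,s)$ lies on $\gr_\f$, and conversely every point of $\gr_\f$ has this form. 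So (ii) is immediate from the coordinate formula, and I would present it first since (i) is easiest to read off from it.

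For (i), the cleanest route is to use (ii) together with the intrinsic Lipschitz cone condition in the form~\eqref{eq:Lipanalitica<n}--\eqref{eq:Lipanalitica=n}: choosing $w,w'$ with the \emph{same} $t$-coordinate (which is legitimate since both lie in the domain $\W$) kills the $t-t'$ term, and by $t$-invariance $\f_j(w')$ depends only on $w'_H$; the remaining right-hand side is the homogeneous norm of a vector whose horizontal part is $w_H-w'_H$ and whose vertical ($T$-)part is a quadratic expression in $w_H,w'_H$. Restricting further to a bounded set and using that $\|\cdot\|_\H$ is comparable to $\|\cdot\|_*$, one gets $|f_\f(w_H)-f_\f(w'_H)|\leq C(|w_H-w'_H|+|w_H-w'_H|^{1/2})$ locally, but then a standard bootstrapping (or simply invoking the $1/2$-Hölder estimate of Remark~\ref{rem:1/2Holder} to control the quadratic term, so that the vertical part of the displacement is itself $O(|w_H-w'_H|)$ times bounded quantities) upgrades this to a genuine Lipschitz bound: $|f_\f(w_H)-f_\f(w'_H)|\leq L|w_H-w'_H|$. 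Since $\f=f_\f\circ(w\mapsto w_H)$ and $w\mapsto w_H$ is $1$-Lipschitz from $(\W,d)$ — using that the homogeneous norm dominates the norm of the horizontal coordinates — $\f$ is Euclidean Lipschitz too. Alternatively, and perhaps more robustly, one can argue directly in $\H^n$: the graph, being a $t$-invariant intrinsic Lipschitz graph, is a union of vertical lines, and the cone condition forces the ``horizontal slice'' $\{(f_\f(w_H),w_H):w_H\}$ to satisfy a Euclidean cone condition, hence be a Euclidean Lipschitz graph. I would pick whichever exposition is shorter given the lemmata already available.

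Claims (iii) and (iv) are then bookkeeping. For (iii): Euclidean differentiability of $\f:\R^{2n-k+1}\to\R^k$ at $\bar w=(\bar w_H,\bar t)$ means $\f(\bar w_H+u, \bar t+\tau)=\f(\bar w)+A(u,\tau)+o(|(u,\tau)|)$; since $\f$ does not depend on the last variable, the $\tau$-column of $A$ vanishes and the statement collapses to Euclidean differentiability of $f_\f$ at $\bar w_H$, with $\nabla\f(\bar w)=[\nabla f_\f(\bar w_H)\,|\,0]$. For (iv): one implication, that intrinsic differentiability implies Euclidean differentiability for $t$-invariant maps, is where the real content sits — I would use the blow-up characterization (b) of Proposition~\ref{prop:differ->convtoTanH}, namely $(\f_{\bar w})^r\to\psi$ locally uniformly with $\psi$ intrinsic linear, hence $t$-invariant, hence (by (i) applied to $\psi$, or directly) of the form $\psi(w)=M w_H$; combined with part (iii) of Lemma~\ref{lem:tinvariantLipschitz} and the fact that for $t$-invariant maps the translated-and-dilated map $(\f_{\bar w})^r$ is again $t$-invariant (this needs a small check using that $\W$ is normal and $\f_{\bar w}(w)=\f(\bar w)^{-1}\f(\bar p w\f(\bar w)^{-1})$, so that the $t$-dependence drops out — the hard part will be verifying carefully that $t$-invariance is preserved under the intrinsic translation $\f\mapsto\f_{\bar w}$, since $\bar p w\f(\bar w)^{-1}$ mixes coordinates through the group law), one reduces the uniform convergence $(\f_{\bar w})^r\to Mw_H$ to: $f_{\f_{\bar w}}(r^{-1}u_H)\cdot r\to M u_H$ uniformly, which is exactly the statement that $f_\f$ is Euclidean differentiable at $\bar w_H$ with differential $M$, i.e. $\f$ is Euclidean differentiable at $\bar w$ with $\nabla^\f\f(\bar w)=\nabla f_\f(\bar w_H)=M$. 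The converse, Euclidean differentiability $\Rightarrow$ intrinsic differentiability, follows by running the same computation backwards: if $f_\f$ has Euclidean differential $M$ at $\bar w_H$, then $(\f_{\bar w})^r(w)=r f_{\f_{\bar w}}(r^{-1}w_H)\to M w_H$ locally uniformly, which is criterion (b) for intrinsic differentiability with $d\f_{\bar w}(w)=Mw_H$; matching this against Definition~\ref{def:differenziabilita} gives $\nabla^\f\f(\bar w)=M=\nabla f_\f(\bar w_H)$. The only genuine obstacle I anticipate is the normality/group-law computation needed to see that $\f_{\bar w}$ and $(\f_{\bar w})^r$ stay $t$-invariant and that their defining formulas reduce, under the identification of $\W$ with $\R^{2n-k}\times\R$, to the naive translation/dilation of $f_\f$; everything else is routine.
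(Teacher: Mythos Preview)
Your argument for (i) has a genuine gap. Choosing $w,w'$ with the same $t$-coordinate only kills the $t-t'$ term in~\eqref{eq:Lipanalitica<n}--\eqref{eq:Lipanalitica=n}; the remaining vertical part $V=-\sum_{j=1}^k\f_j(w')(y_j-y_j')+\tfrac12\sum_{j=k+1}^n(x_jy_j'-x_j'y_j)$ is indeed $O(|w_H-w'_H|)$ on bounded sets, but the homogeneous norm contributes $|V|^{1/2}=O(|w_H-w'_H|^{1/2})$, which \emph{dominates} the horizontal term for small displacements. The resulting local $\tfrac12$-H\"older bound does not bootstrap to Lipschitz: invoking Remark~\ref{rem:1/2Holder} returns exactly the same $\tfrac12$-H\"older estimate, and the fact that $V=O(|w_H-w'_H|)$ is what you already used. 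Your alternative geometric argument is plausible but is not a proof as stated.

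The fix exploits $t$-invariance in the opposite way: rather than equal $t$-coordinates, take $w=(u,0)$ and $w'=(u',t')$ with $t'$ chosen so that the \emph{entire} last component in~\eqref{eq:Lipanalitica<n} vanishes, namely
\[
t':=\tfrac12\sum_{j=k+1}^n(x_jy_j'-x_j'y_j)-\sum_{j=1}^k(f_\f(u'))_j(y_j-y_j').
\]
Then the right-hand side is purely horizontal and~\eqref{eq:Lipanalitica<n} gives $|f_\f(u)-f_\f(u')|\le\al\,\|(u-u',0)\|_\H\le C\al\,|u-u'|$ directly, with no iteration. Your treatment of (ii)--(iv) is along the paper's lines; for (iv) the paper gets there by a single direct computation of $\f_{\bar w}(w)$ via the identity $(\bar w\f(\bar w)\,w\,\f(\bar w)^{-1})_H=\bar w_H+w_H$, rather than routing through blow-ups, which sidesteps the separate verification that $\f_{\bar w}$ stays $t$-invariant.
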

\begin{proof}
The second part of statement (i) is a direct consequence of the first one, that we now prove only in case $k<n$ as the case $k=n$ requires only minor adjustment in the notation. For every $u=(x_{k+1},\dots,y_n)\in\R^{2n-k}$ and $u'=(x_{k+1}',\dots,y_n')\in\R^{2n-k}$  consider $w:=(x_{k+1},\dots,y_n,0)\in\W$ and $w':=(x_{k+1}',\dots,y_n',t')\in\W$, where $t'=t'(u,u')$ is defined by
\[
t':=\frac 12\sum_{j=k+1}^n(x_jy_j'-x_j'y_j)-\sum_{j=1}^k (f_{\f}(u'))_j(y_j-y_j').
\]
From \eqref{eq:Lipanalitica<n} we deduce that for a suitable $\al>0$ and a positive $C$ depending on the  distance $d$
\begin{align*}
|f_\f(u) - f_\f(u') |
& = |\f(w)-\f(w')|\\
&\leq \al \|(x_{k+1}-x_{k+1}',\dots,y_n-y_n',0) \|_\H \\
&\leq C\al d_\infty(0,(x_{k+1}-x_{k+1}',\dots,y_n-y_n',0))\\
&= C\al \|u-u'\|_{\R^{2n-k}}.
\end{align*}
This proves (i).

Statements (ii) and (iii) are trivial. Concerning (iv), if $f_\f$ is Euclidean differentiable at $\bar w\in\W$ one has
\begin{equation}\label{eq:riunione}
\begin{aligned}
\f_{\bar w}(w)& = \f(\bar w)^{-1}\f(\bar w\f(\bar w) w \f(\bar w)^{-1})
= f_\f((\bar w\f(\bar w) w \f(\bar w)^{-1})_H) - f_\f(\bar w_H) \\
&= f_\f( \bar w_H+w_H) - f_\f(\bar w_H) =  \nabla f_\f(\bar w) w_H + o(|w_H|).
\end{aligned}
\end{equation}
and the intrinsic differentiability of $\f$ at $\bar w$ follows because $|w_H|\leq d_\infty(0,w)\leq C d(0,w)$ for a suitable $C>0$. Conversely, assume that $\f$ is intrinsically differentiable at $\bar w$; for every $u\in \R^{2n-k}$ we define $w=(u,0)\in\W$ and, as in~\eqref{eq:riunione}, we obtain
\begin{align*}
f_\f( \bar w_H+u) - f_\f(\bar w_H) &= f_\f( \bar w_H+w_H) - f_\f(\bar w_H)\\
& = \f_{\bar w}(w) = d\f_{\bar w}(w)+o(d_\infty(0,w))=\nabla^\f\f(\bar w)w_H + o(|u|).
\end{align*}
This proves that $f_\f$ is Euclidean differentiable at $\bar w_H$ and, by (iii), that $\f$ is Euclidean differentiable at $w\in\W$.
\end{proof}

We  now state and prove the following result, that will play a distinguished role in the proof of Theorem~\ref{thm:rademacher}.

\begin{lemma}\label{lem:blowuptinvariante}
Let $\f:\W\to\V$ be intrinsic Lipschitz. Then there exists a  $\leb{2n+1-k}$-negligible set $E\subset\W$ such that, for every $\bar w\in\W\setminus E$ and every blow-up $\bar\f:\W\to\V$ of $\f$ at $\bar w$, $\bar\f$ is $t$-invariant. 
\end{lemma}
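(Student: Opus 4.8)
The strategy is to reduce the $t$-invariance of \emph{every} blow-up at a.e.~point to the known codimension-$1$ Rademacher theorem from \cite{FSSCJGA-Diff}, applied not to $\f$ itself but to the $k$ scalar ``slices'' obtained by composing with the projections onto the coordinate factors of $\V\equiv\R^k$. The point is that an intrinsic Lipschitz graph of codimension $k$ sits inside a codimension-$1$ intrinsic Lipschitz graph in a suitable one-dimensional splitting, and the blow-up of the latter is governed by a.e.~intrinsic differentiability in codimension $1$, which forces the $t$-derivative of the blow-up to vanish.

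\textbf{Step 1: setting up the auxiliary codimension-$1$ graphs.} For each $i\in\{1,\dots,k\}$ let $\V_i:=\exp(\Span\{X_i\})$ and let $\W_i$ be the (vertical, codimension-$1$) complementary subgroup containing $\W$ and all $X_j$ with $j\le n$, $j\ne i$. By Theorem~\ref{teo:equivdefLip} (or \cite[Proposition~3.1]{FSJGA}), since $\gr_\f$ is an intrinsic Lipschitz graph and $\V_i$ is a horizontal $1$-dimensional subgroup complementary to $\W_i$, there is an intrinsic Lipschitz map $g_i\colon\W_i\to\V_i$ with $\gr_\f\subset\gr_{g_i}$; indeed $g_i$ is obtained from the $i$-th component of the Lipschitz defining function $f\colon\H^n\to\R^k$ of Theorem~\ref{teo:equivdefLip}, whose $i$-th component is coercive along $X_i$ by Remark~\ref{rem:coercivoderivate}. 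Apply the codimension-$1$ Rademacher theorem \cite{FSSCJGA-Diff} to each $g_i$: there is a $\leb{2n}$-negligible set $N_i\subset\W_i$ outside of which $g_i$ is intrinsically differentiable. Let $E$ be the union, over $i$, of the preimages in $\W$ (under the Lipschitz projection $\W\to\W_i$ along $\V_i$) of the $N_i$; because the projection maps push the Haar measure on $\W$ forward to a measure absolutely continuous with respect to the one on $\W_i$ on the relevant ``fiber'' structure — more precisely one uses the area/coarea-type estimate implicit in \cite[Theorem~3.9]{FSJGA} and Remark~\ref{rem:Ahlfors} together with \eqref{eq:grLiptrascurabili} — the set $E$ is $\leb{2n+1-k}$-negligible.

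\textbf{Step 2: transferring the blow-up.} Fix $\bar w\in\W\setminus E$ and a blow-up $\bar\f$ of $\f$ at $\bar w$ along a sequence $r_j\to+\infty$. Set $\bar p:=\bar w\f(\bar w)\in\gr_\f\subset\gr_{g_i}$ and write $\bar w^{(i)}$ for the projection of $\bar p$ onto $\W_i$. The dilated translates $\de_{r_j}(\bar p^{-1}\gr_\f)$ converge (locally in Hausdorff distance) to $\gr_{\bar\f}$, while $\de_{r_j}(\bar p^{-1}\gr_{g_i})$ converge to $\Tan^\H_{\gr_{g_i}}(\bar p)$, a $2n$-dimensional vertical plane, by Proposition~\ref{prop:differ->convtoTanH}(d) applied at $\bar w^{(i)}\notin N_i$; monotone stability of inclusion under Hausdorff limits gives
\[
\gr_{\bar\f}\subset \bigcap_{i=1}^k \Tan^\H_{\gr_{g_i}}(\bar p)=:\Pi.
\]
Each $\Tan^\H_{\gr_{g_i}}(\bar p)$ is an intrinsic-linear graph over $\W_i$, i.e.~a vertical plane that is $t$-invariant (contains the center direction $T$) and splits as $\{(M_i w_H^{(i)}\text{ in the }X_i\text{-slot})\}$. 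Since every $\Tan^\H_{\gr_{g_i}}(\bar p)$ contains $\exp(\Span\{T\})$, so does $\Pi$, and thus $\gr_{\bar\f}\subset\Pi$ is contained in a set invariant under the right $\vec h$-translation $p\mapsto p\,\vec h$.

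\textbf{Step 3: concluding $t$-invariance.} It remains to deduce that $\gr_{\bar\f}$ being contained in a $t$-invariant set forces $\bar\f$ itself to be $t$-invariant. For $w\in\W$ and $h\in\R$ one has $w\f(w)\in\gr_{\bar\f}\subset\Pi$, hence $w\f(w)\vec h=(w\vec h)\f(w)\in\Pi$; but $(w\vec h)\f(w)$ also equals $(w\vec h)\bar\f(w\vec h)$ modulo the unique $\W\V$ decomposition, and since $\Pi$ is the graph (over $\W$) of the $t$-invariant intrinsic linear map $\psi_\Pi$ with $\psi_\Pi(w\vec h)=\psi_\Pi(w)$, comparing $\V$-components gives $\bar\f(w\vec h)=\bar\f(w)$. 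Wait — this last sentence secretly used that $\bar\f=\psi_\Pi$, which is exactly what we are \emph{not} claiming; rather $\bar\f$ is only \emph{contained} in $\Pi$. The correct argument is: $\gr_{\bar\f}$ is itself an intrinsic Lipschitz graph over $\W$, and the inclusion $\gr_{\bar\f}\subset\Pi=\gr_{\psi_\Pi}$ of two intrinsic graphs over the same base $\W$ forces $\bar\f=\psi_\Pi$ on the domain of $\bar\f$ (which here, by Theorem~\ref{thm:estensione} and Remark~\ref{rem:blowupssonoLip}, is all of $\W$); in particular $\bar\f$ is intrinsic linear, hence $t$-invariant.

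\textbf{Main obstacle.} The delicate point is Step 1: verifying that the negligible ``bad'' sets $N_i\subset\W_i$ furnished by the codimension-$1$ theorem pull back to a $\leb{2n+1-k}$-negligible subset of $\W$. The projection $\pi_i\colon\W\to\W_i$ along $\V_i$ is a smooth surjection whose fibers are lines, but it is \emph{not} measure-preserving in any naive sense, and one must argue via the Ahlfors-regularity/Jacobian bounds of Remark~\ref{rem:Ahlfors} (or directly via Fubini on $\gr_{g_i}\equiv\W_i$ together with \eqref{eq:grLiptrascurabili}) that $\pi_i^{-1}(N_i)$ is null. A clean way is to work on the common graph $\gr_\f$: the measures $\Shaus^{Q-k}\res\gr_\f$ and the pushforward of $\leb{2n+1-k}\res\W$ are mutually absolutely continuous (Remark~\ref{rem:Ahlfors}), and the pushforward to $\W_i$ of $\Shaus^{Q-k}\res\gr_\f\subset\gr_{g_i}$ is absolutely continuous with respect to $\leb{2n}\res\W_i$ — which is what makes $N_i$ negligible upstairs. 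The rest of the proof is soft: Hausdorff-convergence stability of inclusions, plus the rigidity of intrinsic graphs over a common base.
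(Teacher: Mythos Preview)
Your argument has a genuine gap, and it lies exactly where you flagged it as the ``main obstacle''. Your proposed resolution is incorrect: the pushforward to $\W_i$ of $\Shaus^{Q-k}\res\gr_\f$ is \emph{not} absolutely continuous with respect to $\leb{2n}\res\W_i$. Indeed, $\gr_\f$ has Hausdorff dimension $Q-k$, and since the graph map $G_i\colon\W_i\to\gr_{g_i}$ satisfies $(G_i)_\#(\leb{2n}\res\W_i)\approx\Shaus^{Q-1}\res\gr_{g_i}$ (Remark~\ref{rem:Ahlfors}), one has $\leb{2n}\big(\pi_{\W_i}(\gr_\f\cap B(0,R))\big)\approx\Shaus^{Q-1}(\gr_\f\cap B(0,R))=0$ for every $R$, because $Q-k<Q-1$ when $k\ge 2$. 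Thus the image of $\gr_\f$ in $\W_i$ is $\leb{2n}$-null, and the codimension-$1$ bad set $N_i\subset\W_i$ can in principle contain it entirely. There is simply no control on $(\F^{(i)})^{-1}(N_i)\subset\W$ from the information that $\leb{2n}(N_i)=0$.

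There is a second warning sign that should have made you suspicious: your Step~3, as written, does not merely conclude $t$-invariance of $\bar\f$ but \emph{intrinsic linearity}. If the measure-theoretic step could be repaired, your argument would prove the full Rademacher theorem directly, bypassing the currents machinery entirely --- for a problem that the paper describes as the main open question in the area. (And even here there is a loose end: you assume without justification that $\Pi=\bigcap_i\Tan^\H_{\gr_{g_i}}(\bar p)$ is a graph over $\W$; this requires $\Pi\cap\V=\{0\}$, which is not automatic from $\Tan^\H_{\gr_{g_i}}(\bar p)\cap\V_i=\{0\}$.)

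The paper's proof avoids the dimension mismatch by slicing \emph{inside} $\W$ rather than embedding into a larger $\W_i$. For each $i\in\{1,\dots,k\}$ and each choice of the remaining coordinates $(\bar x_{k+1},\dots,\bar y_{i-1},\bar y_{i+1},\dots,\bar y_n)$, the map $(y,t)\mapsto\f_i(\dots,y,\dots,t)$ is intrinsic Lipschitz in $\H^1$; the codimension-$1$ Rademacher in $\H^1$ then gives, for a.e.\ $(y,t)$ in each slice, the infinitesimal estimate $|\f(\bar w\vec h)-\f(\bar w)|=o(|h|^{1/2})$, and Fubini in $\W\equiv\R^{2n+1-k}$ makes the exceptional set null. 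This is strictly weaker than intrinsic differentiability; the paper then upgrades the pointwise $o(|h|^{1/2})$ to $t$-invariance of \emph{all} blow-ups via Severini--Egorov (to make the $o(\cdot)$ uniform off a small set) and a Lebesgue-density argument.
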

\begin{proof}
We prove the statement assuming $k<n$,  the case $k=n$ requiring only straightforward  modifications in the notation.

We claim that 
\begin{equation}\label{eq:claim1}
\text{for a.e. $\bar w\in\W$}\qquad|\f(\bar w\vec h)-\f(\bar w)|=o(|h|^{1/2})\text{ as }h\to0.
\end{equation}
To prove this we write $\f=(\f_1,\dots,\f_k)\in\V\equiv\R^k$ and, for every fixed $i\in\{1,\dots,k\}$ and every fixed $(\bar x_{k+1},\dots, \bar x_n,\bar y_1,\dots,\bar y_{i-1},\bar y_{i+1},\dots\bar y_n)\in\R^{2n-k-1}$, 
we consider the map $\psi_i:\R^2\to\R$ defined by
\[
\psi_i(y,t):=\f_i(\bar x_{k+1},\dots, \bar x_n,\bar y_1,\dots,\bar y_{i-1},y,\bar y_{i+1},\dots\bar y_n,t)
\]
Using \eqref{eq:Lipanalitica<n} we obtain that for every $y,y',t,t'\in\R$ 
\[
|\psi_i(y,t)-\psi_i(y',t')| \leq \al \left\| (0,\dots,0,y-y',0,\dots,0,t-t'-\psi_i(y',t')(y-y') \right\|_\H
\]
where $\al$ is the intrinsic Lipschitz constant of $\f$. This ensures that $\psi_i$ is intrinsic Lipschitz in $\H^1$, i.e., when seen as a map $\psi_i:\W'\to\V'$ where $\W':=\{(0,y,t):y,t\in\R\}$ and $\V':=\{(x,0,0):x\in\R\}$. By the Rademacher's theorem for intrinsic Lipschitz graph of codimension one \cite{FSSCJGA-Diff}, $\psi_i$ is intrinsically differentiable at $\leb 2$-a.e. $(y,t)\in\W'$ and, by Remark \ref{rem:piccoloHolder}, for every such $(y,t)$ one has
\[
|\psi_i(y,t+h)-\psi_i(y,t)|=o(|h|^{1/2})\qquad\text{as }h\to 0.
\]
The claim \eqref{eq:claim1} easily follows.

In order to prove the statement of the lemma it suffices to prove that, for every fixed $\ep>0$, there exists $E_\ep\subset\W$ such that
\begin{itemize}
\item[(a)] $\leb{2n+1-k}(E_\ep)<\ep$ 
\item[(b)] for every $\bar w\in \W\setminus E_\ep$ and every blow-up $\bar\f:\W\to\V$ of $\f$ at $\bar w$, $\bar\f$ is $t$-invariant. 
\end{itemize}
By \eqref{eq:claim1} and Severini-Egorov Theorem, for every $\ep>0$ there exists $E_\ep\subset\W$ such that 
\begin{enumerate}
\item $\leb{2n+1-k}(E_\ep)<\ep$
\item there exists a sequence $(\de_i)_i$ such that
\[
|\f(\bar w\vec h)-\f(\bar w)|<\frac{|h|^{1/2}}i \qquad\text{for every $\bar w\in\W\setminus E_\ep$,  $i\in\N$ and  $h\in(-\de_i,\de_i)$}
\]
\end{enumerate}
By~\eqref{eq:lebW}, the Lebesgue measure $\leb{2n+1-k}$ is doubling and the Lebesgue theorem holds in the metric measure space $(\W,d,\leb{2n+1-k})$ (see e.g. \cite[Chapter 1]{SteinHarmAnal}). Therefore, up to modifying $E_\ep$ on a negligible set we can also assume that
\begin{enumerate}
\item[(3)] for every $\bar w\in\W\setminus E_\ep$ 
\[
\lim_{s\to 0}\frac{\leb{2n+1-k}(B(\bar w,s)\cap\W\setminus E_\ep)}{\leb{2n+1-k}(B(\bar w,s)\cap \W)}=1.
\]
\end{enumerate}
By construction, $E_\ep$ satisfies (a) above; we are going to prove it also satisfies (b), thus completing the proof.

Let then $\bar w\in\W\setminus E_\ep$ and a blow-up $\bar\f:\W\to\V$ of $\f$ at $\bar w$ be fixed; up to a left-translation, we can assume without loss of generality that $\bar w=0$. Let $r_j\to+\infty$ be a sequence such that
\[
\lim_{j\to+\infty}\f^{r_j}=\bar\f\qquad\text{in }L^\infty_{loc}(\W).
\]
Let $R>0$ be fixed; we prove that
\[
\bar\f( x_{k+1},\dots, y_n, t)=\bar\f( x_{k+1},\dots, y_n,0)\quad\forall\ ( x_{k+1},\dots, y_n, t)\in[-R,R]^{2n+1-k}\subset\W\equiv\R^{2n+1-k},
\]
which would immediately give (b). In turn, it is enough to prove that for every $\eta>0$ there exists $\bar\jmath\in\N$ such that
\begin{equation}\label{eq:(b)equiv}
|\f^{r_j}( x_{k+1},\dots, y_n, t)-\f^{r_j}( x_{k+1},\dots, y_n,0)| <\eta\quad\forall\ j\geq\bar\jmath,\forall\ ( x_{k+1},\dots, y_n, t)\in[-R,R]^{2n+1-k}.
\end{equation}
Observe that, by property (3), for $j$ large enough the set $\de_{r_j}(\W\setminus E_\ep)$ is $\eta$-dense in the box $[-R,R]^{2n+1-k}$: namely, for every $w=( x_{k+1},\dots, y_n, t)\in[-R,R]^{2n+1-k}$ there exists $w'\in\de_{r_j}(\W\setminus E_\ep)\cap [-R,R]^{2n+1-k}$ such that $|w-w'|<\eta$. We write $w'=( x_{k+1}',\dots, y_n', t')$; setting $i:=\lfloor R^{1/2}/\eta\rfloor+1$, for large enough $j$ (and, namely, for $r_j^2>R/\de_i$) we have
\begin{align*}
&|\f^{r_j}( x_{k+1}',\dots, y_n', t')-\f^{r_j}( x_{k+1}',\dots, y_n', 0)|\\
=\,& r_j\left|\f( \tfrac{x_{k+1}'}{r_j},\dots, \tfrac{y_n'}{r_j}, \tfrac{t'}{r_j^2})- \f( \tfrac{x_{k+1}'}{r_j},\dots, \tfrac{y_n'}{r_j}, 0)\right|\ 
\leq\ r_j\,\frac{|t'|^{1/2}}{i\,r_j}\ \leq \frac{R^{1/2}}{i}\ <\ \eta
\end{align*}
where we used $|\tfrac{t'}{r_j^2}|\leq \tfrac{R}{r_j^2}<\de_i$ and the fact that  $( \tfrac{x_{k+1}'}{r_j},\dots, \tfrac{y_n'}{r_j}, \tfrac{t'}{r_j^2})=\de_{1/r_j}(w')\in\W\setminus E_\ep$ satisfies (2).
%
By Remark \ref{rem:1/2Holder} there exists $C>0$ (depending only on $R$ and the Lipschitz constant of $\f^{r_j}$, which is the same as the Lipschitz constant of $\f$ and is thus independent of $j$) we finally obtain for large enough $j$ 
\begin{align*}
|\f^{r_j}( x_{k+1},\dots, y_n, t)-\f^{r_j}( x_{k+1},\dots, y_n,0)|
\leq\, & |\f^{r_j}( x_{k+1},\dots, y_n, t)-\f^{r_j}( x_{k+1}',\dots, y_n',t')| \\
&+
|\f^{r_j}( x_{k+1}',\dots, y_n', t')-\f^{r_j}( x_{k+1}',\dots, y_n',0)| \\
&+  |\f^{r_j}( x_{k+1}',\dots, y_n',0)-\f^{r_j}( x_{k+1},\dots, y_n,0)|\\
\leq\, & 2C|w-w'|^{1/2} + \eta\\
\leq\, & 2C\eta^{1/2}+\eta.
\end{align*}
Since the requirements made on $j$ depend on $\eta$ and $R$ but not on $( x_{k+1},\dots, y_n, t)\in[-R,R]^{2n+1-k}$, we have proved the existence of $\bar\jmath$ such that \eqref{eq:(b)equiv} holds. This concludes the proof.
\end{proof}

\subsection{\texorpdfstring{$\H$}{H}-regular submanifolds and \texorpdfstring{$\H$}{H}-rectifiable sets}\label{subsec:Hreg,Hrettif}
In this section we briefly introduce submanifolds  with intrinsic $C^1$ regularity in Heisenberg groups together with the  notion of $\H$-rectifiability. We refer to~\cite{FSSCAIM} for a more comprehensive presentation. 

Given an open set $U\subset\H^n$, we say that $f:U\to\R$ is  of class $C^1_\H$ if $f$ is continuous and its horizontal derivatives 
\[
\nabla_\H f:=(X_1f,\dots,X_nf,Y_1f,\dots,Y_nf)
\]
are represented by continuous functions on $U$. In this case we write $f\in C^1_\H(U)$. We agree that, for every $p\in U$,  $\nabla_\H f(p)\in\R^{2k}$ is identified with the horizontal vector
\[
\nabla_\H f(p):=X_1f(p)X_1+\dots+Y_nf(p)Y_n\in\h_1.
\]

\begin{definition}
Let $k\in\unoenne$ be fixed. We say that $S\subset \H^n$ is a {\em $\H$-regular submanifold} (or a {\em $C^1_\H$-submanifold}) of codimension $k$ if, for every $p\in S$, there exist an open neighbourhood $U\subset\H^n$ of $p$ and $f\in C^1_\H(U,\R^k)$ such that
\[
S\cap U=\{q\in U:f(q)=0\}\qtaq \text{$\nabla_\H f(q)$ has rank $k$ for all $q\in U$.}
\] 
We also define the {\em horizontal normal} $n_S^\H(p)$ to $S$ at $p$ as the horizontal $k$-vector  
\[
n_S^\H(p):=\frac{\nabla_\H f_1(p)\wedge\dots\wedge\nabla_\H f_k(p)}{|\nabla_\H f_1(p)\wedge\dots\wedge\nabla_\H f_k(p)|}\in\bwl_k\h_1
\]
and the {\em (horizontal) tangent} $t^\H_S(p):=*n_S^\H(p)\in \bwl_{2n+1-k}\h$.
\end{definition}

In the definition of the tangent multi-vector $t^\H_S$ the symbol $*$ denotes the Hodge operator. The latter, recalling the notation introduced in~\eqref{eq:defW} and~\eqref{eq:defW_I},  can be defined as the linear isomorphism $\ast:\bwl_k\h\to \bwl_{2n+1-k}\h$ such that
\[
\ast W_I:=(-1)^{\sigma(I)}W_{I^\ast}\qquad \text{for every $I\subset\{1,\dots,2n+1\}$ such that }|I|=k
\]
where $I^\ast:=\{1,\dots,2n+1\}\setminus I$ and $\sigma(I)$ denotes the number of couples $(i,i^\ast)\in I\times I^\ast$ such that $i>i^\ast$. Equivalently, the sign $(-1)^{\sigma(I)}$ can be defined by requiring that $W_I\wedge W_{I^\ast}=(-1)^{\sigma(I)}W_{\{1,\dots,2n+1\}}$ and, all in all, this amounts to requiring that
\[
v\wedge *v=|v|^2X_1\wedge\dots\wedge Y_n\wedge T\qquad \forall\ v\in\bwl_*\h
\]
where the norm $|\cdot|$ is the one associated with the canonical scalar product on multi-vectors making the basis $W_I$ orthonormal. Notice that, if $v=v_1\wedge\dots\wedge v_k\in \bwl_k\h_1$ is a simple horizontal $k$-vector, then $\ast v=w\wedge T$ for some $w\in\bwl_{2n-k}\h_1$. In particular, the horizontal tangent $t^\H_S(p)$ is in fact a vertical multi-vector, i.e., it can be written as $t^\H_S(p)=\tau^\H_S(p)\wedge T$ for a unique  unit vector in $\tau^\H_S(p)\in\bwl_{2n-k}\h_1$. Observe that, when $S$ is of class $C^1$, the definition of $\tau^\H_S$ is consistent with ~\eqref{eq:deftauHS}.


Both $n_S^\H$ and $t_S^\H$ are unit simple vectors. Observe that they are well-defined (even though only up to a sign), i.e., independent from the choice of the defining function $f$. One way of proving this fact is by considering the blow-up of $S$ at $p$: indeed one has
\begin{equation}\label{eq:pianotangenteHregolare}
\lim_{r\to0^+} \de_{1/r}(p^{-1}S)=\Tan^\H_S(p),
\end{equation}
where the limit is taken with respect to the local Hausdorff convergence and  $\Tan^\H_S(p):=\exp(\Span\: t_S^\H(p))$. See e.g.~\cite{FSSCAIM}. The  $(2n+1-k)$-plane $\Tan^\H_S(p)$ is a vertical plane according to Definition~\ref{def:pianiverticali} and it is called {\em tangent plane} to $S$ at $p$. As a consequence of Theorem~\ref{teo:Hreg_grafici_area} below (see also~\cite[Lemma~3.4]{JNGV}), we  have the weak convergence of measures
\begin{equation}\label{eq:convdebolealtangente}
\Shaus^{Q-k}\res\de_{1/r}(p^{-1}S) \rightharpoonup \Shaus^{Q-k}\res \Tan^\H_S(p).
\end{equation}
Also the vector $\tau^\H_S$ is defined only up to a sign; its geometric meaning is provided by the equality
\[
\exp(\Span\: \tau^\H_S(p)) = \Tan^\H_S(p)\cap\exp(\h_1).
\]

\begin{remark}\label{rem:Hreg-->intrdiff}
Proposition~\ref{prop:differ->convtoTanH} implies that the notation $\Tan^\H_S$ introduced in~\eqref{eq:pianotangenteHregolare} is consistent with the notation $\Tan^\H_{\gr_\f}$ of Definition~\ref{def:differenziabilita}. As a consequence, if $A\subset\W$ is open and the function $\f:A\to\V$ parameterizes a $\H$-regular submanifold $S=\gr_\f$ such that $\Tan^\H_S(p)$ is complementary to $\V$ for every $p\in S$, then $\f$ is intrinsically differentiable at every point of $A$.
\end{remark}

It is well-known that  $\H$-regular submanifolds are locally intrinsic Lipschitz graphs and that an integral formula can be provided for their spherical Hausdorff measure $\Shaus^{Q-k}$. We resume these facts in the following statement, which summarizes several results available in the (quite vast) literature and in particular~\cite[Theorem~4.2]{ArenaSer},~\cite[Theorem~4.1]{FSSCAIM} and~\cite[formula~(43)]{CorniMagnani}; see also~\cite{ASCV,BigolinSC2,BigolinSC1,CittiManfredini,Corni,DiDonato2018,DiDonato2019,JNGV,
MagnaniTowardsDiffCalc,MVMathZ12}.
It is worth recalling that the intrinsic Jacobian determinant $J^\f\f$ was introduced in Definition~\ref{def:correntinormali}.

\begin{theorem}\label{teo:Hreg_grafici_area}
Let $S\subset\H^n$ be a $\H$-regular submanifold of codimension $k\leq n$. Then for every $p\in S$ there exist
an open neighbourhood $U$ of $p$,
an open set $A\subset\W$ and an intrinsic Lipschitz $\f:A\to\V$
such that, up to an isometric $\H$-linear isomorphism of $\H^n$,
\begin{align*}
& S\cap U=\gr_\f\\
& \text{$\f$ is intrinsically differentiable on $A$}\\
&\text{ $\nabla^\f\f$ is continuous.}
\end{align*}
Moreover
\begin{equation}\label{eq:areaareaareaformula}
\Shaus^{Q-k}(E)=C_{n,k}\int_{\F^{-1}(E)} J^\f\f\:d\leb{2n+1-k}\qquad\text{for every Borel set }E\subset S\cap U
\end{equation}
where  $\F(w):=w\f(w)$ and $C_{n,k}>0$ is the same constant as in Proposition~\ref{prop:correnti_cl_vs_H_INTRO}.
\end{theorem}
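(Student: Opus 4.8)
\textbf{Proof plan for Theorem~\ref{teo:Hreg_grafici_area}.} The statement collects several facts which are available in the literature, so the plan is essentially to assemble them carefully in the appropriate order, the only nontrivial point being the identification of the two a priori different Jacobian-type quantities.

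First I would recall the known local structure result for $\H$-regular submanifolds of codimension $k\leq n$: by the Implicit Function Theorem of \cite{FSSCMathAnn2001,FSSCAIM}, near any $p\in S$ the level set $S\cap U=\{f=0\}$ can be written as an intrinsic graph $\gr_\f$ after an isometric $\H$-linear change of coordinates (using Remark~\ref{rem:isometrie_delgruppo}) that puts $\V=\exp(\Span\{X_1,\dots,X_k\})$ and $\W$ as in~\eqref{eq:Wfissato}, with $\f$ intrinsic Lipschitz on an open $A\subset\W$; this is the content of \cite[Theorem~4.2]{ArenaSer} and \cite[Theorem~4.1]{FSSCAIM}. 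The intrinsic differentiability of $\f$ at every point of $A$ then follows from Remark~\ref{rem:Hreg-->intrdiff}, since by~\eqref{eq:pianotangenteHregolare} the tangent plane $\Tan^\H_S$ exists and varies continuously, and it is complementary to $\V$ after the chosen normalization; the continuity of $\nabla^\f\f$ is precisely the continuity of $n_S^\H$ (equivalently of $t_S^\H$, equivalently of $\Tan^\H_S$) transported through the graph parametrization, i.e.\ \cite[Theorem~4.2]{ArenaSer} again.

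For the area formula~\eqref{eq:areaareaareaformula} I would invoke \cite[formula~(43)]{CorniMagnani}, which (in the orthogonal/normalized setting we have reduced to) gives exactly $\Shaus^{Q-k}\res S = \F_\#(C'_{n,k}\,J^\f\f\,\leb{2n+1-k}\res A)$ for some constant $C'_{n,k}>0$ depending only on $n,k$ and $d$; here one uses that $\F$ is injective, so that push-forward is just change of variables, and integrates over $\F^{-1}(E)$. The genuinely delicate step — and the one I expect to be the main obstacle — is to check that the constant $C'_{n,k}$ produced by \cite{CorniMagnani} is the \emph{same} $C_{n,k}$ appearing in Proposition~\ref{prop:correnti_cl_vs_H_INTRO} (i.e.\ in Lemma~\ref{lem:correntiR=H}). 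The cleanest way to do this is to compare both constants on a fixed model: evaluate both descriptions of $\Shaus^{Q-k}\res S$ on a small piece of a (Euclidean-$C^1$, noncharacteristic) submanifold $S$ that is also an intrinsic graph. On the one hand Lemma~\ref{lem:correntiR=H} / \cite[Theorem~1.2]{MagCEJM} gives $\Shaus^{Q-k}\res S = C_{n,k}\,|\eta_S|\,\mathrm{vol}_S$, where $\eta_S$ is as in~\eqref{eq:tauSV}; on the other hand, writing $\mathrm{vol}_S$ and $|\eta_S|$ in terms of the graph parametrization $\F$ one recognizes $|\eta_S|\,d\,\mathrm{vol}_S = J^\f\f\,d\leb{2n+1-k}$ (this is the classical area-formula computation relating the Euclidean tangential Jacobian of $\F$, the norm of its horizontal component, and the intrinsic Jacobian $J^\f\f$ of Definition~\ref{def:differenziabilita}, exactly as carried out in \cite{CorniMagnani}). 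Combining these two identities forces $C'_{n,k}=C_{n,k}$, and~\eqref{eq:areaareaareaformula} follows for Borel $E\subset S\cap U$ by the standard measure-theoretic extension from the differential identity of measures.

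A remark on the logical dependencies: the area formula~\eqref{eq:areaareaareaformula} here is stated with the \emph{same} constant $C_{n,k}$ as in Proposition~\ref{prop:correnti_cl_vs_H_INTRO}, and this is exactly what will later (in the proof of Theorem~\ref{thm:formulaarea}, via Remark~\ref{rem:Ahlfors}) allow the identification of $J_\f$ with $J^\f\f$; so the care taken above in matching the two constants is not cosmetic. No ingredient beyond \cite{MagCEJM,ArenaSer,FSSCAIM,CorniMagnani} and the material of the present excerpt (Remark~\ref{rem:isometrie_delgruppo}, Remark~\ref{rem:Hreg-->intrdiff}, Proposition~\ref{prop:differ->convtoTanH}, Lemma~\ref{lem:correntiR=H}) is needed.
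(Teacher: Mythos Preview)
Your proposal is correct and matches the paper's approach: the paper does not give a self-contained proof of this theorem but presents it as a summary of known results, citing \cite[Theorem~4.2]{ArenaSer}, \cite[Theorem~4.1]{FSSCAIM} and \cite[formula~(43)]{CorniMagnani} exactly as you do; the identification of constants that you single out as the delicate point is handled in the paper by Remark~\ref{rem:valoreCnk}, which records (again via \cite{MagnaniTowardsTheoryArea,CorniMagnani}) the explicit common value of $C_{n,k}$.
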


\begin{remark}\label{rem:areaCorniMagnaniorthogonal}
As pointed out in~\cite{CorniMagnani}, the area formula~\eqref{eq:areaareaareaformula} holds more generally when $\W,\V$ are orthogonal (recall Remark~\ref{rem:WVortoghonal}).
\end{remark}

\begin{remark}\label{rem:valoreCnk}
As explained in~\cite{MagnaniTowardsTheoryArea} and~\cite{CorniMagnani}, the exact value of the constant $C_{n,k}$  (which, from the philological point of view, in the present paper is introduced for the first time in Lemma~\ref{lem:correntiR=H}) in Proposition~\ref{prop:correnti_cl_vs_H_INTRO}  and Theorem~\ref{teo:Hreg_grafici_area} is 
\[
C_{n,k}=\Big( \sup\big\{ \leb{2n+1-k}(\W\cap B(p,1)) :   p\in B(0,1)\big\}  \Big)^{-1}.
\]
The  rotational invariance of the distance $d$ plays an important role, see~\cite[Theorem~2.12]{CorniMagnani}.
\end{remark}

We now introduce intrinsic rectifiable sets in Heisenberg groups.

\begin{definition}\label{def:Hrettificabili}
Let $k\in\unoenne$; we say that $R\subset\H^n$ is a {\em $\H$-rectifiable} set of codimension $k$ if $\Shaus^{Q-k}(R)<\infty$ and there exists a finite or countable family $(S_j)_{j}$ of $\H$-regular submanifolds of codimension $k$  such that
\[
\Shaus^{Q-k}\left(R\setminus\bigcup_j S_j\right)=0.
\]
We say that $R\subset\H^n$ is a {\em locally $\H$-rectifiable} set of codimension $k$ if $R\cap B(0,r)$ is $\H$-rectifiable of codimension $k$ for every $r>0$.
\end{definition}

We will later use the well-known fact that sets that are rectifiable  (see e.g.~\cite{federer}) in the Euclidean sense are also $\H$-rectifiable. As a matter of terminology, we say that a set $R\subset\H^n\equiv\R^{2n+1}$ is {\em locally Euclidean rectifiable} of codimension $k$ if  ${\Shaus_{|\cdot|}}^{\!\!\!2n+1-k}\res R$ is a locally finite measure and there exists a finite or countable family $(S_j)_j$ of Euclidean Lipschitz submanifolds of codimension $k$ such that
\[
{\Shaus_{|\cdot|}}^{\!\!\!2n+1-k}(R\setminus\bigcup_j S_j)=0,
\]
where  ${\Shaus_{|\cdot|}}^{\!\!\!2n+1-k}$ denotes the spherical Hausdorff measure with respect to the Euclidean distance on $\H^n\equiv\R^{2n+1}$. If ${\Shaus_{|\cdot|}}^{\!\!\!2n+1-k}(R)<\infty$ we say that $R$ is {\em Euclidean rectifiable}.

\begin{proposition}\label{prop:Euclrectarerect}
Let $R\subset\H^{2n+1}$ be locally Euclidean rectifiable of codimension $k$, $1\leq k\leq n$. Then $R$ is also locally $\H$-rectifiable of codimension $k$.
\end{proposition}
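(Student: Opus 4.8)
The plan is to reduce to a bounded situation and then extract the required $\H$-regular submanifolds from the Euclidean $C^1$ pieces of $R$. Since local $\H$-rectifiability is, by definition, a local property, it suffices to show that $R\cap B(0,r)$ is $\H$-rectifiable of codimension $k$ for every $r>0$; and, $B(0,r)$ being bounded also in the Euclidean sense while $R$ is locally Euclidean rectifiable, the set $R\cap B(0,r)$ is a bounded, Euclidean rectifiable set of codimension $k$. We may therefore assume $R$ bounded with $\Shaus_{|\cdot|}^{2n+1-k}(R)<\infty$. By the classical structure theorem for rectifiable sets (a consequence of Rademacher's and Whitney's theorems; see e.g.~\cite{federer,mattila}) there are a countable family $(S_i)_{i\in\N}$ of $C^1$-regular $(2n+1-k)$-dimensional submanifolds of $\R^{2n+1}$ and a Borel set $N$ with $\Shaus_{|\cdot|}^{2n+1-k}(N)=0$ such that $R\subset N\cup\bigcup_{i}S_i$.

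The second step is to show that, once deprived of its characteristic points, each $S_i$ is an $\H$-regular submanifold of codimension $k$. Fix $i$ and a non-characteristic point $p\in S_i$ (i.e.\ $T_pS_i\not\subset\h_1$). Locally $S_i=\{f=0\}$ for some $f\in C^1(V,\R^k)$ on a Euclidean-open neighbourhood $V$ of $p$ with $df$ of rank $k$ on $V$; then $f\in C^1_\H(V,\R^k)$ and $\nabla_\H f$ is continuous on $V$. Since at $p$ the non-characteristic condition is equivalent to $\nabla_\H f_1(p)\wedge\dots\wedge\nabla_\H f_k(p)\neq0$, the set
\[
U:=\{q\in V:\nabla_\H f_1(q)\wedge\dots\wedge\nabla_\H f_k(q)\neq0\}
\]
is open, contains $p$, and $S_i\cap U=\{q\in U:f(q)=0\}$ with $\nabla_\H f$ of rank $k$ on all of $U$. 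Hence $\Sigma_i:=\{q\in S_i:T_qS_i\not\subset\h_1\}$ is an $\H$-regular submanifold of codimension $k$, whereas the characteristic set $C_i:=S_i\setminus\Sigma_i$ is $\Shaus^{Q-k}$-negligible by~\cite{balogh,MagJEMS}. Therefore $\{\Sigma_i\}_{i\in\N}$ is a countable family of $\H$-regular submanifolds of codimension $k$ and
\[
R\subset\Big(\bigcup_i\Sigma_i\Big)\cup\Big(N\cup\bigcup_iC_i\Big).
\]

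It remains to verify that $\Shaus^{Q-k}\res R$ is finite and that $N\cup\bigcup_iC_i$ is $\Shaus^{Q-k}$-negligible; both follow from the metric estimate that there is $c=c(n,R)>0$ with $\Shaus^{Q-k}(E)\le c\,\Shaus_{|\cdot|}^{2n+1-k}(E)$ for every $E\subset R$. The naive bound coming from the local $1/2$-H\"older continuity of the identity $(\R^{2n+1},|\cdot|)\to(\H^n,d)$ loses too much (it would only control $\Shaus^{Q-k}$ by $\Shaus_{|\cdot|}^{n+1-k/2}$); the relevant point is that the homogeneous metric inflates only the single vertical direction. Concretely, a Euclidean ball $B_{|\cdot|}(p,\rho)$ with $p$ in the bounded set $R$ can be covered by at most $c/\rho$ balls of $(\H^n,d)$ of radius at most $c\rho$: after left-translating $B_{|\cdot|}(p,\rho)$ to the origin it lies inside a Euclidean box $\{|(x,y)|<C\rho\}\times\{|t|<C\rho\}$, one covers this box by $O(1/\rho)$ slabs $\{|(x,y)|<C\rho\}\times\{|t-c_I|<\rho^2\}$, and each slab --- being centred on the $t$-axis, so that no bilinear term of the group law intervenes --- is contained in a single ball of $d$ of radius $O(\rho)$; translating back gives the assertion. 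Summing over a fine Euclidean cover of $E$, the contribution to the $\Shaus^{Q-k}$-premeasure is at most $C\sum_j(1/\rho_j)\rho_j^{Q-k}=C\sum_j\rho_j^{2n+1-k}$, whence the claimed inequality on passing to the infimum. Granting it, $\Shaus^{Q-k}(R)\le c\,\Shaus_{|\cdot|}^{2n+1-k}(R)<\infty$ and $\Shaus^{Q-k}(N\cup\bigcup_iC_i)=0$, so $R$ is $\H$-rectifiable of codimension $k$; by the first step, $R$ is locally $\H$-rectifiable of codimension $k$. The main technical obstacle is precisely this sharp Hausdorff-measure comparison --- the Euclidean structure theorem and the $\Shaus^{Q-k}$-negligibility of characteristic sets being imported from the literature.
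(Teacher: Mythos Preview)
Your proof is correct. The paper does not give its own argument for this proposition but simply refers to \cite[Proposition~5.4]{FSSCAIM}; the approach you have reconstructed---pass to a bounded piece, cover by Euclidean $C^1$ submanifolds via the structure theorem, note that each is $\H$-regular off its characteristic set (which is $\Shaus^{Q-k}$-null by \cite{balogh,MagJEMS}), and control the residual null set via the local estimate $\Shaus^{Q-k}\lesssim\Shaus_{|\cdot|}^{2n+1-k}$---is precisely the standard one found in that reference. Your ball-covering argument for the measure comparison (left-translate, slice the resulting $O(\rho)\times O(\rho)$ box into $O(1/\rho)$ slabs of vertical thickness $\rho^2$, each sitting in a $d$-ball of radius $O(\rho)$) is the correct mechanism and yields the right exponent $Q-k-1=2n+1-k$.
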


A proof of Proposition~\ref{prop:Euclrectarerect} can be found for instance in~\cite[Proposition~5.4]{FSSCAIM}.

We also recall that classical rectifiable sets of dimension $m$ in $\R^n$  can be equivalently defined as those sets with finite $\Shaus^m_{|\cdot|}$-measure that can be covered, up to $\Shaus^m_{|\cdot|}$-negligible sets, by a countable family of (possibly rotated or translated)  graphs of Lipschitz maps $\R^m\to\R^{n-m}$.  As we will  prove later in  Corollary~\ref{cor:HrectifiableC1vsLip}, a similar statement holds in Heisenberg groups: namely, $R\subset\H^n$ is $\H$-rectifiable of codimension $k\in\{1,\dots,n\}$ if and only if $\Shaus^{Q-k}(R)<\infty$ and there exists a countable family $(\f_j)_{j}$ of intrinsic Lipschitz maps $\f_j:\W_j\to\V_j$, where $\W_j,\V_j $ are homogeneous complementary subgroups of $\H^n$ with $\dim \V_j=k$, such that
\[
\Shaus^{Q-k}\left(R\setminus\bigcup_j \gr_{\f_j}\right)=0.
\]

\begin{definition}\label{def:normaletangenteHrettif}
If $R\subset\H^n$ is locally $\H$-rectifiable and $(S_j)_j$ is a family of $\H$-regular submanifolds as in Definition~\ref{def:Hrettificabili}, we define the {\em horizontal normal} $n^\H_R(p)\in\bwl_k\h_1$ at $p\in R$ by
\[
n^\H_R(p):=n^\H_{S_j}(p)\qquad\text{if }p\in R\cap S_j.
\]
Accordingly, we set $t^\H_R(p):=*n_R^\H(p)\in\bwl_{2n+1-k}\h$ and $\Tan^\H_R(p):=\exp(\Span(t^\H_R(p))$. Eventually, we define $\tau^\H_R(p)\in\bwl_{2n-k}\h_1$ by requiring that $t^\H_R(p)=\tau^\H_R(p)\wedge T$. 
\end{definition}

The objects introduced in Definition~\ref{def:normaletangenteHrettif}  are well-defined $\Shaus^{Q-k}$-almost everywhere on $R$ (as usual, up to a sign) because of the following well-known lemma, whose proof we sketch for the sake of completeness.

\begin{lemma}
Let $S_1,S_2\subset\H^n$ be $\H$-regular submanifolds of codimension $k\in\unoenne$; then
\[
\Shaus^{Q-k}(\{p\in S_1\cap S_2:n^\H_{S_1}(p)\notin\{\pm n^\H_{S_2}(p)\}\})=0.
\] 
\end{lemma}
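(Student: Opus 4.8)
The statement asserts that two $\H$-regular submanifolds $S_1,S_2$ of codimension $k$ agree, up to a $\Shaus^{Q-k}$-negligible set, on the value of their horizontal normal wherever they intersect. The plan is to reduce the claim to a purely pointwise, blow-up argument: at $\Shaus^{Q-k}$-a.e.\ point of $S_1\cap S_2$ the two submanifolds have the \emph{same} tangent plane, hence the same horizontal normal (up to sign). First I would recall that for a $\H$-regular submanifold $S$ of codimension $k$, the intrinsic tangent plane $\Tan^\H_S(p)$ is characterized intrinsically via the blow-up $\de_{1/r}(p^{-1}S)\to\Tan^\H_S(p)$ (see~\eqref{eq:pianotangenteHregolare}) and, equivalently, via the weak convergence of measures $\Shaus^{Q-k}\res\de_{1/r}(p^{-1}S)\rightharpoonup\Shaus^{Q-k}\res\Tan^\H_S(p)$ in~\eqref{eq:convdebolealtangente}. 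Since $n^\H_S(p)$ (and $t^\H_S(p)$) is determined, up to sign, by $\Tan^\H_S(p)$ alone — which is why it is well-defined independently of the defining function $f$ — it suffices to prove that $\Tan^\H_{S_1}(p)=\Tan^\H_{S_2}(p)$ for $\Shaus^{Q-k}$-a.e.\ $p\in S_1\cap S_2$.

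The key technical step is a density argument on the common part. Let $A:=S_1\cap S_2$; by Theorem~\ref{teo:Hreg_grafici_area} and the Ahlfors regularity of $\Shaus^{Q-k}$ on $\H$-regular submanifolds, $\Shaus^{Q-k}\res A$ is a (locally finite, doubling) Radon measure, and I would work at points $p\in A$ that are \emph{density points} of $A$ with respect to both $\Shaus^{Q-k}\res S_1$ and $\Shaus^{Q-k}\res S_2$, i.e.\ such that
\[
\lim_{r\to0^+}\frac{\Shaus^{Q-k}(A\cap B(p,r))}{\Shaus^{Q-k}(S_i\cap B(p,r))}=1\qquad\text{for }i=1,2.
\]
By the Lebesgue differentiation theorem in the doubling metric measure space $(S_i,d,\Shaus^{Q-k}\res S_i)$ (each $S_i$ being Ahlfors $(Q-k)$-regular, cf.~\eqref{eq:Ahlfors}), $\Shaus^{Q-k}$-a.e.\ point of $A$ has this property, for both $i$. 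Fix such a $p$; after left-translating by $p^{-1}$ we may assume $p=0$. Then, rescaling by $\de_{1/r}$, the above density relations force the blow-up limits of $S_1$ and $S_2$ at $0$ to carry the \emph{same} limit measure: indeed $\Shaus^{Q-k}\res\de_{1/r}S_i\rightharpoonup\Shaus^{Q-k}\res\Tan^\H_{S_i}(0)$ by~\eqref{eq:convdebolealtangente}, while $\de_{1/r}A\subset\de_{1/r}S_1\cap\de_{1/r}S_2$, so a standard lower-semicontinuity/mass-comparison argument under weak convergence — using that $\Shaus^{Q-k}(S_i\cap B(0,r))$ and $\Shaus^{Q-k}(A\cap B(0,r))$ are comparable with $r^{Q-k}$ and differ by $o(r^{Q-k})$ — shows $\Shaus^{Q-k}\res\Tan^\H_{S_1}(0)$ and $\Shaus^{Q-k}\res\Tan^\H_{S_2}(0)$ have the same mass on every ball $B(0,\rho)$; since both are Haar measures on vertical planes of the same dimension $2n+1-k$, this forces $\Tan^\H_{S_1}(0)=\Tan^\H_{S_2}(0)$. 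Consequently $n^\H_{S_1}(0)=\pm n^\H_{S_2}(0)$, as desired.

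\textbf{Main obstacle.} The delicate point is making rigorous the passage from ``$\de_{1/r}A$ is asymptotically a full-measure subset of each blow-up'' to ``the two blow-up planes coincide.'' One clean way: note that $A$ is relatively closed in both $S_1$ and $S_2$ (intersection of closed sets), so $\de_{1/r}A$ converges locally Hausdorff to a closed subset $E\subset\Tan^\H_{S_1}(0)\cap\Tan^\H_{S_2}(0)$ along a subsequence; the density hypotheses then give $\Shaus^{Q-k}(\Tan^\H_{S_i}(0)\setminus E)=0$ for $i=1,2$ by Fatou/weak-* lower semicontinuity of mass against the uniform-convergence test forms, which forces $E=\Tan^\H_{S_1}(0)=\Tan^\H_{S_2}(0)$ since a vertical $(2n+1-k)$-plane has no proper closed subset of full $\Shaus^{Q-k}$-measure. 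An alternative, avoiding Hausdorff convergence subtleties, is to invoke the intrinsic-graph structure of Theorem~\ref{teo:Hreg_grafici_area}: near $p$ both $S_i$ are intrinsic Lipschitz graphs $\gr_{\f_i}$ over a common $\W$ (after a fixed isometric $\H$-linear change of coordinates), and one checks that $\f_1,\f_2$ are intrinsically differentiable with $d(\f_1)_p=d(\f_2)_p$ at common density points because their graphs share density-one subsets; then $\Tan^\H_{S_1}(p)=\gr_{d(\f_1)_p}=\gr_{d(\f_2)_p}=\Tan^\H_{S_2}(p)$ by Proposition~\ref{prop:differ->convtoTanH}. Either route reduces the lemma to elementary measure theory plus the already-established blow-up theory for $\H$-regular submanifolds; no new hard analysis is required.
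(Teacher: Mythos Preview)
Your density-point approach is correct in spirit and genuinely different from the paper's. The paper blows up the \emph{bad} set $E:=\{p\in S_1\cap S_2:n^\H_{S_1}(p)\notin\{\pm n^\H_{S_2}(p)\}\}$ directly: for every $p\in E$ one has $\limsup_{r\to 0^+}\de_{1/r}(p^{-1}E)\subset\Tan^\H_{S_1}(p)\cap\Tan^\H_{S_2}(p)$, and since the normals disagree at $p$ this intersection is a vertical plane of dimension at most $2n-k$; a negligibility criterion from the literature (\cite[Lemma~B.3]{MVAPDE}) then yields $\Shaus^{Q-k}(E)=0$. You instead show that at density points of $A=S_1\cap S_2$ the two tangent planes coincide. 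The paper's route is shorter by outsourcing the measure theory; yours is self-contained and uses only~\eqref{eq:convdebolealtangente} plus Lebesgue differentiation.

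One line in your main paragraph does not work as written: knowing only that $\Shaus^{Q-k}\res\Tan^\H_{S_1}(0)$ and $\Shaus^{Q-k}\res\Tan^\H_{S_2}(0)$ agree on balls $B(0,\rho)$ centred at the origin does not force the planes to coincide --- by homogeneity this is a single scalar equality, satisfied for instance by any isometric rotate of a given plane. The fix is implicit in your obstacle paragraph and is cleanest stated as follows: the density hypothesis gives $\Shaus^{Q-k}\res\de_{1/r}(S_i\setminus A)\to 0$ locally, so by~\eqref{eq:convdebolealtangente} the measures $\Shaus^{Q-k}\res\de_{1/r}A$ converge weakly to $\Shaus^{Q-k}\res\Tan^\H_{S_i}(0)$ for \emph{both} $i=1,2$; uniqueness of weak limits forces $\Tan^\H_{S_1}(0)=\Tan^\H_{S_2}(0)$. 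Your Hausdorff-limit argument also works. The graph-based alternative (b) is shakier: writing both $S_i$ as intrinsic graphs over a \emph{common} $\W$ requires the chosen $\V$ to be complementary to $\Tan^\H_{S_2}(p)$ as well, which you do not know a priori.
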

\begin{proof}
Let $E:=\{p\in S_1\cap S_2:n^\H_{S_1}(p)\notin\{\pm n^\H_{S_2}(p)\}\}$. For every $p\in E$ 
we have
\[
\limsup_{r\to 0^+}\de_{1/r}(p^{-1}E)\subset \limsup_{r\to 0^+}\de_{1/r}(p^{-1}(S_1\cap S_2))\subset \Tan^\H_{S_1}(p)\cap \Tan^\H_{S_2}(p),
\]
where the $\limsup$  are taken with respect to the local Hausdorff topology. Since the right-hand side is a vertical plane of dimension at most $2n-k$, the statement now follows from~\cite[Lemma B.3]{MVAPDE}.
\end{proof}

\subsection{Currents induced by \texorpdfstring{$C^1$}{C1} intrinsic graphs}\label{subsec:correntiindottedagraficilipschitz}
We now want to study currents induced by $C^1$ regular intrinsic graphs of codimension $k$ in $\H^n$ with $1\leq k\leq n$. Assume  that a $C^1$ map $\f:\W\to\V$ is fixed; we introduce the family $\nf=(\nf_{k+1},\dots,\nf_{2n})$ of vector fields on $\W$ defined by
\begin{equation}\label{eq:nablaphi_i}
\nf_i:=\begin{cases}
X_i & \text{if }k+1\leq i\leq n\\
Y_{i-n}+\f_{i-n}T=\pa_{y_{i-n}}+\f_{i-n}\pa_t\quad& \text{if }n+1\leq i\leq n+k\\
Y_{i-n}& \text{if }n+k+1\leq i\leq 2n.
\end{cases}
\end{equation}
The vectors $\nf_i$ are tangent to $\W$ because so are $X_{k+1},\dots,Y_n,T$. The  family $\nf$ was introduced in~\cite{ASCV} in the case of codimension 1 and in~\cite{Corni,SCSomeTopics} for codimension $k\leq n$. 

\begin{remark}\label{rem:nablaphiphi=sestesso}
The notation introduced in~\eqref{eq:nablaphi_i} is consistent with the one  in Definition~\ref{def:differenziabilita}: in fact (see e.g.~\cite[Proposition~3.7]{Corni}), when $\f$ is of class $C^1$ the components of the matrix $\nf\f(w)$ associated with the intrinsic differential $d\f_w$ are precisely the derivatives $\nf_i\f_j(w)$ of  $\f=(\f_1,\dots,\f_k)$  along the directions $\nf_i$.
\end{remark}

Recalling the notation $W_i$ and $\F$ introduced in \eqref{eq:defW} and \eqref{eq:graficoincoordinate}, one can differentiate the graph map $\F$ along the directions $\nf_i$ to obtain that,
for every $w\in\W$, the vectors 
\begin{equation}\label{eq:defcampizeta}
\nf_i\F(w)=\left(W_i+\sum_{h=1}^k\nf_i\f_h(w)X_h\right)(\F(w)),\qquad i=k+1,\dots,2n
\end{equation}
(which should be thought of as vectors in $\F(w)$ that are continuous with respect to $w$) are horizontal and tangent to the submanifold $S:=\gr_\f$ at the point $\F(w)$. The  equality in~\eqref{eq:defcampizeta} comes from a boring computation that we omit. Since the vectors $\nf_i\F(w)$, $i=k+1,\dots,2n$, are also linearly independent, they generate the $(2n-k)$-dimensional subspace $T_{\F(w)}S\cap\h_1$, hence the multi-vector 
\begin{equation}\label{eq:defZETA}
\nf\F(w):=\nf_{k+1}\F(w)\wedge\dots\wedge\nf_{2n}\F(w)\in\bwl_{2n-k}\h_1
\end{equation}
is a multiple of the unit multi-vector $\tau_{S}^\H(\F(w))$  defined in \S\ref{subsec:correnti}. 

\begin{remark}\label{rem:Jacob=norm}
It is worth noticing that the intrinsic Jacobian determinant $J^\f\f(w)$ equals the norm $|\nf\F(w)|$ of the multi-vector $\nf\F(w)$. As usual, the norm on multi-vectors is the one induced by the left-invariant scalar product making $X_1,\dots,Y_n,T$ orthonormal.
\end{remark}

We state for future references the following result, which is essentially a restatement of Theorem~\ref{teo:approssimiamoooooooo}.

\begin{proposition}\label{prop:approssimazionelisciaunifLipHeisenberg}
Let $A\subset\W$ and $\f:A\to\V$ be intrinsic Lipschitz. Then there exists a sequence $(\f_i)_{i\in\N}$  of $C^\infty$ smooth and uniformly intrinsic Lipschitz maps $\f_i:\W\to\V$ such that
\[
\f_i\to\f\text{ uniformly in }A\text{ as }i\to\infty\,.
\]
Moreover, there exists $C>0$, depending only on the intrinsic Lipschitz constant of $\f$ and the distance $d$, such that
\[
|\nabla^{\f_i}\F_i(w)|\leq C \qquad\text{for every $i\in\N$ and }w\in\W,
\] 
where $\F_i$ is the graph map $\W\ni w\mapsto w\f_i(w)\in\H^n$.
\end{proposition}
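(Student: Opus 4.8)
The plan is to deduce Proposition~\ref{prop:approssimazionelisciaunifLipHeisenberg} directly from Theorem~\ref{teo:approssimiamoooooooo} together with the pointwise identification $J^{\f_i}\f_i=|\nf[\f_i]\F_i|$ recorded in Remark~\ref{rem:Jacob=norm}. Since $\H^n$ is a Carnot group and the splitting $\W,\V$ is the one fixed in~\eqref{eq:Vfissato}--\eqref{eq:Wfissato} (so that $\V$ is horizontal), Theorem~\ref{teo:approssimiamoooooooo} applies verbatim: it yields a sequence of $C^\infty$-regular, intrinsic Lipschitz maps $\f_i:\W\to\V$ with $\f_i\to\f$ uniformly on $A$ and with intrinsic Lipschitz constants bounded, uniformly in $i$, by a constant depending only on the intrinsic Lipschitz constant of $\f$. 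This gives the first assertion of the proposition and reduces the work to the quantitative bound on $|\nf[\f_i]\F_i|$.

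First I would fix $i$ and use the fact that $\f_i$ is of class $C^1$, so that Remark~\ref{rem:nablaphiphi=sestesso} gives that the entries of the matrix $\nabla^{\f_i}\f_i(w)$ are the derivatives $\nf[\f_i]_\ell(\f_i)_h(w)$ and Remark~\ref{rem:Jacob=norm} gives $J^{\f_i}\f_i(w)=|\nabla^{\f_i}\F_i(w)|$. By Definition~\ref{def:differenziabilita} the quantity $J^{\f_i}\f_i(w)=\big(1+\sum_M(\det M)^2\big)^{1/2}$ is a fixed (universal) polynomial in the entries $(\nabla^{\f_i}\f_i(w))_{h\ell}$, with $1\le h\le k$ and $k+1\le \ell\le 2n$; hence it suffices to bound these entries uniformly. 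To this end I would invoke Lemma~\ref{lem:nablaffijminoredialfa}: if $\H^n$ is endowed with $d=d_\infty$ and $\al_i$ denotes the intrinsic Lipschitz constant of $\f_i$, then at every differentiability point — in particular at every $w\in\W$, since $\f_i\in C^1$ is intrinsically differentiable everywhere — one has $|(\nabla^{\f_i}\f_i(w))_{h\ell}|\le\al_i$. Because $\sup_i\al_i\le\bar\al$ for some $\bar\al$ depending only on the intrinsic Lipschitz constant of $\f$, each entry is bounded by $\bar\al$, whence $J^{\f_i}\f_i(w)\le\big(1+N\bar\al^{2r}\big)^{1/2}=:C$ for suitable $N$ and $r$ depending only on $n,k$; here I use that the number of minors of a $k\times(2n-k)$ matrix is finite and that the exponents appearing are at most $\min\{k,2n-k\}$.

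The only mild subtlety is the dependence on the distance: Lemma~\ref{lem:nablaffijminoredialfa} is stated for $d=d_\infty$, whereas the proposition refers to the (rotationally invariant but otherwise general) fixed distance $d$. This is handled by the standard equivalence of all homogeneous distances on $\H^n$: the intrinsic Lipschitz property and its constant change only by a multiplicative factor depending on $d$ when one passes from $d$ to $d_\infty$ (Definition~\ref{def:grafLip} is distance-independent, and the constant scales controllably), while $J^{\f_i}\f_i=|\nf[\f_i]\F_i|$ is a metric-free notion depending only on the chosen adapted basis and the Euclidean scalar product on multi-vectors. So one runs the above argument with respect to $d_\infty$ and absorbs the comparison constant into $C$; the resulting $C$ depends only on $n$, $k$, the distance $d$, and the intrinsic Lipschitz constant of $\f$, exactly as claimed.

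I do not expect any serious obstacle here: the statement is essentially a bookkeeping corollary packaging Theorem~\ref{teo:approssimiamoooooooo}, Remark~\ref{rem:Jacob=norm}, and Lemma~\ref{lem:nablaffijminoredialfa} into the form that will be convenient later. The one point requiring a line of care is making sure the uniform bound on the entries of $\nabla^{\f_i}\f_i$ holds \emph{pointwise everywhere} (not merely a.e.), which is immediate because each $\f_i$ is $C^1$ and hence intrinsically differentiable at every point of $\W$ by Remark~\ref{rem:Hreg-->intrdiff}-type reasoning (more directly, by Proposition~\ref{prop:differ->convtoTanH} applied to a $C^1$ graph, or simply by the $C^1$ smoothness which makes the blow-up converge classically).
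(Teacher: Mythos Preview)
Your proposal is correct and follows essentially the same approach as the paper: the first part is Theorem~\ref{teo:approssimiamoooooooo}, and the second part comes from bounding the entries of $\nabla^{\f_i}\f_i$ via Lemma~\ref{lem:nablaffijminoredialfa} (using Remark~\ref{rem:nablaphiphi=sestesso}) and then reading off the bound on $|\nabla^{\f_i}\F_i|$ from the explicit form~\eqref{eq:defcampizeta} (your route via Remark~\ref{rem:Jacob=norm} and the polynomial expression for $J^{\f_i}\f_i$ is equivalent). Your remark on the distance dependence is a valid elaboration the paper leaves implicit.
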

\begin{proof}
The first part of the statement is  Theorem~\ref{teo:approssimiamoooooooo}, while the second one is a consequence of Lemma~\ref{lem:nablaffijminoredialfa}, Remark~\ref{rem:nablaphiphi=sestesso} and~\eqref{eq:defcampizeta}.
\end{proof}

Similarly as in~\eqref{eq:defcampizeta} one gets
\begin{equation}\label{eq:TPhi}
T\F(w)=\left(T+\sum_{h=1}^k T\f_h(w)X_h\right)(\F(w)).
\end{equation}
The vector fields in~\eqref{eq:defcampizeta} and~\eqref{eq:TPhi} generate the tangent space to $S$ at $\F(w)$. We then fix  the orientation of $S$ is such a way that 
\begin{equation}\label{eq:comeorientareungrafico}
t_S(\F(w)):=\frac{\nf\F\wedge T\F}{|\nf\F\wedge T\F|}\quad\text{is positively oriented for every $w\in\W$.}
\end{equation}
Observe that $\langle t_S,W_{k+1}\wedge\dots\wedge W_{2n}\wedge T\rangle\neq 0$ on $S$. Actually, our choice of the orientation for $S$ corresponds to declaring that a unit tangent vector $t_S$ is positively oriented if and only if $\langle t_S,W_{k+1}\wedge\dots\wedge W_{2n}\wedge T\rangle>0$ on $S$.

Recalling the notation introduced in~\eqref{eq:tauSV}, we deduce from~\eqref{eq:defcampizeta},~\eqref{eq:TPhi} and~\eqref{eq:comeorientareungrafico} that 
\[
\eta_S(\F(w))=\frac{\nf\F(w)}{|\nf\F(w)\wedge T\F(w)|},
\]
which implies $\tau_{S}^\H(\F(w))={\nf\F(w)}/{|\nf\F(w)|}$ and, eventually,
\begin{equation}\label{eq:tSHgrafici}
t_{S}^\H(\F(w))=\frac{\nf\F(w)}{|\nf\F(w)|}\wedge T.
\end{equation}

The multi-vector $\nf\F$ also allows to characterize the Heisenberg current associated with a $C^1$ intrinsic graph. The following lemma is an important tool used in the proof of Theorem~\ref{thm:rademacher}.

\begin{lemma}\label{lem:correntevistaintegrandosuW}
Let $\f:\W\to\V$ be a $C^1$ map and let the graph $S:=\gr_\f$ be oriented as in~\eqref{eq:comeorientareungrafico}. Then
\[
\curr S(\omega)=C_{n,k}\int_\W\langle \nf\F(w)\wedge T\mid \omega(\F(w))\rangle\:d\leb{2n+1-k}(w)\qquad\text{for every }\omega\in\DH^{2n+1-k},
\]
where the constant $C_{n,k}>0$ is the one provided by Proposition~\ref{prop:correnti_cl_vs_H_INTRO}.
\end{lemma}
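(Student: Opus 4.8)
The strategy is to reduce the Heisenberg current $\curr S$ to the classical current $\clcurr S$ via Proposition~\ref{prop:correnti_cl_vs_H_INTRO}, and then to compute $\clcurr S(\omega)=\int_S\omega$ by pulling back through the graph parametrization $\F:\W\to\H^n$. First I would recall that, since $\f$ is of class $C^1$, $S=\gr_\f$ is a (Euclidean) $C^1$-regular submanifold of $\H^n$ of dimension $2n+1-k$; it is oriented by~\eqref{eq:comeorientareungrafico}, and $\F$ is a $C^1$ diffeomorphism from $\W\equiv\R^{2n+1-k}$ onto $S$. Applying Lemma~\ref{lem:correntiR=H} (i.e.\ Proposition~\ref{prop:correnti_cl_vs_H_INTRO}) we get
\[
\curr S(\omega)=C_{n,k}\clcurr S(\omega)=C_{n,k}\int_S\omega\qquad\text{for every }\omega\in\DH^{2n+1-k}.
\]
Then by the change of variables formula for integration of differential forms over oriented $C^1$ manifolds, $\int_S\omega=\int_\W\F^*\omega$, where the orientation on $\W\equiv\R^{2n+1-k}$ is the one induced by $\F$ from the chosen orientation of $S$.

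The second step is to identify $\F^*\omega$ explicitly. Using coordinates $w=(w_{k+1},\dots,w_{2n},t)$ on $\W$ adapted to the vector fields $\nf_{k+1},\dots,\nf_{2n},T$ — more precisely, recalling~\eqref{eq:defcampizeta} and~\eqref{eq:TPhi}, the pushforwards $\F_*(\nf_i)=\nf_i\F$ for $i=k+1,\dots,2n$ and $\F_*(T)=T\F$ span $T_{\F(w)}S$ at each point — one has by definition of pullback
\[
(\F^*\omega)(w)\big(\nf_{k+1}\wedge\dots\wedge\nf_{2n}\wedge T\big)
=\omega(\F(w))\big(\F_*\nf_{k+1}\wedge\dots\wedge\F_*\nf_{2n}\wedge\F_*T\big)
=\langle\,\nf\F(w)\wedge T\F(w)\mid\omega(\F(w))\,\rangle,
\]
using~\eqref{eq:defZETA}. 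Here I would also observe that the ``vertical'' correction term in $T\F$ from~\eqref{eq:TPhi} contributes nothing: since $\omega$ is a Heisenberg form, $\omega(\F(w))\in\mathcal J^{2n+1-k}$, and by Remark~\ref{rem:J2n+1-k} any horizontal $(2n+1-k)$-vector pairs to zero with elements of $\mathcal J^{2n+1-k}$; the only piece of $\nf\F\wedge T\F$ not already of the form $\nf\F\wedge T$ is $\nf\F\wedge(\sum_h T\f_h X_h)$, which is a horizontal $(2n+1-k)$-vector, hence pairs to zero. Therefore $\langle\,\nf\F\wedge T\F\mid\omega\circ\F\,\rangle=\langle\,\nf\F\wedge T\mid\omega\circ\F\,\rangle$.

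The third and final step is bookkeeping of the orientation and the Lebesgue density. The frame $(\nf_{k+1},\dots,\nf_{2n},T)$ on $\W$, read in the exponential coordinates $w=(x_{k+1},\dots,y_n,t)$ (or $(y_1,\dots,y_n,t)$ when $k=n$), is upper-triangular unipotent with respect to $(\partial_{x_{k+1}},\dots,\partial_{y_n},\partial_t)$: indeed from~\eqref{eq:nablaphi_i} the only off-diagonal terms are the coefficients $\f_{i-n}$ multiplying $\partial_t$, so $\nf_{k+1}\wedge\dots\wedge\nf_{2n}\wedge T=\partial_{x_{k+1}}\wedge\dots\wedge\partial_{y_n}\wedge\partial_t$ pointwise. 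Consequently the orientation of $\W$ induced by $\F$ agrees with the standard orientation of $\R^{2n+1-k}$, and integrating $\F^*\omega$ against this frame yields precisely $\int_\W\langle\,\nf\F(w)\wedge T\mid\omega(\F(w))\,\rangle\,d\leb{2n+1-k}(w)$ with no extra Jacobian factor. Combining this with the first step gives the claimed identity. I do not expect any genuine obstacle here — everything is routine once Proposition~\ref{prop:correnti_cl_vs_H_INTRO} is invoked; the only point requiring a little care is the vanishing of the horizontal part noted in the second step and the unipotent-frame observation that removes any orientation sign or density, both of which are immediate from the formulas already established.
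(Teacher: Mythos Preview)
Your argument is correct but follows a genuinely different route from the paper's proof. The paper works directly with the definition of $\curr S$ as an integral against $\Shaus^{Q-k}$: it applies the area formula of Theorem~\ref{teo:Hreg_grafici_area} to convert $\int_S\langle t^\H_S\mid\omega\rangle\,d\Shaus^{Q-k}$ into an integral over $\W$ with density $J^\f\f$, then replaces $t^\H_S(\F(w))$ by $\frac{\nf\F(w)}{|\nf\F(w)|}\wedge T$ via~\eqref{eq:tSHgrafici}, and cancels $J^\f\f=|\nf\F|$ by Remark~\ref{rem:Jacob=norm}. You instead pass immediately to the classical current $\clcurr S$ by Lemma~\ref{lem:correntiR=H}, pull back the Euclidean integral of the differential form through $\F$, discard the horizontal correction in $T\F$ using Remark~\ref{rem:J2n+1-k}, and check that the frame $(\nf_{k+1},\dots,\nf_{2n},T)$ is unipotent in the coordinate basis so that no Jacobian appears. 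Your route bypasses the area formula (Theorem~\ref{teo:Hreg_grafici_area}) altogether, which makes it slightly more elementary once Lemma~\ref{lem:correntiR=H} is available; the paper's route is a bit shorter on the page since the normalization of $t^\H_S$ absorbs the orientation and density bookkeeping that you carry out explicitly.
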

\begin{proof}
We have for every $\omega\in\DH^{2n+1-k}$
\begin{align*}
\curr S(\omega)
&=C_{n,k}\int_S\langle  [t^\H_S(p)]_\mathcal J\mid \omega(p)\rangle\:d\Shaus^{Q-k}(p)\\
&=C_{n,k} \int_\W \langle  t^\H_S(\F(w))\mid \omega(\F(w))\rangle\:J^\f\f(w)\:d\leb{2n+1-k}(w)\\
&=C_{n,k} \int_\W \left\langle\left. \frac{\nf\F(w)}{|\nf\F(w)|}\wedge T\right| \omega(\F(w))\right\rangle\:J^\f\f(w)\:d\leb{2n+1-k}(w)\\
&=C_{n,k}\int_\W\langle \nf\F(w)\wedge T\mid \omega(\F(w))\rangle\:d\leb{2n+1-k}(w),
\end{align*}
where the first equality comes from Theorem~\ref{teo:Hreg_grafici_area} and a change of variable, the second one is justified by~\eqref{eq:tSHgrafici} and the last one by Remark~\ref{rem:Jacob=norm}.
\end{proof}



\section{The Constancy Theorem for Heisenberg currents}\label{sec:constancytheorem}
The classical Constancy Theorem (see e.g.~\cite[4.17]{federer} or~\cite[Theorem~26.27]{Simon}) states that, if $\Tcurr$ is an  $n$-dimensional current  in a connected open set $U\subset\R^n$ such that $\partial\Tcurr=0$, then $\Tcurr$ is constant, i.e., there exists $c\in\R$ such that $\Tcurr(\omega)=c\int_U\omega$ for every smooth $n$-form $\omega$ with compact support in $U$. 
The Constancy Theorem can be generalized (see e.g.~\cite[Proposition 7.3.5]{KrantzParks}) to currents supported on an $m$-dimensional plane $\mathscr P\subset\R^n$: if $\Tcurr$ is an $m$-current with support in $\mathscr P$ and such that $\partial \Tcurr=0$, then there exists $c\in\R$ such that $\Tcurr(\omega)=c\int_\mathscr P\omega$ for every smooth $m$-form $\omega$ with compact support.  

As mentioned in the Introduction,
the version for planes of the Constancy Theorem implies the following fact (see~\cite[Theorem~4.2]{Silhavy}): if $R\subset\R^n$ is an $m$-rectifiable set and  $\Tcurr=\tau\mu$ is a  normal $m$-current, where $\mu$ is a Radon measure and $\tau$ is a locally $\mu$-integrable $m$-vectorfield with $\tau\neq0$ $\mu$-a.e., then 
\begin{enumerate}
\item[(i)] $\mu\res R$ is absolutely continuous with respect to the Hausdorff measure $\Haus^m\res R$, and 
\item[(ii)] $\tau$ is tangent to $R$ at $\mu$-almost every point of $R$.
\end{enumerate}
See also~\cite[4.1.31]{federer} and~\cite[Example 10 at page~146]{GiaqModSouc_I} for simpler cases, and \cite[\S5]{AlbertiMarchese} and ~\cite{AlbMassStep} for  similar-in-spirit results.

A similar program is developed in the present section for currents  in Heisenberg groups. First, in \S\ref{subsec:TangencyThmPiani} we  prove Proposition~\ref{prop:correntipiane}, where a partial version of the Constancy Theorem~\ref{thm:Constancy} is proved.  Proposition~\ref{prop:correntipiane} can also be seen as a particular case of Theorem~\ref{teo:correntinormalirettificabili}, but actually the proof of Theorem~\ref{teo:correntinormalirettificabili} follows from Proposition~\ref{prop:correntipiane} by a blow-up argument. 
The proof of Theorem~\ref{teo:correntinormalirettificabili} is developed in Section~\ref{subsec:corr_norm_rettif}: observe that we are not able to prove any ``absolute continuity'' statement analogous to (i) above, but only a ``tangency'' statement corresponding to (ii). As we said in the Introduction, this is due to the absence of a good notion of projection on planes. 
The proof of the Constancy Theorem~\ref{thm:Constancy} for  Heisenberg currents without boundary and supported on vertical planes  is contained in Section~\ref{subsec:ConstThmPlanes}. Observe that here we are  able to prove not only the ``tangency'' property (ii), but also the ``absolute continuity'' one (i): in fact, by using group convolutions on vertical planes one can  reduce to the case in which $\|\Tcurr\|$ is already absolutely continuous.

\subsection{A partial version of Theorem~\ref{thm:Constancy}}\label{subsec:TangencyThmPiani}
For the reader's convenience we recall  some preliminary notation. First, given  a Radon measure $\mu$ on $\H^n$ and a locally $\mu$-integrable function $\tau:\H^n\to\mathcal J_{2n+1-k}$, we denote by $\tau\mu$  the Heisenberg $(2n+1-k)$-current defined by
\[
\tau\mu(\omega):=\int_{\H^n} \langle\tau(p)\mid \omega(p)\rangle\:d\mu(p),\qquad \omega\in \DH^{2n+1-k}.
\]

Given natural numbers $a,b$ such that $1\leq a+b\leq n$, the $(2a+b+1)$-dimensional vertical plane $\Pab$ was introduced in~\eqref{eq:defPab} as
\begin{equation}\label{eq:defPab222}
\begin{aligned}
\Pab:=\:& \{(x,y,t)\in\H^n:x_i=y_j=0\text{ for all }a+b+1\leq i\leq n \text{ and }a+1\leq j\leq n\}
\\
=\:&
\begin{cases}
\{(x_1,\dots,x_{a+b},0,\dots,0,y_1,\dots,y_a,0,\dots,0,t)\}\quad & \text{if }a\geq 1\text{ and } a+b<n\\
\{(x_1,\dots,x_n,y_1,\dots,y_a,0,\dots,0,t)\}\quad & \text{if }a\geq 1\text{ and } a+b=n\\
\{(x_1,\dots,x_{b},0,\dots,0,t)\}\quad & \text{if }a=0.
\end{cases}
\end{aligned}
\end{equation}
We are interested in the case in which the codimension $2n-2a-b$ of $\Pab$ equals a given $k\in\unoenne$, hence we also assume that $n\leq 2a+b\leq 2n$. Observe that,  if $a=0$, then necessarily  $b=n$. 
A unit tangent vector to $\Pab$ is
\[
t^\H_\Pab=X_1\wedge\dots\wedge X_{a+b}\wedge Y_1\wedge\dots\wedge Y_a\wedge T
\]
and we agree that, when $a=0$, this expression has to be read as $X_1\wedge\dots\wedge X_n\wedge T$. We also notice that \eqref{eq:defPab222} induces a natural identification between the subgroup $\Pab$ and $\R^{2a+b+1}$ according to which the measures $\Shaus^{2a+b+2}$ and $\leb{2a+b+1}$, being Haar measures\footnote{The measure $\leb{2a+b+1}$ is a Haar one because $\Pab$ is canonically isomorphic to $\H^a\times\R^b$.} on $\Pab$, coincide up to a multiplicative constant.

The following Lemma~\ref{lem:vettoretangente,caso_n} is stated in the setting of maximal codimension $k=n$; it however holds also for $1\leq k\leq n-1$, as showed later in Lemma  \ref{lem:vettoretangente,casogenerale}. We  use the following notation: given $i,j\in\{1,\dots,n\}$, by $O(x_i),O(x_i^2),O(x_ix_j)$ we denote smooth differential forms that can be written, respectively, as $x_i\al,x_i^2\be,x_ix_j\ga$ for suitable smooth differential forms $\al,\be,\ga$. When applying exterior differentiation we will freely use straightforward formulae like $d(O(x_i^2))=O(x_i)$, $d(O(x_ix_j))=O(x_i)+O(x_j)$, and similar ones. Eventually, the equivalence class $[\,\cdot\,]_\mathcal J$ is as in Remark~\ref{rem:introducoJ_k}.

\begin{lemma}\label{lem:vettoretangente,caso_n}
Let $a,b$ be natural numbers such that $2a+b=n$ and let $\Pab$ be the $(n+1)$-plane defined in \eqref{eq:defPab222}. Assume there exists  $\tau\in\mathcal J_{n+1}$ such that the Heisenberg $(n+1)$-current $\Tcurr:=\tau\Shaus^{n+2}\res\Pab$ is such that $\partial\Tcurr=0$. Then  there exists $\eta\in\R$ such that
\[
\tau=\eta\:[ X_1\wedge\dots\wedge X_{a+b}\wedge Y_1\wedge\dots\wedge Y_a\wedge T]_\mathcal J.
\]
In particular, $\tau$ is a multiple of $[t^\H_\Pab]_\mathcal J$.
\end{lemma}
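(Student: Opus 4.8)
The idea is to test the closed current $\Tcurr=\tau\,\Shaus^{n+2}\res\Pab$ against the Rumin differentials of carefully chosen compactly supported $n$-forms, and to read off, from the resulting constraints, that every component of $\tau$ against a basis covector of $\mathcal J^{n+1}$ which is not (a multiple of) $[t^\H_\Pab]_\mathcal J$ must vanish. Concretely, write $\tau=\sum_{(I,J,R)}c_{I,J,R}\,[dx_I\wedge dy_J\wedge\al_R\wedge\theta]^*$ in the dual of the basis of $\mathcal J^{n+1}$ furnished by Corollary~\ref{cor:baseJ} (here $h=2n-k=n$, so the tableaux $R$ are rectangular $2\times a$, $k=n$). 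By Remark~\ref{rem:cosafailvettorestandard}, among all these basis elements the only one pairing non-trivially with $X_1\wedge\dots\wedge X_{a+b}\wedge Y_1\wedge\dots\wedge Y_a\wedge T$ is the distinguished triple~\eqref{eq:IJR}; for every other triple the pairing is zero by~\eqref{eq:IJR222}. So the claim is equivalent to: $c_{I,J,R}=0$ for every $(I,J,R)$ other than the distinguished one, and then $\eta$ is simply the remaining coefficient (up to the sign coming from~\eqref{eq:pesaro}).

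First I would set up coordinates adapted to $\Pab$: on $\Pab$ the coordinates are $(x_1,\dots,x_{a+b},y_1,\dots,y_a,t)$ and the ``normal'' coordinates are $x_{a+b+1},\dots,x_n,y_{a+1},\dots,y_n$, all of which vanish on $\Pab$; the surface measure $\Shaus^{n+2}\res\Pab$ is (a constant times) $\leb{n+1}$ in these coordinates. Since $\Tcurr$ is supported on $\Pab$ and has order $0$ there, $\Tcurr(\om)$ depends only on the restriction $\om|_{\Pab}$, i.e.\ only on the ``tangential'' part of $\om$ (the summands involving none of $dx_i$ for $i>a+b$ and none of $dy_j$ for $j>a$), evaluated on $\Pab$. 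The strategy is then, for each undesired basis covector $\la_0=dx_{I_0}\wedge dy_{J_0}\wedge\al_{R_0}\wedge\theta$, to produce an $n$-form $\om$ with $d_C\om$ having tangential part on $\Pab$ equal to $\la_0$ plus forms that either vanish on $\Pab$ or pair to zero with the basis covectors whose coefficients are already known to vanish; feeding such $\om$ to $0=\partial\Tcurr(\om)=\Tcurr(d_C\om)$ then forces $c_{I_0,J_0,R_0}=0$. Because $d_C=D$ here ($k=n$), I would use the explicit formula~\eqref{eq:Dalcondal_v}, $D\al=\theta\wedge(\be_\al+d(L^{-1}((d\al)_{\h_1})))$, which shows $D\al$ always ends in $\wedge\theta$, matching the shape of $\mathcal J^{n+1}$; this is convenient but it is exactly what makes the bookkeeping heavy.

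The natural test forms are built from monomials in the normal coordinates times tangential forms, e.g.\ $\om = g(x,y,t)\,\mu$ where $g$ is a compactly supported bump in $x_{a+b+1},\dots,y_n$ (so $g$ and $dg$ have controlled behaviour on $\Pab$: $g|_{\Pab}=O(1)$ or $=O(\text{normal coord})$, $X_ig|_{\Pab}$ picks out $\pa_{x_i}g$ for normal $i$, etc.) and $\mu$ a wedge of tangential $1$-forms $dx_i$ ($i\le a+b$), $dy_j$ ($j\le a$), $\theta$. Differentiating, $d_C\om$ produces terms in which exactly one normal differential $dx_i$ or $dy_j$ appears (killing them on $\Pab$) plus terms where $dg$ contributes a $\pa_t g$ or where a coefficient like $\f$—no, here no graph—where the structure of $X_i,Y_j,T$ brings in $\theta$ and the symplectic corrections. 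By choosing which normal coordinate $g$ depends on, and which tangential block $\mu$ is, one can hit each $\la_0$ individually. To organise the induction cleanly I would order the undesired triples — say by $|I|+|J|$, then by $\si(R)$ as in the proof of Lemma~\ref{lem:YoungTabLinInd} — and at each stage choose the test form so that its contribution isolates one new coefficient against the already-vanished ones.

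\textbf{Main obstacle.} The genuinely hard part is the combinatorial/algebraic control of $D\om$: computing $(d\al)_{\h_1}$, applying $L^{-1}$, and then $d$ again produces a sum of many $n{+}1$-covectors, and one must verify that, after restriction to $\Pab$, these align with the Young-tableau basis of $\mathcal J^{n+1}$ in a way that lets a single coefficient be peeled off at a time. In other words, the difficulty is not any single computation but checking that the map ``test $n$-form $\mapsto$ tangential part of $D\om$ on $\Pab$'' is surjective onto the span of all basis covectors except $[t^\H_\Pab]^*$ — equivalently, that the only obstruction to exactness (the one basis element that \emph{cannot} be produced as such a $D\om|_{\Pab}$) is precisely the distinguished one~\eqref{eq:IJR}. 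I expect this to require a careful, case-by-case argument exploiting~\eqref{eq:wedgiarea0} (equivalently $\al_R\wedge d\theta=0$) and the rectangularity of $R$ when $k=n$; the payoff is that once this surjectivity-up-to-one-covector is established, $\partial\Tcurr=0$ immediately yields $\tau=\eta[t^\H_\Pab]_\mathcal J$, and the general-codimension case will then follow (as the excerpt anticipates in Lemma~\ref{lem:vettoretangente,casogenerale}) by an analogous but notationally lighter argument.
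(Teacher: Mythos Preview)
Your overall strategy is the paper's: feed $\partial\Tcurr=0$ carefully chosen $n$-forms and read off $\langle\tau\mid\la\rangle=0$ for every basis covector $\la=dx_I\wedge dy_J\wedge\al_R\wedge\theta$ of $\mathcal J^{n+1}$ other than the distinguished one. But there is a conceptual slip, and the key concrete step is missing.

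The slip: it is \emph{not} true that $\Tcurr(D\om)$ depends only on the tangential (pulled-back) part of $D\om$. Since $\Tcurr=\tau\,\Shaus^{n+2}\res\Pab$ with $\tau$ unknown, $\Tcurr(D\om)=\int_\Pab\langle\tau\mid D\om\rangle\,d\Shaus^{n+2}$ depends on the \emph{full} covector $D\om(p)$ at each $p\in\Pab$, including components containing the factors $dx_i$ ($i>a+b$) or $dy_j$ ($j>a$). Indeed, the basis covectors you must kill are precisely those containing such factors; a ``tangential-part'' scheme could never reach them. What \emph{does} simplify is that any scalar coefficient which is $O(\text{normal coordinate})$ vanishes when evaluated on $\Pab$, and this is the only reduction used.

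The missing step is the specific choice of test forms. Since $D$ is second order, the paper matches it with forms \emph{quadratic} in the normal coordinates. After fixing a bump $\psi$ with $\int_\Pab\psi\,d\Shaus^{n+2}=1$, three direct cases (no induction) suffice: (1) if $\bar\imath\in I\cap\{a+b+1,\dots,n\}$, take $\om=x_{\bar\imath}^2\,\psi\,dx_{I\setminus\{\bar\imath\}}\wedge dy_{J\cup\{\bar\imath\}}\wedge\al_R$; (2) if $\bar\jmath\in J\cap\{a+1,\dots,n\}$, take the analogous form with $y_{\bar\jmath}^2$; (3) if the first row of $R$ contains an entry $\bar\imath\geq a+1$ (so, $R$ being rectangular, some column has both entries $\bar\imath<\bar\jmath$ in $\{a+1,\dots,n\}$), take $\om=y_{\bar\imath}y_{\bar\jmath}\,\psi\,dx_{I\cup\{\bar\imath,\bar\jmath\}}\wedge dy_J\wedge\al_Q$ with $Q$ obtained from $R$ by deleting that column. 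In each case a direct computation gives $D\om=\pm\psi\,dx_I\wedge dy_J\wedge\al_R\wedge\theta+O(\text{normal coord.})$, whence $\langle\tau\mid dx_I\wedge dy_J\wedge\al_R\wedge\theta\rangle=0$. A short pigeonhole argument then shows that any $(I,J,R)$ escaping all of (1)--(3) is forced to be the distinguished triple; your anticipated ``careful case-by-case argument'' collapses to these three explicit computations.
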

\begin{proof}
We assume that $a\geq1$; the case $a=0$ requires only simple modifications at the level of notation.

We have to prove that there exists $\eta\in\R$ such that, for every $\la\in\mathcal J^{n+1}$,
\begin{equation}\label{eq:tauvstangente}
\langle\tau\mid \la\rangle=\eta\,\langle X_1\wedge\dots\wedge X_{a+b}\wedge Y_1\wedge\dots\wedge Y_a\wedge T\mid \la\rangle.
\end{equation}
Clearly, it is enough to check~\eqref{eq:tauvstangente} for $\la$ ranging in the basis of $\mathcal J^{n+1}$ provided by Proposition~\ref{prop:baseJintro} (with $k:=n$).
Fix then $I,J\subset \unoenne$ and a standard Young tableau $R$ such that
\begin{itemize}
\item $I\cap J=\emptyset$ and $|I|+|J|\leq n$
\item $R$ is a $(2\times\frac{n-|I|-|J|}{2})$-rectangular (recall Remark~\ref{rem:Rrettangolare}) tableau  that contains the  integers in the set $\unoenne\setminus(I\cup J)$.
\end{itemize}
Testing $X_1\wedge\dots\wedge X_{a+b}\wedge Y_1\wedge\dots\wedge Y_a\wedge T$ against $\la=dx_I\wedge dy_J\wedge\al_R\wedge\theta$ one  realizes that it is enough to show that
\begin{equation}\label{eq:stampelle}
\begin{aligned}
 \text{either}\quad&I=\{a+1,\dots,a+b\},\  J=\emptyset\text{ and } R=
\begin{tabular}{|c|c|c|c|}
\cline{1-4}
$1$ & $2$ &$\ \cdots\ $& $a$\\
\cline{1-4}
$a+b+1$ & $a+b+2^{\phantom{2}}$ &$\ \cdots\ $& $n$
\\
\cline{1-4}
\end{tabular}\\
 \text{or}\quad&\langle \tau\mid  dx_I\wedge dy_J\wedge\al_R\wedge\theta\rangle=0.
\end{aligned}
\end{equation}

We are going to prove \eqref{eq:stampelle} first under some additional assumptions on $I,J$ or $R$, see the following Claims 1, 2 and 3. We fix an auxiliary function $\psi\in C^\infty_c(\H^n)$ such that 
\[
\int_\Pab \psi\:d\Shaus^{n+2}=1.
\]

\medskip

{\em Claim 1: if $I\cap\{a+b+1,\dots,n\}\neq\emptyset$, then $\langle \tau\mid  dx_I\wedge dy_J\wedge\al_R\wedge\theta\rangle=0$.}

\noindent Fix  $\bar\imath\in I\cap\{a+b+1,\dots,n\}$ and define
\[
\omega(x,y,t):=x_{\bar\imath}^2 \;\psi(x,y,t)\: dx_{I\setminus\{\bar\imath\}}\wedge dy_{J\cup\{\bar\imath\}}\wedge\al_R.
\]
By identifying $\omega\in C^\infty_c(\H^n,\bwl^n \h_1)$ with its equivalence class $[\omega]$ according to the quotient in the right-hand side of~\eqref{eq:equivbis}, we have $\omega\in\DH^{n}$.  
One easily computes
\[
d\omega=\pm 2 x_{\bar\imath} \;\psi(x,y,t) \: dxy_{\bar\imath}\wedge dx_{I\setminus\{\bar\imath\}}\wedge dy_{J}\wedge\al_R + O(x_{\bar\imath}^2)
\]
where the sign $\pm$ depends only on $\bar\imath$ and $I$ and, in the sequel, it can change from line to line. We obtain
\[
L^{-1}((d\omega)_{\h_1} )=\pm 2 x_{\bar\imath} \;\psi(x,y,t) \:  dx_{I\setminus\{\bar\imath\}}\wedge dy_{J}\wedge\al_R + O(x_{\bar\imath}^2)
\]
and in turn
\begin{align*}
D\omega & =  d\big(\omega+(-1)^n L^{-1}((d\omega)_{\h_1})\wedge\theta\big)\\
& =  \pm 2\;\psi(x,y,t) dx_I\wedge dy_J\wedge\al_R\wedge\theta + O(x_{\bar\imath}).
\end{align*}
Since $O(x_{\bar\imath})=0$ on $\Pab$  we obtain
\begin{align*}
0&= \partial\Tcurr (\omega) 
 = \Tcurr(D\omega) 
 = \int_\Pab\langle\tau \mid D\omega\rangle d\Shaus^{n+2}\\
&= \pm 2 \int_{\Pab} \psi\langle\tau \mid   dx_I\wedge dy_J\wedge\al_R\wedge\theta\rangle d\Shaus^{n+2}\\
&=\pm 2\langle\tau \mid   dx_I\wedge dy_J\wedge\al_R\wedge\theta\rangle.
\end{align*}
and Claim 1 follows.\medskip

{\em Claim 2: if $J\cap\{a+1,\dots,n\}\neq\emptyset$, then $\langle \tau\mid  dx_I\wedge dy_J\wedge\al_R\wedge\theta\rangle=0$.}

\noindent The proof is analogous to that of Claim 1. Fix  $\bar\jmath\in J\cap\{a+1,\dots,n\}$ and define $\omega\in\DH^n$ by
\[
\omega(x,y,t):=y_{\bar\jmath}^2 \;\psi(x,y,t)\: dx_{I\cup\{\bar\jmath\}}\wedge dy_{J\setminus\{\bar\jmath\}}\wedge\al_R.
\]
A computation similar to the one in Claim 1 gives
\[
D\omega  =  \pm 2 \;\psi(x,y,t) dx_I\wedge dy_J\wedge\al_R\wedge\theta + O(y_{\bar\jmath})
\]
and again
\[
0=
\partial\Tcurr (\omega)
=
\int_\Pab\langle\tau \mid D\omega\rangle d\Shaus^{n+2} 
=
\pm 2\langle\tau \mid   dx_I\wedge dy_J\wedge\al_R\wedge\theta\rangle
\]
allows to conclude.\medskip

{\em Claim 3: if the first row of $R$ contains an element $\bar\imath$ such that $\bar\imath\geq a+1$, then $\langle \tau\mid  dx_I\wedge dy_J\wedge\al_R\wedge\theta\rangle=0$. }

\noindent We observe that, since $R$ is a standard Young tableau, the assumption of Claim 3 is equivalent to $R$ containing a column made by two elements $\bar\imath,\bar\jmath$ such that $a+1\leq\bar\imath<\bar\jmath\leq n$. Define $\omega\in\DH^n$ by
\[
\omega(x,y,t):=y_{\bar\imath}y_{\bar\jmath} \;\psi(x,y,t)\: dx_{I\cup\{\bar\imath,\bar\jmath\}}\wedge dy_{J}\wedge\al_Q,
\]
where $Q$ is the standard Young tableau obtained by removing from $R$ the column containing $\bar\imath,\bar\jmath$ (for instance, $Q$ is the empty tableau if $R$ consists of the column $\bar\imath,\bar\jmath$ only).  One has
\begin{align*}
d\omega =\: & \psi(x,y,t)( y_{\bar\jmath}\,dy_{\bar\imath} + y_{\bar\imath}\,dy_{\bar\jmath}) \wedge dx_{I\cup\{\bar\imath,\bar\jmath\}}\wedge dy_{J}\wedge\al_Q + O(y_{\bar\imath}y_{\bar\jmath})\\
=\: & \psi(x,y,t)
\big[(-1)^{c+1} y_{\bar\jmath}\,dxy_{\bar\imath}\wedge dx_{I\cup\{\bar\jmath\}} + (-1)^{d+2} y_{\bar\imath}\,dxy_{\bar\jmath}\wedge dx_{I\cup\{\bar\imath\}}\big] \wedge dy_{J}\wedge\al_Q
+ O(y_{\bar\imath}y_{\bar\jmath})
\end{align*}
where
\begin{align*}
& c:=|\{i\in I:i<\bar\imath\}|\\
&d:=|\{i\in I:i<\bar\jmath\}|=|\{i\in I\cup\{\bar\imath\}:i<\bar\jmath\}|-1.
\end{align*}
Then
\[
L^{-1}((d\omega)_{\h_1} )=-
\psi(x,y,t)\big[(-1)^{c+1} y_{\bar\jmath}\,dx_{I\cup\{\bar\jmath\}} + (-1)^{d} y_{\bar\imath}\, dx_{I\cup\{\bar\imath\}}\big] \wedge dy_{J}\wedge\al_Q + O(y_{\bar\imath}y_{\bar\jmath})
\]
and in turn
\begin{align*}
D\omega =\: &   d\big(\omega+(-1)^n L^{-1}((d\omega)_{\h_1})\wedge\theta\big)\\
=\: &  (-1)^{n+1} \;\psi(x,y,t)\big[ (-1)^{c+1} dy_{\bar\jmath}\wedge dx_{I\cup\{\bar\jmath\}} + (-1)^{d} dy_{\bar\imath}\wedge dx_{I\cup\{\bar\imath\}}\big]\wedge dy_{J}\wedge
\al_Q \wedge\theta\\
& + O(y_{\bar\imath})+ O(y_{\bar\jmath})\\
=\: &  (-1)^{n+1} \;\psi(x,y,t)\big[ (-1)^{c+d+2} dxy_{\bar\jmath}\wedge dx_{I} + (-1)^{c+d+1} dxy_{\bar\imath}\wedge dx_{I}\big]\wedge dy_{J}\wedge
\al_Q \wedge\theta\\
& + O(y_{\bar\imath})+ O(y_{\bar\jmath})\\
=\: &  \pm  \;\psi(x,y,t)( dxy_{\bar\jmath}- dxy_{\bar\imath})\wedge dx_{I}\wedge dy_{J}\wedge \al_Q\wedge\theta  + O(y_{\bar\imath})+ O(y_{\bar\jmath})\\
=\: &  \mp  \;\psi(x,y,t) dx_{I}\wedge dy_{J}\wedge \al_R\wedge\theta  + O(x_{\bar\imath})+ O(x_{\bar\jmath}).
\end{align*}
As before we obtain
\[
0=
\partial\Tcurr (\omega)
=
\int_\Pab\langle\tau \mid D\omega\rangle d\Shaus^{n+2} 
=
\mp\langle\tau \mid   dx_I\wedge dy_J\wedge\al_R\wedge\theta\rangle
\]
and the claim is proved.\medskip

{\em Claim 4: \eqref{eq:stampelle} holds.}

\noindent We already  know that \eqref{eq:stampelle} holds in case $I,J,R$ satisfy any one of the assumptions in Claims 1, 2, 3. We then assume that none of such assumptions hold, i.e., that
\begin{equation}\label{eq:perstampelle}
\begin{aligned}
& I\subset\{1,\dots,a+b\}\\
\text{and }& J\subset\{1,\dots,a\}\\
\text{and }& \text{the elements in the first row of $R$ are not greater than $a$},
\end{aligned}
\end{equation}
and we prove that one necessarily has
\begin{equation}\label{eq:medusa}
I=\{a+1,\dots,a+b\},\quad  J=\emptyset\quad\text{and}\quad R=
\begin{tabular}{|c|c|c|c|}
\cline{1-4}
$1$ & $2$ &$\ \cdots\ $& $a$\\
\cline{1-4}
$a+b+1$ & $a+b+2^{\phantom{2}}$ &$\ \cdots\ $& $n$
\\
\cline{1-4}
\end{tabular}\,.
\end{equation}
This would be enough to conclude. 

The first two conditions in \eqref{eq:perstampelle} imply that all the $n-a-b=a$ integers $a+b+1,\dots,n$ appear in $R$. They all belong to the second row of $R$ by the third condition in \eqref{eq:perstampelle}, hence $R$ has at least $a$ columns. Therefore the first row of $R$ contains at least $a$ elements, all of them not greater than $a$: it follows that the first row of $R$ contains precisely $1,\dots,a$ (displayed in this order), and that in turn  the second row of $R$ contains precisely $a+b+1,\dots,n$ (displayed in this order). In particular, $R$ is the one displayed in \eqref{eq:medusa}. The remaining integers $a+1,\dots,a+b$, not appearing in $R$, have to belong to either $I$ or $J$; the second condition in \eqref{eq:perstampelle} implies that they all belong to $I$, and the proof is concluded.  
\end{proof}

Lemma~\ref{lem:vettoretangente,caso_n} holds also for non-maximal codimension $k<n$, as we now prove. The reader will easily notice the similarity between the two proofs, the main difference lying in the use of the standard exterior differentiation $d$ in place of Rumin's operator $D$.

\begin{lemma}\label{lem:vettoretangente,casogenerale}
Let $a,b$ be natural numbers such that $1\leq a+b\leq n$ and $n+1\leq 2a+b\leq 2n$; let $\Pab$ be the plane defined in \eqref{eq:defPab222}. 
 Assume there exists  $\tau\in\mathcal J_{2a+b+1}$ such that the Heisenberg $(2a+b+1)$-current $\Tcurr:=\tau\Shaus^{2a+b+2}\res\Pab$ is such that $\partial\Tcurr=0$. Then  there exists $\eta\in\R$ such that
\[
\tau=\eta\:[ X_1\wedge\dots\wedge X_{a+b}\wedge Y_1\wedge\dots\wedge Y_a\wedge T]_\mathcal J.
\]
In particular, $\tau$ is a multiple of $[t^\H_\Pab]_\mathcal J$.
\end{lemma}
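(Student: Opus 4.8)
The plan is to follow the proof of Lemma~\ref{lem:vettoretangente,caso_n} almost verbatim, the only structural change being that Rumin's operator $D$ is replaced throughout by the ordinary exterior derivative $d$ — legitimate here because $2a+b\geq n+1$, so $\partial\Tcurr(\omega)=\Tcurr(d\omega)$ for every $\omega\in\DH^{2a+b}$. As there, it suffices to produce a scalar $\eta$ with $\langle\tau\mid\lambda\rangle=\eta\,\langle X_1\wedge\dots\wedge X_{a+b}\wedge Y_1\wedge\dots\wedge Y_a\wedge T\mid\lambda\rangle$ for every $\lambda$ in the basis $\{dx_I\wedge dy_J\wedge\al_R\wedge\theta\}_{(I,J,R)}$ of $\mathcal J^{2a+b+1}$ given by Proposition~\ref{prop:baseJintro} with $k:=2n-2a-b$; by Remark~\ref{rem:cosafailvettorestandard} the right-hand side vanishes unless $(I,J,R)$ is the distinguished triple~\eqref{eq:IJR}, so the statement reduces to checking $\langle\tau\mid dx_I\wedge dy_J\wedge\al_R\wedge\theta\rangle=0$ for every non-distinguished triple.

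The test forms are now honest elements of $\DH^{2a+b}=C^\infty_c(\H^n,\mathcal J^{2a+b})$: one takes $\omega=g\,\beta\wedge\theta$, where $\beta$ is a constant-coefficient $(2a+b-1)$-covector lying in $\ker L$ (so that $\beta\wedge\theta\in\mathcal J^{2a+b}$ and $d(\beta\wedge\theta)=0$, whence $d\omega=dg\wedge\beta\wedge\theta$) and $g=c(x,y,t)\,\psi$ with $\psi\in C^\infty_c(\H^n)$ satisfying $\int_\Pab\psi\,d\Shaus^{2a+b+2}=1$ and $c$ a single coordinate function vanishing on $\Pab$. Here the contact form $\theta$ appearing in $\beta\wedge\theta$ plays the role of the ``$D$-correction'' $L^{-1}((d\omega)_{\h_1})\wedge\theta$ from Lemma~\ref{lem:vettoretangente,caso_n}. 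One then chooses $c$ and $\beta$ so that $dg\wedge\beta\wedge\theta=\pm\psi\,dx_I\wedge dy_J\wedge\al_R\wedge\theta$ plus terms carrying a factor of $c$ (hence vanishing on $\Pab$), and concludes $0=\partial\Tcurr(\omega)=\Tcurr(d\omega)=\pm\langle\tau\mid dx_I\wedge dy_J\wedge\al_R\wedge\theta\rangle$.

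The non-distinguished triples are handled by three claims mirroring Claims~1–3 of Lemma~\ref{lem:vettoretangente,caso_n}. (1) If $I\cap\{a+b+1,\dots,n\}\neq\emptyset$, pick $\bar\imath$ in that intersection; since $2a+b>n$ the tableau $R$ has a length-one column (Remark~\ref{rem:Rrettangolare}), and one forms $R'$ by placing $\bar\imath$ below its first-row entry: then $\beta:=dx_{I\setminus\{\bar\imath\}}\wedge dy_J\wedge\al_{R'}$ lies in $\ker L$ by Remark~\ref{rem:tuttetabelleok}, and since $dx_{\bar\imath}\wedge dxy_{\bar\imath}=0$ one gets $dx_{\bar\imath}\wedge\beta=\pm dx_I\wedge dy_J\wedge\al_R$, so $g:=x_{\bar\imath}\psi$ does the job. (2) If $J\cap\{a+1,\dots,n\}\neq\emptyset$, the symmetric construction with a $\bar\jmath$ and $g:=y_{\bar\jmath}\psi$ applies. (3) If the first row of $R$ contains some $\bar\imath\geq a+1$, then — the first row being increasing and $R$ having a length-one column — such a $\bar\imath$ may be chosen in a length-one column; one lets $\beta:=dx_I\wedge dy_J\wedge\widetilde\al_R$ with $\widetilde\al_R$ obtained from $\al_R$ by replacing the block $dxy_{\bar\imath}$ with $dx_{\bar\imath}$, checks $\beta\in\ker L$ using $\al_R\wedge\sum_i dxy_i=0$ together with the injectivity of wedging with $dxy_{\bar\imath}$, and takes $g:=y_{\bar\imath}\psi$. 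Finally, if none of (1)–(3) holds, the pigeonhole argument from the proof of Remark~\ref{rem:cosafailvettorestandard} — now with the two rows of $R$ of unequal lengths $(2a+b-|I|-|J|)/2$ and $(2n-2a-b-|I|-|J|)/2$ — forces $(I,J,R)$ to be the triple of~\eqref{eq:IJR}, completing the proof.

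I expect the delicate part to be the bookkeeping in steps (1)–(3): for each bad-index configuration one must pin down the correct ``demoted'' covector $\beta$, verify it lies in $\ker L$, and check that every error term in $dg\wedge\beta\wedge\theta$ is genuinely divisible by a coordinate vanishing on $\Pab$ — and, unlike the maximal-codimension case, there is no $D$-correction available to absorb leftover terms, so the leading term $\pm\psi\,dx_I\wedge dy_J\wedge\al_R\wedge\theta$ must be produced directly by differentiating the single coordinate prefactor of $\omega$. Once the analogues of Claims~1–4 are in place, the conclusion follows exactly as in Lemma~\ref{lem:vettoretangente,caso_n}.
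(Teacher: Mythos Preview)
Your proposal is correct and follows essentially the same route as the paper's proof. The paper likewise reduces to the three ``bad-index'' claims plus the pigeonhole argument, using test forms $\omega=g\,\beta\wedge\theta\in\DH^{2a+b}$ with $\beta\in\ker L$ and $g$ a single coordinate (vanishing on $\Pab$) times $\psi$; the constructions of $\beta$ in each claim coincide with yours up to notation (the paper moves $\bar\imath$ or $\bar\jmath$ between $I$ and a tableau $Q$ rather than speaking of $\tilde\al_R$, but the resulting covectors are the same). Two cosmetic points: in Claim~1 you should place $\bar\imath$ under the \emph{leftmost} length-one column (position $r{+}1$) so that $R'$ is a genuine Young tableau and Remark~\ref{rem:tuttetabelleok} applies; and in Claim~3 the paper avoids your injectivity argument by simply taking $\bar\jmath$ to be the rightmost first-row entry, so that the truncated tableau $Q$ is standard and $dx_{I\cup\{\bar\jmath\}}\wedge dy_J\wedge\al_Q\wedge\theta$ is already a basis element of $\mathcal J^{2a+b}$.
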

\begin{proof}
Observe that  necessarily $a\geq 1$. We assume that also $b\geq 1$ and omit the simple modifications one has to perform in order to treat the case $b=0$.
 
As in Lemma~\ref{lem:vettoretangente,caso_n} we have to prove that~\eqref{eq:tauvstangente} holds for every $\la$ in the basis of $\mathcal J^{2a+b+1}$ provided by Proposition~\ref{prop:baseJintro} (with $k:=2n-2a-b$). To this aim, fix $I,J\subset \unoenne$ and a standard Young tableau $R$ such that
\begin{itemize}
\item $I\cap J=\emptyset$ and $|I|+|J|\leq2n-2a-b$
\item $R$ contains the  integers in the set $\unoenne\setminus(I\cup J)$;
\item the first row of $R$ has length $\ell:=(2a+b-|I|-|J|)/2$ and the second one has length $(2n-2a-b-|I|-|J|)/2$. 
\end{itemize}
Let us observe that the lengths of the two rows of $R$ are  never equal; actually, the difference between these lengths is fixed and equal to $2a+b-n\geq 1$, see also Remark~\ref{rem:Rrettangolare}. In particular, the rightmost element in the first row of $R$ belongs to a column of height one (there is no element in the second row ``below it'').

The proof will be accomplished if we prove that 
\begin{align}
\text{either}\quad 
&\begin{cases}
I=\{a+1,\dots,a+b\}\\
J=\emptyset\\
R= \begin{tabular}{|c|c|c|c|ccc}
\cline{1-7}
$1$ & $2$ &$\ \cdots\ $& $n-a-b$ &\multicolumn{1}{c|}{$n-a-b+1$} &\multicolumn{1}{c|}{$ \cdots$} &   \multicolumn{1}{c|}{$a$}
\\
\cline{1-7}
$a+b+1$ & $a+b+2$ &$\ \cdots\ $& $n$
\\
\cline{1-4}
\end{tabular}\vspace{.2cm}
\end{cases}
\label{eq:stampellebis1}\\
\text{or}\quad &\langle\tau\mid  dx_I\wedge dy_J\wedge\al_R\wedge\theta\rangle=0.\label{eq:stampellebis2}
\end{align}
In the following Claims 1, 2 and 3  we prove that \eqref{eq:stampellebis2} holds under some additional assumptions on $I,J$ or $R$; the argument will be later completed in Claim 4. We  fix again an auxiliary function $\psi\in C^\infty_c(\H^n)$ such that 
\[
\int_\Pab \psi\:d\Shaus^{2a+b+2}=1.
\]
\medskip

{\em Claim 1: if $I\cap\{a+b+1,\dots,n\}\neq\emptyset$, then \eqref{eq:stampellebis2} holds.}

\noindent Fix $\bar\imath\in I\cap\{a+b+1,\dots,n\}$ and define   $\omega\in\DH^{2a+b}$ by
\[
\omega(x,y,t):=x_{\bar\imath}\:\psi(x,y,t)\:dx_{I\setminus\{\bar\imath\}}\wedge dy_J\wedge\al_Q\wedge\theta,
\]
where $Q$ is the Young tableau whose first row is equal to that of $R$ and whose second row is made by the second row of $R$ with the addition of the extra element $\bar\imath$ in the rightmost position. Namely, denoting by $\ell$ and $r$, with $\ell>r$, the lengths of the first and second row of $R$, respectively, we have
\[
Q=\begin{tabular}{|c|c|c|c|cc}
\cline{1-6}
$R^1_1$ & $\ \cdots\ $ &$R^1_r$& $R^1_{r+1}$  &\multicolumn{1}{c|}{$ \cdots$} &   \multicolumn{1}{c|}{$R^1_\ell$}
\\
\cline{1-6}
$R^2_1$ & $\ \cdots\ $& $R^2_r$ &$\bar\imath$
\\
\cline{1-4}
\end{tabular}\,.
\]
The Young tableau $Q$ is not necessarily a standard  one, nonetheless $\omega\in\DH^{2a+b}$ by Remark~\ref{rem:tuttetabelleok}. One can compute
\begin{align*}
d\omega &= \pm\:\psi(x,y,t)\:dx_{I}\wedge dy_J\wedge\al_Q\wedge\theta + O(x_{\bar\imath})\\
& = \pm\:\psi(x,y,t)\:dx_{I}\wedge dy_J\wedge\al_R\wedge\theta + O(x_{\bar\imath}),
\end{align*}
where the second equality is justified by the fact that $dx_I\wedge \al_Q$ contains a factor $dx_{\bar\imath}\wedge(dxy_h- dxy_{\bar\imath})=dx_{\bar\imath}\wedge dxy_h$ for a suitable $h$ (namely, $h=R^1_{r+1}$) appearing in the first row of $R$.  
Since $O(x_{\bar\imath})=0$ on $\Pab$, one gets
\begin{align*}
0 & =
 \partial\Tcurr (\omega) = \Tcurr(d\omega) = \int_\Pab\langle\tau \mid d\omega\rangle d\Shaus^{2a+b+2}
=\pm \langle\tau \mid  dx_I\wedge dy_J\wedge\al_R\wedge\theta\rangle.
\end{align*}
and Claim 1 follows.\medskip

{\em Claim 2: if $J\cap\{a+1,\dots,n\}\neq\emptyset$, then \eqref{eq:stampellebis2} holds.}

\noindent The proof is similar to that of Claim 1. Fix $\bar\jmath\in J\cap\{a+1,\dots,n\}$ and define $\omega\in\DH^{2a+b}$ as 
\[
\omega(x,y,t):=y_{\bar\jmath}\:\psi(x,y,t)\:dx_{I}\wedge dy_{J\setminus\{\bar\jmath\}}\wedge\al_Q\wedge\theta,
\]
where $Q$ is the Young tableau whose first row is equal to that of $R$ and whose second row is made by the second row of $R$ with the addition of the extra element $\bar\jmath$ in the rightmost position. Again,    $\omega\in\DH^{2a+b}$ because of Remark~\ref{rem:tuttetabelleok}. One can compute
\begin{align*}
d\omega 
&= \pm\:\psi(x,y,t)\:dx_{I}\wedge dy_J\wedge\al_Q\wedge\theta + O(y_{\bar\jmath}) \\
&= \pm\:\psi(x,y,t)\:dx_{I}\wedge dy_J\wedge\al_R\wedge\theta + O(y_{\bar\jmath}),
\end{align*}
where, as before, the second equality is justified by the fact that $dy_J\wedge \al_Q$ contains a factor $dy_{\bar\jmath}\wedge(dxy_h- dxy_{\bar\jmath})=dy_{\bar\jmath}\wedge dxy_h$ for a suitable $h$ appearing in the first row of $R$.  
We deduce that
\begin{align*}
0 & =
 \partial\Tcurr (\omega) = \Tcurr(d\omega) = \int_\Pab\langle\tau \mid d\omega\rangle d\Shaus^{2a+b+2}
=\pm \langle\tau \mid  dx_I\wedge dy_J\wedge\al_R\wedge\theta\rangle.
\end{align*}
and Claim 2 follows.\medskip

{\em Claim 3: if the first row of $R$ contains an element $\bar\jmath$ such that $\bar\jmath\geq a+1$, then \eqref{eq:stampellebis2} holds.}

\noindent Since $R$ is a standard Young tableau, also the rightmost element in the first row of $R$   is not smaller than $a+1$; we can then assume that $\bar\jmath$ is precisely this element. As already noticed, there is no element in the second row of $R$ ``below'' $\bar\jmath$. Consider the form $\omega\in\DH^{2a+b}$  defined by
\[
\omega(x,y,t):=y_{\bar\jmath}\:\psi(x,y,t)\:dx_{I\cup\{\bar\jmath\}}\wedge dy_J\wedge \alpha_Q\wedge \theta,
\]
where $Q$ is the (possibly empty)  tableau obtained from $R$ by removing the rightmost entry (i.e., $\bar\jmath$) of the first row. Since $Q$ is a (possibly empty) standard Young tableau containing the same elements of $R$ except for $\bar\jmath$, we have $\omega\wedge d\theta=0$ and in particular $\omega\in\DH^{2a+b}$. Since
\begin{align*}
d\omega & =\psi(x,y,t) dy_{\bar\jmath}\wedge dx_{I\cup\{\bar\jmath\}}\wedge dy_J\wedge \alpha_Q\wedge \theta + O(y_{\bar\jmath}) \\
& =  \pm\:\psi(x,y,t)\: dx_{I}\wedge dy_J\wedge\alpha_Q\wedge dxy_{\bar\jmath}\wedge \theta + O(y_{\bar\jmath}) \\
& =  \pm\:\psi(x,y,t)\:  dx_{I}\wedge dy_J\wedge \alpha_R\wedge \theta + O(y_{\bar\jmath}),
\end{align*}
we deduce as before that
\begin{align*}
0 & =
 \partial\Tcurr (\omega) = \Tcurr(d\omega) = \int_\Pab\langle\tau \mid d\omega\rangle d\Shaus^{2a+b+2}
=\pm \langle\tau \mid  dx_I\wedge dy_J\wedge\al_R\wedge\theta\rangle.
\end{align*}
and Claim 3 follows.\medskip

{\em Claim 4: at least one between \eqref{eq:stampellebis1} and \eqref{eq:stampellebis2} holds.}

\noindent We know that \eqref{eq:stampellebis2} holds if $I,J,R$ satisfy any one of the assumptions of Claims 1, 2, 3. We then assume that none of such assumptions holds, i.e., that
\begin{equation}\label{eq:perstampellebis}
\begin{aligned}
& I\subset\{1,\dots,a+b\}\\
\text{and }& J\subset\{1,\dots,a\}\\
\text{and }& \text{the elements in the first row of $R$ are not greater than $a$,}
\end{aligned}
\end{equation}
and we prove that \eqref{eq:stampellebis1} holds.

By \eqref{eq:perstampellebis}, all the integers $a+b+1,\dots,n$ appear in the second row of $R$: the length of such a row is then at least $n-a-b$. Since the difference between the lengths of the rows of $R$ is  equal to $2a+b-n$, the length of the first row of $R$ is  at least $a$. By the third condition in \eqref{eq:perstampellebis}, the first row of $R$ contains at most $a$ elements, hence it contains precisely the $a$ elements $1,\dots,a$ (in this order). In turn,  the second row contains $n-a-b$ elements, that are forced to be the numbers $a+b+1,\dots,n$ (in this order). In particular, $R$ is the one displayed in \eqref{eq:stampellebis1}.  The remaining integers $a+1,\dots,a+b$, not appearing in $R$, have to belong to either $I$ or $J$; the second condition in \eqref{eq:perstampellebis} implies that they all belong to $I$, and the proof is concluded. 
\end{proof}

Proposition~\ref{prop:ruotiamosupianicanonici} allows to extend Lemmata~\ref{lem:vettoretangente,caso_n} and~\ref{lem:vettoretangente,casogenerale} 
to general vertical planes (Definition~\ref{def:pianiverticali}).
 
\begin{proposition}\label{prop:correntipiane}
Let $k\in\unoenne$, $\tau\in\mathcal J_{2n+1-k}$ and a vertical $(2n+1-k)$-plane $\mathscr P\subset\H^n$ be fixed; assume that the current $\Tcurr:=\tau\Shaus^{Q-k}\res\mathscr P$ is such that $\partial\Tcurr=0$. Then there exists $\eta\in\R$ such that
\[
\tau=\eta\:[t^\H_{\mathscr P}]_\mathcal J.
\]
\end{proposition}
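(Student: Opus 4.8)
The plan is to reduce to the two special cases already settled, namely Lemmata~\ref{lem:vettoretangente,caso_n} (for $k=n$) and~\ref{lem:vettoretangente,casogenerale} (for $k<n$), by means of a $\H$-linear change of variables that straightens $\mathscr P$ onto one of the canonical vertical planes $\Pab$.

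First I would invoke Proposition~\ref{prop:ruotiamosupianicanonici}: since $\mathscr P$ is a vertical plane of dimension $2n+1-k$, there exist non-negative integers $a,b$ with $a+b\leq n$ and $2a+b+1=2n+1-k$ (equivalently $n\leq 2a+b\leq 2n$) together with a $\H$-linear isomorphism $\mathcal L:\H^n\to\H^n$ such that $\mathcal L^*(\theta)=\theta$, $\mathcal L^*(d\theta)=d\theta$ and $\mathcal L(\mathscr P)=\Pab$. The next step is to push the current forward: define $\Tcurr':=\mathcal L_\#\Tcurr$, i.e. $\Tcurr'(\omega):=\Tcurr(\mathcal L^*\omega)$ for $\omega\in\DH^{2n+1-k}$. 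Because $\mathcal L^*$ preserves the spaces $\mathcal J^*$ (Proposition~\ref{prop:JtoJ}) and commutes with the Rumin differential $d_C$ (Corollary~\ref{cor:d_Ccommuta}), the functional $\Tcurr'$ is a well-defined Heisenberg $(2n+1-k)$-current and $\partial\Tcurr'=\mathcal L_\#(\partial\Tcurr)=0$. Moreover $\Tcurr'$ is supported on $\Pab$. Since $\mathcal L^*(\theta)=\theta$ and $\mathcal L^*(d\theta)=d\theta$ force $c_\mathcal L=1$, the isomorphism $\mathcal L$ maps the left-invariant Riemannian volume to itself, so $\mathcal L_\#(\Shaus^{Q-k}\res\mathscr P)$ is a constant multiple of $\Shaus^{Q-k}\res\Pab$; writing $\Tcurr=\tau\Shaus^{Q-k}\res\mathscr P$ one obtains $\Tcurr'=\tau'\Shaus^{Q-k}\res\Pab$ for a constant multi-vector $\tau'\in\mathcal J_{2a+b+1}$, related to $\tau$ by the (invertible) push-forward action of $\mathcal L$ on $\mathcal J_{2n+1-k}$. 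The technical point to check carefully here is precisely this change-of-measure computation and the fact that the transported density is still constant; this uses the rotational invariance of $d$ only through Remark~\ref{rem:isometrie_delgruppo}, but actually it is cleaner to note that $\Shaus^{Q-k}\res\mathscr P$ and $\Shaus^{Q-k}\res\Pab$ are, up to multiplicative constants, the Lebesgue measures of the respective subspaces, and $\mathcal L$ restricted to $\mathscr P$ is a linear isomorphism onto $\Pab$ with constant Jacobian.

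Now apply Lemma~\ref{lem:vettoretangente,caso_n} (if $2a+b=n$, i.e. $k=n$) or Lemma~\ref{lem:vettoretangente,casogenerale} (if $2a+b>n$, i.e. $k<n$) to the current $\Tcurr'=\tau'\Shaus^{2a+b+2}\res\Pab$: since $\partial\Tcurr'=0$, there exists $\eta'\in\R$ with $\tau'=\eta'\,[t^\H_\Pab]_\mathcal J$. Transporting back via $(\mathcal L^{-1})$ and using that $\mathcal L_\#$ sends $[t^\H_{\mathscr P}]_\mathcal J$ to a nonzero multiple of $[t^\H_\Pab]_\mathcal J$ (because $\mathcal L_*$ maps $\Span t^\H_{\mathscr P}$ onto $\Span t^\H_\Pab$), we conclude that $\tau$ is a multiple of $[t^\H_{\mathscr P}]_\mathcal J$, i.e. there exists $\eta\in\R$ with $\tau=\eta\,[t^\H_{\mathscr P}]_\mathcal J$. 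The main obstacle, such as it is, is bookkeeping: verifying that the push-forward of a locally normal current by a $\H$-linear isomorphism is again a (boundaryless) Heisenberg current with the expected representation $\tau'\Shaus^{Q-k}\res\Pab$, which rests on Corollary~\ref{cor:d_Ccommuta} and on the compatibility of $\mathcal L$ with the spherical Hausdorff measures; there is no genuinely new analytic difficulty beyond what Lemmata~\ref{lem:vettoretangente,caso_n} and~\ref{lem:vettoretangente,casogenerale} already provide.
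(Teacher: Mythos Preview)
Your proposal is correct and follows essentially the same route as the paper: reduce to the canonical planes $\Pab$ via the $\H$-linear isomorphism $\mathcal L$ from Proposition~\ref{prop:ruotiamosupianicanonici}, use Corollary~\ref{cor:d_Ccommuta} to see that the pushed-forward current still has zero boundary, observe that $\mathcal L_\#(\Shaus^{Q-k}\res\mathscr P)$ is a constant multiple of $\Shaus^{Q-k}\res\Pab$ (the paper phrases this as a Haar-measure argument, you as a constant-Jacobian one), apply Lemma~\ref{lem:vettoretangente,caso_n} or~\ref{lem:vettoretangente,casogenerale}, and pull back using that $\mathcal L_*$ is an isomorphism of $\mathcal J_{2n+1-k}$ carrying $[t^\H_{\mathscr P}]_\mathcal J$ to a nonzero multiple of $[t^\H_\Pab]_\mathcal J$. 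The only cosmetic difference is that the paper writes the transported density explicitly as $\gamma\,\mathcal L_*(\tau)$ rather than naming it $\tau'$.
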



\begin{proof}
By Proposition~\ref{prop:ruotiamosupianicanonici} there exists a $\H$-linear isomorphism $\mathcal L:\H^n\to\H^n$ such that $\mathcal L(\mathscr P)=\Pab$ and $\mathcal L^*(d\theta)=d\theta$. Consider the push-forward  $\mathcal L_\#\Tcurr$ of $\Tcurr$, i.e., the Heisenberg $(2n+1-k)$-current defined by
\[
\mathcal L_\#\Tcurr(\omega):=\Tcurr(\mathcal L^*(\omega)),\qquad\omega\in\DH^{2n+1-k}.
\]
Also $\mathcal L_\#\Tcurr$ has zero boundary by Corollary~\ref{cor:d_Ccommuta}. The push-forward $\mathcal L_\#(\Shaus^{Q-k}\res\mathscr P)$ of the measure $\Shaus^{Q-k}\res\mathscr P$, defined by
\[
\mathcal L_\#(\Shaus^{Q-k}\res\mathscr P) (E):=\Shaus^{Q-k}(\mathcal L^{-1}(E)\cap\mathscr P),\qquad E\subset\H^n,
\]
is a Haar measure on $\Pab$ and in particular
\[
\mathcal L_\#(\Shaus^{Q-k}\res\mathscr P) = \gamma \Shaus^{Q-k}\res\Pab
\]
for a suitable $\gamma>0$. It follows that for every $\omega\in\DH^{2n+1-k}$
\begin{equation*}
\mathcal L_\#\Tcurr(\omega) 
 = \int_\mathscr P \langle\tau \mid \mathcal L^*(\omega) \rangle \:d\Shaus^{Q-k} 
= \int_\Pab \langle \gamma\mathcal L_*(\tau) \mid \omega \rangle \:d\Shaus^{Q-k}
\end{equation*}
where $\mathcal L_*:\mathcal J_{2n+1-k}\to\mathcal J_{2n+1-k}$ is the isomorphism defined by
\begin{equation}\label{eq:L_*Rumin}
\langle \mathcal L_*(\tau) \mid \la\rangle:= \langle\tau \mid  \mathcal L^*(\la)\rangle\qquad\forall\: \la\in\mathcal J^{2n+1-k}.
\end{equation}
Observe that we are implicitly using Proposition~\ref{prop:JtoJ}. By Corollary~\ref{cor:d_Ccommuta} the current $\mathcal L_\#\Tcurr=\gamma\mathcal L_*(\tau)\Shaus^{Q-k}\res\Pab$ has zero boundary: therefore,
Lemma~\ref{lem:vettoretangente,caso_n} (if $k=n$) or Lemma~\ref{lem:vettoretangente,casogenerale} (if $1\leq k< n$) imply that there exists $\eta\in\R$ such that
\[
\gamma\:\mathcal L_*(\tau) = \eta[t^\H_\Pab]_\mathcal J = \eta\, C\,[\mathcal L_*(t^\H_\mathscr P)]_\mathcal J = \eta \,C\,\mathcal L_*[t^\H_\mathscr P]_\mathcal J
\]
for a suitable $C\neq 0$ depending on $\mathcal L$ and $\mathscr P$. Since $\mathcal L_*:\mathcal J_{2n+1-k}\to\mathcal J_{2n+1-k}$ is an isomorphism, we obtain that $\tau=\eta \,C\,\gamma^{-1}[t^\H_\mathscr P]_\mathcal J$ and the proof is concluded.
\end{proof}

\subsection{Proof of Theorem~\ref{teo:correntinormalirettificabili}}\label{subsec:corr_norm_rettif}
We now prove Theorem~\ref{teo:correntinormalirettificabili}. Recall once again that, for $k\in\unoenne$, the space $\mathcal J_{2n+1-k}$ was introduced as the dual space to $\mathcal J^{2n+1-k}$.  We  however  need to introduce also the dual space to Rumin's space $\bwl^n\h/\mathcal I^n$ and, for convenience of notation, we will denote such dual space by $\mathcal J_{n}$. 
The spaces $\mathcal J_n$ and $\mathcal J_{n+1},\dots,\mathcal J_{2n}$ are endowed, respectively, with the operator norm $|\cdot|$ arising from either the norm on $\bwl^n\h/\mathcal I^n$ introduced in~\eqref{eq:normasuJn}, or from the standard norm on $\mathcal J^{2n+1-k}\subset\bwl^{2n+1-k}\h$, $k=1,\dots, n$. 

\begin{proof}[Proof of Theorem~\ref{teo:correntinormalirettificabili}]
We can without loss of generality assume that $R$ is a $\H$-regular submanifold $S$.
We have to prove that there exists $\zeta:S\to\R$ such that 
\begin{equation}\label{eq:AAA}
\tauT(p)=\zeta(p)[t^\H_{S}(p)]_\mathcal J\qquad\text{for $\|\Tcurr\|_a$-a.e. $p\in S$.}
\end{equation}
Since $\Shaus^{Q-k}\res S$ is  locally $(Q-k)$-Ahlfors regular, we can differentiate the measure $\|\Tcurr\|$ with respect to $\Shaus^{Q-k}\res S$, see e.g.~\cite[Theorem 4.7 and Remark 4.5]{Simon}. In particular, we can write $\|\Tcurr\|_a=f\Shaus^{Q-k}\res S$ for a suitable $f\in L^1_{\mathrm{loc}}(\Shaus^{Q-k}\res S)$ and, 
for $\Shaus^{Q-k}$-a.e. $p\in S$, one has
\begin{equation}\label{eq:CCC}
\int_{S\cap B(p,r)} |f\tauT -f(p)\tauT(p)|\:d\Shaus^{Q-k}
= o\left( \Shaus^{Q-k}(S\cap B(p,r))\right)=o(r^{Q-k})
\end{equation}
and
\begin{equation}\label{eq:BBB}
\|\Tcurr\|_s(B(p,r))=o(r^{Q-k}).
\end{equation}
Statement~\eqref{eq:AAA} (and then  Theorem~\ref{teo:correntinormalirettificabili}) reduces to proving that there exists $\eta:S\to\R$ such that 
\begin{equation}\label{eq:AAAAAA}
f(p)\tauT(p)=\eta(p)[t^\H_{S}(p)]_\mathcal J\qquad\text{for $\Shaus^{Q-k}$-a.e. $p\in S$.}
\end{equation}
Since also $\partial \Tcurr$ has locally finite mass, by Riesz' theorem (recall also Remark~\ref{rem:distriborder0}) there exist a Radon measure $\nu$ and a locally $\nu$-integrable function $\si:\H^n\to\mathcal J_{2n-k}$ such that $|\si|=1$ $\nu$-a.e. and $\partial\Tcurr=\si\nu$, i.e.,
\[
\partial\Tcurr(\omega)=\int\langle\si\mid \omega\rangle\:d\nu,\qquad\omega\in\DH^{2n-k}.
\]
Differentiating $\nu$ with respect to $\Shaus^{Q-k}\res S$ we obtain that for $\Shaus^{Q-k}$-a.e. $p\in S$
\begin{equation}\label{eq:DDD}
\nu(B(p,r))
= O\left( \Shaus^{Q-k}(S\cap B(p,r))\right)=O(r^{Q-k}).
\end{equation}

We claim that  \eqref{eq:AAAAAA} holds for those $p\in S$ for which~\eqref{eq:CCC},~\eqref{eq:BBB} and~\eqref{eq:DDD} hold: this would be enough to conclude. 

Let then such a $p$ be fixed. For  $r>0$ consider the $\H$-linear isomorphism $\Lpr:\H^n\to\H^n$ defined by $\Lpr(q):=\de_{1/r}(p^{-1}q)$ and the push-forward $\Tpr:={\Lpr}_\#\Tcurr$, i.e., the current 
\[
\Tpr(\omega):=\Tcurr(\Lpr^*\omega),\qquad \omega\in\DH^{2n+1-k}.
\]
Observe that, by homogeneity and left-invariance, the equality $\Lpr^*\omega = r^{-(Q-k)}(\omega\circ\Lpr )$ holds for every $\omega\in\DH^{2n+1-k}$. If $\bar r>0$ is such that  $\spt\omega\subset B(0,\bar r)$ one gets 
\begin{align*}
\lim_{r\to 0^+}\Tpr(\omega) 
&  \stackrel{\phantom{\eqref{eq:BBB}}}{=} 
\lim_{r\to 0^+} \frac{1}{r^{Q-k}}\int_{B(p,r\bar r)} \langle \tauT \mid  \omega\circ\Lpr \rangle\: d\|\Tcurr\|\\
& \stackrel{\eqref{eq:BBB}}{=} 
\lim_{r\to 0^+} \frac{1}{r^{Q-k}}\int_{S\cap B(p,r\bar r)} \langle f\tauT \mid  \omega\circ\Lpr \rangle\: d\Shaus^{Q-k}\\
& \stackrel{\eqref{eq:CCC}}{=} 
\lim_{r\to 0^+} \frac{1}{r^{Q-k}}\int_{S\cap B(p,r\bar r)} \langle f(p)\tauT(p) \mid  \omega\circ\Lpr \rangle\: d\Shaus^{Q-k}\\
\intertext{and a change of variables gives}
\lim_{r\to 0^+}\Tpr(\omega)
&  \stackrel{\phantom{\eqref{eq:BBB}}}{=} 
\lim_{r\to 0^+} \int_{\de_{1/r}(p^{-1}S)\cap B(0,\bar r)} \langle f(p)\tauT(p) \mid  \omega \rangle\: d\Shaus^{Q-k}\\
&  \stackrel{\phantom{\eqref{eq:BBB}}}{=} 
\lim_{r\to 0^+} \int_{\de_{1/r}(p^{-1}S)} \langle f(p)\tauT(p) \mid  \omega \rangle\: d\Shaus^{Q-k}.
\end{align*}
By \eqref{eq:convdebolealtangente} we can define the limit current  $\Tcurr_\infty$ as
\[
\Tcurr_\infty(\omega):=\lim_{r\to 0^+}\Tpr(\omega) 
=
\int_{\Tan^\H_{S}(p)} \langle f(p)\tauT(p) \mid  \omega \rangle\: d\Shaus^{Q-k},\qquad \omega\in\DH^{2n+1-k}.
\]

The current $\Tcurr_\infty$ is supported on the plane $\Tan^\H_{S}(p)$. We now study its boundary and observe that for every $\omega\in\DH^{2n-k}$
\begin{align*}
\partial\Tcurr_\infty(\omega)
 &= \Tcurr_\infty(d_C\omega) = \lim_{r\to 0^+}\Tpr(d_C\omega) 
 =\lim_{r\to 0^+} \Tcurr(\Lpr^*d_C\omega) .
\intertext{and by Corollary~\ref{cor:d_Ccommuta}}
\partial\Tcurr_\infty(\omega)
&  = \lim_{r\to 0^+} \Tcurr(d_C\Lpr^*\omega) 
=  \lim_{r\to 0^+} \partial \Tcurr(\Lpr^*\omega) 
= \int_{\H^n} \langle\si \mid  \Lpr^*\omega\rangle\:d\nu.
\end{align*} 
By homogeneity we have $ \Lpr^*\omega=r^{-\Delta}(\omega\circ\Lpr)$, where $\Delta$ is the homogeneity degree of $\omega$ and namely
\begin{align*}
&\Delta=Q-k-2=n& &\hspace{-2cm}\text{if }k=n\\
&\Delta=Q-k-1&&\hspace{-2cm}\text{if }1\leq k\leq n-1.
\end{align*}
We then obtain
\begin{align*}
|\partial\Tcurr_\infty(\omega)|
&  = \lim_{r\to 0^+} {r^{-\Delta}}\left|\int_{\H^n} \langle\si \mid  \omega\circ\Lpr\rangle\:d\nu\right| \\
&\leq  \lim_{r\to 0^+} r^{-\Delta+Q-k}\:\frac{\nu(B(p,r\bar r))}{r^{Q-k}}\:\sup|\omega|=0,
\end{align*} 
the last equality following from~\eqref{eq:DDD} and the inequality $\Delta< Q-k$.

The current $\Tcurr_\infty=f(p)\tauT(p)\Shaus^{Q-k}\res\Tan^\H_{S}(p)$ is such that $\partial\Tcurr_\infty=0$. By Proposition~\ref{prop:correntipiane} there exists $\eta=\eta(p)\in\R$ such that
\[
f(p)\tauT(p)=\eta[t^\H_{\Tan^\H_{S}(p)}]_\mathcal J =\eta [t^\H_{S}(p)]_\mathcal J 
\]
and the proof is accomplished.
\end{proof}

\subsection{The Constancy Theorem in Heisenberg groups}\label{subsec:ConstThmPlanes}
In this section we prove Theorem~\ref{thm:Constancy}. We start by establishing some standard facts inspired by classical results about mollification of distributions. Given $k\in\unoenne$, let  $a,b$ be fixed non-negative integers such that $1\leq a+b\leq n$ and $2a+b=2n-k$. Consider the vertical plane $\Pab$ defined in~\eqref{eq:defPab222}; let us fix a mollification kernel $\varphi\in C^\infty_c(\Pab)$ such that
\[
\int_\Pab \varphi\:d\Shaus^{Q-k}=1\qtaq \spt \varphi\subset B(0,1)\cap \Pab.
\]
As usual, for every $\ep>0$ we define the rescaled kernels $\varphi_\ep:=\ep^{k-Q}(\varphi\circ\de_{1/\ep})$ and, given a Heisenberg $(2n+1-k)$-current $\Tcurr$ with support in $\Pab$, we define the Heisenberg $(2n+1-k)$-current $\Tcurr_\ep$ as
\begin{equation}\label{eq:mollifT}
\Tcurr_\ep(\omega):=\int_\Pab \varphi_\ep(p)\Tcurr(\mathcal L_p^* \omega)\:d\Shaus^{Q-k}(p),\qquad\omega\in\DH^{2n+1-k}
\end{equation}
where $\mathcal L_p(q):=pq$ denotes left-translation by $p\in\Pab$.

\begin{lemma}\label{lem:mollify}
Let $\Tcurr$ be a Heisenberg $(2n+1-k)$-current  with support in $\Pab$ and with locally finite mass; for $\ep>0$, consider $\Tcurr_\ep$  as in~\eqref{eq:mollifT}. Then the following statements hold:
\begin{enumerate}
\item[(i)] $\Tcurr_\ep$ has support in $\Pab$;
\item[(ii)] $\Tcurr_\ep\rightharpoonup \Tcurr$  as $\ep\to 0^+$, i.e., $\Tcurr_\ep(\omega)\to \Tcurr(\omega)$ for every $\omega\in\DH^{2n+1-k}$;
\item[(iii)] there exists a $C^\infty$-smooth map $\tau_\ep:\Pab\to\mathcal J_{2n+1-k}$ such that $\Tcurr_\ep=\tau_\ep\Shaus^{Q-k}\res\Pab$;
\item[(iv)] if $\partial\Tcurr=0$, then $\partial\Tcurr_\ep=0$.
\end{enumerate}
\end{lemma}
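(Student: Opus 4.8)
The plan is to prove (iii) first by an explicit change of variables in the integral defining $\Tcurr_\ep$, and then to read off (i) as an immediate consequence, while (ii) and (iv) are obtained by the standard features of a mollification: an approximate-identity argument and the commutation of left-translations with Rumin's differential. The only genuinely delicate point, which I will isolate below, is this last commutation in degree $k=n$, where $d_C=D$ is Rumin's second-order operator.

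For (iii), since $\Tcurr$ has locally finite mass and support in $\Pab$ one writes $\Tcurr=\tauT\|\Tcurr\|$ with $\|\Tcurr\|$ a Radon measure concentrated on $\Pab$. In the left-invariant coframe $\theta_1,\dots,\theta_{2n+1}$ every $\omega\in\DH^{2n+1-k}$ reads $\omega=\sum_I\omega_I\theta_I$ with $\omega_I\in C^\infty_c(\H^n)$; because left-translations fix the left-invariant forms one has $(\mathcal L_p^*\omega)(q)=\sum_I\omega_I(pq)\,\theta_I|_q$, so that, through the left-invariant trivialization, $\Tcurr(\mathcal L_p^*\omega)=\int_\Pab\langle\tauT(q)\mid\omega(pq)\rangle\,d\|\Tcurr\|(q)$. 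Substituting into the definition of $\Tcurr_\ep$ and applying Fubini (all data are compactly supported and $\|\Tcurr\|$ is locally finite) gives
\[
\Tcurr_\ep(\omega)=\int_\Pab\int_\Pab\varphi_\ep(p)\,\langle\tauT(q)\mid\omega(pq)\rangle\,d\|\Tcurr\|(q)\,d\Shaus^{Q-k}(p).
\]
For fixed $q\in\Pab$ the map $p\mapsto r:=pq$ is an automorphism of $\Pab$, and since $\Pab\cong\H^a\times\R^b$ is unimodular the Haar measure $\Shaus^{Q-k}\res\Pab$ is bi-invariant, so $d\Shaus^{Q-k}(p)=d\Shaus^{Q-k}(r)$ with $p=rq^{-1}$. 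Hence $\Tcurr_\ep=\tau_\ep\,\Shaus^{Q-k}\res\Pab$, where
\[
\tau_\ep(r):=\int_\Pab\varphi_\ep(rq^{-1})\,\tauT(q)\,d\|\Tcurr\|(q)\in\mathcal J_{2n+1-k}.
\]
The map $(r,q)\mapsto\varphi_\ep(rq^{-1})$ is $C^\infty$ on $\Pab\times\Pab$, and for $r$ in a fixed compact set the integrand is supported in a compact set on which $\|\Tcurr\|$ is finite; differentiating under the integral sign yields $\tau_\ep\in C^\infty(\Pab,\mathcal J_{2n+1-k})$, which is (iii). Statement (i) then follows at once, since $\tau_\ep\Shaus^{Q-k}\res\Pab$ is trivially supported in $\Pab$.

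For (ii) I would use that $\Tcurr$ is continuous on $\DH^{2n+1-k}$ together with the continuity of $p\mapsto\mathcal L_p^*\omega$ from $\Pab$ into $\DH^{2n+1-k}$ (translation acts continuously, with the supports staying in a fixed compact set), so that $p\mapsto\Tcurr(\mathcal L_p^*\omega)$ is continuous and equals $\Tcurr(\omega)$ at $p=0$; since $\varphi_\ep\geq 0$, $\int_\Pab\varphi_\ep\,d\Shaus^{Q-k}=1$ and $\spt\varphi_\ep\subset B(0,\ep)\cap\Pab$ shrinks to $\{0\}$, the approximate-identity property gives $\Tcurr_\ep(\omega)\to\Tcurr(\omega)$.

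Finally, for (iv) the key observation is that $\mathcal L_p^*$ commutes with the operators $d_C$ of Rumin's complex: indeed $\mathcal L_p^*\theta=\theta$ and $\mathcal L_p^*(d\theta)=d\theta$ by left-invariance, so $\mathcal L_p^*$ preserves the ideals $\mathcal I^\bullet$ and their annihilators $\mathcal J^\bullet$, descends to the spaces $\Omega_\H^\bullet$ and commutes with $d$ in every degree $\ne n$, while in degree $n$ one repeats verbatim the computation in the proof of Corollary~\ref{cor:d_Ccommuta} with $c_{\mathcal L_p}=1$ (equivalently, $\mathcal L_p$ is a contact diffeomorphism and Rumin's complex is natural under contact maps). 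Granting this, for $\omega\in\DH^{2n-k}$ and assuming $\partial\Tcurr=0$ one gets
\[
\partial\Tcurr_\ep(\omega)=\Tcurr_\ep(d_C\omega)=\int_\Pab\varphi_\ep(p)\,\Tcurr(\mathcal L_p^*d_C\omega)\,d\Shaus^{Q-k}(p)=\int_\Pab\varphi_\ep(p)\,\partial\Tcurr(\mathcal L_p^*\omega)\,d\Shaus^{Q-k}(p)=0.
\]
I expect the main obstacle to be precisely this commutation $\mathcal L_p^*D=D\mathcal L_p^*$ for $k=n$: Corollary~\ref{cor:d_Ccommuta} is stated for $\H$-linear isomorphisms, whereas left-translations are not $\H$-linear, so one must check that the proof there only uses $\mathcal L_p^*\theta=c\theta$, $\mathcal L_p^*(d\theta)=c\,d\theta$ and the naturality of $d$ — all of which hold with $c=1$. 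Everything else is a routine mollification argument, so the write-up should be short once this point is recorded.
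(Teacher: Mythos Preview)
Your proof is correct and follows the same approach as the paper; the only cosmetic differences are that the paper proves (i) directly (if $\spt\omega\cap\Pab=\emptyset$ then the same holds for $\mathcal L_p^*\omega$ for every $p\in\Pab$) rather than deducing it from (iii), and for (ii) invokes the locally finite mass explicitly to bound $|\Tcurr_\ep(\omega)-\Tcurr(\omega)|\leq\sup_{p\in\Pab\cap B(0,\ep)}\|\mathcal L_p^*\omega-\omega\|_{C^0}\cdot\|\Tcurr\|(B(0,R+\ep))$. Your caution in (iv) is well-placed and your resolution correct: the paper cites Corollary~\ref{cor:d_Ccommuta} verbatim although left-translations are not $\H$-linear, but as you observe the proof there only uses $\mathcal L^*\theta=c\theta$, $\mathcal L^*(d\theta)=c\,d\theta$ and the naturality of $d$, all of which hold for $\mathcal L_p$ with $c=1$.
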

\begin{proof}
The statement in (i) is clear: in fact, if $\omega\in\DH^{2n+1-k}$ is such that $\spt \omega\cap\Pab=\emptyset$, then $\spt\mathcal L_p^*\omega\cap\Pab=\emptyset$ for every $p\in\Pab$ and $\Tcurr_\ep(\omega)=0$.

Concerning (ii), let $\omega\in\DH^{2n+1-k}$ be fixed and let $R>0$ be such that $\spt\omega\subset B(0,R)$. Writing $\Tcurr=\tauT\|\Tcurr\|$ as in Remark~\ref{rem:distriborder0}, we estimate
\begin{align*}
|\Tcurr_\ep(\omega)-\Tcurr(\omega)| 
& = \left| \int_\Pab\varphi_\ep(p)\,\Tcurr(\mathcal L_p^* \omega-\omega)\: d\Shaus^{Q-k}(p)\right|\\
&\leq \sup_{p\in\Pab\cap B(0,\ep)}\|\mathcal L_p^* \omega-\omega\|_{C^0(B(0,R+\ep))}\|\Tcurr\|(B(0,R+\ep))
\end{align*}
and in particular $\Tcurr_\ep(\omega)\to \Tcurr(\omega)$ as $\ep\to 0^+$.

Since $\mathcal L_p^* \omega(q)=\omega(pq)$, statement (iii) follows from
\begin{align*}
\Tcurr_\ep(\omega)
&= \int_\Pab\int_\Pab \varphi_\ep(p)\,\langle \tauT(q)\mid \omega(pq)\rangle \,d\|\Tcurr\|(q)\:d\Shaus^{Q-k} (p)\\
&= \int_\Pab \Big\langle \int_\Pab \varphi_\ep(pq^{-1}) \tauT(q)\,d\|\Tcurr\|(q)\: \Big|\:\omega(p) \Big\rangle\:d\Shaus^{Q-k} (p).
\end{align*}

Eventually, if $\partial\Tcurr=0$ one has
\begin{equation}\label{eq:boundarymollif}
\partial\Tcurr_\ep(\omega)=\Tcurr_\ep(d_C\omega)=\int_\Pab \varphi_\ep(p)\Tcurr(d_C(\mathcal L_p^* \omega))\:d\Shaus^{Q-k}(p)=0,
\end{equation}
where we used Corollary~\ref{cor:d_Ccommuta}.
\end{proof}

\begin{remark}
One can more generally observe that, as in~\eqref{eq:boundarymollif},
\[
\partial\Tcurr_\ep(\omega)=\int_\Pab \varphi_\ep(p)\: \partial\Tcurr(\mathcal L_p^* \omega)\:d\Shaus^{Q-k}(p),
\]
i.e.,  $\partial \Tcurr_\ep=(\partial\Tcurr)_\ep$. 
\end{remark}

For the reader's convenience we separate the proof of Theorem~\ref{thm:Constancy} in the cases $k=n$ and $1\leq k\leq n-1$; as one can expect, the former is computationally more demanding because of the use of the second-order operator $D$. 

It is convenient to fix some notation. If $R$ is a Young tableau and the elements displayed in $R$ are all different, by abuse of notation we write $\sum_{k\in R}$ to denote summation on all the elements $k$ displayed in $R$. Moreover, if $R$ is rectangular and $k$ is an element displayed in $R$, we denote by  $[R\!\setminus\! k]$ the (possibly empty) Young tableau obtained from $R$ by removing the column containing $k$. For instance, if
\[
R=
\begin{tabular}{|c|c|c|}
\cline{1-3}
1 & 2 &3\\
\cline{1-3}
0 & 9 &5 
\\
\cline{1-3}
\end{tabular}
\]
then
\[
\sum_{k\in R} f(k)=f(0)+f(1)+f(2)+f(3)+f(5)+f(9)
\qqtaqq 
[R\!\setminus\!9]= \begin{tabular}{|c|c|}
\cline{1-2}
1 &3\\
\cline{1-2}
0  &5 
\\
\cline{1-2}
\end{tabular}\,.
\]

\begin{proof}[Proof of Theorem~\ref{thm:Constancy}, case $k=n$]
Reasoning as in Proposition~\ref{prop:correntipiane}, by  Proposition~\ref{prop:ruotiamosupianicanonici} one can assume without loss of generality that $\mathscr P=\Pab$ for some non-negative integers $a,b$ such that $2a+b=n$. Moreover, by Lemma~\ref{lem:mollify} it is not restrictive to assume that $\Tcurr=\tau\Shaus^{n+2}\res\Pab$ for a suitable $C^\infty$-smooth $\tau:\Pab\to\mathcal J_{n+1}$. By Theorem~\ref{teo:correntinormalirettificabili}, $\tau$ can be written as $\tau=\varphi_\tau [t^\H_\Pab]_\mathcal J$ for some $\varphi_\tau\in C^\infty(\Pab)$; let us prove that $\varphi_\tau $ is constant on $\Pab$.

We are going to utilize test $n$-forms $\omega=f dx_I\wedge dy_J\wedge\al_R$, where $f\in C^\infty_c(\H^n)$ and the triple $(I,J,R)$ is as in Proposition~\ref{prop:baseJintro} (with $k=n$); in particular, $R$ is rectangular. As usual, $\omega$ is a smooth section of $\bwl^n\h_1$, but we identify $\omega$ with an element in $\DH^n$ as in~\eqref{eq:equivbis}. By~\eqref{eq:Dalcondal_v} we have
\begin{equation}\label{eq:Dal}
D\omega= (d\omega)_\mathfrak v +  \theta\wedge d(L^{-1}((d\omega)_{\h_1})).
\end{equation}
Taking into account that
\begin{align*}
d\omega =\:
& \sum_{i\in I} (Y_if) dy_i\wedge dx_I\wedge dy_J\wedge\al_R + \sum_{j\in J} (X_j f) dx_j\wedge dx_I\wedge dy_J\wedge\al_R\\
&+\sum_{k\in R} ((X_kf)dx_k+(Y_kf)dy_k)\wedge dx_I\wedge dy_J\wedge\al_R + (Tf)\theta\wedge dx_I\wedge dy_J\wedge\al_R
\end{align*}
we obtain
\begin{equation}\label{eq:dal_v}
(d\omega)_\mathfrak v = (Tf)\theta\wedge dx_I\wedge dy_J\wedge\al_R
\end{equation}
and
\begin{equation}\label{eq:L-1dal_h1}
\begin{aligned}
L^{-1}((d\omega)_{\h_1}) =\:
& \sum_{i\in I} \pm (Y_if)  dx_{I\setminus\{i\}}\wedge dy_J\wedge\al_R + \sum_{j\in J} \pm 
(X_j f) dx_I\wedge dy_{J\setminus\{j\}}\wedge\al_R\\
&+\sum_{k\in R} \left( \pm(X_kf) dx_{I\cup\{k\}}\wedge dy_J \pm(Y_kf) dx_{I}\wedge dy_{J\cup\{k\}}\right)\wedge\al_{[R\setminus k]}.
\end{aligned}
\end{equation}
The signs $\pm$ appearing in~\eqref{eq:L-1dal_h1} could be easily specified, but they are in fact  irrelevant for our purposes.

Let us fix
\[
\overline I:=\{a+1,\dots,a+b\},\quad\overline J:=\emptyset \qtaq
\overline R:=
\begin{tabular}{|c|c|c|c|}
\cline{1-4}
$1$ & $2$ &$\ \cdots\ $& $a$\\
\cline{1-4}
$a+b+1$ & $a+b+2^{\phantom{2}}$ &$\ \cdots\ $& $n$
\\
\cline{1-4}
\end{tabular}\:.
\]
If $a=0$, then $b=n$, $\overline I=\unoenne$ and $\overline R$ is the empty tableau. If $b=0$, then $a=n/2$ and $\overline I=\emptyset$.

Assume $a\geq 1$, fix a column of $\overline R$ and let $\al,\ga$ be its elements, with $\al<\ga$; in particular, $\ga=\al+n-a$. On choosing
\begin{align*}
& I:=\overline I\cup\{\al\},\quad J:=\{\ga\}\qtaq R:=[\overline R\!\setminus\! \al]\\
& f(x,y,t):=x_\ga\: g(x,y,t)\quad\text{for an arbitrary }g\in C^\infty_c(\H^n)
\end{align*}
one gets from~\eqref{eq:dal_v} and~\eqref{eq:L-1dal_h1}
\[
(d\omega)_\mathfrak v=O(x_\ga)=0\quad\text{on }\Pab,
\]
where we used the notation  introduced before Lemma~\ref{lem:vettoretangente,caso_n}, and
\begin{align*}
\theta\wedge d(L^{-1}((d\omega)_{\h_1})) 
&= \pm (Y_\al X_\ga f) dx_{\overline I}\wedge dy_{\overline J}\wedge\al_{ R}\wedge dxy_\al\wedge\theta + O(x_\ga) + \si\\
&= \pm (Y_\al g) dx_{\overline I}\wedge dy_{\overline J}\wedge\al_{\overline R}\wedge\theta + O(x_\ga) + \si
\end{align*}
where, here and in the following, $\si$ denotes a form (which may vary from line to line) in the annihilator of $[t^\H_\Pab]_{\mathcal J}$; equivalently, $\langle X_1\wedge \dots\wedge X_{a+b}\wedge Y_1\wedge\dots\wedge Y_a\wedge T\mid \si\rangle=0$. In particular, $\langle\tau\mid \si\rangle=0$ and from~\eqref{eq:Dal} we obtain
\begin{equation}\label{eq:distribuzionaleYa}
0=\Tcurr(D\omega)=\pm\int_\Pab \varphi_\tau(Y_\al g)\:d\leb{n+1}
\quad\text{for every }g\in C^\infty_c(\H^n).
\end{equation}

In a similar way, on choosing
\begin{align*}
& I:=\overline I\cup\{\ga\},\quad J:=\{\al\}\qtaq R:=[\overline R\!\setminus\! \al]\\
& f(x,y,t):=y_\ga\: g(x,y,t)\quad\text{for an arbitrary }g\in C^\infty_c(\H^n)
\end{align*}
one gets 
again $(d\omega)_\mathfrak v=O(y_\ga)=0$ on $\Pab$ and
\begin{align*}
\theta\wedge d(L^{-1}((d\omega)_{\h_1})) 
&= \pm (X_\al Y_\ga f) dx_{\overline I}\wedge dy_{\overline J}\wedge\al_{ R}\wedge dxy_\al\wedge\theta + O(y_\ga) + \si\\
&= \pm (X_\al g) dx_{\overline I}\wedge dy_{\overline J}\wedge\al_{\overline R}\wedge\theta + O(y_\ga) + \si
\end{align*}
where again $\si$ denotes a form in the annihilator of $[t^\H_\Pab]_{\mathcal J}$. From~\eqref{eq:Dal} we obtain
\[
0=\Tcurr(D\omega)=\pm\int_\Pab \varphi_\tau(X_\al g)\:d\leb{n+1}
\quad\text{for every }g\in C^\infty_c(\H^n)
\]
which, together with~\eqref{eq:distribuzionaleYa}, gives
\begin{equation}\label{eq:distribuzionaleXYTa}
\begin{aligned}
& X_\al \varphi_\tau = Y_\al \varphi_\tau = 0\qquad\text{for every }\al=1,\dots,a\\
& T \varphi_\tau =  X_1 Y_1 \varphi_\tau - Y_1 X_1 \varphi_\tau=0
\end{aligned}
\end{equation}
We recall once again that the equalities in~\eqref{eq:distribuzionaleXYTa} are proved only under the assumption $a\geq 1$.

If $b\geq 1$ we fix $\be\in\overline I$ and choose
\begin{align*}
& I:=\overline I\cup\{a+b+1\}\setminus\{\be\},\quad J:=\emptyset\qtaq
 R:=
\begin{tabular}{|c|c|c|c|}
\cline{1-4}
$1$ & $2$ &$\ \cdots\ $& $a$\\
\cline{1-4}
$\be$ & $a+b+2^{\phantom{2}}$ &$\ \cdots\ $& $n$
\\
\cline{1-4}
\end{tabular}\\
& f(x,y,t):=y_{a+b+1}\: g(x,y,t)\quad\text{for an arbitrary }g\in C^\infty_c(\H^n).
\end{align*}
The tableau $R$ is obtained from $\overline R$ on replacing the entry $a+b+1$ with $\be$. Then  
\[
(d\omega)_\mathfrak v=O(y_{a+b+1})=0\quad\text{on }\Pab
\]
and
\begin{align*}
\theta\wedge d(L^{-1}((d\omega)_{\h_1})) 
&= \pm (X_\be Y_{a+b+1} f) dx_{\overline I}\wedge dy_{\overline J}\wedge\al_{R}\wedge \theta + O(y_{a+b+1}) + \si\\
&= \pm (X_\be g) dx_{\overline I}\wedge dy_{\overline J}\wedge\al_{\overline R}\wedge\theta + O(y_{a+b+1}) + \si
\end{align*}
where again $\si$ denotes a form annihilating $[t^\H_\Pab]_{\mathcal J}$ and we used the fact that $dx_{\overline I}\wedge dy_{\overline J}\wedge(\al_R-\al_{\overline R})$ annihilates $[t^\H_\Pab]_{\mathcal J}$. From~\eqref{eq:Dal} we obtain
\[
0=\Tcurr(D\omega)=\pm\int_\Pab \varphi_\tau(X_\be g)\:d\leb{n+1}
\quad\text{for every }g\in C^\infty_c(\H^n)
\]
so that
\begin{equation}\label{eq:distribuzionaleXbeta}
X_\be\varphi_\tau=0\qquad\text{for every }\be=a+1,\dots,a+b.
\end{equation}

If $a\geq 1$,~\eqref{eq:distribuzionaleXYTa} and~\eqref{eq:distribuzionaleXbeta} are enough to conclude that $\varphi_\tau$ is constant on $\Pab$. If $a=0$,~\eqref{eq:distribuzionaleXbeta} still holds and we have only to prove that
\begin{equation}\label{eq:distribuzionaleSOLOT}
T\varphi_\tau=0\qquad\text{on }\Pab.
\end{equation}
We then choose $I=\overline I=\unoenne$, $J=\overline J=\emptyset$, $R=\overline R$ (in this case, the empty tableau) and we fix an arbitrary $f\in C^\infty_c(\H^n)$. By~\eqref{eq:Dal},~\eqref{eq:dal_v} and~\eqref{eq:L-1dal_h1}
\begin{align*}
D\omega
&=\theta\wedge\left((Tf)dx_{\unoenne}+d\Big(\sum_{i=1}^n \pm(Y_if)\,dx_{\unoenne\setminus\{i\}}\Big)\right)\\
& = \left(Tf+\sum_{i=1}^n \pm X_iY_if\right)\theta\wedge dx_{\unoenne} + \si 
\end{align*}
for a suitable $\si$ in the annihilator of $[t^\H_\Pab]_{\mathcal J}$. This gives
\[
0=\Tcurr(D\omega)=\int_{\Pab} \left(Tf+\sum_{i=1}^n \pm X_iY_if\right)\varphi_\tau\:d\leb{n+1}\stackrel{\eqref{eq:distribuzionaleXbeta}}{=}\int_{\Pab} (Tf)\varphi_\tau\:d\leb{n+1}
\]
and~\eqref{eq:distribuzionaleSOLOT} follows from the arbitrariness of $f$. This concludes the proof.
\end{proof}

\begin{proof}[Proof of Theorem~\ref{thm:Constancy}, case $1\leq k\leq n-1$]
Reasoning as in Proposition~\ref{prop:correntipiane}, by  Proposition~\ref{prop:ruotiamosupianicanonici} one can assume without loss of generality that $\mathscr P=\Pab$ for some non-negative integers $a,b$ such that $1\leq a+b\leq n$ and $2a+b=2n-k$. Observe that $a\geq 1$. By Lemma~\ref{lem:mollify} it is not restrictive to assume that $\Tcurr=\tau\Shaus^{Q-k}\res\Pab$ for a suitable $C^\infty$-smooth $\tau:\Pab\to\mathcal J_{2n+1-k}$. By Theorem~\ref{teo:correntinormalirettificabili}, $\tau$ can be written as $\tau=\varphi_\tau [t^\H_\Pab]_\mathcal J$ for some $\varphi_\tau\in C^\infty(\Pab)$; let us prove that $\varphi_\tau $ is constant on $\Pab$.

We are going to consider the Heisenberg $(2n-k)$-form $\omega=f dx_I\wedge dy_J\wedge\al_R\wedge\theta$, where
\begin{itemize}
\item $f\in C^\infty_c(\H^n)$;
\item $I\subset\unoenne$, $J\subset\unoenne$, $|I|+|J|\leq k+1$ and  $I\cap J=\emptyset$;
\item $R$ is a (non-necessarily standard) Young tableau that contains the elements of $\unoenne\setminus(I\cup J)$ arranged in two rows of length, respectively, $(2n-k-1-|I|-|J|)/2$  and   $(k+1-|I|-|J|)/2$.
\end{itemize}
Observe that $\omega\in\DH^{2n-k}$ because of Remark~\ref{rem:tuttetabelleok}.
Then
\begin{equation}\label{eq:domega}
\begin{aligned}
d\omega
=& \sum_{i\in I}\pm (Y_if)dx_{I\setminus\{i\}}\wedge dy_J\wedge\al_R\wedge dxy_i\wedge\theta \\
&+ \sum_{j\in J}\pm (X_jf)dx_{I}\wedge dy_{J\setminus\{j\}}\wedge\al_R\wedge dxy_j\wedge\theta\\
&+ \sum_{k\in R} \big( \pm (X_kf)dx_{I\cup\{k\}}\wedge dy_J  \pm (Y_kf)dx_{I}\wedge dy_{J\cup\{k\}}\big)\wedge\al_R\wedge\theta
\end{aligned}
\end{equation}
where, again, the signs $\pm$ will play no role.

We set
\begin{align*}
&\overline I:=\{a+1,\dots,a+b\},\qquad\overline J:=\emptyset\\
&{\overline R}:=
\begin{tabular}{|c|c|c|c|ccc}
\cline{1-7}
$1$ & $2$ &$\ \cdots\ $& $n-a-b$ &\multicolumn{1}{c|}{$ n-a-b+1$} & \multicolumn{1}{c|}{$\ \cdots\ $} & \multicolumn{1}{c|}{$a$}\\
\cline{1-7}
$a+b+1$ & $a+b+2^{\phantom{2}}$ &$\ \cdots\ $& $n$
\\
\cline{1-4}
\end{tabular}
\end{align*}
and fix $\al\in\{1,\dots, a\}$. Observe that $\overline R$ is never rectangular, a fact that plays a role in the following construction. If $\al\geq n-a-b+1$ we define $R$ by removing $\al$ from $\overline R$, i.e.,
\[
R:=
\begin{tabular}{|c|c|c|c|cccc}
\cline{1-8}
$1$ & $\ \cdots\ $& $n-a-b$  & \multicolumn{1}{c|}{$\ \cdots\ $} & \multicolumn{1}{c|}{$\al-1$} & \multicolumn{1}{c|}{$\al+1$} & \multicolumn{1}{c|}{$\ \cdots\ $} &\multicolumn{1}{c|}{$a$}\\
\cline{1-8}
$a+b+1$ & $\ \cdots\ $& $n$
\\
\cline{1-3}
\end{tabular}\:;
\]
otherwise, if $\al\leq n-a-b$ we define a tableau $R$ by removing  from  $\overline R$ the column containing $\al$ and $\al+a+b$ and placing $\al+a+b$ as the rightmost element in the second row, i.e.,
\[
R:=\begin{tabular}{|c|c|c|c|c|c|c|cc}
\cline{1-9}
$1$ & $\cdots\!$& $\al-1$ & $\al+1$ & $\cdots\!$& $n-a-b$  & \multicolumn{1}{c|}{$n-a-b+1$} & \multicolumn{1}{c|}{$\cdots\!$} &\multicolumn{1}{c|}{$a$}\\
\cline{1-9}
$a+b+1$ & $\cdots\!$& $\al-1+a+b$& $\al+1+a+b$& $\cdots\!$ &$n$ & $\al+a+b$
\\
\cline{1-7}
\end{tabular}
\]

With this choice of $R$, we consider the Heisenberg form $\omega=f dx_I\wedge dy_J\wedge\al_R\wedge\theta\in\DH^{2n-k}$ associated with $I:=\overline I$ and $J:=\{\al\}$; by~\eqref{eq:domega}
\begin{align*}
d\omega&= \pm (X_\al f)\: dx_{\overline I}\wedge\al_R\wedge dxy_\al\wedge\theta + \si\\
&= \pm (X_\al f)\: dx_{\overline I}\wedge\al_{\overline R}\wedge\theta + \si,
\end{align*}
where again $\si$ is a form annihilating $t^\H_\Pab$ that can vary from line to line. This gives
\[
0=\Tcurr(d\omega)=\pm\int_\Pab(X_\al f)\varphi_\tau\:d\leb{2a+b+1}\quad\text{for every }f\in C^\infty_c(\H^n)
\]
and in turn
\begin{equation}\label{eq:disXa}
\text{for every $\al=1,\dots,a$,}\qquad X_\al \varphi_\tau=0\text{ on }\Pab.
\end{equation}

Using the same tableau $R$, but choosing $I:=\overline I\cup\{\al\}$ and $J:=\emptyset$, one gets for $\omega:=f dx_I\wedge dy_J\wedge\al_R\wedge\theta\in\DH^{2n-k}$ that
\begin{align*}
d\omega&= \pm (Y_\al f)\: dx_{\overline I}\wedge\al_R\wedge dxy_\al\wedge\theta + \si\\
&= \pm (Y_\al f)\: dx_{\overline I}\wedge\al_{\overline R}\wedge\theta + \si,
\end{align*}
for  $\si$ annihilating $t^\H_\Pab$. Similarly as before we deduce that
 \begin{equation}\label{eq:disYa}
\text{for every $\al=1,\dots,a$,}\qquad Y_\al \varphi_\tau=0\text{ on }\Pab
\end{equation}
that, together with~\eqref{eq:disXa} and the inequality $a\geq 1$, implies
 \begin{equation}\label{eq:disT}
T \varphi_\tau=0\quad\text{on }\Pab.
\end{equation}

If $b=0$,~\eqref{eq:disXa},~\eqref{eq:disYa} and~\eqref{eq:disT} imply that $\varphi_\tau$ is constant on $\Pab$. If $b\geq 1$, we have only to show that
\begin{equation}\label{eq:disXb}
\text{for every $\be=a+1,\dots,a+b$,}\qquad X_\be \varphi_\tau=0\text{ on }\Pab.
\end{equation}
Let $\be\in\{a+1,\dots,a+b\}$ be fixed.
We consider the test form $\omega=f dx_I\wedge dy_J\wedge\al_R\wedge\theta\in\DH^{2n-k}$ where $I:=\overline I\setminus\{\be\}$, $J:=\emptyset$ and $R$ is the tableau obtained from $R$ on placing an extra entry equal to $\be$ as the rightmost element in the second row, namely, 
\[
R:=
\begin{tabular}{|c|c|c|c|c|cc}
\cline{1-7}
$1$ & $2$ &$\ \cdots\ $& $n-a-b$ &\multicolumn{1}{c|}{$ n-a-b+1$} & \multicolumn{1}{c|}{$\ \cdots\ $} & \multicolumn{1}{c|}{$a$}\\
\cline{1-7}
$a+b+1$ & $a+b+2^{\phantom{2}}$ &$\ \cdots\ $& $n$ & $\be$
\\
\cline{1-5}
\end{tabular}\:.
\]
Then
\begin{align*}
d\omega &=\pm(X_\be f) dx_{I\cup\{\be\}}\wedge\al_R\wedge\theta + \si\\
&=\pm(X_\be f) dx_{\overline I}\wedge\al_{\overline R}\wedge\theta + \si
\end{align*}
and, as before, the arbitrariness of $f$ implies~\eqref{eq:disXb}, as desired.
\end{proof}

\section{Proof of  Rademacher's Theorem~\ref{thm:rademacher}}\label{sec:dimrademacher}
This section is devoted to the proof of our main result.  We start with a  boring (but necessary) preliminary observation.

\begin{remark}\label{rem:Rademachersuffpianicanon}
Let $\W,\V$ be homogeneous complementary subgroups of $\H^n$ and let $\f:A\subset\W\to\V$ be fixed. By Remark~\ref{rem:isometrie_delgruppo} there exists an isometric $\H$-linear isomorphism $\mathcal L:\H^n\to\H^n$ such that $\mathcal L(\V)=\V_0:=\exp(\Span\{X_1,\dots, X_k\})$. Let us write $\W_1:=\mathcal L(\W)$ and $A_1:=\mathcal L(A)$; then $\mathcal L(\gr_\f)=\gr_{\f_1}$ for $\f_1:=\mathcal L\circ \f\circ\mathcal L^{-1}:A_1\to\V_0$. Since also $\W_0:=\exp(\Span\{X_{k+1},\dots,Y_n,T\})$ is complementary to $\V_0$, it follows from \cite[Proposition~3.1]{FSJGA} (or, alternatively, from Theorem~\ref{teo:equivdefLip} of the present paper) that $\gr_{\f_1}=\gr_{\f_0}$ for some map $\f_0:A_0\to\V_0$ defined on a suitable $A_0\subset\W_0$.

Let us check that, if $\f$ is intrinsic Lipschitz, so is $\f_0$. Since $\mathcal L$ is an isometry, also $\f_1$ is intrinsic Lipschitz. Moreover, $\W_1 $ is complementary to $\V_0$, hence $\W_1=\gr_L $ for an intrinsic linear map $L:\W_0\to\V_0$; let $C>0 $ be such that $\|L(w_0)\|_\H\leq C\|w_0\|_\H$ for every $w_0\in\W_0$. We prove that, for every $\al>0$, the inclusion
\begin{align*}
&\{w_0v_0:w_0\in\W_0,v_0\in\V_0,\|v_0\|_\H\geq (\al C+\al+C)\|w_0\|_\H\}\\
\subset\ & 
\{w_1v_0':w_1\in\W_1,v_0'\in\V_0,\|v_0'\|_\H\geq\al\|w_1\|_\H\}
\end{align*}
holds: the intrinsic Lipschitz continuity of $\f_0$ will then easily follow from the intrinsic Lipschitz continuity of $\f_1$. Let  $w_0\in\W_0$ and $v_0\in\V_0$ be such that $\|v_0\|_\H\geq (\al C+\al+C)\|w_0\|_\H$; then 
\begin{align*}
& w_0v_0=w_1 v_0'\qquad\text{for }w_1:=w_0L(w_0)\in\W_1 \text{ and }v_0':=L(w_0)^{-1}v_0\in\V_0
\end{align*}
and
\begin{align*}
& \al\|w_1\|_\H\leq \al(1+C)\|w_0\|_\H \leq   \|v_0\|_\H - C\|w_0\|_\H \leq \|v_0\|_\H - \|L(w_0)\|_\H \leq \| v_0'\|_\H,
\end{align*}
as claimed. 

Eventually, let us observe that $\f_0$ is intrinsically differentiable a.e. if and only if $\f$ is intrinsically differentiable a.e.: this follows from the geometric characterization of intrinsic differentiability provided by Proposition~\ref{prop:differ->convtoTanH} (d) and by Remark~\ref{rem:Ahlfors}, which imply that 
\begin{align*}
& \text{$\f_0$ is intrinsically differentiable a.e. on $\W_0$}\\
\Longleftrightarrow\  & \text{the blow-up of $\gr_{\f_0}$ is a vertical plane at $\Shaus^{Q-k}$-almost every point of $\gr_{\f_0}$}\\
\Longleftrightarrow\  & \text{the blow-up of $\gr_{\f}$ is a vertical plane at $\Shaus^{Q-k}$-almost every point of $\gr_{\f}$}\\
\Longleftrightarrow\  & \text{$\f$ is intrinsically differentiable a.e. on $\W$.}
\end{align*}

This discussion shows that, in order to prove Theorem~\ref{thm:rademacher} for intrinsic Lipschitz graphs of codimension at most $n$, it is not restrictive to assume that $\V$ and $\W$ are those defined in~\eqref{eq:Vfissato} and~\eqref{eq:Wfissato}.
\end{remark}

We can now prove our main result. 
For the reader's convenience, the proof is divided into several steps.

\begin{proof}[Proof of Theorem~\ref{thm:rademacher}]
%
As mentioned in the Introduction, thanks to~\cite{AntonelliMerlo} we have to deal only with the case of intrinsic Lipschitz graphs of low codimension; in particular,  $\V$ is an Abelian horizontal subgroup of $\H^n$ and $k:=\dim\V$ is at most $n$. By Remark~\ref{rem:Rademachersuffpianicanon},  we can without loss of generality assume that $\V=\exp(\Span\{X_1,\dots, X_k\})$ and $\W=\exp(\Span\{X_{k+1},\dots, Y_n,T\})$. By Theorem~\ref{thm:estensione} we can also assume that  $\f$ is defined on the whole $\W$. 

{\em Step 1: definition of a current $\Tcurr$ supported on $\gr_\f$.}
By Proposition~\ref{prop:approssimazionelisciaunifLipHeisenberg} 
we can consider a sequence of smooth functions $\f_i:\W\to\V$ such that
\begin{itemize}
\item $\f_i\to\f$  uniformly on $\W$
\item there exists  $C>0$ such that $|\nabla^{\f_i}\F_i(w)|\leq C$ for all $w\in\W$ and all $i\in\N$,
\end{itemize}
where $\F_i$ is the graph map $\F_i(w):=w\f_i(w)$ and $\nabla^{\f_i}\F_i:\W\to\bwl_{2n-k}\h_1$ is defined as in \eqref{eq:defZETA}.
By Lemma~\ref{lem:correntevistaintegrandosuW}, the Heisenberg $(2n+1-k)$-current $\curr{S_i}$ associated with the intrinsic graph $S_i:=\gr_{\f_i}$ can be written as 
\begin{align*}
\curr{S_i}(\omega)
 & = C_{n,k}\int_\W\langle [\nabla^{\f_i}\F_i(w)\wedge T]_\mathcal J\mid \omega(\F_i(w))\rangle\:d\leb{2n+1-k}(w)\qquad\text{for every }\omega\in\DH^{2n+1-k}.
\end{align*}

Possibly passing to a subsequence,  we can assume that there exists $\zeta\in L^\infty(\W,\mathcal J_{2n+1-k})=(L^1(\W,\mathcal J^{2n+1-k}))^*$ such that
\[
[\nabla^{\f_i}\F_i\wedge T]_\mathcal J\ \stackrel{*}{\rightharpoonup} \ \zeta\quad\text{weakly-$*$ in }L^\infty(\W,\mathcal J_{2n+1-k}).
\]
The uniform convergence  $\F_i\to\F$ implies that for every $\omega\in\DH^{2n+1-k}$
\begin{equation}\label{eq:defTTT}
\Tcurr(\omega):=\lim_{i\to\infty}\ \curr{S_i}(\omega)= C_{n,k}\int_\W\langle \zeta(w)\mid \omega(\F(w))\rangle\:d\leb{2n+1-k}(w).
\end{equation}
The Heisenberg  current $\Tcurr$ is clearly supported on $\gr_\f$. The boundary $\partial\Tcurr$ of $\Tcurr$ is the null current: in fact 
\[
\partial\Tcurr(\omega)=\Tcurr(d_C\omega)=\lim_{i\to\infty}\ \curr{S_i}(d_C\omega)=0\qquad\text{for every }\omega\in\DH^{2n-k},
\]
where $d_C$ is as in Remark~\ref{rem:dzero} and the last equality is due to Corollary~\ref{cor:senzabordo}. The equality $\partial\Tcurr=0$ is the key geometric information we will exploit.

Let us prove  that 
\begin{equation}\label{eq:zetanon0}
\zeta(w)\neq 0\qquad \text{for $\leb{2n+1-k}$-a.e. }w\in\W.
\end{equation}
Let $\be\in \mathcal J^{2n+1-k}$ be defined by
\begin{align*}
&\be:= dx_{k+1}\wedge\dots\wedge dx_n\wedge dy_1\wedge\dots\wedge dy_n\wedge\theta\hspace{-2cm}&&\text{if }k<n\\
&\be:=  dy_1\wedge\dots\wedge dy_n\wedge\theta&&\text{if }k=n.
\end{align*}
Then, for every $\chi\in C^\infty_c(\W)$ we have
\begin{align*}
\int_\W \chi(w) \langle\zeta(w)\mid \be\rangle\,d\leb{2n+1-k}(w)
=&  \lim_{i\to\infty}\int_{\W} \chi(w) \underbrace{\langle  \nabla^{\f_i}\F_i(w)\wedge T \mid \be\rangle}_{\equiv 1}\,d\leb{2n+1-k}(w)\\
=& \int_\W \chi(w)\,d\leb{2n+1-k}(w).
\end{align*}
This implies that $\langle  \zeta(w)\mid \be \rangle=1$ for $\leb{2n+1-k}$-a.e. $w\in\W$ and~\eqref{eq:zetanon0} follows.
 
{\em Step 2: statement of sufficient conditions for differentiability.}
Since $\Shaus^{Q-k}\res \gr_\f$ is   $(Q-k)$-Ahlfors regular (Remark~\ref{rem:Ahlfors}), the Lebesgue Differentiation Theorem applies (see e.g.~\cite[Theorem 1.8]{Heinonen}) and we obtain for $\Shaus^{Q-k}$-a.e. $\bar p\in \gr_\f$
\begin{equation}\label{eq:condizione_a)Lebesgue}
\begin{aligned}
\int_{\gr_\f\cap B(\bar p,r)} 
\left|\zeta(\F^{-1}(p))-\zeta(\F^{-1}(\bar p))\right|
\:d\Shaus^{Q-k}(p)
&=   o\left( \Shaus^{Q-k}(\gr_\f\cap B(\bar p,r))\right)\\
&=o(r^{Q-k})\qquad\text{as }r\to0^+.
\end{aligned}
\end{equation}
By~\eqref{eq:zetanon0},~\eqref{eq:equivalenza} and Lemma~\ref{lem:blowuptinvariante},  the following three properties
\begin{align}
&\label{eq:111}
\text{the condition~\eqref{eq:condizione_a)Lebesgue} holds for $\bar p:=\F(\bar w)\in\gr_\f$}\\
& \label{eq:222}
\text{every blow-up of $\f$ at $\bar w$ is $t$-invariant}\\
& \zeta(\bar w)\neq 0\label{eq:333}
\end{align}
hold for $\leb{2n+1-k}$-a.e. $\bar w\in\W$. 
We claim that $\f$ is intrinsically differentiable at every $\bar w\in\W$ such that~\eqref{eq:111},~\eqref{eq:222} and~\eqref{eq:333} hold: this will be enough to conclude. Let such a $\bar w$ be fixed.

{\em Step 3: blow-up at $\bar w$.}
Let $\f_\infty$ be one of the (possibly many) blow-ups of $\f$ at $\bar w$. Namely, there exists a sequence $(r_j)_j$ of positive numbers such that $r_j\to+\infty$ as $j\to+\infty$ and 
\[
\lim_{j\to\infty} (\f_{\bar w})^{r_j}=\f_\infty\qquad\text{locally uniformly on $\W$,}
\]
where $(\f_{\bar w})^{r_j}(w)=\de_{r_j}\big(\f(\bar w)^{-1}\f(\bar w\f(\bar w)(\de_{1/r_j} w)\f(\bar w)^{-1})\big)$ is as in \S\ref{subsec:differenziabilita_blowups}.   For $r>0$ let us introduce the $\H$-linear isomorphisms $\mathcal L_{\bar w,r}:\H^n\to\H^n$ defined by 
\[
\mathcal L_{\bar w,r}(q):=\de_r(\F(\bar w)^{-1}q),\qquad r>0,\ q\in\H^n;
\]
$\mathcal L_{\bar w,r}$ is defined in such a way that $\gr_{(\f_{\bar w})^{r_j}}=\de_{r_j}(\F(\bar w)^{-1}\gr_\f)=\mathcal L_{\bar w,r_j}(\gr_\f)$. Consider the push-forward $\Tcurr_j:=(\mathcal L_{\bar w,r_j})_\#\Tcurr$, i.e., the Heisenberg current defined  by
\[
\Tcurr_j(\omega):=\Tcurr(\mathcal L_{\bar w,r_j}^*\omega)=\Tcurr(r_j^{Q-k}\ \omega\circ \mathcal L_{\bar w,r_j}),\qquad \omega\in\DH^{2n+1-k}
\]
where $\mathcal L_{\bar w,r_j}^*$ denotes pull-back of forms and the last equality comes from left-invariance and homogeneity. Observe that $\partial T_j=0$ for every $j$ because
\[
\Tcurr_j(d_C\omega)=\Tcurr(\mathcal L_{\bar w,r_j}^*(d_C\omega)) =\Tcurr(d_C(\mathcal L_{\bar w,r_j}^*\omega)) =\partial \Tcurr(\mathcal L_{\bar w,r_j}^*\omega)=0.
\]
By Remark~\ref{rem:Ahlfors}  there exist a  constant $C\geq 1$, depending only on the intrinsic Lipschitz constant of $\f$, and a measurable function $J_\f:\W\to\R$ such that 
\begin{equation}\label{eq:equivalenza}
C^{-1} \leq J_\f \leq C\qtaq \Shaus^{Q-k}\res\gr_\f=\F_\#(J_\f\,\leb{2n+1-k}),
\end{equation}
where $\F_\#$ denotes push-forward of measures. 
Using~\eqref{eq:defTTT} and~\eqref{eq:equivalenza} 
\begin{align*}
\Tcurr_j(\omega)
&= C_{n,k}\: r_j^{Q-k}\int_\W \left\langle\zeta(w)\,\left|\, \frac{\omega(\de_{r_j}(\F(\bar w)^{-1}\F(w)))}{J_\f(w)}\right. \right\rangle J_\f(w)\:d\leb{2n+1-k}(w)\\
&= C_{n,k}\: r_j^{Q-k}\int_{\gr_\f} \left\langle{\zeta(\F^{-1}(p))}\,\left|\, \frac{\omega(\de_{r_j}(\F(\bar w)^{-1}p))}{J_\f(\F^{-1}(p))}\right. \right\rangle d\Shaus^{Q-k}(p)\\
&= C_{n,k}\: r_j^{Q-k}\int_{\gr_\f} \left[\left\langle{\zeta(\bar w)}\,\left|\, \frac{\omega(\de_{r_j}(\F(\bar w)^{-1}p))}{J_\f(\F^{-1}(p))}\right. \right\rangle d\Shaus^{Q-k}(p) + o\big(r_j^{-(Q-k)}\big)\right]\\
&= C_{n,k}\: r_j^{Q-k}\int_\W \left[\left\langle {\zeta(\bar w)}\mid \omega(\de_{r_j}(\F(\bar w)^{-1}\F(w))) \right\rangle \:d\leb{2n+1-k}(w) + o\big(r_j^{-(Q-k)}\big)\right]
\end{align*}
where, in the third equality, we used~\eqref{eq:111} and the fact that, if $\omega$ is supported in $B(0,\bar r)$, then $p\mapsto \omega(\de_{r_j}(\F(\bar w)^{-1}p))/J_\f(\F^{-1}(p))$ is  supported in $B(\F(\bar w),\bar r/r_j)$. Therefore
\begin{align*}
\lim_{j\to\infty}\Tcurr_j(\omega)
& = C_{n,k} \lim_{j\to\infty} r_j^{Q-k}\int_\W \left\langle {\zeta(\bar w)}\mid\omega(\de_{r_j}(\F(\bar w)^{-1}\F(w))) \right\rangle \:d\leb{2n+1-k}(w) 
\end{align*}
provided the limit in the right-hand side exists. We now perform the change of variable $w=\bar w\f(\bar w)(\de_{1/r_j}u)\f(\bar w)^{-1}$, $u\in\W$, according to which 
\[
\de_{r_j}(\F(\bar w)^{-1}\F(w))=\F_{\bar w}^{r_j}(u),\qquad\text{where }\F_{\bar w}^{r_j}(u):=u\:( \f_{\bar w})^{r_j}(u).
\]
Therefore
\begin{align*}
\lim_{j\to\infty}\Tcurr_j(\omega)
& = C_{n,k} \lim_{j\to\infty} \int_\W \left\langle   {\zeta(\bar w)}\mid \omega(\F^{r_j}_{\bar w}(u))\right\rangle \:d\leb{2n+1-k}(u) \\
& = C_{n,k}  \int_\W \left\langle  {\zeta(\bar w)}\mid \omega(\F_\infty(u))\right\rangle \:d\leb{2n+1-k}(u)
\end{align*}
due to the uniform convergence of $\F^{r_j}_{\bar w}(u) = u( (\f_{\bar w})^{r_j}(u))$ to the graph map $\F_\infty(u):= u\f_\infty (u)$. 

We obtained that the Heisenberg $(2n+1-k)$-current $\Tcurr_\infty$ defined by
\[
\Tcurr_\infty(\omega):=\lim_{j\to\infty}\Tcurr_j(\omega)= C_{n,k}  \int_\W \left\langle   {\zeta(\bar w)}\mid \omega\circ\F_\infty\right\rangle \:d\leb{2n+1-k},\qquad\omega\in\DH^{2n+1-k}
\]
is supported on $\gr_{\f_\infty}$. Since $\partial\Tcurr_j=0$ for every $j$, than also $\partial\Tcurr_\infty=0$. Moreover, $\f_\infty$ is a uniform limit  of uniformly intrinsic Lipschitz maps, hence it is intrinsic Lipschitz and there exists a measurable function $J_{\f_\infty}:\W\to\R$ such that
\[
C^{-1} \leq J_{\f_\infty} \leq C\qtaq \Shaus^{Q-k}\res\gr_{\f_\infty}={\F_\infty}_\#(J_{\f_\infty}\leb{2n+1-k}).
\]
In particular
\[
\Tcurr_\infty(\omega)= C_{n,k}  \int_{\gr_{\f_\infty}} \left\langle  \left. \frac{\zeta(\bar w)}{J_{\f_\infty}({\F_{\infty}}^{-1}(p))}
\,\right|\,\omega(p)\right\rangle \:d\Shaus^{Q-k}(p),\qquad\omega\in\DH^{2n+1-k},
\]
i.e.,
\[
\Tcurr_\infty=\frac{\zeta(\bar w)}{J_{\f_\infty}\circ\F^{-1}_\infty}\:\Shaus^{Q-k}\res\gr_{\f_\infty}.
\]
By~\eqref{eq:222}, the intrinsic Lipschitz map $\f_\infty$ is $t$-invariant: by Lemma~\ref{lem:tinvariantLipschitz} and Proposition~\ref{prop:Euclrectarerect}, $\f_\infty$ is  Euclidean Lipschitz and $\gr_{\f_\infty}$ is locally $\H$-rectifiable of codimension $k$. By Theorem~\ref{teo:correntinormalirettificabili} and~\eqref{eq:333} we deduce that there exists $\eta:\gr_{\f_\infty}\to\R\setminus\{0\}$ such that
\begin{equation}\label{eq:tHgrinftycostante}
\zeta(\bar w)=\eta(p)[t^\H_{\gr_{\f_\infty}}(p)]_\mathcal J\qquad\text{for $\Shaus^{Q-k}$-a.e.~}p\in\gr_{\f_\infty}.
\end{equation}

{\em Step 4: every blow-up is linear.}
We claim that $\f_\infty$ is intrinsic linear. Let $f_{\f_\infty}:\R^{2n-k}\to\R^k$ be defined by $\f_\infty(w)=f_{\f_\infty}(w_H)$ for every $w\in\W$, where the notation $w_H$ is the one introduced in~\eqref{eq:defw_H}; $f_{\f_\infty}$ is Euclidean Lipschitz continuous by Lemma~\ref{lem:tinvariantLipschitz} (i). By Lemma~\ref{lem:tinvariantLipschitz} (iv),  the gradient $\nabla f_{\f_\infty}(w_H)$ is defined for a.e. $w\in\W$ and it uniquely determines $\Tan^\H_{\gr_{\f_\infty}}(\F_\infty(w))$; since $\f_\infty$ is intrinsic linear if and only if $f_{\f_\infty}$ is linear, $\f_\infty$ is intrinsic linear if and only if $\Tan^\H_{\gr_{\f_\infty}}$ is constant on $\gr_{\f_\infty}$. Assume that $\Tan^\H_{\gr_{\f_\infty}}$ is not constant: then, by~\eqref{eq:tHgrinftycostante} and Proposition~\ref{prop:almassimo2}, there exist  two vertical planes $\mathscr P_1,\mathscr P_2$ that are not rank-one connected and such that 
\[
\text{$\Tan^\H_{\gr_{\f_\infty}}(p)\in\{\mathscr P_1,\mathscr P_2\}$ for $\Shaus^{Q-k}$-a.e.~$p\in\gr_{\f_\infty}$.}
\]
In particular, there exist two $k\times(2n-k)$ matrices $M_1,M_2$ such that $\nabla f_{\f_\infty}\in\{M_1,M_2\}$. By Lemma~\ref{lem:tinvariantLipschitz} (ii) and Remark~\ref{rem:applicazionilinearirankone}, the rank of the matrix $M_1-M_2$ is at least 2. However, a well-known result proved in \cite[Proposition 1]{BallJamesARMA87} (see also \cite[Proposition 2.1]{MullerCetraro}) states that, if a Lipschitz map has only two possible gradients $M_1,M_2$ and rank $(M_1-M_2)\geq 2$, then the map is affine. This proves that $f_{\f_\infty}$ is linear and, actually, that
\begin{equation}\label{eq:M1oppureM2}
\text{either $f_{\f_\infty}\equiv M_1$ or $f_{\f_\infty}\equiv M_2$.}
\end{equation}

{\em Step 5: uniqueness of blow-ups.}
We have proved that every blow-up $\f_\infty$ is intrinsic linear. We now prove that the blow-up of $\f$ at $\bar w$ is unique: by Proposition~\ref{prop:differ->convtoTanH} (c), this is equivalent to the intrinsic differentiability of $\f$ at $\bar w$. 
Assume on the contrary that there exist two different blow-ups $\f_\infty^1,\f_\infty^2$ of $\f$ at $\bar w$; 
observing that the matrices $M_1,M_2$ introduced in Step 4  are uniquely determined by $\zeta(\bar w)$ (via the vertical planes $\mathscr P_1,\mathscr P_2$ provided by Proposition~\ref{prop:almassimo2}), we deduce from~\eqref{eq:M1oppureM2} that, possibly renaming $M_1$ and $M_2$,
\[
\f^1_\infty(w)=M_1 w_H\qtaq \f^2_\infty(w)=M_2 w_H\qquad\forall\: w\in\W.
\]
Let $w\in\W$ be such that $\f^1_\infty(w)\neq\f^2_\infty(w)$, i.e., $M_1 w_H\neq M_2 w_H$. By definition of blow-up, there exist two  diverging sequences $(r_j^1)_{j}$, $(r_j^2)_{j}$ such that
\[
(\f_{\bar w})^{r_j^1}\to \f^1_\infty\qtaq (\f_{\bar w})^{r_j^2}\to \f^2_\infty
\]
and, up to passing to suitable subsequences, we can assume without loss of generality that $r_j^1< r_j^2 < r_{j+1}^1$ for every $j$. Let $\de:=d( \f^1_\infty(w),\f^2_\infty(w))>0$; since the map $r\mapsto (\f_{\bar w})^r(w)$ is continuous and bounded (see Remark~\ref{rem:1/2Holder}), for every large enough $j$ we can find $r_j^3\in(r_j^1,r_j^2)$ such that $d((\f_{\bar w})^{r_j^3}(w),\f^1_\infty(w))\geq \de/3$ and $d((\f_{\bar w})^{r_j^3}(w),\f^2_\infty(w))\geq \de/3$. By Remark~\ref{rem:1/2Holder} and Ascoli-Arzel\`a's Theorem, up to passing to a subsequence we have that $(\f_{\bar w})^{r_j^3}\to\psi$ locally uniformly on $\W$ for some $\psi:\W\to\V$. Observe that $d(\psi(w),\f^1_\infty(w))\geq \de/3$ and $d(\psi(w),\f^2_\infty(w))\geq \de/3$; thus, $\psi$ is a blow-up of $\f$ at $\bar w$ (in particular, it is  $t$-invariant) that is different from both $\f^1_\infty$ and $\f^2_\infty$. This contradicts~\eqref{eq:M1oppureM2}, and the proof is concluded.
\end{proof}

We conclude this section with an observation. Let $\f:A\subset\W\to\V$ be intrinsic Lipschitz. 
Theorem~\ref{thm:rademacher}, together with Remark~\ref{rem:Ahlfors}, implies that a tangent\footnote{In the sense of blow-up limits as in Proposition~\ref{prop:differ->convtoTanH} (d).} plane $\Tan^\H_{\gr_\f}$ to $\gr_\f$ exists $\Shaus^{Q-k}$-a.e. on $\gr_\f$: let us  denote by $t^\H_{\gr_\f}(p)\in\bwl^{2n+1-k}\h$  the unit tangent vector associated with $\Tan^\H_{\gr_\f}(p)$ at $p\in\gr_\f$.
Again, $t^\H_{\gr_\f}(p)$ is defined only up to a sign, hence $p\mapsto t^\H_{\gr_\f}(p)$ could  be not even measurable with respect to $\Shaus^{Q-k}\res\gr_\f$. As one can expect, there are however two canonical choices for the orientation: we present below one of the two, the other being of course the opposite one. See also~\cite{CanarecciOrientability} for some related issues. With this consistent choice of  orientation for $\gr_\f$, one will be allowed to  define the $(2n+1-k)$-Heisenberg  current $\curr{\gr_\f}$ canonically associated with $\gr_\f$ by
\begin{equation}\label{eq:defcanonicalcurrLip}
\curr{\gr_\f}(\omega):=\int_{\gr_\f}\langle t^\H_{\gr_\f}\mid\omega\rangle\:d\Shaus^{Q-k},\qquad\omega\in\DH^{2n+1-k}.
\end{equation}
As a further property, we will prove in Proposition~\ref{prop:grf0bdry} that for entire graphs ($A=\W$) the equality $\partial\curr{\gr_\f}=0$ holds. 

Let us fix our choice of $t^\H_{\gr_\f}$. As in Remark~\ref{rem:Rademachersuffpianicanon}, up to a $\H$-linear map $\mathcal L$ we can assume that $\W,\V$ are as in~\eqref{eq:Vfissato} and~\eqref{eq:Wfissato}. In this case our choice for the orientation of $\gr_\f$ is (compare with~\eqref{eq:tSHgrafici})
\[
t^\H_{\gr_\f}(p):=\frac{\nf\F(\F^{-1}(p))}{|\nf\F(\F^{-1}(p))|}\wedge T
\]
where, remembering~\eqref{eq:defcampizeta} and~\eqref{eq:defZETA}, we set
\[
\nf\F(w):=\nf_{k+1}\F(w)\wedge\dots\wedge\nf_{2n}\F(w)\in\bwl_{2n-k}\h_1
\]
and
\[
\nf_i\F(w):=\left(W_i+\sum_{h=1}^k(\nf\f(w))_{hi}X_h\right)(\F(w)),\qquad i=k+1,\dots,2n.
\]
For general subgroups $\W,\V$, our choice corresponds to fixing a unit tangent vector $t^\H_\W$ and declaring that $\langle t^\H_{\gr_\f}(p),t^\H_\W\rangle>0$ for $\Shaus^{Q-k}$-a.e. $p\in\gr_\f$, where $\langle\cdot ,\cdot\rangle$ is the canonical scalar product on multi-vectors.

Let us point out that, when $\W,\V$ are those in~\eqref{eq:Vfissato} and~\eqref{eq:Wfissato}, as in Remark~\ref{rem:Jacob=norm} we have  $J^\f\f=|\nf\F|$; this equality and Theorem~\ref{thm:formulaarea}, that we will prove in the subsequent Section~\ref{sec:applic}, provide the alternative representation
\begin{equation}\label{eq:defcanonicalcurrLip_ALTERNATIVA}
\curr{\gr_\f}(\omega)=C_{n,k}\int_\W\langle \nf\F\wedge T\mid\omega\circ\F\rangle\:d\leb{2n+1-k},\qquad\omega\in\DH^{2n+1-k}.
\end{equation}

\section{Applications}\label{sec:applic}
In this section we provide some consequences of Theorem~\ref{thm:rademacher}. For computational reasons it is useful to fix a convenient distance on $\H^n$:  therefore, in the present section $d$ denotes the distance $d_\infty$ introduced in~\eqref{eq:dinfty}.

We need to fix some notation. 
When $M$ is a matrix we denote by $|M| $ its Hilbert-Schmidt norm. When $M_1,M_2$ are square matrices, inequalities of the form $M_1\geq M_2$ are understood in the sense of bilinear forms; $I$ denotes the identity matrix. Recall also the notation $W_1,\dots,W_{2n}$ introduced in~\eqref{eq:defW} to denote horizontal left-invariant vector fields. Eventually, if $A$ and $B$ are given sets, $A\Delta B:=(A\setminus B)\cup(B\setminus A)$ denotes their symmetric difference.

The proof of the following lemma closely follows the one of the classical Whitney's Extension Theorem, see e.g.~\cite[Theorem~6.10]{EvansGariepyRevised}. A version of  Whitney's Extension Theorem in $\H^n$ has been proved in~\cite[Theorem~6.8]{FSSCMathAnn2001}, see also~\cite[Theorem~3.2.3]{VittonePhDThesis}.

\begin{lemma}\label{lem:quasiLusin}
For every integers $n\geq 1$ and $1\leq k\leq n$ there exist positive constant $\al_0=\al_0(n,k)$ and $c_0=c_0(n,k)$ with the following property. For every $\de>0$ and every intrinsic Lipschitz map $\f:\W\to\V$ whose intrinsic Lipschitz constant $\al$ is not greater than $\al_0$, there exists $f\in C^1_\H(\H^n,\R^k)$ such that
\begin{align}
&|W_i f(p)|\leq c_0\al\quad\text{for every }p\in\H^n\text{ and }i=k+1,\dots,2n\label{eq:1cerchiato}\\
&\big|\mathrm{col}\big[X_1f(p)|\dots|X_k f(p)\big] - I\big|\leq c_0\al\quad\text{for every }p\in\H^n\label{eq:2cerchiato}\\
& \text{the level set $S:=\{p\in\H^n:f(p)=0\}$ is a $\H$-regular submanifold }\label{eq:SHregolare}\\
&\text{$S=\gr_\psi$ for an intrinsic Lipschitz $\psi:\W\to\V$ with Lipschitz constant at most $c_0\al$}\label{eq:SintrinsicLip}\\
& \Shaus^{Q-k}((\gr_\f\,\Delta\, S)\cup\{p\in\gr_\f\cap S:\Tan^\H_{\gr_\f}(p)\neq \Tan^\H_S(p)\})<\de\label{eq:3cerchiato}\\
& \leb{2n+1-k}(\W\setminus\{\f=\psi\text{ and }\nabla^\f\f= \nabla^\psi\psi\})<\de.\label{eq:4cerchiato}
\end{align}
\end{lemma}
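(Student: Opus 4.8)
The proof of Lemma~\ref{lem:quasiLusin} follows the classical Whitney Extension scheme, adapted to the Heisenberg setting via Theorem~\ref{teo:equivdefLip} and its smooth refinement Proposition~\ref{prop:equivdefLipCinfty}. The plan is as follows.

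\textbf{Step 1: starting point via the smooth defining function.} By Proposition~\ref{prop:equivdefLipCinfty} applied to $\f$, there exist $\de_1=1/4$ and a Lipschitz map $g:\H^n\to\R^k$ with $g\in C^\infty(\H^n\setminus\{g=0\})$, with Lipschitz constant bounded in terms of the intrinsic Lipschitz constant $\al$ of $\f$, such that $\gr_\f\subset\{g=0\}$ and $\mathrm{col}[X_1 g|\dots|X_k g]\geq \tfrac14 I$ a.e. Since $g$ is smooth away from its zero set, on the set $\{g=0\}=\gr_{\tilde\f}$ (an entire intrinsic Lipschitz graph by Remark~\ref{rem:graficoglobale}) the horizontal derivatives $W_i g$, $i=k+1,\dots,2n$, are not a priori continuous across the zero set; the remaining work is to replace $g$ near its zero set by a $C^1_\H$ function $f$ whose zero set is a genuine $\H$-regular submanifold close to $\gr_\f$ in measure.

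\textbf{Step 2: Whitney construction.} Following \cite[Theorem~6.8]{FSSCMathAnn2001} and \cite[Theorem~3.2.3]{VittonePhDThesis} (the Heisenberg Whitney Extension Theorem), I would first choose a closed set $K\subset\gr_\f$ with $\Shaus^{Q-k}(\gr_\f\setminus K)<\de$ on which $\f$ is intrinsically differentiable (possible by Theorem~\ref{thm:rademacher} and Severini--Egorov together with the Ahlfors regularity~\eqref{eq:Ahlfors}) and on which the convergence in the definition of intrinsic differentiability is uniform; equivalently, on $\F(K)$ the blow-up of $\gr_\f$ to its tangent plane $\Tan^\H_{\gr_\f}$ is uniform, and by Lemma~\ref{lem:nablaffijminoredialfa} the entries of $\nabla^\f\f$ are bounded by $\al$ there. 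Then one defines a candidate function $f$ on $\H^n$ by interpolating, via a Whitney partition of unity subordinated to a Whitney decomposition of $\H^n\setminus \F(K)$, the local first-order ``Taylor polynomials'' of $\f$ at the points of $\F(K)$ — concretely, the affine-in-exponential-coordinates functions vanishing on the tangent planes $\Tan^\H_{\gr_\f}(p)$, $p\in\F(K)$, with gradient close to the standard one $\mathrm{col}[X_1|\dots|X_k]$ — and gluing this with $g$ (suitably cut off) far from $K$. The standard Whitney estimates, transplanted to $\H^n$ as in \cite{FSSCMathAnn2001,VittonePhDThesis}, give that $f\in C^1_\H(\H^n,\R^k)$ with $\nabla_\H f$ continuous, that $f=0$ and $\nabla_\H f$ agrees with the normal datum on $\F(K)$, and that $|\nabla_\H f - \mathrm{col}[X_1|\dots|X_k | 0|\dots|0]|\leq c_0\al$ everywhere provided $\al\leq\al_0(n,k)$. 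This yields~\eqref{eq:1cerchiato} and~\eqref{eq:2cerchiato} directly.

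\textbf{Step 3: from the estimates to the remaining conclusions.} Estimate~\eqref{eq:2cerchiato} forces $\mathrm{col}[X_1 f|\dots|X_k f]$ to have full rank $k$ everywhere once $c_0\al<1$, so $S=\{f=0\}$ is $\H$-regular by definition, giving~\eqref{eq:SHregolare}. Moreover~\eqref{eq:1cerchiato}--\eqref{eq:2cerchiato} mean precisely that $f$ satisfies \eqref{eq:(b)2} (in the equivalent ellipticity form~\eqref{eq:b(2)equiv}) with $\de=1-c_0\al>0$ and has Lipschitz constant controlled by $\al$, so Theorem~\ref{teo:equivdefLip} (together with Remark~\ref{rem:graficoglobale} and Remark~\ref{rem:dipendenzaalfa}) gives that $S=\gr_\psi$ for an intrinsic Lipschitz $\psi:\W\to\V$ whose intrinsic Lipschitz constant is bounded by $c_0\al$ (enlarging $c_0$ if necessary), which is~\eqref{eq:SintrinsicLip}. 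For~\eqref{eq:3cerchiato}: on $\F(K)$ we have $f=0$ and $\nabla_\H f$ equal to the horizontal normal of $\gr_\f$, hence $\F(K)\subset S$ and $\Tan^\H_{\gr_\f}=\Tan^\H_S$ there; therefore the exceptional set in~\eqref{eq:3cerchiato} is contained in $(\gr_\f\setminus K)\cup(S\setminus \F(K))$, and $\Shaus^{Q-k}(\gr_\f\setminus K)<\de$ by construction while $\Shaus^{Q-k}(S\setminus\F(K))$ is small by the area formula~\eqref{eq:areaareaareaformula} applied to $\psi$ together with $\leb{2n+1-k}(\pi_\W(\gr_\f\setminus K))$ being small (using the biLipschitz-type control of $\pi_\W$ on intrinsic graphs from Remark~\ref{rem:Ahlfors}) — one shrinks the original $\de$-budget by a dimensional factor at the outset. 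Finally~\eqref{eq:4cerchiato} follows because on $K$ we have $\gr_\f$ and $S$ tangent with the same first-order data, so $\F^{-1}(\F(K))\subset\{\f=\psi,\ \nabla^\f\f=\nabla^\psi\psi\}$ (using Remark~\ref{rem:nablaphiphi=sestesso} and Lemma~\ref{lem:tinvariantLipschitz}(iv)-type identifications at differentiability points), and $\leb{2n+1-k}(\W\setminus\F^{-1}(\F(K)))<\de$ after the same initial rescaling of $\de$.

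\textbf{Main obstacle.} The genuinely delicate point is Step 2: carrying out the Whitney extension so that the produced $f$ is $C^1_\H$ \emph{and} its gradient stays within $c_0\al$ of the reference gradient uniformly on all of $\H^n$ (not just near $K$), which is what makes the zero set an $\H$-regular graph with controlled constant. This requires the quantitative uniform-differentiability on $K$ (from Theorem~\ref{thm:rademacher} plus Egorov), the bound on $\nabla^\f\f$ on $K$ (Lemma~\ref{lem:nablaffijminoredialfa}), and a careful matching, in the Whitney cubes far from $K$, of the interpolated polynomials with the smooth function $g$ from Proposition~\ref{prop:equivdefLipCinfty} so that no large gradient is created in the transition region; all the estimates are the Heisenberg analogues of the Euclidean ones in \cite[Theorem~6.10]{EvansGariepyRevised}, already implemented in \cite{FSSCMathAnn2001,VittonePhDThesis}, so I would invoke those references rather than reprove them in detail.
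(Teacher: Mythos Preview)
Your overall plan (Rademacher $\Rightarrow$ Egorov/Lusin to get a good closed set, then a Whitney-type extension of the first-order data $L_w(p)=(p_1,\dots,p_k)-\nabla^\f\f(w)(p_{k+1},\dots,p_{2n})$) is exactly the paper's approach, and Steps~3 and your handling of~\eqref{eq:SHregolare},~\eqref{eq:3cerchiato},~\eqref{eq:4cerchiato} are fine. However, there is a genuine gap in Step~2, precisely at the point you flag as the ``main obstacle''.

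The function $g$ from Proposition~\ref{prop:equivdefLipCinfty} is not useful here and should be dropped. Its coercivity constant is $1/4$ and its Lipschitz constant is bounded \emph{in terms of} $\al$, but the transverse derivatives $W_i g$, $i\geq k+1$, are \emph{not} $O(\al)$: Lemmata~\ref{lem:lemmacono}--\ref{lem:lemmafunzione} bound them only by fixed constants. Hence ``gluing with $g$ far from $K$'' cannot produce~\eqref{eq:1cerchiato}. Moreover, Whitney cubes far from your closed set $\F(K)$ can still be close to $\gr_\f\setminus K$, where $g$ vanishes, so $g$ contributes nothing there. The paper does not use $g$ at all: on \emph{every} Whitney cube (near or far from $F$) one uses the pure linear data $L_{\F^{-1}(\bar q_p)}(\bar q_p^{-1}\cdot)$ from the nearest point $\bar q_p\in F$.

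The references you cite for Step~2 give a $C^1_\H$ extension matching the data on $K$, but they do \emph{not} give the global bound $|\nabla_\H f-\overline L|\leq c_0\al$, where $\overline L=[\,I_k\mid 0\,]$. The issue is the cross term $\sum_{p}L_{\F^{-1}(\bar q_p)}(\bar q_p^{-1}q)\otimes\nabla_\H v_p(q)$: each summand is only $O(1)$ (value $\sim r_q$ times $|\nabla_\H v_p|\sim r_q^{-1}$). The paper's Step~7 resolves this with a specific cancellation: using $\sum_p\nabla_\H v_p=0$ one subtracts the constant (in~$p$) vector $q_\V-\f(q_\W)$, and then shows, via $|L_w-\overline L|\leq\sqrt{k(2n-k)}\,\al$ together with Remarks~\ref{rem:sealfaminore1/2} and~\ref{rem:epsilonalfa}, that each difference $L_{\F^{-1}(\bar q_p)}(\bar q_p^{-1}q)-(q_\V-\f(q_\W))$ is $O(\al\, r_q)$. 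This is the missing idea in your proposal; it is not a standard Whitney estimate and is not contained in the cited references.

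A smaller point: the bound ``Lipschitz constant of $\psi$ at most $c_0\al$'' in~\eqref{eq:SintrinsicLip} does not follow from Remark~\ref{rem:dipendenzaalfa}, which only yields a bound in terms of $L/\de$ (here $L\sim 1$, $\de\sim 1$). One needs a direct cone argument from~\eqref{eq:1cerchiato}--\eqref{eq:2cerchiato}, as in the paper's Step~8.
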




\begin{proof}
As in Remark~\ref{rem:Rademachersuffpianicanon}, it is not restrictive to assume that $\W$ and $\V$ are the subgroups defined  in~\eqref{eq:Vfissato} and~\eqref{eq:Wfissato}.
Fix $\de>0$ and  an intrinsic Lipschitz function $\f:\W\to\V$  whose intrinsic Lipschitz constant $\al$ is not greater than $\al_0$; the number $\al_0>0$ will be chosen later. We organize the proof in several steps.

{\em Step 0.} We establish some notation and preliminary facts. By Theorem~\ref{thm:rademacher} there exists a measurable set $A\subset\W$ such that $\leb{2n+1-k}(\W\setminus A)=0$ and $\f$ is intrinsically differentiable at every point of $A$. For every $w\in A$  let us introduce the homogeneous homomorphism $L_w:\H^n\to\R^k$
\begin{equation}\label{eq:defL_w}
L_w(p):=(p_1,\dots,p_k) - \nabla^\f\f(w)(p_{k+1},\dots, p_{2n}),\qquad p=(p_1,\dots,p_{2n+1})\in\H^n
\end{equation}
where  $\nabla^\f\f(w)$  is the intrinsic gradient (identified with a $k\times(2n-k)$ matrix)  introduced in Definition~\ref{def:differenziabilita}. Notice that $L_w$ is constructed in such a way that
\begin{equation}\label{eq:Tan=kerL_w}
\Tan^\H_{\gr_\f}(\F(w)) =\ker L_w.
\end{equation}
Moreover, for every $w\in A$
\begin{equation}\label{eq:stimaalfa}
\begin{split}
& \mathrm{col}[X_1 L_w|\dots|X_k L_w]=I\\
& |W_j(L_w)_i|=|(\nabla^\f\f(w))_{ij}|\leq\al\qquad\forall\:i=1,\dots,k,\ \forall\:j=k+1,\dots,2n
\end{split}
\end{equation}
where as usual $\F(w):=w\f(w)$ is the graph map and, in the last formula, we used Lemma~\ref{lem:nablaffijminoredialfa}. By abuse of notation, we will in the sequel identify each $L_w$  with the $k\times2n$ matrix $\nabla_\H L_w=[\ I\ |\ -\nabla^\f\f(w)\ ]$, $I$ being the $k\times k$ identity matrix.  
We also introduce the homogeneous homomorphism $\overline L:\H^n\to\R^k$ defined by
\[
\overline L(p)=(p_1,\dots,p_k),\qquad p=(p_1,\dots,p_{2n+1})\in\H^n
\]
and we  identify $\overline L$ and the $k\times2n$ matrix $[\, I\,|\,0\,]$. We can estimate the  Hilbert-Schmidt norm of $L_w-\overline L$ by
\begin{equation}\label{eq:stimaLw-Lbar}
|L_w-\overline L|\leq \sqrt{k(2n-k)}\,\al\qquad\text{for every }w\in A.
\end{equation}
Let us observe that~\eqref{eq:Tan=kerL_w}, the
Lipschitz continuity of $L_w$ and Proposition~\ref{prop:differ->convtoTanH} (and in particular~\eqref{eq:convtoTanH})
imply that for every $w\in A$
\begin{equation}\label{eq:starstarstar}
\lim_{r\to 0^+} \left(\sup\left\{\frac{L_w(\F(w)^{-1}p)}{d(\F(w),p)}: p\in\gr_\f\cap B(\F(w),r)\right\}\right)=0.
\end{equation}

By Lusin's Theorem there exists a closed set $B\subset A$ such that $\leb{2n+1-k}(\W\setminus B)<\eta$ and $\nabla^\f\f|_B$ is continuous; the number $\eta$ will be chosen later in Step 8 depending on $\de,n$ and $k$. By the Severini-Egorov Theorem, there exists a closed set $C\subset B$ such that $\leb{2n+1-k}(\W\setminus C)<\eta$ and the convergence in~\eqref{eq:starstarstar} is uniform on compact subsets of $C$. 

{\em Step 1}. Define the closed set $F:=\F(C)$ and let $U$ be the open set $U:=\heis^n\setminus F$; for every $p\in\H^n$ we set 
\[
r_p:=\frac1{20}d(p,F)=\frac1{20}\inf\{d(p,q):q\in F\}.
\]
We are going to use a variant of the classical $5r$-covering argument (see e.g.~\cite[Theorem~1.2]{Heinonen}), which  cannot be utilized here since the radii of the balls we use are not uniformly bounded. By Zorn's lemma, there exists a maximal set $\C\subset U$ such that the family of balls $\{B(p,r_p):p\in\C\}$ are pairwise disjoint; we claim that
\[
U=\bigcup_{p\in \C} B(p,5r_p).
\]
The inclusion $\supset$ in the formula above is clear by the definition of $r_p$; assume that the reverse inclusion does not hold, i.e., that there exists $q\in U\setminus \bigcup_{p\in \C} B(p,5r_p)$. By maximality of $\C$, there exists $p\in\C$ and $ p'\in U$ such that $ p'\in B(p,r_p)\cap B(q,r_q)$ and in particular
\[
20 r_q=d(q,F)\leq d(q, p') +d(p', p)+ d(p,F) \leq r_q+21 r_p.
\]
It follows that $r_q\leq \frac{21}{19}r_p$ and in turn
\[
q\in B(q, r_q)\subset B(p,r_p+r_q)\subset B(p,\tfrac{40}{19}r_p)\subset B(p,5r_p),
\]
a contradiction.

{\em Step 2}.  For every $q\in U$ we define
\[
\C_q:=\{p\in \C: B(q,10r_q)\cap B(p,10r_p)\neq\emptyset\}.
\]
We claim that $\# \C_q\leq 129^Q$ and $1/3\leq r_q/r_p\leq 3$ for every $p\in\C_q$. In fact, if $p\in\C_q$ one has
\[
|r_p-r_q|\leq \frac{1}{20}d(p,q)\leq\frac{1}{20}(10r_p+10r_q)=\frac{1}{2}(r_p+r_q).
\]
This implies that $r_p\leq 3r_q$ and $r_q\leq 3r_p$, and the bounds on $r_q/r_p$ follow.

In addition, for every $p\in\C_q$ we have
\[
d(p,q)+r_p\leq 10 (r_p+r_q)+r_p\leq 43 r_q
\]
and in particular $B(p,r_q/3)\subset B(p,r_p)\subset B(q,43r_q)$. The balls $\{B(p,r_q/3):p\in \C_q\}$ are then pairwise disjoint and contained in $B(q,43r_q)$, therefore
\[
\# \C_q\leq\frac{\leb{2n+1}(B(q,43r_q))}{\leb{2n+1}(B(0,r_q/3))}=129^Q,
\]
as claimed.

{\em Step 3}. Now let $\mu:\R\to\R$ be a smooth non-increasing function such that
$$
0\leq\mu\leq 1,\qquad \mu(t)=1\text{ for }t\leq 2^{1/4},\qquad\mu(t)=0\text{ for }t\geq 2.
$$
For every $p\in\C$ define
\[
g_p(q):=\mu\left(\frac{d_{K}(p,q)}{5r_p}\right),
\]
where, as in~\eqref{eq:dKoranyi}, $d_{K}$ is the  homogeneous Kor\'anyi (or Cygan-Kor\'anyi) distance.
Observe that $d_K(p,\cdot)$ is smooth on $\H^n\setminus\{p\}$. 
Being a homogeneous distance (see~\cite{Cygan}), $d_{K}$ is globally equivalent to $d$ and more precisely
$$
d(p',p'')\leq d_{K}(p',p'')\leq 2^{1/4}d(p',p'')\qquad\forall\:p',p''\in\H^n.
$$
It follows that $g_p\in\ci^\infty_c(\heis^n)$, $0\leq g_p\leq 1$ and
\begin{eqnarray}
&& g_p\equiv 1\quad\text{on }B(p,5r_p)\nonumber\\
&& g_p\equiv 0\quad\text{on }\heis^n\setminus B(p,10r_p).\label{gPfuoriCQ}
\end{eqnarray} 
Moreover there is a constant $M=M(n)>0$ such that $|\nabla_\H g_p|\leq M/r_p$; by Step 2, $|\nabla_\H g_p(q)|\leq 3M/r_q$ whenever $p\in \C_q$. Thanks to~\eqref{gPfuoriCQ} one has $g_p(q)=0$ if $p\in\C\setminus \C_q$, hence
\begin{equation}\label{derorizzgP}
|\nabla_\H g_p(q)|\leq 3M/r_q\qquad\text{for all }q\in\heis^n\text{ and }p\in\C.
\end{equation}
Define $\si(q):=\sum_{p\in\C} g_p(q)$ for every $q\in\heis^n$. By~\eqref{gPfuoriCQ} again, one obtains that $g_p\equiv 0$ on $B(q,10r_q)$ whenever $p\notin \C_q$,  so
$$
\si(q')=\sum_{p\in \C_q}g_p(q')\qquad\text{for every $q\in U$  and }q'\in B(q,10r_q).
$$
Observe that $\si\geq 1$ on $U$; in fact, for every $q\in U$ there exists $\tilde p$ such that $q\in B(\tilde p,5r_{\tilde p})$ and in particular $\si(q)\geq g_{\tilde p} (q)=1$. Using~\eqref{derorizzgP} and the inequality  $\# \C_q<129^{Q}$, we deduce that $\si\in\ci^\infty(U)$ and there is a constant $M'=M'(n)>0$ such that
$$
|\nabla_\H\si(q)|\leq\frac{M'}{r_q}\qquad\text{for all }q\in U.
$$
Now we define a partition of the unity $\{v_p\}_{p\in\C}$ on $U$ subordinate to the covering $\{B(p,10r_p):p\in\C)\}$ by setting
$$v_p(q):=\frac{g_p(q)}{\si(q)}.$$
Notice that $v_p\in\ci^\infty_c(U)$ and $\nabla_\H v_p=\frac{\nabla_\H g_p}{\si}-\frac{g_p\,\nabla_\H\si}{\si^2}$; eventually, we deduce that for a suitable $M''=M''(n)>0$ 
\begin{equation}\label{proprietadivP}
\sum_{p\in\C}v_p(q)=1,\qquad\sum_{p\in\C}\nabla_\H v_p(q)= 0\qquad\text{and}\qquad|\nabla_\H v_p(q)|\leq \frac{M''}{r_q}
\end{equation}
for every $q\in U$ and every $p\in\C$.

{\em Step 4}. For every $p\in\C$ choose $\bar q_p\in F$ such that $d(p,\bar q_p)=d(p,F)$; we then define $ f:\heis^n\to\R$ as 
$$
 f(q):=\left\{\begin{array}{ll}
0 & \text{if }q\in F\vspace{.2cm}\\
\displaystyle\sum_{p\in\C} v_p(q) L_{\F^{-1}(\bar q_p)}({\bar q_p\!}^{-1}q)=\sum_{p\in\C_q} v_p(q) L_{\F^{-1}(\bar q_p)}({\bar q_p\!}^{-1}q)\quad & \text{if }q\in U.
\end{array}\right.
$$
Notice that $f\in\ci^\infty(U)$ and 
\begin{equation}\label{eq:gradhf}
\gradh  f(q)=\sum_{p\in\C_q}\Big[  L_{\F^{-1}(\bar q_p)}({\bar q_p\!}^{-1}q) \otimes \nabla_\H v_p(q)  + v_p(q) L_{\F^{-1}(\bar q_p)}
\Big]\qquad\text{on $U$,}
\end{equation}
where we recall the identification between homogeneous homomorphisms $L_w$ and $k\times 2n$ matrices.

{\em Step 5}. We claim that $\gradh  f(q)=L_{\F^{-1}(q)}$ for every $q\in F$. For every such $q$ we define the compact set $H:=F\cap \overline{B(q,1)}$ and, for  $\eta>0$, 
\begin{align*}
\upsilon(\eta):=\ &\sup\left\{ \left|\frac{L_{\F^{-1}(p)}(p^{-1}p')}{d(p,p')}\right|:p,p'\in H,\ 0<d(p,p')\leq\eta \right\}\\
&+ \sup\left\{|L_{\F^{-1}(p)}-L_{\F^{-1}(p')}|:p,p'\in H,\ d(p,p')\leq\eta\right\}.
\end{align*}
The map $p\mapsto L_{\F^{-1}(p)}$ is uniformly continuous on $H$: in fact, $\F^{-1}:H\to C$ is uniformly continuous and, since $\nabla^\f\f$ is continuous on $C$, also $w\mapsto L_w$ is uniformly continuous on the compact set $\F^{-1}(H)$. This, together with the fact that the convergence in~\eqref{eq:starstarstar} is  uniform on compact subsets of $C$, implies that
\begin{equation}\label{eqstarstarstar}
\lim_{\eta\to 0^+}\upsilon(\eta)=0.
\end{equation}

For every $q'\in H$ one has
\begin{equation}\label{casoQ'inH}
\left| f(q')- f(q) - L_{\F^{-1}(q)}(q^{-1}q')\right| = \left| L_{\F^{-1}(q)}(q^{-1}q')\right|\leq \upsilon(d(q,q'))\:d(q,q').
\end{equation}
Instead, for every $q'\in U$ one has
\begin{equation}\label{whitcalcabove}
\begin{split}
&\phantom{=} \left| f(q')- f(q)  - L_{\F^{-1}(q)}(q^{-1}q')\right|\\
&= \left| f(q')-  L_{\F^{-1}(q)}(q^{-1}q')\right|\\
&\leq \sum_{p\in\C_{q'}} v_p(q') |L_{\F^{-1}(\bar q_p)}({\bar q_p\!}^{-1}q') - L_{\F^{-1}(q)}(q^{-1}q')|\\
&\leq \sum_{p\in\C_{q'}} v_p(q')\Big[ |L_{\F^{-1}(\bar q_p)}({\bar q_p\!}^{-1}q') - L_{\F^{-1}(\bar q_p)}(q^{-1}q')|+|(L_{\F^{-1}(\bar q_p)} - L_{\F^{-1}(q)})(q^{-1}q')|\Big]\\
&\leq  \sum_{p\in\C_{q'}} v_p(q')\Big[ |L_{\F^{-1}(\bar q_p)}({\bar q_p\!}^{-1}q)|+|L_{\F^{-1}(\bar q_p)}- L_{\F^{-1}(q)}|d(q,q')\Big].
\end{split}
\end{equation}
Since $20\,r_{q'}\leq d(q,q')$, for every $p\in \C_{q'}$ we obtain
\begin{equation}\label{eq:woody}
\begin{aligned}
d(q,\bar q_p)&\leq  d(q,p) + d(p,\bar q_p) \leq 2d(q,p)
\leq  2(d(q,q')+d(q',p))\\
 &\leq 2(d(q,q')+10(r_{q'}+r_p))
\leq  2(d(q,q')+40r_{q'})
\leq  6\,d(q,q').
\end{aligned}
\end{equation}
Therefore, when $d(q,q')\leq 1/6$ we  have $\bar q_p\in H$ for every $p\in\C_{q'}$; using~\eqref{whitcalcabove} and Step 2 we then obtain
$$
\left| f(q')- f(q)  - L_{\F^{-1}(q)}(q^{-1}q')\right|
 \leq 7\: \upsilon(6\,d(q,q'))\:d(q,q').
$$
Recalling~\eqref{eqstarstarstar}, this inequality and~\eqref{casoQ'inH} eventually give
$$
\left|f(q')- f(q)  - L_{\F^{-1}(q)}(q^{-1}q')\right| = o(d(q,q'))\qquad\text{as }q'\to q
$$
and the claim follows.

{\em Step 6}. Let us prove that  $f\in\ci^1_\heis(\H^n)$; since $f\in C^\infty(U)$, it suffices to prove that $\gradh f$ is continuous on $F$.  We fix $q\in F$ and $q'\in\heis^n$ with $d(q,q')\leq 1/6$ and we define $H$ and $\upsilon$  as in Step 5. If $q'\in F$ we have by Step 5
$$
|\gradh f(q')-\gradh f(q)| = |L_{\F^{-1}(p)}-L_{\F^{-1}(p')}|\leq \upsilon(d(q,q')).
$$
If $q'\in U$ we choose $\bar q\in F$ such that $d(q',\bar q)=d(q',F)=20r_{q'}$ to get
\begin{equation}\label{eqprimadi****}
\begin{split}
|\gradh f(q')-\gradh f(q)|
&\leq |\gradh f(q')-L_{\F^{-1}(\bar q)}| + |L_{\F^{-1}(\bar q)}-L_{\F^{-1}( q)}|\\
&\leq|\gradh f(q')-L_{\F^{-1}(\bar q)}| + \upsilon(2d(q,q')),
\end{split}
\end{equation}
where in the last inequality we used the estimate
$$
d(q,\bar q)\leq d(q,q')+d(q',\bar q)\leq 2d(q,q')
$$
and in particular the fact that $\bar q\in H$.
Using~\eqref{proprietadivP} and~\eqref{eq:gradhf} we estimate 
\begin{equation}\label{eq****}
\begin{split}
&|\gradh f(q')-L_{\F^{-1}(\bar q)}|\\
=\,& \bigg|\sum_{p\in\C_{q'}}\Big[ L_{\F^{-1}(\bar q_p)}({\bar q_p\!}^{-1}q') \otimes \nabla_\H v_p(q')  + v_p(q') L_{\F^{-1}(\bar q_p)} \Big] - \sum_{p\in\C_{q'}}v_p(q')L_{\F^{-1}(\bar q)} \bigg|\\
\leq\, & \sum_{p\in\C_{q'}}\big| L_{\F^{-1}(\bar q_p)}({\bar q_p\!}^{-1}\bar q) \otimes \nabla_\H v_p(q')    \big|
 +\sum_{p\in\C_{q'}}\big| \big(L_{\F^{-1}(\bar q_p) }-L_{\F^{-1}(\bar q)}\big)({\bar q\,}^{-1}q')\otimes  \nabla_\H v_p(q')    \big|\\
&+ \sum_{p\in\C_{q'}}v_p(q')|L_{\F^{-1}(\bar q_p)}-L_{\F^{-1}(\bar q)} |\\
\leq\, & \frac{M''}{r_{q'}}\sum_{p\in\C_{q'}} \upsilon(d(\bar q,\bar q_p))d(\bar q,\bar q_p)
+ \frac{M''}{r_{q'}}\sum_{p\in\C_{q'}}\upsilon(d(\bar q,\bar q_p))d(q',\bar q)
+\sum_{p\in\C_{q'}}\upsilon(d(\bar q,\bar q_p))
\end{split}
\end{equation}
where, in the last inequality, we used the fact that, as in~\eqref{eq:woody},  $\bar q_p\in H$. Recall that $r_p\leq 3 r_{q'}$ for all $p\in\C_{q'}$; therefore, for every such $p$ one can estimate 
\begin{equation}\label{eqwhitterzultima}
d(\bar q, \bar q_p) \leq d(\bar q,q')+d(q',p)+d(p,\bar q_p) \leq 20r_{q'}+10(r_{q'}+r_p)+20r_p
\leq 120 r_{q'}
\end{equation}
and in particular
\begin{equation}\label{eqwhitterzultimaBIS}
d(\bar q, \bar q_p) \leq 6d(q',\bar q)\leq 6 d(q',q)
\end{equation}
for every $p\in\C_{q'}$. Combining~\eqref{eq****}, ~\eqref{eqwhitterzultima} and~\eqref{eqwhitterzultimaBIS} one finds 
$$
|\gradh f(q')-L_{\F^{-1}(\bar q)}|
\leq (120\,M''+20M'' + 1)\sum_{p\in\C_{q'}}\upsilon(d(\bar q,\bar q_p))\leq
(140\,M'' + 1)(129)^Q\upsilon(6d(q',q))
$$
which, together with~\eqref{eqprimadi****}, gives
$$
|\gradh f(q')-\gradh f(q)|\to 0\qquad\text{as }q'\to q,
$$
as claimed.

{\em Step 7.} Let $\al_1:=\min\{1/2,\bar\al(1/2,n,k)\}$, where $\bar\al(1/2,n,k)$ is the number provided by Remark~\ref{rem:epsilonalfa}. We claim that there exists $c_1=c_1(n,k)>0$ such that, if $\al\leq\al_1$, then
\begin{equation}\label{eq:gradhfvicinoLbar}
|\gradh f(q) - \overline L|\leq c_1\,\al \qquad\text{ for every }q\in\H^n.
\end{equation}
We will later choose $\al_0$ and $c_0$ in such a way that $\al_0\leq\al_1$ and $c_0\geq c_1$; in this way, statements~\eqref{eq:1cerchiato} and~\eqref{eq:2cerchiato} are immediate consequences of~\eqref{eq:gradhfvicinoLbar}.

If $q\in F$,~\eqref{eq:gradhfvicinoLbar} follows from Step 5 and~\eqref{eq:stimaLw-Lbar}, hence we can assume $q\in U$. By~\eqref{eq:gradhf} and~\eqref{proprietadivP}
\begin{equation}\label{eq:gradhfmenoLbar}
\begin{split}
\gradh f(q) - \overline L 
=&\sum_{p\in\C_q}  L_{\F^{-1}(\bar q_p)}({\bar q_p\!}^{-1}q) \otimes \nabla_\H v_p(q)  + \sum_{p\in\C_q} v_p(q) (L_{\F^{-1}(\bar q_p)}-\overline L)\\
=& \sum_{p\in\C_q} \Big( L_{\F^{-1}(\bar q_p)}({\bar q_p\!}^{-1}q)-(q_\V-\f(q_\W)) \Big) \otimes \nabla_\H v_p(q)\\
&  + \sum_{p\in\C_q} v_p(q) (L_{\F^{-1}(\bar q_p)}-\overline L),
\end{split}
\end{equation}
where we recall that, for every $p\in\H^n$, we write $p=p_\W p_\V$ for suitable (unique)  $p_\W\in\W$ and $p_\V\in\V$. From~\eqref{eq:stimaLw-Lbar} we obtain
\begin{equation}\label{eq:gradhfmenoLbarstimasecondopezzo}
\bigg| \sum_{p\in\C_q} v_p(q) (L_{\F^{-1}(\bar q_p)}-\overline L)\bigg| \leq \sqrt{k(2n-k)}\,\al.
\end{equation}
We now estimate the first addend in the rightmost side of~\eqref{eq:gradhfmenoLbar}. First, we notice that for every $p\in\C_q$
\begin{equation}\label{eq:gradhfmenoLbarstimaprimopezzoAAA}
\begin{split}
|L_{\F^{-1}(\bar q_p)}({\bar q_p\!}^{-1}q) - \overline L({\bar q_p\!}^{-1}q)|
& \leq  |L_{\F^{-1}(\bar q_p)} - \overline L| \:d(q, \bar q_p)\\
&\leq \sqrt{k(2n-k)}\,\al\:(d(q,p)+d(p,\bar q_p))\\
& \leq \sqrt{k(2n-k)}\,\al\:(10(r_p+r_q)+20 r_p)\\
&\leq 100 \sqrt{k(2n-k)}\,\al\:r_q.
\end{split}
\end{equation}
Noticing that $\overline L({\bar q_p\!}^{-1}q)={({\bar q_p\!}^{-1}q)}_\V$ we deduce that
\[
| \overline L({\bar q_p\!}^{-1}q)- (q_\V-\f(q_\W))| 
= |q_\V-(\bar q_p)_\V+\f(q_\W)-q_\V|
=|\f(q_\W)-(\bar q_p)_\V|
= |\f(q_\W)-\f((\bar q_p)_\W)| 
\]
and by Remark~\ref{rem:sealfaminore1/2}
\begin{equation*}
\begin{split}
| \overline L({\bar q_p\!}^{-1}q)- (q_\V-\f(q_\W))| 
&\leq 2\al d(\bar q_p,\F(q_\W))\\
&\leq 2\al(d(\bar q_p,p)+d(p,q)+d(q,\F(q_\W)))\\
&=2\al(d(\bar q_p,p)+d(p,q)+|q_\V-\f(q_\W)|)\\
&\leq2\al(20 r_p + 10(r_p+r_q) + 2 d(q,\gr_\f) ), 
\end{split}
\end{equation*}
where in the last inequality we used Remark~\ref{rem:epsilonalfa} with $\ep=1/2$. Recalling that $r_p\leq 3r_q$ for every $p\in\C_q$ we deduce
\begin{equation}\label{eq:gradhfmenoLbarstimaprimopezzoBBB}
| \overline L({\bar q_p\!}^{-1}q)- (q_\V-\f(q_\W))|\leq 280\,\al\, r_q
\end{equation}
and the claim~\eqref{eq:gradhfvicinoLbar} is now a consequence of~\eqref{eq:gradhfmenoLbar},~\eqref{eq:gradhfmenoLbarstimasecondopezzo},~\eqref{eq:gradhfmenoLbarstimaprimopezzoAAA},~\eqref{eq:gradhfmenoLbarstimaprimopezzoBBB} and the last statement in~\eqref{proprietadivP}.

{\em Step 8.} Denote  by $K>0$ a constant with the property (see e.g.~\cite{GNLip}) that
\[
|g(p)-g(q)|\leq K \Big(\sup_{\H^n} |\gradh g|\Big)d(p,q)\qquad\text{for every $g\in C^1_\H(\H^n,\R^k)$ and  }p,q\in\H^n.
\]
We now fix $c_0:=\max\{c_1,2Kc_1\}$ and $\al_0:=\min\{\al_1,(2c_1)^{-1}\}$. The inequality~\eqref{eq:gradhfvicinoLbar} now implies that, if $\al\leq\al_0$, then col$[X_1f(p)|\dots|X_k f(p)]\geq \tfrac12I$, hence $\gradh f(p)$ has rank $k$ for every $p\in\H^n$. In particular, the level set $S=\{p\in\H^n:f(p)=0\}$ is a $\H$-regular submanifold and statement~\eqref{eq:SHregolare} follows. 

Theorem~\ref{teo:equivdefLip} ensures that  $S=\gr_\psi$ for an intrinsic Lipschitz $\psi:\W\to\V$. 
We claim that the intrinsic Lipschitz constant of $\psi$ is at most $2Kc_1\al$: to this aim, fix $w\in\W$ and $v\in\V\setminus\{0\}$ such that $\|w\|_\H< \|v\|_\H/(2Kc_1\al)$, so that $wv\in C_\beta$ for some $\beta>2Kc_1\al$. For every $q\in\H^n$  we have $\overline L(qw)=\overline L(q)$ and in particular
\begin{align*}
|f(qw)-f(q)|&= |(f-\overline L)(qw)-(f-\overline L)(q)|\leq K\Big(\sup_{\H^n}|\gradh f-\overline L|\Big)\|w\|_\H< \|v\|_\H/2,
\end{align*}
the last inequality following from~\eqref{eq:gradhfvicinoLbar}. Now, for every $p\in S=\gr_\psi$ we have $f(p)=0$, hence
\begin{align*}
|f(pwv)| & =|f(pwv)-f(p)|\\
&\geq |f(pwv)-f(pw)|-|f(pw)-f(p)|\\
&> \left\langle f(pwv)-f(pw),\frac{v}{|v|}\right\rangle -\frac{\|v\|_\H}{2}\ \geq\ 0,
\end{align*}
where we used Remark~\ref{rem:coercivoderivate}. This proves that $f(pwv)\neq 0$, hence $\gr_\psi\cap pC_\be=\{p\}$ for every $p\in\gr_\psi$ and all $\be>2Kc_1\al$. This implies that the intrinsic Lipschitz constant of $\psi$ is at most $2Kc_1\al$ and statement~\eqref{eq:SintrinsicLip} follows because $c_0\geq 2K c_1$.

{\em Step 9.} By construction, for every $w\in C$ we have $\F(w)\in F\subset\{f=0\}$, hence $\f(w)=\psi(w)$ and
\[
\Tan^\H_{S}(\F(w))=\ker \nabla_\H f(\F(w))=\ker L_w \stackrel{\eqref{eq:Tan=kerL_w}}{=}\Tan^\H_{\gr_\f}(\F(w)).
\]
In particular, the inclusion 
\[
C\subset \{w\in\W:\f(w)=\psi(w)\text{ and }\nabla^\f\f(w)= \nabla^\psi\psi(w)\}
\]
holds, and~\eqref{eq:4cerchiato} follows provided $\eta<\delta$. The  inclusion above also guarantees that
\begin{equation}\label{eq:sfcp1}
(\gr_\f\,\Delta\, S)\cup\{p\in\gr_\f\cap S:\Tan^\H_{\gr_\f}(p)\neq \Tan^\H_S(p)\}\subset \F(\W\setminus C)\cup \Psi(\W\setminus C),
\end{equation}
where $\Psi$ is the graph map $\Psi(w):=w\psi(w)$. The intrinsic Lipschitz constant of $\f$ and $\psi$ are both bounded by $\max\{\al_0,c_0\al_0\}$, which depends only on $n$ and $k$; then, by Remark~\ref{rem:Ahlfors}, there exists  $\kappa=\kappa(k,n)>0$ such that
\begin{equation}\label{eq:sfcp2}
\Shaus^{Q-k}\res\gr_\f
\leq
\kappa\: \F_\#(\leb{2n+1-k}\res\W)
\qtaq
\Shaus^{Q-k}\res\gr_\psi
\leq
\kappa\: \Psi_\#(\leb{2n+1-k}\res\W).
\end{equation}
Statements~\eqref{eq:sfcp1} and~\eqref{eq:sfcp2} imply that
\[
\Shaus^{Q-k}((\gr_\f\,\Delta\, S)\cup\{p\in\gr_\f\cap S:\Tan^\H_{\gr_\f}(p)\neq \Tan^\H_S(p)\})\leq 2\kappa\eta
\]
and~\eqref{eq:3cerchiato} follows provided we also impose $\eta<(2\kappa)^{-1}\de$. This concludes the proof.
\end{proof}

The following Theorem~\ref{thm:Lusin} is one of the main results of this section; recall that it implies Theorem~\ref{thm:Lusinbreve}.

\begin{theorem}\label{thm:Lusin}
Let $A\subset\W$ be an open set and $\f:A\to\V$ an intrinsic Lipschitz function. Then, for every $\ep>0$ there exists an intrinsic Lipschitz function $\psi:A\to\V$ such that $\gr_\psi$ is a $\H$-regular submanifold and 
\begin{align}
& \Shaus^{Q-k}((\gr_\f\,\Delta\, \gr_\psi)\cup\{p\in\gr_\f\cap \gr_\psi:\Tan^\H_{\gr_\f}(p)\neq \Tan^\H_{\gr_\psi}(p)\})<\ep\label{eq:intersezionegrossaS}\\
& \leb{2n+1-k}(A\setminus\{w\in A:\f(w)=\psi(w)\text{ and }\nabla^\f\f(w)= \nabla^\psi\psi(w)\})<\ep.\label{eq:intersezionegrossaW}
\end{align}
\end{theorem}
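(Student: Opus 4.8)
The proof of Theorem~\ref{thm:Lusin} is essentially a localization argument built on top of Lemma~\ref{lem:quasiLusin}, whose only defect is that it applies under the \emph{a priori} smallness assumption $\al\leq\al_0$ on the intrinsic Lipschitz constant of $\f$ and produces a global graph over all of $\W$ rather than one defined on the open set $A$. The plan is to remove the smallness assumption by a covering-and-rescaling trick and to handle the domain $A$ by an exhaustion.

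First I would reduce to the case of small Lipschitz constant. As in Remark~\ref{rem:Rademachersuffpianicanon}, up to an isometric $\H$-linear isomorphism we may assume $\W,\V$ are the subgroups in~\eqref{eq:Vfissato}--\eqref{eq:Wfissato}, and by Theorem~\ref{thm:estensione} we may assume $\f$ is defined on all of $\W$ (keeping track at the end of the restriction to $A$). The point is that dilations rescale the intrinsic Lipschitz constant trivially: the function $\f^r(w):=\de_r\f(\de_{1/r}w)$ has the \emph{same} intrinsic Lipschitz constant as $\f$, so dilations do not help. Instead one uses the following: fix a reference point and note that, while the intrinsic Lipschitz constant is scale-invariant, the graph $\gr_\f$ can be covered by countably many images under dilations/left-translations of a fixed ball, and on each such piece one is free to perform the construction of Lemma~\ref{lem:quasiLusin} \emph{provided} the constant is small. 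The cleaner route is to invoke a result of Franchi--Serapioni--Serra Cassano (\cite{FSJGA}, already cited in the excerpt via~\cite[Proposition~3.1]{FSJGA} and Remark~\ref{rem:Ahlfors}): an intrinsic Lipschitz graph with arbitrary constant can, after a suitable $\H$-linear change of splitting on small balls, be written locally as an intrinsic Lipschitz graph with arbitrarily small constant; equivalently, one subdivides $\W$ into countably many pieces $\{E_j\}$ of small diameter on which, after translating and dilating, $\f$ restricted to a neighbourhood has small intrinsic Lipschitz constant. Alternatively — and this is the route I would actually take — one observes that Lemma~\ref{lem:quasiLusin} is proved by a Whitney-extension construction that in fact only needs $\al\leq\al_1$ for the \emph{qualitative} conclusions \eqref{eq:SHregolare}--\eqref{eq:SintrinsicLip} and $\al\leq\al_0$ for the quantitative bounds~\eqref{eq:1cerchiato}--\eqref{eq:2cerchiato}; since here we do not need the explicit constants, only a version of the lemma valid for small $\al$ is required, and the reduction to small $\al$ is done by covering $\gr_\f$ by countably many balls $B(p_h,\rho_h)$ such that $\de_{1/\rho_h}(p_h^{-1}\gr_\f)$, viewed over a fixed ball of $\W$, has small constant.

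The main step is then the gluing. Having fixed $\ep>0$, cover $\W$ (or $A$) by countably many measurable sets $\{E_h\}_{h\in\N}$ with the properties above; on each $E_h$ apply (the rescaled form of) Lemma~\ref{lem:quasiLusin} with error parameter $\de_h:=\ep 2^{-h-2}$ to produce $f_h\in C^1_\H$ and an associated $\H$-regular graph $\gr_{\psi_h}$. One then defines the closed ``good set'' $C_h\subset E_h$ on which $\f=\psi_h$ and $\nabla^\f\f=\nabla^{\psi_h}\psi_h$, with $\leb{2n+1-k}(E_h\setminus C_h)<\de_h$; setting $G:=\bigcup_h C_h$ one gets $\leb{2n+1-k}(A\setminus G)<\ep/2$. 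The $C^1_\H$ function realizing the Lusin approximation on all of $A$ is obtained by a second Whitney-type patching of the local defining functions $f_h$ over the open cover; since on overlaps the graphs coincide on the good sets and the horizontal gradients are controlled, one can run the same $5r$-covering/partition-of-unity argument as in the proof of Lemma~\ref{lem:quasiLusin} (Steps 1--8) globally, using the closed set $F:=\F(G)$ in place of $\F(C)$. This yields a single $f\in C^1_\H(\heis^n,\R^k)$ with $\nabla_\H f$ of full rank near $F$, whose zero set is a $\H$-regular submanifold that is an entire intrinsic Lipschitz graph $\gr_\psi$ by Theorem~\ref{teo:equivdefLip}, and which agrees with $\f$ (together with intrinsic gradients) on $G$. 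The estimate~\eqref{eq:intersezionegrossaW} is then immediate from $\leb{2n+1-k}(A\setminus G)<\ep$, and~\eqref{eq:intersezionegrossaS} follows exactly as in Step~9 of the proof of Lemma~\ref{lem:quasiLusin}: the symmetric-difference set and the set of disagreeing tangent planes are contained in $\F(A\setminus G)\cup\Psi(A\setminus G)$, and by the Ahlfors regularity of intrinsic Lipschitz graphs (Remark~\ref{rem:Ahlfors}, with the uniform bound on the intrinsic Lipschitz constants of $\f$ and $\psi$) one has $\Shaus^{Q-k}\res\gr_\f\leq\kappa\,\F_\#(\leb{2n+1-k}\res A)$ and similarly for $\psi$, so $\Shaus^{Q-k}$ of that set is $\leq 2\kappa\,\leb{2n+1-k}(A\setminus G)$, which is $<\ep$ after adjusting constants. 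Finally, the restriction $\psi|_A$ is still intrinsic Lipschitz (restriction does not increase the constant) and $\gr_{\psi|_A}\cap\F(A)$ still contains $F$, so all conclusions hold for the open domain $A$.

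The hard part will be making the two-layer Whitney patching rigorous, i.e.\ verifying that the globally patched defining function $f$ is genuinely $C^1_\H$ and that its gradient stays uniformly close to the reference homomorphism $\overline L$ \emph{across the different local pieces}, since a priori the $f_h$ live in different rescaled coordinates with different reference planes. The clean way around this obstacle is to avoid patching the $f_h$ themselves and instead patch the linear maps $L_w:=\nabla_\H f_h$ directly, exactly as in Step~4 of Lemma~\ref{lem:quasiLusin}, using a single global good set $G\subset A$ on which $\nabla^\f\f$ is continuous (Lusin) and on which the uniform-differentiability estimate~\eqref{eq:starstarstar} holds uniformly (Severini--Egorov); then the entire construction of Steps~1--9 goes through verbatim with $C$ replaced by $G$, and no reference to the auxiliary small-constant pieces $E_h$ is needed at all — the covering trick is only used to \emph{establish} that such a $G$ exists with $\f$ intrinsically differentiable on a full-measure subset and with the relevant uniform estimates, which is precisely Theorem~\ref{thm:rademacher} together with Remark~\ref{rem:Ahlfors}. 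So in fact the proof of Theorem~\ref{thm:Lusin} reduces to observing that Lemma~\ref{lem:quasiLusin}'s proof never used $\al\le\al_0$ except to get the explicit constants in~\eqref{eq:1cerchiato}--\eqref{eq:2cerchiato}, and that for the qualitative statements it suffices to first localize via the change-of-splitting lemma of~\cite{FSJGA} so that $\al\le\al_1$; modulo this localization, Theorem~\ref{thm:Lusin} is exactly Lemma~\ref{lem:quasiLusin} with $\de=\ep$, restricted to $A$.
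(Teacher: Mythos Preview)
Your proposal has a genuine gap: none of the approaches you sketch actually reduces to the small-constant regime required by Lemma~\ref{lem:quasiLusin}, and your final claim---that the proof of Lemma~\ref{lem:quasiLusin} ``never used $\al\le\al_0$ except to get the explicit constants in~\eqref{eq:1cerchiato}--\eqref{eq:2cerchiato}''---is false. The smallness of $\al$ is used in Step~7 to invoke Remark~\ref{rem:epsilonalfa} and obtain the global bound $|\nabla_\H f-\overline L|\le c_1\al$, and then in Step~8 one needs $c_1\al\le 1/2$ to conclude that $\mathrm{col}[X_1f|\cdots|X_kf]\ge\tfrac12 I$, which is precisely what guarantees that the level set is $\H$-regular (statement~\eqref{eq:SHregolare}) and an intrinsic Lipschitz graph (statement~\eqref{eq:SintrinsicLip}). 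Without a uniform lower bound on this matrix the Whitney-type construction produces a $C^1_\H$ function whose zero set need not be $\H$-regular at all. Your covering/rescaling ideas do not help either: as you yourself note, the intrinsic Lipschitz constant is invariant under left-translations and dilations, and it does not decrease under restriction to small balls, so no amount of localization reduces $\al$. The change-of-splitting idea from~\cite{FSJGA} would require the new $\W'$ to be close to the tangent plane, which varies from point to point, and the subsequent gluing of pieces defined with respect to \emph{different} splittings is not carried out in your sketch.

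The paper's proof bypasses all of this with a single global trick you missed: an anisotropic $\H$-linear isomorphism. Set $\la:=\al_0/\al$ and define $\mathcal L(x',x'',y',y'',t):=(\la x',x'',y'/\la,y'',t)$ (with the obvious modification when $k=n$). This map is a group automorphism preserving the splitting $\W\V$, and a direct computation shows that the cone inclusion $C_{\al_0}\subset\mathcal L(C_\al)$ holds; hence $\mathcal L(\gr_\f)=\gr_{\f_0}$ for some $\f_0:\W\to\V$ with intrinsic Lipschitz constant at most $\al_0$. One then applies Lemma~\ref{lem:quasiLusin} to $\f_0$ with error $\ep/M^{Q-k}$ (where $M$ is the Lipschitz constant of $\mathcal L^{-1}$), obtains $f_0\in C^1_\H$, and pulls back: $f:=f_0\circ\mathcal L$ is again $C^1_\H$ with coercive horizontal gradient, its zero level set is $\H$-regular and equal to $\mathcal L^{-1}(\gr_{\psi_0})=\gr_\psi$, and the exceptional sets transform via $\mathcal L^{-1}$, so the Hausdorff and Lebesgue measure bounds follow from the Lipschitz estimate on $\mathcal L^{-1}$. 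No covering, no patching, no change of splitting is needed.
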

\begin{proof}
As in Remark~\ref{rem:Rademachersuffpianicanon}, it is not restrictive to assume that $\W$ and $\V$ are the subgroups defined  in~\eqref{eq:Vfissato} and~\eqref{eq:Wfissato}.
By Theorem~\ref{thm:estensione} we can without loss of generality assume that $A=\W$. Let $\al_0,c_0$ be the constants provided by Lemma~\ref{lem:quasiLusin}; up to reducing $\al_0$, we can assume that $\al_0c_0\leq 1/2$. 

Let $\al$ be the intrinsic Lipschitz constant of $\f$;  if $\al\leq\al_0$, the statement directly follows from Lemma~\ref{lem:quasiLusin}. We then assume $\al>\al_0$ and define $\la:=\al_0/\al<1$. Let us consider the $\H$-linear isomorphism $\mathcal L:\H^n\to\H^n$ defined (if $k<n$) by
\[\mathcal L(x',x'',y',y'',t):=\Big(\la x',x'',\frac{y'}\la,y'',t\Big)
\]
for every $(x',x'',y',y'',t)\in\R^k\times\R^{n-k}\times\R^k\times\R^{n-k}\times\R\equiv\H^n$,  and (if $k=n$) by $\mathcal L(x,y,t):=(\la x,y/\la,t)$ for every $(x,y,t)\in\R^n\times\R^n\times\R\equiv\H^n$. We discuss only the case $k<n$, as the case $k=n$ can be treated with some modification in the notation only.

We claim that $C_{\al_0}\subset \mathcal L(C_\al)$. To this aim, fix $w=(0,x'',y',y'',t)\in\W$ and $v=(x',0,0,0,0)\in\V$ such that $wv\in C_{\al_0}$, i.e., $\|w\|_\H\leq \|v\|_\H/\al_0$; then 
\begin{align*}
\|\mathcal L^{-1}(w)\|_\H &= \Big\|\Big(0,x'',\la y',y'',t\Big)\Big\|_\H \leq \|(0,x'',y',y'',t)\|_\H =\|w\|_\H\\
&\leq \frac{1}{\al_0}\|v\|_\H =\frac{1}{\al_0}\|(x',0,0,0,0)\|_\H=\frac{\la}{\al_0}\Big\|\Big(\frac{x'}{\la},0,0,0,0\Big)\Big\|_\H =\frac1\al\|\mathcal L^{-1}(v)\|_\H. 
\end{align*}
This proves that $wv\in\mathcal L(C_\al)$, as claimed.

For every $p\in \mathcal L(\gr_\f)$ we have
\[
\mathcal L(\gr_\f) \cap pC_{\al_0}\subset  \mathcal L(\gr_\f) \cap p\mathcal L( C_\al) = \mathcal L(\gr_\f\cap\mathcal L^{-1}(p) C_\al) =\mathcal L(\mathcal L^{-1}(p))=p,
\]
hence (see Remark~\ref{rem:nointersezalloragrafico}) the set $\mathcal L(\gr_\f)$ is  the intrinsic Lipschitz graph of some function $ \f_0:\W\to\V$ with intrinsic Lipschitz constant at most $\al_0$. By Lemma~\ref{lem:quasiLusin}, there exists $f_0\in C^1_\H(\H^n,\R^k)$ such that
\begin{align}
&|W_i f_0(p)|\leq 1/2\quad\text{for every }p\in\H^n\text{ and }i=k+1,\dots,2n\label{eq:citareloc1}\\
&\mathrm{col}\Big[X_1f_0(p)|\dots|X_k f_0(p)\Big]\geq \frac12 I\quad\text{for every }p\in\H^n\label{eq:citareloc2}\\
& \text{the level set $S_0:=\{p\in\H^n:f_0(p)=0\}$ is a $\H$-regular submanifold}\nonumber\\
&\text{$S_0=\gr_{\psi_0}$ for an intrinsic Lipschitz $\psi_0:\W\to\V$}\nonumber\\
& \Shaus^{Q-k}((\gr_{\f_0}\,\Delta\, S_0)\cup\{p\in\gr_{\f_0}\cap S_0:\Tan^\H_{\gr_{\f_0}}(p)\neq \Tan^\H_{S_0}(p)\})<\ep/M^{Q-k}\label{eq:citareloc3}\\
& \leb{2n+1-k}(\W\setminus\{w\in\W:\f_0(w)=\psi_0(w)\text{ and }\nabla^{\f_0}\f_0(w)= \nabla^{\psi_0}\psi_0(w)\})<\ep/M^{Q-k},\label{eq:citareloc4}
\end{align}
where $M>0$ denotes the Lipschitz constant of $\mathcal L^{-1}:\H^n\to\H^n$. The function $f:=f_0\circ\mathcal L$ is of class $C^1_\H(\H^n,\R^k)$ and
\begin{align*}
& X_if(p)=\la (X_i f_0)(\mathcal L(p))\quad\text{for every }i=1,\dots,k\\
& Y_if(p)=\frac1\la (Y_i f_0)(\mathcal L(p))\quad\text{for every }i=1,\dots,k\\
& W_if(p)=(W_i f_0)(\mathcal L(p))\quad\text{whenever }k+1\leq i\leq n\text{ or }n+k+1\leq i\leq 2n
\end{align*}
so that, by~\eqref{eq:citareloc1} and~\eqref{eq:citareloc2},
\begin{align*}
&|W_i f(p)|\leq \frac{1}{2\la}\quad\text{for every }p\in\H^n\text{ and }i=k+1,\dots,2n\\
&\mathrm{col}\Big[X_1f(p)|\dots|X_k f(p)\Big] \geq \frac\la 2 I\quad\text{for every }p\in\H^n.
\end{align*}
By Theorem~\ref{teo:equivdefLip} and Remarks~\ref{rem:coercivoderivate} and~\ref{rem:graficoglobale}, the level set $\{f=0\}$, which is a $\H$-regular submanifold, is the intrinsic graph of an intrinsic Lipschitz function $\psi:\W\to\V$ whose Lipschitz constant can be estimated  in terms of $\la$ (i.e., of $\al$) only, see Remark~\ref{rem:dipendenzaalfa}. Moreover, the equalities
\begin{multline*}
 (\gr_\f\,\Delta\, \gr_\psi)\cup\{p\in\gr_\f\cap \gr_\psi:\Tan^\H_{\gr_\f}(p)\neq \Tan^\H_{\gr_\psi}(p)\} \\
=\mathcal L^{-1}\big( (\gr_{\f_0}\,\Delta\, \gr_{\psi_0})\cup\{p\in\gr_{\f_0}\cap \gr_{\psi_0}:\Tan^\H_{\gr_{\f_0}}(p)\neq \Tan^\H_{\gr_{\psi_0}}(p)\}  \big)
\end{multline*}
and 
\[
\{\f=\psi\text{ and }\nabla^\f\f= \nabla^\psi\psi\}
= \mathcal L^{-1}\big( \{\f_0=\psi_0\text{ and }\nabla^{\f_0}\f_0= \nabla^{\psi_0}\psi_0\}\big)
\]
hold. 
Statements~\eqref{eq:intersezionegrossaS} and~\eqref{eq:intersezionegrossaW} now follow from~\eqref{eq:citareloc3} and~\eqref{eq:citareloc4}, and the proof is accomplished.
\end{proof}

\begin{remark}\label{rem:Lusincostantecontrollata}
Taking~\eqref{eq:SintrinsicLip} into account, a further outcome of the proof of Theorem~\ref{thm:Lusin} is the existence of a function $u:(0,+\infty)\to(0,+\infty)$ such that, if $A$, $\f$ and $\ep$ are as in the statement of Theorem~\ref{thm:Lusin} and the intrinsic Lipschitz constant of $\f$ is $\al>0$, then the function $\psi$ provided by Theorem~\ref{thm:Lusin} has intrinsic Lipschitz constant at most $u(\al)$.
\end{remark}

A first consequence of Theorem~\ref{thm:Lusin} is the equivalence between the two possible notions of $\H$-rectifiability; this was already stated in \S\ref{subsec:Hreg,Hrettif}.

\begin{corollary}\label{cor:HrectifiableC1vsLip}
Let $\f:A\to \V$ be an intrinsic Lipschitz map defined on some subset $A\subset\W$; then $\gr_\f$ is $\H$-rectifiable. 

In particular, a set $R\subset\H^n$ is $\H$-rectifiable of codimension $k$, $k\in\{1,\dots,n\}$, if and only if $\Shaus^{Q-k}(R)<\infty$ and there exists a countable family $(\f_j)_{j}$ of intrinsic Lipschitz maps $\f_j:\W_j\to\V_j$ where $\W_j,\V_j $ are homogeneous complementary subgroups of $\H^n$ with $\dim \V_j=k$ and
\[
\Shaus^{Q-k}\left(R\setminus\bigcup_j \gr_{\f_j}\right)=0.
\]
\end{corollary}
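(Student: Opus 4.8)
\textbf{Proof plan for Corollary~\ref{cor:HrectifiableC1vsLip}.} The first assertion is the heart of the matter: every intrinsic Lipschitz graph is $\H$-rectifiable. The plan is to use Theorem~\ref{thm:Lusin} iteratively. First I would reduce to the case of an entire graph $\f:\W\to\V$ by applying Theorem~\ref{thm:estensione} to extend $\f$ to all of $\W$; note that $\gr_\f$ is a subset of the extended graph, so it suffices to treat the entire graph (and, by~\eqref{eq:Ahlfors}, $\Shaus^{Q-k}\res\gr_\f$ is locally finite, indeed $\sigma$-finite since $\gr_\f$ is $\sigma$-compact). Next, fix an exhaustion of $\gr_\f$ by sets of finite $\Shaus^{Q-k}$-measure, e.g. $\gr_\f\cap\overline{B(0,m)}$ for $m\in\N$; it is enough to show each such piece is $\H$-rectifiable, since a countable union of $\H$-rectifiable sets is $\H$-rectifiable. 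So we may assume $\Shaus^{Q-k}(\gr_\f)<\infty$.

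\textbf{The covering argument.} Given $m\in\N$, apply Theorem~\ref{thm:Lusin} with $\ep=2^{-m}$ to obtain an intrinsic Lipschitz $\psi_m:\W\to\V$ whose graph $S_m:=\gr_{\psi_m}$ is an $\H$-regular submanifold and such that $\Shaus^{Q-k}(\gr_\f\setminus S_m)\le\Shaus^{Q-k}(\gr_\f\,\Delta\,S_m)<2^{-m}$. I claim that $\gr_\f\setminus\bigcup_m S_m$ is $\Shaus^{Q-k}$-negligible: indeed, for every $m$ one has $\gr_\f\setminus\bigcup_j S_j\subset\gr_\f\setminus S_m$, hence $\Shaus^{Q-k}(\gr_\f\setminus\bigcup_j S_j)\le 2^{-m}$ for all $m$, giving $\Shaus^{Q-k}(\gr_\f\setminus\bigcup_j S_j)=0$. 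Since each $S_m$ is an $\H$-regular submanifold of codimension $k$ and $\Shaus^{Q-k}(\gr_\f)<\infty$, Definition~\ref{def:Hrettificabili} is satisfied and $\gr_\f$ is $\H$-rectifiable. (For the unbounded case one reassembles countably many such bounded pieces.)

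\textbf{The equivalence statement.} One implication is immediate: if $R$ is $\H$-rectifiable of codimension $k$, then by Definition~\ref{def:Hrettificabili} it is covered, up to an $\Shaus^{Q-k}$-null set, by countably many $\H$-regular submanifolds $S_j$; by Theorem~\ref{teo:Hreg_grafici_area} each $S_j$ is locally an intrinsic Lipschitz graph (up to an isometric $\H$-linear isomorphism of $\H^n$), so covering each $S_j$ by countably many coordinate patches yields the desired countable family $(\f_j)_j$ of intrinsic Lipschitz maps. Conversely, if $\Shaus^{Q-k}(R)<\infty$ and $R$ is covered up to an $\Shaus^{Q-k}$-null set by countably many intrinsic Lipschitz graphs $\gr_{\f_j}$, then by the first part of the corollary each $\gr_{\f_j}$ is $\H$-rectifiable, hence so is their union together with $R$, and therefore $R$ itself is $\H$-rectifiable of codimension $k$.

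\textbf{Main obstacle.} There is no deep obstacle here once Theorem~\ref{thm:Lusin} is in hand; the only points requiring a little care are the reduction to finite-measure pieces (so that Definition~\ref{def:Hrettificabili} applies verbatim) and checking that an $\H$-regular submanifold is genuinely covered by countably many intrinsic Lipschitz graphs when invoking Theorem~\ref{teo:Hreg_grafici_area}, which is a routine second-countability argument. The substance of the corollary is entirely borrowed from the Lusin-type Theorem~\ref{thm:Lusin}, which itself rests on Rademacher's Theorem~\ref{thm:rademacher}.
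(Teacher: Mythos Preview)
Your proof is correct and follows essentially the same approach as the paper's: extend to an entire graph via Theorem~\ref{thm:estensione}, apply Theorem~\ref{thm:Lusin} with $\ep=1/j$ (or $2^{-m}$) to cover $\gr_\f$ up to a null set by countably many $\H$-regular submanifolds, and handle the equivalence via Theorem~\ref{teo:Hreg_grafici_area}. Your extra care in reducing to finite-measure pieces is a refinement the paper leaves implicit.
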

\begin{proof}
We can assume $A=\W$ because of Theorem~\ref{thm:estensione}. By Theorem~\ref{thm:Lusin}, for every $j\in\N$ there exists $\f_j:\W\to\V$ such that $\gr_{\f_j}$ is $\H$-regular and
\[
\Shaus^{Q-k}(\gr_\f\setminus \gr_{\f_j})<1/j.
\]
The first part of the statement follows as well as one implication of the second part of the statement. The other implication is a simple consequence of the fact that (Theorem~\ref{teo:Hreg_grafici_area}) $\H$-regular submanifolds are locally intrinsic Lipschitz graphs.
\end{proof}

A second consequence of Theorem~\ref{thm:Lusin} is the  area formula of Theorem~\ref{thm:formulaarea}, that we now prove. Recall that the intrinsic Jacobian determinant $J^\f\f$ was introduced in Definition~\ref{def:differenziabilita}.

\begin{proof}[Proof of Theorem~\ref{thm:formulaarea}]
Recalling Remark~\ref{rem:WVortoghonal}, it is not restrictive to assume that $\W,\V$ are the subgroups in~\eqref{eq:Wfissato} and~\eqref{eq:Vfissato}.
By Theorem~\ref{thm:estensione}, one can  assume that $A=\W$. Since every Borel non-negative function $h$ can be written as a series of characteristic functions of Borel subsets of $\gr_\f$, we can without loss of generality assume that $h=\chi_E$ for some Borel subset $E\subset\gr_\f$, and we  must prove that
\[
\Shaus^{Q-k}(E)=C_{n,k}\int_{\F^{-1}(E)}J^\f\f\:d\leb{2n+1-k}.
\]
Let $\ep>0$ be fixed; by Theorem~\ref{thm:Lusin} we can find an intrinsic Lipschitz map $\psi:\W\to\V$ such that $\gr_\psi$ is a $\H$-regular submanifold and, defining the graph map $\Psi(w):=w\psi(w)$ and  $D:=\{w\in\W:\f(w)=\psi(w)\text{ and }\nabla^\f\f(w)=\nabla^\psi\psi(w)\}$,  one has
\begin{equation}\label{eq:geronimo}
\leb{2n+1-k}(\W\setminus D)<\ep\qtaq \Shaus^{Q-k}(E\setminus \Psi(D))<\ep.
\end{equation}
Using Theorem~\ref{teo:Hreg_grafici_area}
\begin{align*}
\Shaus^{Q-k}(E) 
& = \Shaus^{Q-k}(E\cap \Psi(D))+\Shaus^{Q-k}(E\setminus \Psi(D))\\
& = C_{n,k}\int_{\Psi^{-1}(E\cap \Psi(D))} J^\psi\psi\:d\leb{2n+1-k}+\Shaus^{Q-k}(E\setminus \Psi(D))\\
& = C_{n,k}\int_{\F^{-1}(E)\cap D} J^\f\f\:d\leb{2n+1-k}+\Shaus^{Q-k}(E\setminus \Psi(D))\\
& = C_{n,k}\int_{\F^{-1}(E)} \!J^\f\f\:d\leb{2n+1-k} - C_{n,k}\int_{\F^{-1}(E)\setminus D} \!J^\f\f\:d\leb{2n+1-k}+\Shaus^{Q-k}(E\setminus \Psi(D))
\end{align*}
so that
\begin{align*}
\left| \Shaus^{Q-k}(E)- C_{n,k}\int_{\F^{-1}(E)} J^\f\f\:d\leb{2n+1-k} \right| 
 &\leq C_{n,k}\int_{\W\setminus D} J^\f\f\:d\leb{2n+1-k}+\Shaus^{Q-k}(E\setminus \Psi(D))\\
 &\leq (C_{n,k}C+1)\ep
\end{align*}
where, in the last inequality, we used~\eqref{eq:geronimo} and the fact that, by Lemma~\ref{lem:nablaffijminoredialfa}, $J^\f\f\leq C$ for some positive $C$  depending only on the intrinsic Lipschitz constant of $\f$. The arbitrariness of $\ep$ concludes the proof.
\end{proof}

Eventually, as a further consequence of Theorem~\ref{thm:Lusin} we prove the fact that, as anticipated in Section~\ref{sec:dimrademacher}, the canonical current associated with an entire intrinsic Lipschitz graph has zero boundary. We will use~\eqref {eq:defcanonicalcurrLip_ALTERNATIVA}, whose validity is now guaranteed by Theorem~\ref{thm:formulaarea}.

\begin{proposition}\label{prop:grf0bdry}
Let  $\f:\W\to\V$ be intrinsic Lipschitz. Then the  $(2n+1-k)$-Heisenberg current $\curr{\gr_\f}$ introduced in~\eqref{eq:defcanonicalcurrLip} is such that $\partial\curr{\gr_\f}=0$.
\end{proposition}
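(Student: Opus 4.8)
The plan is to reduce the claim to the approximation result of Theorem~\ref{teo:approssimiamoooooooo} together with the vanishing-boundary statement of Corollary~\ref{cor:senzabordo}, via a suitable limiting argument. First I would reduce to the canonical splitting: as in Remark~\ref{rem:Rademachersuffpianicanon}, after composing with an isometric $\H$-linear isomorphism $\mathcal L$ of $\H^n$ we may assume that $\W,\V$ are the subgroups in~\eqref{eq:Vfissato} and~\eqref{eq:Wfissato}; this is harmless because pull-back along $\mathcal L$ commutes with $d_C$ by Corollary~\ref{cor:d_Ccommuta}, so $\partial\curr{\gr_\f}=0$ is equivalent to $\partial\curr{\mathcal L(\gr_\f)}=0$ (up to the multiplicative constant $c_\mathcal L$). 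With this splitting in force, the alternative representation~\eqref{eq:defcanonicalcurrLip_ALTERNATIVA} is available: $\curr{\gr_\f}(\omega)=C_{n,k}\int_\W\langle\nf\F\wedge T\mid\omega\circ\F\rangle\,d\leb{2n+1-k}$.

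Next I would invoke Proposition~\ref{prop:approssimazionelisciaunifLipHeisenberg} to obtain a sequence of $C^\infty$-smooth, uniformly intrinsic Lipschitz maps $\f_i:\W\to\V$ with $\f_i\to\f$ uniformly on $\W$ and with $|\nabla^{\f_i}\F_i(w)|\leq C$ for all $i$ and all $w$. By Lemma~\ref{lem:correntevistaintegrandosuW} (or directly~\eqref{eq:defcanonicalcurrLip_ALTERNATIVA}), for each $i$ one has $\curr{\gr_{\f_i}}(\omega)=C_{n,k}\int_\W\langle\nf[i]\F_i\wedge T\mid\omega\circ\F_i\rangle\,d\leb{2n+1-k}$, and since $\gr_{\f_i}$ is an oriented $C^1$ submanifold without boundary, Corollary~\ref{cor:senzabordo} gives $\partial\curr{\gr_{\f_i}}=0$, i.e.\ $\curr{\gr_{\f_i}}(d_C\omega)=0$ for every $\omega\in\DH^{2n-k}$. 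It then suffices to show that $\curr{\gr_{\f_i}}(\eta)\to\curr{\gr_\f}(\eta)$ for every fixed $\eta\in\DH^{2n+1-k}$; applying this with $\eta=d_C\omega$ yields $\partial\curr{\gr_\f}(\omega)=\lim_i\partial\curr{\gr_{\f_i}}(\omega)=0$.

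For the convergence, I would argue exactly as in Step~1 of the proof of Theorem~\ref{thm:rademacher}: possibly passing to a subsequence, the uniformly bounded multivector fields $[\nf[i]\F_i\wedge T]_\mathcal J$ converge weakly-$*$ in $L^\infty(\W,\mathcal J_{2n+1-k})$ to some $\zeta$, and the uniform convergence $\F_i\to\F$ together with the uniform boundedness allows one to pass to the limit in the integral, obtaining $\lim_i\curr{\gr_{\f_i}}(\eta)=C_{n,k}\int_\W\langle\zeta\mid\eta\circ\F\rangle\,d\leb{2n+1-k}=:\Tcurr(\eta)$. The point that requires care — and the only real obstacle — is to identify $\zeta$ with $\nf\F\wedge T$ (equivalently $[t^\H_{\gr_\f}]_\mathcal J\,J^\f\f$ up to normalization), so that $\Tcurr=\curr{\gr_\f}$. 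This is where Rademacher's Theorem~\ref{thm:rademacher} enters: at $\leb{2n+1-k}$-a.e.\ $w$ the map $\f$ is intrinsically differentiable, so $\F_i\to\F$ locally uniformly forces the a.e.\ convergence of the approximate tangent directions; more robustly, one uses the Lusin-type Theorem~\ref{thm:Lusin} to write, for every $\ep>0$, $\f=\psi$ with $\nabla^\f\f=\nabla^\psi\psi$ off a set of measure $<\ep$ with $\gr_\psi$ an $\H$-regular graph, reducing the identification of $\zeta$ to the $C^1$ case where it is~\eqref{eq:tSHgrafici}. Passing to the limit $\ep\to0$ and using the uniform bound $J^\f\f\leq C$ from Lemma~\ref{lem:nablaffijminoredialfa} to control the error terms, one concludes $\zeta=[t^\H_{\gr_\f}]_\mathcal J\,J^\f\f$ a.e., hence $\Tcurr=\curr{\gr_\f}$ by~\eqref{eq:defcanonicalcurrLip_ALTERNATIVA} and~\eqref{eq:defcanonicalcurrLip}. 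This completes the proof.
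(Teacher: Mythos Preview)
Your overall strategy---approximate by smooth graphs, use Corollary~\ref{cor:senzabordo} for each of them, pass to the limit---matches the paper's, but the execution has a genuine gap at the point you yourself flag as ``the only real obstacle'': the identification of the weak-$*$ limit $\zeta$ with $[\nf\F\wedge T]_\mathcal J$. Uniform convergence $\f_i\to\f$ does \emph{not} force any convergence of the intrinsic gradients, not even a.e.\ along a subsequence (already in the Euclidean case, $\tfrac1i\sin(iw)\to 0$ uniformly while the derivatives do not converge). Your Lusin-based patch does not close the gap either: the Lusin theorem tells you $\f=\psi$ and $\nabla^\f\f=\nabla^\psi\psi$ on a large set, but $\zeta$ is determined by the approximating sequence $(\f_i)_i$, which has no relationship to $\psi$ on that set. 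Nothing in your argument links $\nabla^{\f_i}\F_i$ to $\nabla^\f\F$.

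The paper avoids this identification altogether by reorganizing the approximation. It first applies Theorem~\ref{thm:Lusin} to produce, for given $\ep>0$, an intrinsic Lipschitz $\psi$ with $\gr_\psi$ $\H$-regular and $\Shaus^{Q-k}$-close to $\gr_\f$ with matching tangents outside a set of measure $<\ep$. Crucially, the defining function $f\in C^1_\H(\H^n,\R^k)$ from the Whitney-type construction is smooth off $\gr_\psi$, so the nearby level sets $\gr_{\psi_i}=\{f=(1/i,0,\dots,0)\}$ are smooth graphs for which not only $\psi_i\to\psi$ but also $\nabla^{\psi_i}\psi_i\to\nabla^\psi\psi$ \emph{locally uniformly} (both are read off from the continuous $\nabla_\H f$). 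This upgraded convergence makes $\curr{\gr_{\psi_i}}(d_C\omega)\to\curr{\gr_\psi}(d_C\omega)$ immediate via~\eqref{eq:defcanonicalcurrLip_ALTERNATIVA}, giving $\partial\curr{\gr_\psi}=0$. Finally one compares $\curr{\gr_\f}(d_C\omega)$ with $\curr{\gr_\psi}(d_C\omega)$ directly using~\eqref{eq:defcanonicalcurrLip} and the $\ep$-bound in~\eqref{eq:intersezionegrossaS}, then lets $\ep\to 0$. The point is that the paper approximates $\psi$, not $\f$, by smooth maps---precisely because for this particular $\psi$ one can arrange convergence of the gradients, which your generic approximants $\f_i$ of $\f$ do not enjoy.
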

\begin{proof}
As in Remark~\ref{rem:Rademachersuffpianicanon} we can assume without loss of generality that the subgroups $\W,\V$ are those defined in~\eqref{eq:Wfissato} and~\eqref{eq:Vfissato}.
Let $\ep>0$ be fixed and let $\psi:\W\to\V$ be the map provided by Theorem~\ref{thm:Lusin}. It was shown in the proof of Theorem~\ref{thm:Lusin} that there exists $f\in C^1_\H(\H^n,\R^k)$ such that $\gr_\psi=\{f=0\}$; moreover, the construction performed in  Lemma~\ref{lem:quasiLusin} ensures that $f\in C^\infty(\H^n\setminus\gr_\psi)$. As in the proof of Theorem~\ref{teo:approssimiamoooooooo},  for every positive integer $i$ there exists $\psi_i\in C^\infty(\W,\V) $ such that
\[
\gr_{\psi_i}=\{f=(1/i,0,\dots,0)\}.
\]
Moreover, defining the graph maps $\Psi(w):=w\psi(w)$ and $\Psi_i(w):=w\psi_i(w)$, as $i\to+\infty$ one has 
\begin{equation}\label{eq:convergenzevarie}
\psi_i\to\psi,\ \Psi_i\to\Psi \text{ and }\nabla^{\psi_i}\psi_i\to\nabla^\psi\psi\qquad\text{locally uniformly on $\W$.}
\end{equation}
The last  convergence  stated in~\eqref{eq:convergenzevarie} follows from the local uniform (with respect to $w\in\W$) convergence
\[
\Tan^\H_{\gr_{\psi_i}}(\Psi_i(w))=\ker\nabla_\H f(\Psi_i(w))\to \ker\nabla_\H f(\Psi(w))=\Tan^\H_{\gr_{\psi}}(\Psi(w)).
\]
Using Proposition~\ref{prop:correnti_cl_vs_H_INTRO}, Lemma~\ref{lem:correntevistaintegrandosuW} and formulae~\eqref{eq:convergenzevarie} and~\eqref {eq:defcanonicalcurrLip_ALTERNATIVA} we deduce that for every $\omega\in\DH^{2n-k}$
\begin{align*}
0
&= \lim_{i\to\infty} \curr{\gr_{\psi_i}}(d_C\omega)\\
&= \lim_{i\to\infty} C_{n,k}\int_\W\langle \nabla^{\psi_i}\Psi_i\wedge T\mid (d_C\omega)\circ\Psi_i\rangle\:d\leb{2n+1-k}\\
&= C_{n,k}\int_\W\langle \nabla^{\psi}\Psi\wedge T\mid(d_C\omega)\circ\Psi\rangle\:d\leb{2n+1-k}\\
&=  \curr{\gr_{\psi}}(d_C\omega).
\end{align*}
In particular
\begin{align*}
|\partial\curr{\gr_{\f}}(\omega)|
&= |\curr{\gr_{\f}}(d_C\omega)-\curr{\gr_{\psi}}(d_C\omega)|\\
&= \left|\int_{\gr_\f} \langle t^\H_{\gr_\f}\mid d_C\omega\rangle\:d\Shaus^{Q-k} - \int_{\gr_\psi} \langle t^\H_{\gr_\psi}\mid d_C\omega\rangle\:d\Shaus^{Q-k} \right|\\
&\leq \Shaus^{Q-k}((\gr_\f\,\Delta\, \gr_\psi)\cup\{p\in\gr_\f\cap \gr_\psi:\Tan^\H_{\gr_\f}(p)\neq \Tan^\H_{\gr_\psi}(p)\})\;\sup_{\H^n}|d_C\omega|\\
&\leq \ep\; \sup_{\H^n}|d_C\omega|.
\end{align*}
The arbitrariness of $\ep$ implies that $\partial\curr{\gr_{\f}}(\omega)=0$ for every $\omega\in\DH^{2n-k}$, as desired.
\end{proof}


\appendix

\section{Proof of Proposition~\ref{prop:almassimo2}}\label{app:max2}
\renewcommand{\theequation}{\thesection.\arabic{equation}}
We provide here the proofs of Lemma~\ref{lem:whyrank1} and Proposition~\ref{prop:almassimo2} that were only stated in \S\ref{subsec:rankoneconnect}. The proof of Lemma~\ref{lem:whyrank1} is quite simple.

\begin{proof}[Proof of Lemma~\ref{lem:whyrank1}]
Assume that (a) holds; then either $\dim \mathscr P_1\cap\mathscr P_2=m$, in which case $\mathscr P_1=\mathscr P_2$ and (b) trivially holds, or $\dim \mathscr P_1\cap\mathscr P_2=m-1$. Let $v_1,\dots,v_{m-1}$ be a basis of $\mathscr P_1\cap\mathscr P_2$ and choose $w_1\in\mathscr P_1\setminus\mathscr P_2$ and $w_2\in\mathscr P_2\setminus\mathscr P_1$; then, for every $t_1,t_2$ as in statement (b) there exist $c_1,c_2\in\R\setminus\{0\}$ such that
\begin{align*}
&  t_1 = c_1w_1\wedge v_1\wedge\dots\wedge v_{m-1} \\
&  t_2=c_2w_2\wedge v_1\wedge\dots\wedge v_{m-1}.
\end{align*}
The difference $t_1-t_2$ is clearly a simple vector and (b) follows.

Statement (b)  implies (c), hence we have only to show that (c) implies (a). Assume  there exist $t_1,t_2$  as in (c). Let $h:=\dim\mathscr P_1\cap\mathscr P_2$; we assume $h\geq 1$, but the following argument can be easily adapted to the case $ h=0$. By assumption  $t:=t_1-t_2$ is  simple; moreover we have $\mathscr P_1\cap\mathscr P_2\subset\Span t$, because for every $v\in \mathscr P_1\cap\mathscr P_2$ we have $v\wedge t=v\wedge(t_1-t_2)=0$. In particular, after fixing  a basis $v_1,\dots, v_h$ of $\mathscr P_1\cap\mathscr P_2$ we can find $ v_{h+1},\dots,  v_m\in\Span t$, $e_1,\dots,e_{m-h}\in\mathscr P_1\setminus\mathscr P_2$ and $e_{m-h+1},\dots,e_{2(m-h)}\in\mathscr P_2\setminus \mathscr P_1$ such that
\begin{align*}
t &= v_1\wedge\dots \wedge v_h\wedge v_{h+1}\wedge\dots\wedge v_m\\
t_1&= v_1\wedge\dots \wedge v_h\wedge e_{1}\wedge\dots\wedge e_{m-h}\\
t_2&= v_1\wedge\dots \wedge v_h\wedge e_{m-h+1}\wedge\dots\wedge e_{2(m-h)}.
\end{align*}
Then, on the one side, $t':=  e_1\wedge\dots\wedge e_{m-h} -  e_{m-h+1}\wedge\dots\wedge e_{2(m-h)}$ equals the simple vector $v_{h+1}\wedge\dots\wedge v_m$, hence $\dim\Span t'=m-h$. On the other side,  $e_1,\dots,e_{2(m-h)}$ are  linearly independent and one can easily check that, in case $m-h\geq 2$, one would have
\[
\Span t'=\{v\in V:v\wedge t'=0\}=\{v\in\Span\{e_1,\dots,e_{2(m-h)}\}:v\wedge t'=0\}=\{0\}.
\]
This implies that  $h\geq m-1$, which is statement (a).
\end{proof}

On the contrary, the proof of Proposition~\ref{prop:almassimo2} is long and technical: there might well be a simpler one but the author was not able to find it. The proof provided here is based on  Lemmata~\ref{lem:vettoresemplicediRumin,b=0} and~\ref{lem:vettoresemplicediRumin}, where one essentially studies the  model cases of the planes $\Pab$ introduced in~\eqref{eq:defPab}. Proposition~\ref{prop:almassimo2} will then follow by a quite standard use of Proposition~\ref{prop:ruotiamosupianicanonici}.

Remark~\ref{rem:cosafailvettorestandard} will be utilized several times.

\begin{lemma}\label{lem:vettoresemplicediRumin,b=0}
Let $a\geq 1$ be an integer such that $n\leq 2a\leq 2n-1$; assume that $\tau\in\bwl_{2a}\h_1$ is a simple $2a$-vector such that
\[
[\tau\wedge T ]_\mathcal J = [X_1\wedge\dots\wedge X_a\wedge Y_1\wedge\dots\wedge Y_a\wedge T]_\mathcal J.
\]
Then the following statements hold:
\begin{enumerate}
\item[(i)] if $2a>n$, then $\tau= X_1\wedge\dots\wedge X_a\wedge Y_1\wedge\dots\wedge Y_a$;
\item[(ii)] if $2a=n$, then either
\[
\tau= X_1\wedge\dots\wedge X_a\wedge Y_1\wedge\dots\wedge Y_a
\]
or
\[
\tau=(-1)^a\,X_{a+1}\wedge\dots\wedge X_{2a}\wedge Y_{a+1}\wedge\dots\wedge Y_{2a}.
\]
\end{enumerate}
\end{lemma}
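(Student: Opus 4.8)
The statement concerns simple $2a$-vectors $\tau\in\bwl_{2a}\h_1$ with the property that $[\tau\wedge T]_\mathcal J$ equals the fixed class $[X_1\wedge\dots\wedge X_a\wedge Y_1\wedge\dots\wedge Y_a\wedge T]_\mathcal J$, under the constraint $n\le 2a\le 2n-1$. The first thing I would do is translate the hypothesis into a system of linear equations: expand $\tau$ in the standard basis $\{W_I : I\subset\unodueenne, |I|=2a\}$ of $\bwl_{2a}\h_1$, so that $\tau\wedge T = \sum_I \tau_I\, W_I\wedge T$, and then pair $\tau\wedge T$ against the basis of $\mathcal J^{2n+1-(2n-2a)}=\mathcal J^{2a+1}$ provided by Corollary~\ref{cor:baseJ} (equivalently Proposition~\ref{prop:baseJintro}), taking $b=0$ so that the tableaux $R$ involved are genuinely $2\times a$ rectangular tableaux in case $2a=n$, and have first row strictly longer than the second in case $2a>n$. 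The point of Remark~\ref{rem:cosafailvettorestandard} (with $b=0$) is precisely that testing the ``reference'' vector $X_1\wedge\dots\wedge X_a\wedge Y_1\wedge\dots\wedge Y_a\wedge T$ against a basis covector $dx_I\wedge dy_J\wedge\al_R\wedge\theta$ gives zero \emph{unless} $(I,J,R)$ is the specific triple in~\eqref{eq:IJR} (with $b=0$: $I=J=\emptyset$ and $R$ the tableau with first row $1,\dots,a$ and second row $a+1,\dots,2a$ when $2a=n$, or $R$ the single-row tableau $1,\dots,a$ when $2a>n$), in which case the pairing is $\pm 1$. So the hypothesis $[\tau\wedge T]_\mathcal J=[\text{reference}]_\mathcal J$ becomes: $\langle\tau\wedge T\mid dx_I\wedge dy_J\wedge\al_R\wedge\theta\rangle=0$ for every basis triple except the distinguished one, and equals $\pm1$ on the distinguished one.

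Next I would extract constraints on the coefficients $\tau_I$. Since the $\al_R$ are certain signed sums of products $dxy_{K}$ with $K$ ranging over transversals of the columns of $R$, each equation $\langle\tau\wedge T\mid dx_I\wedge dy_J\wedge\al_R\wedge\theta\rangle=0$ is a signed sum of a few coordinates $\tau_{I'}$ of $\tau$ (those $I'$ consisting of $I$, $J$ and one index from each column of $R$). A useful auxiliary observation is that $\mathcal J^{2a+1}=\{\la\wedge\theta: \la\in\bwl^{2a}\h_1,\ \la\wedge d\theta=0\}$ together with the basis description says that, modulo $(\mathcal J^{2a+1})^\perp$, the class of $\tau\wedge T$ is determined by the ``$L$-reduced'' part of $\tau$; equivalently, $[\tau\wedge T]_\mathcal J = [\tau'\wedge T]_\mathcal J$ iff $\tau-\tau'$ lies in the image of the Lefschetz operator $L$ acting on $\bwl_{2a-2}\h_1$ (this is dual to~\eqref{eq:kersplit}/Proposition~\ref{prop:BaseKerLD}). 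So the hypothesis says $\tau = X_1\wedge\dots\wedge X_a\wedge Y_1\wedge\dots\wedge Y_a + (\text{something in } \mathrm{Im}\,L)$. Now I would impose that $\tau$ be \emph{simple}. The heart of the matter is the following: among all $2a$-vectors congruent to the reference vector modulo $\mathrm{Im}\,L$, which are decomposable? When $2a>n$ I expect only the reference vector itself; when $2a=n$ I expect exactly two, the reference vector and $(-1)^a X_{a+1}\wedge\dots\wedge X_{2a}\wedge Y_{a+1}\wedge\dots\wedge Y_{2a}$ (one checks directly, using $(dxy_i-dxy_j)\wedge(dxy_i+dxy_j)=0$ and~\eqref{eq:wedgiarea0}, that the difference of these two simple vectors is a nonzero multiple of an element of the form $\al_R$ dual to $L$-image, hence they are congruent; the sign $(-1)^a$ comes from reordering the wedge).

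For the simplicity analysis I would argue via the Plücker relations, or more hands-on as follows. Write $\tau$ in block form relative to the symplectic pairs $(X_i,Y_i)$. A simple $2a$-vector $\tau=v_1\wedge\dots\wedge v_{2a}$ with $v_j\in\h_1=\R^{2n}$ spans a $2a$-dimensional subspace $P=\Span\tau$, and $[\tau\wedge T]_\mathcal J$ being the reference class forces, via the vanishing equations above, strong restrictions: testing against the covectors $dx_I\wedge dy_J\wedge\al_R\wedge\theta$ with $|I|+|J|$ large (up to $2n-2a$, which equals $0$ when $2a=n$ and is positive when $2a<n$ — note here $2a\ge n$ so $2n-2a\le n$) one deduces that many coordinates of $\tau$ vanish. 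Concretely I would run the argument of Lemma~\ref{lem:vettoretangente,caso_n}/Lemma~\ref{lem:vettoretangente,casogenerale} in reverse: the same combinatorial case analysis (Claims 1--4 there) that showed a boundaryless current on $\Pab$ must be tangent, here shows that $\tau$ must be supported (as a multi-vector) on index sets $I'$ that are transversals, with consistent signs, of a rectangular-or-near-rectangular tableau whose entries exhaust $\{1,\dots,2a\}$ — and simplicity then pins this down to a wedge of $a$ of the $dxy_i$'s, i.e. $\tau=\pm X_{i_1}\wedge Y_{i_1}\wedge\dots\wedge X_{i_a}\wedge Y_{i_a}$ for some $a$-subset $\{i_1,\dots,i_a\}\subset\{1,\dots,2a\}$ when $2a=n$, and $\tau$ equal to the reference vector when $2a>n$ (the extra room in the tableau — first row strictly longer — kills the second choice). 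Finally, re-testing each candidate against the distinguished covector from~\eqref{eq:IJR} fixes which ones actually realize the reference \emph{class} (not just lie in the right congruence), giving the single survivor in case (i) and the two survivors with the correct signs in case (ii).

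\textbf{Main obstacle.} The genuinely delicate step is the simplicity classification: showing that a $2a$-vector congruent to $X_1\wedge\dots\wedge X_a\wedge Y_1\wedge\dots\wedge Y_a$ modulo $\mathrm{Im}\,L$ can be decomposable only in the one (resp. two) listed way(s). The congruence class is large-dimensional, so one cannot just enumerate; the Plücker/Grassmann constraints must be combined carefully with the explicit shape of $\al_R$ and the parity/shape constraint $2a\ge n$. I would expect to spend most of the work controlling the cross terms that appear when expanding $\langle\tau\wedge T\mid dx_I\wedge dy_J\wedge\al_R\wedge\theta\rangle$ for tableaux $R$ whose columns are not all of height one, and in bookkeeping the signs — this is exactly the kind of computation the excerpt warns is ``long and tedious,'' and it is where the use of standard Young tableaux and Remark~\ref{rem:cosafailvettorestandard} does the real work.
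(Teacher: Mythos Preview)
Your high-level reformulation is correct: the hypothesis says $\tau$ differs from the reference vector $X_1\wedge\dots\wedge Y_a$ by an element of the annihilator of $\ker L\subset\bwl^{2a}\h_1$, and the task is to classify the simple vectors in this affine subspace. However, the mechanism you propose for the simplicity step has a genuine gap. You assert that the vanishing of $\langle\tau\wedge T\mid dx_I\wedge dy_J\wedge\al_R\wedge\theta\rangle$ for non-distinguished $(I,J,R)$, combined with simplicity, forces $\tau$ to be a wedge of $a$ pairs $X_{i_j}\wedge Y_{i_j}$; but you give no argument beyond an appeal to ``running the argument of Lemmata~\ref{lem:vettoretangente,caso_n}/\ref{lem:vettoretangente,casogenerale} in reverse.'' Those lemmata concern currents supported on a \emph{fixed} plane (the geometry is given and one solves for the scalar coefficient), whereas here the unknown is the plane $\Span\tau$ itself; the two problems are not dual in any useful sense. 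The Pl\"ucker relations you invoke form a large quadratic system whose interaction with the linear constraints is precisely the difficulty, and your intermediate claim (``$\tau$ is a wedge of some $a$ of the $X_i\wedge Y_i$'') already presupposes essentially the full conclusion.

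The paper's proof uses a different and more concrete device that encodes simplicity from the outset: write $\tau=\tau^1\wedge\dots\wedge\tau^{2a}$ and form the $2n\times 2a$ matrix $M$ whose columns are the $\tau^i$. Simplicity is then just $\mathrm{rank}\,M=2a$, and each pairing $\langle\tau\mid dz_I\rangle$ is a $2a\times 2a$ minor of $M$. The key trick is to pick a subset $I\subset\{1,\dots,2n\}$ of row indices that is \emph{maximal} subject to ``no two indices congruent mod $n$'' and ``the rows $(M_i)_{i\in I}$ are linearly independent''; one then completes $I$ to a row-basis $I\cup J'$ with $J'$ drawn from the mod-$n$ partners of $I$. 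Testing $\tau$ against a carefully built covector $dz_B\wedge\al_R\wedge\theta$ (with $B,R$ read off from $I,J'$) and invoking Remark~\ref{rem:cosafailvettorestandard} forces $B=\emptyset$, hence $|I|=a$ and all rows of $M$ outside $I\cup(n+I)$ vanish. A final tableau comparison (Claims~9--10) then identifies $I=\{1,\dots,a\}$, or when $2a=n$ alternatively $I=\{a+1,\dots,2a\}$. This maximal-index-set/matrix strategy bypasses Pl\"ucker relations entirely and is what makes the argument actually close.
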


\begin{remark}\label{rem:nonuniqueness1}
When $2a=n$ one can use computations analogous to those in Remark~\ref{rem:cosafailvettorestandard} to check the action of $(-1)^a\,X_{a+1}\wedge\dots\wedge X_{2a}\wedge Y_{a+1}\wedge\dots\wedge Y_{2a}\wedge T$ in  duality with elements of the basis of  $\mathcal J^{2a+1}=\mathcal J^{n+1}$ provided by Proposition~\ref{prop:baseJintro}. As a matter of fact, the equality
\[
[X_1\wedge\dots\wedge X_a\wedge Y_1\wedge\dots\wedge Y_a\wedge T]_\mathcal J = [(-1)^a\,X_{a+1}\wedge\dots\wedge X_{2a}\wedge Y_{a+1}\wedge\dots\wedge Y_{2a}\wedge T]_\mathcal J.
\]
does hold. 

For instance, recalling  Example~\ref{example:baseJ3inH2} it can be easily checked that in $\H^2$
\[
\langle X_1\wedge Y_1\wedge T \mid  \la \rangle = -\langle X_2\wedge Y_2\wedge T \mid  \la \rangle\qquad\text{ for every }\la\in\mathcal J^3.
\]
\end{remark}

\begin{proof}[Proof of Lemma~\ref{lem:vettoresemplicediRumin,b=0}]
Let us write $\tau=\tau^1\wedge\dots\wedge\tau^{2a}$ for suitable horizontal vectors $\tau^i\in\h_1$. Writing
\[
\tau^i=\tau^i_1X_1+\dots+\tau^i_{2n}Y_n,\qquad i=1,\dots,2a
\] 
we introduce the matrix
\[
M:=\text{col}\big[\:\tau^1\:|\cdots|\:\tau^{2a}\:\big]
=
\left[
\begin{array}{c|c|c}
\tau^1_1 &\dots & \tau^{2a}_1\\
\vdots& &\vdots\\
\tau^1_{2n} &\dots & \tau^{2a}_{2n}
\end{array}
\right]
=\text{row}
\left[
\begin{array}{c}
M_1\\
\hline\vdots\\
\hline M_{2n}
\end{array}
\right]
,
\]
where for every $i=1,\dots,2n$ we denoted by $M_i=(\tau^1_i,\dots,\tau^{2a}_i)\in\R^{2a}$ the $i$-th row of  $M$. It is worth noticing that for every $I\subset\{1,\dots,2n\}$ with $|I|=2a$ the equality 
\[
\langle\tau\mid dz_I\rangle = \det [M_i]_{i\in I},
\]
holds, where we used notation~\eqref{eq:defdzI} and denoted by $[M_i]_{i\in I}$ the $2a\times 2a$ matrix formed by the rows $M_i$, $i\in I$, arranged in the natural order.

We now fix a subset $I\subset\{1,\dots,2n\}$ of indices having maximal cardinality among all subsets $\mathscr I\subset\{1,\dots,2n\}$ satisfying the two properties
\begin{equation}\label{eq:richiestesuI}
\begin{split}
&\forall\ i,j\in\mathscr I\quad i\not\equiv j\mod n\\
& (M_i)_{i\in\mathscr I}\text{ are linearly independent.}
\end{split}
\end{equation}
We also define
\begin{align*}
& J:=\{j\in\{1,\dots,2n\}:\exists\: i\in I\text{ such that }j\equiv i\text{ mod }n\}\setminus I\\
& K:=\{1,\dots,2n\}\setminus(I\cup J).
\end{align*}
Clearly, $I\cap J=\emptyset$.\medskip

{\em Claim 1: $a\leq |I|\leq n$ and $|J|=|I|$.}

\noindent Let us prove the first claimed inequality. We have $\tau\neq 0$, for otherwise we would obtain the contradiction
\begin{align*}
0=& \langle \tau\wedge T\mid \al_{\overline R}\wedge\theta\rangle = \langle X_1\wedge\dots\wedge X_a\wedge Y_1\wedge\dots\wedge Y_a\wedge T \mid  \al_{\overline R}\wedge\theta\rangle =1
\end{align*}
where
\begin{equation}\label{eq:defRbar}
\begin{aligned}
& {\overline R}:=
\begin{tabular}{|c|c|c|c|ccc}
\cline{1-7}
$1$ & $2$ &$\ \cdots\ $& $n-a$ &\multicolumn{1}{c|}{$ n-a+1$} & \multicolumn{1}{c|}{$\ \cdots\ $} & \multicolumn{1}{c|}{$a^{\phantom{2}}$}\\
\cline{1-7}
$a+1$ & $a+2^{\phantom{2}}$ &$\ \cdots\ $& $n$
\\
\cline{1-4}
\end{tabular}
&&\text{if }2a>n\\
& \overline R:=
\begin{tabular}{|c|c|c|c|}
\cline{1-4}
$1$ & $2$ &$\ \cdots\ $& $a$\\
\cline{1-4}
$a+1$ & $a+2^{\phantom{2}}$ &$\ \cdots\ $& $n$
\\
\cline{1-4}
\end{tabular}
&&\text{if }2a=n.
\end{aligned}
\end{equation}
In particular, the rank of $M$ is $2a$ and we can find distinct numbers $i_1,\dots,i_{2a}$ in $\unodueenne$ such that $M_{i_1},\dots,M_{i_{2a}}$ are linearly independent. It is then obvious that we can find a subset of $\{i_1,\dots,i_{2a}\}$ made by at least $a$ elements that are never congruent modulo $n$; this proves that $|I|\geq a$.

The remaining part of the claim ($|I|\leq n$ and $|J|=|I|$) is clear from the definitions of $I$ and $J$.\medskip

{\em Claim 2: for every $k\in K$, $M_k\in\Span\{M_i:i\in I\}$.}

\noindent Assume on the contrary that there exists $k\in K$ such that $(M_i)_{i\in I\cup\{k\}}$ are linearly independent; then $\mathscr I:=I\cup\{k\}$ would satisfy the two properties in \eqref{eq:richiestesuI} and this would contradict the maximality of $I$.\medskip

{\em Claim 3: there exists $J'\subset J$ such that $|J'|=2a-|I|$ and $(M_i)_{i\in I\cup J'}$ are linearly independent. In particular, $(M_i)_{i\in I\cup J'}$ form a basis of $\R^{2a}$.}

\noindent We are going to implicitly use the facts that $I\cap J=\emptyset$ and $|I|=|J|$. Let $j_1,j_2,\dots,j_{|I|}$ be an enumeration of all the elements of $J$; for $\ell=0,1,\dots,|I|$ we inductively define $J'_\ell$ by $J'_0:=\emptyset$ and
\begin{align*}
&\text{ if }M_{j_\ell}\notin\Span\{M_i:i\in I\cup J'_{\ell-1}\},\text{ then }J'_\ell:=J'_{\ell-1}\cup\{j_\ell\}\\
&\text{ if }M_{j_\ell}\in\Span\{M_i:i\in I\cup J'_{\ell-1}\},\text{ then }J'_\ell:=J'_{\ell-1}.
\end{align*}
Setting $J':=J'_{|I|}$, then the elements $(M_i)_{i\in J\cup J'}$ are linearly independent by construction; let us prove that they span the whole $\R^{2a}$,  that will also imply the equality $|J'|=2a-|I|$. Assume on the contrary that the dimension of
\[
\mathscr M:=\Span\{M_i:i\in I\cup J'\} = \Span\{M_i:i\in I\cup J\}
\]
(the second equality holding by construction) is less than $2a$; recalling Claim 2 and the fact that $I\cup J\cup K=\unodueenne$, this would imply that
\[
\forall\ i\in\unodueenne\quad M_i\in\mathscr M,
\]
and the rank of $M$ would not be $2a$. This is a contradiction, and Claim 3 is proved.\medskip

We now introduce
\begin{align*}
& A:=\{i\in\unoenne:\{i,i+n\}\subset I\cup J'\}\\
& B:=(I\cup J')\setminus (A\cup(n+A))=I\setminus (A\cup(n+A))\\
& C:= K\cap\unoenne,
\end{align*}
where for $E\subset\unoenne$ we write $n+E$ for $\{i\in\{n+1,\dots,2n\}:i-n\in E\}$. We notice the following properties:
\begin{itemize}
\item[(i)] if $i\in A$, then either ($i\in I$ and $i+n\in J'$), or  ($i\in J'$ and $i+n\in I$);
\item[(ii)] $|I|=|J|=|A|+| B|$ and $2|A|+|B|=|I|+|J'|=2a$;
\item[(iii)] $K=C\cup (n+C)$.
\end{itemize}
In particular
\[
|A|-|C| = |A|-\tfrac12 |K| = |A|-\tfrac12 (2n-|I|-|J|) = 2|A|+|B| -n =2a-n\geq 0
\]
and we can define the (non-necessarily standard) Young tableau
\[
R:=\begin{tabular}{|c|c|c|c|cc}
\cline{1-6}
$a_1$ & $a_2$ &$\ \cdots\ $& $\cdots$ &\multicolumn{1}{c|}{$ \cdots$} &   \multicolumn{1}{c|}{$a_{|A|}$}\\
\cline{1-6}
$c_1$ & $c_2$ &$\ \cdots\ $& $c_{|C|}$
\\
\cline{1-4}
\end{tabular}
\]
where $a_1,\dots,a_{|A|}$ is the increasing enumeration of the elements of $A$ and $c_1,\dots,c_{|C|}$ is the increasing enumeration of the elements of $C$; of course, when $|A|=|C|$ (or, which is the same, $2a=n$) the tableau $R$ is a $2\times |A|$ matrix.

We now consider the covector $\la:=dz_B\wedge \al_R$; it  can be easily checked that  $\la\wedge\theta\in \mathcal J^{2a+1}$.\medskip

{\em Claim 4: $\langle \tau\mid  \la\rangle\neq 0$.}

\noindent It is enough to check that
\begin{equation}\label{eq:perilclaim4}
\langle \tau\mid  \la\rangle= \langle \tau\mid  dz_B\wedge dxy_A\rangle;
\end{equation}
in fact, if \eqref{eq:perilclaim4} were true one would get
\begin{equation}\label{eq:partiamo}
\langle \tau\mid  \la\rangle= \langle \tau\mid  dz_B\wedge dxy_A\rangle = \pm \det[M_i]_{i\in A\cup(n+A)\cup B} = \pm \det[M_i]_{i\in I\cup J'}\neq0,
\end{equation}
as claimed. The signs $\pm$ in \eqref{eq:partiamo} are not relevant and depend on $A$ and $B$ only.

Let us prove \eqref{eq:perilclaim4}. By definition of $\al_R$, the covector $\la$ can be written as
\[
\la=dz_B\wedge \al_R = dz_B\wedge dxy_A + \tilde\la
\]
where $\tilde\la$ is a sum of covectors of the form $\pm dz_B\wedge dxy_{A'\cup C'}$, where $A'\subset A$ and $C'\subset C$ satisfy
\[
|A'|+|C'|=|A|\qtaq |C'|\geq 1
\]
(in particular, $|A'|<|A|$). We have
\begin{equation}\label{eq:determinantezero}
\langle \tau\mid  dz_B\wedge dxy_{A'\cup C'}\rangle = \pm \det[M_i]_{i\in B\cup A'\cup C'\cup(n+A')\cup(n+C')}
\end{equation}
and we claim that this determinant vanishes. Choose indeed $\bar a\in A\setminus A'$ and let $\bar\jmath\in J'$ be such that $\bar\jmath\equiv\bar a$ mod $n$ (recall property (i) above); by Claim  2 we obtain
\begin{align*}
\Span(M_i)_{i\in B\cup A'\cup C'\cup(n+A')\cup(n+C')} 
\subset\: & \Span(M_i)_{i\in B\cup A'\cup(n+A')} + \Span(M_i)_{i\in  C'\cup(n+C')}\\
\subset\: & \Span(M_i)_{i\in B\cup A'\cup(n+A')} + \Span(M_i)_{i\in I}\\
\subset\: & \Span(M_i)_{i\in I\cup J'\setminus\{\bar\jmath\}}.
\end{align*}
In particular, $\Span(M_i)_{i\in B\cup A'\cup C'\cup(n+A')\cup(n+C')}$ cannot be of maximal dimension $2a$: this implies that the determinant in \eqref{eq:determinantezero} is null, as claimed. The equality $\langle \tau \mid  \tilde\la\rangle=0$ then immediately follows and,  since $\langle \tau\mid  \la\rangle= \langle \tau\mid  dz_B\wedge dxy_A+\tilde\la\rangle$, equality \eqref{eq:perilclaim4} follows.\medskip

{\em Claim 5: $B=\emptyset$.}

\noindent By Remark~\ref{rem:tabellenonYoung}, the covector $\alpha_R$ can be written as a finite linear combination $\al_R=\sum_\ell c_\ell \al_{S_\ell}, c_\ell\in\R,$ of covectors $\al_{S_\ell}$ associated to standard Young tableaux $S_\ell$  containing the elements of $A\cup C$. In particular, if $B\neq\emptyset$ one would have
\[
\langle \tau\mid  \la\rangle =  \sum_\ell c_\ell\langle \tau\mid  dz_B\wedge  \al_{S_\ell}\rangle
 =  \sum_\ell c_\ell\langle X_1\wedge\dots\wedge X_a\wedge Y_1\wedge\dots\wedge Y_a\mid  dz_B\wedge  \al_{S_\ell}\rangle=0,
\]
the last equality due to Remark~\ref{rem:cosafailvettorestandard} (with $b:=0$) and the fact that $dz_B$ contains no factors of the form $dx_i$. This would contradict Claim 4, hence $B=\emptyset$ as claimed.\medskip

Claim 5 implies that $|A|=a$ and
\[
I\cup J'=A\cup(n+A)=I\cup J\qtaq C=\unoenne\setminus A.
\]
We can without loss of generality assume that $I=A$ and $J=J'=n+A$.\medskip

{\em Claim 6: $M_k=0$ for every $k\notin A\cup(n+A)$.}

\noindent Assume that, on the contrary, there exists $k\notin A\cup(n+A)$ (i.e., $k\in K=C\cup(n+C)$) such that $M_k\neq0$. By Claim 3, $(M_i)_{i\in A\cup(n+A)}$ form a basis of $\R^{2a}$, hence
\[
\Span\{M_i:i\in A\}\cap \Span\{M_i:i\in n+A\}=\{0\}
\]
which gives that
\[
\text{either}\quad M_k\notin \Span\{M_i:i\in A\}\quad\text{or}\quad  M_k\notin \Span\{M_i:i\in n+A\}.
\]
This would contradict the choice of $I$, because $\mathscr I:=A\cup\{k\}$ (in the first case) or $\mathscr I:=(n+A)\cup\{k\}$ (in the second one)  would satisfy both conditions in \eqref{eq:richiestesuI}, but $|\mathscr I|>|I|$.\medskip

{\em Claim 7: $\langle \tau\mid dxy_D\rangle=0$ for every $D\subset\unoenne$ such that $|D|=|A|$ and $D\neq A$.}

\noindent This is an immediate consequence of the equality $\langle \tau\mid dxy_D\rangle = \pm \det [M_i]_{i\in D\cup(n+D)}$ together with Claim 6.\medskip

{\em Claim 8:  $\{n-a+1,n-a+2,\dots,a\}\subset A$.}

\noindent Here we are understanding that the claim is empty when $n=2a$.   Denoting by $\overline R$ the Young tableau defined in \eqref{eq:defRbar}, one can easily check that the covector $\al_{\overline R}$ can be written as a sum of elements of the form $\pm dxy_D$ where  the subsets $D\subset \unoenne$ have cardinality $|D|=|A|=a$ and all contain $\{n-a+1,\dots,a\}$. If one had $\{n-a+1,\dots,a\}\not\subset A$, then one would get the contradiction
\[
1= \langle X_1\wedge\dots\wedge X_a\wedge Y_1\wedge\dots\wedge Y_a\mid  \al_{\overline R}\rangle = 
\langle \tau\mid  \al_{\overline R}\rangle = 0,
\]
the last equality following from Claim 7.\medskip

{\em Claim 9: if $2a>n$, then $A=\{1,\dots,a\}$.}

\noindent The condition $2a>n$ implies that $|C|=n-|A|=n-a<a=|A|$. Let $a_1<a_2<\dots<a_{|A|}$ be the elements of $A$ and $c_1<\dots<c_{|C|}$ those of $C$ and define 
\[
R_1:=\begin{tabular}{|c|c|c|c|ccc}
\cline{1-7}
$a_1$ & $a_2$ &$\ \cdots\ $& $a_{|C|}$ &\multicolumn{1}{c|}{$ a_{|C|+1}$} &\multicolumn{1}{c|}{$ \cdots$} &   \multicolumn{1}{c|}{$a_{|A|}$}
\\
\cline{1-7}
$c_1$ & $c_2$ &$\ \cdots\ $& $c_{|C|}$
\\
\cline{1-4}
\end{tabular}
\]
The Young tableau $R_1$ is not necessarily a standard one, as it might happen that $a_i>c_i$ for some $i$; however, it can be easily seen that the tableau
\[
R_2:=\begin{tabular}{|c|c|c|c|ccc}
\cline{1-7}
$m_1$ & $m_2$ &$\ \cdots\ $& $m_{|C|}$ &\multicolumn{1}{c|}{$ a_{|C|+1}$} &\multicolumn{1}{c|}{$ \cdots$} &   \multicolumn{1}{c|}{$a_{|A|}$}
\\
\cline{1-7}
$\mu_1$ & $\mu_2$ &$\ \cdots\ $& $\mu_{|C|}$
\\
\cline{1-4}
\end{tabular}
\]
defined by setting $m_i:=\min\{a_i,c_i\}, \mu_i:=\max\{a_i,c_i\}$ (or, equivalently,  by switching the positions of $a_i, c_i$ in case $a_i>c_i$) is a standard one. Notice that, since the $i$-th column of $R_1$ and the $i$-th column of $R_2$ contain the same elements $a_i,c_i$, we have $\al_{R_2}=\pm\al_{R_1}$.  Using Claim 7 we obtain
\begin{equation}\label{eq:R2ugualeadoverlineR}
\begin{aligned}
\langle X_1\wedge\dots\wedge X_a\wedge Y_1\wedge\dots\wedge Y_a\mid \al_{R_2}\rangle=&
\langle\tau\mid \al_{R_2}\rangle=\pm \langle\tau\mid \al_{R_1}\rangle\\
=&\pm \langle\tau\mid dxy_A\rangle =  \pm\det[M_i]_{i\in A\cup(n+A)}\neq 0
\end{aligned}
\end{equation}
which, by Remark \ref{rem:cosafailvettorestandard}, implies that $R_2=\overline R$. Therefore the rightmost entry in the first row of $R_2$ must be $a$; but this coincides with the rightmost entry in the first row of $R_1$, i.e., with $\max A$. Therefore $A\subset\unoenne$ is such that $|A|=a$ and $\max A=a$, and the claim $A=\{1,\dots,a\}$ follows.\medskip

{\em Claim 10: if $2a=n$, then either $A=\{1,\dots,a\}$ or $A=\{a+1,\dots,2a\}$.}

\noindent The proof is  similar to that of Claim 9. Notice that now $|C|=|A|=a$; let $a_1<a_2<\dots<a_{|A|}$ be the elements of $A$ and $c_1<\dots<c_{|A|}$ those of $C$ and define
\[
R_1:=\begin{tabular}{|c|c|c|c|}
\cline{1-4}
$a_1$ & $a_2$ &$\ \cdots\ $& $a_{|A|}$ 
\\
\cline{1-4}
$c_1$ & $c_2$ &$\ \cdots\ $& $c_{|A|}$
\\
\cline{1-4}
\end{tabular}
\]
as before. Again, let 
\[
R_2:=\begin{tabular}{|c|c|c|c|}
\cline{1-4}
$m_1$ & $m_2$ &$\ \cdots\ $& $m_{|A|}$ 
\\
\cline{1-4}
$\mu_1$ & $\mu_2$ &$\ \cdots\ $& $\mu_{|A|}$
\\
\cline{1-4}
\end{tabular}
\]
be defined by  $m_i:=\min\{a_i,c_i\}, \mu_i:=\max\{a_i,c_i\}$; $R_2$ is a standard Young tableau,  $\al_{R_2}=\pm\al_{R_1}$ and as in \eqref{eq:R2ugualeadoverlineR} we can conclude that $R_2=\overline R$.  We now distinguish two cases: 
\begin{itemize}
\item  if $1\in A$, then $a_1=m_1=1$ and, since $R_2=\overline R$, we have $\mu_1=c_1=a+1$. In particular, $C\subset\unoenne=\{1,\dots,2a\}$ is such that $|C|=a$ and $\min C=a+1$. This implies $C=\{a+1,\dots,n\}$ and in turn  $A=\{1,\dots,a\}$;
\item if $1\notin A$, then $1=m_1=c_1$ and $a+1=\mu_1=a_1$. In particular, $A\subset\unoenne=\{1,\dots,2a\}$ is such that $|A|=a$ and $\min A=a+1$, which implies $A=\{a+1,\dots,n\}$.
\end{itemize}
\medskip

The conclusion of the proof now  follows easily. If $2a>n$, by Claims 6 and 9 we have that $M_i=0$ for every $i\in\{a+1,\dots,n, n+a+1,\dots,2n\}$: this implies that $\tau=tX_1\wedge\dots\wedge X_a\wedge Y_1\wedge\dots\wedge Y_a$ for some $t\in\R$, and $t$ is forced to be 1 because $\langle X_1\wedge\dots\wedge X_a\wedge Y_1\wedge\dots\wedge Y_a\mid \al_{R_2}\rangle\neq 0 $ and
\begin{align*}
\langle X_1\wedge\dots\wedge X_a\wedge Y_1\wedge\dots\wedge Y_a\mid \al_{\overline R}\rangle =\langle \tau \mid  \al_{\overline R}\rangle
=t\langle X_1\wedge\dots\wedge X_a\wedge Y_1\wedge\dots\wedge Y_a\mid \al_{\overline R}\rangle
\end{align*}
This concludes the proof in case $n>2a$. The case $n=2a$ follows analogously on considering Claims 6 and 10.
\end{proof}

We now use Lemma~\ref{lem:vettoresemplicediRumin,b=0} to prove the following, more general result, of which Lemma~\ref{lem:vettoresemplicediRumin,b=0} represents the case $b=0$. 

\begin{lemma}\label{lem:vettoresemplicediRumin}
Let $a\geq 0$ and $b\geq 1$ be  integers such that $n\leq 2a+b\leq 2n-1$; assume that $\tau\in\bwl_{2a+b}\h_1$ is a simple $(2a+b)$-vector such that
\begin{equation}\label{eq:ipotesiuguale}
[ \tau\wedge T ]_\mathcal J = [ X_1\wedge\dots\wedge X_{a+b}\wedge Y_1\wedge\dots\wedge Y_a\wedge T]_\mathcal J.
\end{equation}
Then, the following statements hold:
\begin{enumerate}
\item[(i)] if $2a+b>n$, then $\tau= X_1\wedge\dots\wedge X_{a+b}\wedge Y_1\wedge\dots\wedge Y_a$;
\item[(ii)] if $2a+b=n$, then either
\[
\begin{split}
\tau=& X_1\wedge\dots\wedge X_{a+b}\wedge Y_1\wedge\dots\wedge Y_a
\end{split}
\]
or
\[
\begin{split}
\tau=&(-1)^{a(b+1)} \:X_{a+1}\wedge\dots\wedge X_{n}\wedge Y_{a+b+1}\wedge\dots\wedge Y_{n}
\end{split}
\]
\end{enumerate}
\end{lemma}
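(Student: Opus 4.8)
The strategy is to reduce Lemma~\ref{lem:vettoresemplicediRumin} to the already-proved Lemma~\ref{lem:vettoresemplicediRumin,b=0} (the case $b=0$) by a ``collapsing'' argument in the $b$ horizontal directions indexed by $a+1,\dots,a+b$. Write $\tau=\tau^1\wedge\dots\wedge\tau^{2a+b}$ with $\tau^i=\sum_{h}\tau^i_h W_h\in\h_1$, and as in the proof of Lemma~\ref{lem:vettoresemplicediRumin,b=0} form the $2n\times(2a+b)$ matrix $M=\mathrm{col}[\tau^1|\cdots|\tau^{2a+b}]$ with rows $M_1,\dots,M_{2n}\in\R^{2a+b}$, so that $\langle\tau\mid dz_I\rangle=\det[M_i]_{i\in I}$ for $|I|=2a+b$. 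The first step is to exploit the hypothesis~\eqref{eq:ipotesiuguale} by pairing $\tau\wedge T$ against suitable elements of the basis of $\mathcal J^{2a+b+1}$ given by Proposition~\ref{prop:baseJintro} (with $k:=2n-2a-b$), using Remark~\ref{rem:cosafailvettorestandard}: testing against $dx_i\wedge dx_j\wedge(\ldots)\wedge\theta$ for $i,j\in\{a+b+1,\dots,n\}$, and against the tableaux-covectors, one shows that $M_i=0$ whenever $i\in\{a+b+1,\dots,n\}$ (the ``extra'' $x$-columns are killed) and, more importantly, that for each pair $\ell\in\{a+1,\dots,a+b\}$ the row $M_{n+\ell}$ must lie in the span of the remaining rows in a controlled way — essentially that the $y_\ell$-directions ($\ell\in\{a+1,\dots,a+b\}$) contribute nothing to $\tau$, i.e.\ $M_{n+\ell}=0$ for $\ell=a+1,\dots,a+b$. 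This is the analogue of Claims 1--6 in the proof of Lemma~\ref{lem:vettoresemplicediRumin,b=0}, and it is carried out by choosing test forms of the type $x_\ell^2\,\psi\,dz_{(\cdots)}$ or $y_\ell y_m\,\psi\,dz_{(\cdots)}$ exactly as in Lemma~\ref{lem:vettoretangente,casogenerale}, except that here we feed them to $[\tau\wedge T]_\mathcal J$ rather than to a boundaryless current.

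The second step is the reduction proper. Once we know $M_i=0$ for $i\in\{a+b+1,\dots,n\}\cup\{n+a+1,\dots,n+a+b\}$, the vector $\tau$ is supported, as far as its $Y$-components go, only on $Y_1,\dots,Y_a$, while its $X$-components can a priori involve $X_1,\dots,X_{a+b}$. One factors out the ``rigid'' part: since the hypothesis forces $dx_{a+1}\wedge\dots\wedge dx_{a+b}$ to divide $\tau$ up to lower-order terms (test against $\al_R\wedge\theta$ with $R$ containing the elements of $\{1,\dots,a\}\cup\{a+b+1,\dots,n\}$ — cf.\ the $I=\overline I$, $J=\emptyset$ computation in the proof of Theorem~\ref{thm:Constancy}), one writes $\tau=\pm X_{a+1}\wedge\dots\wedge X_{a+b}\wedge\tau'$ for a simple $2a$-vector $\tau'\in\bwl_{2a}(\Span\{X_1,\dots,X_a,X_{a+b+1},\dots,X_n,Y_1,\dots,Y_n\})$; after relabelling the horizontal directions $\{1,\dots,a,a+b+1,\dots,n\}$ as $\{1,\dots,n-b\}$ — a relabelling that commutes with the Lefschetz/Rumin structure because it is induced by an $\H$-linear isomorphism fixing $T$ and $d\theta$ (Proposition~\ref{prop:dthetainvariante}, Corollary~\ref{cor:d_Ccommuta}) — the constraint~\eqref{eq:ipotesiuguale} becomes precisely $[\tau'\wedge T]_\mathcal J=[X_1\wedge\dots\wedge X_a\wedge Y_1\wedge\dots\wedge Y_a\wedge T]_\mathcal J$ inside $\H^{n-b}$, with $n-b\le 2a\le 2(n-b)-1$ — wait, one has to check $2a$ and $n-b$ satisfy the hypotheses of Lemma~\ref{lem:vettoresemplicediRumin,b=0}: indeed $2a+b\ge n$ gives $2a\ge n-b$, and $2a+b\le 2n-1$ combined with $b\ge 1$ gives $2a\le 2n-1-b\le 2(n-b)-1$ when $b\le n$, which always holds. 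So Lemma~\ref{lem:vettoresemplicediRumin,b=0} applies to $\tau'$ in $\H^{n-b}$.

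The third and final step is to translate the conclusion of Lemma~\ref{lem:vettoresemplicediRumin,b=0} back. In the case $2a>n-b$, i.e.\ $2a+b>n$, that lemma gives $\tau'=X_1\wedge\dots\wedge X_a\wedge Y_1\wedge\dots\wedge Y_a$ in the relabelled coordinates, which unwinds to $\tau=\pm X_{a+1}\wedge\dots\wedge X_{a+b}\wedge X_1\wedge\dots\wedge X_a\wedge Y_1\wedge\dots\wedge Y_a = X_1\wedge\dots\wedge X_{a+b}\wedge Y_1\wedge\dots\wedge Y_a$ after rearranging the wedge factors (the sign works out to $+1$ because the hypothesis~\eqref{eq:ipotesiuguale} already pins down the orientation). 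In the case $2a+b=n$, Lemma~\ref{lem:vettoresemplicediRumin,b=0} gives the dichotomy $\tau'=X_1\wedge\dots\wedge X_a\wedge Y_1\wedge\dots\wedge Y_a$ or $\tau'=(-1)^a X_{a+1}\wedge\dots\wedge X_{2a}\wedge Y_{a+1}\wedge\dots\wedge Y_{2a}$ (in the relabelled $(n-b)$-dimensional coordinates), and unwinding the relabelling and the factored $X_{a+1}\wedge\dots\wedge X_{a+b}$ yields the two listed options, the sign $(-1)^{a(b+1)}$ in (ii) arising from commuting the block $X_{a+1}\wedge\dots\wedge X_{a+b}$ past the block $X_{a+b+1}\wedge\dots\wedge X_n\wedge Y_{a+b+1}\wedge\dots\wedge Y_n$ (which has $2a$ entries, contributing $(-1)^{2ab}=1$... one must track this carefully) together with the $(-1)^a$ from Lemma~\ref{lem:vettoresemplicediRumin,b=0}. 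The main obstacle I anticipate is bookkeeping: getting the signs right in the unwinding, and making sure the relabelling of horizontal indices is genuinely realized by an $\H$-linear isomorphism so that Lemma~\ref{lem:vettoresemplicediRumin,b=0} — whose statement is in a fixed coordinate frame — can be invoked. A secondary subtlety is verifying rigorously, in Step~1, that the $y_\ell$-rows for $\ell\in\{a+1,\dots,a+b\}$ really vanish and not merely that they lie in some span; this requires testing against enough tableaux-covectors $dx_I\wedge dy_J\wedge\al_R\wedge\theta$, and is the place where Remark~\ref{rem:cosafailvettorestandard} and the explicit shape of the standard Young tableaux (Remark~\ref{rem:Rrettangolare}) do the heavy lifting.
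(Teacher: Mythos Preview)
Your high-level plan --- reduce to Lemma~\ref{lem:vettoresemplicediRumin,b=0} inside $\overline\H{}^{n-b}$ --- is precisely the paper's strategy, but your execution contains a methodological confusion and the order of the steps is inverted.

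\textbf{The Step 1 error.} You propose to establish the vanishing of certain rows of $M$ by ``choosing test forms of the type $x_\ell^2\,\psi\,dz_{(\cdots)}$ \ldots\ exactly as in Lemma~\ref{lem:vettoretangente,casogenerale}, except that here we feed them to $[\tau\wedge T]_\mathcal J$''. This does not make sense. Those test forms are elements of $\DH^{2n-k}$ fed to a \emph{current} via $\partial\Tcurr(\omega)=\Tcurr(d_C\omega)$; the factor $x_\ell^2\psi$ is there so that after applying $D$ (or $d$) and integrating over the plane a single term survives. In the present lemma there is no current and no integration: the hypothesis~\eqref{eq:ipotesiuguale} is purely algebraic, and the only thing you can do is pair the fixed multivector $\tau\wedge T$ against \emph{covectors} $\la\in\mathcal J^{2a+b+1}$. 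A single element of $\mathcal J_{2a+b+1}$ cannot be fed forms with variable coefficients.

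\textbf{Order of operations.} Even with the correct tool (pairing against the basis covectors $dx_I\wedge dy_J\wedge\al_R\wedge\theta$), the row-vanishings you claim in Step~1 cannot be obtained up front: in the paper these are the \emph{last} steps (Claims 5--7), and each one works only because the $I_a$ and $I_b$ blocks established earlier collapse the relevant $(2a+b)\times(2a+b)$ determinant to a single entry. Similarly, the factorization $\tau=\pm X_{a+1}\wedge\dots\wedge X_{a+b}\wedge\tau'$ in your Step~2 presupposes $X_{a+1},\dots,X_{a+b}\in\Span\tau$, which is part of what must be proved.

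\textbf{How the paper actually reduces.} First row-reduce $M$ so that rows $a+1,\dots,a+b$ read $[\,0_{b\times 2a}\mid I_b\,]$ (Claims 1--2). Then define $\overline\tau:=\overline\tau^1\wedge\dots\wedge\overline\tau^{2a}$ in $\overline\H{}^{n-b}$ by \emph{projecting} (not factoring): for each $i\le 2a$, keep only the components of $\tau^i$ in positions $\{1,\dots,a\}\cup\{a+b+1,\dots,n\}\cup\{n+1,\dots,n+a\}\cup\{n+a+b+1,\dots,2n\}$. Because of the normalized block, Laplace expansion gives $\langle\overline\tau\mid d\overline z_{\overline I}\rangle=(-1)^{ab}\langle\tau\mid dz_{\iota(\overline I)\cup\{a+1,\dots,a+b\}}\rangle$, which transports~\eqref{eq:ipotesiuguale} to $\overline\tau$; Lemma~\ref{lem:vettoresemplicediRumin,b=0} then forces the $I_a$ blocks in $M$ (Claim 3). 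Only with this block structure in hand does one pair against specific covectors to annihilate the remaining blocks $A_1,A_2,B_1,B_2,C,D_1,D_2$ one at a time (Claims 4--7).
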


\begin{remark}\label{rem:nonuniqueness2}
As a matter of fact, if $2a+b=n$ the equality
\[
[X_1\wedge\dots\wedge X_{a+b}\wedge Y_1\wedge\dots\wedge Y_a\wedge T]_\mathcal J = [(-1)^{a(b+1)} \:X_{a+1}\wedge\dots\wedge X_{n}\wedge Y_{a+b+1}\wedge\dots\wedge Y_{n}\wedge T]_\mathcal J
\]
holds.  This can be proved by using Remark~\ref{rem:cosafailvettorestandard} and checking the action of $X_{a+1}\wedge\dots\wedge X_{n}\wedge Y_{a+b+1}\wedge\dots\wedge Y_{n}\wedge T$ in  duality with elements of the basis of  $\mathcal J^{2a+b+1}=\mathcal J^{n+1}$ provided by Proposition~\ref{prop:baseJintro}.

For instance, in $\H^3$ one has the equality $[X_1\wedge X_2\wedge Y_1\wedge T]_\mathcal J=[X_2\wedge X_3\wedge Y_3\wedge T]_\mathcal J$.
\end{remark}

\begin{proof}[Proof of Lemma \ref{lem:vettoresemplicediRumin}]
Let us write $\tau=\tau^1\wedge\dots\wedge\tau^{2a+b}$ for suitable horizontal vectors $\tau^i\in\h_1$; writing
\[
\tau^i=\tau^i_1X_1+\dots+\tau^i_{2n}Y_n,\qquad i=1,\dots,2a+b
\] 
we introduce the matrix
\[
M:=\text{col}\big[\:\tau^1\:|\cdots|\:\tau^{2a+b}\:\big]
=
\left[
\begin{array}{c|c|c}
\tau^1_1 &\dots & \tau^{2a+b}_1\\
\vdots& &\vdots\\
\tau^1_{2n} &\dots & \tau^{2a+b}_{2n}
\end{array}
\right]
=\text{row}
\left[
\begin{array}{c}
M_1\\
\hline\vdots\\
\hline M_{2n}
\end{array}
\right]
,
\]
where for every $i=1,\dots,2n$ we denoted by $M_i=(\tau^1_i,\dots,\tau^{2a+b}_i)\in\R^{2a+b}$ the $i$-th row of  $M$. Again, for every $I\subset\{1,\dots,2n\}$ with $|I|=2a+b$ the equality 
\[
\langle\tau\mid dz_I\rangle = \det [M_i]_{i\in I},
\]
holds, where we used notation \eqref{eq:defdzI}.\medskip

{\em Claim 1: $M_{a+1},\dots,M_{a+b}$ are linearly independent.}

\noindent Assume not: then, for every $I\subset\unodueenne$ with $|I|=2a$ we would get
\[
\langle \tau\mid  dx_{\{a+1,\dots,a+b\}}\wedge dz_I\rangle=0.
\]
In particular 
\[
\langle\tau\mid  dx_{\{a+1,\dots,a+b\}}\wedge \alpha_R\rangle =0
\quad\text{for}\quad 
R:=\begin{tabular}{|c|c|c|c|cc}
\cline{1-6}
$1$ & $2$ &$\ \cdots\ $& $n-a-b$ &\multicolumn{1}{c|}{$ \cdots$} &   \multicolumn{1}{c|}{$a$}
\\
\cline{1-6}
$a+b+1$ & $a+b+2$ &$\ \cdots\ $& $n$
\\
\cline{1-4}
\end{tabular}
\]
which would provide a contradiction since
\[
\langle\tau\mid dx_{\{a+1,\dots,a+b\}}\wedge \alpha_R\rangle =
\langle X_1\wedge\dots\wedge X_{a+b}\wedge Y_1\wedge\dots\wedge Y_a \mid  dx_{a+1,\dots,a+b}\wedge \alpha_R\rangle=\pm 1
\]
by assumption.\medskip

{\em Claim 2: up to a proper choice of $\tau^1,\dots,\tau^{2a+b}$, the $b\times(2a+b)$ sub-block $\widetilde M$ of $M$ determined by the rows $M_{a+1},\dots, M_{a+b}$ satisfies
\[
\widetilde M=\left[
\begin{array}{c}
M_{a+1}\\
\cline{1-1}\vdots\\
\cline{1-1}
M_{a+b}
\end{array}
\right]
=
\left[
\begin{array}{c|c}
&\\
0_{b\times 2a} & I_b\\
&
\end{array}
\right]
\]
where $0_{b\times 2a}$ denotes a $b\times 2a$ block with null entries and $I_b$ is the $b\times b$ identity matrix.
}

\noindent By Claim 1, the matrix $\widetilde M$ has rank $b$; up to a permutation of $\tau^1,\dots,\tau^{2a+b}$, we can assume that the rightmost $b\times b$ minor of $\widetilde M$ has nonzero determinant. Namely, calling $\widetilde M^j=(\tau^j_{a+1},\dots,\tau^j_{a+b})^T$ the $j$-th column of $\tilde M$ (the superscript $^T$ denoting transposition), the rightmost minor $\overline M:=[\widetilde M^{2a+1}|\dots|\widetilde M^{2a+b}]$ of $\widetilde M$ satisfies $\det\overline M\neq 0$. For $i=2a+1,\dots,2a+b$ we  define
\[
\si^i:=\sum_{j=1}^b (\overline M^{-1})_i^j\:\tau^{2a+j}
\]
and we notice that  $\Span\{\tau^1,\dots,\tau^{2a+b}\}=\Span\{\tau^1,\dots,\tau^{2a},\si^{2a+1},\dots,\si^{2a+b}\}$, because of the equality  $\Span\{\tau^{2a+1},\dots,\tau^{2a+b}\}=\Span\{\si^{2a+1},\dots,\si^{2a+b}\}$. Notice that
\begin{equation}\label{eq:alglin1}
\si^{2a+i}_{a+j}=\de^i_j\qquad\text{for all }i,j\in\{1,\dots,b\};
\end{equation}
in particular, when in the matrix $M$  the rightmost $b$ columns $\tau^{2a+1},\dots,\tau^{2a+b}$ are replaced by $\si^{2a+1},\dots,\si^{2a+b}$,  the block $\widetilde M$ is replaced by a block of the form 
\[
\left[
\begin{array}{c|c}
&\\
*_{b\times 2a} & I_b\\
&
\end{array}
\right]
\]
where $*_{b\times 2a}$ denotes a  $b\times 2a$ matrix. Define
\[
\si^i:=\tau^i-\sum_{j=1}^b  \tau^i_{a+j}  \si^{2a+j},\qquad i=1,\dots,2a
\]
so that
\begin{equation}\label{eq:alglin2}
\si^i_j=0\qquad \text{for all }i=1,\dots,2a,j=a+1,\dots,a+b.
\end{equation}
It is easily seen that  $\Span\{\tau^1,\dots,\tau^{2a+b}\}=\Span\{\si^1,\dots,\si^{2a+b}\}$; in particular, $\si:=\si^1\wedge\dots\wedge\si^{2a+b}$ is a non-zero multiple of $\tau$. Upon multiplying $\si^1$ by a non-zero factor, we can assume that $\si=\tau$. In this way, if one replaces $\tau^1,\dots,\tau^{2a+b}$ by $\si^1,\dots,\si^{2a+b}$, then the new matrix $M$ is such that  the block $\widetilde M$ has the  form 
\[
\left[
\begin{array}{c|c}
&\\
0_{b\times 2a} & I_b\\
&
\end{array}
\right]
\]
as wished.\medskip

In the following, we will keep on using the notation $\tau^1,\dots,\tau^{2a+b}$ for the new family $\si^1,\dots,\si^{2a+b}$.\medskip

{\em Claim 3a: if $n>2a+b$, then up to a proper choice of $\tau^1,\dots,\tau^{2a+b}$, we can assume that the matrix $M$ is of the form
\begin{equation}\label{eq:formaMa}
M=
(-1)^{ab}\left[
\begin{array}{c|c|c}
I_a & 0 & A_1\\
\cline{1-3}0 & 0 & I_b\\
\cline{1-3}0 & 0 & D_1\\
\cline{1-3}0 & I_a & A_2\\
\cline{1-3}B_1 & B_2 & C\\
\cline{1-3}0 & 0 & D_2
\end{array}
\right]
\end{equation}
where
\begin{itemize}
\item[$(i)$] the $2a+b$ columns of $M$ have been arranged into three blocks of size, respectively, $a$, $a$ and $b$;
\item[$(ii)$] the $2n$ rows of $M$ have been arranged into six blocks of size, respectively, $a$, $b$, $n-a-b$, $a$, $b$ and $n-a-b$;
\item[$(iii)$] 0 denotes null matrices of the proper size;
\item[$(iv)$] $I_a,I_b$ denote identity matrices of size $a,b$;
\item[$(v)$] $A_1,A_2,B,C_1,C_2,D_1,D_2$ denote generic matrices of the proper size.
\end{itemize}
}\medskip

{\em Claim 3b: if $n=2a+b$, then up to a proper choice of $\tau^1,\dots,\tau^{2a+b}$, we can assume that
\begin{equation}\label{eq:formaMb}
\text{either }
M=
(-1)^{ab}\left[
\begin{array}{c|c|c}
I_a & 0 & A_1\\
\cline{1-3}0 & 0 & I_b\\
\cline{1-3}0 & 0 & D_1\\
\cline{1-3}0 & I_a & A_2\\
\cline{1-3}B_1 & B_2 & C\\
\cline{1-3}0 & 0 & D_2
\end{array}
\right]
\quad\text{ or }
M=(-1)^{a(b+1)}
\left[
\begin{array}{c|c|c}
0 & 0 & D_1\\
\cline{1-3}0 & 0 & I_b\\
\cline{1-3}I_a & 0 & A_1\\
\cline{1-3}0 & 0 & D_2\\
\cline{1-3}B_1 & B_2 & C\\
\cline{1-3}0 & I_a & A_2
\end{array}
\right]
\end{equation}
where the notation is similar to Claim 3a, \upshape{($i$)-($v$)}.
}

\noindent We prove Claim 3a and 3b simultaneously; notice that there is nothing to prove in case $a=0$. Using the block subdivision of $M$ as in \eqref{eq:formaMa} and \eqref{eq:formaMb}, we already known by Claim 2 that the second block of rows  is of the form $[0|0|I_b]$. Let us prove that we can choose $\tau^1,\dots,\tau^{2a+b}$ so that  null and identity blocks $0$ and $I_a$ appear where claimed in \eqref{eq:formaMa} and \eqref{eq:formaMb}.

We consider now a $(n-b)$-th Heisenberg group and we agree that all objects associated with it will be overlined: in particular,  we denote by $\overline X_1,\dots,\overline X_{n-b},\overline{Y}_1,\dots,\overline{Y}_{n-b},\overline T$ the standard basis of left-invariant  vector fields in $\overline\H{}^{n-b}$. Define 
\[
\overline\tau^i:=\sum_{j=1}^a (\tau^i_j\overline X_j+\tau^i_{n+j}\overline Y_j)+\sum_{j=a+1}^{n-b} (\tau^i_{b+j}\overline X_j + \tau^i_{n+b+j}\overline Y_j),\qquad i=1,\dots,2a.
\]
Consider $\overline\tau:=\overline\tau^1\wedge\dots\wedge\overline\tau^{2a}$; we prove that
\begin{equation}\label{eq:taubartau}
\langle\overline\tau\wedge\overline T \mid  \overline\la\rangle = (-1)^{ab}\langle \overline X_1\wedge\dots\wedge \overline X_a\wedge \overline Y_1\wedge\dots\wedge \overline Y_a\wedge \overline T \mid  \overline \la\rangle\qquad\text{for all }\overline \la\in\overline{\mathcal J}^{2a+1},
\end{equation}
where the symbol  $\overline{\mathcal J}^{2a+1}$  stands for Rumin's space of   $(2a+1)$-covectors in $\overline\H{}^{n-b}$. Lemma~\ref{lem:vettoresemplicediRumin,b=0} then implies $\overline\tau=\overline X_1\wedge\dots\wedge \overline X_a\wedge \overline Y_1\wedge\dots\wedge \overline Y_a$ (unless $2a+b=n$, in which case we also have the possibility $\overline\tau=(-1)^a\overline X_{a+1}\wedge\dots\wedge \overline X_{n-b}\wedge \overline Y_{a+1}\wedge\dots\wedge \overline Y_{n-b}$) and this implies Claims 3a-3b up to some tedious arguments.

Consider $\overline I\subset\{1,\dots,2(n-b)\}$ such that $|\overline I|=2a$; calling $\overline M$ the $(2n-2b)\times 2a$ matrix whose columns are $\overline \tau^1,\dots,\overline\tau^{2a}$ and using for forms in $\overline\H{}^{n-b}$ the notation $d\overline z_{\overline I}$ analogous to that in  \eqref{eq:defdzI}, we have
\[
\langle \overline\tau \mid  d\overline z_{\overline I}\rangle = \det [\overline M_i]_{i\in\overline I},
\]
where of course $\overline M_i$ is the $i$-th row of $\overline M$. Since $\overline M$ is obtained from $M$ by canceling the third block of columns and the second and fifth blocks of rows (according to the arrangement in \eqref{eq:formaMa} and \eqref{eq:formaMb}) and since the second block of rows of $M$ is $[0|0|I_b]$, it is clear that
\begin{align*}
\langle \overline\tau \mid  d\overline z_{\overline I}\rangle = 
& \det [\overline M_i]_{i\in\overline I}
=(-1)^{ab}\det [M_i]_{\iota(\overline I)\cup \{a+1,\dots,a+b\}}\\
= & (-1)^{ab}\langle \tau\mid dz_{\iota(\overline I)\cup\{a+1,\dots,a+b\}}\rangle,
\end{align*}
where $\iota:\{1,\dots,2(n-b)\}\to\unodueenne$ is defined by
\begin{align*}
\iota(i):=
\left\{
\begin{array}{ll}
i & \text{if } 1\leq i\leq a\\
i+b & \text{if } a+1\leq i\leq n-b+a\\
i+2b\  & \text{if } n-b+a+1\leq i\leq 2n-2b.
\end{array}
\right.
\end{align*}
This implies that, for every disjoint subsets $\overline I,\overline J\subset \{1,\dots,n-b\}$ and every standard Young tableau $\overline R$, with rows of the proper lengths $\tfrac12(2a-|\overline I|-|\overline J|)$ and $n-b-\tfrac12(2a+|\overline I|+|\overline J|)$ and whose entries are precisely the numbers in $\{1,\dots,n-b\}\setminus(\overline  I\cup\overline J)$, we have
\begin{equation}\label{eq:seminariotrapoco}
\langle \overline\tau \mid  d\overline x_{\overline I}\wedge d\overline y_{\overline J}\wedge\al_{\overline R}\rangle 
= (-1)^{ab} \langle \tau \mid  d x_{\iota(\overline  I)\cup\{a+1,\dots,a+b\}}\wedge dy_{\overline{J}}\wedge\al_{\iota(\overline R)}\rangle,
\end{equation}
where $\iota(\overline R)$ denotes the tableau of the same form of $R$, obtained on replacing each entry, say $i$, of  $\overline R$ with $\iota(i)$. Observe that $\iota(R)$ is also a standard Young tableau because $\iota$ is increasing. Taking  Remark \ref{rem:cosafailvettorestandard}  and assumption \eqref{eq:ipotesiuguale} into account, equality \eqref{eq:seminariotrapoco} now implies \eqref{eq:taubartau}.\medskip

From now on we  assume that, if $n=2a+b$,  we are in the first case between the two displayed in \eqref{eq:formaMb}: indeed, the following  arguments  can be easily\footnote{One elegant way is to apply the $\H$-linear isomorphism associated with the Lie algebra isomorphism $\mathcal L_*(X_i)=X_{a+b+i},\  \mathcal L_*(Y_i)=Y_{a+b+i}$ if $1\leq i\leq a$, $\mathcal L_*(X_i)=X_{i},\  \mathcal L_*(Y_i)=Y_{i}$ if $a+1\leq i\leq a+b$, $\mathcal L_*(X_i)=X_{i-a-b},\  \mathcal L_*(Y_i)=Y_{i-a-b}$ if $a+b+1\leq i\leq n=2a+b$.}  generalized  to the other possible case.\medskip

{\em Claim 4: up to a proper choice of $\tau^1,\dots,\tau^{2a+b}$, we can assume that the blocks $A_1,A_2$ in \eqref{eq:formaMa}-\eqref{eq:formaMb} are 0.}

\noindent It is  enough to replace $\tau^i$, for $i=2a+1,\dots,2a+b$, with
\[
\tau^i - \sum_{j=1}^a (\tau^i_j \tau^j+\tau^i_{n+j} \tau^{a+j}).
\]
We are using in a key way the two blocks of the form $I_a$ appearing in \eqref{eq:formaMa}-\eqref{eq:formaMb}.\medskip

From now on the vectors $\tau$ are fixed; we are going to prove that the remaining blocks $B,C_1,C_2,D_1,D_2$ in \eqref{eq:formaMa}-\eqref{eq:formaMb} are 0.\medskip

{\em Claim 5: the blocks $B_1$ and $B_2$ in \eqref{eq:formaMa}-\eqref{eq:formaMb} are 0.}

\noindent We need to prove that
\begin{equation*}
\tau^{i}_{n+a+j}=0\qquad\text{for all }i=1,\dots,2a,\ j=1,\dots,b
\end{equation*}
Fix then such $i$ and $j$.
Let $\overline R$ be the standard Young tableau
\begin{equation}\label{eq:defRbarbrutto}
\begin{aligned}
& \overline  R:=
\begin{tabular}{|c|c|c|c|cc}
\cline{1-6}
$1$ & $2$ &$\ \cdots\ $& $n-a-b$   &\multicolumn{1}{c|}{$ \cdots$} &   \multicolumn{1}{c|}{$a$}
\\
\cline{1-6}
$a+b+1$ & $a+b+2$ &$\ \cdots\ $& $n$
\\
\cline{1-4}
\end{tabular}&&\text{if }n<2a+b\\
& \overline  R:=
\begin{tabular}{|c|c|c|c|}
\cline{1-4}
$1$ & $2$ &$\ \cdots\ $& $a$\\
\cline{1-4}
$a+b+1$ & $a+b+2$ &$\ \cdots\ $& $n$
\\
\cline{1-4}
\end{tabular}&&\text{if }n=2a+b.
\end{aligned}
\end{equation}
We define a new tableau $Q$ in the following way:
\begin{itemize}
\item if $1\leq i\leq a$, $Q$ is the tableau obtained from $\overline R$ on replacing, in the first row, the entry $i$ with $a+j$;
\item if $a+1\leq i\leq 2a$, $Q$ is the tableau obtained from $\overline R$ on replacing, in the first row, the entry $i-a$ with $a+j$.
\end{itemize}
Consider
\begin{align*}
& \la:=dx_{\{a+1,\dots,a+b\}\setminus\{a+j\}}\wedge dy_i\wedge\al_Q
&& \text{if }1\leq i\leq a\\
& \la:=dx_{\{i-a\}\cup\{a+1,\dots,a+b\}\setminus\{a+j\}}\wedge\al_Q
&& \text{if }a+1\leq i\leq 2a
\end{align*}
The tableau $Q$ is not a standard Young one; nonetheless, $\la\wedge\theta\in\mathcal J^{2a+b+1}$ and, using  Remark~\ref{rem:tabellenonYoung} and the assumption~\eqref{eq:ipotesiuguale}, we have
\begin{equation}\label{eq:arthgoldau}
\langle \tau\mid \la\rangle=0.
\end{equation}
Assume that $1\leq i\leq a$; we can write
\begin{equation}\label{eq:dottorTerence111}
\la=dx_{\{a+1,\dots,a+b\}\setminus\{a+j\}}\wedge dy_i\wedge\sum_S\si(S)\:dxy_S
\end{equation}
where $\si(S)\in\{1,-1\}$ is a suitable sign and the sum varies among the $2^{n-a-b}$ subsets $S\subset\{1,\dots,a,a+j,a+b+1,\dots,n\}\setminus\{i\}$ (i.e., $S$ is a subset of the entries of $Q$) of cardinality $a$ and containing exactly one element from each column of $Q$. For any such $S$ we have
\begin{equation}\label{eq:librobrutto111}
\begin{split}
\langle\tau \mid dx_{\{a+1,\dots,a+b\}\setminus\{a+j\}}\wedge dy_i\wedge dxy_S \rangle
= & \det[M_\ell]_{\ell\in(\{a+1,\dots,a+b\}\setminus\{a+j\})\cup\{n+i\}\cup S\cup(n+S)}\\
= & \pm \det [N_\ell]_{\ell\in S\cup(n+S)\cup\{n+i\}},
\end{split}
\end{equation}
where
\begin{itemize}
\item the sign $\pm$ depends only on $S,i,j$,
\item $N_\ell:=(\tau^1_\ell,\dots,\tau^{2a}_\ell,\tau^{2a+j}_\ell)$ is obtained from the $\ell$-th row $M_\ell$ of $M$ on canceling the last $b$ components except for the $(2a+j)$-th one,
\item we used that the second block of rows in \eqref{eq:formaMa}-\eqref{eq:formaMb} is $[0|0|I_b]$. 
\end{itemize}
Using Claim 4 one finds
\begin{equation}\label{eq:detSbruttobrutto111}
\text{if $S=\{1,\dots,a,a+j\}\setminus \{i\}$, then }\langle\tau \mid  dx_{\{a+1,\dots,a+b\}\setminus\{a+j\}}\wedge dy_i\wedge dxy_S \rangle = \pm\tau^{i}_{n+a+j}.
\end{equation}
Notice that the case $S=\{1,\dots,a,a+j\}\setminus \{i\}$ correspods to $S$ containing the elements in the first row of $Q$. Instead, if $S\neq\{1,\dots,a,a+j\}\setminus \{i\}$, then there exists an element $\bar\ell\in S$ belonging to the second row of $Q$, i.e., $\bar \ell\in S\cap\{a+b+1,\dots,n\}$. In this case $N_{\bar\ell}$ and $N_{n+\bar\ell}$ are linearly dependent, because by Claim 3 all their entries are null except (possibly) for the last one; in particular
\begin{equation}\label{eq:detSbruttobruttobrutto111}
\text{if $S\neq\{1,\dots,a,a+j\}\setminus \{i\}$, then }\langle\tau \mid  dx_{\{a+1,\dots,a+b\}\setminus\{a+i\}}\wedge dy_{a+i}\wedge dxy_S \rangle = 0.
\end{equation}
By \eqref{eq:arthgoldau}, \eqref{eq:dottorTerence111}, \eqref{eq:detSbruttobrutto111} and \eqref{eq:detSbruttobruttobrutto111} we obtain
\begin{equation}\label{eq:zerotaulacomponente}
0 = \langle\tau \mid  \la \rangle = \pm\tau^{i}_{n+a+j}
\end{equation}
and the claim is proved in case $1\leq i\leq a$.  If $a+1\leq i\leq 2a$, \eqref{eq:zerotaulacomponente} can be proved by a completely analogous argument that we omit. The claim is proved.\medskip

{\em Claim 6: the block $C$ in \eqref{eq:formaMa}-\eqref{eq:formaMb} is 0.}

\noindent The block $C$ is a square one, with size $b\times b$; we start by proving that the elements on the diagonal of $C$ are all null, i.e., that
\begin{equation}\label{eq:diagonale nulla}
 \tau^{2a+i}_{n+a+i}=0 \quad  \text{ for any }i=1,\dots,b.
\end{equation} 
Let then $i\in\{1,\dots,b\}$ be fixed; consider 
\[
\la:=dx_{\{a+1,\dots,a+b\}\setminus\{a+i\}}\wedge dy_{a+i}\wedge\al_{\overline R},
\]
where $\overline R$ is as in \eqref{eq:defRbarbrutto}. We can write
\begin{equation}\label{eq:dottorTerence}
\la=dx_{\{a+1,\dots,a+b\}\setminus\{a+i\}}\wedge dy_{a+i}\wedge\sum_S\si(S)\:dxy_S
\end{equation}
where $\si(S)\in\{1,-1\}$ is a suitable sign and the sum varies among the $2^{n-a-b}$ subsets $S\subset\{1,\dots,a,a+b+1,\dots,n\}$ of cardinality $a$ and containing exactly one element from each column of $\overline R$. For any such $S$ we have
\begin{equation}\label{eq:librobrutto}
\begin{split}
\langle\tau \mid  dx_{\{a+1,\dots,a+b\}\setminus\{a+i\}}\wedge dy_{a+i}\wedge dxy_S \rangle
= & \det[M_j]_{j\in(\{a+1,\dots,a+b\}\setminus\{a+i\})\cup\{n+a+i\}\cup S\cup(n+S)}\\
= & \pm \det [N_j]_{j\in S\cup(n+S)\cup\{n+a+i\}},
\end{split}
\end{equation}
where
\begin{itemize}
\item the sign $\pm$ depends only on $S$ and $i$,
\item $N_j:=(\tau^1_j,\dots,\tau^{2a}_j,\tau^{2a+i}_j)$ is obtained from the $j$-th row $M_j$ of $M$ on canceling the last $b$ components except for the $(2a+i)$-th one,
\item we used that the second block of rows in \eqref{eq:formaMa}-\eqref{eq:formaMb} is $[0|0|I_b]$. 
\end{itemize}
Using Claims 4 and 5 one finds
\begin{equation}\label{eq:detSbruttobrutto}
\text{if $S=\{1,\dots,a\}$, then }\langle\tau \mid  dx_{\{a+1,\dots,a+b\}\setminus\{a+i\}}\wedge dy_{a+i}\wedge dxy_S \rangle = \pm\tau^{2a+i}_{n+a+i}.
\end{equation}
On the contrary, if $S\neq\{1,\dots,a\}$, then there exists an element $\bar\ell\in S$ belonging to the second row of $\overline R$, i.e., $\bar \ell\in S\cap\{a+b+1,\dots,n\}$. In this case $N_{\bar\ell}$ and $N_{n+\bar\ell}$ are linearly dependent, because by Claim 3 all their entries are null except (possibly) for the last one; in particular, \eqref{eq:librobrutto} gives
\begin{equation}\label{eq:detSbruttobruttobrutto}
\text{if $S\neq\{1,\dots,a\}$, then }\langle\tau\mid  dx_{\{a+1,\dots,a+b\}\setminus\{a+i\}}\wedge dy_{a+i}\wedge dxy_S \rangle = 0.
\end{equation}
By \eqref{eq:dottorTerence}, \eqref{eq:detSbruttobrutto} and \eqref{eq:detSbruttobruttobrutto} we obtain
\[
0=\langle X_1\wedge\dots\wedge X_{a+b}\wedge Y_1\wedge\dots\wedge Y_a\mid \la\rangle = \langle\tau \mid  \la \rangle = \pm\tau^{2a+i}_{n+a+i}
\]
and \eqref{eq:diagonale nulla} is proved.

We now prove that the off-diagonal entries of $C$ are null as well: we then fix $i,j\in\{1,\dots,b\}$ with $j<i$ and prove that
\begin{equation}\label{eq:fuoridiagonalenulli}
\tau^{2a+i}_{n+a+j}=\tau^{2a+j}_{n+a+i}=0.
\end{equation}
Let us consider
\begin{align*}
& \la_1:=dx_{\{a+1,\dots,a+b\}\setminus\{a+j,a+i\}}\wedge dy_{\{n+a+j,n+a+i\}}\wedge\al_{\overline R}\\
& \la_2:=dx_{\{a+1,\dots,a+b\}\setminus\{a+j,a+i\}}\wedge \al_{Q}
\end{align*}
where $\overline R$ is as in \eqref{eq:defRbarbrutto} and $Q$ is the tableau obtained by adding a column $(a+j,a+i)$ on the left of $\overline R$, i.e.,
\begin{align*}
&  Q :=
\begin{tabular}{|c|c|c|c|ccc}
\cline{1-7}
$a+j$ & $1$ &$\ \cdots\ $& $n-a-b$  &\multicolumn{1}{c|}{$n-a-b+1$} &\multicolumn{1}{c|}{$ \cdots$} &   \multicolumn{1}{c|}{$a$}
\\
\cline{1-7}
$a+i$ & $a+b+1$ &$\ \cdots\ $& $n$
\\
\cline{1-4}
\end{tabular}&&\text{if }n<2a+b\\
&  Q :=
\begin{tabular}{|c|c|c|c|}
\cline{1-4}
$a+j$ & $1$ &$\ \cdots\ $& $a$\\
\cline{1-4}
$a+i$ & $a+b+1$ &$\ \cdots\ $& $n$
\\
\cline{1-4}
\end{tabular}&&\text{if }n=2a+b.
\end{align*}
Notice that $Q$ is not  a standard Young tableau; however, $\la_1\wedge\theta$ and $\la_2\wedge\theta$ belong to $\mathcal J^{2a+b+1}$ and, using Remark \ref{rem:tabellenonYoung} and the assumption~\eqref{eq:ipotesiuguale}, one obtains 
\begin{equation}\label{eq:launoladue}
\begin{split}
& \langle\tau\mid \la_1\rangle = \langle X_1\wedge\dots\wedge X_{a+b}\wedge Y_1\wedge\dots\wedge Y_a\mid \la_1\rangle = 0\\
& \langle\tau\mid \la_2\rangle = \langle X_1\wedge\dots\wedge X_{a+b}\wedge Y_1\wedge\dots\wedge Y_a\mid \la_2\rangle = 0.
\end{split}
\end{equation}
The  argument that follows is pretty much similar to the previous one as well as to that of Claim 5. We  write
\begin{equation}\label{eq:dottorTerence222}
\begin{aligned}
& \la_1=dx_{\{a+1,\dots,a+b\}\setminus\{a+j,a+i\}}\wedge dy_{\{n+a+j,n+a+i\}}\wedge\sum_S\si(S)\:dxy_S\\
& \la_2=dx_{\{a+1,\dots,a+b\}\setminus\{a+j,a+i\}}\wedge\sum_T\si(T)\:dxy_T
\end{aligned}
\end{equation}
where $\si(S),\si(T)\in\{1,-1\}$ are suitable signs and the sums vary among the  subsets $S,T$ of the sets of entries of $\overline R,Q$ (respectively) with cardinality (resp.) $a,a+1$  and containing exactly one element from each column of (resp.) $\overline R,Q$. For any such $S,T$ we have
\begin{equation}\label{eq:librobrutto222aaa}
\begin{split}
& \langle\tau \mid  dx_{\{a+1,\dots,a+b\}\setminus\{a+j,a+i\}}\wedge dy_{\{n+a+j,n+a+i\}}\wedge dxy_S \rangle\\
=\: & \det[M_\ell]_{\ell\in(\{a+1,\dots,a+b\}\setminus\{a+j,a+i\})\cup\{n+a+j,n+a+i\}\cup S\cup(n+S)}\\
=\: & \pm \det [O_\ell]_{\ell\in S\cup(n+S)\cup\{n+a+j,n+a+i\}},
\end{split}
\end{equation}
and
\begin{equation}\label{eq:librobrutto222bbb}
\begin{split}
\langle\tau \mid  dx_{\{a+1,\dots,a+b\}\setminus\{a+j,a+i\}}\wedge dxy_T \rangle
= & \det[M_\ell]_{\ell\in(\{a+1,\dots,a+b\}\setminus\{a+j,a+i\})\cup T\cup(n+T)}\\
= & \pm \det [O_\ell]_{\ell\in T\cup(n+T)},
\end{split}
\end{equation}
where
\begin{itemize}
\item the signs $\pm$ depend only on $S,T,i$ and $j$,
\item $O_\ell:=(\tau^1_\ell,\dots,\tau^{2a}_\ell,\tau^{2a+j}_\ell,\tau^{2a+i}_\ell)$ is obtained from the $\ell$-th row $M_\ell$ of $M$ on canceling the last $b$ components except for the $(2a+j)$-th and $(2a+i)$-th ones,
\item we used that the second block of rows in \eqref{eq:formaMa}-\eqref{eq:formaMb} is $[0|0|I_b]$. 
\end{itemize}
Using Claims 4, 5 and the fact that the diagonal of $C$ is null one finds that
\begin{itemize}
\item[(i)] if $S=\{1,\dots,a\}$, then 
\begin{align*}
&\langle\tau \mid  dx_{\{a+1,\dots,a+b\}\setminus\{a+j,a+i\}}\wedge dy_{n+a+j,n+a+i}\wedge dxy_S \rangle \\
=&\pm\det\left(\begin{array}{cc}
\tau^{2a+j}_{n+a+j} & \tau^{2a+i}_{n+a+j}\\
\tau^{2a+j}_{n+a+i} & \tau^{2a+i}_{n+a+i}
\end{array}\right)
=\pm\det\left(\begin{array}{cc}
0 & \tau^{2a+i}_{n+a+j}\\
\tau^{2a+j}_{n+a+i} & 0
\end{array}\right) 
= \pm  \tau^{2a+j}_{n+a+i}\tau^{2a+i}_{n+a+j}
\end{align*}
\item[(ii)] if $T=\{1,\dots,a,a+j\}$, then 
\begin{align*}
&
\langle\tau \mid  dx_{\{a+1,\dots,a+b\}\setminus\{a+j,a+i\}}\wedge dxy_T \rangle \\
=&\pm\det\left(\begin{array}{cc}
\tau^{2a+j}_{a+j} & \tau^{2a+i}_{a+j}\\
\tau^{2a+j}_{n+a+j} & \tau^{2a+i}_{n+a+j}
\end{array}\right)
=\pm\det\left(\begin{array}{cc}
1 & 0\\
0 & \tau^{2a+i}_{n+a+j}
\end{array}\right) 
= \pm  \tau^{2a+i}_{n+a+j}
\end{align*}
\item[(iii)] if $T=\{1,\dots,a,a+i\}$, then 
\begin{align*}
&\langle\tau \mid  dx_{\{a+1,\dots,a+b\}\setminus\{a+j,a+i\}}\wedge dxy_T \rangle \\
=&\pm\det\left(\begin{array}{cc}
\tau^{2a+j}_{a+i} & \tau^{2a+i}_{a+i}\\
\tau^{2a+j}_{n+a+i} & \tau^{2a+i}_{n+a+i}
\end{array}\right)
=\pm\det\left(\begin{array}{cc}
0 & 1\\
\tau^{2a+j}_{n+a+i} & 0
\end{array}\right) 
= \pm  \tau^{2a+j}_{n+a+i},
\end{align*}
\end{itemize}
where the $\pm$ signs will be irrelevant. 
If $S\neq\{1,\dots,a\}$, then there exists an element $\bar\ell\in  S\cap\{a+b+1,\dots,n\}$; it follows that $O_{\bar\ell},O_{n+\bar\ell},O_{n+a+j},O_{n+a+i}$ are linearly dependent, because by Claims 3 and 5 all their entries are null except (possibly) for the last two ones. In particular, \eqref{eq:librobrutto222aaa} gives
\begin{equation}\label{eq:detSbruttobruttobrutto222}
\begin{aligned}
\text{if } &S\neq\{1,\dots,a\},\\
\text{then } & \langle\tau \mid  dx_{\{a+1,\dots,a+b\}\setminus\{a+j,a+i\}}\wedge dy_{n+a+j,n+a+i}\wedge dxy_S \rangle = 0.
\end{aligned}
\end{equation}
If $T\neq\{1,\dots,a,a+j\}$ and $T\neq\{1,\dots,a,a+i\}$, then there exists an element $\bar\ell\in  T\cap\{a+b+1,\dots,n\}$; notice also that either $a+j\in T$ or $a+i\in T$. If $a+j\in T$, then $O_{\bar\ell},O_{n+\bar\ell},O_{a+j},O_{n+a+j}$ are linearly dependent, again because by Claims 3 and 5 all their entries are null except (possibly) for the last two ones. Similarly, when $a+i\in T$ one has that $O_{\bar\ell},O_{n+\bar\ell},O_{a+i},O_{n+a+i}$ are linearly dependent. We deduce by \eqref{eq:librobrutto222bbb} that
\begin{equation}\label{eq:detSbruttobruttobrutto222bbb}
\begin{aligned}
\text{if } & T\neq\{1,\dots,a,a+j\}\text{ and }T\neq\{1,\dots,a,a+i\},\\
\text{ then } & \langle\tau \mid  dx_{\{a+1,\dots,a+b\}\setminus\{a+j,a+i\}}\wedge dxy_T \rangle = 0.
\end{aligned}
\end{equation}
By \eqref{eq:launoladue}, \eqref{eq:dottorTerence222}, \eqref{eq:detSbruttobruttobrutto222}  and (i)--(ii)--(iii) above we finally achieve
\[
\tau^{2a+j}_{n+a+i}\tau^{2a+i}_{n+a+j}=0\qtaq\pm  \tau^{2a+j}_{n+a+i}\pm\tau^{2a+i}_{n+a+j}=0
\]
and \eqref{eq:fuoridiagonalenulli} is proved.
\medskip

{\em Claim 7:  the blocks $D_1$ and $D_2$ in \eqref{eq:formaMa}-\eqref{eq:formaMb} are 0.}

\noindent The claim amounts to showing that
\begin{equation}\label{eq:chiasso}
\tau^{2a+i}_{j}=0=\tau^{2a+i}_{n+j}\qquad\text{for any }1\leq i\leq b\text{ and }a+b+1\leq j\leq n.
\end{equation}
Fix $i,j$ as in \eqref{eq:chiasso} and consider the Young tableau $Q$ obtained from $\overline R$ on replacing, in the second row, the entry $j$ with $a+i$. Set
\[
\la:=dx_{\{j\}\cup\{a+1,\dots,a+b\}\setminus\{a+i\}}\wedge\al_Q
\]
and write
\begin{equation}\label{eq:dottorTerence999}
\la=dx_{\{j\}\cup\{a+1,\dots,a+b\}\setminus\{a+i\}}\wedge\sum_S\si(S)\:dxy_S
\end{equation}
where $\si(S)\in\{1,-1\}$ is a suitable sign and the sum varies among the $2^{n-a-b}$ subsets $S\subset\{a+i\}\cup\{1,\dots,a,a+b+1,\dots,n\}\setminus\{j\}$ (i.e., $S$ is a subset of the set of entries of $Q$) with cardinality $a$ and containing exactly one element from each column of $Q$. For any such $S$ we have
\begin{equation}\label{eq:librobrutto999}
\begin{split}
\langle\tau \mid  dx_{\{j\}\cup\{a+1,\dots,a+b\}\setminus\{a+i\}}\wedge dxy_S \rangle
= & \det[M_\ell]_{\ell\in(\{j\}\cup\{a+1,\dots,a+b\}\setminus\{a+i\})\cup S\cup(n+S)}\\
= & \pm \det [N_\ell]_{\ell\in \{j\}\cup S\cup(n+S)},
\end{split}
\end{equation}
where
\begin{itemize}
\item the sign $\pm$ depends only on $S,j$ and $i$,
\item $N_\ell:=(\tau^1_\ell,\dots,\tau^{2a}_\ell,\tau^{2a+i}_\ell)$ is obtained from the $\ell$-th row $M_\ell$ of $M$ on canceling the last $b$ components except for the $(2a+i)$-th one,
\item we used that the second block of rows in \eqref{eq:formaMa}-\eqref{eq:formaMb} is $[0|0|I_b]$. 
\end{itemize}
Using the previous claims we obtain
\begin{equation}\label{eq:detSbruttobrutto999}
\text{if $S=\{1,\dots,a\}$, then }\langle\tau \mid  dx_{\{j\}\cup\{a+1,\dots,a+b\}\setminus\{a+i\}}\wedge dxy_S \rangle = \pm\tau^{2a+i}_{j}.
\end{equation}
If $S\neq\{1,\dots,a\}$, then there exists an element $\bar\ell\in S$ belonging to the second row of $Q$, i.e., $\bar \ell\in S\cap(\{a+i\}\cup\{a+b+1,\dots,n\}\setminus\{j\})$. In this case $N_{\bar\ell}$ and $N_{n+\bar\ell}$ are linearly dependent, because by Claims 3, 4 and 5 all their entries are null except (possibly) for the last one; in particular, \eqref{eq:librobrutto999} gives
\begin{equation}\label{eq:detSbruttobruttobrutto999}
\text{if $S\neq\{1,\dots,a\}$, then }\langle\tau \mid  dx_{\{a+1,\dots,a+b\}\setminus\{a+i\}}\wedge dy_{a+i}\wedge dxy_S \rangle = 0.
\end{equation}
By \eqref{eq:dottorTerence999}, \eqref{eq:detSbruttobrutto999}, \eqref{eq:detSbruttobruttobrutto999} and Remark \ref{rem:tabellenonYoung} we obtain
\[
0=\langle X_1\wedge\dots\wedge X_{a+b}\wedge Y_1\wedge\dots\wedge Y_a\mid \la\rangle = \langle\tau \mid  \la \rangle = \pm\tau^{2a+i}_{j}
\]
and the first equality in~\eqref{eq:chiasso} is proved. 

We are left to show that also $\pm\tau^{2a+i}_{n+j}=0$ for any $i,j$ as in \eqref{eq:chiasso}; this can be done by considering (for the same $Q$ above)
\[
\la=dx_{\{a+1,\dots,a+b\}\setminus\{a+i\}}\wedge dy_j\wedge\al_Q
\]
and following a similar argument, that we omit. The proof of Claim 7 is then complete.\medskip

The proof of Lemma \ref{lem:vettoresemplicediRumin} now follows from the equality $\tau=\tau^1\wedge\dots\wedge\tau^{2a+b}$, the fact that the blocks $A_1,A_2,B_1,B_2,C,D_1,D_2$ in \eqref{eq:formaMa}-\eqref{eq:formaMb} are all null, and the equality
\begin{align*}
&X_1\wedge\dots\wedge X_{a+b}\wedge Y_1\wedge\dots\wedge Y_a\\
= &
(-1)^{ab}X_1\wedge\dots\wedge X_{a}\wedge Y_1\wedge\dots\wedge Y_a\wedge X_{a+1}\wedge\dots \wedge X_{a+b}.
\end{align*}
\end{proof}

\begin{proof}[Proof of Proposition~\ref{prop:almassimo2}]
Assume that there exists a $(2n+1-k)$-dimensional vertical plane $\mathscr P_1$ whose unit tangent vector $t_{\mathscr P_1}^\H$ is such that $[t_{\mathscr P_1}^\H]_\mathcal J$ is a multiple of  $\zeta$. By Proposition~\ref{prop:ruotiamosupianicanonici} there  exist non-negative integers $a$ and $b$ and a $\H$-linear isomorphim $\mathcal L:\H^n\to\H^n$ such that $a+b\leq n$, $\dim\mathscr P_1 =2a+b+1$, $\mathcal L^*(\theta)=\theta$, $\mathcal L^*(d\theta)=d\theta$ and
\[
\mathcal L(\mathscr P_1)=\Pab,
\]
where $\Pab$ is defined as in~\eqref{eq:defPab}. Let us denote by
$\mathcal L_*:\mathcal J_{2n+1-k}\to\mathcal J_{2n+1-k}$  the isomorphism defined in~\eqref{eq:L_*Rumin}. Clearly, $\mathcal L_*([t^\H_{\mathscr P_1}]_\mathcal J)=[\mathcal L_*(t^\H_{\mathscr P_1})]_\mathcal J$ is a multiple of
\[
[t^\H_{\Pab}]_\mathcal J = [X_1\wedge\dots\wedge X_{a+b}\wedge Y_1\wedge\dots\wedge Y_a\wedge T]_\mathcal J,
\]
hence $\mathcal L_*\zeta$ is a multiple of $[X_1\wedge\dots\wedge X_{a+b}\wedge Y_1\wedge\dots\wedge Y_a\wedge T]_\mathcal J$.

Assume that $\mathscr P_2$ is another vertical $(2n+1-k)$-plane in $\H^n$ such that $[t^\H_{\mathscr P_2}]_\mathcal J$ is a multiple of $\zeta$. The vertical plane $\mathscr P_3:=\mathcal L(\mathscr P_2)$ is such that $t^\H_{\mathscr P_3}$ is a multiple of $\mathcal L_*(t^\H_{\mathscr P_2})$, hence $[t^\H_{\mathscr P_3}]_\mathcal J$ is a multiple of  $\mathcal L_*([t^\H_{\mathscr P_2}]_\mathcal J)$, i.e., of $\mathcal L_*\zeta$ and eventually of $[X_1\wedge\dots\wedge X_{a+b}\wedge Y_1\wedge\dots\wedge Y_a\wedge T]_\mathcal J$. 
If $k<n$, Lemma~\ref{lem:vettoresemplicediRumin} implies that $\mathscr P_3=\Pab$, hence $\mathscr P_2=\mathscr P_1$. This proves part (i) of the statement.

If instead $k=n$, Lemma~\ref{lem:vettoresemplicediRumin} implies that either $\mathscr P_3=\Pab$ or 
\[
\mathscr P_3 = \exp(\Span\{X_{a+1},\dots, X_n,Y_{a+b+1},\dots, Y_n,T\}).
\]
Observe that, if $a=0$, then $\mathscr P_3=\Pab$. On the contrary, if $a\geq 1$ then either $\mathscr P_3=\Pab$ or $\mathscr P_3$ and $\Pab$ are not rank-one connected, because 
\[
\dim\mathscr P_3\cap\Pab = b+1\leq (2a+b+1)-2=\dim\mathscr P_3-2.
\]
All in all, we deduce that either $\mathscr P_2=\mathscr P_1$, or 
\[
\mathscr P_2=\mathcal L^{-1}(\exp(\Span\{X_{a+1},\dots, X_n,Y_{a+b+1},\dots, Y_n,T\}))
\]
and $\mathscr P_1$, $\mathscr P_2$ are not rank-one connected. This concludes the proof.
\end{proof}

\bibliographystyle{acm}
\bibliography{biblio}

\begin{thebibliography}{10}

\bibitem{AlbertiMarchese}
{\sc Alberti, G., and Marchese, A.}
\newblock On the differentiability of {L}ipschitz functions with respect to
  measures in the {E}uclidean space.
\newblock {\em Geom. Funct. Anal. 26}, 1 (2016), 1--66.

\bibitem{AlbMassStep}
{\sc Alberti, G., Massaccesi, A., and Stepanov, E.}
\newblock On the geometric structure of currents tangent to smooth
  distributions, to appear on J. Differential Geom.

\bibitem{AmbKirMathAnn2000}
{\sc Ambrosio, L., and Kirchheim, B.}
\newblock Rectifiable sets in metric and {B}anach spaces.
\newblock {\em Math. Ann. 318}, 3 (2000), 527--555.

\bibitem{ASCV}
{\sc Ambrosio, L., Serra~Cassano, F., and Vittone, D.}
\newblock Intrinsic regular hypersurfaces in {H}eisenberg groups.
\newblock {\em J. Geom. Anal. 16}, 2 (2006), 187--232.

\bibitem{ADDDLD}
{\sc Antonelli, G., {Di Donato}, D., Don, S., and Donne, E.~L.}
\newblock Characterizations of uniformly differentiable co-horizontal intrinsic
  graphs in {C}arnot groups, preprint,
  \href{https://arxiv.org/abs/2005.11390}{arXiv:2005.11390}.

\bibitem{AntonelliMerlo}
{\sc Antonelli, G., and Merlo, A.}
\newblock Intrinsically {L}ipschitz functions with normal targets in {C}arnot
  groups, preprint, \href{https://arxiv.org/abs/2006.02782}{arXiv:2006.02782}.

\bibitem{ArenaSer}
{\sc Arena, G., and Serapioni, R.}
\newblock Intrinsic regular submanifolds in {H}eisenberg groups are
  differentiable graphs.
\newblock {\em Calc. Var. Partial Differential Equations 35}, 4 (2009),
  517--536.

\bibitem{Artin}
{\sc Artin, E.}
\newblock {\em Geometric algebra}.
\newblock Interscience Publishers, Inc., New York-London, 1957.

\bibitem{BaldiBarnabeiFranchi}
{\sc Baldi, A., Barnabei, M., and Franchi, B.}
\newblock A recursive basis for primitive forms in symplectic spaces and
  applications to {H}eisenberg groups.
\newblock {\em Acta Math. Sin. (Engl. Ser.) 32}, 3 (2016), 265--285.

\bibitem{BaldiFranchiDivCurl}
{\sc Baldi, A., and Franchi, B.}
\newblock Sharp a priori estimates for div-curl systems in {H}eisenberg groups.
\newblock {\em J. Funct. Anal. 265}, 10 (2013), 2388--2419.

\bibitem{BaldiFranchiTchouTesi}
{\sc Baldi, A., Franchi, B., Tchou, N., and Tesi, M.~C.}
\newblock Compensated compactness for differential forms in {C}arnot groups and
  applications.
\newblock {\em Adv. Math. 223}, 5 (2010), 1555--1607.

\bibitem{BallJamesARMA87}
{\sc Ball, J.~M., and James, R.~D.}
\newblock Fine phase mixtures as minimizers of energy.
\newblock {\em Arch. Rational Mech. Anal. 100}, 1 (1987), 13--52.

\bibitem{balogh}
{\sc Balogh, Z.~M.}
\newblock Size of characteristic sets and functions with prescribed gradient.
\newblock {\em J. Reine Angew. Math. 564\/} (2003), 63--83.

\bibitem{BigolinCaravennaSC}
{\sc Bigolin, F., Caravenna, L., and Serra~Cassano, F.}
\newblock Intrinsic {L}ipschitz graphs in {H}eisenberg groups and continuous
  solutions of a balance equation.
\newblock {\em Ann. Inst. H. Poincar\'{e} Anal. Non Lin\'{e}aire 32}, 5 (2015),
  925--963.

\bibitem{BigolinSC2}
{\sc Bigolin, F., and Serra~Cassano, F.}
\newblock Distributional solutions of {B}urgers' equation and intrinsic regular
  graphs in {H}eisenberg groups.
\newblock {\em J. Math. Anal. Appl. 366}, 2 (2010), 561--568.

\bibitem{BigolinSC1}
{\sc Bigolin, F., and Serra~Cassano, F.}
\newblock Intrinsic regular graphs in {H}eisenberg groups vs. weak solutions of
  non-linear first-order {PDE}s.
\newblock {\em Adv. Calc. Var. 3}, 1 (2010), 69--97.

\bibitem{BLU}
{\sc Bonfiglioli, A., Lanconelli, E., and Uguzzoni, F.}
\newblock {\em Stratified {L}ie groups and potential theory for their
  sub-{L}aplacians}.
\newblock Springer Monographs in Mathematics. Springer, Berlin, 2007.

\bibitem{BryantGG}
{\sc Bryant, R., Griffiths, P., and Grossman, D.}
\newblock {\em Exterior differential systems and {E}uler-{L}agrange partial
  differential equations}.
\newblock Chicago Lectures in Mathematics. University of Chicago Press,
  Chicago, IL, 2003.

\bibitem{CanarecciPhdThesis}
{\sc Canarecci, G.}
\newblock Insight in the {R}umin cohomology and orientability properties of the
  {H}eisenberg group, PhD Thesis,
  \href{https://arxiv.org/abs/1910.01164}{arXiv:1910.01164}.

\bibitem{CanarecciOrientability}
{\sc Canarecci, G.}
\newblock Notion of $\mathbb{H}$-orientability for surfaces in the {H}eisenberg
  group $\mathbb{H}^n$, preprint,
  \href{https://arxiv.org/abs/1910.01158}{arXiv:1910.01158}.

\bibitem{Canarecci_Currents}
{\sc Canarecci, G.}
\newblock Sub-{R}iemannian currents and slicing of currents in the {H}eisenberg
  group $\mathbb{H}^n$, preprint,
  \href{https://arxiv.org/abs/2001.06805}{arXiv:2001.06805}.

\bibitem{CapognaDanielliGarofaloCAG1994}
{\sc Capogna, L., Danielli, D., and Garofalo, N.}
\newblock The geometric {S}obolev embedding for vector fields and the
  isoperimetric inequality.
\newblock {\em Comm. Anal. Geom. 2}, 2 (1994), 203--215.

\bibitem{CDPTbook}
{\sc Capogna, L., Danielli, D., Pauls, S.~D., and Tyson, J.~T.}
\newblock {\em An introduction to the {H}eisenberg group and the
  sub-{R}iemannian isoperimetric problem}, vol.~259 of {\em Progress in
  Mathematics}.
\newblock Birkh\"{a}user Verlag, Basel, 2007.

\bibitem{ChouFaessOrpoAJM2019}
{\sc Chousionis, V., F\"{a}ssler, K., and Orponen, T.}
\newblock Intrinsic {L}ipschitz graphs and vertical {$\beta$}-numbers in the
  {H}eisenberg group.
\newblock {\em Amer. J. Math. 141}, 4 (2019), 1087--1147.

\bibitem{ChouMagTyson}
{\sc Chousionis, V., Magnani, V., and Tyson, J.~T.}
\newblock On uniform measures in the {H}eisenberg group.
\newblock {\em Adv. Math. 363\/} (2020), 106980, 42.

\bibitem{CittiManfredini}
{\sc Citti, G., and Manfredini, M.}
\newblock Implicit function theorem in {C}arnot-{C}arath\'{e}odory spaces.
\newblock {\em Commun. Contemp. Math. 8}, 5 (2006), 657--680.

\bibitem{CittiMPSC}
{\sc Citti, G., Manfredini, M., Pinamonti, A., and Serra~Cassano, F.}
\newblock Smooth approximation for intrinsic {L}ipschitz functions in the
  {H}eisenberg group.
\newblock {\em Calc. Var. Partial Differential Equations 49}, 3-4 (2014),
  1279--1308.

\bibitem{Corni}
{\sc Corni, F.}
\newblock Intrinsic {R}egular {S}urfaces of low co-dimension in {H}eisenberg
  groups, preprint, \href{https://arxiv.org/abs/1903.04415}{arXiv:1903.04415}.

\bibitem{CorniMagnani}
{\sc Corni, F., and Magnani, V.}
\newblock {Area formula for regular submanifolds of low codimension in
  Heisenberg groups}, preprint,
  \href{https://arxiv.org/abs/2002.01433}{arXiv:2002.01433}.

\bibitem{Cygan}
{\sc Cygan, J.}
\newblock Subadditivity of homogeneous norms on certain nilpotent {L}ie groups.
\newblock {\em Proc. Amer. Math. Soc. 83}, 1 (1981), 69--70.

\bibitem{DiDonato2018}
{\sc Di~Donato, D.}
\newblock Intrinsic differentiability and intrinsic regular surfaces in
  {C}arnot groups, preprint,
  \href{https://arxiv.org/abs/1811.05457}{arXiv:1811.05457}.

\bibitem{DiDonato2019}
{\sc Di~Donato, D.}
\newblock Intrinsic {L}ipschitz graphs in {C}arnot groups of step 2, preprint,
  \href{https://arxiv.org/abs/1903.02968}{arXiv:1903.02968}.

\bibitem{DDFaessOrpo2019}
{\sc {Di Donato}, D., F\"assler, K., and Orponen, T.}
\newblock Metric rectifiability of $\mathbb{H}$-regular surfaces with
  {H}\"older continuous horizontal normal, preprint,
  \href{https://arxiv.org/abs/1906.10215}{arXiv:1906.10215}.

\bibitem{DLDMV}
{\sc Don, S., {Le Donne}, E., Moisala, T., and Vittone, D.}
\newblock A rectifiability result for finite-perimeter sets in {C}arnot groups,
  preprint, \href{https://arxiv.org/abs/1912.00493}{arXiv:1912.00493}.

\bibitem{DMV}
{\sc Don, S., Massaccesi, A., and Vittone, D.}
\newblock Rank-one theorem and subgraphs of {BV} functions in {C}arnot groups.
\newblock {\em J. Funct. Anal. 276}, 3 (2019), 687--715.

\bibitem{EvansGariepyRevised}
{\sc Evans, L.~C., and Gariepy, R.~F.}
\newblock {\em Measure theory and fine properties of functions}, revised~ed.
\newblock Textbooks in Mathematics. CRC Press, Boca Raton, FL, 2015.

\bibitem{FaessOrpoRigot}
{\sc F\"assler, K., Orponen, T., and Rigot, S.}
\newblock Semmes surfaces and intrinsic {L}ipschitz graphs in the {H}eisenberg
  group, preprint, \href{https://arxiv.org/abs/1803.04819}{arXiv:1803.04819}.

\bibitem{federer}
{\sc Federer, H.}
\newblock {\em Geometric measure theory}.
\newblock Die Grundlehren der mathematischen Wissenschaften, Band 153.
  Springer-Verlag New York Inc., New York, 1969.

\bibitem{FS}
{\sc Folland, G.~B., and Stein, E.~M.}
\newblock {\em Hardy spaces on homogeneous groups}, vol.~28 of {\em
  Mathematical Notes}.
\newblock Princeton University Press, Princeton, N.J.; University of Tokyo
  Press, Tokyo, 1982.

\bibitem{FMarchiS}
{\sc Franchi, B., Marchi, M., and Serapioni, R.~P.}
\newblock Differentiability and approximate differentiability for intrinsic
  {L}ipschitz functions in {C}arnot groups and a {R}ademacher theorem.
\newblock {\em Anal. Geom. Metr. Spaces 2}, 1 (2014), 258--281.

\bibitem{FPensoS}
{\sc Franchi, B., Penso, V., and Serapioni, R.}
\newblock Remarks on {L}ipschitz domains in {C}arnot groups.
\newblock In {\em Geometric control theory and sub-{R}iemannian geometry},
  vol.~5 of {\em Springer INdAM Ser.} Springer, Cham, 2014, pp.~153--166.

\bibitem{FSSCHouston}
{\sc Franchi, B., Serapioni, R., and Serra~Cassano, F.}
\newblock Meyers-{S}errin type theorems and relaxation of variational integrals
  depending on vector fields.
\newblock {\em Houston J. Math. 22}, 4 (1996), 859--890.

\bibitem{FSSCBUMI}
{\sc Franchi, B., Serapioni, R., and Serra~Cassano, F.}
\newblock Approximation and imbedding theorems for weighted {S}obolev spaces
  associated with {L}ipschitz continuous vector fields.
\newblock {\em Boll. Un. Mat. Ital. B (7) 11}, 1 (1997), 83--117.

\bibitem{FSSCMathAnn2001}
{\sc Franchi, B., Serapioni, R., and Serra~Cassano, F.}
\newblock Rectifiability and perimeter in the {H}eisenberg group.
\newblock {\em Math. Ann. 321}, 3 (2001), 479--531.

\bibitem{FSSCstep2}
{\sc Franchi, B., Serapioni, R., and Serra~Cassano, F.}
\newblock On the structure of finite perimeter sets in step 2 {C}arnot groups.
\newblock {\em J. Geom. Anal. 13}, 3 (2003), 421--466.

\bibitem{FSSCJNCA}
{\sc Franchi, B., Serapioni, R., and Serra~Cassano, F.}
\newblock Intrinsic {L}ipschitz graphs in {H}eisenberg groups.
\newblock {\em J. Nonlinear Convex Anal. 7}, 3 (2006), 423--441.

\bibitem{FSSCAIM}
{\sc Franchi, B., Serapioni, R., and Serra~Cassano, F.}
\newblock Regular submanifolds, graphs and area formula in {H}eisenberg groups.
\newblock {\em Adv. Math. 211}, 1 (2007), 152--203.

\bibitem{FSSCJGA-Diff}
{\sc Franchi, B., Serapioni, R., and Serra~Cassano, F.}
\newblock Differentiability of intrinsic {L}ipschitz functions within
  {H}eisenberg groups.
\newblock {\em J. Geom. Anal. 21}, 4 (2011), 1044--1084.

\bibitem{FSJGA}
{\sc Franchi, B., and Serapioni, R.~P.}
\newblock Intrinsic {L}ipschitz graphs within {C}arnot groups.
\newblock {\em J. Geom. Anal. 26}, 3 (2016), 1946--1994.

\bibitem{FranchiTripaldi}
{\sc Franchi, B., and Tripaldi, F.}
\newblock Differential forms in {C}arnot groups after {M}.~{R}umin: an
  introduction.
\newblock In {\em Topics in mathematics}, Quaderni dell'{U}nione {M}atematica
  {Italiana}. Pitagora, {B}ologna, 2015, pp.~75--122.
\newblock Available at
  \href{http://manet.dm.unibo.it/papers/FT.pdf}{manet.dm.unibo.it/papers/FT.pdf}.

\bibitem{Fulton}
{\sc Fulton, W.}
\newblock {\em Young tableaux}, vol.~35 of {\em London Mathematical Society
  Student Texts}.
\newblock Cambridge University Press, Cambridge, 1997.
\newblock With applications to representation theory and geometry.

\bibitem{GNLip}
{\sc Garofalo, N., and Nhieu, D.-M.}
\newblock Lipschitz continuity, global smooth approximations and extension
  theorems for {S}obolev functions in {C}arnot-{C}arath\'{e}odory spaces.
\newblock {\em J. Anal. Math. 74\/} (1998), 67--97.

\bibitem{GiaqModSouc_I}
{\sc Giaquinta, M., Modica, G., and Sou\v{c}ek, J.}
\newblock {\em Cartesian currents in the calculus of variations. {I}}, vol.~37
  of {\em Ergebnisse der Mathematik und ihrer Grenzgebiete. 3. Folge. A Series
  of Modern Surveys in Mathematics}.
\newblock Springer-Verlag, Berlin, 1998.
\newblock Cartesian currents.

\bibitem{GromovWithin}
{\sc Gromov, M.}
\newblock Carnot-{C}arath\'{e}odory spaces seen from within.
\newblock In {\em Sub-{R}iemannian geometry}, vol.~144 of {\em Progr. Math.}
  Birkh\"{a}user, Basel, 1996, pp.~79--323.

\bibitem{Heinonen}
{\sc Heinonen, J.}
\newblock {\em Lectures on analysis on metric spaces}.
\newblock Universitext. Springer-Verlag, New York, 2001.

\bibitem{JNGV}
{\sc Julia, A., {Nicolussi Golo}, S., and Vittone, D.}
\newblock Area of intrinsic graphs and coarea formula in {C}arnot groups,
  preprint, \href{https://arxiv.org/abs/2004.02520}{arXiv:2004.02520}.

\bibitem{KirchhSC}
{\sc Kirchheim, B., and Serra~Cassano, F.}
\newblock Rectifiability and parameterization of intrinsic regular surfaces in
  the {H}eisenberg group.
\newblock {\em Ann. Sc. Norm. Super. Pisa Cl. Sci. (5) 3}, 4 (2004), 871--896.

\bibitem{KoranyiReimann}
{\sc Kor\'{a}nyi, A., and Reimann, H.~M.}
\newblock Quasiconformal mappings on the {H}eisenberg group.
\newblock {\em Invent. Math. 80}, 2 (1985), 309--338.

\bibitem{KrantzParks}
{\sc Krantz, S.~G., and Parks, H.~R.}
\newblock {\em Geometric integration theory}.
\newblock Cornerstones. Birkh\"{a}user Boston, Inc., Boston, MA, 2008.

\bibitem{LeDonneAPrimer}
{\sc Le~Donne, E.}
\newblock A primer on {C}arnot groups: homogenous groups,
  {C}arnot-{C}arath\'{e}odory spaces, and regularity of their isometries.
\newblock {\em Anal. Geom. Metr. Spaces 5}, 1 (2017), 116--137.

\bibitem{Magthesis}
{\sc Magnani, V.}
\newblock {\em Elements of geometric measure theory on sub-{R}iemannian
  groups}.
\newblock Scuola Normale Superiore, Pisa, 2002.

\bibitem{MagCEJM}
{\sc Magnani, V.}
\newblock Blow-up of regular submanifolds in {H}eisenberg groups and
  applications.
\newblock {\em Cent. Eur. J. Math. 4}, 1 (2006), 82--109.

\bibitem{MagJEMS}
{\sc Magnani, V.}
\newblock Characteristic points, rectifiability and perimeter measure on
  stratified groups.
\newblock {\em J. Eur. Math. Soc. (JEMS) 8}, 4 (2006), 585--609.

\bibitem{MagnaniTowardsDiffCalc}
{\sc Magnani, V.}
\newblock Towards differential calculus in stratified groups.
\newblock {\em J. Aust. Math. Soc. 95}, 1 (2013), 76--128.

\bibitem{MagnaniTowardsTheoryArea}
{\sc Magnani, V.}
\newblock Towards a theory of area in homogeneous groups.
\newblock {\em Calc. Var. Partial Differential Equations 58}, 3 (2019), Paper
  No. 91, 39.

\bibitem{mattila}
{\sc Mattila, P.}
\newblock {\em Geometry of sets and measures in {E}uclidean spaces}, vol.~44 of
  {\em Cambridge Studies in Advanced Mathematics}.
\newblock Cambridge University Press, Cambridge, 1995.
\newblock Fractals and rectifiability.

\bibitem{MatSerSC}
{\sc Mattila, P., Serapioni, R., and Serra~Cassano, F.}
\newblock Characterizations of intrinsic rectifiability in {H}eisenberg groups.
\newblock {\em Ann. Sc. Norm. Super. Pisa Cl. Sci. (5) 9}, 4 (2010), 687--723.

\bibitem{Merlo_Geom1codim}
{\sc Merlo, A.}
\newblock Geometry of $1$-codimensional measures in {H}eisenberg groups,
  preprint, \href{https://arxiv.org/abs/1908.11639}{arXiv:1908.11639}.

\bibitem{Merlo_MM}
{\sc Merlo, A.}
\newblock Marstrand-{M}attila rectifiability criterion for $1$-codimensional
  measures in {C}arnot {G}roups, preprint,
  \href{https://arxiv.org/abs/2007.03236}{arXiv:2007.03236}.

\bibitem{MVMathZ12}
{\sc Monti, R., and Vittone, D.}
\newblock Sets with finite {$\mathbb H$}-perimeter and controlled normal.
\newblock {\em Math. Z. 270}, 1-2 (2012), 351--367.

\bibitem{MVAPDE}
{\sc Monti, R., and Vittone, D.}
\newblock Height estimate and slicing formulas in the {H}eisenberg group.
\newblock {\em Anal. PDE 8}, 6 (2015), 1421--1454.

\bibitem{MullerCetraro}
{\sc M\"{u}ller, S.}
\newblock Variational models for microstructure and phase transitions.
\newblock In {\em Calculus of variations and geometric evolution problems
  ({C}etraro, 1996)}, vol.~1713 of {\em Lecture Notes in Math.} Springer,
  Berlin, 1999, pp.~85--210.

\bibitem{NaorYoungAnnals2018}
{\sc Naor, A., and Young, R.}
\newblock Vertical perimeter versus horizontal perimeter.
\newblock {\em Ann. of Math. (2) 188}, 1 (2018), 171--279.

\bibitem{NGSC}
{\sc Nicolussi~Golo, S., and Serra~Cassano, F.}
\newblock The {B}ernstein problem for {L}ipschitz intrinsic graphs in the
  {H}eisenberg group.
\newblock {\em Calc. Var. Partial Differential Equations 58}, 4 (2019), Paper
  No. 141, 28.

\bibitem{PansuAnnals89}
{\sc Pansu, P.}
\newblock M\'{e}triques de {C}arnot-{C}arath\'{e}odory et quasiisom\'{e}tries
  des espaces sym\'{e}triques de rang un.
\newblock {\em Ann. of Math. (2) 129}, 1 (1989), 1--60.

\bibitem{Rademacher}
{\sc Rademacher, H.}
\newblock \"{U}ber partielle und totale differenzierbarkeit von {F}unktionen
  mehrerer {V}ariabeln und \"{u}ber die {T}ransformation der {D}oppelintegrale.
\newblock {\em Math. Ann. 79}, 4 (1919), 340--359.

\bibitem{Rigot}
{\sc Rigot, S.}
\newblock Differentiation of measures in metric spaces, preprint,
  \href{https://arxiv.org/abs/1802.02069}{arxiv:1802.02069}.

\bibitem{Rigot2019quantitative}
{\sc Rigot, S.}
\newblock Quantitative notions of rectifiability in the {H}eisenberg groups,
  preprint, \href{https://arxiv.org/abs/1904.06904}{arxiv:1904.06904}.

\bibitem{RuminComptesRendus1990}
{\sc Rumin, M.}
\newblock Un complexe de formes diff\'{e}rentielles sur les vari\'{e}t\'{e}s de
  contact.
\newblock {\em C. R. Acad. Sci. Paris S\'{e}r. I Math. 310}, 6 (1990),
  401--404.

\bibitem{Rumin}
{\sc Rumin, M.}
\newblock Formes diff\'{e}rentielles sur les vari\'{e}t\'{e}s de contact.
\newblock {\em J. Differential Geom. 39}, 2 (1994), 281--330.

\bibitem{SerapioniDiffQuot}
{\sc Serapioni, R.~P.}
\newblock Intrinsic difference quotients.
\newblock In {\em Harmonic analysis, partial differential equations and
  applications}, Appl. Numer. Harmon. Anal. Birkh\"{a}user/Springer, Cham,
  2017, pp.~165--192.

\bibitem{SCSomeTopics}
{\sc Serra~Cassano, F.}
\newblock Some topics of {G}eometric {M}easure {T}heory in {C}arnot groups.
\newblock In {\em Geometry, analysis and dynamics on sub-{R}iemannian
  manifolds. {V}ol. 1}, EMS Ser. Lect. Math. Eur. Math. Soc., Z\"{u}rich, 2016,
  pp.~1--121.

\bibitem{Simon}
{\sc Simon, L.}
\newblock {\em Lectures on {G}eometric {M}easure {T}heory}, vol.~3 of {\em
  Proceedings of the Centre for Mathematical Analysis, Australian National
  University}.
\newblock Australian National University, Centre for Mathematical Analysis,
  Canberra, 1983.

\bibitem{SteinHarmAnal}
{\sc Stein, E.~M.}
\newblock {\em Harmonic analysis: real-variable methods, orthogonality, and
  oscillatory integrals}, vol.~43 of {\em Princeton Mathematical Series}.
\newblock Princeton University Press, Princeton, NJ, 1993.
\newblock With the assistance of Timothy S. Murphy, Monographs in Harmonic
  Analysis, III.

\bibitem{VittonePhDThesis}
{\sc Vittone, D.}
\newblock {\em Submanifolds in {C}arnot groups}, vol.~7 of {\em Tesi. Scuola
  Normale Superiore di Pisa (Nuova Series) [Theses of Scuola Normale Superiore
  di Pisa (New Series)]}.
\newblock Edizioni della Normale, Pisa, 2008.
\newblock Thesis, Scuola Normale Superiore, Pisa, 2008.

\bibitem{VAnnSNS}
{\sc Vittone, D.}
\newblock Lipschitz surfaces, perimeter and trace theorems for {BV} functions
  in {C}arnot-{C}arath\'{e}odory spaces.
\newblock {\em Ann. Sc. Norm. Super. Pisa Cl. Sci. (5) 11}, 4 (2012), 939--998.

\bibitem{VodoPupy}
{\sc Vodop'yanov, S.~K., and Pupyshev, I.~M.}
\newblock Whitney-type theorems on the extension of functions on {C}arnot
  groups.
\newblock {\em Siberian Math. J. 47}, 4 (2006), 601--620.

\bibitem{Silhavy}
{\sc \v{S}ilhav\'{y}, M.}
\newblock Normal currents: structure, duality pairings and div-curl lemmas.
\newblock {\em Milan J. Math. 76\/} (2008), 275--306.

\bibitem{Warner}
{\sc Warner, F.~W.}
\newblock {\em Foundations of differentiable manifolds and {L}ie groups},
  vol.~94 of {\em Graduate Texts in Mathematics}.
\newblock Springer-Verlag, New York-Berlin, 1983.
\newblock Corrected reprint of the 1971 edition.

\bibitem{WhitneyExtension34}
{\sc Whitney, H.}
\newblock Analytic extensions of differentiable functions defined in closed
  sets.
\newblock {\em Trans. Amer. Math. Soc. 36}, 1 (1934), 63--89.

\end{thebibliography}

\end{document}